	\theoremstyle{plain}
	\newtheorem{thmintro}{Theorem}
	\theoremstyle{definition}
	\newtheorem{exmintro}[thmintro]{Example}
	\newtheorem{dfintro}[thmintro]{Definition}
\newtheorem{thm}{Theorem}[section]
\newtheorem{lem}[thm]{Lemma}
\newtheorem{prop}[thm]{Proposition}
\newtheorem{cor}[thm]{Corollary}
\theoremstyle{definition}
\newtheorem{NN}[thm]{}
\theoremstyle{definition}
\newtheorem{df}[thm]{Definition}
\theoremstyle{definition}
\newtheorem{rem}[thm]{Remark}
\theoremstyle{definition}
\newtheorem{exm}[thm]{Example}
\newcommand{\red}{\textcolor{red}}
\newcommand{\blue}{\color{blue}}
\newcommand{\green}{\color{green}}
\newcommand{\Green}{\color{Green}}
\newcommand{\ppl}{\color{purple}}
\renewcommand{\phi}{\varphi}
\definecolor{purple}{RGB}{150,10,200} 
\newcommand{\CAs}{$C^*$-algebras}
\newcommand{\Pp}{\mathcal{P}}
\newcommand{\Hh}{\mathcal{H}}
\newcommand{\G}{\mathcal{G}}
\newcommand{\F}{\mathcal{F}}
\newcommand{\N}{\mathbb{N}}
\newcommand{\Z}{\mathbb{Z}}
\newcommand{\Q}{\mathbb{Q}}
\newcommand{\R}{\mathbb{R}}
\newcommand{\C}{\mathbb{C}}
\newcommand{\T}{\mathbb{T}}
\numberwithin{equation}{section}
\newcommand{\Hom}{\operatorname{Hom}}
\newcommand{\Ext}{\operatorname{Ext}}
\newcommand{\Aff}{\operatorname{Aff}}
\newcommand{\id}{\operatorname{id}}
\newcommand{\Her}{\mathrm{Her}}
\newcommand{\cpc}{completely positive contractive linear map}
\newcommand{\morp}{contractive completely positive linear map}
\newcommand{\hm}{homomorphism}
\newcommand{\dt}{\delta}
\newcommand{\ep}{\varepsilon}
\newcommand{\td}{\tilde}
\newcommand{\DT}{\Delta}
\newcommand{\la}{\langle}
\newcommand{\ra}{\rangle}
\newcommand{\andeqn}{\,\,\,{\rm and}\,\,\,}
\newcommand{\rforal}{\,\,\,{\rm for\,\,\,all}\,\,\,}
\newcommand{\CA}{$C^*$-algebra}
\newcommand{\SCA}{$C^*$-subalgebra}
\newcommand{\af}{{\alpha}}
\newcommand{\bt}{{\beta}}
\newcommand{\Tw}{\overline{T(A)}^w}
\newcommand{\wtd}{\widetilde}
\newcommand{\diag}{{\rm diag}}
\newcommand{\wilog}{without loss of generality}
\newcommand{\Wlog}{Without loss of generality}
\newcommand{\D}{\mathbb D}
\newcommand{\beq}{\begin{eqnarray}}
\newcommand{\eneq}{\end{eqnarray}}
\newcommand{\tforal}{\,\,\,\text{for\,\,\,all}\,\,\,}
\newcommand{\tand}{\,\,\,\text{and}\,\,\,}
\newcommand{\ro}{\rho}
\newcommand{\p}{\mathfrak{p}}
\newcommand{\q}{\mathfrak{q}}
\title{  Extensions of  C*-algebras}
\author{James Gabe, Huaxin Lin  and Ping Wong Ng
 }
\date{
}
\begin{document}

\maketitle

\begin{abstract}
Let $A$ be a separable amenable \CA, and $B$ a non-unital  and $\sigma$-unital simple 
\CA\, with continuous scale
($B$ need not be stable).  We classify, up to unitary equivalence, all essential extensions of the form $0 \rightarrow B \rightarrow 
D \rightarrow A \rightarrow 0$ using KK theory.      

There are characterizations of when the relation of weak unitary equivalence is the same as 
the relation of
unitary equivalence, and characterizations of when an extension is liftable (a.k.a.~trivial or split).   
In the case where $B$ is purely infinite, an essential extension $\rho : A \rightarrow M(B)/B$ 
is liftable if and only if 
$[\rho]=0$ in $KK(A, M(B)/B)$.   When $B$ is stably finite, 
the extension $\rho$ is often not liftable when $[\rho]=0$ in $KK(A, M(B)/B).$ 

Finally, when $B$ additionally has tracial rank zero and when $A$ belongs to a sufficiently regular class of unital separable amenable C*-algebras, 
we have a
version of the Voiculescu noncommutative Weyl--von Neumann theorem:  Suppose that $\Phi, \Psi:  A \rightarrow M(B)$ are unital injective homomorphisms  such that $\Phi(A) \cap B = \Psi(A) \cap B = \{ 0 \}$ and $\tau \circ \Phi = \tau \circ \Psi$ for all $\tau \in T(B),$ {the tracial state space of $B.$}  Then there exists a sequence $\{ u_n \}$ of unitaries in $M(B)$ such that (i) $u_n \Phi(a) u_n^* -
\Psi(a) \in B$ for all $a \in A$ and $n \geq 1$,  (ii)  $\|   u_n \Phi(a) u_n^* -
\Psi(a) \| \rightarrow 0$ as $n \rightarrow \infty$ for all $a \in A$.

\end{abstract}

\section{Introduction}

The study of extensions of \CA s began with the classification of essentially normal 
operators, then Brown--Douglas--Fillmore (BDF) theory and finally developed into 
Kasparov's KK-theory.  Let $A$ be a separable amenable \CA\, in the UCT class
and ${\cal K}$ be the  \CA\, of compact operators on an infinite dimensional 
separable Hilbert space $H.$   BDF--Kasparov theory states that the non-unital essential 
extensions of the form
$$
0\to {\cal K}\to E\to A\to 0
$$ 
are uniquely determined by $KK^1(A, \cal K)$ up to unitary equivalence. 
Let $\tau_1, \tau_2: A\to C({\cal K}),$ where $C({\cal K})$ is the Calkin algebra, 
be two {{non-unital}} monomorphisms.\footnote{{In extension theory, the unital and non-unital cases
often need to be separated. We provide more details, as well as relevant background
material, in  the paper.}}  Then there exists a unitary  $u \in C({\cal K})$ such that
\beq
u^*\tau_1(a)u=\tau_2(a) \rforal a\in A
\eneq
if and only if $[\tau_1]=[\tau_2]$ in $KK(A, C({\cal K})) \cong KK^1(A,\mathcal K).$ Moreover 
$\tau_1$ is liftable (i.e., trivial -- meaning that
 there is a \hm\, $\phi: A\to M({\cal K})=B(H)$ {{for
which $\tau_1 = \pi \circ \phi$, where $\pi : B(H) \rightarrow B(H)/{\cal K}$ is the usual
quotient map}}) if and only 
if $[\tau_1]=0$ in $KK(A, C({\cal K})).$ 

The above mentioned result  fails if we replace ${\cal K}$ by a general separable simple \CA\, 
$B$ (when $M(B)/B$ is not simple -- see \cite{LinExtRR0III}, for example). 
One of the important features of the Calkin algebra is that it is simple and purely infinite.
It is known that a $\sigma$-unital {{non-unital}} simple \CA\, $B$ has a simple corona algebra 
$C(B)=M(B)/B$ if and only if {{either $B = {\cal K}$ or $B$ has continuous scale }} (see Definition \ref{d:contscale}).

\begin{dfintro} \label{df:May2420235AM}
Let $A$ be a separable amenable $C^\ast$-algebra and let $B$ be a non-unital but $\sigma$-unital simple $C^\ast$-algebra with 
continuous scale. Denote by {{$\Ext(A, B)$}} the set of 
{{(strong) unitary equivalence}} classes 
of non-unital essential extensions of the 
form $0\to B\to E\to A \to 0$ 

\noindent
{{(see  Definition \ref{d:Ext} and Remark \ref{r33}).}}
\end{dfintro}

We wish to emphasize that we are not assuming that $B$ is stable in the above definition, in contrast to  classical BDF-Kasparov theory.   An important feature about Kasparov's $KK$-groups $KK^1(A,B)$ is that it can be used to classify certain non-unital extensions $0 \to B\otimes \mathcal K \to E \to A \to 0$. Since $KK^1(A,B) \cong KK(A, C(B\otimes \mathcal K))$ by Proposition 4.2 in \cite{DadarlatTopExt}, the following is really an extension of this classical result for $B$ with continuous scale.

\begin{thmintro}\label{thm:1}
Let $A$ be a separable amenable $C^\ast$-algebra and let $B$ be a non-unital but $\sigma$-unital simple $C^\ast$-algebra with continuous scale. Then $\Ext(A,B)$ is canonically isomorphic to the Kasparov group $KK(A, M(B)/B)$\, {{(see Theorem \ref{TH2}).}}
\end{thmintro}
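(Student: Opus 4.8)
The plan is to establish the isomorphism $\Ext(A,B)\cong KK(A,M(B)/B)$ by first showing that the Busby invariant gives a bijection between (strong) unitary equivalence classes of non-unital essential extensions and suitable unitary equivalence classes of injective homomorphisms $\rho\colon A\to M(B)/B$, then identifying the latter with $KK(A,M(B)/B)$ via a Cuntz-picture/stabilization argument. The key structural input is that, since $B$ is simple, $\sigma$-unital, non-unital with continuous scale, the corona algebra $C(B)=M(B)/B$ is simple, purely infinite or otherwise sufficiently regular, and — crucially — that $M(B)$ contains $M(B\otimes\mathcal K)$-like absorbing properties: the point is to verify that for $B$ with continuous scale the natural map is defined even though $B$ need not be stable. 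First I would recall that an essential extension $0\to B\to E\to A\to 0$ is classified by its Busby map $\rho\colon A\to M(B)/B$, and that strong unitary equivalence of extensions corresponds exactly to unitary equivalence of Busby maps via unitaries in $M(B)/B$ lifting to $M(B)$ (or, after the usual argument, it suffices to use unitaries in $M(B)/B$ since $C(B)$ has trivial or well-understood $K_1$-obstructions in the relevant quotient); this reduces the theorem to classifying injective $\rho$ up to such unitary equivalence.

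Next I would put a monoid structure on $\Ext(A,B)$ using a fixed isomorphism $M_2(B)\cong B$ (available since $B$ has continuous scale, hence is stably isomorphic to itself in the appropriate sense, or more carefully using that $B\cong M_n(B)$ by Brown's theorem for continuous-scale hereditary subalgebras) together with an isomorphism $M(B)/B\cong M_2(M(B)/B)$ to define addition of Busby maps by block diagonals, and show this is well-defined and associative. The heart of the argument is then an absorption theorem: every trivial (liftable) extension is absorbed, i.e., if $\sigma\colon A\to M(B)/B$ lifts to a homomorphism $A\to M(B)$ then $\rho\oplus\sigma$ is unitarily equivalent to $\rho$ for any essential $\rho$. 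This is the non-unital, non-stable analogue of Voiculescu's theorem, and I expect it to be the main obstacle: one must produce enough "room" inside $M(B)$ to run a Weyl–von Neumann type approximation argument despite $B$ not being stable, using continuous scale to control the behaviour of approximate units and quasicentral approximate units, and using simplicity/pure infiniteness (or the structure theory cited earlier in the paper) of $C(B)$ to manufacture the required isometries. Granting absorption, $\Ext(A,B)$ becomes a group (inverses come from absorption applied to the split extension built from a Stinespring-type dilation), and liftable extensions form the zero element.

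Finally, to identify this group with $KK(A,M(B)/B)$, I would invoke the Kasparov picture: since $A$ is separable and amenable, $KK(A,M(B)/B)$ is computed by (homotopy classes of) Cuntz pairs $(\phi,\psi)$ of homomorphisms $A\to M(M(B)/B)\cong$ multiplier-like algebra with difference in $M(B)/B$ — but by the very definition of the corona, such pairs are the same data as essential extensions after a diagonal/absorption normalization. Concretely, the map $\Ext(A,B)\to KK(A,M(B)/B)$ sends the class of $\rho$ to the class of the Cuntz pair $(\iota\circ\rho,\ 0)$ where $\iota$ is the canonical inclusion; surjectivity follows because every KK-class has a representative of this normalized form (again using absorption to clear away the split summand), and injectivity follows because a homotopy of Cuntz pairs can be converted, via absorption of the trivial homotopy summand and the stability of $B$ under the continuous-scale hypotheses, into a strong unitary equivalence of the corresponding extensions — this last conversion is where one must be careful, running an Eilenberg swindle compatible with the non-stable $B$, controlled by the continuous scale. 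I would then cite Theorem \ref{TH2} for the detailed verification that these two maps are mutually inverse group homomorphisms, and remark that the hypothesis "$B$ has continuous scale" is exactly what makes $M(B)/B$ simple (as recalled in the introduction) and hence what makes absorption available in this generality.
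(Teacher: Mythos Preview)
Your approach has a genuine gap and also diverges substantially from the paper's route.

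\textbf{The gap.} You base the monoid structure and the absorption/swindle arguments on an isomorphism $M_2(B)\cong B$ (``available since $B$ has continuous scale''). This is false in general: continuous scale is essentially the opposite of stability for finite simple $C^*$-algebras. If $B$ is stable then $C(B)$ is simple only when $B$ is elementary or purely infinite; the whole point of the continuous-scale hypothesis is to cover non-stable $B$ (e.g.\ non-unital hereditary subalgebras of UHF algebras, the Razak algebra, etc.). Brown's theorem gives only stable isomorphism, not $B\cong M_n(B)$. Consequently the Eilenberg swindle you sketch, the construction of inverses via Stinespring dilations inside $M(B)$, and the homotopy-to-unitary-equivalence conversion all lack the ``room'' you are assuming. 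The BDF sum in this setting is defined not via $M_2(B)\cong B$ but via isometries $S,T$ in the corona $C(B)$ with $SS^*+TT^*\le 1$, which exist because $C(B)$ is simple purely infinite---a fact you mention but then do not actually exploit.

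\textbf{How the paper proceeds instead.} The paper avoids absorption theory inside $M(B)$ entirely. The argument is: (i) since $B$ has continuous scale, $C(B)$ is simple purely infinite; (ii) by Gabe's classification result (Theorem~\ref{t:KP}), $KK(A,C(B))$ is in canonical bijection with \emph{asymptotic Murray--von~Neumann equivalence} classes of injective $*$-homomorphisms $A\to C(B)$; (iii) a Phillips--Weaver--type lemma (Lemma~\ref{TWP}) shows that for maps into a corona algebra, asymptotic MvN equivalence collapses to ordinary MvN equivalence; (iv) a dilation lemma for properly infinite, $K_1$-injective algebras (Proposition~\ref{TH1Pnonunital}) upgrades MvN equivalence of non-unital essential extensions to (strong) unitary equivalence. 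Surjectivity is immediate from (ii), and injectivity is the chain (ii)$\Rightarrow$(iii)$\Rightarrow$(iv). No Voiculescu-type absorption, no Cuntz pairs, and no swindle in $M(B)$ are needed.

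Your outline could perhaps be made to work if you replaced the incorrect $M_2(B)\cong B$ step by working entirely in $C(B)$ and proving a genuine absorption theorem there, but that would essentially reprove Theorem~\ref{t:KP} from scratch; the paper instead imports it.
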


While this looks like the perfect generalization of the BDF--Kasparov theorem, there are several 
important differences. First, when $B$ is not purely infinite, an essential extension 
$\tau: A\to C(B)$ with $KK(\tau)=0$ may not be liftable (i.e., trivial), {{or the short exact sequence 
in Definition A  may not be split.}} In fact, in some cases, 
$\tau$ is never liftable whenever $KK(\tau)=0$, even when trivial extensions exist. 
Second, trivial extensions may not be unitarily equivalent. In fact, there may be 
infinitely many inequivalent extensions represented by different $KK$-elements 
which are liftable. 
Therefore, it is important to know which $KK$-elements represent  liftable extensions. 
  In special cases we will characterize in terms of $KK$-theory and traces when extensions are liftable. 
 {{Here we only mention the non-unital case in the introduction for convenience as the similar statement in the
 unital case  for non-stable \CA s $B$ would be somewhat complicated as  the unitary equivalence classes 
 of unital essential extensions may not form a group  (see Remark
\ref{rem:UnitalAndNotASemigroup}).}} 


 In the following we let $QT(B)$ denote the convex set of quasitracial states on $B$
 and $\Aff(QT(B))$ the space of all real continuous affine functions on $QT(B).$
 {{We consider the case where $B$ has continuous scale, strict comparison and stable rank one (this will be handled with slightly weaker hypothesis). In this case we have  
  $K_0(M(B))\cong \Aff(QT(B))$ and  $K_0(M(B)/B)\cong K_1(B)\oplus \Aff(QT(B))/\rho_B(K_0(B))$
  (see Theorem \ref{thm:KComp}).
  Suppose that $\psi: A\to M(B)/B$ is the Busby invariant of an essential extension defined 
  in Definition A and there is a  \hm\, $\phi: A\to M(B)$ such that
  $\pi\circ \phi=\psi.$  Note that $\phi$ induces a \hm\, $\phi_{*0}: K_0(A)\to \Aff(QT(B))$
  and $\pi_{*0}\circ \phi_{*0}=\psi_{*0}.$ Hence 
  $\psi_{*0}$ maps $K_0(A)$ into $\Aff(QT(B))/\rho_B(K_0(B)).$  Also 
  $\psi_{*0}$  can be lifted to 
  an order preserving  \hm\, $\gamma: K_0(A)\to \Aff(QT(B)).$ Moreover  $\gamma$ needs 
  to factor through $\Aff(QT(A)),$ the space of all real continuous affine functions on $QT(A).$
  Furthermore, $\tau\circ \phi$ gives a faithful quasitrace (instead of an arbitrary quasitrace).
  Only these $\psi_{*0}$ could  possibly give  liftable extensions. We also discuss whether every 
  one of them is liftable. 
   Theorem \ref{Tlifable}  
  gives an elaborate  answer to the liftable question.}}

\begin{exmintro}
Let $A$ be any separable amenable $C^\ast$-algebra satisfying the UCT, and let us consider the {{case,}} where $B = \mathcal W$ is the {{Razak}}
algebra. Then $\mathcal W$ fits into  Theorem \ref{Tlifable},
$K_\ast(\mathcal W) = 0$ and $\mathcal W$ has a unique (quasi)tracial state. Moreover, there are canonical isomorphisms $K_0(M(\mathcal W)) \cong K_0(C(\mathcal W)) \cong  {{\Aff (T(\mathcal W))}} \cong \mathbb R$ and $K_1(M(\mathcal W)) = K_1(C(\mathcal W)) = 0$. From this and the UCT, it is straightforward to check that there are canonical isomorphisms
\[
\Ext(A, \mathcal W) \cong KK(A,C(\mathcal W)) \cong KK(A,M(\mathcal W)) \cong \Hom(K_0(A), \mathbb R)
\]
by Theorem \ref{thm:1}. 
{{One should note that, if $A$ has no faithful tracial state, then there is no liftable essential extensions.
Suppose that $A$ has a faithful tracial state. Then
 by Theorem \ref{Tlifable} in section 6, an essential extension $\tau: A\to C(\mathcal W)=M(\mathcal W)/\mathcal W$ is liftable if and only if $\tau_{*0}=r\rho_A(t)$ for some faithful tracial state $t$ and $r\in (0,1],$
where $\rho_A: K_0(A)\to \Aff(T(A))$ is the usual pairing (see also Theorem 8.3 of \cite{LN}).
One then observes}} that a  non-unital extension $\tau$ with $KK(\tau) = 0$ is liftable if and only if $A$ has a faithful trace  and $K_0(A)={\rm ker}\rho_A.$
{{In particular, if $A=C(X)$ for some compact metric space $X$, non-unital extensions $\tau: A\to C({\cal W})$
with
$KK(\tau)=0$ can never be liftable.}}
\end{exmintro} 

Recall that $\tau: A\to C({\cal K})$ is a liftable extension (or trivial extension)
if there is a monomorphism $\phi: A\to B(H)$ such that 
$\pi\circ \phi=\tau.$ Suppose that $\phi_1, \phi_2: A\to B(H)$ are      
two unital monomorphisms with $\phi_i(A)\cap {\cal K}=\{0\}$ ($i=1,2$). 
By Voiculescu's  theorem, there exists a sequence of unitaries $\{u_n\} \subset  B(H)$
such that
\beq\label{intr-1}
&&\lim_{n\to\infty}\|u_n^* \phi_1(a)u_n-\phi_2(a)\|=0
\andeqn\\\label{intr-2}
&& u_n^* \phi_1(a)u_n-\phi_2(a)\in \mathcal K
\eneq
for all $a\in A.$  Naturally, we also search for a generalization for 
this important feature of extensions.   

However, as we mentioned above, there may be many liftable 
extensions in general. Therefore one may not obtain the uniqueness part of the 
Voiculescu theorem.  Suppose that $\phi_1, \phi_2: A\to M(B)$ are 
two unital monomorphisims such that  $\pi_B\circ \phi_1=\pi_B\circ \phi_2,$
where $\pi_B: M(B)\to M(B)/B$ is the quotient map. 
One quickly realizes that one may not expect that they will be approximately unitarily
equivalent as in \eqref{intr-1}.  Much of the work of this research is to 
investigate this problem.    We restrict ourselves to those 
separable simple \CA\, $B$  which have strict comparison and stable rank one.
In order to have sufficiently many quasidiagonal extensions, we also assume that
$B$ has tracial rank zero. 

\begin{thmintro}\label{thm:3}
Let $A$ a separable amenable unital simple $\mathcal Z$-stable $C^\ast$-algebra satisfying the UCT (or a unital AH-algebra) and let $B$ be a non-unital, separable, simple $C^\ast$-algebra with tracial rank zero and continuous scale.
Suppose that $\phi_1, \phi_2 \colon A \to M(B)$ are two unital $\ast$-homomorphisms such that $\pi_B \circ \phi_1 , \pi_B \circ \phi_2 \colon A \to M(B)/B$ are both injective. 

Then there exists a sequence $\{u_n\}$ of unitaries in $M(B)$ such that
 \beq
 &&\lim_{n\to\infty} \|u_n^*\phi_1(a)u_n-\phi_2(a)\|=0\andeqn\\
 && u_n^*\phi_1(a)u_n -\phi_2(a)\in B
 \eneq
 for all $a\in A$,  if and only if $\tau\circ \phi_1=\tau\circ \phi_2$ for all $\tau\in T(B)$ ($= T(M(B))$).
\end{thmintro}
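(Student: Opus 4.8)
The plan is to prove Theorem \ref{thm:3} as a \emph{uniqueness theorem} in the spirit of the classification program, relative to the target $M(B)/B$ and with the ideal $B$ playing the role of the "compact operators". The necessity direction is trivial: if $u_n^\ast \phi_1(a) u_n - \phi_2(a) \in B$ and the norm goes to $0$, then applying any $\tau \in T(B) = T(M(B))$ (which kills $B$ in the sense that $\tau$ extends to a trace on $M(B)$ vanishing on $B$, by continuous scale — see Theorem \ref{thm:KComp} and the discussion around it) and using that traces on $M(B)$ are norm continuous, we get $\tau \circ \phi_1 = \tau \circ \phi_2$. So the whole content is sufficiency.

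First I would pass to the corona: set $\psi_i := \pi_B \circ \phi_i \colon A \to M(B)/B$, which are the Busby invariants of essential extensions. By hypothesis $\tau \circ \phi_1 = \tau \circ \phi_2$ for all $\tau \in T(B)$, and since $K_0(M(B)) \cong \Aff(QT(B))$ with $\rho_B(K_0(B))$ being the "infinitesimal" part killed in the corona (Theorem \ref{thm:KComp}), this trace condition forces $(\phi_1)_{\ast 0}$ and $(\phi_2)_{\ast 0}$ to agree after reduction mod $\rho_B(K_0(B))$, i.e. $(\psi_1)_{\ast 0} = (\psi_2)_{\ast 0} \colon K_0(A) \to \Aff(QT(B))/\rho_B(K_0(B))$; on $K_1$ both maps are trivial because $A$ is $\mathcal Z$-stable (or AH) with the right structure and $K_1(M(B)/B) \cong K_1(B)$ receives nothing forced, but in fact I expect the correct statement is that $KK(\psi_1) = KK(\psi_2)$ follows from the trace agreement together with the UCT, since $KK(A, M(B)/B)$ is (by Theorem \ref{thm:1} and the UCT) controlled by $\Hom(K_\ast(A), K_\ast(M(B)/B))$ plus an Ext term, and the pairing with traces pins down the relevant $K_0$-component while the $K_1$-component and Ext-term vanish for the algebras in question. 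Granting $KK(\psi_1) = KK(\psi_2)$, Theorem \ref{thm:1} (the isomorphism $\Ext(A,B) \cong KK(A, M(B)/B)$) gives a \emph{unitary} $U \in M(B)/B$ with $U^\ast \psi_1(a) U = \psi_2(a)$ for all $a$. Lift $U$ to an element $v \in M(B)$; since $M(B)/B$ has trivial $K_1$ in the relevant sense we can arrange $v$ to be a unitary in $M(B)$ (or a partial isometry that is a unitary modulo $B$), so after replacing $\phi_1$ by $v^\ast \phi_1(\cdot) v$ we reduce to the case $\pi_B \circ \phi_1 = \pi_B \circ \phi_2$, i.e. $\phi_1(a) - \phi_2(a) \in B$ for all $a \in A$.

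Now the heart of the matter: given two unital monomorphisms $\phi_1, \phi_2 \colon A \to M(B)$ that are \emph{equal modulo $B$} and satisfy $\tau \circ \phi_1 = \tau \circ \phi_2$ for all $\tau \in T(B)$, produce unitaries $u_n \in M(B)$ with $u_n^\ast \phi_1(a) u_n - \phi_2(a) \to 0$ in norm and staying in $B$. This is where the hypotheses "tracial rank zero" and "strict comparison, stable rank one" on $B$ are essential: they guarantee, via the machinery of \cite{LN} and the structure of $M(B)$ for $B$ simple with continuous scale, that $M(B)$ has enough projections and a well-behaved Cuntz semigroup so that a \emph{Weyl--von Neumann type absorption} argument applies. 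The strategy is the standard one: show that $\phi_1$ and $\phi_2$ each \emph{absorb} a common "diagonal" (quasidiagonal, trivial) extension $\sigma \colon A \to M(B)$ up to approximate unitary equivalence in $M(B)$ with the difference in $B$ — this is where "sufficiently many quasidiagonal extensions" (guaranteed by $\mathrm{TR}(B) = 0$) is used — and that the compatible trace data makes the two absorbed maps approximately unitarily equivalent. Concretely, one uses that $\phi_1 - \phi_2$ being $B$-valued together with matching traces lets one build, for each finite subset $\mathcal F \subset A$ and $\varepsilon > 0$, a unitary $u \in M(B)$ (close to $1$ in a suitable sense, in particular $u - 1 \in$ the multiplier algebra with $u \equiv 1 \bmod B$) implementing the approximate intertwining on $\mathcal F$; the existence of such $u$ is precisely a relative version of the uniqueness theorem for maps into $M(B)$, which one proves by the usual "stable uniqueness plus infinite repeat" argument, here carried out inside $M(B)$ rather than $M(B \otimes \mathcal K)$, using that $B$ has continuous scale so that $M(B)$ is already "large enough" (no need to stabilize).

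The main obstacle I anticipate is this last step — running the uniqueness/absorption argument \emph{inside $M(B)$ with the error constrained to lie in $B$}, rather than in $M(B)/B$ only. Ordinary Voiculescu-type theorems give approximate unitary equivalence modulo the ideal but do not automatically keep the difference \emph{in the ideal at every stage}; and ordinary classification uniqueness theorems for $\ast$-homomorphisms into $M(B)/B$ lose the $B$-valued information. Bridging these requires a genuinely two-sided statement: a "uniqueness theorem with control in $B$" for unital monomorphisms $A \to M(B)$ that agree mod $B$, whose proof will have to exploit (i) $\mathrm{TR}(B) = 0$ to approximate $\phi_i$ by maps through finite-dimensional-plus-$B$ pieces, (ii) strict comparison and stable rank one to match projections and implement unitaries within $M(B)$, and (iii) the trace condition $\tau \circ \phi_1 = \tau \circ \phi_2$ to kill the only obstruction (a $K_0$-valued / $\Aff(QT(B))$-valued discrepancy) that survives after the $K$-theory and $KK$-class have been matched. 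I expect this to be assembled from the results cited as Theorem \ref{Tlifable}, Theorem \ref{thm:KComp}, and the techniques of \cite{LN}, with the novelty being the bookkeeping that keeps everything inside the non-stable multiplier algebra $M(B)$.
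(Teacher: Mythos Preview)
Your necessity argument is fine, and your instinct that the KK-classes of the Busby maps agree is correct: since $K_1(M(B))=0$ and $K_0(M(B))=\Aff(QT(B))$, the trace condition forces $(\phi_1)_{*0}=(\phi_2)_{*0}$, hence $KK(\phi_1)=KK(\phi_2)$ in $KK(A,M(B))$ and so $KK(\psi_1)=KK(\psi_2)$. But two concrete gaps follow.

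\textbf{Step 1 does not go through as written.} The $\psi_i$ are \emph{unital} extensions, so Theorem~\ref{thm:1} (which classifies non-unital ones) does not apply directly; the unital classification (Theorem~\ref{thm:ClassifyUnitalExt}) only yields a unitary in $C(B)$, i.e.\ weak unitary equivalence. Your claim that ``$M(B)/B$ has trivial $K_1$ in the relevant sense'' is false: by Theorem~\ref{thm:KComp}, $K_1(C(B))=\ker\rho_B$, which is typically nonzero for $B$ of tracial rank zero, so the corona unitary need not lift to $M(B)$. The paper evades this by passing to $M_2(M(B))$ (see the proof of Lemma~\ref{L86}): the unital maps become non-unital in $C(M_2(B))$, where Theorem~\ref{TH2} applies, yielding $W\in M_2(M(B))$. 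This is repairable, but it means you never get the clean reduction ``$\phi_1\equiv\phi_2\pmod B$'' inside $M(B)$ itself.

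\textbf{Step 2 is where the content is, and it is not a known absorption theorem.} Your phrase ``the usual stable uniqueness plus infinite repeat argument\ldots inside $M(B)$'' names the target rather than the method. The paper devotes Sections~8--11 to building exactly this from scratch, and the mechanism is different from what you sketch. Rather than reducing to $\phi_1\equiv\phi_2\pmod B$ and then absorbing, the paper works as follows: (i) it manufactures a single reference map $\Phi$ with the prescribed trace data that admits an explicit \emph{quasidiagonal decomposition} $\Phi\approx\sum_{n}\phi_n$ along an approximate unit $\{e_n\}$ of $B$, with each $\phi_n:C\to (e_n-e_{n-1})B(e_n-e_{n-1})$ having controlled $KL$-behaviour on the finite stages and controlled trace ratios (Lemma~\ref{L84}, resting on the ordered-group lemmas of Section~8 and the AF-diagonalisation of Section~9); (ii) it relates \emph{any} $\Psi$ with matching $K_0$-data to a tail of that same decomposition modulo $B$, via the $M_2$ trick (Lemma~\ref{L86}); (iii) it then performs a block-by-block intertwining, producing unitaries $u_{1,n},u_n,v_{1,n}$ in the individual blocks using finite-stage uniqueness and existence results, and assembling them strictly into $U=U_1U_2U_3\in M(B)$ (proof of Theorem~\ref{T8unitaryequiv}). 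The finite-stage results needed are not off-the-shelf: they are axiomatised as conditions (2), (3)(b), (3)(c) of the class~$\mathcal A_0$ (Definition~\ref{DefA1}), and Section~11 is devoted to verifying that simple $\mathcal Z$-stable UCT algebras and AH-algebras satisfy them. In particular, the existence condition (3)(c) is precisely what lets one subtract a small $KL$-correction $\kappa_0$ from $[\phi_{N_1+n}]$ at each block while keeping the trace data nearly unchanged---this is the step that your ``trace condition kills the only obstruction'' gestures at, but it has to be done \emph{locally at every block}, not once globally.

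So your outline has the right shape at the level of slogans, but the proof is not ``apply known absorption plus bookkeeping''; it is the construction of a quasidiagonal model and a block-diagonal intertwining, carried out through the $\mathcal A_0$ formalism.
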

%

\vspace*{3ex}

This article is organized as follows.

Section 2: We  give a list of notation and some {{definitions}}
that will be used throughout the article.

Section 3: We  present some clarification of certain notions around essential extensions and some 
more {{definitions and preliminary}} 
results focused on extensions.

Section 4. 
 We present one of the main results: Let $A$ be separable  amenable \CA\, which satisfies the UCT and $B$ be a non-unital and $\sigma$-unital 
simple \CA\, with continuous scale.
Then $\Ext(A, B)=KK(A, C(B))$.  There are also other results,
including some in the unital case. 

Section 5:  This section provides  {{some $K$-theory computation for 
the multiplier algebras
$M(B)$ and  corona algebras $M(B)/B$ for certain non-unital simple \CA s.}}

Section 6: In this section, we introduce two classes of amenable \CA s ${\cal A}_1$ and ${\cal A}_2.$
We also discuss the liftable extensions (i.e., trivial extensions)
 in the case of stably finite \CA s. 

Section 7: In this section we briefly
discuss quasidiagonal, approximately trivial and
null extensions. We present cases where all null extensions 
are trivial, cases where there are always some null extensions that
 are not trivial and 
cases where all null extensions are not trivial.

Section 8: This section contains some technical lemmas about order
preserving homomorphisms between certain ordered groups.
This includes technical results which allow certain order preserving \hm s
on an ordered subgroup to be extended, as well as ``quasidiagonal-type"  
 decompositions
of certain order preserving \hm s between ordered groups which allow for certain
control over the ``size of the blocks".   

Section 9:  This section studies quasidiagonal extensions 
 and the technical results  are preparation for the proof of 
the main theorem in section  10. 

Section 10: This section 
introduces a class of separable amenable \CA s 
${\cal A}_0$ and presents one of the main theorems:  A Voiculescu--Weyl--von Neumann theorem
for separable \CA s in the class ${\cal A}_0.$ 

Section 11:  In this section, we show that large classes of \CA s 
belong to ${\cal A}_0$.  Among other things, 
all  unital separable simple amenable ${\cal Z}$-stable \CA s 
satisfying the UCT 
and all unital AH-algebras with faithful traces are in ${\cal A}_0.$ 


{\bf Acknowledgements}

J. Gabe has been supported by the Independent Research Fund Denmark through the Sapere Aude: DFF Starting Grant 1054-00094B, and by the Australian Research Council grant DP180100595. 
H. Lin is partially supported by a grant of NSF (DMS 1954600).   H. Lin and P. Ng acknowledge the support from 
the Research Center for Operator Algebras in East China Normal University
 which is in part supported  by NNSF of China  (12271165 and 12171165)  and Shanghai Science and Technology
 Commission (22dz2229014)
 and  Shanghai Key Laboratory of PMMP.





   
   \section{Preliminaries}

We provide some preliminary definitions and notation.  Other notation will be introduced in the 
text 
as the need arises.
 
Let $A$ and $B$ be \CA s. Throughout this article by a \hm\, 
 $h: A\to B$  we mean a $*$-\hm\, $h: A\to B.$
 {{Denote by ${\cal K}$ the \CA\, of compact operators on the separable infinite dimensional Hilbert space 
 $l^2.$}}
 
\begin{df}\label{D1}
Let $A$ be a C*-algebra.  
Denote by $A^{\bf 1}$ the closed unit ball of $A$ and 
$A_+$ the positive part of $A.$
If $x\in A,$ denote by $\Her(x)={\overline{xAx^*}},$
the hereditary \SCA\, of $A$ generated by $x.$

{{Suppose that $V \in M(A)$, the multiplier algebra of $A$. Denote by
${\rm Ad}V : A \rightarrow A$ the completely positive map given by
${\rm Ad}V (a)=V^*aV$, for all $a \in A$.}}  
{{Denote by ${\rm Ped}(A)$ the Pedersen ideal of $A.$ Denote  
${\rm Ped}(A)_+={\rm Ped}(A)\cap A_+.$}}

If $A$ is non-unital, $\tilde{A}$ denotes the {{minimal}} 
unitization of $A$
(\cite{WeggeOlsen} Prop. 2.1.3).  
For general $A$,
$A^+$ denotes the \emph{forced unitization} of $A$, i.e., 
$A^+ := \tilde{A}$ when $A$ is non-unital, and $A^+ := A \oplus \mathbb{C}$ when $A$ is unital.  
If $A$ and $C$ are C*-algebras with $C$ unital and $\phi : A \rightarrow C$
is a \hm,
the \emph{forced unitization} of $\phi$ is the homomorphism
$\phi^+ : A^+ \rightarrow C$ for which $\phi^+(1_{A^+}) = 1_C$ and $\phi^+|_A = \phi$.

\end{df}

\begin{df}\label{Dtrace}

{\rm Let $A$ be a \CA.  Denote by $T(A)$ the tracial state space of $A$ {{(which could be an empty set)}}
given the weak* topology.
Denote by $T_f(A)$ the set of all faithful
tracial states of $A$. $T_f(A)$  is a convex subset of $T(A).$ 
Note that when $A$ is 
a separable simple C*-algebra such that either $A$ is unital or $A$ has continuous scale
(see Definition \ref{df:ContinuousScale}), then $T(A) = T_f(A)$ is a (weak*) compact convex set. 
{{We extend $T(A)$ to $A\otimes {\cal K}$ as follows:
if $a\in {\rm Ped}(A\otimes {\cal K}),$ $\tau(a)=(\tau\otimes Tr)(a)$ for all $\tau\in T(A),$
where $Tr$ is the standard (non-normalized) trace on ${\rm Ped}({\cal K}).$}}

Let ${\tilde{T}}(A)$ denote the cone of densely defined,
positive, 
lower semi-continuous traces on $A$, equipped with the topology
of pointwise convergence on elements of the Pedersen ideal  ${\rm Ped}(A)$ of $A.$
Denote by $\td T_f(A)$ the subcone of $\td T(A)$ consisting of the faithful traces (i.e.,
strictly positive on ${\rm Ped}(A)_+ \setminus \{0\}$).
Set  $T_0(A)=\{r\cdot \tau: r\in [0,1], \,\, \tau\in T(A)\}.$
Let $\overline{T(A)}^w$ the weak*-closure of $T(A)$ in ${\tilde T}(A).$ 
Note that $\overline{T(A)}^w\subset T_0(A).$

{{
Let $B$ be another \CA\, with $T(B)\not=\emptyset$ and let $\phi: A\to B$ be a 
homomorphism.  
We will then use  $\phi_T:\td T(B)\to \td T(A)$ to denote the induced 
affine map.
If, in addition, $\phi$ brings an approximate unit of $A$ to an approximate
unit of $B$, the restriction of $\phi_T$ to $T(B)$
gives an affine continuous map
$T(B) \rightarrow T(A)$, which we still denote by $\phi_T$.}} 

Let $I$ be a  (closed two-sided) ideal of $A$ and $\tau\in T(I).$
It is well known that $\tau$ can be uniquely extended to a  tracial state of $A$
(by taking $\tau(a)=\lim_\af \tau(ae_\af)$ for all $a\in A,$ where $\{e_\af\}$ is a
{{quasi-central}} approximate identity for $I$; recall that since $\tau \geq 0$, $1 = \| \tau \| = \lim_{\alpha} \tau(e_{\alpha})$ {{ -- see Definition 2.5 of \cite{Lincrell}).}} In what follows, we will continue to use $\tau$ for the extension.
{{Also,}} when $A$ is not unital and $\tau\in T(A),$ we will use
$\tau$ for the extension to ${\widetilde{A}}$ as well as to $M(A),$
the multiplier algebra of $A.$}
\end{df}

\begin{df}\label{Dppp}
Let $S$ be a convex subset of a locally convex topological vector space. 
We denote
\beq\nonumber
&&\Aff(S) := \{ f : S \rightarrow \mathbb{R} : f \makebox{  is 
affine and continuous, and } f(0)=0 \},\\ 
&&\Aff_+(S):=\{f\in \Aff(S): f(t)>0\rforal t\in S\setminus \{0\}\}\cup \{0\},\\  
&&{\rm LAff}_+(S)  :=
\{f: S \to [0, \infty]:  \exists  f_n\in \Aff_+(S), \, 
f_n\nearrow f  \makebox{  pointwise  }\}\cup \{0\}.
\eneq
(If $0\not\in S,$ we ignore the condition that $f(0)=0.)$
Denote by $\Aff^b(S)$ the set of all 
bounded continuous functions in $\Aff(S).$  If $g\in \Aff^b(S),$ {{let}} $\|g\|=\sup\{|g(s)|: s\in S\}.$

{{In what follows, when we say $\Delta$ is a compact metrizable Choquet simplex and write 
$\Aff(\Delta),$ then 
$\Aff_+(\Delta)=\{f\in \Aff(\Delta): f(t)>0\rforal t\in \Delta\}\cup\{0\}.$ In other words, 
in this case, we assume there is no $0$ in $\Delta.$}}
\end{df}

\begin{df}\label{DefAq}
{{Let $A$ be a \CA\, with $T(A)\not=\emptyset$ and let $a\in {\rm{Ped}}(A\otimes {\cal K}).$
Define $\hat{a}(\tau)=\tau(a)$ for all $\tau\in T(A).$}}

{{Let $A_0$ be the set of those elements with the form 
$\sum_i x_ix_i^*-\sum_i x_i^*x_i.$
Let $A_{s.a.}^q$ be the quotient $A_{s.a}/A_0.$
It is a Banach space (see,  \cite{CP} and
Definition 2.2 of \cite{GLNI}). 
Denote by $A_{s.a.}^{q, {\bf 1}}$  the image of $A_{s.a.}^{\bf 1}$ in $A_{s.a.}^q.$}}
\end{df}

\begin{df}\label{Dfdt}
For all $0 < \delta < 1$,  define 
$f_{\delta} :[0, \infty) \rightarrow [0,1]$ to be  the unique continuous function such that
$$f_{\delta}(t) =
\begin{cases}
1 & t \in [\delta, \infty)\\  
0 & t \in [0, \frac{\delta}{2}]\\
\makebox{linear on  } & [\frac{\delta}{2}, \delta].
\end{cases}
$$
\end{df}

 \begin{df}\label{Drho}
 Let $A$ be a $\sigma$-unital \CA\, with $\tilde T(A)\not=\{0\}.$ 
 It follows from Definition 2. 6 of \cite{GLIIIrange} that there is a \hm\, $\rho_A: K_0(A)\to \Aff(\tilde T(A)).$ 
In the case that $A={\rm Ped}(A)$ (i.e., $A$ is compact, see Theorem 4.17 of \cite{eglnp}), 
every trace in  $\tilde T(A)$ is bounded on $A,$ and so 
$\rho_A([p]-[q])(\tau) =\tau(p)-\tau(q)$ for all $\tau\in \td T(A)$
and for projections $p, q\in M_n(\td A)$ (for some integer $n\ge 1$)
{{with}} $\pi_\C^A(p)=\pi_\C^A(q),$  {{when $A$ is not unital and}}  where $\pi_\C^A: \td A\to \C$ is the quotient map. 
{{Note that}} $p-q\in  M_n(A).$
Therefore, $\rho_A([p]-[q])$ is continuous on $\td T(A).$

Define $K_0(A)_+^\ro=\{x\in K_0(A): \rho_A(x)>0\}\cup \{0\}.$ It is a subsemigroup
of $K_0(A).$ 
 \end{df}
  
\begin{df}\label{Dproprho}
Let $A$ be a $\sigma$-unital stably finite \CA.
We say $K_0(A)$ has property 
({\Large{$\varrho$}})
 if $x\in K_0(A)_+$ and $y\in {\rm ker}\rho_A$ implies that 
 $x+y\in K_0(A)_+.$  If $K_0(A)_+^\ro=K_0(A)_+,$ then $K_0(A)$ has property ({\Large{$\varrho$}}).
\end{df}

 \begin{rem}\label{Rrho-1}
 
(1)  Let $A$ be a unital stably finite \CA\, with $\td T_f(A)\not=\emptyset.$  Denote 
${\rm ker}\rho_{A,f} := \{x\in K_0(A): \rho_A(x)(t)=0\rforal t\in \td T_f(A)\}.$ 
Then ${\rm ker}\rho_{A,f}={\rm ker}\rho_A.$ 
It is clear that ${\rm ker}\rho_{A}\subseteq {\rm ker}\rho_{A, f}$.   To see the other inclusion, 
let $x\in {\rm ker}\rho_{A,f}.$  For each $\tau\in \td T(A),$ choose $\tau_f\in \td T_f(A).$ 
Then $t = \tau+ \tau_f \in \td T_f(A)$.
Therefore $\rho_A(x)(t)=\rho_A(x) (\tau_f) = 0$. It follows that $\rho_A(x)(\tau)=0.$

(2)  Let $A$ be a $\sigma$-unital stably finite \CA\, such that $K_0(A)=G\oplus {\rm ker}\rho_A,$
where $G$ is a subgroup.  Suppose that $K_0(A)$ has property ({\Large{$\varrho$}}) and 
 %
 that $K_0(A)_+\cap G$ is generated by a finite  subset $K_p$  as a semigroup. 
  Let $(g, \zeta)\in G\oplus {\rm ker}\rho_A,$ where $g\in G$ and $\zeta\in {\rm ker}\rho_A.$
 Suppose that $(g, \zeta)\in K_0(A)_+.$ Then $(g,0)=(g, \zeta)-(0, \zeta)\in K_0(A)_+.$ 
 It follows that, for any $x\in K_0(A)_+,$ $x=(g, \zeta)$ for some $g\in K_0(A)_+\cap G$ and $\zeta\in {\rm ker}\rho_A.$
 Hence, there are $r_1, r_2,...,r_n\in \Z_+$ and $g_1, g_2,...,g_n\in K_p$ and $\zeta\in {\rm ker}\rho_A$ such that 
 $x=\sum_{i=1}^n r_i g_i+\zeta.$ Conversely, all $x$ with such a decomposition (and with
at least one $r_i g_i$ nonzero) is an element of $K_0(A)_+.$

 \end{rem}

\begin{df}\label{Dcuntz}
Denote by $\{e_{i,j}\}$ a system of matrix unit for ${\cal K}.$ 
Let $A$ be a \CA. 
Consider $A\otimes {\cal K}.$  In  what follows, we identify $A$ with $A\otimes e_{1,1}$ 
as a hereditary \SCA\, of $A\otimes {\cal K},$ whenever it is convenient. 
Let 
$a,\, b\in (A\otimes {\cal K})_+.$ 
We 
write $a \lesssim b$ if there is 
$x_i\in A\otimes {\cal K}$  for all $i\in \N$ 
such that
$\lim_{i\rightarrow\infty}\|a-x_i^*bx_i\|=0$.
We write $a \sim b$ if $a \lesssim b$ and $b \lesssim a$ both  hold.
The Cuntz relation $\sim$ is an equivalence relation.
Set ${\rm Cu}(A)=(A\otimes {\cal K})_+/\sim.$
Let $[a]$ denote the equivalence class of $a$. 
We write $[a]\leq [b] $ if $a \lesssim  b$.
\end{df}


\begin{df}\label{Dqtr}
Let $A$ be a $\sigma$-unital \CA. 
A densely  defined 2-quasitrace  is a 2-quasitrace defined on ${\rm Ped}(A)$ (see  Definition II.1.1 of \cite{BH}). 
Denote by ${\widetilde{QT}}(A)$ the set of densely defined 2-quasitraces 
on 
$A\otimes {\cal K}.$  
%
 Recall that we identify 
$A$ with $A\otimes e_{1,1}.$
Let $\tau\in {\widetilde{QT}}(A).$  Note that $\tau(a)\not=\infty$ for any 
$a\in {\rm Ped}(A)_+\setminus \{0\}.$
%

We endow ${\widetilde{QT}}(A)$ 
{{with}} the topology  in which a net 
${{\{}}\tau_i{{\}}}$ 
 converges to $\tau$ if 
${{\{}}\tau_i(a){{\}}}$ 
 converges to $\tau(a)$ for all $a\in 
 {\rm Ped}(A)$ 
 (see also (4.1) on page {{985 of \cite{ERS}).}}
  Denote by $QT(A)$ the set of normalized 2-quasitraces of $A$ ($\|\tau|_A\|=1$).
  
Note that, for each $a\in ({{A}}
\otimes {\cal K})_+$ and $\ep>0,$ $f_\ep(a)\in {\rm Ped}(A\otimes {\cal K})_+,$  where $f_\epsilon$ is as in Definition \ref{Dfdt}. 
Define 
\beq
\widehat{[a]}(\tau):=d_\tau(a)=\lim_{\ep\to 0}\tau(f_\ep(a))\rforal \tau\in {\widetilde{QT}}(A).
\eneq

Let $A$ be a simple \CA\,with $\wtd{QT}(A)\setminus \{0\}\not=\emptyset.$
Then
$A$
is said to have (Blackadar's) strict comparison, if, for any $a, b\in (A\otimes {\cal K})_+,$ 
the condition 
\beq
d_\tau(a)<d_\tau(b)\rforal \tau\in {\widetilde{QT}}(A)\setminus \{0\}
\eneq
implies that 
$a\lesssim b.$ 
%
%
%
%
If $A$ is exact, then every 2-quasitrace is a trace (\cite{Ha}).
\end{df}

\begin{df}\label{Dunitary}
Let $A$ be a unital \CA. Denote by $U(A)$ the unitary group of $A.$ and 
by $U_0(A)$ the path connected component of $U(A)$ containing $1_A.$
\end{df}

\section{Extension groups}

\begin{df}\label{Dmul}
For each \CA\, $B$, $M(B)$ denotes the multiplier algebra of
$B$, and  $C(B) := M(B)/B$ denotes
the corona algebra of $B$.   
$\pi_B : M(B) \rightarrow C(B)$ denotes the 
usual quotient map.  Sometimes we 
write $\pi$ instead of $\pi_B$. 

For each  \CA\,
extension 
\begin{equation} 0 \rightarrow B \rightarrow D \rightarrow  C \rightarrow 0  \label{Aug320215PM} \end{equation}
(of $C$ by $B$)\footnote{In the literature, the terminology is
sometimes reversed and this is sometimes called an ``extension of
$B$ by $C$".},
we will work with the corresponding \emph{Busby invariant}
which is the induced  homomorphism
$\phi : C \rightarrow C(B)$.    Recall that (\ref{Aug320215PM}) is essential if and only if $\phi$ is injective.  We will
mainly  work
with essential extensions.

An essential extension is \emph{unital} exactly when the corresponding Busby invariant is a unital
map, i.e., if $\phi : C \rightarrow C(B)$ is the Busby invariant, then
$C$ is unital and $\phi(1_C) = 1_{C(B)}$.  
Hence, an essential extension with Busby invariant $\phi : C\rightarrow C(B)$ is 
\emph{non-unital} if $\phi$ is non-unital, i.e., either $C$ is not unital or
$C$ is unital but $\phi(1_C) \neq 1_{C(B)}$.                          
\end{df}

We next recall   the definitions of the basic equivalence relations of extensions (in terms of their Busby invariants), as well as the objects that
we will
{{compute.}}

\begin{df}\label{d:Ext}
Let $A, B$ be C*-algebras with $B$ non-unital, and let
$\phi, \psi : A \rightarrow C(B)$ be extensions.
\begin{enumerate}
\item $\phi$ and $\psi$ are \emph{unitarily equivalent} ($\phi \sim \psi$) if there exists a unitary $u \in M(B)$ such 
that $$\pi(u) \phi(.) \pi(u)^* = \psi(.).$$
\item $\phi$ and $\psi$ are \emph{weakly unitarily equivalent} ($\phi \sim_{wu} \psi$) if there exists a unitary
$v \in C(B)$ such that
$$v \phi(.) v^* = \psi(.).$$
\item $\phi$ and $\psi$ are \emph{weakly equivalent} ($\phi \sim_w \psi$) if there exists a partial isometry
$w \in C(B)$ for
which $w^* w \phi(a) =  \phi(a)$ for all $a \in A$, and
$$w \phi(.) w^* = \psi(.).$$
\item $\phi$ and $\psi$ are \emph{Murray--von Neumann equivalent} ($\phi \sim_{MvN} \psi$) if there exists a contraction $c \in C(B)$ such that
$$c\phi(.)c^\ast = \psi(.) \quad \textrm{and} \quad c^\ast \psi(.)c = \phi(.).$$
\item $\Ext(A, B)$ is the set of unitary equivalence classes of non-unital essential extensions
$A \rightarrow C(B)$.
\item $\Ext^w(A, B)$ is the set of weak equivalence classes of essential extensions 
$A \rightarrow C(B)$.
\item $\Ext^w(A, B)_1$ is the set of weak (unitary) equivalence classes of essential unital 
extensions $A  \rightarrow C(B)$. 
\end{enumerate}
\end{df}

\begin{rem}\label{r33}
In the literature, ``unitary equivalence" is sometimes called ``strong unitary equivalence". 
\end{rem}

Many of the  results stated
 in this paper can be generalized to full extensions into properly infinite corona algebras.
However, we will focus on the case of simple purely infinite corona algebras, the most important and elegant
case of the theory.  We will investigate full extensions and more in a future paper.

The definition of a continuous scale C*-algebra was first introduced in \cite{LinContScaleI}.

\begin{df}\label{d:contscale}
Let $B$ be a $\sigma$-unital simple C*-algebra.  $B$ has \emph{continuous scale} if
$B$ has a sequential approximate unit $\{ e_n \}$ such that 
\begin{enumerate}
\item $e_{n+1} e_n = e_n$ for all $n$, and 
\item for all $a \in B_+ \setminus \{ 0 \}$, there exists $N \geq 1$ for which
for all $m > n \geq N$, $$e_m - e_n 
\lesssim a.$$
\end{enumerate}

{{In the above, $\lesssim$ is the subequivalence relation between positive elements,
where for all $c, d \in B_+$,
$c \lesssim d$ means that there exists a sequence $\{ x_k \}$ in $B$
such that
$\lim_{k\to\infty}\|x_k d x_k^*-c\|=0.$}}
\label{df:ContinuousScale}
\end{df}

The main thing about continuity of the scale that concerns us here is that it characterizes simple pure infiniteness 
of corona algebras

\begin{thm}\label{Tcontscal}
Let $B$ be a  non-unital but $\sigma$-unital 
simple C*-algebra.  Then the following
statements are equivalent:
\begin{enumerate}
\item  $B$ is either elementary or has continuous scale.
\item $C(B)$ is simple.
\item $C(B)$ is simple purely infinite.
\end{enumerate}
Suppose, in addition, that $B$ is stable.
Then the above statements are all equivalent to the following:
\begin{enumerate}
\item[(4)] Either $B$ is elementary or $B$ is simple purely infinite.
\end{enumerate}  
\end{thm}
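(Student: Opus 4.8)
# Proof Proposal for Theorem~\ref{Tcontscal}

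The plan is to establish the equivalences by a cyclic chain, treating the elementary case $B = \mathcal K$ separately (where $C(B)$ is the Calkin algebra, well known to be simple and purely infinite), and otherwise assuming $B$ is non-elementary throughout. The implications $(3) \Rightarrow (2)$ is trivial, so the real content is the cycle $(1) \Rightarrow (3) \Rightarrow (2) \Rightarrow (1)$, plus the supplementary equivalence with $(4)$ under the stability hypothesis.

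For $(1) \Rightarrow (3)$: assume $B$ has continuous scale with approximate unit $\{e_n\}$ as in Definition~\ref{d:contscale}. First I would show $C(B)$ is simple. Given a nonzero $x + B \in C(B)$ with $x \in M(B)_+$ (one can reduce to positive elements), the fact that $x \notin B$ means $f_\delta(x) \notin B$ for some small $\delta$; then I want to produce, using the defining property that $e_m - e_n \lesssim a$ for all small $a \in B_+$ eventually, a two-sided ideal argument showing the ideal generated by $x+B$ is everything. The cleanest route is to directly verify pure infiniteness: for a nonzero positive $x + B$, I would find a positive element $b \in B_+$ and an index $N$ with $e_m - e_n \lesssim b$ for $m > n \ge N$, and then use that $(1_{M(B)} - e_N)$ is equivalent in $C(B)$ to $1_{C(B)}$ together with a comparison/absorption argument (in the spirit of the Cuntz-semigroup computations for coronas of continuous-scale algebras, e.g.\ Lin's work on $M(B)/B$) to conclude that $1_{C(B)}$ is Cuntz-below $x+B$, hence $x+B$ is properly infinite and in fact $C(B)$ is simple purely infinite. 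The key mechanism is that continuity of the scale forces ``$1_{M(B)}$ modulo $B$'' to be subordinate to any nonzero positive corona element.

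For $(2) \Rightarrow (1)$: this is the converse direction and I expect it to be the main obstacle. Assume $C(B)$ is simple; I want to deduce $B$ has continuous scale. The contrapositive is more natural: if $B$ is non-elementary and does \emph{not} have continuous scale, I would exhibit a nontrivial closed two-sided ideal of $C(B)$. The standard construction (going back to \cite{LinContScaleI}) is that failure of continuous scale produces a strictly positive element $h \in B$ whose ``scale function'' is not continuous at infinity, which in turn lets one build a projection or positive element in $M(B)$ that is neither in $B$ nor full modulo $B$ --- concretely, one can arrange an approximate unit $\{e_n\}$ and a nonzero $a \in B_+$ such that $e_m - e_n \not\lesssim a$ for infinitely many pairs, and the corona class of a suitable ``tail'' multiplier generates a proper ideal. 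Making this rigorous requires care with the structure of $M(B)$ for $\sigma$-unital simple $B$, and this is where I would lean on the detailed ideal-structure results for corona algebras.

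For the stable case, $(1) \Leftrightarrow (4)$: if $B$ is stable and non-elementary, then $B$ has continuous scale if and only if $B$ is purely infinite simple. Indeed, a stable continuous-scale algebra would be unital-like in a way incompatible with stability unless it is purely infinite --- more precisely, a stable simple $B$ has continuous scale iff $B \cong B \otimes \mathcal K$ has ``bounded scale'' forcing $B$ to be purely infinite (since for a stably finite stable algebra the scale is genuinely unbounded); conversely a purely infinite simple stable $B$ has $M(B)/B$ simple purely infinite by Zhang's and Kirchberg's results, so $(3)$ holds and then $(3) \Rightarrow (1)$ via the already-established cycle. I would assemble this from the purely infinite simple $C^*$-algebra theory (Cuntz, Zhang, Kirchberg--R\o{}rdam) rather than reprove it.
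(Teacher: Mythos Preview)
The paper does not actually prove this theorem: its entire proof consists of citations to \cite{LinContScaleI}, \cite{LinSimpleCorona}, \cite{RorMult}, and \cite{ZhangRiesz}. Your outline is broadly consistent with the arguments in those references---the cycle $(1)\Rightarrow(3)\Rightarrow(2)\Rightarrow(1)$ is the right structure, and the contrapositive strategy for $(2)\Rightarrow(1)$ (producing a proper corona ideal from failure of continuous scale) is indeed how \cite{LinContScaleI} proceeds.

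That said, your sketch is vague at exactly the places where the real work lies. For $(1)\Rightarrow(3)$, the continuous-scale hypothesis gives Cuntz subequivalences \emph{inside $B$}, and transferring this to the statement that every nonzero positive corona element dominates $1_{C(B)}$ requires a nontrivial lifting/comparison argument in $M(B)$; your phrase ``continuity of the scale forces $1_{M(B)}$ modulo $B$ to be subordinate to any nonzero positive corona element'' names the conclusion but not the mechanism. For $(2)\Rightarrow(1)$, the construction of a proper ideal from failure of continuous scale is the main theorem of \cite{LinContScaleI} and is not short---one builds a multiplier whose corona image generates a proper ideal using the specific obstruction element $a$, and you correctly flag this as the place you would need to invoke existing ideal-structure results rather than reprove them. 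For the stable addendum, your argument is slightly roundabout: the cleanest route is that if $B$ is simple, stable, and purely infinite then $a\lesssim b$ for all nonzero $a,b\in B_+$, so continuous scale is automatic; conversely, if $B$ is stable and not purely infinite it carries a nonzero densely defined lower semicontinuous trace $\tau$, and then $\tau(e_m-e_n)\to\infty$ as $m\to\infty$ while $d_\tau(a)<\infty$ for $a\in\mathrm{Ped}(B)_+$, obstructing continuous scale directly.
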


\begin{proof}
See \cite{LinContScaleI} and \cite{LinSimpleCorona}.
See also \cite{RorMult} and \cite{ZhangRiesz}. 
\end{proof}

As indicated earlier, perhaps one reason for the success of  BDF theory is that their multiplier and corona algebras have particularly nice 
structure.  
For example,  the BDF--Voiculescu theorem, which roughly says that all essential extensions are absorbing, would not be true if the Calkin algebra $B(H)/\mathcal K$ were not simple.  
Thus, the continuous scale case is the basic test case for understanding extension theory, and as discussed in \cite{LinExtRR0III}, it is also the case that one must understand first in order to proceed to the general case.
It is also the most elegant case with the nicest theory, and thus is 
the focus of this paper. 

\vspace{0.1in}

{\bf Algebraic structure}
\vspace{0.1in}

To investigate the group structure on $\Ext(A, B)$ and other 
objects, we will need  a preliminary investigation into the connections 
between $\sim$, $\sim_w$, $\sim_{wu}$ and $\sim_{MvN}$.  
Note that it is immediate that
\[
\phi \sim \psi \quad \Rightarrow \quad \phi \sim_{wu} \psi \quad \Rightarrow \quad \phi \sim_w \psi \quad \Rightarrow \quad \phi \sim_{MvN} \psi. 
\]
It turns out that all four 
relations are the same in the non-unital case.

{{For a {{\CA}}\,
$C$ and a subset $E \subseteq C$,
we denote 
$$E^{\perp} := \{ x \in C : x y = yx = 0 \makebox{  for all } y \in E \}.$$}}


The following dilation lemma, as well as similar results, are surely known to experts.

\begin{lem}\label{lem:dilation}
Let $D$ be a unital properly infinite $C^\ast$-algebra. Suppose that $p,q\in D$ are properly infinite full projections and that $x\in D$ is a contraction such that $x = (1-p)x(1-q)$. Then there exists a unitary $u\in D$ with $[u]_1 = 0 \in K_1(D)$  such that 
$$ au = ax \qquad \textrm{and} \qquad ub = xb $$
for all $a\in \{1-xx^\ast\}^\perp$ and $b\in \{1-x^\ast x\}^\perp$. 
\end{lem}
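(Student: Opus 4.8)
The plan is to build $u$ as a unitary that ``implements'' the partial isometry data of $x$ on the relevant corners, using the standard trick of completing a contraction to a unitary via a $2\times 2$ defect-operator matrix and then absorbing the extra summand using proper infiniteness. First I would record the defect operators $d_1 := (1-xx^\ast)^{1/2}$ and $d_2 := (1-x^\ast x)^{1/2}$, and recall the polar-type identities $x d_2 = d_1 x$ and $x^\ast d_1 = d_2 x^\ast$, so that the operator matrix $w := \begin{pmatrix} x & d_1 \\ d_2 & -x^\ast \end{pmatrix}$ is a unitary in $M_2(D)$. The hypothesis $x=(1-p)x(1-q)$ tells us that $d_1 \geq p$ in the sense that $p$ is a subprojection of the support projection of $1-xx^\ast$ (indeed $xx^\ast \leq 1-p$, since $xx^\ast = (1-p)xx^\ast(1-p) \le (1-p)$), and similarly $q \le \mathrm{supp}(1-x^\ast x)$. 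The point of this is that the ``defect corners'' on which $d_1$ and $d_2$ are invertible-ish are large: they dominate the full properly infinite projections $p$ and $q$.

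Next I would move from $M_2(D)$ back into $D$. Since $p$ and $q$ are properly infinite and full in the unital properly infinite algebra $D$, there are isometries identifying $D \oplus D$ (or rather the relevant hereditary subalgebras) back inside $D$; more precisely I would choose isometries $s_1, s_2 \in D$ with $s_1 s_1^\ast \le p$, $s_1 s_1^\ast$ Murray--von Neumann equivalent to $1$ inside $pDp$ — using that $p$ is properly infinite and full, hence $1 \precsim p$ — and likewise a copy of $1$ sitting under $q$. Compressing/conjugating the unitary $w$ through these isometries produces a unitary $u_0 \in D$ which agrees with $x$ ``in the $(1-p)$-$(1-q)$ corner'' and is the identity-like piece elsewhere, arranged so that $a u_0 = a x$ for $a$ orthogonal to $1-xx^\ast$ (such $a$ kills $d_1$ and the defect summand) and $u_0 b = x b$ for $b$ orthogonal to $1-x^\ast x$ (such $b$ kills $d_2$). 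The two displayed equations then fall out by checking on the generators of $\{1-xx^\ast\}^\perp$ and $\{1-x^\ast x\}^\perp$: if $a(1-xx^\ast)=0$ then $a = axx^\ast$, and $axx^\ast u_0 = axx^\ast x = ax$ after one uses $x^\ast u_0 = x^\ast x$ on the range of $x^\ast$, which is exactly where the block structure of $w$ was designed to give the right answer.

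Finally I would deal with the $K_1$-normalization. The unitary $u_0$ produced above lies in the connected component determined by the class $[w]_1 \in K_1(M_2(D)) \cong K_1(D)$ of the $2\times 2$ defect unitary, which need not vanish a priori; but I can correct it by multiplying by a unitary $v$ supported on $p^\perp$-free room — concretely, since $D$ is properly infinite there is a unitary $v \in D$ with $[v]_1 = -[u_0]_1$ that acts trivially on both $\{1-xx^\ast\}^\perp$ and $\{1-x^\ast x\}^\perp$ (choose $v$ inside a full properly infinite corner disjoint from the relevant support projections, possible because $1-xx^\ast$ and $1-x^\ast x$ dominate the full properly infinite projections $p,q$, leaving complementary full properly infinite room). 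Then $u := u_0 v$ still satisfies $au = ax$ and $ub = xb$ and has $[u]_1 = 0$.

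\textbf{Main obstacle.} The delicate point is the bookkeeping in the second paragraph: making the identifications $D\oplus D \hookrightarrow D$ via isometries under $p$ and $q$ while \emph{simultaneously} (i) keeping $u$ honestly unitary, (ii) making it agree with $x$ exactly — not just approximately — on the two orthogonal complements, and (iii) leaving enough full properly infinite room to fix the $K_1$ class. All three are compatible only because the hypothesis $x = (1-p)x(1-q)$ forces $p \precsim \mathrm{supp}(1-xx^\ast)$ and $q \precsim \mathrm{supp}(1-x^\ast x)$, so one has to be careful to use that relation quantitatively rather than just qualitatively.
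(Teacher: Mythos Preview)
Your overall strategy --- form the Halmos defect unitary, absorb the extra summand using proper infiniteness, then correct $K_1$ --- is exactly the paper's approach, and your $K_1$-correction step is essentially right (a correcting unitary living in a full properly infinite corner under $q$ does the job, since $\{1-xx^\ast\}^\perp\subseteq (1-p)D(1-p)$ and $\{1-x^\ast x\}^\perp\subseteq (1-q)D(1-q)$). The problem is the middle paragraph: ``compressing/conjugating the unitary $w$ through these isometries'' is not a well-defined operation as stated, and the naive reading does not yield what you claim.

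Concretely: if you embed $M_2(D)$ into $D$ via Cuntz isometries $s_1,s_2$ with $s_1s_1^\ast+s_2s_2^\ast=1$, then $\Phi(w)$ acts like $s_1 x s_1^\ast$ on the first corner, not like $x$, so $a\Phi(w)\neq ax$. If instead you try to use the decompositions $1=(1-p)+p$ and $1=(1-q)+q$ with isometries $s,t$ satisfying $ss^\ast=p$, $tt^\ast=q$ and build $u_0=x+d_1't^\ast+sd_2'-sx^\ast t^\ast$ (where $d_1'=((1-p)-xx^\ast)^{1/2}$, etc.), a direct computation gives $pu_0u_0^\ast p=s(1-q)s^\ast\neq p$, so $u_0$ is not unitary. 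The corners simply do not have compatible sizes. Your verification ``$x^\ast u_0=x^\ast x$ on the range of $x^\ast$'' presupposes a $u_0$ that has not been constructed.

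The paper fixes this by first bringing the two sides to a \emph{common} corner: choose a full properly infinite $p_1\geq 1-p$ together with an orthogonal equivalent copy $p_2\sim p_1$ and leftover $p_3$ (with $p_1+p_2+p_3=1$), do the same for $q_1\geq 1-q$, and pick a unitary $w$ with $wp_1=q_1w$. Then $y:=xw$ lies in $p_1Dp_1$ with $yy^\ast=xx^\ast$, so the Halmos dilation can be formed in $M_2(p_1Dp_1)$. The isomorphism $M_2(p_1Dp_1)\cong(p_1+p_2)D(p_1+p_2)$ coming from $p_1\sim p_2$ is the identity on the $(1,1)$-corner, which is precisely what makes $a\cdot(\text{image of }U)=ay$ hold for $a\in\{1-xx^\ast\}^\perp\subseteq p_1Dp_1$. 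Add $p_3$ to get a unitary in $D$, then right-multiply by $w^\ast$; this is where the $b$-equation $ub=xb$ comes from. You correctly identified this bookkeeping as the obstacle --- it is not a detail that can be waved through, it is the proof.
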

\begin{proof}
First, by a standard application of a theorem of Cuntz \cite{CuntzAnnals} we may find properly infinite full projections $p_2,p_3\in pDp$ such that $p_1 = 1-(p_2+p_3)$ is full and properly infinite, $[p_2]_0 = 0$ in $K_0(D)$ and $[p_3]_0 = [1_D]_0$. In particular, $p_1 \sim p_2$ and $p_1 \geq 1-p$. 

Similarly, we find a properly infinite full projection $q_1 \geq 1-q$ such that $q_1 \sim p_1$ and $1-q_1 \sim 1-p_1$. Hence there exists a unitary $w\in D$ such that $wp_1  = q_1 w$. 

Let $y = x w$. As $x = p_1 x q_1$, it follows that $y=p_1 x q_1w = p_1 x w p_1$ is a contraction in $p_1 Dp_1$, and therefore
\[
U = \left(\begin{array}{cc} y & (p_1-yy^\ast)^{1/2} \\ (p_1-y^\ast y)^{1/2} & -y^\ast \end{array} \right) \in M_2(p_1 D p_1)
\]
 is a unitary. Since $p_1 \sim p_2$ we let $s\in D$ such that $ss^\ast = p_1$ and $s^\ast s = p_2$. As $p_1 \perp p_2$, there is an isomorphism $\Phi \colon M_2(p_1 Dp_1) \xrightarrow \cong (p_1+p_2)D(p_1+p_2)$ given by
 \[
 \Phi\left(\begin{array}{cc} d_{11} & d_{12} \\ d_{21} & d_{22} \end{array} \right) = d_{11} + d_{12} s + s^\ast d_{21} + s^\ast d_{22} s
 \]
 for $d_{ij} \in p_1Dp_1$. 
 
  Let $v = \Phi(U) + p_3$ which is a unitary in $D$. Since $\{ 1-xx^\ast\}^\perp$ consists of all elements for which $xx^\ast$ acts as a unit, and since $p_1 \geq 1-p \geq xx^\ast$, we have $\{1-xx^\ast\}^\perp \subseteq p_1 D p_1$. Hence $a (p_1 -yy^\ast)^{1/2} = a (p_1 - xx^\ast)^{1/2} = 0$ for all $a\in \{ 1-xx^\ast\}^\perp$. In particular, for any $a\in \{1-xx^\ast\}^\perp$ we have
  $$ a v = a y + a(p_1 - yy^\ast)^{1/2} s = ay.$$
  It follows that $vw^\ast \in D$ is a unitary such that $avw^\ast = ayw^\ast = ax$ for all $a\in \{1-xx^\ast\}^\perp$. 
  
  As above, $\{ 1- x^\ast x\}^\perp \subseteq q_1 D q_1$ and $(q_1 - x^\ast x) b = 0 $ for $b\in \{1-x^\ast x \}^\perp$. Hence, for such $b$ we have
  \[
   (p_1-y^\ast y)^{1/2} w^\ast b = w^\ast (q_1 - x^\ast x)^{1/2} w w^\ast b = 0.
  \]
  Thus for $b\in \{1-x^\ast x\}^\perp$, we have
  \[
  vw^\ast b = v w^\ast q_1 b = vp_1 w^\ast b = (y + s^\ast (p_1 - y^\ast y)^{1/2})w^\ast b = xww^\ast b = xb.
  \]
So it remains to correct the $K$-theory class of $vw^\ast$. For this, we use that $1-q_1$ is a full properly infinite projection together with a theorem of 
\cite{CuntzAnnals} to find a unitary $u' \in (1-q_1) D (1-q_1)$ such that $[q_1+u']_1 = - [vw^\ast]_1$ in $K_1(D)$. Then $u = vw^\ast (q_1+u')$ is a unitary in $D$ with $[u]_1 = 0$ and for $a\in \{1-xx^\ast\}^\perp$ we have
\[
au = avw^\ast (q_1 + u') = ax(q_1 + u') = ax (1-q) (q_1 + u') = ax(1-q) = ax
\]
and for $b\in \{1-x^\ast x\}^\perp$ we get
\[
ub = vw^\ast (q_1 + u') b = vw^\ast b = xb
\]
as desired.
\end{proof}

Let $\phi : A \rightarrow C$ be a homomorphism between C*-algebras with
$C$ unital and properly infinite.  We say that $\phi$ has \emph{large complement} if there exists
a projection $p \in \phi(A)^{\perp}$ for which $p \sim 1_{C}$.  

Recall that a unital $C^\ast$-algebra $D$ is \emph{$K_1$-injective} if whenever $u\in D$ is a unitary with trivial $K_1$-class, then {{$u\in U_0(D).$}}
By a theorem of \cite{CuntzAnnals}, all simple unital purely infinite $C^\ast$-algebras are $K_1$-injective. It is an open problem whether every unital properly infinite $C^\ast$-algebra is $K_1$-injective.

\begin{prop}\label{p:largecompMvN}
Let $A$ and $B$ be $C^\ast$-algebras and suppose that $C(B)$ is properly infinite and $K_1$-injective. If two extensions $\tau_1, \tau_2 \colon A \to C(B)$ have large 
complement and are Murray--von Neumann equivalent, then they are unitarily equivalent.
\end{prop}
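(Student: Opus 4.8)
The plan is to upgrade a Murray--von Neumann equivalence between the two extensions to a genuine unitary equivalence in $M(B)$, using the dilation Lemma \ref{lem:dilation} together with the large complement hypothesis and $K_1$-injectivity of $C(B)$. First I would unpack the Murray--von Neumann equivalence: there is a contraction $c \in C(B)$ with $c\tau_1(a)c^\ast = \tau_2(a)$ and $c^\ast \tau_2(a) c = \tau_1(a)$ for all $a \in A$. A standard computation shows that $c^\ast c$ acts as a unit on $\tau_1(A)$ and $cc^\ast$ acts as a unit on $\tau_2(A)$; equivalently, $1-c^\ast c \in \tau_1(A)^\perp$ and $1 - cc^\ast \in \tau_2(A)^\perp$, and moreover $c = (cc^\ast)^{1/2}\,c\,(c^\ast c)^{1/2}$ so that $c$ ``lives between'' the relevant hereditary subalgebras.

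Next I would use the large complement hypothesis to find projections $p \in \tau_1(A)^\perp$ and $q \in \tau_2(A)^\perp$ with $p \sim 1_{C(B)} \sim q$; after a further application of Cuntz's halving/absorption techniques (as in the proof of Lemma \ref{lem:dilation}) I can arrange $p$ (resp. $q$) to be a full, properly infinite projection, to absorb $1 - c^\ast c$ into $q$ and $1 - cc^\ast$ into $p$, and to have $c = (1-p)c(1-q)$. Then Lemma \ref{lem:dilation}, applied in the unital properly infinite algebra $D = C(B)$ with $x = c$, produces a unitary $u \in C(B)$ with $[u]_1 = 0$ in $K_1(C(B))$ such that $\tau_2(a)\,u = \tau_2(a)\,c$ and $u\,\tau_1(a) = c\,\tau_1(a)$ for all $a \in A$ (here I use that $\tau_2(a) \in \{1 - cc^\ast\}^\perp$ and $\tau_1(a) \in \{1-c^\ast c\}^\perp$, which follows from the unit-action observations above). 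From these two identities one gets
\[
u \tau_1(a) u^\ast = c \tau_1(a) u^\ast = c \tau_1(a) (u \tau_1(a)^\ast \cdots)
\]
— more carefully, $u\tau_1(a)u^\ast = c\tau_1(a)u^\ast$ and then $\tau_1(a) u^\ast = (u\tau_1(a^\ast))^\ast = (c\tau_1(a^\ast))^\ast = \tau_1(a) c^\ast$, so $u\tau_1(a)u^\ast = c\tau_1(a)c^\ast = \tau_2(a)$ for all $a$.

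Finally, since $[u]_1 = 0$ and $C(B)$ is $K_1$-injective, $u \in U_0(C(B))$; lifting a continuous path from $1$ to $u$ (or simply lifting $u$ itself through the quotient $\pi_B$, which is possible for unitaries in the connected component of the identity) yields a unitary $\tilde u \in M(B)$ with $\pi_B(\tilde u) = u$, so $\pi_B(\tilde u)\tau_1(\cdot)\pi_B(\tilde u)^\ast = \tau_2(\cdot)$, i.e. $\tau_1 \sim \tau_2$ in the sense of Definition \ref{d:Ext}(1). I expect the main obstacle to be the bookkeeping in the second step: arranging, from the raw data of $p$, $q$, and $c$, the precise hypotheses of Lemma \ref{lem:dilation} (that $c$ is cut down by complements of \emph{properly infinite full} projections which also dominate $1 - cc^\ast$ and $1 - c^\ast c$ respectively), which requires careful use of Cuntz's comparison theory for properly infinite projections but no fundamentally new idea.
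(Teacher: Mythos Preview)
Your approach is the paper's approach. Two remarks on the bookkeeping you yourself flagged. First, with your conventions ($c\tau_1 c^\ast=\tau_2$, $p\in\tau_1(A)^\perp$, $q\in\tau_2(A)^\perp$) the replacement must be $c\mapsto (1-q)c(1-p)$, not $(1-p)c(1-q)$; otherwise the Murray--von Neumann equations are not preserved after the cut-down, and you lose the containment $\tau_2(A)\subseteq\{1-cc^\ast\}^\perp$ needed to feed into Lemma~\ref{lem:dilation}. Second, and more to the point, the ``absorption'' manoeuvre you anticipate as the main obstacle is unnecessary: after replacing $c$ by its cut-down, the Murray--von Neumann equations still hold (since $1-p$ and $1-q$ act as the identity on $\tau_1(A)$ and $\tau_2(A)$ respectively), and then your first-paragraph computation, rerun for the \emph{new} $c$, already gives $\tau_1(A)\subseteq\{1-c^\ast c\}^\perp$ and $\tau_2(A)\subseteq\{1-cc^\ast\}^\perp$. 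This is exactly what the paper does (citing \cite[Lemma~3.8]{GabeCrelle} for that standard step), so there is no need to enlarge $p$ or $q$ to dominate $1-c^\ast c$ or $1-cc^\ast$. Incidentally, the identity $c=(cc^\ast)^{1/2}c(c^\ast c)^{1/2}$ you mention fails for general contractions (it forces $c$ to be a partial isometry), but you never use it.
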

\begin{proof}
Let $c\in C(B)$ be a contraction such that $c^\ast \tau_1(a)c = \tau_2(a)$ and $c \tau_2(a) c^\ast = \tau_1(a)$ for all $a\in A$. As $\tau_i$ has large complement, we pick a projection $p_i \in \tau_i(A)^\perp$ such that $p_i \sim 1_{C(B)}$. Hence $p_i$ is full and properly infinite for $i=1,2$. By replacing $c$ with $(1-p_1) c (1-p_2)$, we may assume that $(1-p_1) c (1-p_2) = c$. By Lemma 3.8 of \cite{GabeCrelle} (which is a standard argument) we have $\tau_1(a) c c^\ast = \tau_1(a)$ for all $a\in A$, and thus $\tau_1(A) \subseteq \{1-cc^\ast\}^\perp$.  Similarly $\tau_2(A) \subseteq \{1-c^\ast c\}^\perp$. Applying Lemma \ref{lem:dilation}, there is a unitary 
$w\in {{C(B)}}$
such that $\tau_1(a)w = \tau_1(a) c$ and $w \tau_2(a) = c \tau_2(a)$ for all $a\in A$ and $[w]_1 = 0$ in $K_1(D)$. As {{$C(B)$}} 
is $K_1$-injective, $w$ is homotopic to $1$ and thus it lifts to a unitary 
$u\in {{M(B).}}$
Moreover, for all $a\in A$
$$ \tau_1 (a)\pi(u) = \tau_1(a) c = cc^\ast \tau_1(a) c = c \tau_2(a) = \pi(u) \tau_2(a)$$
and therefore $\tau_1$ and $\tau_2$ are unitarily equivalent. 
\end{proof}


\begin{prop}\label{TH1Pnonunital}
Let $B$ be a non-unital and $\sigma$-unital simple \CA\, with  continuous scale,
let $A$ be a $\sigma$-unital  \CA, and let 
$\tau_1, \tau_2: A\to C(B)$ be two 
non-unital essential extensions.

Then 
$\tau_1$ (and also $\tau_2$) has large complement, and 
$$\tau_1 \sim \tau_2 \quad \Leftrightarrow \quad \tau_1 \sim_{wu} \tau_2 \quad  \Leftrightarrow \quad \tau_1 \sim_w \tau_2 \quad \Leftrightarrow \quad \tau_1 \sim_{MvN} \tau_2.$$
\end{prop}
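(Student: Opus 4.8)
The plan is to deduce Proposition \ref{TH1Pnonunital} from Proposition \ref{p:largecompMvN}. Since all four relations satisfy the implications $\tau_1\sim\tau_2\Rightarrow\tau_1\sim_{wu}\tau_2\Rightarrow\tau_1\sim_w\tau_2\Rightarrow\tau_1\sim_{MvN}\tau_2$ (as already recorded in the text), it suffices to prove the two bookend facts: (a) every non-unital essential extension $\tau_i\colon A\to C(B)$ has large complement, and (b) $C(B)$ is properly infinite and $K_1$-injective. Granting (a) and (b), Proposition \ref{p:largecompMvN} gives $\tau_1\sim_{MvN}\tau_2\Rightarrow\tau_1\sim\tau_2$, closing the cycle of equivalences.

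For (b): by Theorem \ref{Tcontscal}, continuous scale (for non-unital, $\sigma$-unital, simple $B$) is equivalent to $C(B)$ being simple purely infinite; in particular $C(B)$ is properly infinite, and by the cited theorem of Cuntz \cite{CuntzAnnals} every simple unital purely infinite $C^\ast$-algebra is $K_1$-injective. So (b) is immediate from results already in the excerpt. The real content is (a). The plan is: given a non-unital essential extension $\tau\colon A\to C(B)$, produce a projection $p\in\tau(A)^\perp$ with $p\sim 1_{C(B)}$. First I would use non-unitality of $\tau$ together with simplicity and pure infiniteness of $C(B)$ to find a nonzero projection in $\tau(A)^\perp$: since $A$ is $\sigma$-unital, pick a strictly positive element $a_0\in A$, so $\tau(a_0)$ is a positive element of $C(B)$ whose hereditary subalgebra contains $\tau(A)$; non-unitality means $\tau(a_0)$ is not invertible, i.e. $1_{C(B)}-f_\ep(\tau(a_0))\neq 0$ for small $\ep>0$. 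Then $e:=1_{C(B)}-f_\ep(\tau(a_0))$ is a nonzero positive element that is orthogonal to $f_{\ep'}(\tau(a_0))$ for $\ep'$ slightly larger, hence (after passing to a spectral projection/hereditary subalgebra argument) we obtain a nonzero projection $p_0$ orthogonal to $\tau(a_0)A\tau(a_0)\supseteq\tau(A)$ — more carefully, $p_0\in\{f_{\ep'}(\tau(a_0))\}^\perp$, and since $\tau(a)=\lim f_{\ep'}(\tau(a_0))^{1/2}\tau(a)f_{\ep'}(\tau(a_0))^{1/2}$-type estimates show $\tau(a)$ is absorbed by $f_{\ep'}(\tau(a_0))$, we get $p_0\in\tau(A)^\perp$. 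Then, because $C(B)$ is simple purely infinite, $p_0$ is a full properly infinite projection, and by Cuntz's theorem \cite{CuntzAnnals} there is a subprojection $p\leq p_0$ with $p\sim 1_{C(B)}$; this $p$ lies in $\tau(A)^\perp$ and witnesses large complement.

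The step I expect to be the main obstacle is the careful verification that the orthogonal complement $\tau(A)^\perp$ is genuinely nonzero, i.e. turning ``non-unital'' into ``there is a nonzero projection orthogonal to all of $\tau(A)$.'' The subtlety is that $\tau(A)$ need not be contained in a single hereditary subalgebra of the form $\overline{xC(B)x}$ unless one uses $\sigma$-unitality of $A$ to pass to a strictly positive element $a_0$; and even then one must check that $\tau(a)$ for $a\in A$ is actually absorbed by $f_\ep(\tau(a_0))$ in the sense needed (this uses that $\tau(a_0)$ being strictly positive in $C^\ast(\tau(A))$ forces $f_\ep(\tau(a_0))\tau(a)\to\tau(a)$). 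Once a nonzero projection in $\tau(A)^\perp$ is in hand, the rest is routine: fullness is automatic from simplicity of $C(B)$, proper infiniteness from pure infiniteness, and the existence of a subprojection equivalent to $1_{C(B)}$ is exactly the standard Cuntz absorption property of simple purely infinite $C^\ast$-algebras. I would then simply invoke Proposition \ref{p:largecompMvN} with $C(B)$ in place of the abstract corona algebra to conclude.
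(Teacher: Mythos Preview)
Your overall strategy is correct and matches the paper's: reduce to Proposition~\ref{p:largecompMvN} by establishing that $C(B)$ is properly infinite and $K_1$-injective (which you handle correctly via Theorem~\ref{Tcontscal} and Cuntz's theorem) and that each $\tau_i$ has large complement.

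However, your argument for large complement has a genuine gap, precisely at the step you flag as the main obstacle. Your construction gives a nonzero element $e = 1 - f_\ep(b)$ (with $b = \tau(a_0)$) which is orthogonal to $f_{\ep'}(b)$ only for $\ep' \geq 2\ep$, not to $b$ itself. A projection $p_0$ extracted from the hereditary subalgebra of $e$ therefore lies in $\{f_{\ep'}(b)\}^\perp$ for that \emph{fixed} $\ep'$, but your absorption argument (``$\tau(a) = \lim f_{\ep'}(b)^{1/2}\tau(a)f_{\ep'}(b)^{1/2}$'') requires orthogonality to $f_{\ep'}(b)$ for \emph{all} $\ep' > 0$, which is exactly $p_0 \perp b$ --- the very conclusion you are trying to reach. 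No functional-calculus construction inside $C^*(b,1)$ can produce a nonzero element orthogonal to $b$ unless $0$ is an isolated point of $\sigma(b)$, so this route cannot succeed in general.

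The paper closes this gap by invoking Pedersen's result for corona algebras (Theorem~15 of \cite{PedSAW}): since $\mathrm{Her}(b)$ is a proper $\sigma$-unital hereditary C$^*$-subalgebra of the corona algebra $C(B)$, one has $\mathrm{Her}(b)^\perp \neq \{0\}$. This nonzero annihilator is a hereditary C$^*$-subalgebra of the simple purely infinite $C(B)$, hence itself purely infinite simple and therefore of real rank zero by \cite{sZhang}, so it contains a nonzero projection $p \in \mathrm{Her}(b)^\perp \subseteq \tau(A)^\perp$. From there your remaining steps are correct and coincide with the paper's: $1_{C(B)} \precsim p$ by simple pure infiniteness, giving large complement, and Proposition~\ref{p:largecompMvN} finishes the proof.
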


\begin{proof}
We assume that $A$ is not zero. 
Let $e_A$ be a strictly positive element of $A$ and set $b:= \tau_1(e_A)$.
Then ${\rm Her}(b)$ is a 
$\sigma$-unital hereditary \SCA\, of $C(B).$   Since $\tau_1$ is not unital, $\Her(b)\not= C(B).$
It follows from 
Theorem 15 
 of \cite{PedSAW} that ${\rm Her}(b)^\perp\not=\{0\}.$ Since ${\rm Her}(b)^\perp$
is also a hereditary \SCA\, of $C(B)$ which is purely infinite and simple, ${\rm Her}(b)^{\perp}$ is also 
purely infinite and simple. 
By \cite{sZhang}, it has real rank zero. 
Let $p\in {\rm Her}(b)^\perp$ be a nonzero projection. 
Similarly, we can find a nonzero projection $q \in {\rm Her}(b')^{\perp}$,
where $b' =_{df} \tau_2(e_A)$. 
{{Since again $C(B)$ is simple purely infinite, $1_{C(B)} \preceq p$ and
$1_{C(B)} \preceq q$.}}  Hence, both $\tau_1$ and $\tau_2$ have large complement. The result on the equivalences of $\tau_1$ and $\tau_2$ follows from Proposition \ref{p:largecompMvN}.
\end{proof}

\begin{rem}
In connection with Proposition \ref{TH1Pnonunital}, there are also cases where
for unital extensions, unitary and weak equivalence are always the same.  For example,
this is the case with domain algebra $A = C(X)$, where $X$ is a compact metric
space (the commutative case).  In Theorem \ref{TH1}, we give some necessary and
sufficient K-theory conditions  for when $\sim$ is always the same as $\sim_{wu}$.    
See also Remark \ref{RH1} which uses Theorem \ref{TH1}  to explain why, in many cases (including the 
commutative case and the non-unital case), $\sim$ agrees with $\sim_{wu}$.   
\label{rem:WeakAndStrongEquiv}
\end{rem}

We next discuss algebraic structure on the $\Ext$-groups from Definition \ref{d:Ext}.
\begin{df}\label{DefBDF-1}
Let $B$ be a non-unital and $\sigma$-unital simple 
C*-algebra with continuous scale. 
Let $A$ be a separable C*-algebra and 
$\phi, \psi : A \rightarrow C(B)$ two extensions.   
The \emph{BDF sum} of  $\phi$ and $\psi$ is defined to be
$$(\phi \oplus \psi)(.) =_{df} S \phi(.) S^* +  T \psi(.) T^*$$
where $S, T \in C(B)$ are two isometries for which
$$SS^* + TT^* \leq 1.$$
It is clear that the BDF sum is well-defined (independent of 
choices of $S$ and $T$) up to weak equivalence.
{{In addition, by Proposition \ref{TH1Pnonunital},
when both $\phi$ and $\psi$ are essential and non-unital, the 
BDF sum is well-defined up unitary equivalence (and hence weak  
unitary equivalence).}} 
Thus, the BDF sum induces sums on
$\Ext(A, B)$ and $\Ext^w(A, B)$, which we also call the \emph{BDF sum}.  
\end{df}

The following is straightforward to check, and is left for the reader.

\begin{prop}
Let $A$ be a separable C*-algebra and $B$ a non-unital  but $\sigma$-unital
simple  C*-algebra with continuous scale.

Then $\Ext(A, B)$ and $\Ext^w(A, B)$ are  abelian semigroups.
\label{prop:Ext1Semigroup}
\end{prop}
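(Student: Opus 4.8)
The plan is to verify the semigroup axioms directly on the level of Busby invariants, using the preceding structural results to handle well-definedness. First, I would recall from Definition \ref{DefBDF-1} that the BDF sum $\phi \oplus \psi$ is defined via a choice of isometries $S, T \in C(B)$ with $SS^\ast + TT^\ast \le 1$; such isometries exist because $C(B)$ is simple purely infinite by Theorem \ref{Tcontscal} (here we use that $B$ is non-unital and $\sigma$-unital with continuous scale). For $\Ext^w(A,B)$, independence of the choice of $S, T$ up to weak equivalence is already asserted in Definition \ref{DefBDF-1}; for $\Ext(A,B)$, one uses Proposition \ref{TH1Pnonunital}: when $\phi$ and $\psi$ are essential and non-unital, so is $\phi \oplus \psi$ (its Busby invariant is still injective and non-unital, since $S \phi(\cdot) S^\ast + T \psi(\cdot) T^\ast$ is injective when $\phi$ is and $1_{C(B)} - SS^\ast - TT^\ast \ne 0$ gives non-unitality), whence weak equivalence, weak unitary equivalence and unitary equivalence coincide and the sum descends to $\Ext(A,B)$.

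Next I would check that $\oplus$ is a well-defined binary operation on equivalence classes, i.e.\ that it respects the equivalence relation. If $\phi_1 \sim \phi_1'$ and $\psi_1 \sim \psi_1'$ (unitary equivalence in the case of $\Ext$, or weak equivalence in the case of $\Ext^w$), say implemented by unitaries (resp.\ partial isometries) $v, w$, then $v \oplus w := SvS^\ast + TwT^\ast + (1 - SS^\ast - TT^\ast)$ is a unitary (resp.\ a suitable partial isometry) in $C(B)$, or in $M(B)$ in the strong case, conjugating $\phi_1 \oplus \psi_1$ to $\phi_1' \oplus \psi_1'$; in the $\Ext$ case one should instead invoke Proposition \ref{TH1Pnonunital} to pass freely between the relations and conclude. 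Commutativity $\phi \oplus \psi \sim_w \psi \oplus \phi$ follows from the standard trick of swapping $S$ and $T$: the partial isometry $W = ST^\ast + TS^\ast$ satisfies $W(\phi \oplus \psi)(\cdot)W^\ast = (\psi \oplus \phi)(\cdot)$ and $W^\ast W (\psi\oplus\phi)(a) = (\psi\oplus\phi)(a)$, giving weak equivalence, hence equality in $\Ext^w(A,B)$; in $\Ext(A,B)$ one again upgrades via Proposition \ref{TH1Pnonunital} using that both sides are essential and non-unital.

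Finally, associativity $(\phi \oplus \psi) \oplus \eta \sim_w \phi \oplus (\psi \oplus \eta)$ is the only step with genuine (if routine) content: fixing isometries $S,T$ used in the outer sums and $S', T'$ used in the inner sums, both triple sums are of the form $\sum_{i=1}^3 R_i \chi_i(\cdot) R_i^\ast$ for three isometries $R_1, R_2, R_3 \in C(B)$ with mutually orthogonal range projections summing to something $\le 1$ (namely $R_1 = SS'$-type compositions, etc.), and any two such triples of isometries in the purely infinite simple algebra $C(B)$ are conjugate by a partial isometry implementing a weak equivalence between the two bracketings. I expect this bookkeeping with compositions of isometries — and checking the range projections behave correctly — to be the main (though entirely standard) obstacle; everything else is formal. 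As the paper itself notes, the verification is straightforward and can reasonably be left to the reader, so the write-up would be brief, citing Theorem \ref{Tcontscal} for pure infiniteness and Proposition \ref{TH1Pnonunital} for the coincidence of equivalence relations in the non-unital essential case.
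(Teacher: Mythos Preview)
Your proposal is correct and matches what the paper expects: the paper itself leaves this proposition to the reader as ``straightforward to check.'' Your outline---verifying well-definedness, commutativity and associativity by standard isometry manipulations in the purely infinite simple corona $C(B)$, and invoking Proposition~\ref{TH1Pnonunital} to pass between the equivalence relations in the non-unital essential case---is exactly the routine argument intended.
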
 

With $A$ and $B$ as in the above proposition, there is an obvious semigroup homomorphism $\Ext(A,B) \to \Ext^w(A,B)$.

\begin{thm}
Let $A$ be a separable nuclear C*-algebra and $B$ a non-unital  but
$\sigma$-unital simple 
C*-algebra with continuous scale.

Then $\Ext(A, B)$ and $\Ext^w(A, B)$ are abelian groups, and the canonical homomorphism 
$$\Ext(A,B) \to \Ext^w(A,B)$$
is an isomorphism.
\label{thm:Ext1Group}
\end{thm}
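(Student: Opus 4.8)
The plan is to establish two things: that every class in $\Ext(A,B)$ has an inverse under the BDF sum, and that the canonical map $\Ext(A,B)\to\Ext^w(A,B)$ is both injective and surjective; semigroup structure and commutativity are already provided by Proposition \ref{prop:Ext1Semigroup}. First I would address surjectivity of $\Ext(A,B)\to\Ext^w(A,B)$: given any essential extension $\psi\colon A\to C(B)$ (possibly unital), I want to replace it by a unitarily equivalent-in-$\Ext^w$ \emph{non-unital} essential extension. Since $C(B)$ is simple purely infinite, it is properly infinite, so one can find an isometry $S\in C(B)$ with $SS^*\neq 1$ and set $\psi'(\cdot)=S\psi(\cdot)S^*$; then $\psi'$ is non-unital, essential, and weakly equivalent to $\psi$. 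This shows the map is onto. For injectivity, suppose $\phi,\psi$ are non-unital essential extensions that are weakly equivalent; by Proposition \ref{TH1Pnonunital}, weak equivalence coincides with (strong) unitary equivalence for non-unital essential extensions, so $[\phi]=[\psi]$ in $\Ext(A,B)$. Hence the canonical homomorphism is an isomorphism of semigroups, and it remains only to check the group property on either side.

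Next I would show $\Ext^w(A,B)$ (equivalently $\Ext(A,B)$) is a group by producing inverses. This is where nuclearity of $A$ enters, via an absorption/Voiculescu-type argument. The strategy is the standard BDF one: given an essential extension $\tau\colon A\to C(B)$, one uses the fact that $A$ is nuclear (hence the relevant $cp$-maps have appropriate liftings) together with the simplicity and pure infiniteness of $C(B)$ to produce a \emph{trivial} (liftable) essential extension $\sigma\colon A\to C(B)$ such that $\tau\oplus\sigma$ is itself trivial, i.e.\ weakly equivalent to a liftable extension; and trivial essential extensions form the identity of the semigroup (any two non-unital trivial essential extensions are unitarily equivalent, by an absorption theorem, and the BDF sum of a trivial with anything is weakly equivalent to a ``larger'' copy — one must check the trivial class is a neutral element). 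Concretely, I would invoke the existence of a nuclear, $B$-absorbing trivial extension $\sigma$ (a Kasparov/Thomsen-style stabilization: since $C(B)$ is simple purely infinite, the trivial extension coming from a faithful nuclear map $A\to M(B)$ composed with a suitable infinite repeat absorbs all essential extensions), and then the Cuntz-style rotation argument gives $\tau\oplus\sigma\sim_w\sigma$, so $[\sigma]$ is the zero element and $[\sigma]$ has the required absorbing property, forcing every $[\tau]$ with $[\tau]+[\sigma']=[\sigma]$ for a suitable complement to have an inverse. In fact the cleanest route is: nuclearity of $A$ $\Rightarrow$ (by Choi--Effros lifting and the structure of $C(B)$) every essential extension is \emph{semisplit}, and semisplit extensions into a simple purely infinite corona admit inverses by the usual Kasparov construction.

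The main obstacle I expect is the absorption theorem in this non-stable setting: classical BDF--Kasparov absorption is stated for $C(\mathcal{K})$ or for stable $B$, whereas here $B$ is only assumed $\sigma$-unital simple with continuous scale and need \emph{not} be stable. One must verify that a Voiculescu-type noncommutative Weyl--von Neumann theorem holds for trivial essential extensions into $C(B)=M(B)/B$ when $C(B)$ is simple purely infinite — i.e.\ that any two non-unital trivial essential extensions $A\to C(B)$ are unitarily equivalent, and that a trivial extension absorbs arbitrary essential extensions under the BDF sum. This is where the hypotheses ``continuous scale'' (giving $C(B)$ simple purely infinite, via Theorem \ref{Tcontscal}) and ``$A$ nuclear'' (giving semisplitness and the completely positive liftings needed to run the intertwining) are essential; the pure infiniteness supplies the abundance of isometries and the $K_1$-injectivity needed to upgrade weak to strong unitary equivalence via Proposition \ref{TH1Pnonunital}. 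Once absorption is in hand, the group structure is formal. I would either cite the relevant absorption result for simple purely infinite corona algebras from the literature (Lin, Elliott--Kucerovsky, Kirchberg) or, if it is proved later in the paper, forward-reference it; the inverse of $[\tau]$ is then represented by any essential extension $\bar\tau$ with $\tau\oplus\bar\tau$ trivial, whose existence follows from semisplitness by the Kasparov--Cuntz ``$\tau\oplus(\text{mapping cone trivial extension})$'' construction.
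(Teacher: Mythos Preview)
Your surjectivity and injectivity arguments for the canonical map $\Ext(A,B)\to\Ext^w(A,B)$ are exactly what the paper does: compress by a proper isometry in $C(B)$ for surjectivity, and invoke Proposition~\ref{TH1Pnonunital} for injectivity.

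For the group property, the paper simply cites \cite[Theorem~2.10]{NgRobin} rather than arguing directly. Your sketch---produce an absorbing trivial extension as neutral element, then use nuclearity (hence semisplitting via Choi--Effros) together with a Kasparov-type dilation to manufacture an inverse $\bar\tau$ with $\tau\oplus\bar\tau$ trivial---is the right strategy and is essentially what that reference carries out. You have correctly isolated the crux: one needs an absorption theorem for non-unital trivial essential extensions into $C(B)$ when $B$ is not stable but merely has continuous scale, so that all such extensions are mutually unitarily equivalent and absorb under the BDF sum. This does not follow formally from the classical stable case and requires the simple pure infiniteness of $C(B)$ in an essential way; the paper's later machinery (Theorem~\ref{t:KP} combined with Lemma~\ref{TWP}) gives one route, and \cite{NgRobin} another. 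Your middle paragraph is a bit muddled---the sentence ``$\tau\oplus\sigma\sim_w\sigma$, so $[\sigma]$ is the zero element\ldots forcing every $[\tau]$ with $[\tau]+[\sigma']=[\sigma]$\ldots'' conflates the neutral-element and inverse constructions---but your final summary straightens this out.
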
 

\begin{proof}
That $\Ext(A, B)$ is a group follows from {{\cite[Theorem 2.10]{NgRobin}.}} 
 So it suffices to show that the  map $\Ext(A,B) \to \Ext^w(A,B)$ is bijective.

The map is injective by Proposition \ref{TH1Pnonunital}. For surjectivity, let $\phi \colon A \to C(B)$ be a (possibly unital) essential extension. Let $S\in C(B)$ be a proper isometry. Then $[\phi]_w = [S\phi(.)S^*]_w$, and $[S\phi(.)S^\ast] \in \Ext(A,B)$, so the map $\Ext(A,B) \to \Ext^w(A,B)$ is surjective.

\end{proof}

\vspace{0.1in}

{\bf The unital case}

\vspace{0.1in}

\begin{df}\label{DefBDF-2}
Suppose that $B$ is a non-unital and $\sigma$-unital simple 
\CA\, with continuous
scale
for which 
$[1_{C(B)}] = 0$ in $K_0(C(B))$. 
For any unital separable C*-algebra $A$ and
any two unital extensions $\phi, \psi : A \rightarrow C(B)$, we define
the \emph{BDF sum} to be 
$$(\phi \oplus \psi)(.) = S \phi(.) S^* + T \psi(.)T^*$$
where $S, T \in C(B)$ are isometries for which
$$SS^* + TT^* = 1.$$
In what follows in the case that $[1_{Q(B)}]=0,$ when we write $\Ext^w(A, B)_1,$ we assume 
this BDF sum is defined. 
\end{df}

This BDF sum is well-defined up to weak unitary equivalence, and thus
we have the following:

\begin{prop}
Let $B$ be a non-unital but $\sigma$-unital simple \CA\, with continuous
scale
such that    
$[1_{C(B)}] = 0$ in $K_0(C(B))$. 
Let $A$ be a unital separable C*-algebra.

Then $\Ext^w(A, B)_1$ is an abelian semigroup.
\end{prop}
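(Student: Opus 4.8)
The plan is to verify that the BDF sum of Definition~\ref{DefBDF-2} induces a well-defined, associative and commutative operation on $\Ext^w(A,B)_1$, so four points need checking: that admissible isometry pairs $S,T\in C(B)$ (meaning $S^*S=T^*T=1$ and $SS^*+TT^*=1$) exist at all; that the BDF sum of two essential unital extensions is again essential and unital; that it is independent both of the choice of $S,T$ and of the chosen representatives within $\sim_{wu}$-classes; and finally associativity and commutativity. I expect only the first point to require a genuine idea — everything after it is formal manipulation of the orthogonality relations $S^*T=0$, $S^*S=T^*T=1$, $SS^*+TT^*=1$.

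For the existence of $S,T$: by Theorem~\ref{Tcontscal} the algebra $C(B)$ is simple and purely infinite, so $1_{C(B)}$ is properly infinite and we may write $1_{C(B)}=e+(1_{C(B)}-e)$ with $e=SS^*$ for some isometry $S$. Then $[e]=[1_{C(B)}]=0$ in $K_0(C(B))$ by hypothesis, hence $[1_{C(B)}-e]=0=[1_{C(B)}]$ as well; since in a purely infinite simple unital C*-algebra two projections with the same $K_0$-class are Murray--von Neumann equivalent \cite{CuntzAnnals}, we get $1_{C(B)}-e\sim 1_{C(B)}$ and may choose an isometry $T$ with $TT^*=1_{C(B)}-e$, so $SS^*+TT^*=1_{C(B)}$. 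This is the only place where the hypothesis $[1_{C(B)}]=0$ is genuinely used. Closedness is then immediate: $(\phi\oplus\psi)(1_A)=SS^*+TT^*=1_{C(B)}$, so the sum is unital, and if $(\phi\oplus\psi)(a)=0$ then compressing by $S^*(\cdot)S$ gives $\phi(a)=0$, hence $a=0$ by injectivity of $\phi$, so the sum is essential.

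For well-definedness I would use the standard intertwining unitaries: given a second admissible pair $(S',T')$, the element $S'S^*+T'T^*$ is a unitary of $C(B)$ sending $S\mapsto S'$ and $T\mapsto T'$, hence conjugating one BDF sum onto the other; and if $v\phi_1v^*=\phi_2$ and $w\psi_1 w^*=\psi_2$ for unitaries $v,w\in C(B)$, then $SvS^*+TwT^*$ is a unitary of $C(B)$ conjugating $\phi_1\oplus\psi_1$ onto $\phi_2\oplus\psi_2$. For associativity, after composing the relevant isometries both $(\phi\oplus\psi)\oplus\chi$ and $\phi\oplus(\psi\oplus\chi)$ take the common shape $R_1\phi(\cdot)R_1^*+R_2\psi(\cdot)R_2^*+R_3\chi(\cdot)R_3^*$ with $R_i$ isometries having pairwise orthogonal ranges and $R_1R_1^*+R_2R_2^*+R_3R_3^*=1_{C(B)}$, and any two such triples are intertwined by $\sum_i R_i'R_i^*$ exactly as above. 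For commutativity, the flip $ST^*+TS^*$ is a unitary of $C(B)$ sending $S\mapsto T$ and $T\mapsto S$, so it conjugates $\phi\oplus\psi$ onto $\psi\oplus\phi$. Since all the conjugating unitaries lie in $C(B)$, which is precisely what the relation $\sim_{wu}$ asks for, this will show that $\Ext^w(A,B)_1$ is an abelian semigroup.
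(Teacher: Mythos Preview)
Your argument is correct and is precisely the standard verification the paper has in mind; the paper does not spell out a proof at all, stating only that the BDF sum is well-defined up to weak unitary equivalence and leaving the rest to the reader. Your identification of the single place where $[1_{C(B)}]=0$ is needed (producing an admissible Cuntz pair $S,T$ with $SS^*+TT^*=1$) is exactly right, and the remaining intertwining-unitary computations for independence of choices, associativity and commutativity are all valid.
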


\begin{thm}
Let $B$ be a  non-unital $\sigma$-unital simple continuous
scale C*-algebra
such that 
$[1_{C(B)}] = 0$ in $K_0(C(B))$.
Let $A$ be a unital separable amenable 
C*-algebra.

Then $\Ext^w(A, B)_1$ is an abelian group.
\label{thm:Extw1Group}
\end{thm}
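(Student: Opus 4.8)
The plan is to reduce the unital statement to the non-unital one already established in Theorem \ref{thm:Ext1Group}, exploiting the hypothesis $[1_{C(B)}]=0$ in $K_0(C(B))$. First I would recall that, by the previous proposition, $\Ext^w(A,B)_1$ is an abelian semigroup under the BDF sum of Definition \ref{DefBDF-2}; what remains is to produce a neutral element and inverses. For the neutral element: since $C(B)$ is simple, purely infinite and $[1_{C(B)}]=0$, a standard application of Cuntz's theorem \cite{CuntzAnnals} gives a unital embedding of the Cuntz algebra $\mathcal O_2$ (or at least a unital copy of $\mathcal O_\infty$ together with the $K_0$-triviality of the unit) into $C(B)$; more to the point, there is a unital trivial extension $\tau_0\colon A\to C(B)$ that is "absorbing" in the appropriate sense. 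I would take $\tau_0$ to be $\pi_B\circ\sigma$ where $\sigma\colon A\to M(B)$ is built from a unital embedding $A\to M(B)$ obtained by amplifying a faithful representation; the point of $[1_{C(B)}]=0$ is precisely that such a unital lift exists at the level of $K_0$, and amenability of $A$ together with Theorem \ref{Tcontscal} (so that $C(B)$ is simple purely infinite) lets one invoke a Weyl--von Neumann--Voiculescu absorption argument showing $\tau\oplus\tau_0\sim_{wu}\tau$ for every unital essential $\tau$. This makes $[\tau_0]$ the identity of $\Ext^w(A,B)_1$.

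Next, inverses. Here I would use the comparison between the unital and non-unital pictures. Given a unital essential extension $\tau\colon A\to C(B)$, pick a proper isometry $S\in C(B)$; then $a\mapsto S\tau(a)S^*$ is a non-unital essential extension, so by Theorem \ref{thm:Ext1Group} it has an inverse $[\psi]$ in $\Ext(A,B)=\Ext^w(A,B)$, witnessed by a non-unital essential $\psi\colon A\to C(B)$ with $S\tau(\cdot)S^*\oplus\psi\sim_w$ a trivial non-unital extension. The remaining work is to upgrade $\psi$ to a \emph{unital} extension and to check the sum lands at $[\tau_0]$ rather than merely at a non-unital trivial extension. This is where $[1_{C(B)}]=0$ re-enters: the "defect projections" $1-SS^*$ and $1-(\text{support of }\psi)$ are both full, properly infinite, and $K_0$-trivial, hence Murray--von Neumann equivalent to $1_{C(B)}$, so one can conjugate/dilate $\psi$ (using Lemma \ref{lem:dilation} and Proposition \ref{TH1Pnonunital}, which identify $\sim_w$, $\sim_{wu}$ and $\sim$ for non-unital extensions) to replace it by a unital extension $\psi'$ with $[\psi']=[\psi]$ in the non-unital group. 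Then $\tau\oplus\psi'$ is a unital essential extension which, by construction, is weakly equivalent to a trivial extension; composing with the absorption property of $\tau_0$ gives $\tau\oplus\psi'\sim_{wu}\tau_0$, so $[\psi']$ is the inverse of $[\tau]$ in $\Ext^w(A,B)_1$.

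I expect the main obstacle to be the bookkeeping around units and $K_0$-classes of "complementary" projections: one must ensure that every time a non-unital extension is padded up to a unital one, or a unital one is cut down by an isometry, the relevant defect projection is genuinely full, properly infinite, and $K_0$-trivial, so that it is Murray--von Neumann equivalent to $1_{C(B)}$ and the dilation lemma applies cleanly. The simple purely infinite structure of $C(B)$ (Theorem \ref{Tcontscal}), $K_1$-injectivity of $C(B)$ (so that $\sim_{wu}$ can be promoted to $\sim$ via Proposition \ref{p:largecompMvN} when needed), and the hypothesis $[1_{C(B)}]=0$ are exactly the tools that make this bookkeeping go through, but assembling them into a single coherent absorption-plus-inverse argument is the delicate part. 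A cleaner alternative, which I would pursue in parallel, is to quote directly the abstract criterion of \cite{NgRobin} used in the proof of Theorem \ref{thm:Ext1Group}: verify that the unital BDF semigroup satisfies the same hypotheses (existence of a suitable absorbing trivial element plus the "stably unitarily equivalent" cancellation coming from Proposition \ref{TH1Pnonunital} and $K_1$-injectivity), and conclude group-hood formally, thereby sidestepping the explicit construction of inverses.
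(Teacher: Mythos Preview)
The paper's proof is precisely your second alternative: it cites \cite[Theorem 2.13]{NgRobin} and notes that the argument there goes through verbatim once unitary equivalence is replaced by weak unitary equivalence (the latter being necessary because $A$ is not assumed to have a one-dimensional representation). So your closing suggestion is on the mark.

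Your main approach, however, has a genuine gap at the neutral-element step. You propose $\tau_0 = \pi_B \circ \sigma$ for a unital embedding $\sigma \colon A \to M(B)$ ``obtained by amplifying a faithful representation'', but no such $\sigma$ need exist. For a concrete obstruction, take $B$ to be a non-unital continuous-scale hereditary \SCA\ of a UHF algebra chosen so that $[1_{C(B)}]=0$ (e.g.\ $B=\overline{aQa}$ with $Q$ the universal UHF and $d_\tau(a)\in\Q\cap(0,1)$); then $M(B)$ inherits a faithful tracial state from $B$, so no properly infinite unital $A$---for instance $A=\mathcal{O}_2$, which is separable and amenable---embeds unitally into $M(B)$. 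The paper's later Sections~6--7 in fact show that in the stably finite setting, liftable unital extensions can fail to exist entirely. The neutral element of $\Ext^w(A,B)_1$ is therefore not in general the class of a trivial extension; in the \cite{NgRobin} argument it arises instead from a Kasparov--Stinespring infinite-repeat construction whose absorbing property is established by a swindle, with no liftability asserted. Without a correct neutral element your reduction-to-non-unital inverse construction, while plausible, does not close the argument.
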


\begin{proof}
The proof is exactly the same as that
of \cite[Theorem 2.13]{NgRobin}, except that, unitary equivalence
is replaced with weak unitary equivalence, since we do not assume
that $A$ has any one-dimensional representations.
\end{proof}

For the benefit of the reader, 
let us mention
the theory for simple 
corona algebras of a stable canonical ideal. 
\begin{thm}
Let $B$ be a 
$\sigma$-unital and stable C*-algebra.  

Then the following statements are equivalent.
\begin{enumerate}
\item $B \cong {\cal K}$ or $B$ is simple purely infinite. 
\item For every separable C*-algebra $A$, every (unital or non-unital)
essential extension $A \rightarrow C(B)$ is nuclearly absorbing (in the unital or 
non-unital senses respectively).   
\end{enumerate}

As a consequence, when $A$ is a separable amenable 
C*-algebra, and $B$
satisfies the above conditions, then 
$$\Ext(A, B) = KK^1(A, B).$$
\end{thm}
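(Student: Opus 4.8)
The plan is to prove the theorem in two halves: the equivalence $(1) \Leftrightarrow (2)$, and then the consequence $\Ext(A,B) = KK^1(A,B)$.

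\textbf{The equivalence $(1) \Leftrightarrow (2)$.} This is essentially a restatement of known absorption theory in the stable setting. For the direction $(1) \Rightarrow (2)$, note that when $B = \mathcal K$ this is the classical BDF--Voiculescu--Kasparov theorem (every essential extension of a separable $C^\ast$-algebra by $\mathcal K$ is nuclearly absorbing), and when $B$ is stable and simple purely infinite, this is Kirchberg's absorption theorem (or the Elliott--Kucerovsky / Kirchberg version for $\mathcal O_\infty$-absorbing algebras); the key input is that $\mathcal K$ and simple purely infinite stable $B$ are exactly the stable $\sigma$-unital simple $C^\ast$-algebras whose corona is simple (Theorem \ref{Tcontscal}, item (4)), and that $M(B)$ contains a unital copy of $\mathcal O_2$ (or at least enough isometries with orthogonal ranges summing appropriately) to run the Weyl--von Neumann argument. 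For $(2) \Rightarrow (1)$: if $B$ is stable, $\sigma$-unital and simple but not $\mathcal K$ and not purely infinite, then $B$ has a nonzero densely defined lower semicontinuous trace, $C(B)$ fails to be simple (again Theorem \ref{Tcontscal} shows $C(B)$ simple forces $B = \mathcal K$ or continuous scale, and a stable algebra with continuous scale would have to be $\mathcal K$), and one exhibits a specific essential extension that is not absorbing --- e.g. using a character/ideal of $C(B)$ to separate a trivial extension from its double, contradicting absorption.

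\textbf{The consequence.} Assume $B$ is $\sigma$-unital, stable, and satisfies (1), and $A$ is separable amenable. The group $\Ext(A,B)$ of unitary equivalence classes of essential extensions $A \to C(B)$ carries the BDF sum. Because every essential extension is nuclearly absorbing by (2), adding a fixed (absorbing) trivial extension does not change the class, so $\Ext(A,B)$ coincides with the group of stable/weak equivalence classes, which by Kasparov's theorem (via the Stinespring-type picture of $KK^1$, or via the isomorphism $KK^1(A,B) \cong KK(A, C(B))$ of Dadarlat's Proposition 4.2 in \cite{DadarlatTopExt} together with Paschke duality) is exactly $KK^1(A,B)$. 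One must check: (i) the semigroup $\Ext(A,B)$ is a group --- invertibility comes from absorption plus the existence of a trivial absorbing extension which serves as the neutral element; (ii) the natural map $\Ext(A,B) \to KK^1(A,B)$ sending an extension to its class in the Kasparov/Cuntz picture is well-defined, additive, surjective (every $KK^1$-class is represented by some extension since $C(B)$ contains enough isometries, using that stable $B$ with simple corona is $\mathcal O_\infty$-like), and injective (two extensions with the same $KK^1$-class are stably equivalent, hence equivalent after absorbing, hence --- since both are already absorbing --- unitarily equivalent).

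\textbf{Main obstacle.} The technical heart is item (2) and in particular establishing \emph{nuclear} absorption uniformly in both the unital and non-unital cases, i.e.\ a careful Weyl--von Neumann--Voiculescu argument inside $M(B)$ showing that a nuclear trivial extension can be absorbed; in the purely infinite stable case this rests on Kirchberg's $\mathcal O_\infty$-stability and the structure of $M(B)$, while in the $B = \mathcal K$ case it is classical. I would cite \cite{LinSimpleCorona}, \cite{GabeCrelle}, and the Elliott--Kucerovsky absorption theorem for the heavy lifting, and the remaining work is bookkeeping to transfer absorption into the statement that $\Ext(A,B) \cong KK^1(A,B)$ as groups, together with the compatibility of the BDF sum with the Kasparov product structure. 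The subtlety about the unital versus non-unital distinction (which the paper flags throughout) means one should be explicit that in the stable case $[1_{C(B)}]$ behaves well enough that the two pictures agree; this is where I would be most careful to avoid a gap.
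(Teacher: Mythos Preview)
Your proposal is essentially correct and aligned with the paper's approach: the paper's ``proof'' is nothing more than a citation list (\cite{ElliottKucerovsky}, \cite{VoiculescuWvN}, \cite{KasparovKK}, \cite{GiordanoNg}) together with the remark that the $\sigma$-unital case reduces to the separable case, and you have correctly identified the content that those references supply --- Voiculescu for $B=\mathcal K$, Elliott--Kucerovsky for the purely infinite absorption, and Kasparov for the $KK^1$ identification.

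One point to tighten: in your $(2)\Rightarrow(1)$ argument you tacitly assume $B$ is simple, but the theorem does not. If $B$ is not simple then $C(B)$ has a proper closed ideal, and any essential extension whose Busby invariant lands inside that ideal is not full, hence not nuclearly absorbing (absorbing extensions are automatically full); so this case is in fact the easy one, but it should be said. The paper's reference to \cite{GiordanoNg} is presumably meant to cover the full converse (including the stably finite simple case where the trace obstruction produces a non-absorbing extension), whereas you gesture toward \cite{GabeCrelle} and \cite{LinSimpleCorona}; either route works, but the non-simple branch should be handled explicitly rather than folded silently into the simple one.
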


\begin{proof}
This is essentially contained in \cite{ElliottKucerovsky}, 
\cite{VoiculescuWvN}, \cite{KasparovKK}, and 
\cite{GiordanoNg}.

Reference \cite{ElliottKucerovsky} is for separable canonical ideals $B$, but 
the case where $B$ is $\sigma$-unital is an easy and standard reduction.
\end{proof}

We note that, if in the above, $A$ also satisfies the Universal Coefficient
Theorem, then $KK^1(A, B)$ is in principle ``computable" from K-theory.

In the following section we will obtain similar results for computing the $\Ext$-groups using $KK$-theory when $B$ is not assumed to be stable but instead is assumed to have continuous scale.

{{
\begin{rem} \label{rem:UnitalAndNotASemigroup} 

{{Let $A$ be a unital separable \CA.

(1) We first note that, in order to  
have the BDF sum defined in \ref{DefBDF-2}  to work
for $\Ext^w(A,B)_1,$ 
we need to assume that there exists $V\in M_k(M(B)/B)$ such that
$V^*V=1_{M(B)/B}$ and $VV^*=1_{M_k(M(B)/B)}$ for every $k\in \N.$  
In other words, 
the assumption that  
$[1_{C(B)}]=0$ is required.

(2)  One may define the sum of two maps $\phi_1,\phi_2: A\to C(B)$ in 
$C(B)\otimes {\cal K}.$ Since we assume that $B$ has continuous scale, 
$C(B)$ is purely infinite simple, this sum actually works well for $\Ext(A,B)$
and is equivalent to the BDF sum defined in \ref{DefBDF-1}.  However, without 
assuming $[1_{C(B)}]=0,$ the sums of unital essential extensions are not weakly unitarily 
equivalent to unital essential extensions. In other words, without assuming 
$[1_{C(B)}]=0,$ the addition 
could not be defined for $Ext^w(A, B)_1$ so that it becomes 
a semigroup.

 (3) However, by Theorem \ref{thm:ClassifyUnitalExt}
below,  we do have a desired classification for $\Ext^w(A,B)_1$
without assuming  that $[1_{C(B)}]=0$ (and without group structure). 

(4)  Let us mention that, in general, there is no reason to believe that
$[1_{Q(B)}]=0$ 
should be automatic.   Suppose that $B$ is a non-unital separable ${\cal Z}$-stable 
simple \CA\,   with continuous scale. Let $b$ be a strictly positive element $b.$ If $\widehat{\la b\ra}\not\in \rho_B(K_0(B)),$
then $[1_{M(B)}]\not\in \rho_B(K_0(B)).$ Consequently $[1_{C(B)}]\not=0$ in $K_0(C(B))$ (see section 5).
}}

Here is an example:  Let $D$ be the UHF-algebra with $K_0(D)=\Q,$ 
the rational numbers $\mathbb{Q}$ and unique tracial state $\tau$.  
Let $\{ p_n \}$ be a sequence of
pairwise orthogonal projections in $D \otimes {\cal K}$ such that
$\sum_{n=1}^{\infty} p_n$ converges strictly in $M(D \otimes {\cal K})$ and
$\sum_{n=1}^{\infty} \tau(p_n)$ is an irrational number in $(0, 1)$
(e.g., $\sum_{n=1}^{\infty} \tau(p_n)$ can be equal to $\frac{\pi}{5}$).
Let $P \in M(D \otimes {\cal K}) \setminus (D \otimes {\cal K})$ be the projection
defined by $P := \sum_{n=1}^{\infty} p_n$ and 
$B := P (D \otimes {\cal K}) P$.  Then $B$ has continuous scale and since
$\widehat{P}$ is not in the image of $K_0(B)$ ($= \mathbb{Q}$) in 
$\Aff(T(D))$, $[1_{C(B)}] = [\pi(P)] \neq 0$ in $K_0(C(B))$.
(E.g., see the computations in Theorem \ref{thm:KComp} and Proposition
\ref{prop:K0M(B)}.  See also \cite{LinExtContScale} 1.7 and the references
therein.) 

(5) 
We note that the above phenomena have been
 well-known for a long time, and hence
we require the assumption that $[1_{C(B)}] = 0$ in $K_0(C(B))$ in the unital
case.  See, for example, \cite{LinExtContScale} 1.10.

(6) The requirement for replacing unitary equivalence with weak unitary
equivalence, for a reasonable classification in the unital case,  
 has also been known for a long time.  (E.g., see
\cite{NgNonstableAbsorb} the remark before Theorem 3.6.) And this
requirement is necessary in order to have a reasonable classification
of extensions as in Theorem \ref{thm:ClassifyUnitalExt}.  As already
noted previously, there are nonetheless many special cases (like
when the domain of the Busby invariant is commutative) where this 
requirement is not present.  (See Theorem \ref{TH1} and the remark 
after it.)

\end{rem}
}}

\section{Classification of extensions with simple corona algebras}


Since $KK$-theory is invariant under unitary conjugation, there is a canonical homomorphism $\Ext(A,B) \to KK(A,C(B))$ which sends a Busby invariant to its class in $KK$-theory. The aim of this section is to show that this map is an isomorphism when $B$ is not assumed necessarily
to be stable but instead is assumed to have continuous scale. 

Recall that when $B$ is $\sigma$-unital and has continuous scale, then  the corona algebra $C(B)$ is simple and purely infinite.

\begin{df}
Let $\phi, \psi : A \rightarrow C$ be two homomorphisms between C*-algebras where $A$ is separable.
Then $\phi$ and $\psi$ are \emph{asymptotically 
Murray--von Neumann equivalent} if there exists a norm continuous path
$\{ c_t \}_{t \in [0,\infty)}$ of contractions in $M(C)$ such that 
$$\| c_t^* \phi(a) c_t - \psi(a) \| \rightarrow 0 \qquad \makebox{   and   } \qquad
\| c_t \psi(a) c_t^* - \phi(a) \| \rightarrow 0$$
as $t \rightarrow \infty$, for all $a \in A$. 
\end{df}

The role of the above equivialence relation is captured by the following result. It is an immediate consequence of Theorems A, B and Corollary 9.8 of \cite{GabeClassifyOinftyStable} from the first named author's new proof of the Kirchberg--Phillips theorem.

\begin{thm}\label{t:KP}
Let $A$ be a separable amenable C*-algebra and let $D$ be a simple unital purely infinite C*-algebra. Then the group $KK(A,D)$ is (canonically) in one-to-one correspondence with the asymptotic Murray--von Neumann equivalence classes of injective 
homomorphisms $A\to D$.
\end{thm}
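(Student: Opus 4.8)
The plan is to show this is essentially a repackaging of Theorems A and B together with Corollary 9.8 of \cite{GabeClassifyOinftyStable}. Recall that in that framework one works with the bifunctor of asymptotic Murray--von Neumann (MvN) classes of homomorphisms and shows it agrees with $KK$. The first step is to set up the correspondence: given an injective homomorphism $\phi \colon A \to D$ one assigns its class $KK(\phi) \in KK(A,D)$, and the claim is that this descends to a bijection from asymptotic MvN equivalence classes onto $KK(A,D)$. Well-definedness (asymptotically MvN equivalent homomorphisms have the same $KK$-class) is the ``easy'' direction and follows from homotopy invariance and stability properties of $KK$ — an asymptotic MvN equivalence gives, in particular, a homotopy of Cuntz pairs, so $KK(\phi) = KK(\psi)$.

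For surjectivity, I would use that $D$ is simple, unital, purely infinite, hence $\mathcal{O}_\infty$-stable and absorbing in the sense needed in \cite{GabeClassifyOinftyStable}; every element of $KK(A,D)$ is represented by a genuine $\ast$-homomorphism $A \to D$ (after passing to a suitable stabilization one can compress back into $D$ using that $1_D$ is properly infinite and full, so that $M_n(D)$ and $D \otimes \mathcal{K}$-level representatives can be conjugated into $D$ itself), and moreover this representative can be taken injective: one adds an ample absorbing injective homomorphism (e.g. coming from a unital embedding of a copy of $\mathcal{O}_2$ or from Voiculescu-type absorption in the purely infinite simple setting), which does not change the $KK$-class since the added summand is $KK$-trivial, and the resulting sum is injective. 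This is exactly the content of Theorem A / Corollary 9.8 in the cited reference, so the bulk of this step is a citation rather than a new argument.

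For injectivity of the correspondence, suppose $\phi, \psi \colon A \to D$ are injective with $KK(\phi) = KK(\psi)$. Here one invokes Theorem B of \cite{GabeClassifyOinftyStable}: injective homomorphisms into a simple unital purely infinite C*-algebra that agree in $KK$ are asymptotically MvN equivalent (this is the ``uniqueness'' half of the new proof of Kirchberg--Phillips, stated for asymptotic MvN equivalence precisely to avoid assuming the codomain is $\mathcal{O}_\infty$-stable on the nose or that the maps are full in a strong sense — injectivity into a simple algebra already gives fullness). Thus the map on equivalence classes is injective.

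The main obstacle — though it is a bookkeeping obstacle rather than a conceptual one — is matching conventions: \cite{GabeClassifyOinftyStable} is phrased for (possibly nonunital) $\mathcal{O}_\infty$-stable codomains and for full homomorphisms, whereas here $D$ is unital simple purely infinite and the homomorphisms are merely injective. I would handle this by noting that (i) a simple unital purely infinite C*-algebra is $\mathcal{O}_\infty$-stable by Kirchberg--Phillips, (ii) any nonzero homomorphism into a simple C*-algebra is full, and (iii) injectivity is automatic for nonzero homomorphisms when $A$ is simple but in general must be imposed, which is why the statement restricts to injective homomorphisms. With these identifications, the theorem is the literal combination of Theorems A, B and Corollary 9.8, and no further work is needed.
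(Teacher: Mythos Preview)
Your proposal is correct and takes essentially the same approach as the paper: the paper does not give a proof at all but simply states that the result is an immediate consequence of Theorems A, B and Corollary 9.8 of \cite{GabeClassifyOinftyStable}, which is exactly the reduction you sketch. Your discussion of the bookkeeping (why simple unital purely infinite implies $\mathcal{O}_\infty$-stable, why injective into simple implies full) fills in precisely the verifications one would need to make when tracing that citation.
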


The following relies heavily on a result of Phillips and Weaver (\cite{PhillipsWeaver}; using 
ideas from \cite{ManThomsen}).

\begin{lem}\label{TWP}
Let $A$ be a separable \CA\, and $B$ be a non-unital and $\sigma$-unital \CA.
Then two \hm s $\phi_1, \phi_2 \colon A\to C(B)$ are asymptotically Murray--von Neumann equivalent if and only if they are Murray--von Neumann equivalent (see Definition \ref{d:Ext}).
\end{lem}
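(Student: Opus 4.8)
The plan is to prove only the nontrivial implication: asymptotic Murray--von Neumann equivalence implies (honest) Murray--von Neumann equivalence, the reverse being immediate by taking the constant path. So suppose $\phi_1,\phi_2\colon A\to C(B)$ are asymptotically Murray--von Neumann equivalent via a norm-continuous path $\{c_t\}_{t\in[0,\infty)}$ of contractions in $C(B)$ with $\|c_t^\ast\phi_1(a)c_t-\phi_2(a)\|\to 0$ and $\|c_t\phi_2(a)c_t^\ast-\phi_1(a)\|\to 0$ for all $a\in A$. The idea is to lift this path of contractions to a norm-continuous path of contractions in $M(B)$, use the Phillips--Weaver result (building on Manuilov--Thomsen) that the canonical map from norm-continuous paths in $M(B)$ on $[0,\infty)$ into $C(B)$ is, in the appropriate sense, surjective/reparametrizable, and then extract a single contraction in $C(B)$ implementing the equivalence by a reindexing/maximal-element argument exploiting that $A$ is separable.

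First I would fix a countable dense subset $\{a_k\}_{k\geq 1}$ of the unit ball of $A$ and reduce the two asymptotic conditions to a single sequence: choose $t_n\nearrow\infty$ so that $\|c_{t_n}^\ast\phi_1(a_k)c_{t_n}-\phi_2(a_k)\|<1/n$ and $\|c_{t_n}\phi_2(a_k)c_{t_n}^\ast-\phi_1(a_k)\|<1/n$ for $k\le n$. The substantive step is to realize $\{c_t\}$ — or at least a sequence $\{c_{t_n}\}$ — as the image under $\pi_B$ of a norm-continuous path (resp.\ sequence) of contractions $\{C_t\}\subset M(B)$; this is exactly the content of the Phillips--Weaver lifting theorem for contractions from $C(B)=M(B)/B$, which applies since $B$ is $\sigma$-unital. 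Then, viewing $A$ as acting through fixed lifts $\Phi_1,\Phi_2\colon A\to M(B)$ of $\phi_1,\phi_2$ (which exist up to the usual Busby/Stinespring picture — or one works directly with elements of $M(B)$ representing $\phi_i(a_k)$), the differences $C_{t_n}^\ast\Phi_1(a_k)C_{t_n}-\Phi_2(a_k)$ and $C_{t_n}\Phi_2(a_k)C_{t_n}^\ast-\Phi_1(a_k)$ lie within $1/n$ of $B$. A diagonal/telescoping construction over the countable set $\{a_k\}$ then produces a single contraction: concretely, one uses the standard trick of gluing the pieces $C_{t_n}$ along an approximate unit $\{e_\lambda\}$ of $B$ (quasicentral for the relevant elements) to build $C\in M(B)$ with $C^\ast\Phi_1(a_k)C-\Phi_2(a_k)\in B$ and $C\Phi_2(a_k)C^\ast-\Phi_1(a_k)\in B$ for all $k$, hence for all $a\in A$ by density and continuity. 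Setting $c:=\pi_B(C)$ gives a contraction in $C(B)$ with $c^\ast\phi_1(a)c=\phi_2(a)$ and $c\phi_2(a)c^\ast=\phi_1(a)$ for all $a$, i.e.\ $\phi_1\sim_{MvN}\phi_2$.

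I expect the main obstacle to be the gluing/diagonalization step that passes from the whole continuous family (or the sequence $\{C_{t_n}\}$) to a single multiplier $C$: one must ensure simultaneously that $C$ is a genuine contraction in $M(B)$, that it is compatible with \emph{both} the ``forward'' relation $C^\ast\Phi_1 C\approx\Phi_2$ and the ``backward'' relation $C\Phi_2 C^\ast\approx\Phi_1$ modulo $B$, and that the errors, which are only $O(1/n)$ on the $n$-th block, actually disappear (not merely become small) in the quotient — this is where separability of $A$ and the careful choice of a quasicentral approximate unit for $B$ relative to the countably many elements $\{\Phi_i(a_k)\}$ are essential, and where the Phillips--Weaver and Manuilov--Thomsen techniques do the heavy lifting. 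The rest is routine: the reverse implication and the verification that $c$ indeed implements an honest Murray--von Neumann equivalence are immediate from the construction.
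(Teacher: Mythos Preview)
Your proposal locates the right difficulty but misapplies the key tool. Phillips--Weaver's Proposition~1.4 is \emph{not} a lifting theorem for contractions from $C(B)$ to $M(B)$---lifting a single contraction, or a sequence of them, is elementary and requires no such machinery. It is a statement about the asymptotic corona $C(B)_{\mathrm{as}}:=C_b([0,\infty),C(B))/C_0([0,\infty),C(B))$: given a separable $D$, a $\ast$-homomorphism $\Phi\colon D\to C(B)_{\mathrm{as}}$, and a sub-$C^*$-algebra $E\subseteq D$ with $\Phi(E)\subseteq\iota(C(B))$, it produces a $\ast$-homomorphism $\Psi\colon D\to C(B)$ with $\iota\circ\Psi|_E=\Phi|_E$. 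The paper applies this directly and never lifts anything to $M(B)$: take $D=A\star A\star D_0$ (with $D_0$ the universal $C^*$-algebra on a single contraction $x$), map it to $C(B)_{\mathrm{as}}$ via $\iota\phi_1\star\iota\phi_2\star\eta$ where $\eta(x)$ is the class of $\{c_t\}$, observe that the subalgebra $E$ generated by $A_1$, $A_2$, $x^\ast A_1x$, $xA_2x^\ast$ lands in $\iota(C(B))$ precisely by the asymptotic Murray--von Neumann relations, and read off $\Psi(x)\in C(B)$ as the required contraction.

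Your detour through $M(B)$ replaces this with a gluing construction---assembling $C\in M(B)$ from the lifts $C_{t_n}$ along a quasicentral approximate unit---which you correctly flag as the ``main obstacle'' but do not actually carry out. The difficulty is genuine: writing $C_{t_n}^\ast\Phi_1(a_k)C_{t_n}-\Phi_2(a_k)=b_{n,k}+r_{n,k}$ with $b_{n,k}\in B$ and $\|r_{n,k}\|<1/n$, a telescoped sum can kill the $r_{n,k}$-part in the quotient, but the $b_{n,k}$-part is a strictly convergent series of elements of $B$ with no norm control and its limit need not lie in $B$; there are also cross terms between different $C_{t_n}$ to manage, and you have not explained why the glued $C$ is even a contraction. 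Arranging all of this is exactly the reindexing/saturation argument that Phillips--Weaver packages, so invoking it by name at the end does not rescue the earlier mischaracterization or fill the gap.
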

\begin{proof}
Clearly Murray--von Neumann equivalence implies asymptotic Murray--von Neumann equivalence. For the other implication, suppose $\phi_1$ and $\phi_2$ are asymptotically Murray--von Neumann equivalent. For simplicity, we write $C(B)_{\mathrm{as}}$ to denote $C_b([0,\infty), C(B)) / C_0([0,\infty), C(B))$, and $\iota \colon C(B) \to C(B)_{\mathrm{as}}$ to denote the canonical inclusion. By asymptotic Murray--von Neumann equivalence, there is a contraction $c\in C(B)_{\mathrm{as}}$ such that $c^\ast \iota\phi_1(.)c = \iota \phi_2(.)$ and $c\iota\phi_2(.)c^\ast = \iota \phi_1(.)$. 

Let $D$ be the universal $C^\ast$-algebra generated by a contraction $x$, and let $\eta \colon D \to C(B)_{\mathrm{as}}$ be the $\ast$-homomorphism satisfying $\eta(x) = c$. Let 
\[
\Phi := \iota\phi_1 \star \iota\phi_2 \star \eta \colon A \star A \star D \to C(B)_{\mathrm{as}}
\]
where $A \star A \star D$ is the universal free product $C^\ast$-algebra. Write $A_1$ and $A_2$ for the first and second copy respectively of $A$ inside $A\star A \star D$. Let $E \subseteq A \star A \star D$ be the $C^\ast$-subalgebra generated by $A_1$, $A_2$, $x^\ast A_1 x$ and $xA_2 x^\ast$. Clearly $\Phi(E) \subseteq \iota(C(B))$ and therefore, by Proposition 1.4 of \cite{PhillipsWeaver}, there is a $\ast$-homomorphism $\Psi \colon A\star A \star D \to C(B)$ such that $\iota(\Psi(y)) = \Phi(y)$ for all $y\in E$. In particular, $\Psi(x)\in C(B)$ is a contraction such that for all $a\in A$ (with $a_1 \in A_1$ the corresponding element) we have
\[
\iota(\Psi(x)^\ast \phi_1(a) \Psi(x)) = \iota(\Psi(x^\ast a_1 x)) = \Phi(x^\ast a_1 x) = c^\ast \iota \phi_1(a) c = \iota\phi_2(a).
\]
As $\iota$ is injective, it follows that $\Psi(x)^\ast \phi_1(a) \Psi(x) = \phi_2(a)$ for all $a\in A$ and similarly we obtain $\Psi(x) \phi_2(a) \Psi(x)^\ast = \phi_1(a)$ for all $a\in A$. Hence $\phi_1 \sim_{MvN} \phi_2$ as desired.
\end{proof}

{The following is Theorem \ref{thm:1} from the introduction.}

\begin{thm}{{[cf.  Theorem 3.7 of \cite{LinExtQuasidiagonal}]}}\label{TH2}
Let $A$ be a separable  amenable \CA\, and let $B$ be a non-unital and $\sigma$-unital 
simple \CA\, with continuous scale. Then the canonical homomorphism $\Ext(A,B) \to KK(A,C(B))$ is an isomorphism.

\end{thm}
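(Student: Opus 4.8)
The plan is to show the canonical map $\Ext(A,B) \to KK(A, C(B))$ is both surjective and injective, using the machinery assembled earlier in the section. Throughout, write $C := C(B) = M(B)/B$, which is simple, unital, and purely infinite since $B$ is $\sigma$-unital with continuous scale (Theorem \ref{Tcontscal}).

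\textbf{Surjectivity.} First I would reduce to representing an arbitrary $KK$-class by a genuine essential non-unital extension. Given $\kappa \in KK(A, C)$, Theorem \ref{t:KP} (the Kirchberg--Phillips-type statement, valid since $A$ is separable amenable and $C$ is unital simple purely infinite) produces an injective homomorphism $\psi \colon A \to C$ with $[\psi] = \kappa$. This $\psi$ need not be non-unital, so I would conjugate by a proper isometry: pick an isometry $S \in C$ with $SS^* \neq 1$ (which exists since $C$ is properly infinite), and set $\tau := S\psi(\cdot)S^*$. Then $\tau$ is an injective, hence essential, non-unital extension, and $\tau$ is Murray--von Neumann equivalent to $\psi$ (via $c = S$), so $[\tau] = [\psi] = \kappa$ in $KK$. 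Thus $[\tau] \in \Ext(A,B)$ maps to $\kappa$, giving surjectivity. (This is essentially the same move used in the proof of Theorem \ref{thm:Ext1Group}.)

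\textbf{Injectivity.} This is where the real content lies, and I expect it to be the main obstacle. Suppose $\tau_1, \tau_2 \colon A \to C$ are non-unital essential extensions with $[\tau_1] = [\tau_2]$ in $KK(A, C)$. By Theorem \ref{t:KP} again, equality of $KK$-classes of the injective homomorphisms $\tau_1, \tau_2$ means exactly that $\tau_1$ and $\tau_2$ are asymptotically Murray--von Neumann equivalent. By Lemma \ref{TWP} (the Phillips--Weaver lifting argument applied to $C = C(B)$ with $B$ non-unital $\sigma$-unital), asymptotic Murray--von Neumann equivalence implies (honest) Murray--von Neumann equivalence: $\tau_1 \sim_{MvN} \tau_2$. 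Now invoke Proposition \ref{TH1Pnonunital}: since $B$ is non-unital $\sigma$-unital simple with continuous scale and $\tau_1, \tau_2$ are non-unital essential extensions, they have large complement, and $\sim_{MvN}$ coincides with $\sim$ (strong unitary equivalence). Hence $\tau_1 \sim \tau_2$, i.e.\ they define the same class in $\Ext(A,B)$. This shows the map is injective.

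\textbf{Homomorphism property and conclusion.} It remains to note the canonical map is a semigroup (in fact group, by Theorem \ref{thm:Ext1Group}) homomorphism: the BDF sum on $\Ext(A,B)$ corresponds under the map to the addition in $KK(A,C)$, because the BDF sum $S\tau_1(\cdot)S^* + T\tau_2(\cdot)T^*$ with $SS^* + TT^* \le 1$ represents, in the purely infinite simple algebra $C$, the $KK$-sum of $[\tau_1]$ and $[\tau_2]$ (standard, via the isometries implementing the sum in $KK$-theory of maps into a Cuntz-algebra-like target). Combining surjectivity and injectivity with this, the canonical homomorphism $\Ext(A,B) \to KK(A, C(B))$ is an isomorphism. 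The technical heart is genuinely the chain Theorem \ref{t:KP} $\Rightarrow$ Lemma \ref{TWP} $\Rightarrow$ Proposition \ref{TH1Pnonunital}; once those are in hand the argument is short, so in the write-up I would simply assemble them, being careful that the non-unitality hypothesis is used precisely where Proposition \ref{TH1Pnonunital} needs it.
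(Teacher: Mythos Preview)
Your proposal is correct and follows essentially the same approach as the paper: surjectivity via Theorem \ref{t:KP} (with the extra step of conjugating by a proper isometry to ensure non-unitality, as in the proof of Theorem \ref{thm:Ext1Group}), and injectivity via the chain Theorem \ref{t:KP} $\Rightarrow$ Lemma \ref{TWP} $\Rightarrow$ Proposition \ref{TH1Pnonunital}. The paper's proof is just a one-line assembly of these same ingredients.
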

\begin{proof}
Surjectivity of the map follows from Theorem \ref{t:KP}, and injectivity is obtained by combining Proposition \ref{TH1Pnonunital}, Theorem \ref{t:KP} and Lemma \ref{TWP}.
\end{proof}
We now consider the unital case. If $A$ is unital we let 
$$KK(A, C(B))_1 =_{df} \{ x \in KK(A, C(B)) : 
{{x}}([1_A]_{K_0(A)}) = [1_{C(B)}]_{K_0(C(B))} \}.$$
As with the map $\Ext(A,B) \to KK(A,C(B))$, there is an obvious map of sets $\Ext^w(A,B)_1 \to KK(A,C(B))_1$ given by mapping a unital Busby invariant to its $KK$-class. This is a homomorphism provided $[1_{C(B)}]_0 = 0$ in $K_0(C(B))$.

\begin{thm}\label{thm:ClassifyUnitalExt}
Let $A$ be a unital separable amenable C*-algebra and let $B$ be a non-unital but $\sigma$-unital simple C*-algebra
with continuous scale.  

(1) Then the map $\Ext^w(A, B)_1  \to KK(A, C(B))_1$ is a bijection. 

{{(2) Let $\phi_1, \phi_2: A\to M(B)/B$ be two unital essential extensions. Then 
$\phi_1, \phi_2$ are weakly unitarily equivalent if and only if $[\phi_1]=[\phi_2].$}}

(3)  if, in addition,  $[1_{C(B)}]_0 = 0$ in $K_0(C(B))$ then this is an isomorphism of groups.
%
\end{thm}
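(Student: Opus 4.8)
The plan is to reduce the unital statement to the non-unital classification theorem (Theorem \ref{TH2}) together with the semigroup/group facts already established (Proposition \ref{TH1Pnonunital}, Theorems \ref{thm:Ext1Group}, \ref{thm:Extw1Group}, and Lemma \ref{TWP}). For part (1), surjectivity of $\Ext^w(A,B)_1 \to KK(A,C(B))_1$ should follow from Theorem \ref{t:KP}: given $x \in KK(A,C(B))_1$, represent it by an injective homomorphism $\psi \colon A \to C(B)$; since $x([1_A]) = [1_{C(B)}]$, the projection $\psi(1_A)$ has the same $K_0$-class as $1_{C(B)}$ in the simple purely infinite (hence $K_0$-injective in the relevant sense) algebra $C(B)$, so $\psi(1_A) \sim 1_{C(B)}$ and after conjugating by a partial isometry we may take $\psi$ unital; this is the desired unital essential extension. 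For injectivity one combines part (2) with the observation that $[\phi_1] = [\phi_2]$ in $KK$ is exactly the condition controlling weak unitary equivalence.

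For part (2), which is the heart of the matter, I would argue as follows. If $\phi_1 \sim_{wu} \phi_2$ then clearly $[\phi_1] = [\phi_2]$ since $KK$ is invariant under unitary conjugation in $C(B)$. Conversely, suppose $[\phi_1] = [\phi_2]$ in $KK(A, C(B))$. By Theorem \ref{t:KP}, $\phi_1$ and $\phi_2$ are asymptotically Murray--von Neumann equivalent (both are injective since essential), and by Lemma \ref{TWP} they are therefore Murray--von Neumann equivalent: there is a contraction $c \in C(B)$ with $c^\ast \phi_1(\cdot) c = \phi_2(\cdot)$ and $c\phi_2(\cdot)c^\ast = \phi_1(\cdot)$. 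Now I want to upgrade $\sim_{MvN}$ to $\sim_{wu}$. The natural route is a large-complement argument as in Proposition \ref{p:largecompMvN}, but applied inside $C(B)$ rather than passing to $M(B)$: since $\phi_i$ is \emph{unital}, $\phi_i(A)$ is a unital subalgebra and a priori has no nonzero complement, so the Proposition \ref{TH1Pnonunital} trick does not apply directly. The fix is to first replace each $\phi_i$ by $S\phi_i(\cdot)S^\ast$ for a proper isometry $S \in C(B)$ — this is Murray--von Neumann (indeed unitarily, up to the compression) equivalent to $\phi_i$ inside $C(B)$ in a way that is tracked by the ambient unitary group, has large complement, and does not change the weak unitary equivalence class among the compressions; then Proposition \ref{p:largecompMvN} gives that $S\phi_1(\cdot)S^\ast \sim S\phi_2(\cdot)S^\ast$ via a unitary of $M(B)$, hence $\phi_1 \sim_{wu} \phi_2$ via the corresponding unitary of $C(B)$. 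One must be careful here that the compressed maps are non-unital essential extensions so that Proposition \ref{TH1Pnonunital} applies; this is clear since $1 - SS^\ast \neq 0$.

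For part (3), assume $[1_{C(B)}]_0 = 0$ in $K_0(C(B))$. Then by Definition \ref{DefBDF-2} the BDF sum with $SS^\ast + TT^\ast = 1$ is defined on unital essential extensions, $\Ext^w(A,B)_1$ is an abelian group by Theorem \ref{thm:Extw1Group}, and $KK(A,C(B))_1$ is a coset of the subgroup $\ker(\mathrm{ev}_{[1_A]})$, which under the hypothesis $[1_{C(B)}]_0 = 0$ is all of $KK(A,C(B))$ — in any case it carries an abelian group structure for which the map $\Ext^w(A,B)_1 \to KK(A,C(B))_1$ is a homomorphism, as noted just before the theorem. Since it is already a bijection by part (1), it is an isomorphism of groups; the only thing to check is additivity, and that follows because the BDF sum on extensions corresponds to the Cuntz-pair/addition structure on $KK$ induced by the same pair of isometries $S, T$, which is a routine verification using $KK$-additivity. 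The main obstacle I anticipate is the large-complement upgrade in part (2): making precise that compressing by a proper isometry produces genuinely non-unital essential extensions whose weak unitary equivalence is equivalent to that of the originals, and checking the $K_1$-injectivity / $K_1$-triviality bookkeeping needed to lift the unitary from $C(B)$ back along $\pi_B$ — though since $C(B)$ is simple purely infinite it is $K_1$-injective and this is exactly the situation handled by Proposition \ref{p:largecompMvN}.
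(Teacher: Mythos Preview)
Your surjectivity argument for (1) and your treatment of (3) match the paper's approach essentially verbatim. For (2) (and hence injectivity in (1)), you take a correct but needlessly circuitous route. The paper's key observation is that for \emph{unital} Busby invariants, Murray--von Neumann equivalence \emph{is} weak unitary equivalence, with no upgrade needed: if $c^*\phi_1(\cdot)c = \phi_2(\cdot)$ and $c\phi_2(\cdot)c^* = \phi_1(\cdot)$, then evaluating at $1_A$ gives $c^*c = \phi_2(1_A) = 1_{C(B)}$ and $cc^* = \phi_1(1_A) = 1_{C(B)}$, so $c$ is already a unitary in $C(B)$. Thus once Theorem~\ref{t:KP} and Lemma~\ref{TWP} give $\phi_1 \sim_{MvN} \phi_2$, you are done immediately. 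Your compression-by-a-proper-isometry detour does work (one checks that $ScS^*$ implements $\sim_{MvN}$ for the compressed maps, applies Proposition~\ref{p:largecompMvN}, and then verifies that $S^*\pi(U)S$ is a unitary in $C(B)$ conjugating $\phi_1$ to $\phi_2$ since $\pi(U)$ must commute with $SS^*$), but it obscures the one-line reason the unital case is actually \emph{easier} than the non-unital case at this step, and your stated ``main obstacle'' is in fact a non-issue.
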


\begin{proof}
Murray--von Neumann equivalence of unital Busby invariants is exactly weak unitary equivalence, so the injectivity of the map follows from Theorem \ref{t:KP} and Lemma \ref{TWP}. For surjectivity, let $x\in KK(A,C(B))_1$, and use Theorem \ref{t:KP} to find an injective homomorphism $\phi \colon A \to C(B)$ with $KK(\phi) = x$. Since $x\in KK(A,C(B))_1$ we have $[\phi(1_A)]_0 = [1_{C(B)}]_0$ in $K_0(C(B))$. As $C(B)$ is simple purely infinite, it follows that there is an isometry $v\in C(B)$ with $vv^\ast = \phi(1_A)$. Then $\psi := v^\ast \phi(.) v\colon A \to C(B)$ is a unital homomorphism with $KK(\psi)= KK(\phi) = x$, and this $\psi$ induces an element in $\Ext^w(A,B)_1$. Hence the map $\Ext^w(A,B)_1 \to KK(A,C(B))_1$ is a bijection. {{This proves (1), and (2) also follows.}}
The map  is clearly an isomorphism
of groups when $[1_{C(B)}]_0 = 0$ in $K_0(C(B)).$ 
\end{proof}

We now give K-theory conditions on the relevant algebras that
are necessary and sufficient for all weakly unitarily equivalent
essential extensions to be unitarily equivalent.

Towards these K-theory conditions, 
recall that, for any 
\CA s $A$ and $C$, with $A$ unital,  
\beq
\label{equdf:H1}
H_1(K_0(A), K_1(C)) :=  \{x \in K_1(C) :  x = \lambda([1_A]) \makebox{ for some  }  \lambda\in {\rm Hom}(K_0(A), K_1(C))\}.
\eneq
{{(See \cite{Linproc09} and \cite[Definition 12.1]{Linamj09}.)}}

\begin{thm}\label{TH1}
Let $B$ be a non-unital but $\sigma$-unital simple \CA\, with continuous scale.
Let $A$ be a separable unital amenable \CA\, which satisfies 
the UCT. Suppose that $\Ext^w(A, B)_1  \neq \emptyset$, or in other words, suppose that there exists an essential unital extension $0 \to B \to E \to A \to 0$.
Then the following statements are equivalent:
\begin{enumerate}
\item Any two unital extensions of $A \rightarrow C(B)$
which are weakly unitarily equivalent  
are unitarily equivalent, i.e., $\sim$ is the same as
$\sim_{wu}$.
\item \beq\nonumber
&&\hspace{-0.8in}H_1(K_0(A), K_1({C(B)}))/\pi_*(U(M(B))/U_0(M(B)))\\\label{TH1-1}
%
&&\hspace{0.6in}=K_1({C(B)})/{{\pi_*(U(M(B))/U_0(M(B))),}}
\eneq
\noindent
where $\pi: M(B)\to C(B)$  is the quotient map and 
$\pi_*: U(M(B))/U_0(M(B))\to K_1(C(B))$ is the induced \hm.
\end{enumerate}
 \end{thm}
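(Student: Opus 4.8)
The plan is to analyze when the natural surjection $\Ext^w(A,B)_1 \to \Ext(A,B)_1$ (forgetting down from strong to weak unitary equivalence) is injective, and to identify the fibers of this map with a quotient of $K_1(C(B))$. First I would recall from Theorem \ref{thm:ClassifyUnitalExt} that $\Ext^w(A,B)_1$ is in bijection with $KK(A,C(B))_1$, so condition (1) is equivalent to saying that two unital essential extensions $\tau_1,\tau_2 \colon A \to C(B)$ with the same class in $KK(A,C(B))$ are unitarily equivalent. The key is to understand, for a \emph{fixed} unital essential extension $\tau$, the set of unitary-equivalence classes of unital essential extensions that are weakly unitarily equivalent to $\tau$: if $v\tau(\cdot)v^\ast = \tau'(\cdot)$ for a unitary $v\in C(B)$, then $\tau'$ is unitarily equivalent to $\tau$ precisely when $v$ can be modified, by multiplying on the right by a unitary in $\tau(A)'\cap C(B)$, to lie in $\pi(U(M(B)))$. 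Since $\tau$ is injective and non-unital, one can compute the relative commutant modulo the relevant identifications using that $C(B)$ is simple purely infinite and that $\tau$ has large complement (Proposition \ref{TH1Pnonunital}): the obstruction to absorbing $v$ into $\pi(U(M(B)))$ is measured by the class $[v]_1 \in K_1(C(B))$, taken modulo two subgroups, namely $\pi_*(U(M(B))/U_0(M(B)))$ (which is what can always be realized by an actual multiplier unitary) and the set of classes that arise from unitaries in the relative commutant of $\tau(A)$, which by the UCT and a $KK$-computation is exactly $H_1(K_0(A), K_1(C(B)))$.

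Concretely, the steps I would carry out are: (i) Using the UCT for $A$, identify $KK(A,C(B)) \cong \Hom(K_0(A),K_0(C(B))) \oplus \Hom(K_1(A),K_1(C(B)))$ modulo $\Ext^1_{\Z}$-terms, and more importantly identify which unitaries $w \in C(B)$ lie in (a suitable stabilization of) the relative commutant of an absorbing unital essential extension $\tau$ of a given $KK$-class; show that the possible $K_1$-classes of such $w$ form precisely $H_1(K_0(A),K_1(C(B)))$ — this uses that absorbing extensions $\tau$ of fixed $KK$-class are unique up to unitary equivalence (Theorem \ref{thm:ClassifyUnitalExt}) together with the standard computation of $K_1$ of the relative commutant/mapping-cone type algebra attached to $\tau$. (ii) Show that $\tau$ and $\tau' = v\tau(\cdot)v^\ast$ are unitarily equivalent iff the class of $v$ in $K_1(C(B))/\pi_*(U(M(B))/U_0(M(B)))$ lies in the image of $H_1(K_0(A),K_1(C(B)))$; the ``if'' direction uses that a unitary with trivial class modulo these subgroups is, after correction by an element of the relative commutant, liftable to a unitary in $M(B)$ (here $K_1$-injectivity of $C(B)$, valid since it is simple purely infinite, lets one pass from trivial $K_1$-class to $U_0$ and hence lift), and the ``only if'' direction is a direct bookkeeping of $K_1$-classes. (iii) Conclude: every weak-unitary-equivalence class of unital essential extensions contains a single unitary-equivalence class for every $\tau$ iff $H_1(K_0(A),K_1(C(B)))$ surjects onto $K_1(C(B))/\pi_*(U(M(B))/U_0(M(B)))$, which is exactly \eqref{TH1-1}.

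The main obstacle I expect is step (i): pinning down exactly which subgroup of $K_1(C(B))$ is realized by unitaries in the (appropriately stabilized) relative commutant of an absorbing unital essential extension, and checking it equals $H_1(K_0(A),K_1(C(B)))$ rather than something larger or smaller. This requires the UCT, the absorption/uniqueness theorem for unital essential extensions into the simple purely infinite algebra $C(B)$, and a careful six-term exact sequence argument for the commutant algebra (or equivalently for a mapping cone of $\tau$); one must be careful that $A$ being unital but $\tau$ non-unital is handled correctly, and that the BDF sum structure from Definition \ref{DefBDF-2} (which needs $[1_{C(B)}]_0 = 0$) is \emph{not} secretly being invoked — the argument here should work at the level of a single fixed $\tau$ without group structure, using only Theorem \ref{thm:ClassifyUnitalExt}(2). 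The remaining steps (ii)–(iii) are then comparatively routine diagram chases and applications of $K_1$-injectivity and liftability of $U_0$-unitaries along $\pi_B \colon M(B) \to C(B)$.
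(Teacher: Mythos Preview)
Your overall strategy is correct and matches the paper's: the obstruction to upgrading weak unitary equivalence to unitary equivalence is measured by the class of the conjugating unitary in $K_1(C(B))/\pi_*(U(M(B))/U_0(M(B)))$, and this obstruction vanishes exactly when that class lies in the image of $H_1(K_0(A),K_1(C(B)))$. Your steps (ii) and (iii) are essentially what the paper does, including the use of $K_1$-injectivity of $C(B)$ to lift unitaries in $U_0(C(B))$.

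The real difference is in step (i). You propose computing the $K_1$-classes realizable by unitaries in the relative commutant $\tau(A)'\cap C(B)$ via a six-term exact sequence for the commutant or a mapping cone. The paper avoids analyzing the commutant altogether and instead uses an $A\otimes C(\T)$ trick: a unitary $w$ commuting with $\tau(A)$ is exactly the extra data needed to extend $\tau$ to a unital homomorphism $\tau_3\colon A\otimes C(\T)\to C(B)$ with $\tau_3(1\otimes z)=w$. One then applies Theorem~\ref{thm:ClassifyUnitalExt} to $A\otimes C(\T)$ (which is still separable, nuclear, UCT), together with the split $KK(A\otimes C(\T),C(B))\cong KK(A,C(B))\oplus KK^1(A,C(B))$ and the UCT, to realize any prescribed $\xi\in\Hom(K_0(A),K_1(C(B)))$ as the $K_0(A)$-component of $[\tau_3]$; then $[\tau_3(1\otimes z)]=\xi([1_A])$. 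Since Theorem~\ref{thm:ClassifyUnitalExt} only yields $\tau_3|_{A\otimes 1}$ \emph{weakly} unitarily equivalent to the given $\tau$ (note your phrase ``unique up to unitary equivalence'' is a slip here---the classification is only up to weak unitary equivalence, which is precisely the distinction the theorem is about), one conjugates $\tau_3(1\otimes z)$ by the implementing unitary $W\in C(B)$ to land in the actual commutant of $\tau(A)$, without changing its $K_1$-class. For the converse containment (every commuting unitary has $K_1$-class in $H_1$) the paper simply reads off the induced homomorphism $K_0(A)\to K_1(C(B))$ from the extension $\phi(a\otimes f)=\tau(a)f(w)$.

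Your proposed six-term approach for the commutant could in principle be made to work, but there is no clean off-the-shelf formula for $K_*(\tau(A)'\cap C(B))$ in this generality, and the argument would likely end up reconstructing the $A\otimes C(\T)$ picture anyway. The paper's route is shorter because it reuses Theorem~\ref{thm:ClassifyUnitalExt} (applied to $A\otimes C(\T)$) as a black box instead of opening up the commutant.
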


\begin{proof}
We first prove that $(2)$ implies $(1)$.
Let {{$\tau_1, \tau_2: A\to C(B)$ }} be 
two essential {unital} extensions  such that
there is a unitary $u\in M(B)/B$ where  
$u^*\tau_1(a)u=\tau_2(a)$ for all $a\in A.$ 

By (\ref{TH1-1}), there is a \hm\, $\xi: K_0(A)\to K_1(M(B)/B)$
such that $\xi([1_A])+[u]\in {{\pi_*(U(M(B))/U_0(M(B)))}}.$
Let $j:K_0(A)\to  K_1(A\otimes C(\T))=K_1(A)\oplus K_0(A)$ 
be the identification with the second summand. 
In particular, $j([1_A])=[1\otimes z]$ in $K_1(A\otimes C(\T)),$
where $z\in C(\T)$ is  the identity function from the unit circle to itself. 
By the UCT, choose  an element $x\in KK^1(A, M(B)/B)$
such that $x$  induces $\xi$ as a \hm\, from $K_0(A)$ to $K_1(M(B)/B).$
Let $$y:=KK(\tau_1)\oplus x\in KK(A\otimes C(\T), M(B)/B)=KK(A, M(B)/B)\oplus KK^1(A, M(B)/B)$$
(see, for example, Theorem 19.6.1 of 
\cite{BlackadarBook} with trivial $\af$). 
By Theorem \ref{thm:ClassifyUnitalExt}, 
there is an essential {unital} extension 
$\tau_3: A\otimes C(\T)\to M(B)/B$ such that
$KK(\tau_3)=y.$ Let $\tau_4:={\tau_3}|_{A\otimes 1}: A
\to M(B)/B.$
Then $KK(\tau_4)=KK(\tau_1)$,  
{and therefore} by Theorem {\ref{thm:ClassifyUnitalExt}} 
there exists a unitary $W\in M(B)/B$
such that $W^*\tau_4(a)W=\tau_1(a)$ for all $a\in A.$
Note that 
\beq
[\tau_3(1\otimes z)]+[u]=\xi([1_A])+[u]\in \pi_*(U(M(B))/U_0(M(B))).
\eneq
Also, for all $a\in A,$
\beq
\tau_3(1_A\otimes z)\tau_4(a)&=&\tau_3(1_A\otimes z)\tau_3(a\otimes 1_{C(\T)})
\\
&=&\tau_3(a\otimes 1_{C(\T)})\tau_3(1\otimes z)=\tau_4(a)\tau_3(1\otimes z).
\eneq
Put $v=W^*\tau_3(1\otimes z) Wu.$ Then $[v]\in \pi_*(U(M(B))/U_0(M(B))).$
In other words, there is a unitary $V\in M(B)$ such that  $[\pi(V)]=[v]$ in $K_1(M(B)/B).$ 
Therefore, since $M(B)/B$ is purely infinite and simple,  $v\pi(V)^*\in U_0(M(B)/B).$
This implies that there is $V_0\in M(B)$ such that $\pi(V_0)=v\pi(V)^*.$
Let $V_1=V_0V.$ Then $\pi(V_1)=v\pi(V)^*\pi(V)=v.$ 
Moreover
\beq
v^*\tau_1(a)v&=&u^*W^*\tau_3(1\otimes z)^*W\tau_1(a)W^*\tau_3(1\otimes z)Wu\\
&=&u^*W^*\tau_3(1\otimes z)^*\tau_4(a)\tau_3(1\otimes z)Wu\\
&=&u^*W^*\tau_4(a)Wu=u^*\tau_1(a)u=\tau_2(a)\rforal a\in A.
\eneq

For {$(1)\Rightarrow (2)$}, 
let $u\in M(B)/B$ be a unitary.
{Let} $\tau: A\to M(B)/B$ {be an essential unital extension.} 
Put $\tau_1:={\rm Ad}\, u\circ \tau.$ Then $\tau_1$ and $\tau$ are weakly unitarily equivalent.
If $\tau_1$ and $\tau$ are strongly unitarily equivalent, then there is a unitary 
$V\in M(B)$ such that
$\pi(V)^*\tau_1\pi(V)=\tau.$
Put $w=u \pi(V).$ 
Then 
\beq
w^*\tau(a)w=w^*(u\tau_1(a)u^*)w=\pi(V)^*\tau_1(a)\pi(V)=\tau(a)\rforal a\in A.
\eneq
Define $\phi: A\otimes C(\T)\to M(B)/B$ by 
\beq
\phi(a\otimes f(z))=\tau(a)f(w)\rforal a\in A\andeqn f\in C(\T).
\eneq
Then $\phi$ is an extension.
Consider \hm\, $\phi_{*1}: K_1(A\otimes C(\T))\to K_1(M(B)/B).$
By identifying $K_0(A)$ with the summand of $K_1(A\otimes C(\T))=K_1(A)\oplus K_0(A),$
one obtains the \hm\, $\lambda:={\phi_{*1}}|_{K_0(A)}  \colon  K_0(A)\to K_1(M(B)/B)$
which maps $[1_A]$ to $[w].$
Denote by $q: K_1(M(B)/B)\to K_1(M(B)/B)/{{\pi_*(U(M(B))/U_0(M(B)))}}$
the quotient map. 
Since $w=u\pi(V),$
one concludes that
\beq
q([u])\in H_1(K_0(A), K_1(M(B)/B))/{{\pi_*(U(M(B))/U_0(M(B))).}}
\eneq
Since $u$ is arbitrarily chosen and since $M(B)/B$ is purely infinite and thus $K_1$-surjective, this implies that
\beq\nonumber
H_1(K_0(A), K_1(M(B)/B))/\pi_*(U(M(B))/U_0(M(B)))
=K_1(M(B)/B)/{{\pi_*(U(M(B))/U_0(M(B))).}}
\eneq
\end{proof}

\begin{rem}\label{RH1}
In the case $K_1(M(B)/B)=\pi_{*1}(K_1(M(B))),$ of course,
the relations of  unitary and  weak 
unitary equivalence, {{on extensions of $A$ by $B$, }} are the same.

In the case $K_1(M(B))=\{0\},$ the assumption \eqref{TH1-1} means\\ 
$H_1(K_0(A), K_1(M(B)/B))=K_1(M(B)/B),$ which is prima facie 
a stronger condition. 
(This case occurs often;  e.g., see Theorem \ref{thm:K1M(B)}; see
also  
\cite{BlackadarBook} Proposition 12.2.1, \cite{LinCERRR0} and
\cite{LinWvNII}.) 

Suppose that $A=C(X)$ for some compact metric space $X.$
Then $K_0(A)=\Z\cdot [1_A]\oplus G$ for some subgroup $G\subset K_0(A).$
Then any \hm\, $\lambda: \Z\cdot [1_A]\to K_1(M(B)/B)$ can be extended to a \hm\, 
in ${\rm Hom}(K_0(A), K_1(M(B)/B)).$ It follows 
that $H_1(K_0(A), K_1(M(B)/B))=K_1(M(B)/B).$
By Theorem \ref{TH1}, 
if $A$ is commutative, 
unitary and weak unitary equivalence are the same.
This of course covers all cases that $K_0(A)=\Z\cdot [1_A]\oplus G$
(not just the commutative case).  

If $A={\tilde C}$ for some non-unital \CA\, $C,$ then $K_0(A)=\Z\cdot [1_{\tilde C}]\oplus K_0(C).$
As above, $H_1(K_0(A), K_1(M(B)/B))=K_1(M(B)/B).$
Theorem \ref{TH1} also explains 
why all weak unitary equivalences among non-unital essential extensions
are in fact unitary equivalences.

{{If $m\cdot [1_A]\not=0$ for any $m\in \N$ and 
if}}  $K_1(M(B)/B)$ is divisible, unitary and weak unitary equivalence are the same 
as
\beq
 H_1(K_0(A), K_1(M(B)/B))=K_1(M(B)/B).
 \eneq 
   In fact, one only needs that 
$K_1(M(B)/B)/{{\pi_*(U(M(B))/U_0(M(B)))}}$
is divisible.
Finally, let us point out, there are many pairs $A$ and $B$
for which
\beq
H_1(K_0(A), K_1(M(B)/B))/{{\pi_*(U(M(B))/U_0(M(B))}}\\
=K_1(M(B)/B)/{{\pi_*(U(M(B))/U_0(M(B)))}}.
\eneq
This explains the reason why often unitary and weak unitary equivalence are the same for many classical cases.

\end{rem}

{\bf{Extensions with purely infinite ideals}}

\vspace{0.2in}

We will end this section by considering the case where the $C^\ast$-algebra $B$ in the above results is $\sigma$-unital, non-unital, simple and purely infinite. In this case $B$ is stable by Zhang's dichotomy \cite{Zhangdichotomy} and has continuous scale since $a \lesssim b$ for all non-zero $a,b\in B_+$. We will observe a  result about liftability of such extensions which is surely known to experts (it can also be proved by classical BDF-Kasparov theory). We include it since we do not believe it appears explicitly in this form in the published literature.

We emphasize that in the unital case below, the lifting homomorphism is not assumed to be unital.

\begin{thm}\label{Tpurelyinfinite}
Let $B$ be a non-unital and $\sigma$-unital purely infinite simple C*-algebra, and let $A$ be a separable  amenable 
\CA.
Then an essential
extension $\tau: A\to M(B)/B$ (unital or non-unital)
 is liftable  if and only if $KK(\tau)=0.$ 
\end{thm}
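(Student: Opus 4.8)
The plan is to prove both implications separately. The implication ``liftable $\Rightarrow KK(\tau)=0$'' is the easy direction: if $\phi\colon A\to M(B)$ is a $\ast$-homomorphism with $\pi_B\circ\phi=\tau$, then $\tau$ factors as $A\xrightarrow{\phi} M(B)\xrightarrow{\pi_B} M(B)/B$, and since $M(B)$ is the multiplier algebra of a stable $\sigma$-unital $C^\ast$-algebra it is properly infinite with trivial $K$-theory (indeed $KK(A,M(B))=0$ by a standard Eilenberg swindle / contractibility argument), so $KK(\tau)=\pi_{B,*}KK(\phi)=0$. I would cite the standard fact that $M(B)$ with $B$ stable is $K$-contractible; alternatively note $KK(\tau)$ is the image of $0\in KK(A,M(B))$.

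For the converse, suppose $KK(\tau)=0$. The point is to produce an honest lift. First I would observe that since $B$ is purely infinite simple, non-unital and $\sigma$-unital, Zhang's dichotomy gives $B$ stable, and $C(B)=M(B)/B$ is simple purely infinite; in particular $C(B)$ is unital, $K_1$-injective, and absorbs stably. Then I would invoke Theorem~\ref{t:KP}: since $KK(\tau)=0$, the (injective, as $\tau$ is essential) homomorphism $\tau\colon A\to C(B)$ is asymptotically Murray--von Neumann equivalent to any homomorphism with trivial $KK$-class; in particular it is asymptotically Murray--von Neumann equivalent to a trivial (liftable) essential extension $\tau_0$, e.g. one of the form $\pi_B\circ\phi_0$ for a suitable $\phi_0\colon A\to M(B)$ (such $\phi_0$ exists by the purely infinite absorption theory, using that $B$ stable admits trivial absorbing extensions). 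By Lemma~\ref{TWP}, asymptotic Murray--von Neumann equivalence upgrades to genuine Murray--von Neumann equivalence, so $\tau\sim_{MvN}\tau_0$. By Proposition~\ref{TH1Pnonunital} in the non-unital case, or Proposition~\ref{p:largecompMvN} together with the large-complement / $K_1$-injectivity of $C(B)$ in the unital case (note here the lift need not be unital, which is exactly why large complement is available), $\tau\sim_{MvN}\tau_0$ implies $\tau\sim_{wu}\tau_0$, i.e. there is a unitary $v\in C(B)$ with $v\tau_0(\cdot)v^*=\tau(\cdot)$.

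It then remains to lift this unitary conjugation. Since $C(B)$ is simple purely infinite it is $K_1$-surjective and $K_1$-injective; but more directly, $v$ need not lift to a unitary in $M(B)$ unless $[v]_1=0$. To handle this I would absorb a trivial extension: replace $\tau_0$ by $\tau_0\oplus\theta$ for a trivial absorbing $\theta$, which changes nothing up to unitary equivalence and allows one to perturb $v$ within its weak-unitary-equivalence class so that its $K_1$-class vanishes (using that the unitary group of a properly infinite unital $C^\ast$-algebra, modulo $U_0$, is flexible under the BDF sum and that $\tau_0$ can be taken absorbing). With $[v]_1=0$ and $C(B)$ $K_1$-injective, $v\in U_0(C(B))$, hence $v$ lifts to a unitary $u\in M(B)$; then $\mathrm{Ad}(u)\circ\phi_0$ is a $\ast$-homomorphism $A\to M(B)$ lifting $\tau$.

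The main obstacle is precisely this last $K_1$-lifting step: one must arrange that the implementing unitary $v\in C(B)$ can be chosen in $U_0(C(B))$, which requires the absorption/stabilization argument to kill its $K_1$-class while preserving that the comparison extension $\tau_0$ remains liftable. Everything else is a formal assembly of Theorems~\ref{t:KP}, \ref{TWP}, \ref{TH1Pnonunital} and the standard structure theory of purely infinite simple corona algebras.
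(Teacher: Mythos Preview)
Your forward direction and non-unital converse are fine and match the paper's approach; note only that in the non-unital case Proposition~\ref{TH1Pnonunital} already produces a unitary in $M(B)$ (not merely in $C(B)$), so there is no $K_1$-lifting issue at all there --- you are essentially re-deriving Theorem~\ref{TH2} and should simply cite it.

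The genuine gap is in the unital case, at exactly the point you flag. Your proposed fix --- ``replace $\tau_0$ by $\tau_0\oplus\theta$ \dots\ and perturb $v$ so that its $K_1$-class vanishes'' --- does not work as stated. Absorbing a trivial $\theta$ gives $\tau_0\oplus\theta\sim_{wu}\tau_0$ via some unitary $w$, and then $vw$ implements $\tau\sim_{wu}\tau_0\oplus\theta$; but you have no control over $[w]_1$, so you cannot arrange $[vw]_1=0$ this way. (Also, the parenthetical about large complement is confused: a \emph{unital} $\tau$ never has large complement, since $\tau(A)^\perp=\{0\}$; the fact that its lift need not be unital is irrelevant to this.) The paper sidesteps the $K_1$ obstruction entirely with a different idea: since $B$ is purely infinite simple, every class in $K_0(B)$ is positive (Cuntz), and hence by Nagy's lifting result every unitary $w\in C(B)$ lifts to an \emph{isometry} $v\in M(B)$. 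One does not need a unitary lift: if $\phi_1\colon A\to M(B)$ is a unital homomorphism with $\pi_B\circ\phi_1$ essential, then $v\phi_1(\cdot)v^*$ is still a $\ast$-homomorphism (because $v^*v=1$), it need not be unital, and it lifts $\tau=w\,\pi_B(\phi_1(\cdot))\,w^*$. This is the missing ingredient.
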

\begin{proof}
Since $B$ is stable we have $KK(A,M(B)) = 0$ by Proposition 4.1 in \cite{DadarlatTopExt}. Hence $KK(\tau) = 0$ whenever $\tau$ is liftable. 

Conversely, suppose $KK(\tau) = 0$.  Suppose first that $\tau$ is non-unital. We may pick 
{{a \hm}} $\phi \colon A \to M(B)$ such that $\pi_B \circ \phi$ is a non-unital embedding. As $KK(\phi) = 0$ it follows that $KK(\pi_B \circ \phi) = 0 = KK(\tau)$, so by Theorem \ref{TH2} there exists a unitary $u\in M(B)$ such that $\pi_B(u\phi(.)u^\ast) = \tau$, and hence $u\phi(.) u^\ast$ is a lift of $\tau$.

Suppose now that $\tau$ is unital, and pick a unital 
{{\hm\,}} $\phi_1 \colon A \to M(B)$ such that $\pi_B \circ \phi_1$ is an embedding. As above, $KK(\pi_B \circ \phi_1) = 0 = KK(\tau)$ and by Theorem \ref{thm:ClassifyUnitalExt} there is a unitary $w\in C(B)$ such that $\tau = w\pi_B(\phi_1(.))w^\ast$. Since every element in $K_0(B)$ is positive \cite{CuntzAnnals}, it follows from Corollary 1 in \cite{Nagy} that there exists an isometry $v\in M(B)$ such that $\pi_B(v) = w$. Hence $v\phi_1(.)v^\ast$ is a $\ast$-homomorphism which lifts $\tau$.
\end{proof}

For completeness and since we will 
{{prove}} an analogous result in the 
finite case, we here mention the following well-known noncommutative 
Weyl--von Neumann theorem.  (See, for example, \cite{ElliottKucerovsky}.)

\begin{thm}\label{Tinfinteuniq2}
Let $A$ be a separable amenable \CA\, 
and $B$ be a non-unital and $\sigma$-unital purely infinite simple \CA.
Suppose that $\phi_1, \phi_2: A\to M(B)$ are two injective \hm s  
such that either both $\phi_1, \phi_2$ are unital, or both
$\pi \circ \phi_1, \pi \circ \phi_2$ are non-unital, and 
$\phi_i(A)\cap B=\{0\},$ $i=1,2$. Then there exists a sequence of unitaries $\{u_n\}\subset M(B)$ 
such that
\beq
\lim_{n\to\infty}\|u_n^*\phi_1(a)u_n-\phi_2(a)\|=0\andeqn
u_n^*\phi_1(a)u_n-\phi_2(a)\in B \rforal a\in A. 
\eneq

\end{thm}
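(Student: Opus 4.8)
The plan is to deduce Theorem \ref{Tinfinteuniq2} from the classification of extensions already established (Theorem \ref{TH2} and Theorem \ref{thm:ClassifyUnitalExt}), together with the absorption of the ``trivial'' part that comes from stability of $B$. First, since $B$ is purely infinite, simple, $\sigma$-unital and non-unital, Zhang's dichotomy forces $B$ to be stable; hence $KK(A, M(B)) = 0$ by Proposition 4.1 of \cite{DadarlatTopExt}. Consequently, for \emph{any} \hm\ $\Phi \colon A \to M(B)$ one has $KK(\Phi) = 0$ and therefore $KK(\pi_B \circ \Phi) = 0$ in $KK(A, C(B))$.

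Now compare $\phi_1$ and $\phi_2$. Since $\phi_i(A) \cap B = \{0\}$, the compositions $\tau_i := \pi_B \circ \phi_i \colon A \to C(B)$ are injective, i.e., they are essential extensions; by hypothesis they are both unital or both non-unital. By the previous paragraph $KK(\tau_1) = 0 = KK(\tau_2)$, so $KK(\tau_1) = KK(\tau_2)$. In the non-unital case, Theorem \ref{TH2} (injectivity of $\Ext(A,B) \to KK(A, C(B))$, together with Proposition \ref{TH1Pnonunital} identifying the equivalence relations) gives a unitary $u \in M(B)$ with $\pi_B(u)\,\tau_1(\cdot)\,\pi_B(u)^* = \tau_2(\cdot)$; equivalently $u^*\phi_1(a)u - \phi_2(a) \in B$ for all $a$. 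In the unital case, Theorem \ref{thm:ClassifyUnitalExt}(2) gives instead a unitary $v \in C(B)$ with $v\,\tau_1(\cdot)\,v^* = \tau_2(\cdot)$; since $C(B)$ is simple purely infinite hence $K_1$-surjective and (by \cite{CuntzAnnals}) $K_1$-injective with $[1_{C(B)}]_0 = 0$ — or more directly, since $\pi_B \colon U(M(B)) \to U(C(B))$ is surjective onto $U_0$ and $v$ can be adjusted by an element of $U_0(C(B))$ that lifts — we can replace $v$ by the image of a unitary in $M(B)$. (This last lifting point is where I would cite the same tools used in the proof of Theorem \ref{Tpurelyinfinite}, namely \cite{Nagy} and $K_1$-injectivity, or simply note $K_1(C(B)) = K_1(M(B)/B)$ and that $\pi_B$ is surjective on unitaries up to $U_0$, which suffices.) Either way, we obtain a \emph{single} unitary $u \in M(B)$ with $u^*\phi_1(a)u - \phi_2(a) \in B$ for all $a \in A$.

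It remains to upgrade the single unitary $u$ to a \emph{sequence} $\{u_n\}$ with the additional norm-convergence property $\|u_n^*\phi_1(a)u_n - \phi_2(a)\| \to 0$. Here I would invoke the asymptotic refinement already built into the machinery: by Theorem \ref{t:KP} and Lemma \ref{TWP}, $KK(\tau_1) = KK(\tau_2) = 0$ means $\tau_1$ and $\tau_2$ are asymptotically Murray--von Neumann equivalent, and in fact the classification yields a norm-continuous path of unitaries (in $C(B)$, and after lifting, in $M(B)$) implementing the equivalence asymptotically; sampling this path at $t = n$ produces the desired sequence. Alternatively — and this is cleaner — one applies the nuclear absorption theorem for stable purely infinite simple $B$ (Theorem in the excerpt, ``$\Ext(A,B) = KK^1(A,B)$'' via \cite{ElliottKucerovsky}, \cite{VoiculescuWvN}, \cite{KasparovKK}, \cite{GiordanoNg}): both $\phi_1$ and $\phi_2$ absorb any fixed trivial extension $\sigma \colon A \to M(B)$ (with $\sigma(A) \cap B = \{0\}$) up to approximate unitary equivalence with the difference in $B$, in the appropriate unital/non-unital sense; since $\phi_1 \oplus \sigma$ and $\phi_2 \oplus \sigma$ have the same Busby class and hence the same $KK$-class, they are approximately unitarily equivalent modulo $B$, and pulling this back through the absorption isomorphisms gives $\{u_n\}$ for $\phi_1, \phi_2$ themselves.

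The main obstacle I anticipate is the \emph{simultaneous} control of both conditions (i) exact containment $u_n^*\phi_1(a)u_n - \phi_2(a) \in B$ and (ii) norm convergence to $0$ — a single unitary gives (i) but not (ii), while a naive Weyl--von Neumann perturbation argument gives (ii) only approximately for (i). Reconciling these is exactly the content of passing from $KK$-equivalence through the \emph{asymptotic} Murray--von Neumann picture (Theorem \ref{t:KP}, Lemma \ref{TWP}) rather than the bare $\Ext$-group, so the care is in extracting a genuine norm-continuous unitary path in $M(B)$ (not merely in $C(B)$) and then discretizing it; the lifting of the path from $C(B)$ to $M(B)$ uses $K_1$-injectivity of $C(B)$ (Cuntz, \cite{CuntzAnnals}) exactly as in the proof of Theorem \ref{Tpurelyinfinite}. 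The unital-versus-non-unital bookkeeping (ensuring the absorbed trivial extension matches the unitality of $\phi_i$, and that $[1_{C(B)}]_0 = 0$, which holds here since $K_0(B) = K_0(M(B))$ surjects appropriately for stable purely infinite $B$) is routine but must be tracked.
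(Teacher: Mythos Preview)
The paper does not prove this theorem; it simply records it as a well-known noncommutative Weyl--von Neumann theorem and cites \cite{ElliottKucerovsky}. Your Plan B --- invoking the Elliott--Kucerovsky absorption theorem --- is exactly that citation, so in that sense you arrive at the paper's answer.

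However, your primary Plan A has a genuine gap. Lifting a norm-continuous path of unitaries $v_t$ from $C(B)$ to $\tilde v_t \in M(B)$ only yields
\[
\bigl\|\pi\bigl(\tilde v_t^{\,*}\phi_1(a)\tilde v_t - \phi_2(a)\bigr)\bigr\|_{C(B)} \to 0,
\]
which says that the \emph{distance} from $\tilde v_t^{\,*}\phi_1(a)\tilde v_t - \phi_2(a)$ to $B$ tends to zero. This gives neither exact membership in $B$ (condition (i)) nor norm convergence in $M(B)$ (condition (ii)). The passage from ``asymptotically equal in the quotient'' to ``asymptotically equal in $M(B)$ with remainder exactly in $B$'' is precisely the hard content of the Voiculescu--Kasparov--Elliott--Kucerovsky machinery, and it cannot be extracted from Theorem~\ref{t:KP} and Lemma~\ref{TWP} alone; those results live entirely in $C(B)$.

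Your Plan B articulation also has a circularity: $\phi_1 \oplus \sigma$ and $\phi_2 \oplus \sigma$ do not have the same Busby invariant, only unitarily equivalent ones, and the inference ``same $KK$-class $\Rightarrow$ approximately unitarily equivalent modulo $B$'' \emph{is} the absorption theorem you are trying to invoke. The clean one-line argument is: since $B$ is stable purely infinite simple, both $\phi_1$ and $\phi_2$ are nuclearly absorbing by \cite{ElliottKucerovsky}, and the Kasparov--Voiculescu theorem then gives directly that any two absorbing trivial extensions (matching in unitality) are approximately unitarily equivalent with remainders in $B$. Finally, note that in the unital case your lifting via \cite{Nagy} (as used in the proof of Theorem~\ref{Tpurelyinfinite}) produces only an \emph{isometry} in $M(B)$, not a unitary, so it does not suffice here; the absorption framework handles unitality correctly without this detour.
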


\section{Some K theory computations} 

\label{KComputations}


Recall that for a \CA\, $A$, ${\rm Cu}(A)$ denotes the Cuntz semigroup
of $A$.

  \begin{df}\label{Dgamma}
  Let $A$ be a simple \CA. 
  Denote by $\Gamma: {\rm Cu}(A)\to {\rm LAff}_+(\wtd{QT}(A))$
  the canonical order preserving \hm\, defined by
  $\Gamma([x])(\tau)=\widehat{[x]}(\tau) =d_\tau(x)$ for all $x\in (A\otimes {\cal K})_+$ and $\tau \in QT(A)$
  (see  \ref{Dcuntz}  and  \ref{Dqtr}).
  \end{df}

Let us also briefly recall some other definitions.
Let $A$ be a C*-algebra and let ${\rm Cu}(A)$ be the Cuntz semigroup
of $A$.  For all $x, y \in {\rm Cu}(A)$, we write $x \ll y$ if whenever
$y \leq \sup z_n$ for an increasing sequence $\{ z_n \}$ in ${\rm Cu}(A)$,
there exists an $n_0 \in \mathbb{N}$ such that $x \leq z_{n_0}$.
We say that ${\rm Cu}(A)$ is 
\emph{almost divisible} if for all $x, y \in {\rm Cu}(A)$ for
which $x \ll y$, for all
$k \in \mathbb{N}$, there exists a $z \in {\rm Cu}(A)$ such that 
$$kz \leq y \makebox{  and  } x \leq (k+1)z.$$
We say that ${\rm Cu}(A)$ is \emph{almost unperforated} if for all $x, y \in 
{\rm Cu}(A)$, for all $k \in \mathbb{N}$, that $(k+1)x \leq k y$ implies that
$x \leq y$.

We will consider only separable simple \CAs\, with strict comparison. Let us quote the following 
statement (see also \cite{Th}).

\begin{thm}[{\cite[Corollary 1.2, 1.3]{Linossr=1}}]\label{stst=1}
Let $A$ be a separable simple \CA\, with strict comparison for positive elements
which is not purely infinite.
Then the following statements are equivalent:

(1) $\Gamma$ is surjective.  

(2) ${\rm Cu}(A)$ is almost divisible. 

(3) $A$ has tracial approximate oscillation zero. 

(4) $A$ has stable rank one.
\end{thm}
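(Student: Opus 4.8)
The plan is to prove the four conditions equivalent through the implications $(4)\Rightarrow(1)$, $(1)\Leftrightarrow(2)$, $(1)\Rightarrow(3)$ and $(3)\Rightarrow(4)$. Throughout, one should keep in mind what strict comparison already supplies before any of $(1)$--$(4)$ is invoked: the map $\Gamma$ is injective on the non-compact (``soft'') part of ${\rm Cu}(A)$, the relation $x\ll y$ for non-compact $y$ is detected by the strict inequality $\widehat{[x]}<\widehat{[y]}$ on $\wtd{QT}(A)\setminus\{0\}$, and every $f\in{\rm LAff}_+(\wtd{QT}(A))$ is the pointwise supremum of an increasing sequence from $\Aff_+(\wtd{QT}(A))$. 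So, morally, the theorem asserts that for these $A$ the ``expected'' extra regularity of ${\rm Cu}(A)$ is automatic once the algebra has stable rank one, and conversely forces stable rank one.

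For $(4)\Rightarrow(1)$ I would invoke Thiel's realization theorem (cf.\ \cite{Th}): in a non-elementary simple $C^\ast$-algebra with stable rank one every lower semicontinuous ``rank'' on the traces is attained by a positive element, which for $A$ with strict comparison says precisely that $\Gamma$ is onto ${\rm LAff}_+(\wtd{QT}(A))$. For $(1)\Rightarrow(2)$: given $x\ll y$ in ${\rm Cu}(A)$, use surjectivity to realize $\tfrac1k\widehat{[y]}$ by some $z\in{\rm Cu}(A)$ and then apply strict comparison to conclude $kz\le y$ and $x\le(k+1)z$. For the converse $(2)\Rightarrow(1)$: write $f=\sup_n f_n$ with $f_n\in\Aff_+(\wtd{QT}(A))$ and $f_n\ll f_{n+1}$, and use almost divisibility together with strict comparison to build inductively an increasing sequence $z_n\in{\rm Cu}(A)$ with $\widehat{[z_n]}\nearrow f$ pointwise; then $\Gamma(\sup_n z_n)=f$.

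For $(1)\Rightarrow(3)$: surjectivity of $\Gamma$ means in particular that for any $a\in{\rm Ped}(A\otimes\mathcal K)_+$ the rank $\tau\mapsto d_\tau(a)$ is an increasing supremum of functions $\tau\mapsto\tau(c_n)$ with the $c_n$ positive in the Pedersen ideal of $\overline{a(A\otimes\mathcal K)a}$; together with strict comparison (used to compare $a$ with ``softened'' cut-downs of itself) this is exactly what is needed to make the relevant oscillation quantity vanish and the approximation error in the definition of tracial approximate oscillation zero tend to $0$, the approximants being taken as suitable cut-downs of $a$. This step is essentially bookkeeping once $(1)$ is available.

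The genuinely hard implication, and the one I expect to be the main obstacle, is $(3)\Rightarrow(4)$: one must convert the purely Cuntz-/trace-theoretic hypothesis of tracial approximate oscillation zero into the analytic statement that the invertibles are norm-dense in $\widetilde A$ --- equivalently, that every element is a norm-limit of products $wc$ with $w$ a unitary and $c\ge0$. The route I would take is: given such an element, the tracial-approximate-oscillation-zero data let one replace it by a nearby $b\in A$ whose absolute value admits a positive contraction $c$ in $\overline{|b|A|b|}$ with $d_\tau(c)$ uniformly close to $d_\tau(|b|)$; strict comparison then makes the source- and range-defect projections of the partial isometry in the polar decomposition of $b$ Cuntz-dominated by an arbitrarily small full corner, so they can be unitarily absorbed, producing the desired factorisation up to a small error. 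One finally has to check that all of these perturbations fit under a single uniform norm bound and that the Cuntz-negligibility of the original error is not destroyed by the unitary correction; turning this chain of Cuntz and trace estimates into an honest $\varepsilon$-norm statement at the end is the crux, and is where a careful tracial-approximation argument in the style of \cite{Linossr=1} is required.
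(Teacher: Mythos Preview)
The paper does not prove this theorem at all: it is quoted verbatim from \cite[Corollary~1.2,~1.3]{Linossr=1} (with a nod to \cite{Th}), and no argument is given here. So there is nothing in the present paper to compare your proposal against; the actual proof lives in Lin's preprint \cite{Linossr=1} and, for $(4)\Rightarrow(1)$, in Thiel's work \cite{Th}.

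That said, your outline matches the logical architecture of \cite{Linossr=1} quite well, and you correctly locate the hard step at $(3)\Rightarrow(4)$. Two places deserve more care than you indicate. First, in $(1)\Rightarrow(2)$ you cannot simply take $z$ with $\widehat{[z]}=\tfrac{1}{k}\widehat{[y]}$ and deduce $kz\le y$ from strict comparison, since that requires a strict inequality of dimension functions; you must instead realize a continuous affine function strictly between $\tfrac{1}{k+1}\widehat{[x]}$ and $\tfrac{1}{k}\widehat{[y]}$ (and treat the case where $y$ is a projection class separately). Second, your claim that $(1)\Rightarrow(3)$ is ``essentially bookkeeping'' undersells the issue: surjectivity of $\Gamma$ realizes a given rank function by \emph{some} positive element of $A\otimes\mathcal K$, but the definition of tracial approximate oscillation zero requires the approximants with small oscillation to lie in the hereditary subalgebra $\overline{a(A\otimes\mathcal K)a}$ generated by the specific element $a$ under consideration. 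Getting from the former to the latter uses strict comparison and a cut-down argument, but it is a genuine step, not a formality. Your sketch of $(3)\Rightarrow(4)$ captures the right mechanism (polar decomposition, tracial smallness of the defect, unitary absorption), and you are right that the delicate point is converting the Cuntz/trace control into an honest norm estimate; this is indeed the main technical content of \cite{Linossr=1}.
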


We refer to \cite{FLosc} for the definition of tracial approximate oscillation zero.

The following is a folklore.

\begin{lem}\label{strictpe}  
Let $A$ be a unital \CA\, and $a\in A$ be a strictly positive element.
Then $a$ is invertible. If $a=b+c,$ where $b, c\in A_+$ and $bc=cb=0,$
then $\overline{cAc}$ is unital.
\end{lem}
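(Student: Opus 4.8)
The statement to prove is Lemma \ref{strictpe}: if $A$ is unital and $a \in A$ is strictly positive, then $a$ is invertible; and if $a = b + c$ with $b, c \in A_+$ orthogonal, then $\overline{cAc}$ is unital.

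\textbf{Plan.} The key fact is that in a unital $C^\ast$-algebra, a positive element $a$ is strictly positive (i.e.\ $\overline{aAa} = A$) if and only if $a$ is invertible. First I would recall why: if $a$ is strictly positive then the hereditary subalgebra $\overline{aAa}$ equals $A$, so in particular $1_A \in \overline{aAa}$, which means $1_A$ lies in the hereditary subalgebra generated by $a$; but then for the continuous function $f_\delta$ (Definition \ref{Dfdt}) we have $f_\delta(a) \to 1_A$ is false in general — instead the cleaner route is: a strictly positive element generates $A$ as a hereditary subalgebra, and since $1 \in A$, one has $1 \lesssim a$ in the Cuntz sense within the unital algebra; more directly, $\overline{aAa} \ni 1$ forces $0 \notin \mathrm{sp}(a)$, since if $0 \in \mathrm{sp}(a)$ then (using functional calculus) $\overline{aAa} = \overline{\chi_{(0,\infty)}(a)} A \overline{\chi_{(0,\infty)}}$ would be contained in the hereditary subalgebra cut down by a spectral projection strictly below $1$, a proper subalgebra. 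So I would argue by contradiction: if $a$ is not invertible, $0 \in \mathrm{sp}(a)$, pick $\delta > 0$ and consider $g = 1 - f_\delta(a) \in \overline{aAa}^\perp$... actually the simplest clean argument: $\overline{aAa}$ is the smallest hereditary subalgebra containing $a$; for $\epsilon > 0$, $(a - \epsilon)_+ \in \overline{aAa}$ and these increase to the support projection of $a$ in $A^{**}$; so the unit of $\overline{aAa}$ (as a multiplier) is the support projection $p$ of $a$, and $\overline{aAa} = A$ iff $p = 1$ iff $0 \notin \mathrm{sp}(a)$ iff $a$ invertible.

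\textbf{Second statement.} Suppose $a = b + c$ with $b, c \geq 0$, $bc = cb = 0$, and $a$ strictly positive hence invertible with $a \geq \lambda 1$ for some $\lambda > 0$ (by the first part and functional calculus, $\mathrm{sp}(a) \subseteq [\lambda, \|a\|]$). The plan is to show $\overline{cAc}$ is unital by exhibiting its unit. Since $bc = 0$, the elements $b$ and $c$ live in complementary corners: let $p$ be the support projection of $c$ in $A^{**}$, so $\overline{cAc}$ is unital iff $p \in A$. From $a = b+c$ and $bc = cb = 0$, for any continuous $f$ vanishing at $0$ we get $f(a) = f(b) + f(c)$; apply this with $f = f_\delta$ for small $\delta < \lambda$, so $f_\delta(a) = 1_A$ (since $\mathrm{sp}(a) \subseteq [\lambda,\infty) \subseteq [\delta,\infty)$), giving $1_A = f_\delta(b) + f_\delta(c)$. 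Now $f_\delta(c)$ is a positive element of $\overline{cAc}$ that acts as a unit on $c$: indeed $f_\delta(c) c = c$, and more generally $f_\delta(c)$ acts as identity on $\overline{cAc}$ provided $\delta$ is below the nonzero part of $\mathrm{sp}(c)$ — but $\mathrm{sp}(c)$ near $0$ may not have a gap. The fix: since $1_A = f_\delta(b) + f_\delta(c)$ and $f_\delta(b) f_\delta(c) = f_\delta(b)\,f_\delta(c) = $ (product of orthogonal elements) $= 0$, the element $e := f_\delta(c)$ is a projection! ($e = 1 - f_\delta(b)$, and $f_\delta(b)$ satisfies $f_\delta(b)^2 = f_\delta(b)$? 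No.) Let me reconsider: we need $f_\delta(b) + f_\delta(c) = 1$ with the two summands orthogonal positive elements summing to $1$; two orthogonal positive elements summing to $1$ must each be a projection, since $f_\delta(b) = 1 \cdot f_\delta(b) = (f_\delta(b) + f_\delta(c))f_\delta(b) = f_\delta(b)^2$. So $e := f_\delta(c) \in A$ is a projection in $\overline{cAc}$, and $ec = f_\delta(c)c = c$ (as $f_\delta(t) t = t$ for $t$... no, $f_\delta(t)t = t$ only for $t \geq \delta$). Hmm, but $e \geq $ support projection of $c$? We have $e = f_\delta(c)$, and $f_\delta \geq \chi_{(0,\infty)}$ pointwise only where... actually $f_\delta(t) = 0$ for $t \in [0,\delta/2]$, so $e$ does NOT dominate the support of $c$ if $c$ has spectrum accumulating at $0$.

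\textbf{Resolution of the obstacle.} The point I'm missing: $c = a f_\delta(c)$-ish relations, combined with $a$ invertible. From $e = 1 - f_\delta(b)$ a projection and $eb = f_\delta(c) b = 0$, so $b = (1-e) b (1-e) = f_\delta(b) b f_\delta(b)$, meaning $b \in (1-e)A(1-e)$. Then $c = a - b$ with $a \geq \lambda$, so on the corner $eAe$: $ece = eae - ebe = eae$ (since $ebe = 0$), and $eae \geq \lambda e$. Thus $ece = eae \geq \lambda e$ is invertible in the unital corner $eAe$, so $\overline{cAc} \supseteq \overline{ece\, A\, ece}$; but also $c \leq \|c\| e$? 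We need $c \in eAe$: since $e$ is a projection with $(1-e) = f_\delta(b) $, $(1-e)c(1-e) = f_\delta(b) c f_\delta(b)$, and $f_\delta(b)c $: is it $0$? $f_\delta(b)$ is a function of $b$, $c \perp b$, so $f_\delta(b) c = 0$. Hence $(1-e)c = 0$, so $c = ec = ce = ece \in eAe$, and $c \geq \lambda e$ within $eAe$ gives $\overline{cAc} = eAe$, which is unital with unit $e$. So the hard part is realizing $f_\delta(c)$ is a projection and $(1-f_\delta(c))c = 0$; I would write this up carefully, noting that orthogonal positive elements summing to a projection (here $1$) are themselves projections.

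\begin{proof}
First suppose $a \in A_+$ is strictly positive, i.e.\ $\overline{aAa} = A$. If $0 \in \operatorname{sp}(a)$, let $p$ denote the support projection of $a$ in $A^{**}$; then $p \neq 1$, and since every $x \in \overline{aAa}$ satisfies $x = pxp$ (because $(a-\ep)_+ \to a$ has support dominated by $p$ and these generate $\overline{aAa}$), we would have $1 = p1p = p \neq 1$, a contradiction. Hence $0 \notin \operatorname{sp}(a)$, so $a$ is invertible; say $\operatorname{sp}(a) \subseteq [\lambda, \|a\|]$ with $\lambda > 0$.

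Now write $a = b + c$ with $b, c \in A_+$ and $bc = cb = 0$. Choose $\delta$ with $0 < \delta < \lambda$. Since $b$ and $c$ commute and multiply to $0$, for any continuous $f$ on $\operatorname{sp}(a) \cup \operatorname{sp}(b) \cup \operatorname{sp}(c)$ with $f(0) = 0$ we have $f(a) = f(b) + f(c)$ (this holds on the abelian $C^\ast$-algebra generated by $b, c$). Applying this with $f = f_\delta$ (Definition \ref{Dfdt}) and using $\operatorname{sp}(a) \subseteq [\lambda,\infty) \subseteq [\delta,\infty)$, so that $f_\delta(a) = 1_A$, we obtain
\[
1_A = f_\delta(b) + f_\delta(c).
\]
Set $e := f_\delta(c)$, so $e = 1_A - f_\delta(b)$. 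Since $b \perp c$, also $f_\delta(b) \perp f_\delta(c)$, hence $f_\delta(b) e = 0$; multiplying the displayed identity on the right by $f_\delta(b)$ gives $f_\delta(b) = f_\delta(b)^2$, so $f_\delta(b)$ is a projection, and therefore so is $e = 1_A - f_\delta(b)$. Moreover $e \in \overline{cAc}$.

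Since $(1_A - e) = f_\delta(b)$ is a function of $b$ and $b \perp c$, we get $(1_A - e)c = f_\delta(b) c = 0$, so $c = ec = ce = ece \in eAe$. Finally, within the unital corner $eAe$ we compute $ece = eae - ebe = eae$, because $ebe = f_\delta(c) b f_\delta(c) = 0$. As $a \geq \lambda 1_A$, we have $eae \geq \lambda e$, so $c = eae$ is invertible in $eAe$. Therefore $\overline{cAc} = \overline{c(eAe)c} = eAe$, which is unital with unit $e$. This completes the proof.
\end{proof}
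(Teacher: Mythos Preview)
Your proof is correct and takes essentially the same route as the paper for the second claim: both deduce from $f_\delta(b)+f_\delta(c)=1_A$ with orthogonal summands that $f_\delta(c)$ is a projection, after which you finish by showing $c$ is invertible in the corner $eAe$ while the paper simply observes the projection is strictly positive in $\overline{cAc}$. For the first claim the paper works directly with states---extending the evaluation-at-$0$ character on $C^*(1,a)$ to a state $\tau$ on $A$ with $\tau(a)=0$---and that state argument is exactly what is needed to justify your unproven assertion that the support projection $p$ in $A^{**}$ is not $1$ when $0\in\operatorname{sp}(a)$.
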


\begin{proof}
Replacing $a$ by $f(a)$ for some $f\in C_0((0, \|a\|],$
 we may assume that
$0\le a\le 1.$
Put $X={\rm sp}(a)\subset [0,1].$ 
If $a$ is not invertible, then $0\in {\rm sp}(a).$ 
Then the \SCA\, $C$ generated by $1$ and $a$ is isomorphic 
to $C(X).$  Let $\tau: C\to \C$ be the state defined by 
$\tau(g)=g(0)$ for all $g\in C(X)$ (so $\tau\not=0$).  Note that
$\tau(a)=0.$ Extend $\tau$ to a state of $A$ denoted again by $\tau.$
Then we still have $\tau(a)=0$ but $\tau(1)=1.$ This implies that $a$ is not strictly positive.
This shows that $0\not\in X,$ or $a$ is invertible. 

If $a=b+c,$ where $b, c\in A_+$ and $bc=cb=0,$ as $0\not\in {\rm sp}(a),$ 
there is $f\in C_0(0, \| a \|]_+$ such that $f(a)=1.$
In particular (recall that $b$ and $c$ are mutually orthogonal)
\beq
f(a)=f(b+c)=f(b)+f(c)=1.
\eneq
Therefore,  
\beq
f(a)^2=f(b)^2+f(c)^2=1=f(a)=f(b)+f(c).
\eneq
It follows that $f(c)=f(c)^2.$ Hence $f(c)$ is a projection and 
$f(c)$ is strictly positive element of $\overline{cAc}.$  Consequently $\overline{cAc}$
is unital.
\end{proof}

\begin{NN}\label{12NN}
Let $B$ be a  finite separable simple \CA\, with almost unperforated  {{and almost divisible  ${\rm Cu}(B)$.}}  
By Theorem \ref{stst=1}, $\Gamma$ is surjective. 
Choose 
%
$e\in (B\otimes {\cal K})_+\setminus \{0\}$ such that  $\widehat{[e]}\in \Aff_+(\wtd{QT}(B)) \setminus\{0\}.$
Then
$\overline{eBe}$ has continuous scale.  In what follows in this section, 
\wilog, we assume 
that $B$ has continuous scale to make the statements easier to state.

We use the fact that, if $A$ is a $\sigma$-unital simple \CA\, 
with continuous scale, then $QT(A)$ is compact (see 
Theorem 2.19 of \cite{FLosc} and Theorem 5.3 and Proposition 5.4 of [10]).
\end{NN}

\begin{thm}\label{125-2331}
 Let $B$ be a 
separable simple 
\CA\, with continuous scale, strict comparison for positive elements and  satisfying one of the conditions in 
Theorem \ref{stst=1}.

 Then, for any pair of projections 
$p, q\in M(B \otimes {\cal K}) \setminus B \otimes {\cal K},$
\begin{enumerate}
{\item if $\tau(p) = \tau(q)$
for all $\tau \in QT(B)$, then
$$ p \sim q \,\,\,{\rm in}\,\,\, M(B\otimes {\cal K});$$}
\item  $p\lesssim q$ in $M(B\otimes {\cal K})$ if and only if 
there exists $f\in {\rm LAff}_+(QT(B))$ such that
$\widehat{p}+f=\widehat{q};$ and,  
\item if
$\tau(p)<\infty$ for all $\tau\in QT(B)$ and  $\widehat{q}-\widehat{p}\in {\rm LAff}_+(QT(B)),$
 then 
there exists a partial isometry $v\in M(B\otimes {\cal K})$ such that  
$$
v^*v=p\andeqn vv^*\le q.     
$$

Moreover, 
{{\item for every {{$f \in {\rm LAff}_+(QT(B)) \setminus \{ 0 \}$,}} there exists a projection
$p \in M(B \otimes {\cal K}) \setminus B \otimes {\cal K}$ such that
$$\widehat{p}=f,\,\,\,{\rm i.e.},\,\,\, \tau(p) = f(\tau)
\rforal \tau \in QT(B).$$}}

\end{enumerate}
\label{thm:Aug12022}
\end{thm}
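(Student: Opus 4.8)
The plan is to reduce everything to the structure of $M(B\otimes\mathcal K)$ as a $\sigma$-unital, non-unital, simple $C^*$-algebra whose multiplier/corona behaviour is governed by the Cuntz semigroup, using that $B$ (hence $B\otimes\mathcal K$) has strict comparison and stable rank one (Theorem \ref{stst=1}), and that $QT(B)=QT(B\otimes\mathcal K)$ is a compact Choquet simplex since $B$ has continuous scale (\ref{12NN}). Throughout I will identify traces on $B$ with their densely defined, strictly lower semicontinuous extensions to $M(B\otimes\mathcal K)_+$, and I will use the scaling function $\widehat{p}(\tau)=\tau(p)\in(0,\infty]$ for a projection $p\in M(B\otimes\mathcal K)$. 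The key technical input is that for $\sigma$-unital simple $C^*$-algebras with strict comparison and stable rank one, comparison of positive elements (and in particular of projections) in the multiplier algebra is detected by the cone of lower semicontinuous dimension functions — this is the ``multiplier strict comparison'' result of the sort appearing in work of Kucerovsky--Ng, Lin, and Ng--Robin; I will assume the version needed is already available (it is what makes Theorem \ref{thm:KComp} and Proposition \ref{prop:K0M(B)} work). Concretely: if $p,q\in M(B\otimes\mathcal K)$ are projections not in $B\otimes\mathcal K$ and $d_\tau(p)\le d_\tau(q)$ for all $\tau$, with an appropriate strictness/openness condition at the ``top'' handled by continuity of the scale, then $p\precsim q$ in $M(B\otimes\mathcal K)$.

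For part (2), the forward implication is immediate: if $v^*v=p$ and $vv^*\le q$ then for every $\tau$ one has $\widehat p(\tau)=\tau(vv^*)\le\tau(q)=\widehat q(\tau)$, so $f:=\widehat q-\widehat p$ is a nonnegative affine-type function; that it lies in ${\rm LAff}_+(QT(B))$ follows because $\widehat q$ is lower semicontinuous and $\widehat p$ can be approximated from below, so a standard argument (as in Theorem \ref{125-2331}) puts the difference in ${\rm LAff}_+$. For the converse, given $\widehat p+f=\widehat q$ with $f\in{\rm LAff}_+(QT(B))$, I would first use surjectivity of $\Gamma$ (Theorem \ref{stst=1}, via Theorem \ref{thm:Aug12022}(4) applied to $f$, or directly to $B\otimes\mathcal K$) to realize $f$ as $d_\tau(c)$ for some positive $c\in(B\otimes\mathcal K\otimes\mathcal K)_+\cong(B\otimes\mathcal K)_+$, and then invoke multiplier strict comparison: since $d_\tau(p)\le d_\tau(q)$ for all $\tau$, we get $p\precsim q$ in $M(B\otimes\mathcal K)$. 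Part (1) is then the special case $f=0$ combined with the Cantor--Bernstein property for Cuntz subequivalence in stable rank one algebras (Rørdam--Winter / the fact that $p\precsim q$ and $q\precsim p$ give $p\sim q$, which for projections in a stably finite algebra can be upgraded from Cuntz to Murray--von Neumann equivalence). Part (3) is the ``strict'' refinement: when $\tau(p)<\infty$ for all $\tau$ and $\widehat q-\widehat p\in{\rm LAff}_+$, the finiteness of $\widehat p$ lets one produce an actual partial isometry $v$ with $v^*v=p$, $vv^*\le q$ (rather than merely $p\precsim q$ in the Cuntz sense), by a lifting argument: decompose $q$ along an approximate unit adapted to the scale, write $p$ as a strict sum of projections each equivalent to a subprojection of a ``block'' of $q$ below it in trace, and assemble the partial isometries; finiteness of $\widehat p$ is exactly what guarantees this telescoping terminates/converges strictly.

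For part (4), the existence statement, given $f\in{\rm LAff}_+(QT(B))\setminus\{0\}$ I would write $f=\sup_n f_n$ with $f_n\in\Aff_+(QT(B))$ increasing. By continuous scale, $\widehat{1_{M(B\otimes\mathcal K)}}=\infty$ pointwise (the scale of $B\otimes\mathcal K$ is ``infinite''), so there is room. Using surjectivity of $\Gamma$ on $B\otimes\mathcal K$ I realize each increment $f_{n+1}-f_n$ (or rather suitable $g_n\in\Aff_+$ with $\sum_{k\le n} g_k$ close to $f_n$) by a positive element with the prescribed dimension function; passing to projections requires stable rank one, which ensures every element of ${\rm Cu}(B\otimes\mathcal K)$ with ``compact'' (i.e. $\ll$ itself... more precisely, appropriately compactly contained) data is represented by a projection, and more to the point that one can build a strictly convergent sum $p=\sum_n p_n$ of mutually orthogonal projections in $M(B\otimes\mathcal K)$ with $\sum_n\widehat{p_n}=f$. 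Here one must be careful that the sum converges strictly and that $p\notin B\otimes\mathcal K$: the latter is automatic since $f\ne 0$ forces $\widehat p$ to be unbounded-or-at-least nonzero... actually $f$ could be bounded, so the point that $p\notin B\otimes\mathcal K$ is that a projection in $B\otimes\mathcal K$ has $\widehat p\in\Aff_+$ of a very restricted form lying in the image of $\rho_B(K_0(B))$ — so choosing $f$ not in that image, or more simply noting $p\in B\otimes\mathcal K$ would force $[p]\in K_0(B)$ with $\rho_B([p])=f$, and one arranges $p$ to be a genuine infinite strict sum so it is not in the ideal; if $f$ itself happens to be in $\rho_B(K_0(B))$ one can still take $p$ in $M(B\otimes\mathcal K)\setminus B\otimes\mathcal K$ by adding and subtracting a ``scale'' trick, but the cleanest route is to observe that $M(B\otimes\mathcal K)$ contains projections of every scale value because $B\otimes\mathcal K$ is stable and hence its scale is the full ${\rm LAff}_+$. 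I expect the main obstacle to be precisely this part (4) together with the ``partial isometry, not just Cuntz subequivalence'' strengthening in part (3): turning dimension-function data into honest projections and honest partial isometries in the multiplier algebra, with strict convergence controlled, is where stable rank one and the continuous-scale compactness of $QT(B)$ must be used most delicately, and it is the step most likely to need a careful inductive construction along an approximate unit rather than a soft appeal to strict comparison.
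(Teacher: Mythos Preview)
Your proposal has a genuine gap in part (4), and a related vagueness in part (1), both stemming from the same missing idea.

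For (4), you propose to realise each increment $g_n$ by a \emph{projection} $p_n\in B\otimes\mathcal K$ and form $p=\sum_n p_n$ as a strict sum. But $B$ may be projectionless (e.g.\ $B=\mathcal Z$ or $\mathcal W$), in which case $B\otimes\mathcal K$ has no nonzero projections at all; stable rank one does not produce projections. The paper avoids this entirely: it realises each $g_n$ by a positive \emph{element} $a_n\in M_{r(n)}(B)$ with $d_\tau(a_n)=g_n(\tau)$ (Theorem 7.14 of \cite{APRT}, which uses stable rank one), forms $c=\mathrm{diag}(a_1,\tfrac12 a_2,\ldots)\in(B\otimes\mathcal K)_+$, and then invokes Kasparov's stabilisation theorem to obtain a projection $p\in M(B\otimes\mathcal K)$ with $p(B\otimes\mathcal K)\cong\overline{c(B\otimes\mathcal K)}$ as Hilbert modules. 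This gives $\tau(p)=d_\tau(c)=f(\tau)$. The argument that $p\notin B\otimes\mathcal K$ is then immediate: since each $a_n\ne 0$, $0\in\mathrm{sp}(c)$, so $\overline{c(B\otimes\mathcal K)c}\cong p(B\otimes\mathcal K)p$ is non-unital. Your discussion of $\rho_B(K_0(B))$ here is a red herring.

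For (1), you invoke an unspecified ``multiplier strict comparison'' as a black box, but strict comparison gives subequivalence only from \emph{strict} inequality $d_\tau(p)<d_\tau(q)$; with $\widehat p=\widehat q$ you cannot conclude $p\precsim q$ directly, so ``Cantor--Bernstein'' does not get off the ground. The paper's route is again via Hilbert modules: $p\sim q$ in $M(B\otimes\mathcal K)$ iff $p(B\otimes\mathcal K)\cong q(B\otimes\mathcal K)$ as Hilbert modules, which by \cite{AraPereraToms} (Theorems 4.29, 4.33, using stable rank one) reduces to $[a]=[b]$ in $\mathrm{Cu}(B)$ for strictly positive $a\in p(B\otimes\mathcal K)p$, $b\in q(B\otimes\mathcal K)q$. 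The key trick is that $p\notin B\otimes\mathcal K$ forces $p(B\otimes\mathcal K)p$ to be non-unital, hence $0\in\mathrm{sp}(a)$, hence $d_\tau(f_{1/n}(a))<d_\tau(a)=d_\tau(b)$ \emph{strictly}, and now strict comparison in $B$ gives $[f_{1/n}(a)]\le[b]$, whence $[a]\le[b]$; symmetrically $[b]\le[a]$.

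Finally, you are overcomplicating (3): once (2) is established, (3) is immediate, since $p\lesssim q$ in $M(B\otimes\mathcal K)$ already means there is a partial isometry $v$ with $v^*v=p$ and $vv^*\le q$. No telescoping construction is needed. The paper proves (2) from (1) and (4): for the ``if'' direction with $f\ne 0$, use (4) to get $q_1$ with $\widehat{q_1}=f$, then $\widehat{p\oplus q_1}=\widehat q$ and (1) gives $p\oplus q_1\sim q$, hence $p\lesssim q$; for ``only if'', write $p\sim p_1\le q$ and take $f=\widehat{q-p_1}$.
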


\begin{proof}

For (1), let $p, q\in M(B\otimes {\cal K})\setminus B\otimes {\cal K}$ such that $\tau(p) = \tau(q)$ for all $\tau\in QT(B)$. By viewing $M(B\otimes  \mathcal K) = \mathcal B( B\otimes \mathcal K)$ as the adjointable operators on the Hilbert $(B\otimes \cal K)$-module $B\otimes \cal K$, it follows that $p\sim q$ if and only if the right Hilbert $(B\otimes \cal K)$-modules $X_p:=p(B\otimes \cal K)$ and $X_q:=q(B\otimes \cal K)$ are isomorphic. 
In fact, $\mathcal B(X_p,X_q) = qM(B \otimes \mathcal K)p$ and since an isomorphism $X_p \cong X_q$ is implemented by a unitary in $\mathcal B(X_p, X_q)$  (see, for instance, Theorem 3.5 in \cite{Lance}), this unitary is the same as an element $v\in qM(B \otimes \mathcal K)p$ with $v^\ast v = p$ and $vv^\ast =q$.
Hence, it suffices to show that $X_p \cong X_q$.

Let $a\in p(B\otimes \mathcal K)p$ and $b\in q(B\otimes \mathcal K)q$ be strictly positive elements. Since $B$ has stable rank one, it follows from Theorems 4.29 and 4.33 of \cite{AraPereraToms} that $X_p \cong X_q$ if and only if $[a]= [b]$ in $\mathrm{Cu}(B)$.  

Note that $d_\tau(a) = \tau(p) = \tau(q) = d_\tau(b)$ for all $\tau\in QT(B)$. Since $p$ is not in $B\otimes \mathcal K$, $p(B\otimes \mathcal K)p$ is not unital, and therefore $0$ is in the spectrum of $a$. Hence $d_\tau(f_{1/n}(a)) < d_\tau(a) = d_\tau(b)$ for all $\tau\in QT(B)$ (since each $\tau$ is faithful) and $n\in \mathbb N$ (here $f_{1/n}$ is as  in Definition \ref{Dfdt}). By strict comparison we get $[f_{1/n}(a)] \leq [b]$ in $\mathrm{Cu}(A)$ for all $n\in \mathbb N$. As $[a] = \sup_n [f_{1/n}(a)]$ it follows that $[a] \leq [b]$. Similarly, $[b] \leq [a]$ and thus, by what was shown above, $p\sim q$.

To see (4), let {{$f\in {\rm LAff}_+(QT(B)) \setminus \{ 0 \}.$}}  
Since $QT(B)$ is compact (as $B$ has continuous scale), 
choose $s > 0$ so that $f(\tau) > s$ for all $\tau \in QT(B)$.
Note {{$f-s\in {\rm LAff}_+(QT(B)) \setminus \{ 0 \}.$}} 
There exists a sequence 
{{$f_n'\in {\rm Aff}_+(QT(B)) \setminus \{ 0 \}$}} such that 
$f_n'\nearrow f - s$ pointwise on $QT(B)$.
For each $n$, let $f_n := f'_n + \sum_{j=1}^n \frac{s}{2^j}$.     
Hence, the sequence $\{ f_n \}$ is strictly increasing (i.e., $f_n(\tau) 
< f_{n+1}(\tau)$ for all $n$, $\tau$) and $f_n \nearrow f$.  
For each $n$, let $g_n := f_n - f_{n-1} \in {{{\rm Aff}_+(QT(B))\setminus \{ 0 \}}}$ 
{{(where $f_0 := 0$).}} 

Thus,
we obtain a sequence $\{g_n\}\subset  \Aff_+(QT(B))\setminus \{ 0 \}$ such that
\beq
f=\sum_{j=1}^{\infty} g_j.  
\eneq

Since $B$ has stable rank one,  by Theorem 7.14 of \cite{APRT},
there is, for each $n\in \N,$ a positive contraction $a_n\in M_{r(n)}(B)$ such 
that 
$d_\tau(a_n)=g_n(\tau)$ for all $\tau\in QT(B).$

We may view 
\beq
c:=\diag(a_1, {\tfrac{1}{2}}a_2,..., {\tfrac{1}{n}} a_n,...)
\eneq
as an element in $B \otimes {\cal K}$.
We compute that 
{{\beq
d_{\tau}(c)=f(\tau)\rforal \tau\in QT(B). 
\eneq}}  
Since each $g_n \neq 0$, it follows that $a_n \neq 0$ for all $n\in \mathbb N$. Hence $0$ is in the spectrum of $c$ and thus $\overline{c(B\otimes \mathcal K)c}$ is not unital.

By Kasparov's stabilization theorem there is a projection $p\in M(B\otimes \mathcal K)$ such that $p(B\otimes \mathcal K) \cong \overline{c(B\otimes \mathcal K)}$. Hence $\tau(p) = d_\tau(c) = f(\tau)$ for all $\tau\in QT(B)$. Furthermore, as $\overline{c(B\otimes \mathcal K)c} \cong p(B\otimes \mathcal K)p$ is non-unital it follows that $p\notin B\otimes \mathcal K$.

Next, we prove (2).  For the ``only if" direction of (2), suppose that
$p,q \in M(B \otimes {\cal K}) \setminus B \otimes {\cal K}$ are projections such that
$p \lesssim q$.  Hence, let $p_1 \in M(B \otimes {\cal K})$ be a 
projection with $p \sim p_1$ and $p_1 \leq q$.
$$\widehat{q} = (p_1 + (q - p_1))^{\widehat{}} = 
\widehat{p_1} + \widehat{q - p_1} = \widehat{p} + \widehat{q - p_1}.$$
Take $f := \widehat{q - p_1} \in {\rm LAff}_+(QT(B))$.

For the ``if" direction of (2),  suppose that $f \in {\rm LAff}_+(QT(B))$ 
is such that $\widehat{p} + f = \widehat{q}$. 
If $f = 0$, then by (1), $p \sim q$ and we are done.  So suppose that
$f \neq 0$.
By (4), let $q_1 \in M(B \otimes {\cal K})
\setminus B \otimes {\cal K}$ be a projection such that 
$\widehat{q_1} = f$.   Hence,
$$\widehat{q} = \widehat{p} + f = \widehat{p} + \widehat{q_1} =
(p \oplus q_1)^{\widehat{}}.$$
Hence, by (1), $q \sim p \oplus q_1$.  Hence, $p \lesssim q$.

(3) follows from (2).

\end{proof}

\begin{cor}\label{c123}
{{Let $B$ be a 
separable simple 
\CA\, with continuous scale, strict comparison for positive elements and stable rank one.}}

\begin{enumerate}
\item  Then, for any pair of projections 
$p, q\in M(B) \otimes {\cal K} \setminus B \otimes {\cal K},$
if $$\tau(p) = \tau(q)$$ 
for all $\tau \in QT(B)$, then
$$p \sim q\,\,\, {\rm in}\,\,\, M(B)\otimes {\cal K}$$
and, if $\tau(p)<\tau(q)$ for all $\tau\in QT(B),$ then 
\beq
p\lesssim q \,\,\, {\rm in}\,\,\, M(B)\otimes {\cal K}.
\eneq

{{\item For every $f \in {\rm Aff}_+(QT(B)) \setminus \{ 0 \}$, there exists a projection
$p \in M(B)\otimes {\cal K} \setminus B \otimes {\cal K}$ such that
$$\tau(p) = f(\tau)$$
for all $\tau \in QT(B).$ Moreover, for every $p\in M(B)\otimes {\cal K},$
$\widehat{p}\in \Aff_+(QT(B)).$}}

\end{enumerate}
\end{cor}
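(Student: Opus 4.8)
The plan is to reduce everything to Theorem~\ref{thm:Aug12022}, applied to $B$ itself; the only real work is translating between $M(B\otimes\mathcal K)$ and $M(B)\otimes\mathcal K$. The key is a compression identity: let $p_N\in\mathcal K$ be a rank-$N$ projection and put $e_N:=1_{M(B)}\otimes p_N\in M(B)\otimes\mathcal K\subseteq M(B\otimes\mathcal K)$. Viewing $M(B\otimes\mathcal K)$ as the adjointable operators on the standard Hilbert $B$-module, one has natural identifications $e_N M(B\otimes\mathcal K)e_N=M_N(M(B))$ and $e_N(B\otimes\mathcal K)e_N=M_N(B)$. Since $M(B)\otimes\mathcal K=\overline{\bigcup_N M_N(M(B))}$ and any projection within $1/2$ of a self-adjoint element of $M_N(M(B))$ is unitarily equivalent, via a unitary in $(M(B)\otimes\mathcal K)^\sim$, to a projection in $M_N(M(B))$, we conclude: up to such a conjugation---which preserves Murray--von Neumann (sub)equivalence in $M(B)\otimes\mathcal K$ and membership in the ideal $B\otimes\mathcal K$---any projection of $M(B)\otimes\mathcal K$ may be assumed to lie in $M_N(M(B))=M(M_N(B))$ for some $N$.

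Granting this, I would first prove the ``Moreover'' of (2). For a projection $p\in M(B)\otimes\mathcal K$, reduce to $p\in M_N(M(B))$; by the isomorphism $K_0(M(B))\cong\Aff(QT(B))$ of Proposition~\ref{prop:K0M(B)} one has $\widehat p=\rho_{M(B)}([p])\in\Aff(QT(B))$, and as $\widehat{1_{M(B)}}\in\Aff(QT(B))$ is bounded, $0\le\widehat p\le N\,\widehat{1_{M(B)}}<\infty$; since all quasitraces on $M(B)$ are faithful (continuous scale), $\widehat p>0$ off $0$ whenever $p\ne0$, so $\widehat p\in\Aff_+(QT(B))$. For (1), given projections $p,q\in M(B)\otimes\mathcal K\setminus B\otimes\mathcal K$ with $\tau(p)=\tau(q)$ (resp.\ $\tau(p)<\tau(q)$) for all $\tau\in QT(B)$, view them as projections of $M(B\otimes\mathcal K)\setminus B\otimes\mathcal K$ and apply Theorem~\ref{thm:Aug12022}(1) (resp.\ (2), using that $\widehat q-\widehat p$ is strictly positive and continuous on the compact set $QT(B)$, hence lies in $\Aff_+(QT(B))\subseteq{\rm LAff}_+(QT(B))$) to get $p\sim q$, resp.\ $p\lesssim q$, \emph{in} $M(B\otimes\mathcal K)$. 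After the reduction we may take $p,q\in M_N(M(B))$, so $p,q\le e_N$; then any $v\in M(B\otimes\mathcal K)$ realizing $v^\ast v=p$, $vv^\ast\le q$ satisfies $v=qvp=e_Nve_N\in M_N(M(B))$ and $vv^\ast=e_N(vv^\ast)e_N\in M_N(M(B))\subseteq M(B)\otimes\mathcal K$, so the (sub)equivalence already holds in $M(B)\otimes\mathcal K$.

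For the existence statement in (2): given $f\in\Aff_+(QT(B))\setminus\{0\}$, Theorem~\ref{thm:Aug12022}(4) produces a projection $p_0\in M(B\otimes\mathcal K)\setminus B\otimes\mathcal K$ with $\widehat{p_0}=f$, where $\tau(p_0)=f(\tau)<\infty$ since $f$ is bounded. Put $g:=\widehat{1_{M(B)}}$, which is bounded below on the compact set $QT(B)$ by faithfulness, and choose $n\in\mathbb N$ with $ng>f$ on $QT(B)$. Then $P_0:=1_n\otimes1_{M(B)}\in M(B)\otimes\mathcal K\setminus B\otimes\mathcal K$ has $\widehat{P_0}=ng$, so $\widehat{p_0}+(ng-f)=\widehat{P_0}$ with $ng-f\in{\rm LAff}_+(QT(B))$; Theorem~\ref{thm:Aug12022}(2) then gives $p_0\lesssim P_0$ in $M(B\otimes\mathcal K)$, i.e.\ there is a projection $p\le P_0$ with $p\sim p_0$. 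Finally $p\in P_0 M(B\otimes\mathcal K)P_0=M_n(M(B))\subseteq M(B)\otimes\mathcal K$, $\widehat p=\widehat{p_0}=f$, and $p\notin B\otimes\mathcal K$ because $B\otimes\mathcal K$ is an ideal and $p_0\notin B\otimes\mathcal K$.

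The only point needing genuine care is the compression identity and the consequent upgrading of $\sim$ and $\lesssim$ from $M(B\otimes\mathcal K)$ to $M(B)\otimes\mathcal K$; everything else is bookkeeping over Theorem~\ref{thm:Aug12022} and Proposition~\ref{prop:K0M(B)}.
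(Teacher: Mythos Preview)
Your proof is correct and follows essentially the same route as the paper: both establish that each $M_N(M(B))=e_N\,M(B\otimes\mathcal K)\,e_N$ (with $e_N=1_{M(B)}\otimes p_N$) is a corner of $M(B\otimes\mathcal K)$, and use this hereditary structure to pull the (sub)equivalence and existence statements of Theorem~\ref{thm:Aug12022} back from $M(B\otimes\mathcal K)$ into $M(B)\otimes\mathcal K$.

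The one substantive difference is in the ``Moreover'' of (2). You invoke Proposition~\ref{prop:K0M(B)} to conclude $\widehat p\in\Aff(QT(B))$; in the paper's ordering that proposition appears \emph{after} this corollary (and one of the two derivations offered for it actually uses Corollary~\ref{c123}), so your appeal is a forward reference that is safe only because the result is also cited independently from~\cite{LN}. The paper instead gives a self-contained argument: after reducing to $p\in M_k(M(B))$, observe that $1_{M_{k+1}(M(B))}-p$ is a projection not in $M_{k+1}(B)$, so both $\widehat p$ and $(k+1)-\widehat p$ lie in ${\rm LAff}_+(QT(B))$, which forces $\widehat p$ to be continuous. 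This avoids any circularity concern and is slightly more elementary.
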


\begin{proof}
We first show that $M(B) \otimes \mathcal K$ is a hereditary \SCA\, of 
$M(B\otimes {\cal K}).$  {{This fact  is known, but we provide the argument for the convenience
of the reader.}}   Write $\mathcal K ={{\overline{ \bigcup_{n\in \mathbb N} M_n(\mathbb C)}}}$ and let $p_n \in \mathcal K$ denote the unit in $M_n(\mathbb C)$. Since $M(B)\otimes \mathcal K = \overline{\bigcup_n M_n(M(B))}$ it suffices to show that each $M_n(M(B)) = M(M_n(B))$ is hereditary in $M(B\otimes \mathcal K)$. Define $P_n = 1_{M(B)}\otimes p_n\in M(B\otimes \mathcal K)$. Observe that $M_n(B) = P_n (B\otimes \mathcal K) P_n$. It is well-known that $P_nM(B\otimes \mathcal K) P_n \cong M(P_n (B\otimes \mathcal K)P_n)$ canonically (since multipliers in $M(P_n(B\otimes \mathcal K)P_n)$ extend in an obvious way to multipliers in $P_n M(B \otimes \mathcal K)P_n$), and thus $P_nM(B\otimes \mathcal K) P_n = M(P_n(B\otimes \mathcal K)P_n) = M_n(M(B))$ is a corner in $M(B\otimes \mathcal K)$, and thus hereditary.

Thus the first part (1) follows from  (1) of  Theorem \ref{thm:Aug12022}.


For the second part of (1), {{we prove the following claim:}} 

Claim:  {{For every projection $p\in M(B)\otimes {\cal K}\setminus B\otimes {\cal K}$,}} 
$$\widehat{p}\in \Aff_+(QT(B)).$$

Note that  $p$ is unitarily equivalent to a projection $p_1\in M_k(M(B))$ for some integer $k.$
\Wlog, we may assume that $p\in M_k(M(B)).$ 
It follows that $1_{M_{k+1}(M(B))}-p$ is a projection which is not in $M_{k+1}(B).$
It follows that $(k+1)-\widehat{p}\in {\rm LAff}_+(QT(B)).$ Since $\widehat{p}\in {\rm LAff}_+(QT(B)),$
this implies that $\widehat{p}\in \Aff_+(QT(B)).$ Hence the claim holds. 

Now let us assume that $\tau(p)<\tau(q)$ for all $\tau\in QT(B).$ 
By the claim above, both $\widehat{p}$ and $\widehat{q}$ are continuous. 
It follows that $\widehat{q}-\widehat{p}\in \Aff_+(QT(B)).$ By (3) of Theorem \ref{thm:Aug12022},
$p\lesssim q$ in $M(B)\otimes {\cal K}.$

To see (2) holds, let $f\in \Aff_+(QT(B)).$ By (2) of  Theorem \ref{thm:Aug12022},
there exists  a projection  $q\in M(B\otimes {\cal K})\setminus B\otimes {\cal K}$ such that
\beq
\tau(q)=f(\tau)\rforal \tau\in QT(B).
\eneq
There exists an integer $k\in \N$ such that $\tau(q)<k$ for all $\tau\in QT(B).$
Hence $k-\widehat{q}\in \Aff_+(QT(B)).$ Applying (3) of Theorem \ref{thm:Aug12022},
we may assume that 
 $q\in M_{k+1}(M(B))\setminus M_{k+1}(B).$
 This proves the first part of (2).  The second part of (2) follows from the claim above. 
\end{proof}

Many C*-algebras satisfy the hypotheses of Theorem
\ref{thm:Aug12022}.  For example,
all separable simple Jiang-Su stable C*-algebras 
{{satisfy the hypotheses of Theorem \ref{thm:Aug12022} (see Theorem 6.7 and 
Corollary 6.8 \cite{FLL}).}}  
In fact, there are also many
non-Jiang--Su stable C*-algebras, like non-unital hereditary $C^\ast$-subalgebras of $C^*_r(\mathbb{F}_{\infty})$, which also 
satisfy the hypotheses of Theorem \ref{thm:Aug12022}. See also \cite{FLsigma}.

The following is quoted from \cite{ChandNgSutradhar}.

\begin{thm}  
Let $B$ be a {{non-unital but}} $\sigma$-unital simple \CA\, with continuous scale, strict comparison for positive elements and 
stable rank one. Then $$K_1(M(B)) = 0.$$
\label{thm:K1M(B)}
\end{thm}

The following is from Lemma 7.2 of \cite{LN}.  It also follows from Theorem \ref{125-2331}
and Corollary \ref{c123}.


\begin{prop}
{{Let $B$ be a non-unital separable simple \CA\, with continuous scale, strict comparison for positive elements
and stable rank one.}}

Then 
$$(K_0(M(B)), K_0(M(B))_+) = {{(\Aff(QT(B)), \Aff_+(QT(B))).}}$$ 
\label{prop:K0M(B)}
\end{prop}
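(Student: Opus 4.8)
The plan is to establish the isomorphism $(K_0(M(B)), K_0(M(B))_+) \cong (\Aff(QT(B)), \Aff_+(QT(B)))$ by exhibiting an explicit map and checking it is a well-defined, surjective, order-preserving, order-reflecting isomorphism using the projection picture of $K_0$ together with the structural results of Theorem \ref{thm:Aug12022} and Corollary \ref{c123}. First I would define $\Lambda \colon K_0(M(B)) \to \Aff(QT(B))$ by sending a formal difference $[p] - [q]$ of projections in $M_n(M(B)) = M_n(M(B))$ (all projections in $M_\infty(M(B))$ arise this way, and $K_1(M(B)) = 0$ by Theorem \ref{thm:K1M(B)} so there is no obstruction to the usual presentation) to the affine continuous function $\tau \mapsto \tau(p) - \tau(q)$ on $QT(B)$. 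Here one uses that $B$ has continuous scale, so $QT(B)$ is compact, and that each $\tau \in QT(B)$ extends canonically to a bounded (hence finite on projections) trace on $M(B) \otimes \mathcal K$; by the Claim inside the proof of Corollary \ref{c123}, $\widehat{p} \in \Aff_+(QT(B))$ for every projection $p \in M(B) \otimes \mathcal K$, so $\Lambda$ does land in $\Aff(QT(B))$, and it is manifestly a group homomorphism.

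Next I would check $\Lambda$ is well-defined on $K_0$-classes, i.e.\ that $[p]_0 = [q]_0$ in $K_0(M(B))$ implies $\widehat p = \widehat q$; this is immediate since stabilized Murray--von Neumann equivalence and the standard $K_0$ relations all preserve traces. For injectivity, suppose $[p]_0 - [q]_0 \in \ker \Lambda$, i.e.\ $\tau(p) = \tau(q)$ for all $\tau \in QT(B)$ (after adding a common projection we may assume $p, q$ are genuine projections in some $M_n(M(B))$, and we may further assume $p, q \notin M_n(B)$, since if one of them lies in $M_n(B)$ a small perturbation or the fact that $B$ is non-unital lets us replace it by an equivalent large projection — alternatively add $1_{M(B)}$ to both). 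Then part (1) of Corollary \ref{c123} (which is just part (1) of Theorem \ref{thm:Aug12022} transported along the identification of $M(B) \otimes \mathcal K$ as a hereditary subalgebra of $M(B \otimes \mathcal K)$, carried out in the proof of Corollary \ref{c123}) gives $p \sim q$ in $M(B) \otimes \mathcal K$, hence $[p]_0 = [q]_0$. For surjectivity, given $f \in \Aff(QT(B))$ write $f = f_1 - f_2$ with $f_i \in \Aff_+(QT(B))$ (possible since $QT(B)$ is compact: add a large positive constant to each piece), and apply part (2) of Corollary \ref{c123} to produce projections $p_i \in M(B) \otimes \mathcal K$ with $\widehat{p_i} = f_i$; then $\Lambda([p_1]_0 - [p_2]_0) = f$.

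Finally, for the order statement I must show $\Lambda$ and $\Lambda^{-1}$ are both positive, i.e.\ $K_0(M(B))_+ = \Lambda^{-1}(\Aff_+(QT(B)))$. The inclusion $\Lambda(K_0(M(B))_+) \subseteq \Aff_+(QT(B))$ is again the Claim from the proof of Corollary \ref{c123} (a nonzero projection has $\widehat p$ strictly positive on $QT(B) \setminus \{0\}$; here I use that every $\tau \in QT(B)$ is faithful, which holds because $B$ is simple with continuous scale). For the reverse, suppose $x = [p]_0 - [q]_0$ with $\widehat p - \widehat q \in \Aff_+(QT(B))$; normalizing as above to projections in $M_n(M(B)) \setminus M_n(B)$, the case $\widehat p = \widehat q$ is handled by part (1) of Corollary \ref{c123} ($x = 0$), and when $\widehat p - \widehat q$ is strictly positive the second half of part (1) of Corollary \ref{c123} gives $q \lesssim p$, so there is a projection $q_1 \le p$ with $q_1 \sim q$, whence $x = [p - q_1]_0 \in K_0(M(B))_+$. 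I expect the main obstacle to be the bookkeeping around projections that happen to lie in $M_n(B)$ (where the cited comparison results do not directly apply) and the precise justification that $M(B) \otimes \mathcal K$ sits inside $M(B \otimes \mathcal K)$ as a hereditary subalgebra so that Theorem \ref{thm:Aug12022} can be invoked — but the latter is exactly the computation already performed at the start of the proof of Corollary \ref{c123}, so in practice the proof reduces to assembling the pieces of Corollary \ref{c123} and Theorem \ref{thm:K1M(B)}.
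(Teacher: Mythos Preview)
Your proposal is correct and takes essentially the same approach as the paper, which simply states that the result ``is from Lemma 7.2 of \cite{LN}'' and ``also follows from Theorem \ref{125-2331} and Corollary \ref{c123}'' without giving details; you have filled in precisely the argument the paper gestures at, using the projection comparison and realization statements from Corollary \ref{c123} together with the trick of adding $1_{M(B)}$ to push projections out of $B\otimes{\cal K}$. One small remark: the appeal to $K_1(M(B))=0$ is unnecessary for the $K_0$ computation and can be dropped.
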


Let $B$ be as in the above proposition. 
Recall that there is a natural group homomorphism
$$\rho_B : K_0(B) \rightarrow {{\Aff(QT(B))}}$$
which is given by
$$\rho_B([p]_0 - [q]_0)(\tau) =_{df} \tau(p)- \tau(q)$$
for all projections $p,q\in M_n(\tilde B)$ with $p-q\in M_n(B)$ and $\tau\in QT(B)$ (which extends uniquely to a non-normalized quasitrace on $M_n(B)$). 
However, projections in $M(B)$ do not have the cancellation property. 
In fact, one may have  projections $p\in  M_k(B)$ and $q\in M_k(M(B))\setminus M_k(B)$ 
for some $k\in \N$ such that $\tau(p)=\tau(q)$
for all $\tau\in QT(B).$ So, in particular,  $p$ and $q$ are not Murray-von Neumann equivalent.
Nevertheless, $[p]=[q]$ in $K_0(M(B)).$


The proof of the next result is already contained in
the case  that the canonical ideal has real rank zero, which was computed  some 
time ago. 
(E.g., see \cite{LinExtContScale} 1.7.)

\begin{thm} \label{thm:KComp}  
Let $B$ be {{a non-unital but}} $\sigma$-unital simple \CA\, with continuous scale, strict comparison for positive elements
{{and stable rank one.}}
Then we have the following:
\begin{enumerate}
\item $K_0(C(B)) \cong K_1(B) \oplus {{\Aff(QT(B))}}/\rho_B(K_0(B))$.   
\item $K_1(C(B)) = \ker(\rho_B)$. 
\end{enumerate}
\end{thm}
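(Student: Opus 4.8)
\textbf{Proof proposal for Theorem \ref{thm:KComp}.}

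The plan is to compute $K_\ast(C(B))$ via the six-term exact sequence associated to the extension $0 \to B \to M(B) \to C(B) \to 0$, using the two inputs already at hand: first, $K_1(M(B)) = 0$ from Theorem \ref{thm:K1M(B)}; second, the identification $K_0(M(B)) \cong \Aff(QT(B))$ from Proposition \ref{prop:K0M(B)}, together with the fact that the composite $K_0(B) \to K_0(M(B)) \cong \Aff(QT(B))$ is precisely $\rho_B$ (this is just the compatibility of the trace pairing with the inclusion $B \hookrightarrow M(B)$, once one recalls that projections in $M_n(B)$ pair the same way whether viewed in $B$ or in $M(B)$). So the six-term sequence reads
\[
K_0(B) \xrightarrow{\ \rho_B\ } \Aff(QT(B)) \to K_0(C(B)) \xrightarrow{\ \partial\ } K_1(B) \to 0 \to K_1(C(B)) \xrightarrow{\ \partial\ } K_0(B) \xrightarrow{\ \rho_B\ } \Aff(QT(B)).
\]
From the bottom row, exactness at $K_1(C(B))$ and at $K_0(B)$ immediately gives $K_1(C(B)) \cong \ker(\rho_B)$, which is statement (2); one only needs that $K_1(M(B)) = 0$ kills the incoming map and that the index map $K_1(C(B)) \to K_0(B)$ is injective, both of which are read off the sequence. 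For statement (1), exactness at $K_1(B)$ (with $K_1(M(B)) = 0$ again) shows $\partial : K_0(C(B)) \to K_1(B)$ is surjective, and exactness at $K_0(C(B))$ identifies its kernel with the image of $\Aff(QT(B)) \to K_0(C(B))$, which in turn (exactness at $\Aff(QT(B))$) is $\Aff(QT(B))/\rho_B(K_0(B))$. Hence there is a short exact sequence
\[
0 \to \Aff(QT(B))/\rho_B(K_0(B)) \to K_0(C(B)) \to K_1(B) \to 0.
\]

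To conclude (1) as a direct sum decomposition I would exhibit a splitting. The natural candidate: any class in $K_1(B)$ is represented by a unitary $v$ in some $M_n(\widetilde B)$; since $M(B)$ is properly infinite with $K_1(M(B)) = 0$, one can realize $v$ (after stabilizing) as $\pi_B(w)$ for a suitable element of $M(B)$, or alternatively invoke Corollary \ref{c123} / the pure infiniteness of $C(B)$ to produce projections in $M(B)\otimes\mathcal K$ with prescribed behaviour and use the Busby/extension picture; more cleanly, one observes that the exponential map $K_0(C(B)) \to K_1(B)$ in the Kasparov picture has a natural section coming from the Bott/suspension isomorphism because $K_1(B) \cong K_0(SB)$ and $SB \subseteq B$-type constructions produce liftable data. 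I would likely follow the reference indicated (\cite{LinExtContScale} 1.7), where in the real-rank-zero case the splitting is constructed explicitly, and note that the same construction goes through verbatim here since the only facts used are $K_1(M(B)) = 0$ and the Proposition \ref{prop:K0M(B)} computation, both now available in this generality.

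The main obstacle, and the step deserving the most care, is the identification of the map $K_0(B) \to K_0(M(B))$ with $\rho_B$ under the isomorphism $K_0(M(B)) \cong \Aff(QT(B))$ of Proposition \ref{prop:K0M(B)} — i.e. checking that the isomorphism of that proposition is genuinely implemented by the trace pairing $\tau \mapsto \tau(p)$ and is compatible with the inclusion, rather than merely being an abstract isomorphism of ordered groups. This requires unwinding how Proposition \ref{prop:K0M(B)} (via Theorem \ref{125-2331} and Corollary \ref{c123}) was proved: projections in $M(B)\otimes\mathcal K$ are classified up to Murray--von Neumann equivalence by their rank functions $\widehat p \in \Aff_+(QT(B))$, and $K_0(M(B))$ is the Grothendieck group of this monoid, so the isomorphism is by construction $[p]-[q] \mapsto \widehat p - \widehat q$; then for $p,q \in M_n(\widetilde B)$ with $p-q \in M_n(B)$ one has $\widehat p - \widehat q = \rho_B([p]-[q])$ essentially by definition of $\rho_B$. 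Once this compatibility is pinned down, the rest is a mechanical run through the six-term sequence plus the splitting argument, and I expect the whole proof to be short.
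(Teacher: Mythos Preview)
Your overall strategy---six-term exact sequence, $K_1(M(B))=0$, $K_0(M(B))\cong\Aff(QT(B))$, identification of the connecting map with $\rho_B$---matches the paper's proof exactly, and your derivation of $K_1(C(B))=\ker\rho_B$ and of the short exact sequence
\[
0 \to \Aff(QT(B))/\rho_B(K_0(B)) \to K_0(C(B)) \to K_1(B) \to 0
\]
is correct. The one place you diverge is the splitting, and there your proposal is both vaguer and harder than necessary. You sketch several candidate sections $K_1(B)\to K_0(C(B))$ (lifting unitaries, Bott-type constructions, appealing to the real-rank-zero reference), none of which you pin down, and indeed producing a \emph{group homomorphism} section this way is not automatic. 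The paper instead splits from the other side: $\Aff(QT(B))$ is a real vector space, hence divisible, hence so is its quotient $G_0:=\Aff(QT(B))/\rho_B(K_0(B))$; a divisible abelian group is injective, so the inclusion $j:G_0\hookrightarrow K_0(C(B))$ admits a retraction $\psi$, giving $K_0(C(B))\cong G_0\oplus\ker\psi$ with $\ker\psi\cong K_1(B)$. This is a one-line algebraic observation that replaces your entire splitting discussion.

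Your careful check that the map $K_0(B)\to K_0(M(B))$ really is $\rho_B$ under the identification of Proposition~\ref{prop:K0M(B)} is correct and worth keeping; the paper takes this for granted.
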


\begin{proof}
The exact sequence
$$0 \rightarrow B \rightarrow M(B) \rightarrow C(B) \rightarrow 0$$
induces a six term exact sequence in K theory which, together 
with Theorem \ref{thm:K1M(B)} and Proposition
\ref{prop:K0M(B)}, gives an exact sequence
$$0 \rightarrow K_1(C(B)) \rightarrow K_0(B) \rightarrow {{\Aff}}(QT(B))
\rightarrow K_0(C(B)) \rightarrow K_1(B) \rightarrow 0.$$
Then we obtain 
\beq
&&K_1(C(B))={\rm ker}\rho_B\andeqn\\
 &&0\to \Aff(QT(B))/\rho_B(K_0(B)) {\stackrel{j}{\longrightarrow}} K_0(C(B)){\stackrel{s}{\longrightarrow}} K_1(B)\to 0,
\eneq
where $j$ is an injective \hm\, and $s$ is a surjective \hm. 
In particular (2) holds.
Put $G_0= \Aff(QT(B))/\rho_B(K_0(B)).$
Note that $\Aff(QT(B))/\rho_B(K_0(B))$ is divisible and hence injective.
In other words, there is $\psi: K_0(C(B))\to \Aff(QT(B))/\rho_B(K_0(B))$
such that $\psi\circ j={\rm id}_{G_0}.$ It follows that $G\cong G_0\oplus {\rm ker}\psi.$ 
Therefore, $G/G_0\cong {\rm ker}\psi.$ It follows that ${\rm ker}\psi\cong K_1(B).$

\end{proof}

\begin{rem}\label{RTraceMB}
Let $B$ be a non-unital separable simple \CA\, with strict comparison and stable rank one.
In the same spirit as in \ref{12NN}, let us assume that $B$ has continuous scale.

(1) If
$p, q\in M(B\otimes {\cal K})\setminus B\otimes {\cal K},$ $\tau(p)<\tau(q)$
and $\tau(p) < \infty$  for $\tau\in QT(B)$ but {{$\widehat{q}-\widehat{p}\not\in {\rm LAff}_+(QT(B)),$}} 
then, by (2) of Theorem \ref{thm:Aug12022},  $p\not\lesssim q.$

(2) On the other hand, if $p, q\in M(B\otimes {\cal K})\setminus B\otimes {\cal K}$ are projections 
such that $\tau(p)<\tau(q)$ for all $\tau\in QT(B)$ and $\widehat{p}\in \Aff_+(QT(B)),$
then 
\beq
p\lesssim q\,\,\,{\rm in}\,\, M(B\otimes {\cal K}).
\eneq
(compare  Theorem 3.5 of \cite{Linossr=1}).

To see this, let $f=\widehat{q}-\widehat{p}.$ Since $\widehat{p}\in \Aff_+(QT(B)),$
$f\in {\rm LAff}_+(QT(B)).$ By (3) of Theorem \ref{thm:Aug12022}, 
\beq
p\lesssim q \,\,\,{\rm in}\,\, M(B\otimes {\cal K}).
\eneq

(3)  Suppose that there exists $h\in {\rm LAff}_+(QT(B))\setminus \Aff_+(QT(B))$ which is bounded. 
By (4) of Theorem \ref{thm:Aug12022}, there is a {{projection $q\in M(B\otimes {\cal K}) 
\setminus B \otimes {\cal K}$}} 
such that ${\widehat{q} = h}$.
Put $B_1=q(B\otimes {\cal K})q.$ Then $B_1$ does not have continuous scale.
Moreover, $M(B_1)=qM(B\otimes {\cal K})q.$ Therefore, by (2) and (4) of Theorem \ref{thm:Aug12022}, $K_0(M(B_1))_+$ can be identified with
\beq\nonumber 
\{f\in {\rm LAff}_+(QT(B)): \makebox{{\rm there is}}\,\,k\in \N \,\,{\rm such\,\, that}\,\, f\leq k\hat q \textrm{ and } k \widehat{q}-f\in {\rm LAff}_+(QT(B))\}.
\eneq
For any $g\in \Aff(QT(B)),$ choose $g_+\in \Aff_+(QT(B))$ such 
that $g(\tau)+g_+(\tau)>0$ for all $\tau\in QT(B).$ 
Then $g+g_+\in K_0(M(B_1)).$ It follows that $g\in K_0(M(B_1)).$
In other words, 
$\Aff(QT(B))\subset  K_0(M(B_1)).$ 
In the light of (2) of 
Theorem \ref{thm:Aug12022}, one sees that  $K_0(M(B_1))$ has somewhat complicated order.

This further justifies our choice of non-unital \CA\, $B$ with continuous scale. 
\end{rem}

Many of the results stated in this section do not require the \CA\, $B$ to be separable.
The condition that $B$ is separable may be replaced by 
$B$ is $\sigma$-unital and ${\wtd QT}(B)$ has a metrizable Choquet simplex as its basis. 

\section{Liftable extensions of stably finite \CA s}
%
In this section we investigate when an extension $0 \to B \to E \to A \to 0$ is liftable, in the sense that there is a homomorphism $A \to E$ which makes the extension split. The main theorem of this section is  Theorem \ref{Tlifable}.
This is all done under certain assumptions on $A$ and $B$. The standing assumptions for $B$ are introduced in \ref{58} below, and the C*-algebra $A$ will be assumed to be in a class $\mathcal A_1$ introduced in Definition \ref{df:ClassCalA}. We explain in Remark \ref{rem:CGSTW} why it will follow from future work \cite{CGSTW} that $\mathcal A_1$ contains every separable amenable C*-algebra satisfying the UCT and which has a faithful amenable trace.

%
\begin{NN}\label{58}
We will study the case that $B$ is a non-unital separable simple \CA\, with continuous scale such 
that the following statements hold: 
\begin{enumerate}
\item[I.] (Projection injectivity)  
If $p, q\in  M(B) \otimes {\cal K} \setminus B \otimes {\cal K}$ are two projections such that 
$\tau(p)=\tau(q)$
for all $\tau \in QT(B)$,
then $p \sim q$.  
(Recall that $QT(B)$ is compact, since $B$ has continuous scale.)
\item[II.] $(K_0(M(B)), K_0(M(B))_+)=(\Aff(QT(B)), \Aff_+(QT(B)))$.
Moreover, every element of $\Aff_+(QT(B)) \setminus \{ 0 \}$ can be realized by
a projection in $M(B) \otimes {\cal K} \setminus B \otimes {\cal K}$.
\item[III.]  $K_1(M(B))=\{0\}.$  
\end{enumerate}

\vspace*{2ex}

\textbf{Throughout this section, we will make the above \emph{standing 
assumptions} on
$B$.}\\

We note that the above assumptions are not too restrictive, and are possessed
by many C*-algebras, including C*-algebras
that are sufficiently regular.  For instance, by the results of Section \ref{KComputations}, any non-unital, separable, simple C*-algebra with continuous scale, strict comparison for positive elements, and stable rank one satisfies this standing assumption.

Finally, from the computations in Section \ref{KComputations} and by our standing assumptions
on $B$, we have the following statements:
\begin{enumerate}
\item[IV.]  $K_0(C(B)) = K_1(B) \oplus {{\Aff}}(QT(B))/\rho_B(K_0(B))$.
\item[V.]  $K_1(C(B)) = \ker(\rho_B)$.\\
\end{enumerate}
\end{NN}

We continue by introducing more notation (some of which were already
introduced in earlier sections, but we present again for the convenience
of the reader).
For a (possibly non-unital) C*-algebra $C$, we let $T_f(C)$ denote the set 
of faithful tracial
states on $C$.  {{Note that}} $T_f(C)$ is a convex subset of $T(C)$.

Let $A$ be a separable amenable \CA\,  with $T_f(A)\not= \emptyset.$ 
Let $$T_{f,0}(A)=\{r\tau: \tau\in T_f(A): 0\le r\le 1\}.$$ 
Let $\rho_{A,f}: K_0(A)\to \Aff(T_{f,0}(A))$ be 
the order  preserving \hm\, defined by
$$\rho_{A,f}([p]-[q])(\tau) =\tau(p)-\tau(q)$$ 
for all pairs of projections 
$p, q \in M_\infty(A^+)$ such that $\pi_\C^A(p)=\pi_\C^A(q)$ and $\tau\in T_{f,0}(A).$
Here, $\pi_{\C}^A : A^+ \rightarrow A^+/A = \C$ is the usual 
quotient map.  Also, if, in the above, $T_{f,0}(A)$ is replaced with
$T(A)$, then $\rho_{A, f}$ is replaced with $\rho_A$.

{{  
\begin{df}\label{Deflambda}
Let $A$ be a \CA\, with $T_f(A)\not=\emptyset.$
Recall that $\overline{T(A)}^w$  is the weak *- closure of $T(A)$ (see \ref{Dtrace}).
If $A$ is unital, $\overline{T(A)}^w=T(A).$
Then
the restriction map $\lambda : \Aff(\overline{T(A)}^w) \rightarrow 
\Aff^b(T_f(A))$ is strictly positive.
Note that $\|\lambda(f)\|\le \|f\|$ for all $f\in \Aff(T(A))$ (see \ref{Dppp}).
Denote by $\Aff^\lambda(T_f(A))=\lambda(\Aff(\overline{T(A)}^w))$ the linear subspace.
By $\Aff^\lambda_+(T_f(A)),$ we mean $\lambda(\Aff_+(T(A))).$
Note that if $A$ is simple, $T_f(A)=T(A).$

Let $\rho_A^w: K_0(A)\to \Aff(\Tw)$ be defined by
$$
\rho_A^w([p]-[q])(\tau)=\tau(p)-\tau(q)\rforal \tau\in  \Tw
$$
for any pair of projections $p, q\in  M_m(\wtd A)$ such that $p-q\in M_m(A)$ for all $m\in \N.$

Let $\rho_{A,f}^w=\lambda\circ \rho_A^w: K_0(A)\to \Aff^\lambda(T_f(A)).$ 

Suppose that every $2$-quasitrace of $A$ is a trace.
If $C$ is another \CA\, with $QT(C)\not=\emptyset$ and $\phi: A\to C$ is a \hm,
then one obtains a  continuous affine map $\phi_T: QT(C)\to T_0(A).$
Recall that $T_0(A)=\{ t\cdot s: t\in [0,1],\, s\in T(A)\}.$
This induces a continuous positive linear map
$L: \Aff(T_0(A))\to \Aff(QT(C)).$ 
Recall that $f(0)=0$ if $f\in \Aff(T_0(A)).$
Let $r: \Aff(T_0(A))\to \Aff(\Tw)$ be defined by
$r(f)=f|_{\Tw}$ for all $f\in \Aff(T_0(A)).$ 
It is an isometric isomorphism. Let 
$L_\phi: \Aff(\Tw)\to \Aff(QT(C))$  be defined by
$L\circ r^{-1}.$ 

Now let us assume that $\tau\circ \phi$ is a faithful trace for each $\tau\in QT(C).$
Note that if $f(s)=s(a)$ for some $a\in A_+$ and for all $s\in T(A),$ 
then $L_\phi(f)(\tau)\ge 0$ for all $\tau\in QT(C).$
Suppose that $f_1, f_2\in \Aff(T(A))$ such that $\lambda(f-f')=0.$ Let $a,b\in A_{s.a.}$
such that $s(a)=f_1(s)$ and $s(b)=f_2(s)$ for all $s\in  T(A)$ (see 2.7 and 2.8 of \cite{CP}).
It follows that $t(a-b)=0$ for all $t\in T_{f,0}(A).$
Then 
$\tau(\phi(a-b))=0$ since $\tau\circ \phi=r \cdot t$ for some $0\le r\le 1$ and $t\in T_f(A).$
This provides a linear map 
$\bar L_\phi: \Aff^\lambda(T_f(A))\to \Aff(QT(C)).$
As $\tau\circ \phi\in T_{f,0}(A),$ we conclude that $\|\bar L_\phi\|\le 1.$
\end{df}}}

{{
\begin{prop}\label{P230604}
Let $A$ be a non-unital \CA\, with $T_f(A)\not=\emptyset.$

(1) Then 
$\tau(a)<1$ for all $a\in A_+$ with $0\le a\le 1$ for all $\tau\in T_f(A).$

(2) If every 2-quasitrace of $A$ is a trace,  $C$ is another \CA\, and 
$\phi: A\to C$ is a \hm,  then ${\bar L_\phi}(1_{T_f(A)})(\tau)>\tau(\phi(a))$
for all $0\le a\le 1$ and $\tau\in QT(C)$ whenever $\tau\circ \phi$ is a faithful trace.
\end{prop}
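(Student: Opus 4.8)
The plan is to deduce (2) from (1), so the real work is in (1), which I would prove by contradiction. Fix $\tau\in T_f(A)$ and $a\in A_+$ with $0\le a\le 1$; since $\tau$ is a state, $\tau(a)\le 1$, and I extend $\tau$ to the state on the minimal unitization $\wtd A$ with $\tau(1)=1$. Assume toward a contradiction that $\tau(a)=1$, so that $\tau(1-a)=0$ with $1-a\ge 0$ in $\wtd A$. By the Cauchy--Schwarz inequality for the positive functional $\tau$ on $\wtd A$, $\tau(1-a)=0$ forces $\tau\big((1-a)^{1/2}w\big)=0$ for every $w\in\wtd A$. Now for $x\in A$ set $z:=x(1-a)^{1/2}$; since $A$ is an ideal of $\wtd A$ we have $z\in A$, and taking $w:=x^*x(1-a)^{1/2}$ gives $\tau(z^*z)=\tau\big((1-a)^{1/2}w\big)=0$, so faithfulness of $\tau$ on $A$ yields $z=0$. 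Thus $x(1-a)^{1/2}=0$ for all $x\in A$, and taking adjoints $(1-a)^{1/2}y=0$ for all $y\in A$. Applying this to $y=a$ and left-multiplying by $(1-a)^{1/2}$ gives $(1-a)a=0$, so $a=a^2$ is a projection; then $(1-a)^{1/2}=1-a$, and $(1-a)y=0=y(1-a)$ for all $y\in A$ says precisely that $a$ is a unit for $A$ — contradicting non-unitality. Hence $\tau(a)<1$, proving (1).

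For (2), fix $\tau\in QT(C)$ for which $\tau\circ\phi$ is a faithful trace on $A$, and $a\in A$ with $0\le a\le 1$. Put $r:=\|\tau\circ\phi\|$. Since $\|\tau|_C\|=1$ and $\phi$ is contractive, $r\le 1$; since $\tau\circ\phi$ is faithful and $A\neq\{0\}$, $r>0$, and $t:=r^{-1}(\tau\circ\phi)$ is a faithful tracial state, so $t\in T_f(A)$ and $\phi_T(\tau)=\tau\circ\phi=r\,t\in T_{f,0}(A)$. Now I would unwind Definition \ref{Deflambda}: writing $1_{T_f(A)}=\lambda(h)$ with $h\in\Aff(\Tw)$ and letting $\tilde h\in\Aff(T_0(A))$ be its unique affine extension (so $\tilde h=r^{-1}$ of $h$ in the notation there), we have $\bar L_\phi(1_{T_f(A)})(\tau)=L_\phi(h)(\tau)=\tilde h(\phi_T(\tau))=\tilde h(r\,t)$. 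Since $\tilde h(0)=0$, $\tilde h$ is affine, and $\tilde h|_{T_f(A)}\equiv 1$, this equals $r\,\tilde h(t)=r$. On the other hand $\tau(\phi(a))=(\tau\circ\phi)(a)=r\,t(a)$, and $t(a)<1$ by part (1). Therefore $\tau(\phi(a))=r\,t(a)<r=\bar L_\phi(1_{T_f(A)})(\tau)$, as claimed.

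The step I expect to require the most care is not any single hard estimate but the bookkeeping in (2): identifying $\bar L_\phi(1_{T_f(A)})(\tau)$ with the normalization constant $\|\tau\circ\phi\|$ rather than with $1$. This is exactly where the standing hypothesis that $\tau\circ\phi$ be a faithful trace enters: only then does $\phi_T(\tau)$ lie on the ray $[0,1]\cdot t$ for some $t\in T_f(A)$, which (together with $\tilde h(0)=0$ and affinity) pins down the value of the a priori merely affine, $0$-preserving extension $\tilde h$ at $\phi_T(\tau)$, instead of merely bounding it by $\|\phi_T(\tau)\|$. In (1) the only subtlety is the routine verification that $(1-a)^{1/2}\in\wtd A$ and that the relevant products land back in the ideal $A$, so that faithfulness of $\tau$ on $A$ can be invoked; I do not anticipate a genuine obstacle beyond this.
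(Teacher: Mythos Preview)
Your proof is correct, and both parts take a somewhat different route from the paper's.

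For (1), the paper argues by a functional-calculus dichotomy: either $a^{1/n}$ eventually stabilizes, in which case one extracts a projection $e\ge a$ with $\tau(e)=1$ and derives a contradiction to non-unitality via some $x=(1-e)bb^*(1-e)\ne 0$ with $\tau(x)=0$; or $a^{1/n}\ne a$ for some $n$, in which case $\tau(a^{1/n}-a)>0$ contradicts $\tau(a)=1$. Your Cauchy--Schwarz argument in $\wtd A$ bypasses this split entirely: from $\tau(1-a)=0$ you force $x(1-a)^{1/2}=0$ for all $x\in A$, hence $a$ is a projection that acts as a unit. This is shorter and conceptually clean; the paper's approach has the mild advantage of staying inside $A$ and avoiding the unitization, but both reach the same contradiction.

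For (2), the paper argues that $\bar L_\phi(1_{T_f(A)})(\tau)\ge \tau(\phi(a^{1/n}))$ for all $n$ and then, in the non-projection case, uses faithfulness of $\tau\circ\phi$ to get $\tau(\phi(a^{1/n}))>\tau(\phi(a))$; the projection case is handled by reference to the argument in (1). You instead unwind Definition~\ref{Deflambda} to identify $\bar L_\phi(1_{T_f(A)})(\tau)$ with $r=\|\tau\circ\phi\|$ exactly (using affinity, $\tilde h(0)=0$, and $\phi_T(\tau)=r\,t$ with $t\in T_f(A)$), and then apply (1) to the normalized faithful tracial state $t$. Your reduction is more transparent and handles all $a$ uniformly; the paper's approach avoids computing $\bar L_\phi(1_{T_f(A)})(\tau)$ explicitly but leaves the projection case slightly elliptical.
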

\begin{proof}
For part (1), let 
$0\le a\le 1$ be in $A_+$ such that
$\tau(a)=1$ for some $\tau\in T_f(A).$  If there exists $n_0\in \N$ such that 
$a^{1/n}=a^{1/n_0}$ for all $n\ge n_0,$ put $e=a^{1/2n_0}.$ Then $e^2=a^{1/4n_0}=a^{1/n_0}\ge e.$ 
Since $0\le e\le 1,$ this implies that $e^2=e.$
So $e$ is a projection and $e\ge a.$ 
It follows that $\tau(e)=1.$ But $(1-e)b\not=0$ for some $\|b\|\le 1.$
Otherwise $e$ would be the unit for $A.$ Then $x=(1-e)bb^*(1-e)\not=0.$
It follows that $0\le e+x\le 1$ is in $A_+$ 
Hence $\tau(e+x)\le 1.$ But $\tau(e)=1.$ We conclude that $\tau(x)=0.$ This contradicts the fact 
that $\tau\in T_f(A).$ 

The remaining case is the case that $a^{1/n}-a\not=0$ for some 
$n\in \N.$ Since $0\le a^{1/n}-a\le 1,$ we must have $\tau(a^{1/n}-a)>0.$ 
This is not possible as $\tau(a)=1.$ Thus part (1)  holds.

Now consider part (2).  From what has be proved, 
we may assume that $a\not=a^{1/n}$ for all $n\in \N.$
Therefore $\tau\circ \phi(a^{1/n}-a)>0$ for all $\tau\in QT(C)$ such that 
$\tau\circ \phi$ is a faithful trace.
Since ${\bar L_\phi}(1_{T_f(A)})\ge \tau\circ \phi(a^{1/n})$ for all $n\in \N,$ 
we conclude that ${\bar L_\phi}(1_{T_f(A)})(\tau)>\tau(\phi(a))$
for all $0\le a\le 1$ and $\tau\in QT(C)$ when $\tau\circ \phi$ is a faithful trace.
\end{proof}
}}

{{
\begin{df}\label{DCAc}
Let $CA_+^{\bf 1}(\Aff^\lambda(T_f(A)), \Aff(QT(B)))$ be the set of 
linear maps  $L$ 
from $\Aff^\lambda(T_f(A))$ to $\Aff(QT(B))$ 
which maps $\Aff^\lambda_+(T_f(A))\setminus \{0\}$ into $\Aff_+(QT(B))\setminus \{0\},$
and which maps $1_{T_f(A)}$ 
to either $1_{QT(B)}$ or an affine function strictly less than $1_{QT(B)}$.  Here ``strictly less"  means strictly less at every
point in $QT(B)$. 

Set{\small{\beq\nonumber
H_{A, B,\rho}:=\{\eta\in {\rm Hom}(K_0(A), \Aff(QT(B)))_+: \eta=L\circ \rho^w_{A,f}\andeqn L\in 
CA^{\bf 1}_+(\Aff^\lambda(T_f(A)), \Aff(QT(B)))\}.
\eneq
}}

\end{df}
}}

Note that if $T_f(A) \neq \emptyset$ then
 $CA^{\bf 1}_+({{\Aff^\lambda}}(T_f(A)), \Aff(QT(B)))\not=\emptyset.$ 
To see this, fix $\tau_0\in T_f(A).$
Define $j_f: QT(B)\to T_{f}(A)$ by $j_f(\tau)=\tau_0$
for all $\tau\in QT(B).$ Then $j_f$  induces an element in $CA^{\bf 1}_+({{\Aff^\lambda}}(T_f(A)), \Aff(QT(B))).$

\vspace*{2ex}

{{Let}} $H^0_{A, B, \rho}$ {{be the subset of $H_{A,B,\rho}$ consisting}} of those ${{\gamma}}= L \circ \rho^w_{A, f} \in H_{A, B,\rho}$ for which
$L$ is strictly non-unital, i.e., $L(1_{T_f(A)})(\tau) < 1$ for all $\tau \in QT(B)$.


\begin{rem}\label{RQTMB}
Let $B$ be as above.  Since $B$ has continuous scale, $M(B)/B$ is a purely infinite simple \CA\, (see \ref{Tcontscal}).
Therefore $QT(M(B))=QT(B).$ 
By \cite[II.4.4]{BH}, $QT(M(B))$ is a Choquet simplex.  Since $B$ is separable, this also implies that
$QT(B)$ is a metrizable Choquet simplex.  It follows that there is a unital simple AF-algebra 
$F$ (by \cite{BlackadarTraces})  for which there exists an affine homeomorphism $\iota : QT(B) \to T(F).$
It induces an order preserving \hm\, $\Gamma: \rho_F(K_0(F))\to \Aff(QT(B))$ such that
$\Gamma([1_F])=1_{QT(B)}.$ It follows from  \cite[Lemma 4.2] 
{PR}
that  there is a unital injective \hm\, $j_F: F\to M(B)$ such that 
$(j_F)_{*0}=\Gamma\circ \rho_F.$ Since $F$ is an AF-algebra, $\tau(j_F(a))=\iota(\tau)(a)$
for all $a\in F_{s.a.}$ and $\tau\in QT(B).$
\end{rem}

\begin{df}[The classes $\mathcal A_1$ and $\mathcal A_2$]\label{df:ClassCalA}

Let ${\cal A}_1$ be the class of all separable amenable \CA s $A$ such that,
for any unital simple infinite-dimensional AF-algebra $C$ with ${\rm ker}\rho_C=\{0\},$ 
and 
any order preserving \hm\, $\af\in {\rm Hom}(K_0(\wtd A), K_0(C))$ with 
$\af([1_{\wtd A}])\le [1_C],$   if there is a strictly positive 
linear continuous map  
$\Gamma: {{\Aff^\lambda}}(T_f(A))\to \Aff(T(C))$   such that
$ \Gamma\circ \rho^w_{A,f}= \rho_C \circ \af,$
%
then 
there 
exists an embedding $h: A\to C$ such that 
$h_{*0}=\af|_{K_0(A)}$ (recall that $\wtd A$ is the minimal unitization of $A$)
and $\tau(h(a))=\Gamma(\hat{a})(\tau)$ for all $a\in A_{s.a.}$ and $\tau\in T_f(A).$

Let ${\cal A}_2$ be the class of all separable amenable \CA s $A$   such that, 
for any $B$ satisfying the standing assumptions {{of this section,}} {{and,}} for any $\lambda\in H_{A,B,\rho},$ there exists an embedding 
$h: A\to M(B)$ such that   $h(A)\cap B=\{0\}$ and
$h_{*0}=\lambda.$  Moreover, if $A$ is unital and $\lambda([1_A])=[1_{M(B)}],$ 
then we may choose $h(1_A)=1_{M(A)},$    and, if $\lambda([1_A])\not=[1_{M(B)}],$ we may choose
$1_{M(B)}-h(1_A)\not\in B.$ 


\end{df}


{\begin{rem}\label{rem:CGSTW}
It will follow from upcoming work of the first named author, in collaboration with Carrión, Schafhauser, Tikuisis, and White \cite{CGSTW}, that every separable amenable C*-algebra satisfying the UCT and with a faithful amenable tracial state is in $\mathcal A_1$. 

Let $\mathcal A_1^{UCT}$ and $\mathcal A_2^{UCT}$ be the classes of C*-algebras in $\mathcal A_1$ and $\mathcal A_2$ respectively which also satisfies the UCT. Since every C*-algebra in $\mathcal A_2$ has a faithful trace, it follows from Proposition \ref{PA1A2} below, that $\mathcal A_1^{UCT} = \mathcal A_2^{UCT}$ and that these classes are exactly the class of all separable amenable C*-algebras satisfying the UCT which have a faithful tracial state. 

In order to not have this paper rely on forthcoming work, we will work with the classes $\mathcal A_1$ and $\mathcal A_2$ instead, and we will provide large classes of examples in Theorem \ref{TembeddingAH} which are in $\mathcal A_1$ and $\mathcal A_2$. 

But we emphasize that all results where we consider $A\in \mathcal A_i$ satisfying the UCT will hold for every separable amenable C*-algebra satisfying the UCT which has a faithful tracial state.
\end{rem}}

\begin{prop}\label{PA1A2}
If $A\in {\cal A}_1,$ then $A\in {\cal A}_2.$
\end{prop}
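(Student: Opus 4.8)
The plan is to factor the required embedding $A\hookrightarrow M(B)$ through a carefully chosen unital simple AF-algebra: first use the defining property of ${\cal A}_1$ to embed $A$ into such an algebra $F$ with prescribed behaviour on $K_0$ and on traces, and then embed $F$ unitally into $M(B)$ exactly as in Remark \ref{RQTMB}. Concretely, fix $B$ satisfying the standing assumptions of this section and $\lambda\in H_{A,B,\rho}$, and write $\lambda=L\circ\rho^w_{A,f}$ with $L\in CA_+^{\bf 1}(\Aff^\lambda(T_f(A)),\Aff(QT(B)))$; put $\DT:=QT(B)$, a metrizable Choquet simplex. The first step will be to produce a countable subgroup $G\le\Aff(\DT)$ that contains $1_\DT$ and $\lambda(K_0(A))$, is norm-dense in $\Aff(\DT)$, and, with the strict ordering ($g\ge 0$ iff $g=0$ or $g(\tau)>0$ for all $\tau\in\DT$) and order unit $1_\DT$, is a dimension group. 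One constructs $G$ by the usual interpolation-closure argument: start from the countable subgroup generated by $1_\DT$, by $\lambda(K_0(A))$ and by a countable dense subset of $\Aff(\DT)$, and enlarge in countably many steps so as to solve all finite interpolation problems arising along the way, using that $\Aff(\DT)$ itself has Riesz interpolation since $\DT$ is a Choquet simplex. Let $F$ be a unital simple AF-algebra with $(K_0(F),K_0(F)_+,[1_F])=(G,G_+,1_\DT)$ (see \cite{BlackadarTraces}); it is infinite-dimensional because $G$ is not cyclic, its trace space $T(F)$ is affinely homeomorphic to $\DT$ via the evaluation map $\iota\colon\DT\to T(F)$ (density of $G$ in $\Aff(\DT)$ is used here), and ${\rm ker}\,\rho_F=\{0\}$ because under these identifications $\rho_F$ is the inclusion $G\hookrightarrow\Aff(\DT)$. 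Exactly as in Remark \ref{RQTMB} (via \cite[Lemma 4.2]{PR}) there is a unital injective \hm\ $j\colon F\to M(B)$ with $j_{*0}\colon K_0(F)=G\hookrightarrow\Aff(\DT)=K_0(M(B))$ the inclusion; since $F$ is simple and $j(1_F)=1_{M(B)}\notin B$, we get $j(F)\cap B=\{0\}$.

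The second step is to check the hypotheses of the definition of ${\cal A}_1$ with $C:=F$. Let $\iota_*\colon\Aff(\DT)\xrightarrow{\ \cong\ }\Aff(T(F))$ be the isomorphism dual to $\iota$ and set $\GM:=\iota_*\circ L\colon\Aff^\lambda(T_f(A))\to\Aff(T(F))$, which is a strictly positive continuous linear map since $L$ carries $\Aff^\lambda_+(T_f(A))\setminus\{0\}$ into $\Aff_+(\DT)\setminus\{0\}$ and $\iota_*$ is an order isomorphism. Define $\af\colon K_0(\wtd A)\to K_0(F)=G$ by $\af:=\lambda$ if $A$ is unital (this lands in $G$ because $\lambda(K_0(A))\sbs G$), and, if $A$ is non-unital, by $\af|_{K_0(A)}:=\lambda$ and $\af([1_{\wtd A}]):=[1_F]$, using $K_0(\wtd A)=K_0(A)\oplus\Z[1_{\wtd A}]$. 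Then $\af([1_{\wtd A}])\le[1_F]$ (an equality in the non-unital case; in the unital case $\af([1_A])=L(1_{T_f(A)})\le 1_\DT=[1_F]$ by the definition of $CA_+^{\bf 1}$), and $\rho_F\circ\af=\GM\circ\rho^w_{A,f}$ on $K_0(A)$ by unwinding the identifications. Finally $\af$ is order preserving: the positive cones of $G$ and of $K_0(M(B))=\Aff(\DT)$ are both given by the strict ordering, so $\lambda(K_0(A)_+)\sbs\Aff_+(\DT)\cap G=G_+$, which settles the unital case; for $0\ne[p]\in K_0(\wtd A)_+$ with $p\in M_N(\wtd A)$ a projection whose image under $\pi^A_\C$ has rank $n\ge 0$, set $w:=[p]-n[1_{\wtd A}]\in K_0(A)$, and note that since the canonical extension of any faithful tracial state of $A$ to $M_N(\wtd A)$ is again faithful, $n\cdot 1_{T_f(A)}+\rho^w_{A,f}(w)=\widehat p|_{T_f(A)}$ is a nonzero element of $\Aff^\lambda_+(T_f(A))$; hence $nL(1_{T_f(A)})+\lambda(w)=L(\widehat p|_{T_f(A)})\in\Aff_+(\DT)\setminus\{0\}$, and since $L(1_{T_f(A)})\le 1_\DT$ this yields $\af([p])=n\cdot 1_\DT+\lambda(w)\ge nL(1_{T_f(A)})+\lambda(w)>0$, so $\af([p])\in G_+$.

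Applying the defining property of ${\cal A}_1$ then gives an embedding $h'\colon A\to F$ with $h'_{*0}=\af|_{K_0(A)}=\lambda$. Put $h:=j\circ h'\colon A\to M(B)$; this is an embedding with $h(A)\sbs j(F)$, so $h(A)\cap B=\{0\}$, and $h_{*0}=j_{*0}\circ\lambda=\lambda$ viewed as a map into $\Aff(\DT)=K_0(M(B))$. If $A$ is unital and $\lambda([1_A])=[1_{M(B)}]$, then $[h'(1_A)]=[1_F]$ in $K_0(F)$, and since $F$ is stably finite this forces $h'(1_A)=1_F$, so $h(1_A)=1_{M(B)}$; if instead $\lambda([1_A])\ne[1_{M(B)}]$, then $1_F-h'(1_A)$ is a nonzero projection of $F$ (its class $1_\DT-\lambda([1_A])$ is nonzero), whence $1_{M(B)}-h(1_A)=j(1_F-h'(1_A))\in j(F)\setminus\{0\}$ and in particular $1_{M(B)}-h(1_A)\notin B$. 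Therefore $A\in{\cal A}_2$.

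The genuinely substantive point is the first step: the construction of the dimension group $G$ — equivalently, of the auxiliary AF-algebra $F$ — arranged so that $\lambda(K_0(A))$ lies inside $j_{*0}(K_0(F))$ and $T(F)\cong QT(B)$. Everything after that is bookkeeping with the identifications coming from Remark \ref{RQTMB}, together with the elementary inequality $L(1_{T_f(A)})\le 1_\DT$ which is what makes the extended map $\af$ positive.
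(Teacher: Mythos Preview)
Your proof follows the same route as the paper's: factor the desired embedding $A\hookrightarrow M(B)$ through a unital simple AF-algebra $F$ whose $K_0$ is a countable dense subgroup of $\Aff(QT(B))$ containing $1_\Delta$ and $\lambda(K_0(A))$, embed $A\to F$ via the ${\cal A}_1$ hypothesis, and embed $F\to M(B)$ via \cite[Lemma~4.2]{PR} as in Remark~\ref{RQTMB}. (Your interpolation-closure step is harmless but unnecessary: any countable dense subgroup of $\Aff(\Delta)$ with the strict order automatically has Riesz interpolation, since strict inequalities give room to perturb an $\Aff(\Delta)$-interpolant into $G$.)

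There is, however, a genuine gap in your order-preservation argument for non-unital $A$. Your assertion that $\widehat p|_{T_f(A)}$ lies in $\Aff^\lambda_+(T_f(A))$ is not justified and can fail: by Definition~\ref{Deflambda}, $\Aff^\lambda_+(T_f(A))=\lambda(\Aff_+(\overline{T(A)}^w))$, and since $T_f(A)$ is dense in $T(A)$ the restriction $\lambda$ is injective, so $\widehat p|_{T_f(A)}\in\Aff^\lambda_+(T_f(A))$ forces $\widehat p>0$ on all of $T(A)$ --- which need not hold at non-faithful traces. Concretely, take $A=C_0((0,1])\oplus\C$ (so $\tilde A\cong C([0,1])\oplus\C$), let $\Delta=[0,1]$, and let $L$ be dual to an affine path $\gamma\colon[0,1]\to T(A)$ with $\gamma(0)=\tau_0$ the non-faithful trace $\tau_0(f,z)=z$ and $\gamma(1)$ any trace with $c$-component in $(0,1)$. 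Then $L\in CA_+^{\bf 1}$, $\lambda\in H_{A,B,\rho}$, but for $p=1_{\tilde A}-(0,1)$ one finds $\alpha([p])(0)=0$ while $\alpha([p])(s)>0$ for $s>0$, so $\alpha([p])\notin G_+$ and your extension $\alpha([1_{\tilde A}])=[1_F]$ is not order-preserving. The paper's own proof simply omits this verification; a correct argument requires either a different extension of $\lambda$ to $K_0(\tilde A)$ (which exists but needs justification) or a direct appeal to positivity and sub-unitality of $L$ together with Proposition~\ref{P230604}.
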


\begin{proof}
Suppose that $B$ satisfies the standing assumptions of this section.
Let $\lambda\in H_{A, B, \rho}$ and $\lambda=L\circ \rho^w_{A,f}$ for $L \in CA^{\bf 1}_+({{\Aff^\lambda}}(T_f(A)), \Aff(QT(B)))$.

Choose a countable dense ordered subgroup $H\subset \Aff(QT(B))$ such that   $1_{T(B)}\in H$ and 
\beq\label{monoid-1}
\lambda(K_0(\wtd A))_+\subseteq H_+:=H\cap \Aff_+(QT(B)).
\eneq
There exists a unital simple 
separable  AF-algebra $F$ such that $T(F)={{Q}}T(B),$ 
${\rm ker}\rho_F=\{0\},$
 $K_0(F)=H$,  $V(F) = K_0(F)_+ = H_+$,
and $[1_F]=1_{T(B)}.$

Let $\iota: K_0(F)\to \Aff(QT(B))$ be the embedding.
We will construct an embedding  $\psi: F\to M(B).$  To do this, we  apply the same argument as that  of Lemma 8.1 of \cite{LN} 
(see also Lemma 5.2 of \cite{LinExtQuasidiagonal}).
It follows from 7.2 of \cite{LN} that 
\beq
V(M(B))=V(B)\sqcup (\Aff_+(QT(B))\setminus \{0\})
\eneq
(with the mixed sum  and order as follows: if $x\in V(B)$ and $y\in \Aff_+(QT(B))\setminus \{0\},$
$x+y=\rho_B(x)+y$;  $x\le y,$ if $\rho_A(x)(\tau) < y(\tau)$ for
all $\tau \in QT(B)$).  
Note that $\Aff_+(QT(B))$ is an ordered semigroup (monoid).
Recall that $\iota: V(F)\to \Aff_+(QT(B))$ is   the inclusion $H_+\subset \Aff_+(QT(B)).$
It follows from  Lemma 4.2 of 
\cite{PR} that there exists a   unital  \hm\, 
$\psi: F \to M(B)$ such that   $\psi$ induces the inclusion $\iota.$
Note that, since $F$ is simple and since $\psi$ is unital,
$\psi$ is an embedding and $\psi(A) \cap B = \{ 0 \}$.  


Next consider the linear map $L\in {{CA_+^{\bf 1}({{\Aff^\lambda}}(T_f(A)), \Aff(QT(B)))}}.$
Then $L\circ \rho^w_{A,f}(K_0(A))\subset \rho_C(K_0(F))=K_0(F).$ 
Define $\af: K_0(A)\to K_0(F)$ by $\af(x)=L\circ \rho^w_{A,f}(x).$
Since $$L\in CA_+^{\bf 1}({{\Aff^\lambda}}(T_f(A)), \Aff(QT(B))),$$
$L$ is strictly positive. Since $A\in {\cal A}_1,$ 
it follows that there is an injective \hm\, $\phi: A\to F$ such that
\beq
\phi_{*0}=\alpha.
\eneq
Define $h: A\to M(B)$ by $h=\psi\circ \phi.$
Note that 
\beq
h_{*0}=\psi_{*0}\circ \phi_{*0}=\iota\circ \alpha =\lambda.
\eneq
If $A$ is unital, and $\af([1_A])=[1_{M(B)}]=[1_F],$ then $h(1_A)=1_{M(B)}.$
If $\af([1_A])\not=[1_{M(B)}],$ then $\af([1_A])<[1_F].$ Hence $1_F-\phi(1_A)\not=0$ and 
$1_F-\phi(1_A)\in \psi(C).$
It follows that $1_{M(B)}-h(1_A)\not\in B.$
\end{proof}

Recall that when $A$ satisfies the UCT and $B$ satisfies the standing assumptions of this section, then $KK(A, M(B))={\rm Hom}(K_0(A), \Aff(QT(B)))$. 
Therefore we may view $H_{A, B, \rho}$ as a subgroup of $KK(A, M(B)).$

{{
Let $A$ be a unital, simple, separable, stably finite C*-algebra.
Let $S_1(K_0(A))$ be the state space of $(K_0(A), K_0(A)_+, [1_A])$, i.e., 
$S_1(K_0(A)) := \{ f \in Hom(K_0(A), \mathbb{R})_+ : f([1_A]) = 1 \}$. 
Let $r_A : T(A) \rightarrow S_1(K_0(A))$ be the map given by 
$r_A(t)([p]) = t(p)$ for all projections $p \in {{A \otimes {\cal K}}}$ and for
all $t \in T(A)$.  The
\emph{Elliott invariant} of $A$ is defined to be the sextuple 
$$Ell(A) := (K_0(A), K_0(A)_+, [1_A], K_1(A), T(A), r_A).$$
Let $B$ be another unital, simple, separable, stably finite C*-algebra.  
A \emph{homomorphism} $\gamma : Ell(A) \rightarrow Ell(B)$ between Elliott
invariants is a triple $\gamma = (\kappa_0, \kappa_1, \kappa_T),$ where
$\kappa_j : K_j(A) \rightarrow K_j(B)$ ($j=0,1$) are group homomorphisms for
which $\kappa_0$ is order preserving and $\kappa_0([1_A]) = [1_B]$,
$\kappa_T : T(B) \rightarrow T(A)$ is an affine continuous map, and 
\beq\label{ellk}
r_A(\kappa_T(t))(x) =   r_B(t)(\kappa_0(x)) \makebox{  for all  }
x \in K_0(A) \makebox{  and }   t \in T(B).
\eneq
If $\alpha \in KK(A, B)$, then $\alpha$ is said to be \emph{compatible}
with $\gamma$ if $\alpha_j = \kappa_j$ for $j = 0,1$.

For $\gamma$ as in the previous paragraph,
if, in addition, $\kappa_0$ is an ordered group isomorphism, $\kappa_1$
is a group isomorphism and $\kappa_T$ is an affine homeomorphism, then we 
say that $\gamma$ is an \emph{isomorphism} of Elliott invariants and we
write $Ell(A) \cong Ell(B)$.}}

The following is a variation of   Theorem 29.5 of \cite{GLNII} 
(see \cite{GLNhom2}).

\begin{lem}\label{Hom1}
Let $A$ and $B$ be two  finite  unital simple  
separable amenable ${\cal Z}$-stable \CA s such that $A$ satisfies the UCT.

Suppose that there is a \hm\, $\gamma:
{\rm Ell}(A)\to {\rm Ell}(B)$   and $\kappa\in KL(A, B)$
which is compatible with $\gamma.$ Then there is a \hm\, 
$\phi: A\to B$ such that $\phi$ induces $\gamma$ and $\kappa.$

\end{lem}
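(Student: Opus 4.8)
The statement is a classification-type result: given an Elliott-invariant morphism $\gamma\colon \mathrm{Ell}(A)\to\mathrm{Ell}(B)$ together with a $KL$-lift $\kappa\in KL(A,B)$ compatible with $\gamma$, we must produce an actual $\ast$-homomorphism $\phi\colon A\to B$ inducing both. The natural route is to reduce to the existence/classification theorems already available for unital separable simple amenable $\mathcal Z$-stable C*-algebras satisfying the UCT (the Gong--Lin--Niu / Elliott--Gong--Lin--Niu machinery, packaged in Theorem 29.5 of \cite{GLNII} and \cite{GLNhom2}). The only gap between what those theorems give and what we want is the passage from $KL$ to $KK$ and, more importantly, the handling of the additional $K$-theory-with-coefficients data and the rotation/Hausdorffized-$K_1$ data that a full $KK$- or total-$K$-theory statement requires. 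So the proof is essentially a ``lifting the invariant'' argument.

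First I would recall that, since $A$ satisfies the UCT, the universal coefficient sequence for $KK$ (and its variants for $KL$) lets us compare $KK(A,B)$, $KL(A,B)=KK(A,B)/\mathrm{Pext}$, and $\underline{KK}(A,B)=\mathrm{Hom}_\Lambda(\underline K(A),\underline K(B))$. Given the $KL$-class $\kappa$ compatible with $(\kappa_0,\kappa_1)$, I would first lift it to a class in $\underline{KK}(A,B)$, i.e.\ choose compatible maps on $K$-theory with $\mathbb Z/n$ coefficients extending $\kappa_0,\kappa_1$; this is possible because the relevant obstruction groups vanish or can be absorbed (the standard argument: $\underline{KK}(A,B)$ surjects onto $KL(A,B)$ with kernel controlled by the coefficient data, and one can always choose the Bockstein-compatible system). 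Then, together with the tracial data $\kappa_T$ from $\gamma$ and the compatibility identity \eqref{ellk}, I would assemble the full ``$KTU$''-type invariant morphism required as input to the existence theorem: $\underline{KK}$-part, tracial-simplex map, and the map on the Hausdorffized unitary group / on $\mathrm{Aff}(T(\cdot))$, the latter built canonically from $\kappa_T$ since both algebras are $\mathcal Z$-stable (so $K_1$ pairs trivially with traces on the de la Harpe--Skandalis level in the relevant sense, and the rotation map is determined). With this complete compatible invariant morphism in hand, the existence part of Theorem 29.5 of \cite{GLNII} (equivalently the main theorem of \cite{GLNhom2}) produces a $\ast$-homomorphism $\phi\colon A\to B$ realizing it; in particular $\phi$ induces $\gamma$ on the Elliott invariant and its $KK$-class reduces to the prescribed $KL$-class $\kappa$ (uniqueness of the $KL$-class modulo $\mathrm{Pext}$ is automatic since $\phi_*$ on total $K$-theory is the chosen lift).

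The main obstacle is the bookkeeping in lifting $\kappa\in KL(A,B)$ to a genuinely compatible element of the finer invariant and checking that the tracial compatibility \eqref{ellk} is preserved under this lift — in other words, that the $\mathbb Z/n$-coefficient maps, the trace map $\kappa_T$, and the rotation maps can be chosen simultaneously compatibly. This is where one must invoke that $A$ is UCT and that $B$ is $\mathcal Z$-stable (hence has strict comparison, stable rank one, and the relevant $K_1$-$T$ compatibility is unobstructed), so that the only genuine datum is $(\kappa_0,\kappa_1,\kappa_T)$ plus a choice of Bockstein-compatible extension, which always exists. Once the compatible invariant morphism is built, the rest is a black-box appeal to the classification existence theorem; so I would organize the write-up as: (1) set up $\underline K$-theory and lift $\kappa$; (2) record that $\kappa_T$ and \eqref{ellk} give the tracial and rotation data compatibly; (3) apply Theorem 29.5 of \cite{GLNII}/\cite{GLNhom2} to get $\phi$; (4) note $\phi$ induces $\gamma$ and $\kappa$ by construction.
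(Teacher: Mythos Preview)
Your proposal is correct and aligns with the paper's treatment: the paper does not prove this lemma, recording it instead as a variant of Theorem~29.5 of \cite{GLNII} with a pointer to \cite{GLNhom2}, which is exactly the black-box existence theorem you propose to invoke.

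Two minor corrections to your outline. First, the step ``lift $\kappa$ from $KL$ to $\underline{KK}$'' is vacuous: since $A$ satisfies the UCT, the Dadarlat--Loring universal multicoefficient theorem gives $KL(A,B)\cong\mathrm{Hom}_\Lambda(\underline K(A),\underline K(B))$, so $\kappa$ already \emph{is} the Bockstein-compatible system on total $K$-theory; there is nothing to lift and no obstruction to discuss. Second, the map $\gamma^\dag$ on $U_\infty/CU_\infty$ is not \emph{determined} by $\kappa_T$; it must be \emph{chosen} compatibly with $\kappa_1$ and the affine map induced by $\kappa_T$ (via a non-canonical splitting of the de la Harpe--Skandalis extension). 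Such a compatible choice always exists, and since the conclusion only requires $\phi$ to induce $\gamma$ and $\kappa$---not any particular $\gamma^\dag$---any choice will do. With these adjustments your outline is complete.
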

\begin{rem} \label{rmk:AffToT}
Let $A$ and $C$ be  unital separable C*-algebras 
with $T_f(A) \neq \emptyset \neq T(C)$
and $C$ simple.  
If we have a strictly positive affine continuous map
$\Gamma : {{\Aff^\lambda}}(T_f(A)) \rightarrow \Aff(T(C))$, then $\Gamma$ induces 
an affine continuous map $\gamma : T(C) \rightarrow T_f(A)$.  Here is a  
short argument:  The restriction map $\lambda : \Aff(T(A)) \rightarrow 
{{\Aff^\lambda}}(T_f(A))$ is strictly positive, and so we have a strictly positive map
$\Gamma \circ \lambda : \Aff(T(A)) \rightarrow \Aff(T(C))$.  This induces 
an affine continuous map $\gamma : T(C) \rightarrow T(A)$.  It remains to check
that the range of $\gamma$ is in $T_f(A)$.      
Note that for all $a \in A_+ \setminus \{ 0 \}$, for all $\tau \in T_f(A)$,
$\tau(a) > 0$, and so $\lambda(\hat{a})  \in {{\Aff_+}}(T_f(A)) \setminus \{ 0 \}$,
and hence (since $\Gamma$ is strictly positive) 
$\Gamma \circ \lambda(\hat{a})
\in {{\Aff_+}}(T(C)) \setminus \{ 0 \}$.   Hence, for all $\tau \in T(C)$,
for all $a \in A_+ \setminus \{ 0 \}$,
$\gamma(\tau)(a) = \Gamma \circ \lambda(\hat{a})(\tau) > 0$.  Hence,
the range of $\gamma$ is in $T_f(A)$.   
\end{rem}

{{
\begin{thm}\label{TembeddingAH}
(1) If $A$ is a unital AH-algebra with $T_f(A)\not=\emptyset$, then $A\in {\cal A}_1\subset {\cal A}_2.$

(2) If $A$ is a unital simple separable finite classifiable 
amenable C*-algebra,  then $A \in {\cal A}_1\subset {\cal A}_2.$
\end{thm}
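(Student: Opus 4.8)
Since $\mathcal A_1\subseteq\mathcal A_2$ by Proposition \ref{PA1A2}, in both cases it suffices to verify $A\in\mathcal A_1$. So fix a unital simple infinite-dimensional AF-algebra $C$ with $\ker\rho_C=\{0\}$, an order preserving $\alpha\in\Hom(K_0(\wtd A),K_0(C))$ with $\alpha([1_{\wtd A}])\le[1_C]$, and a strictly positive continuous linear map $\Gamma\colon\Aff^\lambda(T_f(A))\to\Aff(T(C))$ with $\Gamma\circ\rho^w_{A,f}=\rho_C\circ\alpha$; I must produce an embedding $h\colon A\to C$ with $h_{*0}=\alpha|_{K_0(A)}$ and $\tau(h(a))=\Gamma(\widehat a)(\tau)$ for all $a\in A_{s.a.}$ and $\tau\in T(C)$. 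I begin with a corner reduction. As $A$ is unital, $1_A\le 1_{\wtd A}$, hence $0\le\alpha([1_A])\le[1_C]$; moreover $\alpha([1_A])\ne 0$, since otherwise $\Gamma(1_{T_f(A)})=\Gamma(\rho^w_{A,f}([1_A]))=\rho_C(\alpha([1_A]))=0$ while $1_{T_f(A)}\in\Aff^\lambda_+(T_f(A))\setminus\{0\}$, contradicting strict positivity of $\Gamma$. Since $C$ is AF it has cancellation and enough projections, so there is a projection $e\in C$ with $[e]_0=\alpha([1_A])$; then $eCe$ is again a unital simple infinite-dimensional AF-algebra with $\ker\rho_{eCe}=\{0\}$, the inclusion $eCe\hookrightarrow C$ induces an order isomorphism $\iota_*\colon K_0(eCe)\xrightarrow{\cong}K_0(C)$ (fullness of $e$ together with cancellation), and setting $\bar\alpha:=\iota_*^{-1}\circ\alpha|_{K_0(A)}$ we obtain an order preserving map $\bar\alpha\colon K_0(A)\to K_0(eCe)$ with $\bar\alpha([1_A])=[e]=[1_{eCe}]$.

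Next I turn $\Gamma$ into a trace map for $eCe$, which is the mechanism of Remark \ref{rmk:AffToT} applied to the corner. Extend each affine function $\Gamma(\lambda(\widehat a))\in\Aff(T(C))$ positively-homogeneously and linearly over the cone of bounded traces of $C$, and let $\mathrm{ext}\colon T(eCe)\to\{\sigma\in\wtd T(C):\sigma(e)=1\}$ be the (affine) map sending $\tau'$ to its unique trace extension to $C$ normalized at $e$; uniqueness holds since $e$ is full in the simple algebra $C$. Setting $\kappa_T(\tau')(a):=\widetilde{\Gamma(\lambda(\widehat a))}(\mathrm{ext}(\tau'))$ gives an affine continuous map $\kappa_T\colon T(eCe)\to T(A)$: each $\kappa_T(\tau')$ is tracial because $\Gamma(\lambda(\widehat a))$ depends on $a$ only through $\tau\mapsto\tau(a)$; it is normalized because $\Gamma(\lambda(\widehat{1_A}))=\rho_C(\alpha([1_A]))=\widehat e$, so $\kappa_T(\tau')(1_A)=\mathrm{ext}(\tau')(e)=1$; and it is faithful because $\lambda(\widehat a)\in\Aff^\lambda_+(T_f(A))\setminus\{0\}$ for $a\in A_+\setminus\{0\}$, which forces $\Gamma(\lambda(\widehat a))$ to be strictly positive on $T(C)$. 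Thus $\kappa_T$ has range in $T_f(A)$. Finally $\Gamma\circ\rho^w_{A,f}=\rho_C\circ\alpha$ unwinds (again using $\iota_*$ and the extension) to $r_A(\kappa_T(\tau'))(x)=r_{eCe}(\tau')(\bar\alpha(x))$ for $x\in K_0(A)$, $\tau'\in T(eCe)$, so that $(\bar\alpha,0,\kappa_T)$ is a homomorphism of Elliott invariants ${\rm Ell}(A)\to{\rm Ell}(eCe)$ (the $K_1$-component is $0$ since $K_1(eCe)=0$).

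For part (2), $A$ is unital simple separable finite classifiable amenable, so it satisfies the UCT, is $\mathcal Z$-stable, and $T_f(A)=T(A)\ne\emptyset$; and $eCe$, being a non-elementary simple AF-algebra, is unital, separable, amenable, finite, $\mathcal Z$-stable and satisfies the UCT. Using the UCT and $K_1(eCe)=0$, lift $\bar\alpha$ to a class $\kappa\in KK(A,eCe)$ with $\kappa|_{K_0(A)}=\bar\alpha$ (automatically $\kappa|_{K_1(A)}=0$), and pass to its image in $KL(A,eCe)$, which is compatible with $(\bar\alpha,0,\kappa_T)$. Lemma \ref{Hom1} then produces a unital homomorphism $\phi\colon A\to eCe$ inducing $(\bar\alpha,0,\kappa_T)$ and $\kappa$; since $A$ is simple and $\phi\ne 0$, $\phi$ is injective. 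Let $h$ be the composite $A\xrightarrow{\phi}eCe\hookrightarrow C$. Then $h_{*0}=\iota_*\circ\bar\alpha=\alpha|_{K_0(A)}$, and for $\tau\in T(C)$, writing $\tau'=\tau(e)^{-1}\tau|_{eCe}\in T(eCe)$, one gets $\tau(h(a))=\tau(e)\,\tau'(\phi(a))=\tau(e)\,\kappa_T(\tau')(a)=\Gamma(\widehat a)(\tau)$, the last equality by the normalizations recorded above. Hence $h$ is as required and $A\in\mathcal A_1$.

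For part (1), $A$ is a unital AH-algebra with $T_f(A)\ne\emptyset$; it is separable, amenable and satisfies the UCT, and as in (2) the UCT with $K_1(eCe)=0$ gives $\kappa\in KK(A,eCe)$ lifting $\bar\alpha$. In place of Lemma \ref{Hom1} I now invoke the existence/embedding theorem for unital AH-algebras into unital simple C*-algebras of real rank zero (\cite{LinAH}): from the compatible datum $\kappa$ together with the compatible affine continuous trace map $\kappa_T\colon T(eCe)\to T_f(A)$ it yields a unital \emph{monomorphism} $\phi\colon A\to eCe$ inducing them --- here injectivity is part of the conclusion, as $A$ need not be simple. Composing with $eCe\hookrightarrow C$ and checking the $K_0$-map and traces exactly as in (2) gives the desired $h$, so $A\in\mathcal A_1$. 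The main obstacle is precisely this last step in case (1): matching the pair $(\kappa,\kappa_T)$ to the hypotheses of the AH-embedding theorem and extracting an injective unital map into the corner $eCe$; in case (2) the corresponding role is played by the classification input Lemma \ref{Hom1}. Everything else --- the corner reduction, the isomorphism $\iota_*$, the passage between the trace simplices $T(C)$ and $T(eCe)$ (which are only homeomorphic through a non-affine projective rescaling, so one must genuinely work on the trace cone), and the verification that $\kappa_T$ is a well-defined faithful affine trace map compatible with $\bar\alpha$ --- is routine bookkeeping that nonetheless must be carried out with care.
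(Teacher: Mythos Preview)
Your proof is correct and follows essentially the same strategy as the paper: reduce to the corner $eCe$ via a projection $e$ with $[e]=\alpha([1_A])$, build a compatible affine trace map into $T_f(A)$, lift $\bar\alpha$ to a $KL$-class using the UCT and $K_1(eCe)=0$, and then invoke \cite[Theorem~6.11]{LinAH} for part~(1) and Lemma~\ref{Hom1} for part~(2). The only notable difference is that the paper obtains its trace map $\gamma\colon T(C)\to T_f(A)$ directly from Remark~\ref{rmk:AffToT} and leaves the passage to $T(eCe)$ implicit, whereas you carry out the (non-affine) normalization $T(C)\leftrightarrow T(eCe)$ explicitly via the trace cone; your treatment is more careful on this point but the content is the same.
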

}}

\begin{proof}

{{To see (1),  let $C$ be a unital simple}} {AF-algebra } {{with ${\rm ker}\rho_C=\{0\}.$ 
Suppose $\kappa_0: K_0(A)\to K_0(C)=\rho_C(K_0(C))$  is an order preserving \hm\, and 
$\Gamma: {{\Aff^\lambda}}(T_f(A))\to \Aff(T(C))$  is  a strictly positive affine continuous map }}  
such that $\Gamma\circ \rho^w_{A,f}=\rho_C\circ \kappa_0.$  
Let $e\in C$ be a projection such that $[e]=\kappa_0([1_{ A}]).$  
Put $F=eCe.$ 
The map $\Gamma$  induces
an affine continuous map $\gamma : T(C) \to T_f(A)$ (see 
Remark \ref{rmk:AffToT}) which
{{ is compatible with $\kappa_0.$ 
Note that $K_1(F)=\{0\}.$
By the UCT, there exist $\kappa\in KL_e(C, F)^{++}$ (see \cite[Theorem 6.11]{LinAH})
which induces $\kappa_0$ on $K_0(F)$ and zero map on $K_1(A).$ 
Let  $\af:  U(M_\infty(A))/CU(M_\infty(A))\to U(M_\infty(F))/CU(M_\infty(F))=0$ be zero.
Then $\kappa,$ $\gamma$ and $\af$ are compatible. 
By \cite[Theorem 6.11]{LinAH},   there is a  unital monomorphism 
$\phi: A\to F\subset C$ such that $\phi_{*0}=\kappa_0$ and $\phi_\sharp=\gamma.$
}}

The proof of statement {{ (2) follows the same argument using Proposition \ref{PA1A2}
but  also applying Lemma \ref{Hom1}  instead of \cite{LinAH}.}}
\end{proof}
\begin{cor}
Every separable commutative \CA\, is in ${\cal A}_1\subset {\cal A}_2.$ 
\end{cor}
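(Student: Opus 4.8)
The plan is to invoke Proposition~\ref{PA1A2}, which reduces the claim to showing that every separable commutative \CA\ lies in ${\cal A}_1$. Write such an algebra as $A=C_0(X)$ with $X$ a second countable locally compact Hausdorff space; it is nuclear, hence amenable, and separable, so only the embedding condition in Definition~\ref{df:ClassCalA} needs to be checked. First I would note that $A$ always has a faithful tracial state: fixing a countable dense subset $\{x_k\}_{k\ge 1}$ of $X$, the probability measure $\mu=\sum_{k\ge 1}2^{-k}\delta_{x_k}$ has full support, and the associated trace is faithful. In particular $T_f(A)\neq\emptyset$, so the hypotheses appearing in the definition of ${\cal A}_1$ are not vacuous.

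Suppose first that $X$ is compact. Then $A=C(X)$ is a unital commutative \CA, hence a unital AH-algebra: every compact metrizable space is an inverse limit $X=\varprojlim_n X_n$ of finite simplicial complexes, so $A=\varinjlim_n C(X_n)$ with each $C(X_n)$ a homogeneous building block (alternatively one may regard $C(X)$ itself as a single homogeneous building block over $X$). Since $T_f(A)\neq\emptyset$, Theorem~\ref{TembeddingAH}(1) gives $A\in{\cal A}_1$, and then $A\in{\cal A}_2$ by Proposition~\ref{PA1A2}.

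Now suppose $X$ is non-compact, so $A$ is non-unital and $\wtilde A=C(X^+)$, where $X^+$ is the (compact metrizable) one-point compactification; by the previous paragraph $\wtilde A\in{\cal A}_1$. I would deduce $A\in{\cal A}_1$ from this. Given the data in Definition~\ref{df:ClassCalA} for $A$ --- a unital simple infinite-dimensional AF-algebra $C$ with $\ker\rho_C=\{0\}$, an order preserving $\af\in\Hom(K_0(\wtilde A),K_0(C))$ with $\af([1_{\wtilde A}])\le[1_C]$, and a strictly positive linear continuous $\Gamma\colon\Aff^\lambda(T_f(A))\to\Aff(T(C))$ with $\Gamma\circ\rho_{A,f}^w=\rho_C\circ\af$ --- one produces the corresponding data for $\wtilde A$ using the natural affine maps relating $T_f(A)$ and $T_f(\wtilde A)$. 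Extension of measures gives a continuous affine injection $T_f(A)\hookrightarrow T_f(\wtilde A)$: a full-support measure on the dense subset $X\subseteq X^+$ still has full support on $X^+$ since $X$ is non-compact, so the extended trace remains faithful; and restriction gives a continuous affine map $T(\wtilde A)\to T_0(A)$. Transporting $\Gamma$ along these maps yields a strictly positive linear continuous $\wtilde\Gamma\colon\Aff^\lambda(T_f(\wtilde A))\to\Aff(T(C))$ with $\wtilde\Gamma\circ\rho_{\wtilde A,f}^w=\rho_C\circ\af$ which restricts back to $\Gamma$. Applying $\wtilde A\in{\cal A}_1$ produces an embedding $\wtilde h\colon\wtilde A\to C$ with $\wtilde h_{*0}=\af$ and $\tau\circ\wtilde h=\wtilde\Gamma(\,\widehat{\cdot}\,)$ on $T_f(\wtilde A)$; then $h:=\wtilde h|_A\colon A\to C$ is injective with $h_{*0}=\af|_{K_0(A)}$ and $\tau\circ h=\Gamma(\,\widehat{\cdot}\,)$ for every $\tau\in T_f(A)$, which is exactly what is required. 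Hence $A\in{\cal A}_1\subseteq{\cal A}_2$.

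The step I expect to be the main obstacle is the last one: the bookkeeping needed to pass the trace data across the unitization --- identifying $\Aff^\lambda(T_f(A))$ and $\Aff^\lambda(T_f(\wtilde A))$ via the maps above, checking that $\wtilde\Gamma$ is genuinely strictly positive and compatible with $\af$ through $\rho_{\wtilde A,f}^w$, and verifying that restricting $\wtilde h$ to $A$ reproduces precisely the prescribed $K_0$- and trace-values. (If a version of the embedding theorem \cite[Theorem~6.11]{LinAH} is available directly for non-unital commutative \CA s, or if one simply observes that $C_0(X)$ is a non-unital AH-algebra to which the argument of Theorem~\ref{TembeddingAH}(1) adapts, this reduction can be skipped and the non-unital case follows exactly as the unital one.)
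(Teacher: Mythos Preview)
Your approach matches the paper's. The paper's proof is very brief: it writes $\wtd A=C(X)$, exhibits the faithful trace $\tau(f)=\sum_n 2^{-n}f(t_n)$ coming from a countable dense $\{t_n\}\subset X$, concludes $T_f(\wtd A)\neq\emptyset$, and stops there --- implicitly invoking Theorem~\ref{TembeddingAH}(1) for the unital AH-algebra $\wtd A$. You make exactly the same reduction but are considerably more explicit about the non-unital case, carrying out the bookkeeping of transporting $(\af,\Gamma)$ across the unitization and restricting the resulting embedding; the paper leaves this step entirely implicit. Your identification of that bookkeeping as the only potentially delicate point is accurate, and your sketch of how to handle it (extend full-support measures on $X$ to full-support measures on $X^+$, dualize to pass between $\Aff^\lambda(T_f(A))$ and $\Aff^\lambda(T_f(\wtd A))$, then restrict the embedding) is the natural route.
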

 
\begin{proof}
Let $A$ be a separable commutative \CA. Then $\wtd A=C(X).$ for some 
compact metric space.  Let $\{t_n\}$ be a countable dense subset of $X.$ 
Define $\tau(f)=\sum_{n=1}^\infty f(t_n)/2^n.$ 
Then, if $f\not=0$ and $f\ge 0,$ then $\tau(f)>0.$ It follows that 
$T_f(\wtd A)\not=\emptyset.$  
\end{proof}


The following follows from the work in \cite{CGSTW}, see Remark \ref{rem:CGSTW}.

\begin{thm}
  Every separable amenable \CA\, in the UCT class and with a  faithful 
tracial state is in ${\cal A}_1$.
\end{thm}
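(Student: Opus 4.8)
The plan is to obtain this as a consequence of the classification of embeddings into unital simple AF-algebras established by Carri\'on, Gabe, Schafhauser, Tikuisis and White in \cite{CGSTW} (see Remark \ref{rem:CGSTW}); apart from that input, the only work is a short reduction checking that its hypotheses are met.

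First I would dispose of one elementary point. Since $A$ is amenable, its bidual is injective, hence so is the weak closure of $A$ in any representation; in particular the von Neumann algebra generated by the GNS representation of any tracial state of $A$ is hyperfinite. Thus the hypothesis ``$A$ has a faithful tracial state'' already supplies ``$A$ has a faithful \emph{amenable} tracial state,'' which is the hypothesis in \cite{CGSTW}.

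Next I would unwind the definition of ${\cal A}_1$. Fix a unital simple infinite-dimensional AF-algebra $C$ with $\ker\rho_C=\{0\}$, an order preserving $\af\in\Hom(K_0(\wtd A),K_0(C))$ with $\af([1_{\wtd A}])\le[1_C]$, and a strictly positive continuous linear map $\Gamma\colon\Aff^\lambda(T_f(A))\to\Aff(T(C))$ with $\Gamma\circ\rho^w_{A,f}=\rho_C\circ\af$ on $K_0(A)$. Pick a projection $e\in C$ with $[e]_0=\af([1_{\wtd A}])$ (using that a simple AF-algebra has cancellation of projections and $\af([1_{\wtd A}])\le[1_C]$), and put $D:=eCe$, again a unital simple infinite-dimensional AF-algebra with $\ker\rho_D=\{0\}$ and $K_0(D)=K_0(C)$ canonically. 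By Remark \ref{rmk:AffToT}, composing $\Gamma$ with the canonical affine homeomorphism $T(D)\cong T(C)$ yields a continuous affine map $\gamma\colon T(D)\to T_f(A)$ with $\gamma(\tau)(a)=\Gamma(\widehat a)(\tau)$ for all $a\in A_{s.a.}$. After the standard unitalization, $\gamma$ extends to an affine continuous map $T(D)\to T(\wtd A)$, which together with $\af\colon K_0(\wtd A)\to K_0(D)$ (sending $[1_{\wtd A}]$ to $[1_D]_0$) and the zero map $K_1(\wtd A)\to K_1(D)=\{0\}$ constitutes a morphism of Elliott invariants ${\rm Ell}(\wtd A)\to{\rm Ell}(D)$; the pairing compatibility \eqref{ellk} is precisely the identity $\Gamma\circ\rho^w_{A,f}=\rho_C\circ\af$. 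Since $\wtd A$ satisfies the UCT, this morphism is compatible with some $\kappa\in KK(\wtd A,D)$. The existence half of \cite{CGSTW} now produces a unital $*$-homomorphism $\phi\colon\wtd A\to D$ realizing this morphism and $\kappa$. Because $\gamma(\tau)\in T_f(A)$ for every $\tau\in T(D)$, for any $a\in A_+\setminus\{0\}$ we have $\tau(\phi(a))=\gamma(\tau)(a)>0$, so $\phi$ is injective; hence $h:=\phi|_A\colon A\to D\subseteq C$ is an embedding with $h_{*0}=\af|_{K_0(A)}$ and $\tau(h(a))=\Gamma(\widehat a)(\tau)$ for all $a\in A_{s.a.}$ and $\tau\in T(C)$. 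This is exactly the condition defining $A\in{\cal A}_1$ (and then $A\in{\cal A}_2$ by Proposition \ref{PA1A2}).

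The substance, and the main obstacle, is entirely in \cite{CGSTW}: a stably finite, non-unital counterpart of the recent existence and uniqueness theorems at the heart of the classification programme (in the spirit of Gabe's new proof of Kirchberg--Phillips, of work on subalgebras of simple AF-algebras, and of Tikuisis--White--Winter), whose proof is long and is not reproduced here. One must moreover be slightly careful that it is available in a form allowing the \emph{domain} to be merely separable, unital, nuclear and UCT (not necessarily simple or ${\cal Z}$-stable), and that it delivers an \emph{injective} $*$-homomorphism whenever the trace datum is faithful; the only remaining issues are the bookkeeping translating the invariant used here --- the pair $(\af,\Gamma)$ with $\Gamma$ factoring $\rho^w_{A,f}$ --- into the Elliott-type invariant of \cite{CGSTW}, and handling the degenerate case $\af([1_{\wtd A}])=0$ separately (where the assertion is vacuous). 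These are routine.
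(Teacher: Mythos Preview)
Your approach is correct and matches the paper's: the paper offers no proof beyond the sentence ``follows from the work in \cite{CGSTW}, see Remark~\ref{rem:CGSTW},'' so your explicit reduction --- cutting down to the corner $D=eCe$, translating $(\alpha,\Gamma)$ into an Elliott-invariant morphism via Remark~\ref{rmk:AffToT}, and invoking the existence theorem of \cite{CGSTW} --- is exactly the content the paper leaves implicit. One small correction: the degenerate case $\alpha([1_{\wtd A}])=0$ is not quite vacuous as you say, but it is easily excluded, since then $\alpha=0$ and the compatibility $\Gamma\circ\rho^w_{A,f}=\rho_C\circ\alpha$ together with strict positivity of $\Gamma$ forces $\Gamma(1_{T_f(A)})=0$ when $A$ is unital (a contradiction), while for non-unital $A$ one still needs to produce an embedding realizing the nonzero trace map $\Gamma$, which can be handled by choosing any nonzero projection $e$ and applying the same \cite{CGSTW} argument to $(\alpha',\Gamma)$ with $\alpha'([1_{\wtd A}])=[e]$.
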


\begin{lem}

Let $A \in \mathcal{A}_2$ be unital.
Say that $L \in CA^{\bf 1}_+({{\Aff^\lambda}}_+(T_f(A)), \Aff_+(QT(B)))$ and $p \in M(B) \setminus B$ is a
projection such that
$$L(1_{T_f(A)}) = [p].$$

Then there exists a unital  embedding $h : A \rightarrow pM(B)p$
such that $h(A) \cap B = \{ 0 \}$ and (when $h$ is viewed as a map into $M(B)$)
$$h_{*0} = L  \circ {{\rho^w_{A,f}}}.$$ 
\label{lem:Feb22202111:45PM}
\end{lem}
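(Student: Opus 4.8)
The plan is to reduce the statement to the defining property of the class $\mathcal{A}_2$, with the only real work being the bookkeeping between the corner $pM(B)p$ and $M(B)$ itself. First I would observe that since $p \in M(B) \setminus B$ and $B$ is simple with continuous scale, the corner $B_p := \overline{pBp}$ (equivalently $p(B\otimes e_{1,1})p$ intersected appropriately) is a hereditary $C^*$-subalgebra of $B$, hence simple, and $pM(B)p = M(B_p)$ canonically. Moreover $B_p$ is $\sigma$-unital; the subtlety is that $B_p$ need not have continuous scale (this is exactly the phenomenon flagged in Remark \ref{RTraceMB}(3)), so I cannot directly invoke the standing assumptions of Section 6 for $B_p$. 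However, what I actually need is weaker: I need a unital embedding $h \colon A \to M(B_p) = pM(B)p$ with prescribed $K_0$-data and trace-compatibility, and for that it suffices to work inside $M(B)$ and land in the corner.

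The cleaner route, which I would take, is to avoid $B_p$ altogether and instead manufacture the map directly via $\mathcal{A}_2$ applied to $B$. Since $L(1_{T_f(A)}) = \widehat{p}$ and $p \in M(B)\setminus B$, by Assumption II in \ref{58} (projection surjectivity, which holds since $\widehat{p} \in \Aff_+(QT(B))$ by Corollary \ref{c123}) the element $\lambda := L \circ \rho^w_{A,f} \in H_{A,B,\rho}$ satisfies $\lambda([1_A]) = \widehat{p} = [p]$ in $K_0(M(B))$. If $L(1_{T_f(A)}) < 1_{QT(B)}$ strictly, then by definition of $\mathcal{A}_2$ there is an embedding $h_0 \colon A \to M(B)$ with $h_0(A)\cap B = \{0\}$, $(h_0)_{*0} = \lambda$, and $1_{M(B)} - h_0(1_A) \notin B$. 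The projection $q := h_0(1_A)$ then satisfies $\widehat{q} = \lambda([1_A]) = \widehat{p}$, so by Assumption I (projection injectivity) $q \sim p$ in $M(B)$; choosing a partial isometry $w$ with $w^*w = q$, $ww^* = p$, the map $h := w h_0(\cdot) w^* \colon A \to pM(B)p$ is a unital embedding with $h_{*0} = (h_0)_{*0} = L\circ \rho^w_{A,f}$ and $h(A)\cap B \subseteq pBp$; since $h$ is an isomorphism onto its image of $h_0(A)$ and $h_0(A)\cap B = \{0\}$, we get $h(A)\cap B = \{0\}$ as well (a partial isometry in $M(B)$ conjugates $B$ into $B$). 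If instead $L(1_{T_f(A)}) = 1_{QT(B)}$, then $p \sim 1_{M(B)}$ is impossible unless $p$ differs from $1$ by something in $B$; but then $\widehat{p} = 1_{QT(B)}$ forces, via the $\mathcal{A}_2$ clause for the unital case with $\lambda([1_A]) = [1_{M(B)}]$, an embedding $h_0$ with $h_0(1_A) = 1_{M(B)}$, and again conjugating by a suitable unitary/partial isometry identifying $1_{M(B)}$ with $p$ (if $p \ne 1$, use $p\sim 1$ in the multiplier algebra since $\widehat{p}=1$ and projection injectivity) yields the desired $h$ into $pM(B)p$.

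The key steps in order are therefore: (i) verify $\widehat{p} \in \Aff_+(QT(B))$ and hence $\lambda \in H_{A,B,\rho}$ with $\lambda([1_A]) = [p]$; (ii) apply the defining property of $\mathcal{A}_2$ to get $h_0 \colon A \to M(B)$ with $h_0(A)\cap B = \{0\}$ and $(h_0)_{*0} = \lambda$, choosing the version of the clause ($h_0(1_A)=1_{M(B)}$ or $1_{M(B)}-h_0(1_A)\notin B$) according to whether $L(1_{T_f(A)})$ equals $1_{QT(B)}$; (iii) use projection injectivity (Assumption I) to conclude $h_0(1_A) \sim p$ in $M(B)$ and pick a partial isometry $w$ implementing this; (iv) set $h := \mathrm{Ad}\,w \circ h_0$ and check it is a unital embedding into $pM(B)p$ with $h(A)\cap B = \{0\}$ and the correct $K_0$-map, using that $K_0$ is invariant under conjugation by partial isometries between full-enough projections. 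I expect step (iii)–(iv) to be the main obstacle, not because it is deep but because one must be slightly careful: the partial isometry $w$ lives in $M(B)$ (not necessarily in $B$ or its unitization), so one needs that $\mathrm{Ad}\,w$ carries $B$ into $B$ (true, since $B$ is an ideal in $M(B)$) to preserve the condition $h(A)\cap B = \{0\}$, and one needs $w^*w = h_0(1_A)$ exactly so that $h$ is genuinely unital with unit $p$. The trace-compatibility $h_{*0} = L\circ \rho^w_{A,f}$ is then immediate since conjugation does not change the class in $K_0(M(B))$.
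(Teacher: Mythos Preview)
Your proposal is correct and takes essentially the same approach as the paper: apply the defining property of $\mathcal{A}_2$ to obtain an embedding $\phi \colon A \to M(B)$ with $\phi(A)\cap B=\{0\}$ and $\phi_{*0}=L\circ\rho^w_{A,f}$, then use projection injectivity (Assumption I) to find a partial isometry implementing $\phi(1_A)\sim p$ and conjugate. The paper does exactly this in three lines without your case split (which is harmless but unnecessary, since projection injectivity applies uniformly once you note $\phi(1_A)\in M(B)\setminus B$ from $\phi(A)\cap B=\{0\}$).
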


\begin{proof}

{{Since $A\in {\cal A}_2,$ there exists an}} {{injective}} {{ \hm\, 
$\phi: A\to M(B)$ such that 
$\phi_{*0}=L\circ \rho_{A,f}$ with $\phi(A)\cap B=\{0\}.$   It follows that $[\phi(1_A)]=[p].$ 
By the assumption on $B,$ there is a partial isometry $v\in M(B)$}} {{with
$vv^* = \phi(1_A)$}} {{such that 
$v^*\phi(1_A)v=p.$ Define $h: A\to pM(B)p$ by 
$h(a)=v^* \phi(a)v$ for all $a\in A.$ }}
\vspace{0.2in}

\end{proof}

Recall that for C*-algebras $A$ and $B$,
an extension $\tau : A \rightarrow C(B)$ is \emph{liftable} or 
\emph{trivial} if there exists a homomorphism $\phi : A \rightarrow M(B)$
such that $\tau = \pi \circ \phi$.
{{One notices that are
some  complicated restrictions on $\lambda.$  
It  is  not sufficient to assume 
the existence of a strictly positive 
\hm\, $\lambda\in {\rm Hom}(K_0(A), 
K_0(M(B)))$  such that $KK(\tau)=KK(\pi)\circ \lambda$
(see Example \ref{ExmX}).}}

\vspace{0.2in}


\begin{thm}\label{Tlifable}
Let $A\in {\cal A}_2$ satisfy the UCT, $B$ be a C*-algebra satisfying the
standing assumptions of this section,  and let $\tau: A\to C(B)$ be an 
essential extension.
Then $Ext(A, B)=KK(A, C(B)).$  

(i) Suppose that $A$ is not unital.  Then 
$\tau$ is liftable  if and only if
 $KK(\tau)=KK(\pi)\circ \lambda$ for some $\lambda\in H_{A, B,\rho}.$

(ii) Suppose that $A$ is unital and $[\tau(1_A)] \neq [1_{C(B)}]$.
 Then 
$\tau$ is liftable  if and only if
 $KK(\tau)=KK(\pi)\circ \lambda$ for some $\lambda\in H_{A, B, \rho}.$

(iii)  Suppose that $A$ is unital and $[\tau(1_A)] = [1_{C(B)}]$ and $\tau$ is non-unital.
Then 
$\tau$ is liftable  if and only if
 $KK(\tau)=KK(\pi)\circ \lambda$ for some $\lambda\in H_{A, B, \rho}^0.$ 

(iv) There is an example of $A$, $B$, $\tau$,
with $[\tau(1_A)] = [1_{C(B)}]$, $\tau$ non-unital, such that 
$KK(\tau)=KK(\pi)\circ \lambda$ for some $\lambda \in H_{A, B, \rho}$, 
but $\tau$
is not liftable. 

(v) Suppose that $A$ is unital,  $\tau$ is unital,
$B \otimes \cal K$ has strict comparison of positive elements, 
and $H_1(K_0(A), K_1(C(B)))=K_1(C(B))$. 
Then $\tau$ is liftable if and only if $KK(\tau)=KK(\pi)\circ \lambda$ for some 
$\lambda\in H_{A,B, \rho}.$\\
(See (\ref{equdf:H1}) for the definition of $H_1(K_0(A), K_1(C(B)))$.
See also the statement of Theorem \ref{TH1} and note that 
by our standing assumptions on $B$, $K_1(M(B)) = 0$.)

(vi) For any $\lambda\in H_{A, B, \rho},$ there is an essential extension
$\tau_1: A\to M(B)/B$ such that $KK(\tau_1)= KK(\pi) \circ \lambda.$


\end{thm}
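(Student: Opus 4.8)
The plan is to directly construct the essential extension $\tau_1$ from the given datum $\lambda \in H_{A,B,\rho}$ using the class $\mathcal{A}_2$ together with the classification of extensions from Section 4. First I would unpack what $\lambda \in H_{A,B,\rho}$ means: by Definition \ref{DCAc}, $\lambda = L \circ \rho^w_{A,f}$ for some $L \in CA^{\bf 1}_+(\Aff^\lambda(T_f(A)), \Aff(QT(B)))$, and in particular $\lambda \in \Hom(K_0(A),\Aff(QT(B)))_+ = \Hom(K_0(A), K_0(M(B)))_+$ using Proposition \ref{prop:K0M(B)} (which applies since $B$ satisfies the standing assumptions of this section). Since $A \in \mathcal{A}_2$, there exists an embedding $h \colon A \to M(B)$ with $h(A) \cap B = \{0\}$ and $h_{*0} = \lambda$; moreover the definition of $\mathcal{A}_2$ gives control over $h(1_A)$ in the unital case (choosing $h(1_A) = 1_{M(B)}$ or $1_{M(B)} - h(1_A) \notin B$ according to whether $\lambda([1_A]) = [1_{M(B)}]$ or not). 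Then set $\tau_1 := \pi_B \circ h \colon A \to M(B)/B$.

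The next step is to verify that $\tau_1$ is indeed an essential extension, i.e., that $\tau_1$ is injective. This is exactly where the condition $h(A) \cap B = \{0\}$ is used: if $a \in A$ with $\pi_B(h(a)) = 0$, then $h(a) \in B$, so $h(a) \in h(A) \cap B = \{0\}$, whence $h(a) = 0$, and since $h$ is an embedding, $a = 0$. Thus $\tau_1$ is an injective homomorphism $A \to M(B)/B$, i.e., an essential extension in the sense of Definition \ref{Dmul}.

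Finally, I would check the $KK$-theoretic identity $KK(\tau_1) = KK(\pi) \circ \lambda$. Here $\lambda$ is viewed as an element of $KK(A, M(B)) = \Hom(K_0(A), \Aff(QT(B)))$ via the UCT (using $K_1(M(B)) = 0$ from Theorem \ref{thm:K1M(B)} and Proposition \ref{prop:K0M(B)}, as recalled just before the theorem), and $KK(\pi) \in KK(M(B), C(B))$ is the class of the quotient map. Since $\tau_1 = \pi \circ h$ by construction, functoriality of $KK$ gives $KK(\tau_1) = KK(\pi) \circ KK(h)$. It then remains to confirm $KK(h) = \lambda$ in $KK(A, M(B))$: by the UCT identification $KK(A, M(B)) \cong \Hom(K_0(A), \Aff(QT(B)))$, an element is determined by its induced map on $K_0$, and $h_{*0} = \lambda$ by the choice of $h$; one must also note there is no $K_1$-component to worry about since $K_1(M(B)) = 0$. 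The main (minor) obstacle is making sure the UCT identification $KK(A,M(B)) = \Hom(K_0(A), \Aff(QT(B)))$ is being applied correctly — in particular that $KK(h)$ really is pinned down by $h_{*0}$ alone and that the composition $KK(\pi) \circ (-)$ on the $KK$-level agrees with the map on invariants induced by $\pi$; both follow from the UCT exact sequence for $A$ together with $K_1(M(B)) = 0$, but should be stated carefully. No genuinely hard step is involved here; the content has been front-loaded into the definition of $\mathcal{A}_2$ and the $K$-theory computations of Section \ref{KComputations}.
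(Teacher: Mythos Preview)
Your proof is correct and follows exactly the same approach as the paper, which simply states that (vi) ``follows immediately from the definition of $\mathcal{A}_2$''; you have merely spelled out the details (the injectivity of $\tau_1 = \pi \circ h$ from $h(A)\cap B=\{0\}$, and the identification $KK(h)=\lambda$ via the UCT with $K_1(M(B))=0$ and $K_0(M(B))=\Aff(QT(B))$ divisible) that the paper leaves implicit.
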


\begin{proof}

Statement (vi) follows immediately from the the definition of  
$\mathcal{A}_2$.

The ``only if" directions of (i),  (ii), (iii) and (v)  
are straightforward, i.e.,  
if $\tau$ is liftable then $KK(\tau)=KK(\pi)\circ \lambda$
for some $\lambda\in H_{A,B,\rho}.$ 
We sketch the argument for the convenience of the reader: 
Let 
$\phi: A\to M(B)$ be a (necessarily injective)
 \hm\,  such that $\pi\circ \phi=\tau.$
Note that since $B$  {{is simple and}} has continuous scale,
every {{quasitrace}} in ${{QT}}(B)$
extends uniquely to a {{quasitrace}} in ${{QT}}(M(B))$.
{{It follows that $\tau\circ \phi$ is a faithful trace of $A.$ 
Thus we obtain a continuous positive linear map
${\bar L_\phi}: \Aff^\lambda(T_f(A))\to \Aff(QT(B))$  (see \ref{Deflambda}).
By Proposition \ref{P230604}, 
if $A$ is non-unital, then ${\bar L}_{\phi}\in CA_+^{\bf 1}(\Aff^\lambda_+(T_f(A)), \Aff_+(QT(B))).$
If $A$ is unital, $p:=\phi(1_A)\in M(B)\setminus B$ is a projection such that
$p\le 1_{M(B)}.$  If $p\not=1_{M(B)},$ then $(1-p)B(1-p)\not=0.$
Since $B$ is simple, this implies that $\tau(1-p)>0$ for all $\tau\in QT(B).$ 
Therefore, in this case, we still have 
${\bar L}_\phi\in  CA_+^{\bf 1}(\Aff^\lambda_+(T_f(A)), \Aff_+(QT(B))).$}}

By our standing hypotheses {{of this section}} on $B$ and since $\phi$ lifts $\tau$, we have
that $$K_0(\phi) = \lambda =_{df} {\bar L_\phi} \circ \rho_{A}^{w,f}\in Hom(K_0(A), 
{{\Aff(QT(B)))_+}}.$$   
Since $A$ satisfies the UCT and by our standing hypotheses on $B$,
{{$$KK(A, M(B)) = Hom(K_0(A), \Aff(QT(B)).$$  }} 
So $KK(\phi) = \lambda$, and 
thus, $$KK(\tau)=KK(\pi)\circ\lambda.$$
(Note that, in the computation for (iii), since $\tau$ is non-unital, $\phi$ is 
non-unital and so the corresponding $L$ satisfies that
$L(1_{T_f(A)})(t) < 1$ for all $t \in QT(B)$.  Thus the corresponding
$\lambda \in H^0_{A, \rho}$.)

{{Next  we assume}} that 
$KK(\tau)=KK(\pi)\circ \lambda$ for some $\lambda\in H_{A, B,\rho}.$ 
Since $A\in {\cal A}_2,$ there is an injective \hm\,  $\phi: A\to M(B)$
such that $\phi(A)\cap B=\{0\}$ and $KK(\phi)=\lambda.$ 
It follows that 
\beq
KK(\pi\circ\phi)=KK(\tau).
\label{equ:Feb20202111PM}
\eneq

We now prove the ``if" direction of (i) and (ii).
In case (ii), $[\tau(1_A)]\not=[1_{M(B)/B}]$ and hence, $\pi\circ \phi$ is also not unital.  Thus for both cases (i) and (ii), $\pi \circ \phi$ and $\tau$
are non-unital.
Thus, for both {\ppl{cases}} (i) and (ii), by (\ref{equ:Feb20202111PM}) and
 Theorem \ref{TH2}, 
 there is a unitary $U\in M(B)$ such that
\beq
\pi(U)^*\pi\circ \phi(a)\pi(U)=\tau(a)\rforal a\in A.
\eneq
Define $\psi: A\to M(B)$ by $\psi(a)=U^*\phi(a)U$ for all $a\in A.$ 
It follows that $\tau = \pi \circ \psi$ is liftable.

We now prove the ``if" {{part}} of (iii).
Say that $\lambda =  L \circ \rho_{A,f},$ where $L(1_{T_f(A)})$ is strictly 
{{less than}} $1_{T(B)}$.
Let $p \in M(B){\ppl{\setminus}} B$ be a projection such that
$[p] = L(1_{T_f(A)})$.   So  $p \neq 1_{M(B)}$.  Moreover, by 
our standing assumptions {{on $B,$}} we can choose $p$ so that 
$1 - p \notin B$ also.   By Lemma \ref{lem:Feb22202111:45PM}, we can find a unital embedding
$h : A \rightarrow M(pBp)$ such that $h(A) \cap B = \{ 0 \}$ and (viewing $h$ as a map into $M(B)$),
$$h_{*0} = \lambda.$$
Then, by the UCT, $$\pi \circ h : A \rightarrow C(B)$$
is a non-unital essential extension for which
$$KK(\pi \circ h) = KK(\pi) \circ \lambda = KK(\tau).$$
By Theorem \ref{TH2}, there exists a unitary $U \in M(B)$ 
 such that
\beq
\pi(U)^*\pi\circ h(a)\pi(U)=\tau(a)\rforal a\in A.
\eneq
Define $\psi: A\to M(B)$ by $\psi(a)=U^* h(a)U$ for all $a\in A.$ 
It follows that $\tau = \pi \circ \psi$ is liftable.

For statement (iv), let ${{{\cal Z}}}$ be the Jiang--Su algebra.
By {{Proposition 4.4 of \cite{LinNgZ},}} we can find a projection
$p \in M({{{\cal Z} \otimes {\cal K}}})\setminus \{0\}$ such that
$$\mu(p) = 1 = \mu(1_{\cal Z} \otimes e_{1,1}),$$
where $\mu$ is the unique tracial state of ${{{\cal Z}}}$.
Then, by Propositions 4.2 and 4.4 of \cite{LinNgZ},
$$B =_{df} p({{{\cal Z}\otimes {\cal K}}})p$$
is a simple projectionless continuous scale C*-algebra 
which satisfies the
standing assumptions.   Moreover, $T(B) = T({\cal Z}) =\{ \mu \}$ and
$[1_{C(B)}] = 0 \makebox{ in  } K_0(C(B)){{=\R/\Z}}$.

Since $C(B)$ is simple purely infinite, let $q \in C(B)$
be a {{nonzero}} projection such that $q \neq \pi(p)$, but
$$[q] = 0 = [ \pi(p) ].$$
{{We claim that there is no projection $r\in M(B)$ such that 
$\pi(r)=q.$ In fact, say $r\in M(B)$ such that $\pi(r)=q.$ Then $p-r\not=0.$}}
{{Moreover,}}
$$\mu(p) - \mu(r) \in \mu(K_0(B)) = \mathbb{Z},$$
which is impossible, since $\mu(p) = 1$ and $0 < \mu(r) < 1$. This is a contradiction
which proves the claim.

Now let $\tau : \mathbb{C} \rightarrow C(B)$
be the non-unital essential extension given by
$$\tau(1) =_{df} q.$$
{{One computes that}}
$$KK(\tau) = KK(\pi) \circ \lambda$$ 
where $\lambda \in H_{\mathbb{C}, B,\rho}$ is the 
the KK class of the map
$$\mathbb{C} \rightarrow M(B) : 1 \mapsto p.$$
However, since $q$ cannot be lift to a projection in $M(B),$ $\tau$ is not liftable.

(Before continuing, we note that, in the above example, we can avoid
$[\tau](K_0(\mathbb{C})) \cap \rho_B(K_0(B)) \neq \{ 0 \}$ by doing the
following:  Choose the projection $p \in M(B) \setminus B$ so that
$\mu(p) = \frac{1}{3}$.  In this case, 
for all $\nu \in T(p ({\cal Z} \otimes K)p)$,   $\nu(p) = 1$ and
$\nu(K_0(B)) \in 3 \mathbb{Z}$.  Proceed as in the above argument.)

We now prove the ``if" {{part}}  of (v). 
Let $\tau: A\to M(B)/B$ be a unital monomorphism
such that $KK(\tau)=KK(\pi)\circ \lambda$ for some $\lambda\in H_{A, f}.$
So $$1_{T(B)} - \lambda([1_A]) \in \rho_B(K_0(B)).$$
Since $B \otimes {{\cal K}}$ has strict comparison of positive elements, 
there exists a projection $r \in B$ for which
$$1_{T(B)} - \lambda([1_A]) = \rho_B([r]).$$

Let $p =_{df} 1_{M(B)} - r \in M(B) \setminus B$.
By Lemma \ref{lem:Feb22202111:45PM}, since $A \in \mathcal{A}$, 
let $\phi: A\to pM(B)p$ be a unital embedding
such that (when $\phi$ is viewed as a map into $M(B)$)
$$KK(\phi)=\lambda.$$
Note that $\pi\circ \phi$ is unital (as a map into $C(B)$), 
 and   
$$KK(\pi\circ \phi)=KK(\tau).$$ 
By Theorems \ref{TH2} and \ref{TH1} , there is a unitary $U\in M(B)$ such 
that 
\beq
\pi(U)^*\pi\circ \phi(a)\pi(U)=\tau(a)\rforal a\in A.
\eneq
(Recall that one of our standing assumptions is that $K_1(M(B)) = 0$, and
hence, $\pi_{*1}(K_1(M(B))) = 0$ in Theorem \ref{TH1}.)
Define $\psi: A\to M(B)$ by  $\psi(a):=U^*\phi(a)U$ for all $a\in A.$  
Therefore, $\psi$ is a lift of $\tau.$ \\  
\end{proof}


\begin{NN}\label{DCC0}

{{To illustrate the  special case that $A=C(X),$ 
where $X$ is  a compact metric space, let us consider 
the rank function $d_X: K_0(C(X))\to C(X, \Z).$}}   It follows from \cite{Nob}
that $C(X, \Z)$ is a free abelian group. 
Thus one 
has the following splitting short exact sequence:  
\beq
0\to {\rm ker}d_X \to K_0(A)\,{\stackrel{s_0}{\leftrightarrows}}_{d_X}\,C(X, \Z)\to 0.
\eneq

{{Let $\Delta$ be a Choquet simplex.
Denote by ${\rm Hom}(C(X, \Z), \Aff(\Delta))_{++}^\R$ the set of those 
strictly positive \hm s $\lambda: C(X, \Z)\to \Aff(\Delta)$ 
(i.e., if $g\in C(X, \Z)_+\setminus \{0\},$ 
then $\lambda(g)\in \Aff_+(\Delta)\setminus \{0\}$)
such that 
there is a strictly positive affine map $\wtd \lambda: C(X, \R)\to \Aff(\Delta)$
such that $\wtd \lambda|_{C(X, \Z)}=\lambda.$  If we identify $C(X, \R)$ with $\Aff(T(C(X))),$ then $\wtd\lambda: \Aff(T(C(X))) \to \Aff(\Delta)$ 
is a strictly positive affine map}} and $\wtd \lambda \circ d_X=\lambda \circ d_X.$ 

\begin{NN}\label{DCWC}
Let $X$ be a compact metric space with finitely many connected components so that
$X=\sqcup_{i=1}^n Y_i,$ where each $Y_i$ is a connected.
Choose a base point in $y_i\in Y_i,$ $i=1,2,...,n.$ Let $C:=C(X)$ and $C_{0,X}:=\bigoplus_{i=1}^n C_0(Y_i\setminus y_i).$ 
Note that $K_0(C_{0, X})={\rm ker}\rho_C.$ 
Let $e_i\in C$ be the function which has the constant value $1$ 
on $Y_i$ and zero elsewhere, $i=1,2,...,n.$ 
Note that  $1_C=\sum_{i=1}^n e_i.$

It follows that 
$C(X, \Z)=\Z^n.$ 
 If $\lambda: \Z^n\to \Aff(\Delta)$ is a strictly positive \hm, 
then  define $ \lambda_{\R}: \R^n \to \Aff(\Delta)$ by
$\lambda_\R (r_1, r_2,...,r_n)=\sum_{i=1}^n r_i \lambda(e_i).$
It is clear that $\lambda_\R$ is strictly positive and extends $\lambda.$
Write $X=\sqcup_{i=1}^n X_i,$ where $X_i$ is a connected component of $X.$ 
Choose a strictly positive Borel probability measure $\mu_i$ on $X_i.$ Define 
$s:C(X, \R)\to \R^n$ by $s(f)=\sum_{i=1}^n  \lambda(e_i)\int_{X_i}f d\mu_i.$ 
Then $s$ is a strictly positive affine \hm\, from $C(X, \R)$ to $\R^n.$ 
Define $\wtd \lambda: C(X, \R)\to \Aff(T(\Delta))$ by 
$\wtd \lambda(f)=\lambda_{\R} \circ s.$ 
Moreover,  $\wtd \lambda|_{C(X, \Z)}=\lambda.$

{{
Thus, if $X$ has finitely many  connected components, then 
$${\rm Hom}(C(X, \Z), \Aff(\Delta))_{++}={\rm Hom}(C(X, \Z), \Aff(\Delta))_{++}^\R.$$}}

\end{NN}

\end{NN}


\begin{cor}\label{LAbelian}
Let $X$ be a compact metric space,  
$C=C(X)$, and $B$ be a {{\CA}} 
satisfying the standing assumptions
of this section.   Then $Ext(C,B)=KK(C, C(B)).$

Suppose that $\tau: C:=C(X)\to C(B)$ is an essential extension. 

(i)  Suppose that $\tau$ is not unital. Then $\tau$ is liftable
 if and only if 
 there is a\\
{{$\lambda\in {\rm Hom}(C(X, \Z), \Aff(QT(B)))_{++}^\R$}} such that 
\beq
KK(\tau)=\lambda\times KK(\pi)\tand 1-\lambda(1_{X_0})\in \Aff_+(QT(B))\setminus \{0\}.
\eneq


(ii) Suppose that $\tau$ is unital.  Then $\tau$ is liftable if and only if 
 there is a \\
{{$\lambda\in {\rm Hom}(C(X_0, \Z), \Aff(QT(B)))^\R_{++}$}} such that 
{{ \beq
KK(\tau)=\lambda\times KK(\pi)\tand \lambda(1_{X_0})=1_{QT(B)}.
\eneq }}  

\end{cor}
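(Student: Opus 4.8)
\textbf{Proof plan for Corollary \ref{LAbelian}.}
The plan is to deduce the corollary directly from Theorem \ref{Tlifable}, applied to $A = C(X)$, together with the book-keeping on $K_0(C(X))$ set up in \ref{DCC0} and \ref{DCWC}. Since $C(X)$ is commutative it is separable, amenable, and satisfies the UCT; the only nontrivial hypothesis of Theorem \ref{Tlifable} we need is that $C(X) \in \mathcal{A}_2$, and this is exactly the preceding Corollary (every separable commutative \CA\, is in $\mathcal{A}_1 \subseteq \mathcal{A}_2$). So $\Ext(C, B) = KK(C, C(B))$ follows immediately from the first sentence of Theorem \ref{Tlifable}, and the two liftability statements will come from cases (i), (ii)/(iii) and (v) of that theorem once we translate the condition ``$KK(\tau) = KK(\pi)\circ\lambda$ for some $\lambda \in H_{C,B,\rho}$'' into the stated condition on $\Hom(C(X,\Z), \Aff(QT(B)))^{\R}_{++}$.

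The translation step is the technical heart, and it goes as follows. First, for $A = C(X)$ every faithful tracial state exists (by the argument in the preceding Corollary), $QT(C(X)) = T(C(X))$ since $C(X)$ is nuclear hence exact, and $\overline{T(C(X))}^w = T(C(X))$; moreover $\Aff^\lambda(T_f(C(X)))$ can be identified with a space of real continuous functions on $X$, and $\rho^w_{C(X),f}$ factors through the rank map $d_X \colon K_0(C(X)) \to C(X,\Z)$ by the splitting in \ref{DCC0}. Thus a map $\lambda = L\circ\rho^w_{C(X),f} \in H_{C,B,\rho}$ is, up to this identification, precisely a strictly positive homomorphism $C(X,\Z) \to \Aff(QT(B))$ that extends to a strictly positive affine map $C(X,\R) \to \Aff(QT(B))$ — i.e. an element of $\Hom(C(X,\Z), \Aff(QT(B)))^{\R}_{++}$ — with the normalization on $1_X$ dictated by whether $L$ is unital, strictly subunital, etc. Here one uses \ref{DCWC}: when $X$ has finitely many components the distinction between $\Hom(\cdots)_{++}$ and $\Hom(\cdots)^{\R}_{++}$ disappears, so the extendability is automatic in that case, but in general the superscript $\R$ is needed to record that $L$ really comes from an affine map on all of $\Aff^\lambda(T_f(C(X)))$, not just a homomorphism on $K_0$. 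The condition $KK(\tau|_{C_0}) = 0$ implicit in requiring $KK(\tau) = \lambda \times KK(\pi)$ with $\lambda$ supported on $C(X,\Z)$ is automatic because $\rho^w_{C(X),f}$ kills $\ker d_X$, so any such $\lambda$ annihilates $\ker\rho_{C(X)}$.

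For part (i) (the non-unital case) I would invoke Theorem \ref{Tlifable}(i): $\tau$ non-unital and $C(X)$ non-unital is impossible since $C(X)$ is unital, so instead $\tau$ is a non-unital essential extension of the unital algebra $C(X)$, and the dichotomy is between $[\tau(1_X)] \neq [1_{C(B)}]$ (Theorem \ref{Tlifable}(ii), condition $\lambda \in H_{C,B,\rho}$) and $[\tau(1_X)] = [1_{C(B)}]$ (Theorem \ref{Tlifable}(iii), condition $\lambda \in H^0_{C,B,\rho}$, i.e. $L(1_{T_f})$ strictly less than $1$). Both cases are uniformly captured by requiring $\lambda \in H_{C,B,\rho}$ with $1 - \lambda(1_{X_0}) \in \Aff_+(QT(B)) \setminus \{0\}$: the strict subunitality $1 - \lambda(1_{X_0}) \in \Aff_+(QT(B))\setminus\{0\}$ already forces $[\tau(1_X)] \le [1_{C(B)}]$ with strict trace inequality, hence $\lambda \in H^0_{C,B,\rho}$ when $[\tau(1_X)] = [1_{C(B)}]$, and the statement is exactly what Theorem \ref{Tlifable}(ii)--(iii) give. (Strictly, one should also note that a priori $[\tau(1_X)] = [1_{C(B)}]$ with $\lambda$ subunital could fail to lift by part (iv) of the theorem, but that obstruction occurs only when $1 - \lambda(1_{X_0}) \in \rho_B(K_0(B))$ while the actual projection realizing $q = \tau(1_X)$ fails to lift; here the hypothesis ``$1-\lambda(1_{X_0}) \in \Aff_+(QT(B))\setminus\{0\}$'' together with the standing assumptions on $B$ — every positive affine function is realized by a projection in $M(B)\otimes\mathcal K \setminus B\otimes\mathcal K$ and projection injectivity — lets us produce an honest lift via Lemma \ref{lem:Feb22202111:45PM}, so the part (iv) pathology does not arise.) For part (ii) (the unital case) I would apply Theorem \ref{Tlifable}(v): the extra hypothesis $H_1(K_0(A), K_1(C(B))) = K_1(C(B))$ required there is automatic for $A = C(X)$ since $K_0(C(X)) = \Z\cdot[1_{C(X)}] \oplus G$ splits off a copy of $\Z$ generated by the unit (as noted in Remark \ref{RH1}), and then liftability is equivalent to $KK(\tau) = KK(\pi)\circ\lambda$ for some $\lambda \in H_{C,B,\rho}$ with the unital normalization $\lambda(1_{X_0}) = 1_{QT(B)}$, which is exactly the stated condition after the $C(X,\Z) \cong C(X_0,\Z)$ identification. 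The main obstacle I anticipate is precisely making the identification $\Aff^\lambda(T_f(C(X))) \cong$ (appropriate space of real functions on $X$) and checking that ``$\lambda$ extends to a strictly positive affine map on this space'' is genuinely the same as ``$\lambda \in H_{C,B,\rho}$'' — i.e. that every $L \in CA^{\bf 1}_+(\Aff^\lambda(T_f(C(X))), \Aff(QT(B)))$ restricts on $\rho^w_{C(X),f}(K_0(C(X)))$ to something of the advertised form and conversely every such $\lambda$ extends; the converse direction is where the explicit extension construction of \ref{DCWC} (choosing strictly positive Borel probability measures on the components, or more generally a strictly positive state) does the work, and one must be a little careful when $X$ has infinitely many components, which is why the $\R$-superscript is carried along rather than dropped.
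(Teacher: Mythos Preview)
Your plan is sound and matches the paper's intended route: specialize Theorem \ref{Tlifable} to $A=C(X)$ using the preceding corollary ($C(X)\in\mathcal A_2$), and translate $H_{C(X),B,\rho}$ into $\Hom(C(X,\Z),\Aff(QT(B)))^{\R}_{++}$ via the rank map $d_X$. Your identification $\Aff^\lambda(T_f(C(X)))\cong C(X,\R)$ is correct (restriction to faithful traces is injective because a convex combination with a fixed faithful trace stays faithful), and part (i) is handled properly: in subcase \ref{Tlifable}(ii) the inequality $[\tau(1_X)]\neq[1_{C(B)}]$ forces $\lambda([1_X])\neq 1_{QT(B)}$, hence $\lambda([1_X])<1$ by the very definition of $CA^{\bf 1}_+$, so both subcases collapse to $\lambda\in H^0_{C,B,\rho}$. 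Your parenthetical about the part (iv) pathology is unnecessary, since that example lives in $H\setminus H^0$.

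Part (ii), however, has two gaps. First, Theorem \ref{Tlifable}(v) carries the extra hypothesis that $B\otimes\mathcal K$ has strict comparison, which is \emph{not} among the standing assumptions \ref{58}; you verify the $H_1$ hypothesis via Remark \ref{RH1} but overlook this one. The fix is to note that when $\lambda(1_{X_0})=1_{QT(B)}$ exactly, the strict-comparison step in the proof of (v) is vacuous (take $r=0$), or equivalently to argue directly: the definition of $\mathcal A_2$ with $\lambda([1_{C(X)}])=[1_{M(B)}]$ yields a \emph{unital} embedding $\phi\colon C(X)\to M(B)$, and then $\pi\circ\phi$ and $\tau$ are unitarily equivalent by Theorems \ref{thm:ClassifyUnitalExt} and \ref{TH1} combined with Remark \ref{RH1}. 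Second, in the ``only if'' direction you claim (v) yields $\lambda$ with $\lambda(1_{X_0})=1_{QT(B)}$, but (v) only gives $\lambda\in H_{C,B,\rho}$ with no normalization. You need one more step: if $\phi$ is any lift of the unital $\tau$, then $e_0:=1_{M(B)}-\phi(1_C)\in B$; setting $h_0(f)=f(x_0)e_0$ for a fixed $x_0\in X$ and replacing $\phi$ by $\phi':=\phi+h_0$ gives a unital lift, and the associated $\lambda'$ then satisfies $\lambda'(1_{X_0})=1_{QT(B)}$.
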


{{\begin{exm}\label{ExmX}
Example 5.4 in \cite{Linann}   shows that there is a compact subset $X$ of the real line and  a strictly positive \hm\,  
$\lambda: C(X, \Z)\to \Q\subset \R$ (as additive groups) may not be extended to a strictly positive \hm\,
$\wtd \lambda: C(X, \R)\to \R.$  Note in this case $K_0(C(X))=C(X, \Z)$ and $K_1(C(X))=\{0\}.$ 

Let $B={\cal W}$ be the  Razak algebra.
Recall that $K_i(B)=\{0\},$ $i=0,1,$
$K_0(M(B))=K_0(M(B)/B)=\R,$ $K_1(M(B))=\{0\}$ and $K_1(M(B)/B)={\rm ker}\rho_B=\{0\}.$
Since $B$ has a unique tracial state, $B$ has continuous scale.
Let $\pi: M(B)\to M(B)/B$ be the quotient map.
Let $A=C(X).$
Consider $\pi_{*0}\circ \lambda: K_0(C(X))\to K_0(M(B)/B).$  Since $A$ has the UCT,
we compute that $KK(C(X), M(B)/B)={\rm Hom}(K_0(C(X)), \R).$ 
By Theorem \ref{TH2}, there is $\psi: C(X)\to M(B)/B$ such that $[\psi]=(\pi_{*0}\circ \lambda)\in KK(C(X), M(B)/B).$
Since $\lambda\in {\rm Hom}(K_0(C(X)), K_0(M(B))),$ by UCT, 
as $K_1(M(B))=\{0\}$ and $K_1(C(X))=\{0\},$ 
$KK(C(X),M(B))={\rm Hom}(K_0(C(X)), K_0(M(B)))={\rm Hom}(K_0(C(X)), \R),$  
we have, viewing $\lambda$ as an 
element in $KK(C(X), M(B)),$  $\lambda$ lifts
$[\pi]\circ \lambda.$   However, if there is a \hm\, $\phi: C(X)\to M(B)$ such
that $\pi\circ \phi=\psi.$ Then $\phi_{*0}=\lambda: K_0(C(X))\to \Q\subset \R.$
On the other hand $\phi$ induces an order preserving continuous affine 
map $\phi_T: \Aff(T(C(X)))\to \R$ such that $\phi_T\circ \rho_A=\lambda.$
Recall that $\Aff(T(C(X)))\cong C(X, \R)$ and $\rho_A$ gives the embedding
$C(X, \Z)\subset C(X, \R).$
Thus $\phi_T: C(X, \R)\to \R$ 
extends $\lambda.$ Moreover, $\phi_T$ would be strictly positive as $\pi\circ \phi$ is injective.
However, as mentioned above, by Example 5.4 in \cite{Linann}, 
$\lambda$ cannot be extended to a strictly positive \hm. In other words, 
$\phi$ does not exist.

This example illustrates the  need of additional conditions on $\lambda$ 
in the statement of Theorem \ref{Tlifable}.

\end{exm}}}

\begin{cor}\label{CCWCC}
Let $X$ be a compact metric space as in \ref{DCWC}, $B$ be a {{\CA\,}}
satisfying the standing assumptions of this section,  and 
$\tau: C(X)\to M(B)/B$ 
be an essential extension.

(i)  Suppose that $\tau$ is not unital. Then $\tau$ is liftable if and only if $KK(\tau|_{C_{0,X}})=0$ and there are $r_i\in \Aff_+(T(B))$ ($i=1,2,...,n$)
such that $1-\sum_{i=1}^n r_i\in \Aff_+(QT(B))\setminus \{ 0 \}$ and 
$[\tau(e_i)]=\bar r_i\in \Aff_+(QT(B))/\rho_B(K_0(B)),$ $i=1,2,...,n.$ 

(ii) Suppose that $\tau$ is unital.  Then $\tau$ is liftable if and only if $KK(\tau|_{C_{0,X}})=0$ and there are $r_i\in \Aff_+(QT(B))$ ($i=1,2,...,n$)
such that $\sum_{i=1}^n r_i=1$ and 
$[\tau(e_i)]=\bar r_i\in \Aff_+(T(B))/\rho_B(K_0(B)),$ $i=1,2,...,n.$

\end{cor}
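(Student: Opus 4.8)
The plan is to derive Corollary \ref{CCWCC} as a special case of Theorem \ref{Tlifable}, using the structure of $C(X)$ when $X=\sqcup_{i=1}^n Y_i$ has finitely many connected components, together with the identifications in \ref{DCC0} and \ref{DCWC}. First I would record the $K$-theoretic picture: since $X$ has finitely many components, $C(X,\Z)=\Z^n$ (the rank function $d_X$ sends a projection to its vector of ranks on the components), and by \ref{DCC0} the rank map $d_X\colon K_0(C(X))\to C(X,\Z)$ splits, giving $K_0(C(X))=\ker\rho_{C(X)}\oplus C(X,\Z)$. Here $\ker\rho_{C(X)}=K_0(C_{0,X})$ where $C_{0,X}=\bigoplus_i C_0(Y_i\setminus y_i)$, so the hypothesis $KK(\tau|_{C_{0,X}})=0$ is precisely the statement that the $KK$-class of $\tau$ ``sees only the rank part'' on $K_0$. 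By the UCT and the standing assumptions on $B$ (so $K_1(M(B))=0$, $K_0(M(B))=\Aff(QT(B))$ divisible, whence all $\mathrm{ext}$-terms vanish), we have $KK(A,M(B))=\mathrm{Hom}(K_0(A),\Aff(QT(B)))$ for $A=C(X)$, and likewise $KK(A,C(B))$ is computed from $\mathrm{Hom}$ and $\mathrm{Hom}(K_1(A),K_1(C(B)))=\mathrm{Hom}(K_1(C_{0,X}),\ker\rho_B)$.

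Next I would translate the membership ``$\lambda\in H_{C(X),B,\rho}$'' from Theorem \ref{Tlifable} into the concrete condition in the Corollary. An element of $H_{C(X),B,\rho}$ is $L\circ\rho^w_{C(X),f}$ for $L\in CA^{\bf 1}_+(\Aff^\lambda(T_f(C(X))),\Aff(QT(B)))$. Composing with the splitting, such a $\lambda$ must kill $\ker\rho_{C(X)}$ (since $\rho^w_{C(X),f}$ does) and is determined on the rank part by the values $r_i:=\lambda([e_i])=L(1_{T_f(Y_i\text{-part})})\in\Aff_+(QT(B))\setminus\{0\}$, with $\sum_i r_i=L(1_{T_f(C(X))})$, which is $1_{QT(B)}$ in the unital case and strictly less than $1$ (equivalently, in $H^0$) in the non-unital case. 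Conversely, given positive $r_i$ with the appropriate total, I would use \ref{DCWC}: any strictly positive homomorphism $C(X,\Z)=\Z^n\to\Aff(QT(B))$ sending $e_i\mapsto r_i$ extends to a strictly positive affine map $C(X,\R)\to\Aff(QT(B))$ (concretely $\widetilde\lambda(f)=\sum_i r_i\int_{Y_i}f\,d\mu_i$ for strictly positive Borel probability measures $\mu_i$ on $Y_i$), and $C(X,\R)\cong\Aff(T(C(X)))$; restricting back through $\rho^w_{C(X),f}$ produces the required $L$ and hence $\lambda\in H_{C(X),B,\rho}$ (resp.\ $H^0$). The condition $[\tau(e_i)]=\bar r_i$ in $\Aff_+(QT(B))/\rho_B(K_0(B))=K_0(C(B))$'s $\Aff$-summand (item IV of \ref{58}) together with $KK(\tau|_{C_{0,X}})=0$ is then exactly the assertion $KK(\tau)=KK(\pi)\circ\lambda$, once one checks $\tau(e_i)$ lands in the right summand and matches $\pi_{*0}(r_i)$; the non-unitality of $\tau$ forces $\sum_i[\tau(e_i)]\ne[1_{C(B)}]$, which on lifts corresponds to $1-\sum_i r_i\in\Aff_+(QT(B))\setminus\{0\}$.

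With this dictionary in place, cases (i) and (ii) of the Corollary follow directly from cases (i)/(ii)/(iii) and the unital part of Theorem \ref{Tlifable}, after noting $C(X)\in\mathcal A_1\subseteq\mathcal A_2$ by the Corollary before \ref{DCC0} (every separable commutative \CA\ is in $\mathcal A_1$) and $C(X)$ satisfies the UCT. For the non-unital case (i) one uses part (iii) of Theorem \ref{Tlifable} when $[\tau(1_{C(X)})]=[1_{C(B)}]$ and part (ii) otherwise — but in both sub-cases the condition ``$\lambda\in H^0$ or $\lambda\in H$'' is subsumed by requiring $1-\sum_i r_i\in\Aff_+(QT(B))\setminus\{0\}$, since strict positivity of this difference is automatic in the non-unital lift (by Proposition \ref{P230604}, an injective non-unital lift has $L(1_{T_f})$ strictly below $1$). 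For the unital case (ii) one invokes the ``$\tau$ unital'' statement of Theorem \ref{Tlifable} together with Remark \ref{RH1}, which guarantees $\sim$ and $\sim_{wu}$ coincide for commutative domain, so no extra $H_1$-hypothesis is needed.

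The main obstacle I anticipate is the careful bookkeeping identifying $KK(\tau)=KK(\pi)\circ\lambda$ with the pair of conditions ``$KK(\tau|_{C_{0,X}})=0$'' and ``$[\tau(e_i)]=\bar r_i$'': one must verify that under the splitting $K_0(C(X))=K_0(C_{0,X})\oplus C(X,\Z)$ and the identification $K_0(C(B))=K_1(B)\oplus\Aff(QT(B))/\rho_B(K_0(B))$ (item IV), the $K_1(B)$-component of $KK(\tau)$ is governed entirely by $\tau|_{C_{0,X}}$ on $K_1$ (there is no $K_1$ coming from the rank part since $K_1(C(X))=K_1(C_{0,X})$ and $e_i$ are projections), and that $\lambda$ necessarily has vanishing $K_1$-part because $\lambda$ factors through $\rho^w_{C(X),f}$ which annihilates torsion/$K_1$ information. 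This, plus checking the ``strictly positive extendability'' point really does force $r_i\in\Aff_+(QT(B))\setminus\{0\}$ rather than merely $\ge 0$, are the places where one has to be precise; everything else is a direct invocation of Theorem \ref{Tlifable} and the structural facts in \ref{DCC0}--\ref{DCWC}.
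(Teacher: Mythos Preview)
Your proposal is correct and follows essentially the same route the paper intends: the corollary is left unproved in the paper precisely because it is a direct specialization of Theorem~\ref{Tlifable} (via Corollary~\ref{LAbelian}) using the observation in~\ref{DCWC} that for finitely many components ${\rm Hom}(C(X,\Z),\Aff(\Delta))_{++}={\rm Hom}(C(X,\Z),\Aff(\Delta))_{++}^\R$, together with Remark~\ref{RH1} for the unital case. Your dictionary between $\lambda\in H_{C(X),B,\rho}$ and the data $(r_i)$, and your handling of the $K$-theory bookkeeping (in particular that $\Z^n$ being free forces all ext-contributions and all $K_1$-contributions to come from $C_{0,X}$, so $KK(\tau|_{C_{0,X}})=0$ captures exactly the non-rank part), are the right points to check and are handled correctly.
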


\begin{df}\label{DKKordereq}
Let $A\in {{{\cal A}_2}}$ be a \CA\, with compact $T(A).$ (Compactness
of $T(A)$ may be replaced by the weaker condition that
 $0\not\in {\overline{T(A)}^w}.$) 
One has the following short exact sequence: 
\beq
0\to {\rm ker}\rho_A\to K_0(A)\to \rho_A(K_0(A))\to 0. 
\eneq

We say that $A$ is \emph{ordered $KK$-equivalent} to $C(X)$ 
if there an order isomorphism from $\rho_A(K_0(A))$ onto $C(X, \Z)$ and an isomorphism from ${\rm ker}\rho_A$ onto  
${\rm ker}\rho_{d_X}$ and an isomorphism from $K_1(A)$ onto $K_1(C(X)).$ 
Denote by $j_A\in KK(A, C(X))$ a $KK$-equivalence given by the above mentioned isomorphism (from the UCT). 

Note that in this case, $\rho_A(K_0(A))$ is a free abelian group. 
\end{df}

\begin{thm}
Let $A\in {{{\cal A}_2}}$ be a separable amenable \CA\, which is ordered $KK$-equivalent to $C(X)$
for some compact metric space $X$, and let $B$ be a C*-algebra
satisfying the standing assumptions of this section.   Let $\tau: A\to M(B)/B$ be an essential extension.

(i)  If $\tau$ is non-unital, then $\tau$ is liftable if and only if  there is 
a \\
{{$\lambda\in {\rm Hom}(C(X, \Z), \Aff(QT(B)))^\R_{++}$ }} such that 
$KK(\tau)=j_A\times \lambda\times KK(\pi)$ and $1-\lambda(1_{X_0})\in \Aff_+(T(B))\setminus \{0\}.$

(ii) If $\tau$ is unital, then $\tau$ is liftable if and only {{if there is
a $\lambda\in {\rm Hom}(C(X, \Z), \Aff(QT(B)))^\R_{++}$ }} such that 
$KK(\tau)=j_A\times \lambda\times KK(\pi)$ and $\lambda(1_{X_0})=1.$

\end{thm}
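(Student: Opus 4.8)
The plan is to derive this from Theorem \ref{Tlifable} by transporting its liftability criterion across the ordered $KK$-equivalence $j_A\in KK(A,C(X))$, just as Corollaries \ref{LAbelian} and \ref{CCWCC} do in the case $A=C(X)$. Since $A$ is ordered $KK$-equivalent to $C(X)$ it satisfies the UCT, and $A\in{\cal A}_2$, so Theorem \ref{Tlifable} is available. When $\tau$ is non-unital we are in case (i), (ii) or (iii) of that theorem, and the criterion is $KK(\tau)=KK(\pi)\circ\mu$ for some $\mu$ in $H_{A,B,\rho}$ (or in $H^0_{A,B,\rho}$ when $A$ is unital and $[\tau(1_A)]=[1_{C(B)}]$). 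When $\tau$ is unital we have $[\tau(1_A)]=[1_{C(B)}]$ and $\tau$ is not non-unital, so the relevant statement is Theorem \ref{Tlifable}(v) (its strict comparison hypothesis being part of the natural setting for $B$, cf.\ \ref{58}); its hypothesis $H_1(K_0(A),K_1(C(B)))=K_1(C(B))$ is then automatic here, because under the ordered $KK$-equivalence $[1_A]$ corresponds to the constant function $1_{X_0}$, a basis element of the free abelian group $\rho_A(K_0(A))\cong C(X,\Z)$, so $[1_A]$ spans a direct summand of $K_0(A)$ (compare Remark \ref{RH1}).

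The main step is to set up a bijective dictionary between $H_{A,B,\rho}$ and ${\rm Hom}(C(X,\Z),\Aff(QT(B)))^\R_{++}$ compatible with the unitality normalizations. Given $\mu=L\circ\rho^w_{A,f}\in H_{A,B,\rho}$ with $L\in CA^{\bf 1}_+(\Aff^\lambda(T_f(A)),\Aff(QT(B)))$, one uses ${\rm ker}\,\rho^w_{A,f}={\rm ker}\,\rho_A$ (valid since $A$ has a faithful trace, cf.\ Remark \ref{Rrho-1}) to factor $\mu$ through $\rho_A(K_0(A))$, and then composes with the ordered-group isomorphism $\rho_A(K_0(A))\xrightarrow{\ \cong\ }C(X,\Z)$ furnished by $j_A$ to get a strictly positive $\lambda\colon C(X,\Z)\to\Aff(QT(B))$; since $L$ is moreover defined on all of $\Aff^\lambda(T_f(A))=\lambda(\Aff(\Tw))\cong C(X,\R)$ (the last identification again via $j_A$ and $\Aff(T(C(X)))\cong C(X,\R)$), composing $L$ with the restriction map yields a strictly positive affine extension $\wtd\lambda\colon C(X,\R)\to\Aff(QT(B))$ of $\lambda$, so $\lambda\in{\rm Hom}(C(X,\Z),\Aff(QT(B)))^\R_{++}$. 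The correspondence is reversible: a strictly positive $\lambda$ with $\R$-extension $\wtd\lambda$ determines $L$ on $\Aff^\lambda(T_f(A))$ by pulling $\wtd\lambda$ back along the restriction-and-identification isomorphism, giving $\mu:=L\circ\rho^w_{A,f}\in H_{A,B,\rho}$, which lies in $H^0_{A,B,\rho}$ exactly when $\lambda(1_{X_0})<1$. Finally, unwinding $KK(A,M(B))={\rm Hom}(K_0(A),\Aff(QT(B)))$ and the definition of $j_A$, the equality $KK(\tau)=KK(\pi)\circ\mu$ becomes precisely $KK(\tau)=j_A\times\lambda\times KK(\pi)$ in $KK(A,C(B))$, with $\lambda$ read on the right as the element of $KK(C(X),M(B))={\rm Hom}(K_0(C(X)),\Aff(QT(B)))$ annihilating ${\rm ker}\,d_X$.

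Granting the dictionary, statement (i) is a direct translation of Theorem \ref{Tlifable}(i)--(iii): for $\tau$ non-unital, liftability is equivalent to the existence of $\lambda\in{\rm Hom}(C(X,\Z),\Aff(QT(B)))^\R_{++}$ with $KK(\tau)=j_A\times\lambda\times KK(\pi)$ and $1-\lambda(1_{X_0})\in\Aff_+(T(B))\setminus\{0\}$; the strict-inequality clause is automatic when $A$ is non-unital (by Proposition \ref{P230604}), is exactly membership in $H^0_{A,B,\rho}$ when $A$ is unital with $[\tau(1_A)]=[1_{C(B)}]$, and when $A$ is unital with $[\tau(1_A)]\ne[1_{C(B)}]$ it is forced since then $KK(\tau)$ is not the unital class. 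For statement (ii), $\tau$ unital, Theorem \ref{Tlifable}(v) gives liftability iff $KK(\tau)=KK(\pi)\circ\mu$ for some $\mu\in H_{A,B,\rho}$, and one must additionally arrange $\mu([1_A])=1_{QT(B)}$, i.e.\ $\lambda(1_{X_0})=1$.

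That last point is the main obstacle. Because $KK(\pi)$ kills exactly the homomorphisms $K_0(A)\to\rho_B(K_0(B))$, the datum $KK(\tau)$ pins down $\lambda$ only modulo such maps, so one must show the normalization $\lambda(1_{X_0})=1$ can always be achieved while keeping $\lambda$ strictly positive and $\R$-extendable. This is precisely what the proof of Theorem \ref{Tlifable}(v) accomplishes: it uses strict comparison of positive elements in $B\otimes{\cal K}$ to convert the a priori relation $1_{QT(B)}-\mu([1_A])\in\rho_B(K_0(B))$ into a genuine projection $r\in B$, passes to the corner $(1-r)M(B)(1-r)$, and invokes $H_1(K_0(A),K_1(C(B)))=K_1(C(B))$; so case (ii) follows. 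A minor but necessary bookkeeping point underlying the whole dictionary is the verification of ${\rm ker}\,\rho^w_{A,f}={\rm ker}\,\rho_A$ and of the identification $\Aff^\lambda(T_f(A))\cong C(X,\R)$, both of which come from $A$ having a faithful trace together with the ordered $KK$-equivalence to $C(X)$.
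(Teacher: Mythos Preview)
Your overall strategy --- reduce to Theorem~\ref{Tlifable} and translate $H_{A,B,\rho}$ into ${\rm Hom}(C(X,\Z),\Aff(QT(B)))^{\R}_{++}$ via the ordered $KK$-equivalence $j_A$ --- is the natural one, and is presumably what the paper intends (no proof is given there). However, the dictionary you set up has a genuine gap.

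The problematic step is the claimed identification $\Aff^\lambda(T_f(A))\cong C(X,\R)$. Definition~\ref{DKKordereq} only furnishes an order isomorphism $\rho_A(K_0(A))\cong C(X,\Z)$ of ordered groups together with group isomorphisms on $\ker\rho_A$ and $K_1$; it says nothing about the tracial simplex $T(A)$. Since $T_f(A)$ is dense in the compact $T(A)$, one has $\Aff^\lambda(T_f(A))\cong\Aff(T(A))$, and there is no reason for $\Aff(T(A))$ to agree with $C(X,\R)=\Aff(T(C(X)))$. Thus a strictly positive $L$ on $\Aff^\lambda(T_f(A))$ extending $\bar\mu$ need not give a strictly positive $\wtd\lambda$ on $C(X,\R)$ extending $\lambda$, and conversely a given $\wtd\lambda$ on $C(X,\R)$ need not produce an $L$ on $\Aff(T(A))$. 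In other words, both conditions ask one to extend the \emph{same} strictly positive homomorphism $\bar\mu=\lambda$ from the ordered group $G:=\rho_A(K_0(A))\cong C(X,\Z)$ to two \emph{different} ordered real vector spaces $\Aff(T(A))$ and $C(X,\R)$, and these two extension problems are not equivalent in general (cf.\ Example~\ref{ExmX}, which shows extendability to $C(X,\R)$ is a genuine obstruction). Without an argument relating $T(A)$ to $T(C(X))$ --- which ordered $KK$-equivalence does not provide --- the bijection $H_{A,B,\rho}\leftrightarrow{\rm Hom}(C(X,\Z),\Aff(QT(B)))^{\R}_{++}$ is not established, and so neither direction of (i) or (ii) is proved.

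There is also a smaller issue in the unital case. Theorem~\ref{Tlifable}(v) carries two extra hypotheses beyond the standing assumptions of~\ref{58}: strict comparison for $B\otimes{\cal K}$, and $H_1(K_0(A),K_1(C(B)))=K_1(C(B))$. The first is not part of the standing assumptions (it is listed in~\ref{58} only as a \emph{sufficient} condition for I--III), so you cannot invoke (v) without adding it. For the second, your argument that $[1_A]$ spans a direct summand is essentially correct but imprecise: $1_{X_0}$ is not in general a basis element of $C(X,\Z)$ (e.g.\ $(1,1)\in\Z^2$ when $X$ has two components), though it \emph{is} primitive (evaluation at any point sends it to $1$), and primitivity of $\rho_A([1_A])$ in the free group $\rho_A(K_0(A))$ does let you split $K_0(A)=\Z[1_A]\oplus G'$. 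This also tacitly assumes the order isomorphism sends $\rho_A([1_A])$ to $1_{X_0}$, which the paper's notation $\lambda(1_{X_0})$ suggests but Definition~\ref{DKKordereq} does not state.
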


\begin{rem}
If $A$ is properly infinite, then no essential extension
$\tau: A\to M(B)/B$ is liftable.
\end{rem}

\section{Null extensions, approximately trivial extensions and quasidiagonal extensions}

 {{Most}} results in this section are proven elsewhere.  We provide a
summary here for completeness and for the convenience of the reader
since we will be using some of these results in the present paper. We firstly
provide a little history, with some terminology which will be defined
precisely later.
 For more
complete details and history, we refer the reader to the paper 
\cite{LinExtQuasidiagonal}.

Let $T \in B(l_2)$ be an essentially normal operator, and 
let 
$\phi_T : C(X) \rightarrow B(l_2)/K$ be the corresponding essential extension,
where $X$ is the essential spectrum of $T$. 
A result of Brown--Douglas--Fillmore (\cite{BDF1})
states
that 
$T$ is quasidiagonal (i.e., $\phi_T$ is a quasidiagonal extension)
 if and only if $T$ is 
normal plus compact (i.e., $\phi_T$ is a trivial or liftable 
extension) if and only if $KK(\phi_T) = K_1(\phi_T) = 0$.
Much work has been done by multiple authors to generalize this result, {{let us}}
just mention the following:
Let $A$ be a separable nuclear C*-algebra that satisfies the
UCT, let $B$ be
 a separable stable {{\CA}}\,
and suppose that $A$ is quasidiagonal relative to $B$.  
Generalizing results of BDF, Brown, Salinas and others, Schochet proved that
an absorbing extension $\phi : A \rightarrow C(B)$ is quasidiagonal if and only
if $\phi$ is approximately trivial if and only if
$[\phi]=0$ in $KL(A, C(B))$, and such extensions are classified by
$Pext(K_*(A), K_*(B))$ (see \cite{SchochetKK1} and \cite{SchochetKK2}; see
also \cite{LinExtQuasidiagonal} Theorem 6.5). 
The restriction, that $\phi$ be absorbing, can be removed when the
canonical ideal $B \cong K$ or $B$ is simple purely 
infinite (e.g., see \cite{LinExtQuasidiagonal} Corollary 6.7); among other
things, these are exactly the separable stable {{\CA s}}
which have
a Voiculescu theorem:  roughly speaking, every essential extension is
absorbing (\cite{VoiculescuWvN}, \cite{ElliottKucerovsky}).

For more general canonical ideals, 
{{when essential extensions are not necessarily absorbing,}} 
the classes of quasidiagonal extensions, approximately trivial extensions
and null extensions can be different from each other.  Examples and more details
 can be 
found in \cite{LinExtQuasidiagonal}.  Below, for the convenience of
the reader, 
 we briefly summarize some results
characterizing quasidiagonality and  approximately triviality,  
and mention some examples from \cite{LinExtQuasidiagonal}:

We begin by recalling some definitions.\\

\begin{df}
Let $A$ be a separable C*-algebra and $B$ a non-unital {{but}} $\sigma$-unital  C*-algebra.  
Let  $\tau : A \rightarrow C(B)$ be an essential extension.
\begin{enumerate}
\item The extension $\tau$ is said to be \emph{quasidiagonal}     
if 
there exists an approximate unit $\{ e_n \}$ of $B$, consisting 
of an increasing sequence of projections, such that 
$$\| e_n x - x e_n \| \rightarrow 0 \makebox{ for all  } 
x \in \pi^{-1}(\phi(A)).$$
(Recall that $\pi : M(B) \rightarrow C(B)$ is the quotient map.)   
\item The extension $\tau : A \rightarrow C(B)$ is said to be
\emph{approximately trivial} if there exists a sequence 
$\{ \tau_n : A \rightarrow C(B) \}$ of trivial (or liftable)
 extensions such that for all $a \in A$,
$\tau(a) = \lim_{n \rightarrow \infty} \tau_n(a)$. 
\item The extension $\tau$ is said to be a \emph{null extension} if $[\tau] = 0$ in
$KK(A, C(B))$.
\end{enumerate}
\end{df}

{{In the characterization of quasidiagonality {{below,}} the ``UCT strong NF"   
assumption on the C*-algebra $A$ can be replaced by the assumption
that $A$ can be realized as an inductive limit $A = 
\overline{\bigcup_{n=1}^{\infty} A_n}$, where for all $n$, $A_n$ is a separable
amenable residually finite dimensional C*-algebra which satisfies the UCT. 
(See \cite{LinExtQuasidiagonal} Theorem 7.11 and the comment
before it; see also 6.16 in \cite{BlackadarKirchberg}.)}}

\begin{thm}[\cite{LinExtQuasidiagonal}  Theorem 7.11]\label{Tqdextensions}
Let $A$ be a separable strong NF- \CA\, which satisfies the UCT.
Let $B$ be a non-unital but $\sigma$-unital simple \CA\, 
with real rank zero, stable rank one, weakly unperforated 
$K_0(B)$ and has continuous scale.
Suppose that $\tau: A\to M(B)/B$ is an essential extension.
Then $\tau$ is quasidiagonal if and only if $\tau_{*1}=0,$
${\rm im}\tau_{*0}\subset \Aff(QT(B))/\rho_B(K_0(B))$ and 
$KL(\tau)|_{K_i(A, \Z/k\Z)}=0,$ $i=0,1$ and $k=2,3,....$ 
\end{thm}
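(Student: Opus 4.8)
The plan is to reduce the characterization of quasidiagonality to a question about liftability and approximate triviality, since all three notions coincide in sufficiently nice situations, and then to invoke the $K$-theory classification of extensions (Theorem \ref{TH2}) together with the liftability analysis from Section 6. First I would set up the standard machinery: since $B$ has real rank zero, stable rank one, weakly unperforated $K_0(B)$ and continuous scale, it satisfies the standing assumptions of Section 6 (indeed $B$ has strict comparison in this case, so Section 5 applies), and $C(B)$ is simple purely infinite. The key classical fact is that for such ideals, an essential extension $\tau$ is quasidiagonal if and only if it is approximately trivial (this uses real rank zero of $B$, so that one can build increasing sequences of projections forming approximate units, and the fact that $C(B)$ absorbs trivial extensions in a suitable sense). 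This direction is essentially contained in \cite{LinExtQuasidiagonal}; I would cite it rather than reprove it.

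Next, the bulk of the argument is to identify when $\tau$ is approximately trivial in terms of the stated invariant conditions. Approximate triviality means $\tau = \lim_n \tau_n$ pointwise with each $\tau_n$ liftable. By Theorem \ref{TH2}, $\Ext(A,B) = KK(A,C(B))$, and one checks that pointwise limits of liftable extensions are exactly those $\tau$ whose $KK$-class lies in the closure of the subgroup generated by liftable classes; in the purely infinite simple corona setting this closure is controlled by $KL(A,C(B))$, i.e., by the $\Z/k\Z$-coefficient $K$-theory. Concretely, the liftable classes, by Theorem \ref{Tlifable} and the computation $K_0(C(B)) = K_1(B) \oplus \Aff(QT(B))/\rho_B(K_0(B))$ and $K_1(C(B)) = \ker\rho_B$, are precisely those with $\tau_{*1} = 0$ and $\mathrm{im}\,\tau_{*0} \subseteq \Aff(QT(B))/\rho_B(K_0(B))$ (the $K_1(B)$-summand of $K_0(C(B))$ being exactly the obstruction to lifting on $K_0$). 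Passing to the closure in $KL$ adds the condition $KL(\tau)|_{K_i(A,\Z/k\Z)} = 0$ for all $i$ and all $k$, since the $\mathrm{Pext}$ and torsion-coefficient information is precisely what distinguishes $KK$ from $KL$ and what a pointwise-approximating sequence can and cannot preserve. I would make this precise using the UCT for $A$ and the six-term sequence relating $KK$, $KL$ and the $\Z/k\Z$-coefficient groups.

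For the converse direction, assuming $\tau_{*1} = 0$, $\mathrm{im}\,\tau_{*0} \subseteq \Aff(QT(B))/\rho_B(K_0(B))$ and $KL(\tau)|_{K_i(A,\Z/k\Z)} = 0$, I would first produce a liftable extension $\tau_0$ with $KL(\tau_0) = KL(\tau)$ using the realization results for $\mathcal{A}_2$-type algebras (here the ``strong NF'' plus UCT hypothesis on $A$ plays the role of membership in the relevant class, guaranteeing enough homomorphisms into $M(B)$), noting that $\tau$ and $\tau_0$ then differ by an element of $\mathrm{Pext}(K_*(A), K_*(C(B)))$, and then approximate that $\mathrm{Pext}$-difference by a sequence of liftable (indeed null-on-$K$-theory) extensions using an Eilenberg swindle / telescoping argument in the stable, purely infinite corona. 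Finally, invoking the quasidiagonal $\Leftrightarrow$ approximately trivial equivalence closes the loop.

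The main obstacle I expect is the $\mathrm{Pext}$/torsion-coefficient bookkeeping: translating ``pointwise limit of liftable extensions'' into the exact statement ``$KL(\tau)$ vanishes on all torsion coefficients and the integral invariants are those of a liftable extension'' requires carefully combining the UCT description of $KK(A,C(B))$, the description of $KL$ as $KK$ modulo $\mathrm{Pext}$, and a topology-on-$\Ext$ argument showing that approximate triviality is detected exactly at the $KL$ level. This is where I would need to be most careful and where the real work lies; the rest is assembling already-established pieces (Theorems \ref{TH2}, \ref{thm:KComp}, \ref{Tlifable}, and the quasidiagonality results of \cite{LinExtQuasidiagonal}).
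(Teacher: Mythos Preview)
The paper does not prove this statement; it is quoted directly from \cite{LinExtQuasidiagonal} (Theorem 7.11 there) as part of a survey section, so there is no in-paper argument to compare your proposal against. Nevertheless, your sketch has a genuine gap at its first reduction.

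You propose to show that, under the hypotheses on $B$, quasidiagonality coincides with approximate triviality, and then to characterize the latter. But the paper explicitly warns (in the paragraph immediately preceding these two theorems) that for non-stable canonical ideals these notions can differ, and indeed Theorems \ref{Tqdextensions} and \ref{TApproxtrivial} give \emph{different} invariant characterizations: quasidiagonality requires only that $\mathrm{im}\,\tau_{*0}\subset \Aff(QT(B))/\rho_B(K_0(B))$, whereas approximate triviality requires the strictly stronger condition $\tau_{*0}\in Apl(K_0(A),\Aff(QT(B))/\rho_B(K_0(B)))$, a local liftability condition on $\tau_{*0}$ to strictly positive homomorphisms into $\Aff(QT(B))$. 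So the equivalence you want to invoke is false in exactly this setting, and the remainder of your argument---even if the $KL$/$\mathrm{Pext}$ bookkeeping were carried out---would at best recover Theorem \ref{TApproxtrivial}, not Theorem \ref{Tqdextensions}.

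A second issue: your use of Theorem \ref{Tlifable} assumes $A\in\mathcal{A}_2$, but the hypothesis here is only that $A$ is strong NF and satisfies the UCT. That is a quasidiagonality-type hypothesis (equivalently, $A$ is an inductive limit of residually finite-dimensional amenable UCT algebras) and does not place $A$ in $\mathcal{A}_2$, which concerns trace-compatible embeddings into $M(B)$. The strong NF assumption is used in \cite{LinExtQuasidiagonal} in a different way: it supplies finite-dimensional approximants of $A$ that one pushes into the real-rank-zero algebra $B$ to build, directly, a block-diagonal (hence quasidiagonal) model extension with the prescribed $KL$-data, after which the $\Ext = KK$ identification (Theorem \ref{TH2}) finishes the sufficiency direction. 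Your route through liftability and approximate triviality does not reach that construction.
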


\begin{df}
Let $A$ and $B$ be {{\CA s.}}

Let $Pl(K_0(A), \Aff(QT(B))/\rho_B(K_0(B)))$ denote the set of 
$\alpha \in Hom(K_0(A), \Aff(QT(B))/\rho_B(K_0(B)))$ for which there
exists a strictly positive homomorphism $\beta \in Hom(K_0(A), \Aff(QT(B)))$  with $\beta([1_{\wtd A}])=1$ 
or $\bt([1_{\wtd A})<1$ such that 
$$\Phi \circ \beta = \alpha.$$ 
Here, $\Phi :  \Aff(T(B)) \rightarrow \Aff(QT(B))/\rho_B(K_0(B))$ is the usual quotient map.

Let $Apl(K_0(A), \Aff(QT(B))/\rho_B(K_0(B)))$ denote the set of 
$\alpha \in Hom(K_0(A), \Aff(QT(B))/\rho_B(K_0(B)))$ for which there exist 
an increasing sequence $\{ G_n \}$ of finitely generated subgroups of $K_0(A)$
and a sequence $\{ \alpha_n \}$ in $Pl(K_0(A), \Aff(QT(B))/\rho_B(K_0(B)))$ such that
$$\bigcup_{n=1}^{\infty} G_n = K_0(A)$$
and
$$\alpha_n |_{G_n} = \alpha|_{G_n}  \makebox{  for all  }n.$$ 
\end{df}
 
Note that if $\alpha \in Apl(K_0(A), \Aff(QT(B))/\rho_B(K_0(B)))$ then 
$\alpha |_{ker \rho_A} = 0$.  Here is the short argument:  Suppose that 
 $x \in ker \rho_A$.  Then there
exists an $m$ for which $x \in G_m$.  But $\alpha_m |_{ker \rho_A} = 0$.
Hence, $\alpha(x) = \alpha_m(x) = 0$.


\begin{thm}({{\cite{LinExtQuasidiagonal}}}  Theorem 8.9)\label{TApproxtrivial}
Let $A$ be a separable amenable \CA\, satisfying the UCT which can be embedded
in a unital AF-algebra {{$C$ }} such that
{{$K_0(A)/{\rm ker}\rho_A=K_0(C)/{\rm ker}\rho_C$  }}
{{(as ordered groups). }}
Let  $B$ be a non-unital but $\sigma$-unital   simple \CA\, with {{real rank zero,
stable rank one, weakly unperforated $K_0(B)$ and has continuous scale.}} 
Suppose that $\tau: A\to M(B)/B$ {{  is  }} an essential extension.
Then $\tau$ is approximately trivial if and only if $\tau_{*1}=0,$
$KL(\tau)|_{K_i(A, \Z/k\Z)}=0,$ $i=0,1,$ $k=1, 2,...,$ and   
$\tau_{*0}\in Apl(K_0(A), \Aff(QT(B))/\rho_B(K_0(B))).$
\end{thm}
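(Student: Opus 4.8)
The statement to prove is Theorem \ref{TApproxtrivial}, which characterizes when an essential extension $\tau \colon A \to M(B)/B$ is approximately trivial, under the hypotheses that $A$ is separable amenable satisfying the UCT and embeddable in a unital AF-algebra $C$ with $K_0(A)/\ker\rho_A = K_0(C)/\ker\rho_C$ as ordered groups, and $B$ non-unital $\sigma$-unital simple with real rank zero, stable rank one, weakly unperforated $K_0(B)$ and continuous scale.

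\textbf{Plan of proof.} The plan is to prove both directions by relating approximate triviality to the invariants $\tau_{*1}$, $KL(\tau)|_{K_i(A,\Z/k\Z)}$, and $\tau_{*0} \bmod \rho_B(K_0(B))$, using the classification machinery already available (Theorem \ref{TH2}, i.e., $\Ext(A,B) = KK(A,C(B))$, together with Theorem \ref{Tlifable} describing which $KK$-classes are liftable). For the ``only if'' direction, suppose $\tau = \lim_n \tau_n$ pointwise with each $\tau_n$ trivial. First I would observe that the map $A \to C(B)_{as} := C_b([0,\infty),C(B))/C_0(\dots)$ assembled from the $\tau_n$'s (after interpolating to a continuous path) is a genuine homomorphism whose composition with each point-evaluation is a trivial extension; passing to $K$-theory and $KL$-theory, since $KL$ and the total $K$-theory with coefficients are continuous under pointwise limits in a suitable sense (and since the total invariant of a trivial extension factors through $KK(A,M(B))$ which, under the standing assumptions on $B$, is $\mathrm{Hom}(K_0(A),\Aff(QT(B)))$ concentrated in even degree with no coefficient information), one deduces $\tau_{*1}=0$ and $KL(\tau)|_{K_i(A,\Z/k\Z)}=0$. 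Then each trivial $\tau_n$ lifts to $\phi_n \colon A \to M(B)$, inducing $(\phi_n)_{*0} = \beta_n \in \mathrm{Hom}(K_0(A),\Aff(QT(B)))$ which is strictly positive with $\beta_n([1_{\tilde A}]) \le 1$ by Proposition \ref{P230604}; composing with $\Phi \colon \Aff(QT(B)) \to \Aff(QT(B))/\rho_B(K_0(B))$ gives $\tau_{*0}|_{G_n} = (\Phi\circ\beta_n)|_{G_n}$ on an exhausting sequence of finitely generated subgroups $G_n$, which is precisely $\tau_{*0} \in Apl(K_0(A),\Aff(QT(B))/\rho_B(K_0(B)))$.

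\textbf{The ``if'' direction} is the substantive part. Given the invariant conditions, I would fix an exhausting increasing sequence $\{G_n\}$ of finitely generated subgroups of $K_0(A)$ and elements $\alpha_n \in Pl(K_0(A),\Aff(QT(B))/\rho_B(K_0(B)))$ with $\alpha_n|_{G_n} = \tau_{*0}|_{G_n}$, each lifting to a strictly positive $\beta_n \in \mathrm{Hom}(K_0(A),\Aff(QT(B)))$ with $\beta_n([1_{\tilde A}]) \le 1$. The key point is to realize, for each $n$, a \emph{trivial} extension $\tau_n \colon A \to C(B)$ whose $KK$-class agrees with $KK(\tau)$ \emph{on the relevant finitely generated piece} and which agrees with $\tau$ on an increasing finite set of elements of $A$ up to $1/n$. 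Using the embeddability hypothesis $A \hookrightarrow C$ (an AF-algebra) with $K_0(A)/\ker\rho_A = K_0(C)/\ker\rho_C$, together with Lemma \ref{lem:Feb22202111:45PM} (which produces unital embeddings $h \colon A \to pM(B)p$ realizing a prescribed $L\circ\rho^w_{A,f}$ on $K_0$), I would build homomorphisms $\phi_n \colon A \to M(B)$ with $(\phi_n)_{*0} = \beta_n$; then $\pi\circ\phi_n$ is a trivial extension and $KK(\pi\circ\phi_n) = KK(\pi)\circ\beta_n$. The vanishing of $\tau_{*1}$ and of the coefficient maps means $KK(\tau)$ is determined by $\tau_{*0}$ modulo $Pext$, so $KK(\pi\circ\phi_n)$ and $KK(\tau)$ differ only by a $Pext$-element that vanishes on $G_n$; I would absorb this discrepancy by a further $BDF$-sum correction with a null extension, using that $Pext(K_*(A),K_*(C(B)))$ is the inverse limit of the $\mathrm{Ext}$ over the finitely generated $G_n$'s. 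Finally, applying Theorem \ref{TH2} to identify extensions with the same $KK$-class up to unitary equivalence, I obtain trivial extensions $\tau_n$ with $[\tau_n] = [\tau]$ ``eventually'' on finitely generated subgroups and hence, after a diagonal/telescoping argument, a sequence of trivial extensions converging pointwise to $\tau$.

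\textbf{Main obstacle.} The hard part will be the $Pext$/coefficient bookkeeping in the ``if'' direction: translating the hypotheses $\tau_{*1}=0$ and $KL(\tau)|_{K_i(A,\Z/k\Z)}=0$ into the statement that $KK(\tau)$ is, compatibly with an exhaustion of $K_0(A)$, realized by trivial extensions, and then arranging the pointwise convergence $\tau_n(a) \to \tau(a)$ for all $a\in A$ rather than merely agreement of $KK$-classes. This requires combining the classification theorem (Theorem \ref{TH2}) with a careful approximate-intertwining / Elliott-type diagonal argument, controlling at each stage both the finite subset of $A$ and the finitely generated subgroup of $K$-theory, and invoking the embeddability of $A$ into an AF-algebra to actually construct the requisite trivial extensions with the prescribed traces. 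Since Theorem \ref{TApproxtrivial} is quoted from \cite{LinExtQuasidiagonal}, I would ultimately cite that source for the full details, but the outline above is how the argument proceeds.
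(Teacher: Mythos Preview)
The paper itself gives no proof of this statement: Theorem~\ref{TApproxtrivial} is quoted verbatim from \cite{LinExtQuasidiagonal}, Theorem~8.9, as part of a summary section, and you correctly recognize this at the end of your proposal. Your outline is broadly in the right spirit---especially the ``only if'' direction and the overall strategy of matching $\tau$ with trivial extensions stage by stage on finitely generated subgroups of $\underline{K}(A)$---but two points in your ``if'' direction deserve correction.

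First, your appeal to Lemma~\ref{lem:Feb22202111:45PM} to manufacture the embeddings $\phi_n\colon A\to M(B)$ is not justified: that lemma requires $A\in{\cal A}_2$, which is not among the hypotheses of Theorem~\ref{TApproxtrivial}. The argument in \cite{LinExtQuasidiagonal} instead uses the AF-embedding hypothesis $A\hookrightarrow C$ directly, factoring the desired maps through finite-dimensional stages of $C$ and into $M(B)$ (recall $B$ has real rank zero here, so such maps are plentiful).

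Second, and more seriously, the step ``absorb this discrepancy by a further BDF-sum correction with a null extension'' cannot work as written: a null extension has $KK$-class zero, so BDF-summing with one leaves the $KK$-class unchanged and does not correct any $Pext$ discrepancy. You do not in fact want to achieve $[\tau_n]=[\tau]$ in $KK(A,C(B))$---that would force $\tau$ itself to be trivial by Theorem~\ref{TH2}. The genuine mechanism is an approximate-uniqueness theorem for homomorphisms into the simple purely infinite algebra $C(B)$: agreement of $[\tau_n]$ and $[\tau]$ on a sufficiently large finite subset ${\cal P}\subset\underline{K}(A)$ already forces approximate unitary equivalence of $\tau_n$ and $\tau$ on a prescribed finite subset of $A$. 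Equivalently, one shows that the bijection $\Ext(A,B)\to KK(A,C(B))$ of Theorem~\ref{TH2} is a homeomorphism from the pointwise-convergence topology to the $KL$-topology (the Brown--Salinas/Schochet topology). This is the technical heart of the argument in \cite{LinExtQuasidiagonal}, and it is what converts $[\tau_n]|_{{\cal P}_n}=[\tau]|_{{\cal P}_n}$ into the desired pointwise convergence $\tau_n\to\tau$; no $Pext$ correction is needed.
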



For more examples like the one below, please see \cite{LinExtQuasidiagonal}.

\begin{prop}\label{Pnullext}

Let $A$ be a separable amenable \CA\, which satisfies the UCT and let $B$ be a 
simple 
\CA\, with continuous scale {{and which satisfies the standing assumptions  }} 
in \ref{58}.

If $\tau_0 : A \rightarrow C(B)$ is a 
null essential extension then 
for any non-unital  essential extension $\tau: A\to M(B)/B,$
\beq
[\tau+\tau_0]=[\tau] \makebox{  in}\,\,\,   \Ext(A,B),  
\eneq
where the above $+$ is the BDF sum.

Suppose, in addition,   {{that $A\in {\cal A}_1$
with finitely generated $K_i(A)$}}   and that 
there is a unital simple AF-algebra $C$ such that $K_0(C)=\rho_B(K_0(B))\subset K_0(M(B))$ as ordered
groups and {{$[1_C]\le [1_{T(B)}].$}} 
 Denote by 
\beq
{{{\rm Hom}^T(K_0(A), \rho_B(K_0(B)))_{++}}}
\eneq
{{the set of those $\gamma\in {\rm Hom}(K_0(A), \rho_B(K_0(B)))_+$ such that
there exists 
$$L\in CA_+^{\bf 1}({{\Aff^b}}(T_f(A)), \Aff(QT(B)))_{++}$$ for which 
$\gamma=L\circ \rho_{A, f}.$}}

{{(1)  Then,  for any $\gamma
\in {\rm Hom}^T(K_0(A), \rho_B(K_0(B)))_{++},$}} 
there exists a null essential extension $\tau_0 : A \rightarrow C(B)$ which is trivial and 
$(\tau_0)_{*0}=\pi_{*0}\circ \gamma,$
where  $\pi: M(B)\to M(B)/B$ is the quotient map.  Consequently, 
(a)  if ${\rm Hom}^T(K_0(A), \rho_B(K_0(B)))_{++}\not=\emptyset$  and
 $A$ is not unital, then every null extension is liftable, 
 (b) if $A$ is unital and 
${\rm Hom}^T(K_0(A), \rho_B(K_0(B)))_{++}$ 
contains $\gamma$  such that $\gamma ([1_{A}]) \neq [1_{M(B)}]$, then 
every non-unital null extension is liftable, 

(2) 
If {{${\rm Hom}^T(K_0(A), \rho_B(K_0(B)))_{++}=\emptyset,$}} then no null extension
$A \rightarrow M(B)/B$ is trivial. 

{{(3) If $A$ is unital and $T_f(A)\not=\emptyset,$ 
and $QT(B)$ is separable, then there are always some liftable (i.e., trivial) 
$\tau\in  Ext(A, B)$ such that $\tau$ is not null.}}

\end{prop}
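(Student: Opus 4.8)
The plan is to produce a single $\lambda\in H_{A,B,\rho}$ whose associated extension (via $\mathcal A_2$) is non-unital and has nonzero $KK$-class. Since $A\in\mathcal A_1\subseteq\mathcal A_2$ (Proposition \ref{PA1A2}) and $A$ satisfies the UCT, once we have $\lambda\in H_{A,B,\rho}$ with $\lambda([1_A])\neq[1_{M(B)}]$, the definition of $\mathcal A_2$ yields an embedding $h\colon A\to M(B)$ with $h(A)\cap B=\{0\}$, $h_{*0}=\lambda$ and $1_{M(B)}-h(1_A)\notin B$. Then $\tau:=\pi\circ h$ is injective (if $\pi(h(a))=0$ then $h(a)\in h(A)\cap B=\{0\}$, so $a=0$) and non-unital, hence represents a class in $\Ext(A,B)$, and it is liftable, lifted by $h$. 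Moreover $KK(\tau)=KK(\pi)\circ\lambda$, and since $A$ satisfies the UCT and $B$ satisfies the standing assumptions of this section, Theorem \ref{thm:KComp} identifies $\pi_{*0}\colon K_0(M(B))=\Aff(QT(B))\to K_0(C(B))$ with the quotient onto the summand $\Aff(QT(B))/\rho_B(K_0(B))$. Thus $\tau$ will fail to be null as soon as $\lambda([1_A])\notin\rho_B(K_0(B))$.

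To build such a $\lambda$, fix $\tau_0\in T_f(A)$ (possible since $T_f(A)\neq\emptyset$). Evaluation at $\tau_0$ is a linear functional on $\Aff^\lambda(T_f(A))$ sending $1_{T_f(A)}$ to $1$, and, since $\tau_0$ is faithful, sending $\Aff^\lambda_+(T_f(A))\setminus\{0\}$ into $(0,\infty)$. Because $B$ is separable, $K_0(B)$ and hence $\rho_B(K_0(B))$ is countable, and $c\mapsto c\cdot 1_{QT(B)}$ is injective, so $\{c\in(0,1):c\cdot 1_{QT(B)}\in\rho_B(K_0(B))\}$ is countable; choose $c_0\in(0,1)$ outside it. Define $L\colon\Aff^\lambda(T_f(A))\to\Aff(QT(B))$ by $L(f)=c_0\,f(\tau_0)\,1_{QT(B)}$. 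Then $L$ is linear, it carries $\Aff^\lambda_+(T_f(A))\setminus\{0\}$ into the strictly positive constant functions, i.e., into $\Aff_+(QT(B))\setminus\{0\}$, and $L(1_{T_f(A)})=c_0\cdot 1_{QT(B)}$ is strictly less than $1_{QT(B)}$; hence $L\in CA^{\bf 1}_+(\Aff^\lambda(T_f(A)),\Aff(QT(B)))$. Put $\lambda:=L\circ\rho^w_{A,f}$; it is order preserving (again since $\tau_0$ is faithful), so $\lambda\in H_{A,B,\rho}$, and, as $A$ is unital, $\rho^w_{A,f}([1_A])=1_{T_f(A)}$, whence $\lambda([1_A])=c_0\cdot 1_{QT(B)}$.

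Finally, $\lambda([1_A])=c_0\cdot 1_{QT(B)}\neq 1_{QT(B)}=[1_{M(B)}]$ in $K_0(M(B))=\Aff(QT(B))$ because $c_0\neq 1$, so the first paragraph applies and gives a non-unital liftable essential extension $\tau=\pi\circ h$. Its $KK$-class $KK(\pi)\circ\lambda$ restricts on $K_0$ to the map sending $[1_A]$ to the class of $c_0\cdot 1_{QT(B)}$ in $\Aff(QT(B))/\rho_B(K_0(B))\subseteq K_0(C(B))$, which is nonzero by the choice of $c_0$; hence $KK(\tau)\neq 0$ and $\tau$ is not null, which proves the statement. The only non-formal step is the selection of $c_0$: it records the fact that $M(B)$ carries far more projections than are detected by $K_0(B)$ (equivalently, $\Aff(QT(B))$ is uncountable whereas $\rho_B(K_0(B))$ is countable), and this is exactly why being a \emph{liftable} extension does not force the $KK$-class to vanish in the stably finite setting. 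We note that the separability of $QT(B)$ assumed in the statement is automatic, as $QT(B)$ is a metrizable Choquet simplex whenever $B$ is separable (Remark \ref{RQTMB}).
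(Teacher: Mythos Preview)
Your argument addresses only part (3), and for that part it is correct and follows essentially the same idea as the paper: exploit the countability of $\rho_B(K_0(B))$ inside $\Aff(QT(B))$ to choose $\lambda([1_A])$ outside it, then realize the corresponding liftable extension and read off that $\tau_{*0}([1_A])\neq 0$ in $K_0(C(B))$.

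The paper packages this a bit differently. It fixes any $g\in \Aff_+(QT(B))\setminus \rho_B(K_0(B))$ with $g<1$, builds an auxiliary unital simple AF-algebra $C$ with $T(C)=QT(B)$ and $g\in\rho_C(K_0(C)_+)$, embeds $C\hookrightarrow M(B)$ via (the proof of) Proposition~\ref{PA1A2}, and then uses $A\in\mathcal A_1$ to embed $A\hookrightarrow C$ with $[1_A]\mapsto g$. You collapse these two embeddings into one by invoking $\mathcal A_2$ directly (legitimately, via $\mathcal A_1\subset\mathcal A_2$), and you specialize $g$ to a constant function $c_0\cdot 1_{QT(B)}$. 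Your route is slightly cleaner; the paper's makes the AF intermediary explicit, which mirrors how $\mathcal A_1\subset\mathcal A_2$ is proved but is not logically necessary here. Your observation that separability of $QT(B)$ is automatic under the standing assumptions (via Remark~\ref{RQTMB}) is also correct; what you actually use is that $K_0(B)$ is countable.

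Note, however, that the proposition has several other parts---the opening assertion that $[\tau+\tau_0]=[\tau]$, and parts (1) and (2)---which your proposal does not treat. These are more routine (the first is immediate from Theorem~\ref{TH2}, (2) is a direct contrapositive, and (1) runs through $\mathcal A_1$ and the given AF-algebra $C$), but they are part of the statement.
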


\begin{proof}

  For the first part of the proposition,  since $\tau_0$ is a null extension, 
we have that in $KK(A, C(B))$,
$[\tau + \tau_0]_{KK} = [\tau]_{KK} + [\tau_0]_{KK} = [\tau]_{KK}.$
Hence, by Theorem \ref{TH2},
{{$[\tau + \tau_0]= [\tau]$ in ${\rm Ext}(A,B).$}}

 Now we prove the second part.  
By Theorem \ref{TembeddingAH}, $C \in {\cal A}_1 \subset {\cal A}_2$. 
 Hence, 
  there is an embedding $\psi_C: C\to M(B)$
such that $(\psi_C)_{*0}={\id_{\rho_B(K_0(B))}},$  $\psi_C(C)\cap B=\{0\}.$
Moreover, if $[1_C]=[1_{M(B)}],$ we may assume that $\psi_C(1_C)=1_{M(B)}$ and 
if $[1_C]<[1_{M(B)}],$ we may assume that $1_{M(B)}-\psi_C(1_C)\not\in B.$
Since $A\in {\cal A}_1,$ 
 there is an embedding  $\psi_A:A\to C$
such that ${\psi_A}_{*0}=\gamma.$ Consider the extension $\tau_0:=\pi\circ \psi_C\circ \psi_A: A\to M(B)/B.$
Then $\tau_0$ is trivial.  To see that 
 $\tau_0$ is null, one computes that  $[\tau_0]|_{K_0(A)}=0,$ 
 $[\tau_0]|_{K_1(A)}=0,$   and $[\tau_0]|_{K_i(A, \Z/k\Z)}=0,$  $i=0,1$ and $k=2,3,....$  (See standing assumptions on $B$ in \ref{58}.)  
 {{Since we assume that  $K_i(A)$ is finitely generated, $[\tau_0]=0$ in $KK(A, C(B)).$}} 
So $\tau_0$ is null. 

For the converse, for case (a), choose $\gamma\in {\rm Hom}^T(K_0(A), \rho_B(K_0(B)))_{++}.$
Then $\pi\circ \psi_C\circ \psi_A$ is liftable and $[\pi\circ\psi_C\circ \psi_A]=0.$
If $\tau$ is null, then $[\tau]=[\pi\circ\psi_C\circ \psi_A]$ in $KK(A, C(B)).$
Since $A$ is not unital, $[\tau]=[\pi\circ\psi_C\circ \psi_A]$ in ${\rm Ext}(A, B).$
Therefore $\tau$ is liftable.

For case (b),        
suppose that $\gamma$ (as before) satisfies that
$\gamma([1_A]) \neq [1_{M(B)}].$ 
{{Then $\psi_C\circ \psi_A$ is non-unital embedding of $A$ into $M(B)$ and 
since $\psi_C\circ \psi_A(A)\subset \psi_C(C),$ if $\psi_A(1_A)\not=1_C,$
then $\psi_C(1_C)-\psi_C\circ \Psi_A(1_A)\not\in B.$ Consequently, 
$1_{M(B)}-\psi_C\circ \psi_A(1_A)\not\in B.$ It follows that $\pi\circ \psi_C\circ \psi_A$ is not unital.
If $\psi_A(1_A)=1_C,$ then, since $\gamma([1_A])\not=[1_{M(B)}]$ and $\psi_{A_{*0}}=\gamma,$
we conclude that $[1_C]\not=[1_{M(B)}].$ As we constructed above, this implies that $1_{M(B)}-\psi_C(1_C)\not\in B.$ It follows that $1_{M(B)}-\psi_C\circ \psi_A(1_A)\not\in B.$ Hence $\pi\circ \psi_C\circ \psi_A$ 
is not unital. 

Let $\tau$ be a non-unital null extension. Then 
$[\pi\circ \psi_C\circ \psi_A]=[\tau].$   It follows that $[\tau]=[\pi\circ \psi_C\circ \psi_A]$ 
in ${\rm Ext}(A,B).$ So $\tau$ is liftable.}}

{{Now suppose that ${\rm Hom}^T(K_0(A), \rho_B(B))_{++}=\emptyset$ and
$\tau: A\to C(B)$ is a null extension.}} If there exists $\phi: A\to M(B)$ such that
$\tau=\pi\circ \phi,$ then since $[\tau]=0,$ by our standing assumptions on 
$B$ (see IV. in  \ref{58}), 
$\phi_{*0}\in {\rm Hom}^T(K_0(A), \rho_B(K_0(B)))_{++}.$
But ${\rm Hom}^T(K_0(A), \rho_B(K_0(B)))_{++}=\emptyset.$ Hence no null extension is liftable.

{{For (3), fix $g\in \Aff_+(QT(B))\setminus \rho_B(K_0(B))$ with $g<1.$ 
Consider a unital simple AF-algebra 
$C$ such that $T(C)=QT(B),$ $\rho_C$ is injective and $g\in \rho_C(K_0(C)_+).$  
(We can find such a $C$ by taking a countable dense ordered subgroup 
$H \subset \Aff(T(B))$ with $1, g \in H$, 
and choose a simple unital AF-algebra $C$ so that
$(K_0(C), K_0(C)_+, [1_C]) = (H, H_+, 1)$.) 
Denote by $\iota: K_0(C)\to \Aff_+(T(B))$ the  embedding. 
By  (the proof of) Theorem \ref{PA1A2},
there is a unital  embedding $h: C\to M(B)$ such that $h_{*0}=\iota.$}}

{{Fix $t_0\in T_f(A).$ 
Define $\lambda_0: {{\Aff^b}}_+(T_f(A))\to \Aff_+(T(C))$ by
$\lambda_0(f)(s)=f(t_0)g(s)$ for all 
$f\in {{\Aff^b}}_+(T_f(A))$ and $s\in T(C).$ 
Put $\af: K_0(A)\to \Aff_+(T(C))$ by $\af(x)(s)=\lambda_0\circ \rho_{A,f}(x)(s)$
for all $x\in K_0(A)$ and $s\in T(C).$}}
Since $A \in {\cal A}_1$, there is a monomorphism 
$h_A: A\to C$ such that $(h_A)_{*0}=\af.$ In particular $(h_A)_{*0}([1_A])=g.$
Put $\tau=\pi\circ \iota \circ h_A.$ Then $\tau$ is liftable but 
$[\tau(1_A)]=\pi_{*0}(g)\not=\{0\}.$ So $\tau$ is not null.

\end{proof}

\begin{exm}\label{Exnull}
Some comments are in order:

{{
(1) We first note that if $K_0(A)=\{0\}$ (such as $A=C_0((0,1])$), then $\rho_{A,f}=0.$
Then $0\in {\rm Hom}^T(K_0(A), \rho_B(K_0(B)))_{++}.$

(2) If $\rho_{A,f}\not=0,$ then $0\not\in {\rm Hom}^T(K_0(A), \rho_B(B))_{++}.$}}

(3) Let $Q$ be the UHF-algebra with $(K_0(Q), K_0(Q)_+, [1_Q])=(\Q, \Q_+,1)$ and
let $\tau$ be the unique tracial state of $Q$.

Let    
$a\in Q$ be a non-zero positive element with 
$d_\tau(a)=3/4$ which is not a projection. Let $B=\overline{aQa}$ and let $A$ be an irrational rotation algebra.  Note that $B$ satisfies the standing assumptions
of \ref{58} (see Section \ref{KComputations}, especially Theorem \ref{thm:K1M(B)}
and the statements that follow it). 
It follows from  Theorem \ref{TembeddingAH}   that there is a unital embedding 
$\phi_0: A\to M(B)$.  Necessarily, $(\phi_0)_{*0}([1_A])=1$ in $K_0(M(B)) = \Aff(T(B)) =
\mathbb{R}$.  Note that
$(\phi_0)_*([1_A])$ corresponds to the number $3/4$ in $K_0(Q)$, and
$K_0(B) = K_0(Q) = \Q$.   Hence, since $A$ is an irrational rotation algebra, 
$(\phi_0)_{*0}(x)\not\in \rho_B(K_0(B)) = \Q$ if $\rho_A(x)\not\in \Q$ for all
$x \in K_0(A)$. 
{{Thus,}}  since $A$ is an irrational rotation algebra, we can find $y \in K_0(A)_+$
such that $(\phi_0)_{*0}(y)$ is incommensurable with $(\phi_0)_*([1_A])$ (i.e., their ratio 
is irrational), and thus
must be an irrational number.  {{Therefore,}}
 $(\phi_0)_{*0} \notin {\rm Hom}^T(K_0(A), \rho_B(K_0(B)))_{++}.$  Hence,   
by Proposition \ref{Pnullext},
$\phi_0$ is an example of a trivial (or liftable) extension which is not a null extension. 

{{(4) In fact, ${\rm Hom}^T(K_0(A), \rho_B(K_0(B)))_{++} = \emptyset$.}}  {{To see this,  
assume,}}  to the contrary, that there exists 
 $\lambda \in {\rm Hom}^T(K_0(A), \rho_B(K_0(B)))_{++}.$ 
Since $A \in {\cal A}_1$, we can find an embedding $\phi : A \rightarrow M(B)$ so that
$(\phi)_{*0} = \lambda$. 
Since $\lambda \in {\rm Hom}^T(K_0(A), \rho_B(K_0(B)))_{++}$, we must have that
$0 \neq (\phi)_{*0}([1_A]) \in \mathbb{Q} = \rho_B(K_0(B))$.  
Since $A$ is an irrational rotation algebra, we can find $x \in K_0(A)_+$ such that
$(\phi)_{*0}(x)$ is incommensurable with $(\phi)_{*0}([1_A])$ (i.e., their ratio is an 
irrational number), and hence, $(\phi)_{*0}(x)$ is an irrational number.  
This contradicts that 
$(\phi)_{*0} = \lambda \in {\rm Hom}^T(K_0(A), \rho_B(K_0(B)))_{++}$. 
Thus, by Proposition \ref{Pnullext}, no null extension $\tau: A\to C(B)$ is trivial (i.e., liftable).

{{(5)  Suppose that $X$ is a compact metric  space with finitely many connected components
and $B$ be a $\sigma$-unital simple \CA\, with continuous scale and 
which satisfies the assumption in \ref{58}.
Recall that $C(X, \Z)\cong \Z^k$ for some $k\in \N.$ It is easy to find 
strictly positive \hm\,}} $\lambda\in {\rm Hom}(C(X, \Z), \rho_B(K_0(B)))_{++}$ with $\lambda([1_{C(X)}]) \neq [1_{M(B)}]$. 
Hence by (1) of Theorem \ref{Pnullext}, in this case, every non-unital null extension is liftable. 
\end{exm}



\section{Ordered groups}

Throughout  this section, 
 in $\Z^l,$ we write $e_j=(\overbrace{0,0,...0}^{j-1}, 1, 0,...0)\in \Z^l,$ $j=1,2,...,l.$

\begin{lem}\label{700}
Let $G\subset \Z^l$ (for some integer $l\ge 1$) be an ordered subgroup
with $g_1, g_2,...,g_m\in G_+\setminus \{0\}$ generating $G_+$ 
and   $1>\sigma>0.$  Let $\Delta$ be a compact metrizable Choquet simplex, let   
%
$u_g=\sum_{j=1}^lk_je_j\in G$ for some $k_j\ge 1$ ($1\le j\le l$)
and let $\ep>0,$  $1>r>7/8.$ Let $K \geq 1$ be an arbitrary integer. 

Suppose that  $\D\subset \Aff(\Delta)$ is a dense ordered subgroup (with {{the}} strict order) and 
$\lambda_G: G\to \Aff(\Delta)$ is a strictly positive \hm. 
Let $\iota: \Z^l\to \Z^{l_1}$ (for some $l_1\ge l$) be a strictly positive \hm\, with $\iota(u_g)=
\sum_{k=1}^{l_1}m_ke_k',$ where $e_k'=(\overbrace{0,0,...,0,}^{k-1}1,0,...0)\in \Z^{l_1},$ 
$m_j\ge 1$ $(1 \leq j\leq l_1)$, and 
let $\bt: \Z^{l_1}\to \Aff(\Delta)$ be a strictly positive \hm\, with $\|\bt(\iota(u_g))\|\le 1.$
Suppose that 
\beq\label{700-1}
\Pi\circ \bt\circ \iota(g)=\Pi\circ \lambda_G(g)\tforal g\in G,
\eneq
where $\Pi: \Aff(\Delta)\to \Aff(\Delta)/\D$ is the quotient map. 
Then there are strictly positive \hm s $\td\lambda_\D: \Z^l\to K\D,$
$\bar\bt: \Z^{l_1}\to \Aff(\Delta),$ and  
  a positive \hm\, $\af: G\to \D$ 
such that 
\beq\label{92-nn-1}
&&\td\lambda_\D|_G+\bar \bt\circ \iota|_G+\af=\lambda_G,\\\label{92-nn-2}
&&\Pi\circ \bar \bt(x)=\Pi\circ \bt(x)\tforal x\in \Z^{l_1},\\\label{92-nn-3}
&&\|\bar\bt(\iota(u_g))\|<\ep,\,\,\,
\|\bar\bt(\iota(g_j))\|<\ep, \,\,\, \|\af(u_g)\|<\ep,\andeqn \|\af(g_j)\|<\ep, \\\label{700-T-10}
&&{\td\lambda_\D(g_i)(\tau)\over{\td\lambda_\D(u_g)(\tau)}}\ge r{\lambda_G(g_i)(\tau)\over{\lambda_G(u_g)(\tau)}}\tforal \tau\in \Delta,
\,\,1\le i\le m,\tand\\\label{700-T-11}
&&  {\bar\bt(e_k')(\tau)\over{\bar\bt(\iota(u_g))(\tau)}}\ge r{\bt(e_k')(\tau)\over{\bt(\iota(u_g))(\tau)}}\tforal \tau\in \Delta,\,\,\,1\le k\le l_1.
\eneq

\end{lem}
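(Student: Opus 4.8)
The plan is to work entirely with the affine data over the Choquet simplex $\Delta$, treating $\lambda_G$, $\bt$, $\iota$ as strictly positive homomorphisms into $\Aff(\Delta)$ and the dense subgroup $\D$ as a ``target'' for a small perturbation. The starting point is the compatibility hypothesis \eqref{700-1}: $\bt\circ\iota$ and $\lambda_G$ agree on $G$ modulo $\D$. First I would fix, for each generator $g_i$ of $G_+$ and for $u_g$, an element of $\D$ realizing the difference $\lambda_G(g_i)-\bt(\iota(g_i))$; by density of $\D$ in $\Aff(\Delta)$ and strict positivity one can choose these representatives so small (controlled by the desired $\ep$ and $\sigma$) that all positivity and ratio estimates below survive. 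This gives a candidate additive splitting on the generators; the work is to promote it to an honest homomorphism on all of $G$ (respecting the relations of the ordered subgroup $G\subseteq\Z^l$) and then extend it to $\Z^l$ and $\Z^{l_1}$ while keeping everything strictly positive.

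The key technical engine is the ordered-group extension machinery of Section 8 (the analogues of Lemmas \ref{Gext-4}, \ref{Gext-5}, \ref{multiple-ext}, together with the simple AF-algebra realization of dense subgroups of $\Aff(\Delta)$). Concretely I would: (1) use that $G$ is finitely generated and $G_+$ finitely generated, so a homomorphism is determined by finitely many values and ``smallness'' can be measured on $g_1,\dots,g_m,u_g$; (2) build the perturbation $\af\colon G\to\D$ as the difference between $\lambda_G$ and the part of it that will be absorbed by $\td\lambda_\D$ and $\bar\bt\circ\iota$, arranging $\af$ to land in $\D$ and to be positive (by taking it small relative to $\sigma$, the strict positivity of $\lambda_G$ forces $\af$ positive automatically); (3) define $\bar\bt$ as a small strict-positivity-preserving perturbation of $\bt$ on the coordinate generators $e_k'$ of $\Z^{l_1}$, chosen so that $\Pi\circ\bar\bt=\Pi\circ\bt$ (i.e.\ the perturbation is $\D$-valued) — this is exactly the situation handled by the extension lemmas, producing $\bar\bt$ on all of $\Z^{l_1}$; (4) finally define $\td\lambda_\D$ on $\Z^l$ by $\td\lambda_\D:=\lambda_G-\bar\bt\circ\iota-\af$ on $G$ and extend it to $\Z^l$ with values in $K\D$ via the multiple-divisibility lemma (\ref{multiple-ext}-type argument), which is precisely why the integer $K$ and the clearing-denominators step appear in the statement. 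The ratio inequalities \eqref{700-T-10} and \eqref{700-T-11} with constant $r>7/8$ are then a direct consequence of choosing all perturbations with norm bounded by a small multiple of $(1-r)\sigma$: since $\lambda_G(u_g)(\tau),\ \bt(\iota(u_g))(\tau)$ are bounded below by a positive constant on the compact $\Delta$, a sufficiently small additive perturbation changes each ratio by less than the allowed factor.

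The order of operations would be: realize $\D$ (and the relevant finite data) inside a unital simple AF-algebra $C=\varinjlim(C_n,\iota_n)$ with $T(C)=\Delta$, $\ker\rho_C=0$, $\rho_C(K_0(C))=\D$; pull $\lambda_G,\bt,\iota$ back to $K_0$-level maps at some finite stage $C_n$ (using finite generation of $G$ to push the maps far enough into the inductive system); run the finite-dimensional perturbation/extension arguments (Lemmas \ref{Gext-1}-, \ref{multiple-ext}-type) at stage $C_n$ to get integer-matrix approximants satisfying the exact relations on $G$; then map back to $\Aff(\Delta)$, producing $\td\lambda_\D$, $\bar\bt$, $\af$ with the required $\D$- resp.\ $K\D$-valuedness; and verify \eqref{92-nn-1}-\eqref{700-T-11} by bookkeeping the sizes of the perturbations against $\sigma$ and $r$.

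The main obstacle I anticipate is the simultaneous bookkeeping: one must split $\lambda_G$ into \emph{three} pieces ($\td\lambda_\D$, $\bar\bt\circ\iota$, $\af$) while (a) keeping $\td\lambda_\D$ and $\bar\bt$ strictly positive as homomorphisms on the \emph{full} groups $\Z^l$, $\Z^{l_1}$ — not just on the subgroup $G$ — which forces the extension lemmas to be applied in the right order so that positivity on coordinate generators is never destroyed; (b) keeping $\td\lambda_\D$ valued in $K\D$ for the \emph{prescribed} $K$, which is why the divisibility lemma must be invoked with denominators dividing $K$; and (c) controlling the two ratio estimates, which couple $\td\lambda_\D$ with $u_g$ and $\bar\bt$ with $\iota(u_g)$ respectively, so the perturbations of $u_g$-values must be taken an order of magnitude smaller than those of the other generators. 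Threading all of these constraints through a single choice of $\dt$ (depending on $\ep$, $\sigma$, $r$, $G$, and the word lengths of $g_1,\dots,g_m,u_g$) is the delicate part; the individual steps are each instances of the already-established Section 8 lemmas.
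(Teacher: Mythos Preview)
Your plan would likely work, but it takes a considerably more elaborate route than the paper does. You propose to realize $\D$ via an AF-algebra, pull everything to a finite stage, and then use extension/divisibility machinery (in the spirit of your ``Gext-4, Gext-5, multiple-ext'' references) to build $\alpha$ first and then extend $\td\lambda_\D$ from $G$ to $\Z^l$ with values in $K\D$. The delicate part of that scheme is exactly what you flag: extending a strictly positive homomorphism from the subgroup $G$ to the free group $\Z^l$ while forcing the values to lie in the prescribed subgroup $K\D$.

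The paper sidesteps this entirely by reversing the order of construction. It first extends $\lambda_G$ once to a strictly positive $\td\lambda_G\colon\Z^l\to\Aff(\Delta)$ (Lemma~2.11 of \cite{Linsubh01}), and from then on works only on the \emph{free} groups $\Z^l$ and $\Z^{l_1}$, where a homomorphism is nothing more than a choice of values on the standard generators. Since $K\D$ is itself dense in $\Aff(\Delta)$, one simply picks $\td\bt\colon\Z^{l_1}\to K\D$ close to $(1-\tfrac1M)\bt$ (for large $M$), sets $\bar\bt:=\bt-\td\bt$, then picks $\td\lambda_\D\colon\Z^l\to K\D$ close to $\gamma:=\td\lambda_G-\bar\bt\circ\iota$, and finally \emph{defines} $\alpha:=(\gamma-\td\lambda_\D)|_G$. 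The ratio inequalities \eqref{700-T-10}--\eqref{700-T-11} follow from a soft continuity lemma (Lemma~\ref{lem:First}) applied at the points $\td\lambda_G$ and $\tfrac1M\bt$; no quantitative $\sigma$-bookkeeping against word lengths is needed. The crucial point is that $\alpha$ is the \emph{last} thing defined, as a remainder, so it is automatically a homomorphism on $G$ and automatically lands in $\D$: on $G$ one has $\alpha=\lambda_G-\bar\bt\circ\iota-\td\lambda_\D$ with $\lambda_G-\bt\circ\iota\in\D$ by hypothesis~\eqref{700-1}, $\bar\bt-\bt\in K\D\subset\D$, and $\td\lambda_\D\in K\D\subset\D$.

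In short: your approach imposes the $\D$- and $K\D$-valuedness constraints early and then has to fight to extend; the paper imposes them for free (by choosing generators of free groups in the dense subgroup $K\D$) and lets the subgroup constraint on $\alpha$ fall out of~\eqref{700-1} at the end. No AF-realization, no finite-stage pullback, and no divisibility lemma are used.
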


{{To prove the lemma, let us state the following fact as a lemma using the notation 
stated above. Note in ${\rm Hom}(\Z^l, \Aff(\Delta)),$ 
$\dt_n\to \dt$ in the pointwise-norm topology means $\lim_{n\to\infty}\|\dt_n(g)-\dt(g)\|=0$
for all $g\in \Z^l,$ where the norm is the norm on the real Banach space $\Aff(\Delta).$ }}

{{A \hm\, $\dt: \Z^l\to \Aff(\Delta)$ is strictly positive, if
$\dt(z)\in \Aff_+(\Delta)\setminus \{0\}$ (i.e., $\dt(z)(t)>0$ for all $t\in \Delta$)  for all 
$z\in \Z^l_+\setminus \{0\}.$  Denote by
${\rm Hom}(\Z^l, \Aff(\Delta))_{++},$  the  set of all strictly positive \hm s from 
$\Z^l$ to $\Aff(\Delta).$}} {{Note that, here $0$ is not a point of $\Delta$.}}

\begin{lem} \label{lem:First}  
Let ${{\rm Hom}}(\mathbb{Z}^l, {{\Aff}}(\Delta))$  {{(for some $l\ge 1$)}} be the group of homomorphisms from 
$\mathbb{Z}^l$ to ${{\Aff}}(\Delta)$, given the pointwise-norm topology.  
{{Fix $x, y\in\Z^l_+$
with $y\not=0.$}} 

For each $1 \leq j \leq n$, the map
$${{\rm Hom}}(\mathbb{Z}^l, {{\Aff}}(\Delta))_{++} \rightarrow C(\Delta) : 
{{\dt}} \mapsto \frac{\dt(x)}{\dt(y)}$$
is continuous. 

In the above, $\frac{\delta(x)}{\delta(y)}$ is the function on $\Delta$
defined by $\left(\frac{\dt(x)}{\delta(y)}\right)(\tau)
:= \frac{\dt(x)(\tau)}{\dt(y)(\tau)}$ for all $\tau \in \Delta$.

{{Moreover, let $g_1, g_2,...,g_m, u_g\in \Z^l_{++}\setminus \{0\}$ 
be such that $u_g$ is in the subgroup generated by $g_1,g_2,...,g_m.$ 
Then for any $\dt\in {\rm Hom}(\Z^l, \Aff(\Delta))_{++}\setminus \{0\},$ 
for any $\eta>0,$ there exists $\ep_0>0$ satisfying the following:
if $\dt'\in {\rm Hom}(\Z^l, \Aff(\Delta))_{++}\setminus \{0\}$ and $\|\dt(g_j)-\dt'(g_j)\|<\ep_0,
$ $j=1,2,...,m,$ then 
\beq
\|{\dt(g_j)\over{\dt(u_g)}}-{\dt'(g_j)\over{\dt'(u_g)}}\|<\eta, \,\,\, 1\le j\le l.
\eneq}}

%
\end{lem}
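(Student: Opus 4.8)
\textbf{Proof proposal for Lemma \ref{lem:First}.}

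The plan is to treat the two assertions separately, as they are somewhat routine but of slightly different flavor. For the first assertion, fix $x,y\in\Z^l_+$ with $y\neq 0$, and let $\{\delta_n\}$ be a sequence (or net) in ${\rm Hom}(\Z^l,\Aff(\Delta))_{++}$ converging to $\delta$ in the pointwise-norm topology. Since $x$ and $y$ are fixed elements of $\Z^l$, convergence in the pointwise-norm topology gives $\|\delta_n(x)-\delta(x)\|\to 0$ and $\|\delta_n(y)-\delta(y)\|\to 0$, where the norms are sup-norms on $C(\Delta)\supseteq \Aff(\Delta)$. Because $y\neq 0$ and $\delta\in {\rm Hom}(\Z^l,\Aff(\Delta))_{++}$, the function $\delta(y)\in\Aff_+(\Delta)$ is strictly positive on the compact set $\Delta$, hence bounded below by some $c>0$. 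For $n$ large, $\delta_n(y)$ is uniformly close to $\delta(y)$ and hence bounded below by $c/2>0$, so the functions $1/\delta_n(y)$ are uniformly bounded and converge uniformly to $1/\delta(y)$ (using $\|u^{-1}-v^{-1}\|\le \|u-v\|/(\inf u \cdot \inf v)$ for strictly positive $u,v$). Then
\[
\frac{\delta_n(x)}{\delta_n(y)}-\frac{\delta(x)}{\delta(y)}
= \frac{\delta_n(x)-\delta(x)}{\delta_n(y)}+\delta(x)\left(\frac{1}{\delta_n(y)}-\frac{1}{\delta(y)}\right),
\]
and both terms tend to zero in sup-norm, giving continuity. (If one prefers to work with nets rather than sequences, the same estimates apply verbatim.)

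For the ``Moreover'' part, fix $\delta\in{\rm Hom}(\Z^l,\Aff(\Delta))_{++}\setminus\{0\}$ and $\eta>0$. Since $u_g$ lies in the subgroup generated by $g_1,\dots,g_m$, write $u_g=\sum_{k=1}^m n_k g_k$ with $n_k\in\Z$; then for any $\delta'$ we have $\delta'(u_g)=\sum_k n_k\delta'(g_k)$, so if $\|\delta(g_k)-\delta'(g_k)\|<\ep_0$ for all $k$ then $\|\delta(u_g)-\delta'(u_g)\|<(\sum_k|n_k|)\ep_0$. Similarly each $e_j$ is a $\Z$-combination of a fixed finite set of elements — more to the point, we only need control on the finitely many group elements $g_1,\dots,g_m,u_g,e_1,\dots,e_l$; but among the $e_j$ we cannot assume they lie in the group generated by the $g_k$, so a cleaner route is: it suffices to prove the estimate with $g_j$ replaced by an arbitrary fixed $x\in\Z^l_+$ in the numerator, and $u_g$ in the denominator. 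Since $\delta(u_g)$ is strictly positive on compact $\Delta$, it is bounded below by some $c>0$; choose $\ep_0$ small enough (depending on $\delta$, on the coefficient sums expressing $u_g$ in terms of the $g_k$, on $c$, and on $\sup_\Delta|\delta(x)|$) so that whenever $\|\delta(g_k)-\delta'(g_k)\|<\ep_0$ for $k=1,\dots,m$, one gets $\|\delta(u_g)-\delta'(u_g)\|<c/2$ (hence $\delta'(u_g)\ge c/2$ on $\Delta$) and $\|\delta(x)-\delta'(x)\|$ is as small as desired. Then applying the identity displayed above with $\delta,\delta'$ in place of $\delta,\delta_n$ and the uniform lower bounds on $\delta(u_g),\delta'(u_g)$, a direct estimate gives $\|\delta(x)/\delta(u_g)-\delta'(x)/\delta'(u_g)\|<\eta$. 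Taking $x=g_j$ for $j=1,\dots,m$ and $x=e_j$ for $j=1,\dots,l$ — and noting that to bound $\|\delta(e_j)-\delta'(e_j)\|$ we do need $e_j$ controlled, which follows since the numerator estimate only requires $\delta'$ close to $\delta$ on a generating set that we may harmlessly enlarge to include $g_1,\dots,g_m,e_1,\dots,e_l$ — yields the claimed inequality for all $1\le j\le l$.

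The only genuine subtlety, and the step I would be most careful about, is the bookkeeping around which group elements must be controlled: the hypothesis only assumes closeness on $g_1,\dots,g_m$, and one must verify that this forces closeness on $u_g$ (which it does, since $u_g$ is in the group they generate) and that the quotient $\delta(e_j)/\delta(u_g)$ makes sense and varies continuously (which requires the $e_j$ themselves, but $\delta(e_j)$ is automatically close to $\delta'(e_j)$ once $\delta'$ is close to $\delta$ on a \emph{spanning} set — and one can always assume without loss of generality that the given data includes enough generators, or simply note that in the application the $g_j$ do generate $\Z^l$). Everything else is the elementary continuity of inversion on strictly positive elements of $C(\Delta)$ together with the compactness of $\Delta$ guaranteeing uniform lower bounds. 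I do not anticipate any real obstacle; the lemma is purely a topological-continuity statement feeding into Lemma \ref{700}.
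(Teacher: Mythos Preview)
The paper does not supply a proof of this lemma; it is stated as an elementary fact just before the proof of Lemma~\ref{700}, which invokes it. Your argument for the first assertion is correct and is the standard one: compactness of $\Delta$ gives a uniform lower bound on $\delta(y)$, eventual closeness of $\delta_n(y)$ to $\delta(y)$ propagates this bound, and the displayed decomposition of the difference of quotients finishes the job.

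For the ``Moreover'' part your core argument is also correct, but you get tangled at the end by what is evidently a typo in the paper's statement: the conclusion reads ``$1\le j\le l$'' while the numerator is $\delta(g_j)$, and there are only $m$ elements $g_1,\dots,g_m$. From the way the lemma is actually used (in the proof of Lemma~\ref{700} only $j=1,\dots,m$ appears, and in Lemma~\ref{lem:Feb2020231AM} it is applied with $g_j=e_j$ and $m=l$), the index range should be $1\le j\le m$. With that reading, your argument is complete: closeness on $g_1,\dots,g_m$ forces closeness on $u_g$ since $u_g$ is a $\Z$-combination of the $g_k$, the uniform lower bound on $\delta(u_g)$ transfers to $\delta'(u_g)$, and the quotient estimate follows. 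Your attempt to also control quotients with $e_j$ in the numerator is unnecessary, and the ``harmlessly enlarge the generating set'' maneuver you propose for that case is not legitimate as written (it changes the hypothesis) --- but since the $e_j$ case is an artifact of the typo, this does not affect the correctness of your proof.
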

 
 \begin{proof}[Proof of Lemma \ref{700}] 
 By Lemma 2.11 of [34], let $\tilde{\lambda}_G : \mathbb{Z}^l \rightarrow
{{\Aff}}(\Delta)$ be a strictly positive homomorphism (i.e., $\tilde{\lambda}_G(e_j) > 0$ for all $1 \leq j \leq l$) such that
$$\tilde{\lambda}_G |_G = \lambda_G.$$

By Lemma \ref{lem:First}, for each $1 \leq j \leq m$, the map
$$Hom(\mathbb{Z}^l, {{\Aff}}(\Delta))_{++} \rightarrow C(\Delta) : 
\dt \mapsto \frac{\dt(g_j)}{\dt(u_g)}$$ 
is continuous at the point $\dt = \tilde{\lambda}_G$.
Put
\beq\label{92-nn-10}
\eta_1=\inf\{\td \lambda_G(e_j)(s): s\in \Delta  \makebox{ and  } 1 \leq j \leq m   
 \}>0.
\eneq
{{Choose}} $0<\epsilon_1<\eta_1/10$ {{(recall that $7/8<r<1$)}}  so that for all 
$\dt \in {{\rm Hom}}(\mathbb{Z}^l, {{\Aff}}(\Delta))_{++}$, 
if $\| \delta(g_j) - \tilde{\lambda}_G(g_j) \| < \epsilon_1$ for all
$1 \leq j \leq m$, then 
\begin{equation}  \label{equ:Jan2820231AM}
\frac{{\dt}(g_j)(\tau)}{\dt(u_g)(\tau)} 
> r \frac{\tilde{\lambda}_G(g_j)(\tau)}{\tilde{\lambda}_G(u_g)(\tau)}
\end{equation}  
for all $1 \leq j \leq m$ and all $\tau \in \Delta$.
Choose $M \geq 1$ such that 
\begin{equation} \label{equ:Jan2820232AM}
\frac{1}{M}\max\left\{ \| \beta(\iota(e_k)) \|, 
\| \beta( \iota(g_j) ) \|, \| \beta(\iota(u_g)) \| \right\} 
< \min\left\{ \frac{\epsilon_1}{10}, 
\frac{\epsilon}{10} \right\} 
\end{equation}
for all $1 \leq k \leq l$ and $1 \leq j \leq m$. 

{{By Lemma \ref{lem:First},}} for each $1 \leq k \leq l_1$,
since the map 
$${{\rm Hom}}(\mathbb{Z}^{l_1}, {{\Aff}}(\Delta))_{++} \rightarrow C(\Delta) 
: {{\dt}} \mapsto \frac{\dt(e'_k)}{\dt (\iota(u_g))}$$
is continuous at the point ${{\dt=}}\frac{1}{M}\beta$, 
we can choose $\epsilon_2 > 0$ so that for all $\dt
\in {\rm Hom}(\mathbb{Z}^{l_1}, {{\Aff}}(\Delta))_{++}$, 
if $\| {{\dt}}(e'_k) - \frac{1}{M}\beta(e'_k) \| < \epsilon_2$ for all
$1 \leq k \leq l_1$, then 
\begin{equation} \label{equ:Jan2820233AM}
\frac{\dt(e'_k)(\tau)}{\dt(\iota(u_g))(\tau)} > 
r \frac{\beta(e'_k)(\tau)}{\beta(\iota(u_g))(\tau)}
\end{equation}
for all $1 \leq k \leq l_1$ and 
\begin{equation}
\label{equ:Jan2820234AM}
\max\left\{ \| \dt(\iota(e_k)) \|, \| \dt(\iota(g_j)) \|,
\| \dt(\iota(u_g)) \| \right\}
 < \min \left\{\frac{\epsilon_1}{10}, 
\frac{\epsilon}{10} \right\}
\end{equation}
for all $1 \leq k \leq l$  and  
$1 \leq j \leq m$. 

Since $\mathbb{D}$ is an ordered dense subgroup of ${{\Aff}}(\Delta)$, 
{{$K\D$ is also dense in $\Aff(\Delta).$ So we may}}
choose a strictly positive homomorphism  
$\widetilde{\beta} : \mathbb{Z}^{l_1} \rightarrow K \mathbb{D}$
so that  
\beq\label{92-n-1}
{{0<}}\widetilde{\beta}(e'_k)(\tau) < \beta(e'_k)(\tau){{\rforal \tau\in \Delta}}
\eneq
and
$$\| (\beta(e'_k) - \widetilde{\beta}(e'_k)) - \frac{1}{M} \beta(e'_k)
\| < \epsilon_2$$
for all  
$1 \leq k \leq l_1.$  
Hence, if we define $$\overline{\beta} := \beta - \widetilde{\beta},$$
then 
$\overline{\beta} : \mathbb{Z}^{l_1} \rightarrow {{\Aff}}(\Delta)$ is a 
strictly positive homomorphism {{(see \eqref{92-n-1}),}} and by  
(\ref{equ:Jan2820233AM}) and (\ref{equ:Jan2820234AM}),
 \begin{equation}
\label{equ:Jan2820235AM}
\frac{\overline{\beta}(e'_k)(\tau)}{\overline{\beta}(\iota(u_g))(\tau)}
>  r \frac{\beta(e'_k)(\tau)}{\beta(\iota(u_g))(\tau)}
\end{equation}  
{{for all $\tau\in \Delta$ (hence \eqref{700-T-11} holds), and}} $1 \leq k \leq l_1,$
and
\begin{equation}  \label{equ:Jan282023-1AM}
\max\{ \| \overline{\beta}(\iota(e_k)) \|,  
\| \overline{\beta}(\iota(g_j)) \|, \| \overline{\beta}(\iota(u_g)) \|
\} < \min\left\{ \frac{\epsilon_1}{10},
\frac{\epsilon}{10} \right\}
\end{equation}  
for all $1 \leq k \leq l$
and $1 \leq j \leq m$.
In particular,
$$
\| \overline{\beta}(\iota(u_g)) \| < \epsilon\andeqn \|\overline\bt(\iota(e_j))\|<\ep,\,\,\, 1\le j\le l.
$$
{{(So the first two  inequalities of \eqref{92-nn-3} hold.)}}
Moreover, by the definitions of $\overline{\beta}$,  
$\widetilde{\beta}$ and $\beta$, we immediately have that
\begin{equation}
\label{equ:Jan282023-2AM}
\Pi \circ \overline{\beta} = \Pi \circ \beta
\makebox{  and  } \Pi \circ \overline{\beta} \circ \iota |_G
= \Pi \circ \lambda_G.
\end{equation}
{{(So \eqref{92-nn-2} holds.)}}
In addition, by   {{\eqref{92-nn-10}}}  
{{and \eqref{equ:Jan282023-1AM},}}
$\gamma: = \widetilde{\lambda}_G - \overline{\beta} \circ \iota : \mathbb{Z}^l
\rightarrow {{\Aff(\Delta)}}$
is a strictly positive homomorphism, and by (\ref{equ:Jan2820231AM})
and (\ref{equ:Jan282023-1AM}),
we must have that 
$$\frac{\gamma(g_j)(\tau)}{\gamma(u_g)(\tau)} > r 
\frac{\widetilde{\lambda}_G(g_j)(\tau)}{\widetilde{\lambda}_G(u_g)(\tau)}$$
for all $1 \leq j \leq m$ and $\tau \in \Delta$.

Since $K \mathbb{D}$ is (norm) dense in ${{\Aff}}(\Delta)$, and by Lemma
\ref{lem:First},  
we can find a strictly positive homomorphism
${{\wtd \lambda_{\mathbb{D}}}} : \mathbb{Z}^l \rightarrow K \mathbb{D}$ such that
\beq
0<\wtd \lambda_{\mathbb{D}}(e_j)(\tau) < \gamma(e_j)(\tau) {{\rforal \tau\in \Delta, \,\,1\le j\le l,\andeqn}}
\eneq
$$\frac{{{\wtd \lambda_{\mathbb{D}}}}(g_j)(\tau)}{\wtd \lambda_{\mathbb{D}}(u_g)(\tau)} > r
\frac{\widetilde{\lambda}_G(g_j)(\tau)}{\widetilde{\lambda}_G(u_g)(\tau)}$$ 
for all $1 \leq j \leq m$ and $\tau \in \Delta$ {{(so \eqref{700-T-10} holds),}} and
if we define ${{\alpha_1}} := \gamma - {{\td\lambda_{\mathbb{D}}}}: 
\mathbb{Z}^l \rightarrow {{\Aff}}(\Delta)$, then ${{\alpha_1}}$ is a strictly
positive homomorphism such that 
\beq
{{\|\af_1(g_j)\|<\ep\andeqn}}\,\,\,\| {{\alpha_1}}(u_g) \| < \epsilon.
\eneq
Define $\alpha := \alpha_1|_G.$  {{Then the last two  inequalities in
 \eqref{92-nn-3} holds.}}
{{Moreover,}}
\beq
\widetilde{\lambda}_{\mathbb{D}} |_G + \alpha + \overline{\beta} \circ
\iota|_{G} = \lambda_G.
\eneq
{{It follows that \eqref{92-nn-1} holds.}}
Moreover, by (\ref{equ:Jan282023-2AM}), the range of $\alpha$ must be in 
$\mathbb{D}$. 
\end{proof}

\begin{lem} \label{lem:Feb2020231AM} 
Let $e_1, ..., e_l \in \Z^l$ be the standard basis, which also 
generates $\Z^l_+$ (as a semigroup).
Let $e'_1, ..., e'_{l_1} \in \Z^{l_1}$ be the standard basis, which again
also generates $\Z^{l_1}_+$ (as a semigroup).
Let $\iota: \Z^l \rightarrow \Z^{l_1}$ be a strictly positive homomorphism.
Let {{$u_g \in \Z^l_{++}$}} be a nonzero positive element.

Let $\Delta$ be a metrizable Choquet simplex, and    
suppose that $\D \subset \Aff(\Delta)$ is a dense ordered subgroup 
(with the strict order).
Let $\lambda : \Z^l \rightarrow \Aff(\Delta)$ be a strictly positive homomorphism
and $\beta : \Z^{l_1} \rightarrow \Aff(\Delta)$ a strictly positive 
homomorphism for which 
$$\Pi \circ \beta \circ \iota(g) = \Pi \circ \lambda (g) \makebox{  for all  }
g \in \Z^l,$$
where $\Pi : \Aff(\Delta) \rightarrow \Aff(\Delta)/\D$ is the quotient map.
Let $\epsilon > 0$ and $0 < r < 1$ be given.

Then  
there are strictly positive homomorphisms
$\lambda_{\D} : \Z^l \rightarrow \D$ and
$\overline{\beta} : \Z^{l_1} \rightarrow \Aff(\Delta)$ such that 
\begin{equation}\label{add225-1}
\lambda_{\D} + \overline{\beta} \circ \iota = \lambda,
\end{equation}
\begin{equation}\label{add225-2}
\Pi \circ \overline{\beta} = \Pi \circ \beta,
\end{equation}
\begin{equation}\label{add225-3}
\| \overline{\beta} \circ \iota (u_g) \| < \epsilon,
\end{equation}
\begin{equation}
\label{equ:1}
\frac{{\lambda}_{\D}(e_j)(\tau)}{\lambda_{\D}(u_g)(\tau)}  
> r \frac{\lambda(e_j)(\tau)}{\lambda(u_g)} 
\makebox{  for all } \tau \in \Delta, 1 \leq j \leq l, \makebox{  and  }
\end{equation} 
\begin{equation}\label{add225-5}
\frac{\overline{\beta}(e'_k)(\tau)}{\overline{\beta}(\iota(u_g))(\tau)}
> r \frac{\beta(e'_k)(\tau)}{\beta(\iota(u_g))(\tau)}
\makebox{  for all } \tau \in \Delta, 1 \leq k \leq l_1.  
\end{equation}
\end{lem}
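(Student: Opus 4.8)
The statement to prove, Lemma \ref{lem:Feb2020231AM}, is the special case of Lemma \ref{700} in which the subgroup $G$ is taken to be all of $\Z^l$ (so $m=l$ and $g_j=e_j$), with the additional bookkeeping that we no longer need to track the map $\af$ on a proper subgroup — since $G=\Z^l$, the decomposition \eqref{92-nn-1} becomes $\td\lambda_\D + \bar\bt\circ\iota = \lambda$ with no extra summand. Concretely, in the notation of Lemma \ref{700} one puts $G=\Z^l$, $g_i=e_i$ for $1\le i\le l$, leaves $u_g$ arbitrary positive, takes $K=1$ (or any fixed integer, since the conclusion here only asks for codomain $\D$, not $K\D$), and observes that the $\af$ produced there has the property that $\af|_G$ combined with $\wtd\lambda_\D$ still lands in $\D$, so one can simply absorb $\af$ into $\lambda_\D$. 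Thus the plan is: \emph{invoke Lemma \ref{700} and then fold $\af$ into $\lambda_\D$.}

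In more detail, I would first apply Lemma \ref{700} with the data $(G,\{g_i\}, u_g, \D, \lambda_G, \iota, \bt)$ set equal to $(\Z^l, \{e_1,\dots,e_l\}, u_g, \D, \lambda, \iota, \beta)$, $K=1$, and with $r'$ chosen slightly larger than the given $r$ (say $r' = (1+r)/2 \in (r,1)$, which still satisfies $r'>7/8$ after further enlarging $r$ if necessary — or more simply just assume WLOG $r>7/8$ as in the hypotheses of \ref{700}, and if $r\le 7/8$ the statement for larger $r'$ is stronger so it implies the one for $r$), and $\ep' = \ep/2$. This yields strictly positive homomorphisms $\wtd\lambda_\D : \Z^l \to \D$ and $\bar\bt : \Z^{l_1}\to\Aff(\Delta)$, and a positive homomorphism $\af:\Z^l\to\D$, satisfying \eqref{92-nn-1}--\eqref{700-T-11} of Lemma \ref{700}: in particular $\wtd\lambda_\D + \bar\bt\circ\iota + \af = \lambda$, $\Pi\circ\bar\bt = \Pi\circ\beta$, $\|\bar\bt\circ\iota(u_g)\| < \ep/2$, and the ratio bounds $\wtd\lambda_\D(e_j)/\wtd\lambda_\D(u_g) \ge r' \lambda(e_j)/\lambda(u_g)$ and $\bar\bt(e'_k)/\bar\bt(\iota(u_g)) \ge r' \beta(e'_k)/\beta(\iota(u_g))$. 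Then I would set $\lambda_\D := \wtd\lambda_\D + \af$. Since both $\wtd\lambda_\D$ and $\af$ map into $\D$ and $\af$ is positive, $\lambda_\D$ is a strictly positive homomorphism $\Z^l\to\D$, and \eqref{add225-1} holds: $\lambda_\D + \bar\bt\circ\iota = \lambda$. Equations \eqref{add225-2} and \eqref{add225-3} are immediate from the corresponding conclusions of \ref{700}.

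The only point requiring a small argument is \eqref{equ:1}, the ratio bound for the \emph{new} $\lambda_\D$: we have $\wtd\lambda_\D(e_j)/\wtd\lambda_\D(u_g) \ge r'\lambda(e_j)/\lambda(u_g)$, but adding the positive term $\af$ to both numerator and denominator could in principle move the ratio. Here I would use that $\af$ can be made uniformly small in norm (the $\|\af(g_j)\|<\ep'$, $\|\af(u_g)\|<\ep'$ conclusions of \ref{700}), together with the continuity statement of Lemma \ref{lem:First}: the map $\dt\mapsto \dt(e_j)/\dt(u_g)$ is continuous at $\wtd\lambda_\D$ in the pointwise-norm topology on strictly positive homomorphisms, so by choosing $\ep'$ (hence the $\af$-perturbation) small enough we get $\lambda_\D(e_j)/\lambda_\D(u_g) \ge r\,\lambda(e_j)/\lambda(u_g)$ from the strict inequality with $r'>r$. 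To make this rigorous one should run the Lemma \ref{700} application with $\ep'$ not fixed in advance but chosen after inspecting $\wtd\lambda_\D$ — but this is not circular, because one can instead note that the proof of Lemma \ref{700} itself already gives the flexibility to make $\af$ as small as desired relative to any modulus of continuity; alternatively, I would simply strengthen the bound obtained from \ref{700} by requiring $r''>r'>r$ there and absorbing the loss. The step I expect to be the main (though minor) obstacle is precisely this transfer of the ratio estimate across the $+\af$ correction; everything else is a direct specialization. Finally, \eqref{add225-5} is literally \eqref{700-T-11} with $r'\ge r$, so it holds a fortiori.
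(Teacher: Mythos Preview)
Your proposal is correct and follows essentially the same approach as the paper: apply Lemma \ref{700} with $G=\Z^l$, $g_j=e_j$, then set $\lambda_\D:=\td\lambda_\D+\af$. The one place where the paper is cleaner is in verifying \eqref{equ:1}: rather than invoking continuity at $\td\lambda_\D$ (which, as you note, risks circularity since $\td\lambda_\D$ depends on the $\ep'$ you feed into Lemma~\ref{700}), the paper uses continuity at the \emph{fixed} map $\lambda$. Concretely, one first picks $r_0\in(r,1)$ and, via Lemma~\ref{lem:First}, an $\ep_0>0$ so that any strictly positive $\dt$ with $\|\dt(e_j)-\lambda(e_j)\|<2\ep_0$ satisfies $\dt(e_j)/\dt(u_g)>r_0\,\lambda(e_j)/\lambda(u_g)$; then one applies Lemma~\ref{700} with $\ep_0$ in place of $\ep$ and $r_0$ in place of $r$. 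Since $\lambda_\D=\td\lambda_\D+\af=\lambda-\bar\bt\circ\iota$ and \eqref{92-nn-3} gives $\|\bar\bt\circ\iota(e_j)\|<\ep_0$, one has $\|\lambda_\D(e_j)-\lambda(e_j)\|<2\ep_0$, so \eqref{equ:1} follows with no circularity.
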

  
\begin{proof}
{{We apply Lemma \ref{700}.
Fix $l, l_1, \iota, u_g, \Delta, \D, \lambda, \bt, \ep, r$ as given.
Let $G=\Z^l.$ We also let $g_j=e_j,$ $1\le j\le l=m.$
Note that the map $\dt\mapsto {\dt(e_j)\over{\dt(u_g)}}$ is continuous at $\lambda.$
Choose $1>r_0>r.$  There is $0<\ep_0<\ep/2$ such that, if 
$\dt\in  {\rm Hom}(\Z^l, \Aff(\Delta))_{++}\setminus \{0\}$ and 
$\|\dt(e_j)-\lambda(e_j)\|<2\ep_0,$ $j=1,2,...,l,$  then 
\beq
{\dt(e_j)(\tau)\over{\dt(u_g)(\tau)}}>r_0 {\lambda(e_j)(\tau)\over{\lambda(u_g)(\tau)}}
\rforal \tau\in \Delta,\,\,\, 1\le j\le l.
\eneq
By applying {{Lemma \ref{700}}}
we obtain $\tilde\lambda_\D,$  ${\bar \bt}$   {{and $\af$}} as in 
{{Lemma \ref{700}}}
such that  \eqref{92-nn-1} to \eqref{700-T-11} hold 
but for $\ep_0$ (in place of $\ep$) and $r_0$ (in place of $r$), and $g_j=e_j$ ($1\le j\le l$).
It follows that \eqref{add225-2}, \eqref{add225-3}  and \eqref{add225-5} hold. 

Next define $\lambda_\D=\tilde \lambda_\D+\af: \Z^l \to \D.$  Then
\eqref{add225-1} holds, and  \eqref{92-nn-3}
(with  $\ep_0$ in place of $\ep$) implies 
that
\beq
\|\lambda_\D(e_j)-\lambda(e_j)\|<2\ep_0,\,\,\, 1\le j\le l.
\eneq
By the choice of $\ep_0$ (and $r_0$), we also have 
\beq
\frac{{\lambda}_{\D}(e_j)(\tau)}{\lambda_{\D}(u_g)(\tau)}  
> r \frac{\lambda(e_j)(\tau)}{\lambda(u_g)} 
\makebox{  for all } \tau \in \Delta, 1 \leq j \leq l.
\eneq}} 
\end{proof}

\begin{prop}[Lemma 2.11 of \cite{Linsubh01}]\label{Pgexten}
Let $G\subset \Z^m\subset \R^m$ be a finitely generated  ordered subgroup
with order unit $1_g=(1,1,...,1),$ $T$ be a metrizable 
Choquet simplex
and $\phi: G\to \Aff(T)$   an order preserving \hm\, such that 
$\phi(g)(t)>0$ for all $g\in G_+ \setminus \{ 0 \}$ and $t\in T.$
Then there exists an order preserving 
\hm\, $\td \phi: \R^m\to \Aff(T)$ 
such that $\td \phi|_G=\phi$ and 
$\td \phi(x)(t)>0$  for all $x\in \R^m_+\setminus \{0\}.$ 
\end{prop}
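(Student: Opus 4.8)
\textbf{Proof proposal for Proposition \ref{Pgexten}.}

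The plan is to reduce the extension problem from the finitely generated ordered subgroup $G \subseteq \mathbb{Z}^m$ to a two-step procedure: first extend across the torsion-free quotient issues by passing to a suitable splitting, then invoke a Hahn--Banach-type argument to extend linear functionals while preserving strict positivity. First I would observe that since $G$ is a finitely generated ordered subgroup of $\mathbb{Z}^m$ containing the order unit $1_g = (1,1,\dots,1)$, and since $\mathbb{Z}^m$ is free abelian, $G$ is itself free abelian of some rank $k \leq m$; moreover $\mathbb{Z}^m/G$ need not be free, but after tensoring with $\mathbb{R}$ we get that $G \otimes \mathbb{R} =: V$ is a linear subspace of $\mathbb{R}^m$ of dimension $k$, and $G$ spans $V$.

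The key step is to extend $\phi$ first to a homomorphism on $V \cap \mathbb{Q}^m$ (equivalently, to the divisible hull of $G$ inside $\mathbb{R}^m$) and then to all of $V$ by continuity, and finally to extend from $V$ to $\mathbb{R}^m$. For the extension from $G$ to $V$: since $G$ is free of rank $k$ and spans $V$, pick a basis $h_1, \dots, h_k$ of $G$; these are linearly independent over $\mathbb{R}$, so they form a basis of $V$, and defining $\td\phi_V$ on $V$ by $\td\phi_V(\sum r_i h_i) = \sum r_i \phi(h_i)$ gives a well-defined $\mathbb{R}$-linear map $V \to \Aff(T)$ agreeing with $\phi$ on $G$. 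The subtlety is strict positivity: $\td\phi_V$ need not send every element of $V_+ \setminus \{0\} := (V \cap \mathbb{R}^m_+) \setminus \{0\}$ to a strictly positive function, because $V_+$ may be strictly larger (in the relevant sense) than the cone generated by $G_+$. However — and this is where I would be careful — the statement only requires $\td\phi$ to be strictly positive as a map out of $\mathbb{R}^m$ with the cone $\mathbb{R}^m_+$; I must produce an extension to all of $\mathbb{R}^m$ that is strictly positive on $\mathbb{R}^m_+ \setminus \{0\}$, and the freedom in extending from $V$ to $\mathbb{R}^m$ is exactly what makes this possible.

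The main obstacle, therefore, is the extension from $V$ to $\mathbb{R}^m$ together with the strict positivity requirement on the full positive cone. Here I would argue as follows: the standard basis vectors $e_1, \dots, e_m$ of $\mathbb{R}^m$ project onto $V$ and onto a complement $W$; I want to choose $\td\phi(e_j)$ for each $j$ so that $\td\phi(e_j) > 0$ in $\Aff(T)$ (i.e.\ $\td\phi(e_j)(t) > 0$ for all $t \in T$) and so that $\td\phi$ restricted to $V$ agrees with $\td\phi_V$. Since $1_g = \sum_j e_j \in G_+$, we have $\td\phi_V(1_g) = \phi(1_g) = \sum_j \td\phi(e_j)$ is a strictly positive element of $\Aff(T)$ (bounded below by some $\varepsilon > 0$ by compactness of $T$). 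The constraint that $\td\phi$ agrees with $\td\phi_V$ on $V$ means the values $\td\phi(e_1), \dots, \td\phi(e_m)$ must lie in a fixed affine subspace $\mathcal{A}$ of $\Aff(T)^m$ determined by the linear relations among the $e_j$ that hold in $V$. I would show $\mathcal{A}$ meets the open cone $\Aff_+(T)^m$: pick any lift of $\td\phi_V$ (e.g.\ via the basis $h_1,\dots,h_k$ completed to a basis of $\mathbb{R}^m$, setting the new basis vectors to map to large constant functions), then perturb within $\mathcal{A}$ by adding, to each $\td\phi(e_j)$, the value $\td\phi_V(\pi_V(e_j))$ rescaled plus a large positive constant absorbed appropriately — concretely, since for each $j$ the constant function $1_T$ can be written using $1_g$ and we can add any element of the form $\td\phi_V(v)$ for $v \in V$ with $\sum$-type constraints, one checks there is enough room to push all coordinates strictly positive. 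Alternatively, and more cleanly, I would cite Lemma 2.11 of \cite{Linsubh01} directly, which is exactly this statement; since the excerpt labels Proposition \ref{Pgexten} as ``[Lemma 2.11 of \cite{Linsubh01}]'', the proof is simply a reference, and the above is the sketch of why it holds. I expect the write-up to consist of the reference plus, if a self-contained argument is wanted, the free-abelian reduction and the affine-subspace-meets-open-cone perturbation argument, with the compactness of $T$ ensuring uniform positivity throughout.
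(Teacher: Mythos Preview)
Your proposal takes a different and more circuitous route than the paper, and it contains a logical slip along the way.

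The paper's proof is two lines: it invokes Lemma~2.11 of \cite{Linsubh01} to obtain a strictly positive extension $\psi:\Z^m\to\Aff(T)$ of $\phi$ (i.e.\ $\psi(e_j)(t)>0$ for all $j$ and $t$), and then simply sets $\td\phi(r_1,\dots,r_m)=\sum_j r_j\psi(e_j)$. So Lemma~2.11 is \emph{not} the $\R^m$-statement you are asked to prove; it is the $\Z^m$-statement, and passing from $\Z^m$ to $\R^m$ is the trivial $\R$-linear extension. Your closing remark (``cite Lemma~2.11 directly, which is exactly this statement'') slightly misreads the situation.

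Your alternative route via $V=G\otimes\R$ has a genuine error of logic. You say $\td\phi_V$ ``need not'' be strictly positive on $V_+\setminus\{0\}$, but then claim the freedom in extending from $V$ to $\R^m$ repairs this. That is impossible: any extension $\td\phi$ of $\td\phi_V$ restricts to $\td\phi_V$ on $V$, and $V_+\setminus\{0\}\subset\R^m_+\setminus\{0\}$, so if $\td\phi_V$ failed strict positivity on $V_+$ then no extension could be strictly positive on $\R^m_+$. (In fact your worry is unfounded: $\td\phi_V$ \emph{is} automatically strictly positive on $V_+\setminus\{0\}$, because every nonzero face of the rational cone $V_+$ contains a nonzero point of $G_+$, on which $\phi$ is strictly positive. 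But you did not prove this, and your attempted workaround is backwards.) Finally, your Step~2 --- extending from $V$ to $\R^m$ with $\td\phi(e_j)>0$ --- is exactly where the substance lies, and your ``affine-subspace-meets-open-cone'' sketch is not an argument; it is essentially the content of Lemma~2.11 that you have deferred rather than proved.
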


\begin{proof}
By Lemma 2.11 of \cite{Linsubh01},
there exists $\psi: \Z^m\to \Aff(T)$ such 
that $\psi|_G=\phi$ and $\psi(e_j)(t)>0$ for all $t\in T$ 
($1\le j\le m$). 
For $(r_1,r_2,...,r_n)\in \R^m,$ we define $\td \phi(r_j)=r_j\psi(e_j).$
Then $\td \phi$ meets the requirement.
\end{proof}

\begin{lem}
\label{lem:RieszEmbed}
Let $G$ be a simple Riesz group with scale $e\in G_+$ (see definitions and
results in
[DavidsonBook] chapter IV, especially Section IV.6).
Suppose that $G$ can be expressed as an inductive limit 
of ordered groups 
$G = \lim_{n \rightarrow \infty} G_n$,
where for all $n \geq 1$, $(G_n, (G_n)_+) = (\Z^{l(n)}, \Z^{l(n)}_+)$,
and 
the connecting map $\iota_{n,n+1} : G_n \rightarrow G_{n+1}$ is 
strictly positive.
Suppose also that for all $n \geq 1$, ${{g_n}}
\in (G_n)_+$ is an order unit for
$G_n$ such that $\iota_{n,n+1}({{g_n}}) =
 {{g_{n+1}}}$ and 
$\iota_{n, \infty}({{g_n}}) 
= e$, where $\iota_{n, \infty} : G_n \rightarrow G$
is the induced morphism.

Let $\Delta$ be any metrizable Choquet simplex, and let
 $\D \subset \Aff(\Delta)$ be any dense ordered (with the strict order) subgroup
such that $1_{\Delta} \in \D$.

Let $0 < r_0 < 1$ be given.

Suppose that 
$\Lambda : G \rightarrow \Aff(\Delta)$ is a unital strictly positive homomorphism.
(Recall that here, unital means that $\Lambda(e) = 1_{\Delta}$.)

Then there exist a  sequence ${{\{ \widehat{g}_n \}}}$
(with $\widehat{g}_0 = 0$)
in $\D_+ \setminus \{ 0 \}$ 
and a sequence $\lambda_n : G_{n} \rightarrow \D$ of strictly positive
homomorphisms with $\lambda_n ({{g_n}}
) = {{\widehat{g}_n}}$
($n \geq 1$) such that
\begin{equation}\label{701-0}
\sum_{n=0}^{\infty} {{\widehat{g}_n}} 
= 1_{\Delta} \makebox{  where the sum converges
uniformly over } \Delta, \makebox{  i.e., in uniform norm on  }\Aff(\Delta),
\end{equation}
and for all $m \geq 1$,
\begin{equation}\label{701-1}
\Lambda(\iota_{m, \infty}(x)) - \sum_{n=m}^{\infty} \lambda_n(\iota_{m,n}(x)) \in \D \makebox{ for all }
x \in {G_m}   
\end{equation}
(in the above, the sum $\sum_{n=m}^{\infty} \lambda_n(\iota_{m,n}(x))$ converges uniformly
on $\Delta$, i.e., in the Banach space $(\Aff(\Delta), \|. \|)$ where
the norm is uniform norm; also, for all $n \geq m$, $\iota_{m,n} : G_m
\rightarrow G_n$ is the induced connecting morphism),  
\begin{equation}\label{701-2}
\Lambda(\iota_{1,\infty}(x)) = \sum_{n=1}^{\infty} \lambda_n(\iota_{1,n}(x))
\makebox{  for all  } x \in G_{1},
\end{equation}
\begin{equation} \label{equ:Feb20202310AM}
\frac{\lambda_n(x)(\tau)}{\lambda_n({{g_n}}
)(\tau)} > r_0 \Lambda({{\iota_{n, \infty}}}(x))(\tau)
\makebox{  for all  } \tau \in \Delta, \makebox{ }x \in (G_{n})_+ 
\makebox{  and  } n \geq 1.
\end{equation}
\end{lem}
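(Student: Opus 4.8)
\textbf{Proof plan for Lemma \ref{lem:RieszEmbed}.}

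The plan is to build the data $(\widehat{g}_n, \lambda_n)$ inductively, at each stage peeling off a ``small'' strictly positive $\mathbb{D}$-valued piece $\lambda_n$ from (a finite-stage approximation of) $\Lambda$ and carrying the remainder forward. The main device is Lemma \ref{lem:Feb2020231AM} (equivalently Lemma \ref{700}): given a strictly positive $\lambda$ on $\mathbb{Z}^l$, a strictly positive $\beta$ on $\mathbb{Z}^{l_1}$, and $\iota \colon \mathbb{Z}^l \to \mathbb{Z}^{l_1}$ strictly positive with $\Pi\circ\beta\circ\iota = \Pi\circ\lambda$ mod $\mathbb{D}$, one can split $\lambda = \lambda_{\mathbb{D}} + \overline{\beta}\circ\iota$ with $\lambda_{\mathbb{D}}$ landing in $\mathbb{D}$, $\overline{\beta}$ strictly positive, $\|\overline\beta\circ\iota(u_g)\|$ as small as desired, and the ratio-comparison inequalities \eqref{equ:1}, \eqref{add225-5} preserved up to a factor $r<1$. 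So first I would fix $r_0 < 1$ and choose a sequence $r_0 < r_1 < r_2 < \cdots < 1$ and a summable sequence $\epsilon_n > 0$ controlling the sizes $\|\widehat g_n\|$ so that $\sum \epsilon_n < \infty$ and the telescoping products of the $r_k$'s stay above $r_0$.

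The inductive step: at stage $n$ I have a strictly positive homomorphism $\beta_n \colon G_n = \mathbb{Z}^{l(n)} \to \mathrm{Aff}(\Delta)$ with $\beta_n(g_n) = 1_\Delta - \sum_{k<n}\widehat g_k$ (for $n=1$ take $\beta_1 = \Lambda\circ\iota_{1,\infty}$, using that $\Lambda$ is unital), and satisfying the congruence $\Pi\circ\beta_n = \Pi\circ(\Lambda\circ\iota_{n,\infty})$ as well as the ratio lower bound $\beta_n(x)(\tau)/\beta_n(g_n)(\tau) > (\text{product of earlier }r_k)\cdot\Lambda(\iota_{n,\infty}(x))(\tau)$. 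Apply Lemma \ref{lem:Feb2020231AM} with $l = l(n)$, $l_1 = l(n+1)$, $\iota = \iota_{n,n+1}$, $u_g = g_n$, $\lambda = \beta_n$, $\beta = $ (a strictly positive homomorphism on $G_{n+1}$ lifting $\Lambda\circ\iota_{n+1,\infty}$; existence via density of $\mathbb{D}$ and Proposition \ref{Pgexten}/Lemma 2.11 of \cite{Linsubh01} together with the congruence inherited from $\beta_n$ through $\iota_{n,n+1}$), parameters $\epsilon_n$ and $r_n$. This yields $\beta_n = \lambda_n + \beta_{n+1}\circ\iota_{n,n+1}$ with $\lambda_n \colon G_n \to \mathbb{D}$ strictly positive; set $\widehat g_n := \lambda_n(g_n) \in \mathbb{D}_+\setminus\{0\}$ and note $\beta_{n+1}(g_{n+1}) = \beta_{n+1}(\iota_{n,n+1}(g_n)) = \beta_n(g_n) - \widehat g_n = 1_\Delta - \sum_{k\le n}\widehat g_k$, completing the step. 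The congruence $\Pi\circ\beta_{n+1} = \Pi\circ(\Lambda\circ\iota_{n+1,\infty})$ follows from \eqref{add225-2}, and the ratio bound for $\beta_{n+1}$ follows from \eqref{add225-5} and the choice of $r_n$.

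The verification of the conclusions then runs as follows. Equation \eqref{701-0} is immediate since $1_\Delta - \sum_{k\le n}\widehat g_k = \beta_{n+1}(g_{n+1})$ has norm $\le \epsilon_n \to 0$ by \eqref{add225-3}, and $\widehat g_n \ge 0$, so the series converges in uniform norm to $1_\Delta$. For \eqref{701-1}, iterate the splitting: $\beta_m\circ(\text{stuff}) = \sum_{n=m}^{N}\lambda_n\circ\iota_{m,n} + \beta_{N+1}\circ\iota_{m,N+1}$, and the tail $\beta_{N+1}\circ\iota_{m,N+1}(x) \to 0$ (dominated, via strict positivity and the scale, by a multiple of $\|\beta_{N+1}(g_{N+1})\| \le \epsilon_N$), so $\sum_{n\ge m}\lambda_n(\iota_{m,n}(x))$ converges and equals $\Lambda(\iota_{m,\infty}(x)) - (\text{the remaining }\mathbb{D}\text{-valued correction built into } \beta_m)$, which lies in $\mathbb{D}$ by construction; \eqref{701-2} is the case $m=1$ where the correction is zero because $\beta_1 = \Lambda\circ\iota_{1,\infty}$ exactly. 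Finally \eqref{equ:Feb20202310AM} comes from $\lambda_n(x)(\tau)/\lambda_n(g_n)(\tau) > r_n\cdot\beta_n(x)(\tau)/\beta_n(g_n)(\tau)$ (from \eqref{equ:1} applied at stage $n$, noting $e_j$'s generate $(G_n)_+$ and taking positive combinations) combined with the running ratio bound $\beta_n(x)(\tau)/\beta_n(g_n)(\tau) > r_{n-1}r_{n-2}\cdots\Lambda(\iota_{n,\infty}(x))(\tau)$, and the telescoping product staying $> r_0$.

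\textbf{Main obstacle.} The delicate point is not any single application of Lemma \ref{lem:Feb2020231AM} but the bookkeeping that makes the infinite iteration converge while keeping everything strictly positive: one must ensure (i) the lift $\beta$ of $\Lambda\circ\iota_{n+1,\infty}$ chosen at each stage is genuinely strictly positive \emph{and} compatible with $\iota_{n,n+1}$ and the previously chosen $\beta_n$ modulo $\mathbb{D}$ — this uses that $\mathbb{D}$ is dense in the \emph{strict} order and that $G$ is a simple Riesz group so order units behave well under $\iota_{n,n+1}$ — and (ii) that the error series $\sum \epsilon_n$ and the product $\prod r_n$ are arranged in advance so that the ratio bounds do not degrade below $r_0$ and the tails of all the relevant series (both $\sum \widehat g_n$ and, for each fixed $x$, $\sum\lambda_n(\iota_{m,n}(x))$) are uniformly controlled; the latter control requires knowing $\lambda_n(\iota_{m,n}(x)) \le C_x \widehat g_n$ for a constant depending only on $x$ (again via strict positivity: $x \le C_x g_m$ in $G_m$ for some integer $C_x$ since $g_m$ is an order unit), which is where simplicity of the Riesz group $G$ is really used.
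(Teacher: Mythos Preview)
Your approach is essentially the same as the paper's: fix sequences $\{r_k\}$ with $\prod r_k > r_0$ and summable $\{\epsilon_k\}$, set $\beta_1 = \Lambda\circ\iota_{1,\infty}$, and iterate Lemma \ref{lem:Feb2020231AM} to peel off $\lambda_n : G_n \to \mathbb{D}$ while carrying forward a remainder $\overline{\beta}_{n+1}$ on $G_{n+1}$; the verifications of \eqref{701-0}--\eqref{equ:Feb20202310AM} then go exactly as you describe.

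The one place you have overcomplicated things is the choice of the auxiliary $\beta$ at each inductive step, and the ``main obstacle'' you flag around it. You do not need to construct any lift: simply take $\beta = \Lambda\circ\iota_{n+1,\infty}$ itself. This is already a strictly positive homomorphism $G_{n+1}\to\Aff(\Delta)$ (since $\Lambda$ is strictly positive and $\iota_{n+1,\infty}$ sends nonzero positives to nonzero positives in the simple group $G$), and the compatibility $\Pi\circ\beta\circ\iota_{n,n+1} = \Pi\circ\beta_n$ is automatic because $\Lambda\circ\iota_{n+1,\infty}\circ\iota_{n,n+1} = \Lambda\circ\iota_{n,\infty}$ and you already carry the invariant $\Pi\circ\beta_n = \Pi\circ(\Lambda\circ\iota_{n,\infty})$. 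No appeal to Proposition \ref{Pgexten} or density of $\mathbb{D}$ is needed here; those ingredients are used only \emph{inside} the proof of Lemma \ref{lem:Feb2020231AM}, not in its application. With this simplification your obstacle (i) vanishes entirely, and obstacle (ii) is handled exactly as you say, via $x \le C_x g_m$ from the order-unit property.
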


\begin{proof}

Let $\{ r_k \}_{k=1}^{\infty}$ be a sequence in $(0, 1)$ such that
$\prod_{k=1}^{\infty} r_k > r_0$. Let $\{ \epsilon_k \}_{k=1}^{\infty}$
be a sequence in $(0,1)$ such that
$\sum_{k=1}^{\infty} \epsilon_k < 1$.  

For each $n \geq 1$, let $e_{n, 1}, ..., e_{n, l(n)}$ be the standard basis
for $G_n = \Z^{l(n)}$, i.e., $e_{n,k}     :=
(0,0,...,0,1, 0, ..., 0)$, with $1$ in the $k$th entry and $0$ in the other
entries, for all $1 \leq k \leq l(n)$.

For each $n \geq 1$, the map $\Lambda$ induces a map 
$\beta_n : G_n \rightarrow \Aff(\Delta)$, namely,
\begin{equation}\label{3123-1}
\beta_n := \Lambda \circ \iota_{n, \infty}.
\end{equation}  
Note that since $\Lambda$ and $\iota_{n, \infty}$ are unital (maps $e$ to $1_{\Delta}$
and ${{g_n}}$ to $e$ respectively)
strictly positive homomorphisms, $\beta_n$ is a unital (maps $g_n$ to $1_{\Delta}$)
 strictly positive homomorphism,
for all $n$.

We will now 
inductively construct sequences of strictly positive homomorphisms
$\lambda_n : G_n \rightarrow \D$ and  $\overline{\beta}_{n+1} : G_n 
\rightarrow \Aff(\Delta)$, for $n \geq 1$.

\vspace{0.1in}

\noindent Basis step: $n = 1$.  

By definition, $\beta_2 \circ \iota_{1,2} = \beta_1$. Hence, we can   
apply Lemma \ref{lem:Feb2020231AM}
to $\beta_1 : G_1 \rightarrow \Aff(\Delta)$, $\iota_{1,2} : G_1 \rightarrow G_2$,     
$\beta_2 : G_2 \rightarrow \Aff(\Delta)$,  $
{{g_1}}\in G_1$, $\epsilon_1$ and
$r_1$, to get
strictly positive homomorphisms  
$\lambda_1 : G_1 \rightarrow \D$ and $\overline{\beta}_2 : G_2 \rightarrow \Aff(\Delta)$
such that  {{(with $\bar \bt_1=\bt_1$)}}
\begin{equation} \label{equ:Feb2020232AM}
 \lambda_1 + \overline{\beta_2} \circ \iota_{1,2} = 
\overline{\beta}_1,
\end{equation} 
\begin{equation} 
\Pi \circ \overline{\beta_2} = \Pi \circ \beta_2,
\end{equation}
\begin{equation}
\| \overline{\beta_2} \circ \iota_{1,2}(
{{g_1}}) \| < \epsilon_1,
\end{equation}
\begin{equation} \label{equ:Feb20202311AM} 
\frac{\lambda_1(e_{1,k})(\tau)}{\lambda_1(
{{g_1}})(\tau)} > r_1 
\frac{\overline{\beta_1}(e_{1,k})(\tau)}{\overline{\beta}_1
{{(g_1)}}(\tau)}
= r_1 \Lambda\circ \iota_{1,\infty}(e_{1,k})(\tau) \makebox{  for all }
\tau \in \Delta \makebox{  and  } 1 \leq k \leq l(1)
\end{equation}
(recall that $\overline{\beta}_1 := \beta_1$; also, $\beta_1({{g_1}}) = \Lambda(\iota_{1,\infty}({{g_1}})) = \Lambda(e) = 1_{\Delta}$), 
and
\begin{equation} \label{equ:Feb20202312PM}
\frac{\overline{\beta_2}(e_{2,l})(\tau)}{\overline{\beta_2}(\iota_{1,2}({{g_1}}))(\tau)}
> r_1 \frac{\beta_2(e_{2,l})(\tau)}{\beta_2(\iota_{1,2}({{g_1}}))} 
= r_1  \Lambda \circ \iota_{2, \infty}(e_{2,l})(\tau)
\makebox{ for all } \tau \in \Delta \makebox{  and  } 1 \leq l \leq l(2).
\end{equation}
(In the above, 
recall that $\beta_2(\iota_{1,2}({{g_1}})) = \beta_2({{g_2}}) 
= \Lambda(e) = 1_{\Delta}$.)

\vspace*{3ex}
\noindent Induction step: Suppose that we have constructed
$\lambda_m : G_m \rightarrow \D$ and $\overline{\beta}_{m+1}: G_{m+1}
\rightarrow \Aff(\Delta)$ for all $1 \leq m \leq n$  {{which satisfy the following:}}

{{\begin{equation}  \label{630-0}
\lambda_{m}  + \overline{\beta}_{m+1}  \circ \iota_{m, m+1} 
= \overline{\beta}_{m} 
\end{equation}}}
{{
\begin{equation} \label{equ:630-1}
\Pi \circ \overline{\beta}_{m} = \Pi \circ  \beta_{m}
\end{equation}

\begin{equation} \label{equ:630-2}
\| \overline{\beta}_{m+1}(\iota_{m,m+1}(g_{m})) \| < \epsilon_{m},
\end{equation}

\begin{equation} \label{equ:630-3}
\frac{\lambda_{m}(e_{m,k})(\tau)}{\lambda_{m}(g_{m})(\tau)}
> r_{m} \frac{\overline{\beta}_{m}(e_{m,k})(\tau)}{\overline{\beta}_{m} 
(g_{m})(\tau)}  \makebox{  for all  } \tau \in \Delta \makebox{  and  }
1 \leq k \leq l(m)  
\end{equation}
and
\beq \label{equ:630-4}
\frac{\overline{\beta}_{m+1}(e_{m+1,l})(\tau)}{\overline{\beta}_{m+1}(
\iota_{m,m+1}(g_{m}))(\tau)}
> r_{m} \frac{\beta_{m+1}(e_{m+1,l})(\tau)}{\beta_{m+1}(\iota_{m,m+1}
(g_{m}))(\tau)}
= r_{m} \Lambda(\iota_{m+1, \infty}(e_{m+1,l}))(\tau)  
\eneq
${\makebox{  for all } \tau \in \Delta
\makebox{ and } 1 \leq l \leq l(m+1).} $}}

We now construct $\lambda_{n+1} : G_{n+1} \rightarrow \D$ 
and $\overline{\beta}_{n+2} : G_{n+2} \rightarrow \Aff(\Delta)$.

By the definition of the $\beta_m$ {{(see \eqref{3123-1})}},
$$\beta_{n+2} \circ \iota_{n+1, n+2} = \beta_{n+1}.$$
Also, by the induction hypothesis,
$$\Pi \circ \overline{\beta}_{n+1} = \Pi \circ \beta_{n+1}.$$  
Hence,
$$
\Pi \circ \beta_{n+2} \circ \iota_{n+1, n+2} = \Pi \circ \overline{\beta}_{n+1}.
$$
{{Therefore,}} we {{may}} apply Lemma \ref{lem:Feb2020231AM} to  
$\overline{\beta}_{n+1} : G_{n+1} \rightarrow \Aff(\Delta)$, 
$\iota_{n+1, n+2} : G_{n+1} \rightarrow G_{n+2}$, 
$\beta_{n+2} : G_{n+2} \rightarrow \Aff(\Delta)$, 
${{g_{n+1}}} \in G_{n+1}$, $\epsilon_{n+1}$ and $r_{n+1}$
 to get strictly positive
homomorphisms $\lambda_{n+1} : G_{n+1} \rightarrow \D$ 
and $\overline{\beta}_{n+2} : G_{n+2} \rightarrow \Aff(\Delta)$ such 
that 
\begin{equation}  \label{equ:Feb2020233AM}  
\lambda_{n+1}  + \overline{\beta}_{n+2}  \circ \iota_{n+1, n+2} 
= \overline{\beta}_{n+1} 
\end{equation}

\begin{equation} \label{equ:Feb2020239AM}
\Pi \circ \overline{\beta}_{n+2} = \Pi \circ  \beta_{n+2}
\end{equation}

\begin{equation} \label{equ:Feb2020235AM}
\| \overline{\beta}_{n+2}(\iota_{n+1,n+2}({{g_{n+1}}})) \| < \epsilon_{n+1},
\end{equation}

\begin{equation} \label{equ:Feb20202313PM}
\frac{\lambda_{n+1}(e_{n+1,k})(\tau)}{\lambda_{n+1}({{g_{n+1}}})(\tau)}
> r_{n+1} \frac{\overline{\beta}_{n+1}(e_{n+1,k})(\tau)}{\overline{\beta}_{n+1} 
({{g_{n+1}}})(\tau)}  \makebox{  for all  } \tau \in \Delta \makebox{  and  }
1 \leq k \leq l(n+1)  
\end{equation}
and
\beq \label{equ:Feb2020232PM}
\frac{\overline{\beta}_{n+2}(e_{n+2,l})(\tau)}{\overline{\beta}_{n+2}(
\iota_{n+1,n+2}({{g_{n+1}}}))(\tau)}
> r_{n+1} \frac{\beta_{n+2}(e_{n+2,l})(\tau)}{\beta_{n+2}(\iota_{n+1,n+2}
({{g_{n+1}}}))(\tau)}
= r_{n+1} \Lambda(\iota_{n+2, \infty}(e_{n+2,l}))(\tau) \,\, 
\eneq
${{\makebox{  for all } \tau \in \Delta
\makebox{ and } 1 \leq l \leq l(n+2).}} $
(Recall that, by definition, 
 $\beta_{n+2}(\iota_{n+1, n+2}({{g_{n+1}}})) = \beta_{n+2}({{g_{n+2}}}) = \Lambda(e) = 1_{\Delta}$.)  
 
\vspace*{2ex}
\noindent This completes the inductive construction.\\

{{Moreover, by the induction process, we also have, for all $n\in \N,$
\beq
\frac{\lambda_{n+1}(e_{n+1,k})(\tau)}{\lambda_{n+1}({{g_{n+1}}})(\tau)}
> r_{n+1} r_n {\beta_{n+1}(e_{n+1,k})(\tau)\over{\beta_{n+1}
({{g_{n+1}}})(\tau)}}=r_{n+1}r_n \Lambda(\iota_{n+1,\infty}(e_{n+1, k}))(\tau)
\eneq
 $\makebox{  for all  } \tau \in \Delta \makebox{  and  }
1 \leq k \leq l(n+1).$ }}
{{Hence
\beq
\frac{\lambda_{n+1}(e_{n+1,k})(\tau)}{\lambda_{n+1}(g_{n+1})(\tau)}
> r_0 \Lambda(\iota_{n+1,\infty}(e_{n+1, k}))(\tau)
\eneq
 $\makebox{  for all  } \tau \in \Delta \makebox{  and  }
1 \leq k \leq l(n+1).$ }}
{{It follows that, for all $x\in G_{n+1},$ 
\beq
\frac{\lambda_{n+1}(x)(\tau)}{\lambda_{n+1}(g_{n+1})(\tau)}
> r_0 \Lambda(\iota_{n+1,\infty}(x))(\tau)\rforal \tau\in \Delta.
\eneq
Thus, together with \eqref{equ:Feb20202311AM}, \eqref{equ:Feb20202310AM}   holds. 
}}

{{Next, 
by}}  (\ref{equ:Feb2020232AM}) and repeated applications of (\ref{equ:Feb2020233AM}),
we have that for all $n \geq m \geq 1$, 
\begin{equation} \label{equ:Feb2020236AM}
\lambda_m + \lambda_{m+1} \circ \iota_{m, m+1} + \lambda_{m+2} \circ \iota_{m, m+2} + 
... + \lambda_{n} \circ \iota_{m, n} + \overline{\beta}_{n+1} \circ \iota_{m, n+1}
= \overline{\beta}_m.
\end{equation}  

In particular, for $m = 1$  and for $n \geq 1$, since 
$\overline{\beta}_1 := \beta_1$,
 we have that
\begin{equation}  \label{equ:Feb2020234AM} 
\lambda_1 + \lambda_2 \circ \iota_{1,2} + ... + \lambda_n \circ \iota_{1,n}
+ \overline{\beta}_{n+1} \circ \iota_{1, n+1}
= \beta_1.   
\end{equation}

For each $n \geq 1$, since $ran(\lambda_n) \subseteq \mathbb{D}$,
 let $\widehat{g}_n \in \D$ be given by
\begin{equation}
\widehat{g}_n := \lambda_n(\iota_{1,n}(g_1)) = \lambda_n({{g_n}}).   
\end{equation}

Now by (\ref{equ:Feb2020235AM}) (and since ${{g_k}}$ is an order unit
for $G_k$,  
for all $k$; and since the relevant maps are strictly positive) 
we know that for all $k$, for all $x \in G_k$,
\begin{equation}
\label{equ:Feb2020237AM} 
\overline{\beta}_{n+1} \circ \iota_{k, n+1}(x) \rightarrow 0
\end{equation}  
uniformly on $\Delta$ as $n \rightarrow \infty$.
From this and    
(\ref{equ:Feb2020233AM}), for all $k$, for all $x \in G_k$, 
\begin{equation} \label{equ:Feb2020238AM}
\lambda_{n+1} \circ \iota_{k, n+1}(x) \rightarrow 0
\end{equation}
uniformly on $\Delta$ as $n \rightarrow \infty$.
Thus, by (\ref{equ:Feb2020234AM}),
since $\beta_1({{g_1}}) = \Lambda(e) = 1_{\Delta}$, we must have that 
$$\sum_{n=1}^{\infty} \widehat{g}_n = 1_{\Delta}$$
where the sum converges
uniformly on $\Delta$.  

Let $m \geq 1$ and $x \in G_m$.
By (\ref{equ:Feb2020236AM}),  {{and}} (\ref{equ:Feb2020237AM}),
{{\beq
\overline{\bt}_m(x)-\sum_{n=m}^{m+k}\lambda_n\circ \iota_{m,n}(x)\to 0
\eneq
uniformly on $\Delta$ (as $k\to\infty$).}} 
{{We then}} have that 
$$\overline{\beta}_m(x) = \sum_{n = m}^{\infty} \lambda_n \circ \iota_{m,n}(x),$$
where the sum converges uniformly on $\Delta$.
Hence, by (\ref{equ:Feb2020239AM}) and since 
$\Lambda(\iota_{m, \infty}(x)) = \beta_m(x)$,  
$$\Lambda(\iota_{m,\infty}(x)) - \sum_{n=m}^{\infty} \lambda_n \circ \iota_{m,n}(x) \in \D.$$

By the same argument, using (\ref{equ:Feb2020234AM}),  
for all $x \in G_1$,
since $\Lambda(\iota_{1,\infty}(x)) = \beta_1(x)$,
$$\Lambda(\iota_{1,\infty}(x)) = \sum_{n=1}^{\infty} \lambda_n(\iota_{1,n}(x))$$
where the sum converges uniformly on $\Delta$.
%
\end{proof}

\section{Maps to the multiplier algebras}

Throughout this section, $B$ is a non-unital  separable 
simple \CA\, with tracial rank zero
and with continuous scale. 
\begin{lem}\label{AFdiag}
Let $C$ be a unital simple AF-algebra with ${\rm ker}(\rho_C) = \{ 0 \}$,
 and let $J: C\to M(B)$ be 
a unital \hm.
Suppose that $C=\overline{\cup_{n=1}^{\infty}F_n},$ where 
$\{F_n\}$ is an increasing sequence of finite dimensional \SCA s of $C$ with $1_{F_n}=1_C$ for all $n.$
Fix an integer $n_0\ge 1$  
and $1>r_0>3/4.$

Then there exist an approximate identity $\{e_k\}$ (with $e_0=0$) of $B$  consisting of projections, 
a subsequence $\{n_k\}_{k=1}^{\infty}$ 
of the positive integers (with $n_1\ge n_0$),
and a sequence of unital injective 
 \hm s $\phi_k: F_{n_k}\to (e_k-e_{k-1})B(e_k-e_{k-1})$
such that
\beq
\label{82-T-1}
&& {{\Pi\circ J_{*0}(x)=(\Pi(\sum_{k=1}^\infty\phi_{k}\circ E_k))_{*0}(x)}}
\rforal x\in K_0(C),\\\label{82-T-2}
&&\tau(J(c))=\tau\circ \sum_{k=1}^\infty \phi_k\circ E_k(c)\tforal \tau\in T(M(B)) \tand c\in F_{n_1},\\
\label{82-T-4}
&&\hspace{-0.5in}\tand {\rho_B((\phi_{k})_{*0}(g))(\tau)\over{\rho_B((\phi_k\circ E_k)_{*0}([1_C]))(\tau)}}> r_0\rho_B((J\circ \iota_{k,\infty})_{*0}(g))(\tau)
\eneq
for all  $g\in K_0(F_{n_k})_+\setminus \{0\},$ and 
for all  $\tau\in T(B),$
where {{$\Pi: \Aff(T(B))\to \Aff(T(B))/\rho_B(K_0(B))$ is the quotient map and}} $E_k: C\to F_{n_k}$ is an expectation 
and $\iota_{k, \infty} : F_{n_k}\to C$ is the unital embedding from
the above inductive limit decomposition of $C$, $k=1,2,....$ 
\end{lem}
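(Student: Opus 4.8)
\textbf{Proof plan for Lemma \ref{AFdiag}.}

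The plan is to combine the ordered-group machinery of Section 8 (specifically Lemma \ref{lem:RieszEmbed}) with the approximation results of Section 5 and the embedding results for AF-algebras into $M(B)$ already used in Section 6. First I would set up the data: since $C$ is a unital simple AF-algebra with $\ker\rho_C = \{0\}$, the ordered group $K_0(C)$ is a simple Riesz group with order unit $[1_C]$, realized as an inductive limit $K_0(C) = \lim_{n} (K_0(F_n), \iota_{n,n+1,\ast 0})$ where each $(K_0(F_n), K_0(F_n)_+) \cong (\mathbb Z^{l(n)}, \mathbb Z^{l(n)}_+)$ and the connecting maps are strictly positive (using simplicity of $C$ and passing to a subsequence so that all connecting maps $\iota_{n,\infty,\ast 0}$ are injective on each $K_0(F_n)$ — this is standard for simple AF-algebras, and it also forces each $\iota_{n,m,\ast 0}$ to be strictly positive). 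The unital homomorphism $J\colon C \to M(B)$ induces a unital strictly positive homomorphism $\Lambda := \rho_B \circ J_{\ast 0}\colon K_0(C) \to \Aff(QT(B)) = \Aff(T(B))$ (strict positivity because $J$ is unital into $M(B)$ and $B$ has continuous scale, so every trace in $T(B)$ is faithful and extends to $M(B)$, making $\tau(J(c)) > 0$ for nonzero positive $c$). I take $\mathbb D := \rho_B(K_0(B))$, which is dense in $\Aff(T(B))$ since $B$ has tracial rank zero, and $1_\Delta = \rho_B([e]) \in \mathbb D$ for a strictly positive element; here $\Delta := T(B)$.

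Next I would apply Lemma \ref{lem:RieszEmbed} with $G = K_0(C)$, $G_n = K_0(F_n)$, $g_n = [1_{F_n}] = [1_C]$, $\Delta = T(B)$, $\mathbb D = \rho_B(K_0(B))$, the chosen $r_0 \in (3/4, 1)$, and $\Lambda$ as above. This produces a sequence $\{\widehat g_n\}$ in $\mathbb D_+\setminus\{0\}$ with $\sum_{n=1}^\infty \widehat g_n = 1_\Delta$ (uniform convergence), strictly positive homomorphisms $\lambda_n\colon K_0(F_n) \to \mathbb D$ with $\lambda_n([1_C]) = \widehat g_n$, and the key trace estimate
\[
\frac{\lambda_n(x)(\tau)}{\lambda_n([1_C])(\tau)} > r_0\, \Lambda(\iota_{n,\infty,\ast 0}(x))(\tau) \qquad (\tau\in\Delta,\ x\in K_0(F_n)_+),
\]
together with $\Lambda(\iota_{m,\infty,\ast 0}(x)) - \sum_{n\ge m} \lambda_n(\iota_{m,n,\ast 0}(x)) \in \mathbb D$ and the exact equality for $m=1$. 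Since $\sum\widehat g_n = 1_\Delta$ converges uniformly, I can choose inductively projections $e_k \in B$ (with $e_0 = 0$, $e_{k-1} \le e_k$) forming an approximate unit of $B$ such that $\tau(e_k - e_{k-1}) = \widehat g_{n_k}(\tau)$ for all $\tau\in T(B)$ — this uses that $B$ has tracial rank zero (real rank zero, stable rank one, weakly unperforated $K_0$), so Cuntz-type comparison lets us realize prescribed trace values by projections inside the "slices", building the $e_k$ one at a time. I relabel along the subsequence $\{n_k\}$ (with $n_1 \ge n_0$) so that $\lambda_{n_k}$ corresponds to $F_{n_k}$. Because each corner $(e_k - e_{k-1})B(e_k - e_{k-1})$ again has real rank zero and stable rank one, and $\rho_B \circ \lambda_{n_k}\colon K_0(F_{n_k}) \to K_0((e_k - e_{k-1})B(e_k-e_{k-1}))$ (after identifying $K_0$ of the corner with its image, using that $\lambda_{n_k}([1_C]) = \widehat g_{n_k} = \rho_B([e_k - e_{k-1}])$) is a strictly positive unital homomorphism, there is a unital injective homomorphism $\phi_k\colon F_{n_k} \to (e_k - e_{k-1})B(e_k - e_{k-1})$ inducing $\lambda_{n_k}$ on $K_0$; injectivity is automatic since $F_{n_k}$ has $C$-simple image... more precisely since the map is unital and strictly positive on $K_0$ of a finite-dimensional algebra.

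Finally I would verify the three displayed conclusions. For \eqref{82-T-4}: $(\phi_k)_{\ast 0} = \lambda_{n_k}$ under the identification, $(\phi_k\circ E_k)_{\ast 0}([1_C]) = \lambda_{n_k}([1_C]) = \widehat g_{n_k}$, and $(J\circ\iota_{k,\infty})_{\ast 0} = J_{\ast 0}\circ\iota_{n_k,\infty,\ast 0}$, so $\rho_B \circ (J\circ\iota_{k,\infty})_{\ast 0} = \Lambda\circ\iota_{n_k,\infty,\ast 0}$, and the estimate is exactly the Lemma \ref{lem:RieszEmbed} inequality. For \eqref{82-T-2}: for $c \in F_{n_1}$ with $c = c^*$, both sides are evaluations of affine functions determined by $K_0$; since $F_{n_1}$ is finite-dimensional, $\tau(J(c))$ and $\tau(\sum_k \phi_k\circ E_k(c))$ depend only on the images in $K_0$ of the spectral projections, and the relation $\Lambda(\iota_{1,\infty,\ast 0}(x)) = \sum_{n\ge 1}\lambda_n(\iota_{1,n,\ast 0}(x))$ (the $m=1$ equality) gives precisely $\tau(J(c)) = \sum_k \tau(\phi_k(E_k(c)))$; I should also check that $\sum_k \phi_k\circ E_k$ converges strictly in $M(B)$ and defines a legitimate map (each $\phi_k\circ E_k$ lands in the orthogonal corner $(e_k-e_{k-1})B(e_k-e_{k-1})$, and $\{e_k\}$ is an approximate unit, so the sum converges strictly — this is a routine multiplier-algebra argument). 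For \eqref{82-T-1}: passing to $K_0(B)$ and then to $\Aff(T(B))/\rho_B(K_0(B))$, the relation $\Lambda(\iota_{m,\infty,\ast 0}(x)) - \sum_{n\ge m}\lambda_n(\iota_{m,n,\ast 0}(x)) \in \mathbb D = \rho_B(K_0(B))$ says exactly that $\Pi\circ\rho_B\circ J_{\ast 0}$ and $\Pi\circ\rho_B\circ(\sum_k\phi_k\circ E_k)_{\ast 0}$ agree on the dense subgroup $\bigcup_m \iota_{m,\infty,\ast 0}(K_0(F_m)) = K_0(C)$; since both $K_1(B) = 0$ and we are working modulo $\rho_B(K_0(B))$, and $\Pi$ factors through $K_0(M(B))/\rho_B(K_0(B)) \hookrightarrow K_0(M(B)/B)$, this yields the claimed identity in $K$-theory. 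The main obstacle I anticipate is the bookkeeping in the inductive construction of the $e_k$ — ensuring simultaneously that $\tau(e_k - e_{k-1}) = \widehat g_{n_k}(\tau)$ exactly (not just approximately) for all $\tau$, that $\{e_k\}$ is genuinely an approximate unit of $B$ (which requires the continuous-scale structure and choosing the slices to eventually dominate any given element), and that the subsequence $\{n_k\}$ can be chosen to start at $n_1 \ge n_0$ while remaining compatible with the injectivity-of-connecting-maps reduction; the trace-exactness uses the tracial-rank-zero hypothesis in an essential way (weak unperforation plus real rank zero to realize arbitrary dense trace values by projections), so I would be careful to invoke the right comparison statement there.
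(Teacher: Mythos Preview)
Your proposal is correct and follows essentially the same approach as the paper: both reduce to Lemma \ref{lem:RieszEmbed} applied to $\Lambda = J_{*0}$ with $\mathbb{D} = \rho_B(K_0(B))$, then realize the resulting $\lambda_n$ by homomorphisms $\phi_k$ into orthogonal corners of $B$ and verify the three conclusions from the outputs of that lemma. The only cosmetic difference is that the paper constructs the approximate identity $\{e_k\}$ globally via Kasparov stabilization and the projection-equivalence result Theorem \ref{125-2331} (building a projection $P \sim 1_{M(B)}$ in $M(B\otimes\mathcal{K})$ and conjugating), whereas you build it inductively slice by slice; both methods work under the tracial-rank-zero hypothesis.
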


\begin{proof}
Since $C$ is simple, passing to a subsequence (starting with $n_1 \geq n_0$)
if necessary, we may assume that for all $n \geq 1$, the
inclusion map ${{\iota_{n, n+1}:}} F_n \hookrightarrow F_{n+1}$ induces a strictly positive
homomorphism $K_0(F_n) \rightarrow K_0(F_{n+1})$.

Plug $G = K_0(C)$, $e = [1_C]$, $G_n = K_0(F_n)$, ${{g_n}}
 = [1_{F_n}]
\in K_0(F_n) = G_n$ (for all $n$),
$\Delta = T(B)$, and $\D = \rho_B(K_0(B))$ (recall that since $B$ has real
rank zero, $\rho_B(K_0(B))$ is norm dense in $\Aff(T(B))$) {{together with $\Lambda=J_{*0}:
K_0(C)\to K_0(M(B))=\Aff(T(B))$}}
into Lemma \ref{lem:RieszEmbed}, to get    
$\{ \widehat{g}_n \}$ and $\{ \lambda_n \}$ {{which satisfy \eqref{701-0},
\eqref{701-1},  \eqref{701-2} and \eqref{equ:Feb20202310AM}.}}

For each $n \geq 1$, let $f_{n,1}, ..., f_{n, N(n)}$
be a set of minimal projections from different {simple} summands of $F_n$ (so
for all $j 
\neq k$, $f_{n,j}$ and $f_{n,k}$ are from different summands) and let
$m_{n,1}, ..., m_{n,N(n)} \geq 1$   
be integers
so that the ideal (in $F_n$) generated by $f_{n,1}, ..., f_{n,N(n)}$
is all of $F_n$, and  
$$\lambda_n (m_{n,1} [f_{n,1}] + ... + m_{n,N(n)} [f_{n, N(n)}])
= {{\widehat{g}_n}}.$$
For each $n \geq 1$, since ${\rm ran}(\lambda_n) \subseteq \rho_B(K_0(B))$,
we can find {{mutually orthogonal}} projections $e_{n,1}, ..., e_{n,N(n)} \in B$ so that
$$\lambda_n([f_{n,j}])(\tau) = \tau(e_{n,j})$$
for all $1 \leq j \leq N(n)$ and  $\tau \in T(B)$.

Denote by $\{v_{i,j}\}$ a system of matrix units for ${\cal K}.$
For the rest of the proof, we identify $B$ with $B \otimes v_{1,1}$
and $M(B)$ with $M(B) \otimes v_{1,1}$. (E.g., we use $1_{M(B)}$ and 
$1_{M(B)} \otimes v_{1,1}$ interchangeably, and also 
$T(B)$ and $T(B \otimes v_{1,1})$ interchangeably.)

{{Recall that ${{\widehat{g}_n}}\in \rho_B(K_0(B))$ are non-zero and $\sum_{n=1}^\infty 
{{\widehat{g}_n}}=1_{T(B)}.$
In particular, $0<{{\widehat{g}_n}}
(\tau)<1$ for all $\tau\in T(B).$ 
We may choose, for each $n$ 
a projection}} $q_n\in B\otimes v_{n,n}$
such that  $q_n\sim \bigoplus_{j=1}^{N(n)} {\bar e}_{n,j},$ 
{{ where  ${\bar e}_{n,j}$ are projections and $[{\bar e}_{n,j}]=m_{n,j} [e_{n,j}]$ in $K_0(B).$}} 
Put $P:= \bigoplus_{n=1}^{\infty} q_n,$
where the sum converges strictly in $M(B \otimes {{\cal K}}).$

{{We compute that $\tau(P)=\tau(1_{M(B)})$ for all $\tau\in T(B).$ 
Recall that $B$ is a $\sigma$-unital simple \CA\, of tracial rank zero. It follows (see Theorem \ref{125-2331}) that
$P\sim 1_{M(B)}$ in $M(B\otimes {\cal K}).$  In other words,  there exists a partial isometry 
$W\in M(B\otimes {\cal K})$ such that
\beq
W^*W=1_{M(B)}\andeqn WW^*=P.
\eneq
Let $p_n=W^*q_nW,$ $n\in \N.$
Then  $p_ip_j=p_jp_i=0,$ if $i\not=j,$ and 
$\sum_{n=1}^\infty p_n  =1_{M(B)}$ which converges in the strict topology. 
Put $e_0=p_0=0$}} and $e_k= \sum_{l=1}^k p_k$ for $n\in \N.$ 
{{Then $\{e_k\}$ forms an approximate identity (consisting of projections)  
for $B.$}} 
 
 %
 %
Note that,  for all $n$ for all $\tau \in T(B)$,
$$\tau(e_n - e_{n-1}) = \tau(p_n) = 
{{\widehat{g_n}}}(\tau).$$
For each $n$, let $\phi_n : F_n \rightarrow p_n B p_n$ be a  
unital injective *-homomorphism for which
$$\rho_B \circ K_0(\phi_n) = \lambda_n.$$
{{Recall that $K_0(C)=\cup_{n=1}^\infty \iota_{n, \infty}(G_n)$  and
$E_n\circ \iota_{n, \infty}={\rm id}_{F_n}$ for all $n\in \N.$  
Therefore, for each $m\in \N,$
by \eqref{701-1},
\beq
J_{*0}(x) - \sum_{n=m}^{\infty} \lambda_n((\iota_{m,n}\circ E_m)_{*0}(x)) \in \D \makebox{ for all }
x \in \iota_{m, \infty}(G_m).   
\eneq
Hence \eqref{82-T-1} holds. Then  \eqref{82-T-2} follows from \eqref{701-2}, and \eqref{82-T-4} 
follows from \eqref{equ:Feb20202310AM}.}}
\end{proof}

\begin{cor}\label{C82+}
Let $C$ be a unital simple AF-algebra with ${{{\rm ker}}}(\rho_C) = \{ 0 \}$,
 and let  $J: C\to M(B)$ be 
a unital \hm.
Suppose that $C=\overline{\cup_{n=1}F_n},$ where 
$\{F_n\}$ is an increasing sequence of finite dimensional \SCA s of $C$ with $1_{F_n}=1_C$ for all $n$.  
Fix an integer $n_0\ge 1,$ $0<\ep<1$  and $3/4<r<1.$

Then there exist an approximate identity $\{e_k\}$ (with $e_0=0$) of $B$  consisting of projections, 
a subsequence $\{n_k\}_{k=1}^{\infty}$ 
of the positive integers  (with $n_1\ge n_0$), 
  and a sequence of unital injective  \hm s $\phi_k: F_{n_k}\to 
(e_k-e_{k-1})B(e_k-e_{k-1})$
such that
\beq
\label{83-T-1}
&&J(x)-\sum_{k=1}^\infty\phi_k\circ E_k(x)\in B\rforal x\in C\tand\\\label{83-T-2}
&&\|J(c)-\sum_{k=1}^\infty\phi_k\circ E_k(c)\|<\ep \tforal c\in {{(F_{n_1})^{\bf 1}}},\\
\label{83-T-4}
&&\hspace{-0.6in}\tand {\tau(\phi_{k}(c))\over{\tau(\phi_{k}\circ E_k(1_C))}} \ge r\tau(J\circ \iota_{k,\infty}(c))
\tforal c\in (F_{n_k})_+\setminus\{0\},
\eneq
 and 
for all  $\tau\in T(B),$
%
%
where $E_k: C\to F_{n_k}$ is an expectation and $\iota_{k, \infty} :
F_{n_k} \rightarrow C$ is the unital embedding from the above inductive
limit decomposition of $C$, $k=1,2,....$ 
\end{cor}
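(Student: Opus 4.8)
The plan is to deduce Corollary \ref{C82+} from Lemma \ref{AFdiag} by converting the $K$-theory statements of that lemma into norm statements, using that $C$ is AF (so finite-dimensional building blocks whose elements are controlled by traces and $K_0$) and that $B$ has tracial rank zero. First I would apply Lemma \ref{AFdiag} with the given $n_0$ and with a parameter $r_0$ chosen slightly larger than $r$ (say $3/4<r<r_0<1$), obtaining an approximate identity $\{e_k\}$ of projections, a subsequence $\{n_k\}$, and unital injective homomorphisms $\phi_k:F_{n_k}\to(e_k-e_{k-1})B(e_k-e_{k-1})$ satisfying \eqref{82-T-1}, \eqref{82-T-2} and \eqref{82-T-4}. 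The map $\Psi:=\sum_{k=1}^\infty\phi_k\circ E_k:C\to M(B)$ converges strictly and is a unital homomorphism. The trace conclusion \eqref{82-T-2} of the lemma says $\tau(J(c))=\tau(\Psi(c))$ for all $c\in F_{n_1}$ and all $\tau\in T(M(B))=T(B)$ (using that $B$ has continuous scale, so $QT(B)=QT(M(B))$ and, being exact, all quasitraces are traces). Since $F_{n_1}$ is finite dimensional, agreement of traces on $F_{n_1}$ together with \eqref{82-T-1} (which already gives $\Pi\circ J_{*0}=\Pi\circ\Psi_{*0}$, hence $J_{*0}$ and $\Psi_{*0}$ differ by something landing in $\rho_B(K_0(B))$) will let me adjust $\phi_1$ on a slightly larger block so that in fact $J$ and $\Psi$ agree exactly modulo $B$ in the strong sense needed; more simply, \eqref{82-T-1} is exactly \eqref{83-T-1}.

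Next I would establish the norm estimate \eqref{83-T-2}. For $c\in F_{n_1}$ with $\|c\|\le 1$, write $d:=J(c)-\Psi(c)$. By \eqref{82-T-1}, $d\in B$; by \eqref{82-T-2}, $\tau(d)=0$ for all $\tau\in T(B)$ (extended to $M(B)$). The point is that $d\in B$ and $\tau(d^*d)$ is small: indeed $\Psi(c)=\sum_k\phi_k(E_k(c))$ and, since $E_k(c)\to c$ in $C$ (as $C=\overline{\bigcup F_n}$) — here using $c\in F_{n_1}$ so $E_k(c)=\iota_{n_1,n_k}(c)$ for $k$ large — one gets that $J(c)-\Psi(c)$ is, modulo controllably small tails, a difference of two $*$-homomorphic images of $c$ which are trace-equal. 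Using that $B$ has tracial rank zero (so strict comparison, and more importantly a Cuntz-semigroup / quasidiagonal decomposition), a positive element of $B$ that is trace-zero must be zero; combining with the tail estimates from the strict convergence of $\sum\phi_k\circ E_k$ gives $\|J(c)-\Psi(c)\|<\epsilon$ once we start the subsequence $\{n_k\}$ far enough out and arrange the blocks $(e_k-e_{k-1})$ so that the "error term" $\phi_1(E_1(c))$ matches $J(c)$ on a large spectral portion. Concretely I would feed a small tolerance into Lemma \ref{AFdiag} (the lemma as stated has no $\epsilon$, so this refinement has to come from truncating: replace $J$ by $J$ restricted to a large block $e_{N}Je_{N}$ plus a small remainder, absorb the remainder into $\phi_1$). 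The cleanest route is: since $J(c)-\Psi(c)\in B$ and $B$ has an approximate unit of projections $\{e_k\}$ with $\|e_kJ(c)e_k-J(c)\|$, $\|e_k\Psi(c)e_k-\Psi(c)\|\to 0$ (because $e_k$ is built to be quasicentral with respect to both $J(C)$ and $\Psi(C)$ — this is automatic for $\Psi$ and can be arranged for $J$ by passing to a further subsequence), and because the partial sums $\sum_{k\le N}\phi_k\circ E_k(c)$ differ from $e_N\Psi(c)e_N$ by something small, the difference $J(c)-\Psi(c)$ is approximated in norm by $e_N(J(c)-\Psi(c))e_N=\sum_{k\le N}\big(e_kJ(c)e_k-\phi_k(E_k(c))\big)+(\text{small})$, and each summand is small once $n_k$ is large since $E_k(c)\to c$ and $\phi_k$ compresses into the same block as $e_kJ(c)e_k$ — so an inductive construction controlling the norm at each stage (exactly parallel to the estimates \eqref{82-e-3}–\eqref{82-e-4+} already used inside the proof of Lemma \ref{AFdiag}) delivers \eqref{83-T-2}.

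For \eqref{83-T-4} I would translate \eqref{82-T-4} from $K_0$ to traces. For a minimal projection $f\in F_{n_k}$ in a summand, $[f]\in K_0(F_{n_k})_+\setminus\{0\}$, and \eqref{82-T-4} gives
\[
\frac{\rho_B((\phi_k)_{*0}([f]))(\tau)}{\rho_B((\phi_k\circ E_k)_{*0}([1_C]))(\tau)}>r_0\,\rho_B((J\circ\iota_{k,\infty})_{*0}([f]))(\tau),
\]
i.e. $\tau(\phi_k(f))/\tau(\phi_k(E_k(1_C)))>r_0\,\tau(J(\iota_{k,\infty}(f)))$ for all $\tau\in T(B)$. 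Since every positive element $c\in F_{n_k}$ with $\|c\|\le1$ is dominated by $1_{F_{n_k}}$ and, in a finite-dimensional algebra, $\tau\circ\phi_k$ and $\tau\circ J\circ\iota_{k,\infty}$ are positive traces on $F_{n_k}$ determined by their values on minimal projections, the inequality on minimal projections upgrades to: for all $c\in(F_{n_k})_+$, $\tau(\phi_k(c))\ge r_0(1-\delta)\tau(J\circ\iota_{k,\infty}(c))$ for a $\delta$ I can absorb by choosing $r_0/(1-\delta)\le$ something $>r$; this is routine once one notes $\tau(J\circ\iota_{k,\infty}(c))\le\tau(J\circ\iota_{k,\infty}(1_C))=\tau(1_{M(B)})$ is uniformly bounded and $\tau(\phi_k(E_k(1_C)))=\tau(e_k-e_{k-1})$. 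So \eqref{83-T-4} follows with the chosen $r<r_0$. I expect the main obstacle to be the norm estimate \eqref{83-T-2}: Lemma \ref{AFdiag} only gives equality of $K_0$-data and traces, and bridging from "trace-equal and $K_0$-equal modulo $B$" to "$\epsilon$-close in norm" requires genuinely using tracial rank zero of $B$ (real rank zero, stable rank one, strict comparison) to show that a trace-zero positive element of $B$ sitting in a matching finite block is forced small — together with a careful inductive bookkeeping of block sizes and quasicentrality so that the truncations $e_N(\cdot)e_N$ behave, which is where all the real work lies and which parallels the long estimate already carried out in the proof of Lemma \ref{AFdiag}.
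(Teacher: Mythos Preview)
Your proposal has a genuine gap at the very first step: \eqref{82-T-1} is \emph{not} \eqref{83-T-1}. The conclusion \eqref{82-T-1} of Lemma \ref{AFdiag} says only that $\Pi\circ J_{*0}=\Pi\circ\Psi_{*0}$ on $K_0(C)$, where $\Pi:\Aff(T(B))\to\Aff(T(B))/\rho_B(K_0(B))$ is the quotient map; it is a $K_0$-level statement modulo $\rho_B(K_0(B))$, not the assertion that $J(c)-\Psi(c)\in B$ for $c\in C$. There is no way to ``adjust $\phi_1$ on a slightly larger block'' to force $J-\Psi$ into $B$: the discrepancy is global, not concentrated in one summand. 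What \eqref{82-T-1} (together with \eqref{82-T-2}) actually gives, via the UCT and the computation of $K_*(C(B))$, is $[\pi\circ J]=[\pi\circ\Psi]$ in $KK(C,C(B))$. The paper then invokes Theorem \ref{TH2} to obtain a unitary $W\in M_2(M(B))$ with ${\rm Ad}\,\pi(W)\circ\pi\circ\Psi=\pi\circ J$, and it is only after conjugating the diagonal map by $W$ that one has $J(c)-W^*\Psi(c)W\in B$. The remainder of the paper's argument is then a careful compression: choose $N_1$ large so that a cut-down of $J$ agrees with $W^*(\sum_{n>N_1}\psi_n\circ E_n)W$ modulo a prescribed $\delta$, produce a projection $d\in 1+B$ close to the complementary cut-down, and build the first map $\phi_0:F_1\to(1-d)B(1-d)$ from $(1-d)J(\cdot)(1-d)$ by a perturbation of an almost-multiplicative map. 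The final $\phi_k$ are $\phi_0$ followed by the conjugates $U^*W^*\psi_{N_1+k}WU$.

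Your approach to \eqref{83-T-2} also does not work as written. The difference $J(c)-\Psi(c)$ is neither known to lie in $B$ (as above) nor positive, so the observation ``a positive element of $B$ that is trace-zero must be zero'' is inapplicable; equality of traces on a self-adjoint difference in $B$ gives no norm control. The norm smallness in the paper comes not from traces but from the unitary conjugation above followed by the approximate-unit/compression argument, which forces the difference to be supported on a small block where it is genuinely $\epsilon$-close by construction. Your treatment of \eqref{83-T-4} (passing from minimal projections to positive elements by positive linear combinations) is correct and matches the paper's argument.
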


\begin{proof}

Let $0<\ep<1.$  
Set  $r_0=r+{1-r\over{64}}.$ Then $r<r_0<1.$

As in the proof of Lemma \ref{AFdiag}, 
we simplify notation by
letting $n_0 = 1$.

   Let $\{e_k \}$ and unital \hm\, 
$\psi_k: F_k\to (e_k-e_{k-1})B(e_k-e_{k-1})$ be as in 
(the proof of) Lemma \ref{AFdiag}  so that \eqref{82-T-1}, \eqref{82-T-2}
and \eqref{82-T-4} all hold 
(with $r_0$ as above).
Note that, as in the proof of Lemma \ref{AFdiag}, we are simplifying
notation and letting $n_k = k$ for all $k$.  In  particular,
$n_1 = 1 \makebox{ } (= n_0)$.  

We will often  simplify notation 
 by writing 
``$c$" for $\iota_{n,\infty}(c) \in C$ or $\iota_{n,m}(c) \in F_m$
for all $c \in F_n$ and all $n \leq m$. 

Let ${\cal F}\subset F_{1}$ be a finite subset of 
 the unit ball of $F_{1}$.
(which is compact) and 
$1_C\in {\cal F}.$

Put 
\beq
\sigma_0:=\inf\{\tau(J\circ \iota_{1,\infty}(e)): e\in (F_1)_+\setminus \{0\},\,\, e\,\,{\rm is\,\, a\,\,minimal\,\, projection}\}>0
\eneq  
 and     $\ep_0:=\min\{\sigma_0/64, \ep/4, {1-r\over{64}}\}.$ 
   
Choose  $0<\dt<\ep_0^3/64$ 
such that, for  any  unital ${\cal F}$-$\dt$-multiplicative \morp\, $L': F_1\to D$ (for any unital \CA\, 
$D$), there is a unital \hm\, $\phi: F_1\to D$ such that
\beq
\|L'-\phi\|<\ep_0^3/64.
\eneq

{{Denote by $\pi: M(B)\to M(B)/B$ as well as $\pi: M_2(M(B))\to M_2(M(B)/B)$ the quotient maps.}}
Define $L: C\to M(B)$ by $L(c)=\sum_{n=1}^\infty\psi_n\circ E_n.$
Note $\pi\circ L: C\to M(B)/B$ is a unital essential extension of $C.$ 
It follows from Lemma \ref{AFdiag} that $[\pi\circ J]=[\pi\circ L].$ 
Therefore, by Theorem \ref{TH2}, there exists a unitary $W\in M_2(M(B))$ such that
\beq
{\rm Ad}\, \pi(W)\circ \pi\circ L=\pi\circ J.
\eneq
It follows, for any $c\in C,$ 
\beq
J(x)-W^*(\sum_{n=1}^\infty \psi_n\circ E_n(c){{)}}W\in M_2(B).
\eneq

We will simplify notation by sometimes writing $1_2 = 1_{M_2(M(B))}$ and $1 = 1_{M(B)}$.

Let $p_n'=\diag(e_n, e_n)\in M_2(B).$ Then $\{p_n:=W^*p_n'W\}$ forms an approximate 
identity for $M_2(B).$  
Since ${\cal F}$ is compact, there is $N_1\ge 1$ such that
\beq\label{83-a-1-00}
&&\hspace{-0.5in}{{\|(1_2-p_{N_1})J(x)-W^*(\sum_{N_1+1}^\infty \psi_n\circ E_n(x))W\|<\dt/2^9}}\andeqn\\
&&\hspace{-0.5in}\|(1_2-p_{N_1})J(x)(1_2-p_{N_1})-W^*(\sum_{N_1+1}^\infty \psi_n\circ E_n(x))W\|<{{\dt/2^9}}\rforal x\in {\cal F},\andeqn\\
\label{83-a-1-01}
&&\hspace{-0.5in}(1_2-p_{N_1})J(c)(1_2-p_{N_1})-W^*(\sum_{N_1+1}^\infty \psi_n\circ E_n(c))W\in M_2(B)\rforal  c\in C.
\eneq
{{Since $B$ has continuous scale, we}} may choose a sufficiently large $N_1$ such that
\beq\label{84-TTT}
\sup\{\tau(1_2-p_{N_1}): \tau\in T(B)\}<\ep_0^3/64.
\eneq
Note $p_n$ commutes with $W^*(\sum_{n=1}^\infty \psi_n\circ E_n(c)){{W}}$ for all $c\in C.$	
Hence, by \eqref{83-a-1-00},
\beq\nonumber
\hspace{-0.3in}(1_2-p_{N_1})1_{M(B)}&=&(1_2-p_{N_1})W^*L(1_{M(B)})W+(1_2-p_{N_1})(1_{M(B)}-W^*L(1_{M(B)})W)\\\nonumber
&\approx_{\dt/{{2^9}}}&(1_2-p_{N_1})W^*L(1_{M(B)})W\\\nonumber
&=&W^*L(1_{M(B)})W(1-p_{N_1})\\\nonumber
&\approx_{\dt/{{2^9}}}&(1_{M(B)}-W^*L(1_{M(B)})W)(1_2-p_{N_1})+W^*L(1_{M(B)})W(1_2-p_{N_1})\\
&=&1_{M(B)}(1_2-p_{N_1}).
\eneq
Note that we identify $J(1_C) = 1_{M(B)}$ with 
 the identity of $\wtd B.$ It follows that 
$1_{M(B)}(1_2-p_{N_1})1_{M(B)}\in \wtd B.$
Thus there is a projection $d\in 1+B$ such that
\beq\label{84-phi0-e2}
\|d-(1_2-p_{N_1})J(1_C)(1_2-p_{N_1})\|<\dt/2^7.
\eneq
It follows that  {{(see also \eqref{83-a-1-00})}}
\beq\label{83-515+1}
&&\|dJ(x)-J(x)d\|<{{\dt/2^7+\dt/2^{9}}}\rforal x\in {\cal F}\andeqn\\\label{83-515+2}
&&\|d-W^*(\sum_{N_1+1}^\infty \psi_n\circ E_n(1_C))W\|<{{\dt/2^7+\dt/2^9.}}
\eneq
Note {{that}} both $d$ and $W^*(\sum_{N_1+1}^\infty \psi_n\circ E_n(1_C))W$ are projections 
in $\td B+M_2(B).$
There is {{(by \eqref{83-515+2})}} a unitary $U\in M_2(\td B)$ such that
\beq
\|U-1_{M_2(\td B)}\|<\dt/{{32}}\andeqn
U^*W^*(\sum_{N_1+1}^\infty \psi_n\circ E_n(1_C))WU=d.
\eneq
It follows that
\beq
dJ(c)d-U^*W^*(\sum_{N_1+1}^\infty\psi_n\circ E_n(c)){{WU}}\in B\rforal c\in C\andeqn\\
\|dJ(x)d-U^*W^*(\sum_{N_1+1}^\infty\psi_n\circ E_n(x))W{{U}}\|<\dt/2\rforal x\in {\cal F}.
\eneq
Thus, by the choice of $\dt$ {{(see also \eqref{83-515+1}),}} there is a unital \hm\, $\phi_0: F_1\to (1-d)B(1-d)$ such that
\beq\label{84-phi0-e1}
\|(1-d)J(c)(1-d)-\phi_0(c)\|<\ep_0^3/64\rforal c\in  F_1 \andeqn \|c\|\le 1.
\eneq
One then checks that, for all $x\in F_1$ with $\|x\|\le 1,$ 
\beq
\|J(x)-(U^*W^*(\sum_{n=N_1+1}^\infty\psi_n\circ E_n(x))WU-\phi_0(x))\|<\ep_0^3/32.
\eneq
We may write $\phi_1:=\phi_0$  and replace $e_1$ by $e_1':=(1-d),$  $e_n$ by $e_n':=U^*W^*e_{n+N_1}WU,$
if $n>1,$ 
and  define $\phi_n:F_{n+N_1}\to (e_n'-e_{n-1}')B(e_n'-e_{n-1}')$ by 
$\phi_n(c)=U^*W(\psi_{n+N_1}(c))WU$ for all $c\in F_n$ and $n>1.$
Then $\{e_n'\}$ is an approximate identity for $B$ and 
\beq
&&\|J(x)-\sum_{n=1}^\infty\phi_n\circ E_n(x)\|<\ep_0^3/16<\ep\rforal x\in {\cal F}\andeqn\\
&&J(c)-\sum_{n=1}^\infty\phi_n\circ E_n(c)\in B \makebox{  for all  }
c \in C. 
\eneq
We also have,  {{by our choice of $\psi_k$ (from Lemma \ref{AFdiag}) which satisfies \eqref{82-T-2},}}
for all $k\ge 1$ and  any $g\in K_0(F_k)_+\setminus \{0\},$  and, for all $\tau\in T(B),$
\beq\label{84-E-10}
{\rho_B((\phi_{k+1}\circ\iota_{k,k+1})_{*0}(g))
(\tau)\over{\rho_B\circ \phi_{k+1}([1_{F_k}])(\tau)}}> r_0\rho_B(J\circ \iota_{k,\infty})_{*0}(g))(\tau).
\eneq
Note, since $F_k$ is a finite dimensional \CA, for any $c\in (F_k)_+\setminus \{0\},$ 
 there are $\af_1, \af_2,...,\af_m\in \R_+$ (not all zero) and minimal projections 
$e_1, e_2,...,e_m\in (F_k)_+\setminus\{0\}$
such that $c=\sum_{l=1}^m\af_l e_l.$ 

Then we have, by 
\eqref{84-E-10},   for all $c\in (F_k)_+\setminus\{0\},$
that 
\beq\label{84-phike-1}
{\tau(\phi_{k+1}\circ \iota_{k, k+1}(c))\over{\tau(\phi_{k+1}(1_{F_{k+1}}))}}={\sum_{l=1}^m\af_l \tau(\phi_{k+1}\circ \iota_{k,k+1}(e_l))\over{\tau(\phi_{k+1}(1_{F_{k+1}}))}}=\sum_{l=1}^m\af_l{\tau(\phi_{k+1}\circ \iota_{k, k+1}(e_l))\over{\tau(\phi_{k+1}(1_{F_{k+1}}))}}\\
>r_0(\sum_{l=1}^m\af_l\tau(J\circ \iota_{k, \infty}(e_l)))>r\tau(J\circ \iota_{k, \infty}(c)) \rforal \tau\in T(B).
\eneq
Then  \eqref{83-T-4} holds.  {{The lemma follows.}}
\end{proof}

For each $i=0,1,$ and $k\ge 0$ define $P_{i,k}: \underline{K}(C)\to K_i(C, \Z/k\Z)$ to be the projection map.


Also, for any group $H$ and any subset $E \subseteq H$, $E^g$ will denote the
subgroup of $H$ which is generated by $E$.
We will also sometimes use the notation $G^E$ to denote the same object $E^g$ (e.g., see item (3)(b) in
Definition \ref{DefA1}).

\begin{df}\label{DefKL-triple}
Recall that for a 
{{\CA\,}} $C$, a finite subset $G \subset
\underline{K}(C)$, a finite subset ${\cal G} \subset C$ and 
$\delta > 0$, the triple $(\delta, {\cal G}, G)$ is called a 
\emph{KL-triple} if for every 
{{\CA}}\, $D$,  every completely positive
contractive ${\cal G}$-$\delta$-multiplicative linear map  
$\phi : C \rightarrow D$ induces a well-defined group homomorphism
$G^g \rightarrow \underline{K}(D)$. 
\end{df}

\begin{lem}\label{L84}
Let $C$ be a unital \CA\,
 {{and}} let $\Gamma: K_0(C)\to \Aff(T(B))$ be a strictly positive \hm\,
such that $\Gamma([1_C])=1_{T(B)}.$ 
Suppose that $C\in {\cal A}_1,$ and there is an affine continuous map $\gamma: T(B)\to T(C)$ such that
$\rho_{M(B)}\circ \Gamma(g)(\tau)=\rho_C(g)(\gamma(\tau))$
for all $g\in K_0(C)$ and $\tau\in T(B).$ 

Then
there is a unital injective \hm\, $\Phi: C\to M(B)$ such that
$\Phi_{*0}=\Gamma$ and $\tau(\Phi(c))=\gamma(\tau)(c)$
for all $c\in C$ and $\tau\in T(B).$

Let $\{G_n\}$ be an increasing sequence of finite subsets of $\underline{K}(C)$ such that
$\cup_{n=1}^\infty G_n=\underline{K}(C)$ and $P_{i,k}(G_n)\subset G_n$ 
for all $n$, 
 $i= 0,1$ and $k \geq 0$. 
Let $\{{\cal G}_n\}$ be an increasing sequence of finite subsets 
of $C$ such that $\cup_{n=1}^\infty{\cal G}_n$ is dense in $C,$  $\{ \dt_n \}$
  a decreasing  sequence of positive numbers
with $\sum_{n=1}^\infty\dt_n<1,$  $\ep>0$ and  let ${\cal F}\subset C$ be a finite subset.
We also assume that $(\dt_n, {\cal G}_n, G_n)$ is a $KL$-triple for every $n$.
Let $\{ {\cal H}_n \}$
be an increasing sequence 
of finite subsets of  
 $C_+^{\bf 1}\setminus \{0\}$ 
such 
that $\cup_{n=1}^\infty {\cal H}_n$ is dense in  $C_+^{\bf 1}$
and let $1>r>3/4.$

Then, 
we can choose $\Phi$ as above so that 
there exist an approximate identity $\{e_n\}$ of $B$
(with $e_0=0$ and $e_n-e_{n-1}\not=0$ for all $n\geq 1$) 
consisting of projections, 
and a sequence of maps $\{ \phi_n \}$ where 
$\phi_n: C\to (e_n-e_{n-1})B(e_n-e_{n-1})$ 
is a ${\cal G}_n$-$\dt_n$-multiplicative  unital \morp \ 
such that $[\phi_n]|_{G_n}$ is well defined, 
\beq
&&[\phi_n]|_{G_n^g\cap {\rm ker}\rho_{C,f}}=0,\,\,
[\phi_n]|_{ G_n\cap K_1(C)}=0,\\
&&{[\phi_n]}|_{G_n\cap K_i(C, \Z/k\Z)}=0\tforal k\ge 2\,\,\, (i=0,1),
\eneq
$n=1,2,...,$ (where $G_n^g$ is the the subgroup of 
$\underline{K}(C)$ generated by $G_n$) and 
\beq\label{84-B-1}
&&\Phi(c)-\sum_{n=1}^\infty \phi_n(c)\in B\tforal c\in C,\\
&&\|\Phi(c)-\sum_{n=1}^\infty \phi_n(c)\|<\ep\tforal c\in {\cal F},\tand\\
&& {\tau(\phi_n(x))\over{\tau(e_n-e_{n-1})}}\ge r\tau(\Phi(x))\tforal x\in {\cal H}_n.
\eneq
\end{lem}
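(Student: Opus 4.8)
The plan is to combine the AF-algebra diagonalization results just proven (Corollary \ref{C82+} and Lemma \ref{AFdiag}) with the embedding property defining the class ${\cal A}_1$, and then to make a careful perturbation argument so that the approximately multiplicative pieces $\phi_n$ have vanishing total $K$-theory on the relevant finite subsets. First I would produce the unital injective homomorphism $\Phi \colon C \to M(B)$: since $C \in {\cal A}_1$, the strictly positive map $\Gamma$ together with the compatible affine continuous map $\gamma \colon T(B) \to T(C)$ gives (via the defining property of ${\cal A}_1$, applied with the unital simple AF-algebra being, after passing to $\rho_{M(B)}$, a suitable approximant — more precisely using Lemma \ref{L84}'s hypothesis that such $C$ embed) a unital embedding $\Phi \colon C \to M(B)$ with $\Phi_{*0} = \Gamma$ and $\tau \circ \Phi = \gamma(\tau)$ on $T(B)$. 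This first assertion of the lemma is essentially a restatement of membership in ${\cal A}_1$ once one checks the compatibility of $\Gamma$ with $\gamma$ at the level of $\Aff^\lambda$, which is exactly what Remark \ref{rmk:AffToT} provides.

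Next I would build the diagonal decomposition. The idea is to approximate $C$ from inside by finite-dimensional subalgebras is not available in general (only $C \in {\cal A}_1$, not AF), so instead the strategy is: use that $C \in {\cal A}_1$ to factor $\Phi$ (up to the right $KK$-data) through a unital simple AF-algebra $F$ with $\ker \rho_F = \{0\}$ and $T(F) = T(B)$, chosen so that $\rho_F(K_0(F))$ is a countable dense subgroup of $\Aff(T(B))$ containing $\Gamma(K_0(C))$. Concretely, one embeds $C \hookrightarrow F$ compatibly with the trace data (this uses $C \in {\cal A}_1$), then writes $F = \overline{\bigcup F_n}$ and applies Corollary \ref{C82+} to the induced unital homomorphism $J \colon F \to M(B)$ (here $J$ is a unital embedding of $F$ into $M(B)$ realizing $\rho_F(K_0(F)) \subset \Aff(T(B)) = K_0(M(B))$, which exists by the argument in Proposition \ref{PA1A2} / Lemma 4.2 of \cite{PR}), to get an approximate identity $\{e_n\}$ of projections and unital injective homomorphisms $F_{n_k} \to (e_k - e_{k-1})B(e_k - e_{k-1})$ whose strict sum agrees with $J$ modulo $B$ and approximately in norm on finite sets, with the stated trace-ratio lower bound. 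Composing with the embedding $C \hookrightarrow F$ (and a fixed conditional-expectation-type completely positive contractive approximation of $C$ inside the finite-dimensional $F_{n_k}$, which is where the $\delta_n$-multiplicativity of $\phi_n$ comes from) yields the maps $\phi_n \colon C \to (e_n - e_{n-1})B(e_n - e_{n-1})$; since each $\phi_n$ essentially factors through a finite-dimensional algebra followed by an embedding into a simple AF corner, its induced total $K$-theory class on $G_n$ kills $K_1$, all $K_i(\cdot,\mathbb{Z}/k)$, and the part of $K_0$ in $\ker \rho_{C,f}$ — the first two because finite-dimensional algebras have trivial $K_1$ and torsion $K$-theory, and the $\ker\rho$ statement because the composition's $K_0$-map is the given $\rho$-compatible map $\Gamma$, which annihilates $\ker\rho_{C,f}$.

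The main obstacle is the bookkeeping to make all three requirements — (i) exact agreement with $\Phi$ modulo $B$, (ii) norm approximation on ${\cal F}$, and (iii) the vanishing of $[\phi_n]$ on $G_n$ with the $KL$-triple constraints and simultaneously the trace-ratio bound $\tau(\phi_n(x))/\tau(e_n - e_{n-1}) \ge r\,\tau(\Phi(x))$ on ${\cal H}_n$ — hold for one and the same choice of data. I would handle this by first getting (i), (iii) from Corollary \ref{C82+} applied to $J$ (so the AF level is controlled exactly), then transferring to $C$ via the embedding and a standard perturbation: replace the raw composition by a genuine ${\cal G}_n$-$\delta_n$-multiplicative map, absorb the small error into a unitary perturbation inside the corner (using that the corners $(e_n - e_{n-1})B(e_n - e_{n-1})$ have tracial rank zero, hence real rank zero and good projection lifting, so nearby approximately multiplicative maps into them can be corrected), and note that such small perturbations change neither the $KL$-data on $G_n$ (by the $KL$-triple property) nor the trace ratios by more than a controllable amount, which one pre-arranges by taking $r$ slightly smaller at the AF stage and shrinking $\delta_n$. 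Finally I would need to re-establish (ii): since $\Phi$ and $J \circ (C \hookrightarrow F)$ agree modulo $B$ and their $KK$-classes match, by Theorem \ref{TH2} there is a unitary in $M(B)$ conjugating one to the other modulo $B$; conjugating the whole diagonal decomposition $\{\phi_n\}$ and the approximate identity $\{e_n\}$ by a lift of this unitary (as in the proof of Corollary \ref{C82+}) gives agreement with $\Phi$ itself, and a further telescoping/compactness argument over ${\cal F}$ yields the norm estimate $\|\Phi(c) - \sum \phi_n(c)\| < \epsilon$ for $c \in {\cal F}$.
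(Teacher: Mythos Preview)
Your overall strategy matches the paper's: factor $\Phi$ through a unital simple AF-algebra $F$ with $\ker\rho_F=\{0\}$ via the ${\cal A}_1$ property, embed $F$ into $M(B)$, apply Corollary~\ref{C82+} to get the block-diagonal decomposition, and then compose back with $C\hookrightarrow F$ and the expectations $E_n$. The trace-ratio bound and the vanishing on $K_1(C)$ and on $\ker\rho_{C,f}$ follow for exactly the reasons you give.

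There is, however, a genuine gap in your treatment of the torsion-coefficient $K$-theory. You assert that $[\phi_n]|_{G_n\cap K_i(C,\Z/k\Z)}=0$ ``because finite-dimensional algebras have trivial $K_1$ and torsion $K$-theory.'' This is false for $i=0$: a finite-dimensional algebra $F'_n$ has $K_0(F'_n,\Z/k\Z)\cong(\Z/k\Z)^m\neq 0$, so merely factoring $\phi_n$ through $F'_n$ does \emph{not} force the map on $K_0(C,\Z/k\Z)$ to vanish. (For $i=1$ you are fine, since $K_1(F'_n,\Z/k\Z)=0$.) The paper repairs this with an additional step you omit: it tensors $F$ with the universal UHF algebra $Q$, writes $Q=\overline{\bigcup_k Q_k}$ with $Q_k=M_{r(k)}$, and chooses $r(k)$ divisible by an integer $m_k$ that annihilates every element of $G_k\cap\bigl(\mathrm{Tor}(K_0(C))\cup\bigcup_{m,i}K_i(C,\Z/m\Z)\bigr)$. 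Then $\phi_n$ factors through $c\mapsto c\otimes 1_{Q_n}$, which on $\underline{K}$ is multiplication by $r(n)$; since the relevant elements of $G_n$ are $m_n$-torsion and $m_n\mid r(n)$, they are killed. Without this UHF trick your argument for the vanishing on $K_0(C,\Z/k\Z)$ does not go through.

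A minor remark: your final paragraph, in which you conjugate by a unitary from Theorem~\ref{TH2} to match an externally given $\Phi$ with $J\circ(C\hookrightarrow F)$, is unnecessary. The lemma only asks that \emph{some} $\Phi$ with the stated properties be constructed, and the paper simply \emph{defines} $\Phi$ as the composition $j_D\circ j_F\circ\Psi$, so no conjugation is needed.
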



\begin{proof}
Choose a dense ordered countable subgroup $H\subset \Aff(T(B))$ such that   $1_{T(B)}\in H$ and 
\beq\label{monoid-1}
\Gamma(K_0(C))_+\subseteq H_+:=H\cap \Aff_+(T(B)).
\eneq
Let $H_1:=\{th: t\in \Q, h\in H\} \subseteq \Aff(T(B))$. 
There exists a separable simple unital AF-algebra $F$ such that $T(F)=T(B),$ 
${\rm ker}\rho_F=\{0\},$ 
$K_0(F)=H_1$  
and $[1_F]=1_{T(B)}.$ 
(Note that since $H_1$ is a dense ordered subgroup of $\Aff(T(B))$, {{we have that}}
the state space of $H_1$ is affine homeomorphic to $T(B)$. Hence, since $(K_0(F), K_0(F)_+, [1_F])
 = (H_1, {H_1}_+, 1_{T(B)})$,
$T(F) = T(B)$.)
Now
$H_1 \otimes \mathbb{Q} \cong H_1$, and so, by the Elliott classification of
AF algebras, $F \otimes Q \cong F$, where $\mathbb{Q}$ is the group of
rational numbers and $Q$ is the UHF algebra with dimension group
$(\mathbb{Q}, \mathbb{Q}_+, 1)$.   

Since 
 $C$ is in the class 
${\cal A}_1$,  there is a unital injective \hm\, $\Psi: C\to F$ such that $\Psi_{*0}=\iota\circ \Gamma,$
where $\iota: \Gamma(K_0(C))\hookrightarrow K_0(F)$ is the inclusion  as just 
described;  moreoever,
\begin{equation} \label{equ:Mar920231AM} \tau(\Psi(c)) = \gamma(\tau)(c) \makebox{  for all } c \in C \makebox{  and  }
\tau \in T(B). 
\end{equation}   
Let $D := F \otimes Q$ and  $j_F: F\to D$ be the unital embedding 
defined by $j_F(a)=a\otimes 1_Q.$  
Since $D \cong F$,
and since $D$ is in the class ${\cal A}$, there is a unital embedding
$j_D : D \rightarrow M(B)$ such that ${j_D}_{*0}$ gives the identification
of $K_0(F) \makebox{ } (= K_0(D))$ with 
$H_1 \subseteq \Aff(T(B)) = K_0(M(B))$.   
Hence, since ${j_F}_{*0} :  K_0(F) \rightarrow K_0(D)$ is the
identity map on the scaled ordered group $(K_0(F), K_0(F)_+, [1_F])$,
$$(j_D \circ j_F \circ \Psi)_{*0} = \Gamma.$$ 
Note that since $D$ is simple, $j_D(D)\cap B=\{0\}.$
Note also that $D \cong F$ and we can identify $T(D) = T(F) = T(B) = T(M(B))$. (The last equality
{{holds}}  because $M(B)/B$ is simple purely infinite.) In fact, $j_F$ and $j_D$ induce
affine homeomorphisms (which can be viewed as  identity maps) $T(D) \rightarrow T(F)$ and
$T(B) \rightarrow T(D)$ respectively.

So if we define   $$\Phi:= j_D \circ j_F\circ \Psi:C\to M(B)$$
then   ${\Phi}_{*0} = \Gamma$ and by (\ref{equ:Mar920231AM}),   
\beq
\tau(\Phi(c))=\gamma(\tau)(c) \rforal c\in  C\andeqn \tau\in T(B).
\eneq

We may also assume that for any $n,$   since $G_n^g$ is finitely generated, 
$G_n$ contains a generating 
set of $G_n^g\cap {\rm ker}\rho_{C,f}$.   
For all $n$, set 
\beq\label{85-e-4}
\sigma_n=\inf\{\tau(\Phi(x)):x\in {\cal H}_n\andeqn \tau\in T(B)\}>0.
\eneq
Choose $1>r_0>r$ such that 
\beq\label{85-e-5}
r_0(1-{1-r_0\over{16}})>r.
\eneq

Let $F=\overline{\cup_{n=1}^\infty F'_n},$ where for all $n$,
$F'_n$ is a finite dimensional C*-algebra, 
$F'_n\subset F'_{n+1},$ and 
$1_{F'_n}=1_F.$   Let $E'_n: F\to F'_n$ be an expectation, $n=1,2,....$
\Wlog, passing to a subsequence of $\{F'_n\}$ if necessary,
 we may assume that for all $n$, the map  
 $E'_n\circ \Psi : C \rightarrow F'_n$ is ${\cal G}_n$-$\dt_n$-multiplicative. 
Moreover, we may also assume that 
\beq\label{85-e-10}
\|E'_n\circ \Psi(x)-\Psi(x)\|<((1-r_0)\dt_n\sigma_n/64)\|x\|\rforal x\in {\cal H}_n, \,\,n=1,2,....
\eneq
{{Choose $m_k\in \N$ such that $m_kg=0$ for any $g\in G_k\cap Tor(K_0(C))$ 
and any $g\in G_k\cap K_i(C, \Z/m\Z)$, for all $m\in \N$ and $i=0,1.$}}
{{Note that we may write $Q=\overline{\cup_{k=1}^\infty Q_k},$ where 
$Q_k\subset Q_{k+1},$   $1_{Q_k}=1_Q,$
and $Q_{k}=M_{r(k)}$ such that ${m_k} |r(k),$ $k\in \N.$}}
Put $F_n=F_n'\otimes Q_n,$ $n=1,2,....$ Note $j_D(x)=x\otimes 1_{Q_k}.$
%
Clearly, $F \cong F \otimes Q = \overline{\bigcup_{n=1}^{\infty} F'_n \otimes
Q_n}.$
%

To simplify the notation, we may assume that $\Psi(x)\in F_1'$ (and
hence, $j_F \circ \Psi(x) \in F_1$) for all $x\in {\cal F}.$ 

Since $\Gamma(x)=0$ for all $x\in {\rm ker}\rho_{C,f},$ and 
 since   $K_1(F)=\{0\},$
 we have that 
 \beq
 [E_n'\circ \Psi]|_{G_n^g\cap {\rm ker}\rho_{C,f}}=0\andeqn
 [E_n'\circ \Psi]|_{G_n\cap K_1(C)}=\{0\},
 \eneq
$n=1,2,....$
Let $E_k'': Q\to Q_k$ be an expectation and $E_k$ the expectation given
by $E_k=E_k'\otimes E_k'': D\to F_k\otimes Q_k$.  

It follows 
that,  {{for $n\in \N,$}} 
\beq
&&[E_n\circ j_F\circ \Psi]|_{G_n\cap K_i(C, \Z/m\Z)}=0\,\,\,\rforal m\in {{\N}}\andeqn i=0,1, {{\andeqn}}\\
 &&[E_n\circ j_F\circ \Psi]|_{G_n^g\cap {\rm ker}\rho_C}=0\andeqn
 [E_n\circ j_F\circ \Psi]|_{G_n\cap K_1(C)}=\{0\}.
 \eneq

By  Corollary \ref{C82+}, \wilog, we may assume that
there is an approximate identity $\{e_n\}$ of $B$ and 
for each $n$, a unital \hm\, $\psi_n: {{F_n'}} \otimes Q_n 
\to (e_n-e_{n-1})B(e_n-e_{n-1})$
such that 
\beq
&&\hspace*{-0.8in} j_D(d)-\sum_{n=1}^\infty \psi_n\circ E_n(d) \in B \rforal d\in D,\\
&&\hspace*{-0.8in} \|j_D(j_F\circ \Psi(a))-\sum_{n=1}^\infty \psi_n\circ E_n(j_F\circ \Psi(a))\|<\ep\rforal a\in {\cal F},\andeqn\\\label{85-e-20}
&&\hspace*{-0.8in} {\tau(\psi_k(d))\over{\tau(e_k-e_{k-1})}}>r_0\tau(j_D(d))\rforal d\in ({{F_k'}}\otimes Q_k)_+\setminus \{0\}, \tau \in T(B) \makebox{ and } k \geq 1.
\eneq
Let $\phi_n:=\psi_n\circ E_n\circ j_F\circ \Psi,$ $n=1,2,....$ 
Thus, 
\beq
&&j_D\circ j_F\circ \Psi(c)-\sum_{n=1}^\infty \phi_n(c)\in B\rforal c\in C,\\
&&\|j_D\circ j_F\circ \Psi(a)-\sum_{n=1}^\infty \phi_n(a)\|<\ep\rforal a\in {\cal F}.
\eneq
By  \eqref{85-e-20}, \eqref {85-e-10}, \eqref{85-e-4} and \eqref{85-e-5}, we have that for all $n$, for all $x\in {\cal H}_n,$ 
\beq
{\tau(\phi_n(x))\over{\tau(e_n-e_{n-1})}}&>&r_0\tau(j_D(E_n\circ j_F\circ \Psi(x)))\\
&>&r_0\tau(j_D\circ j_F\circ \Psi(x))-r_0{(1-r_0)\dt_n\sigma_n\over{16}}\\
&>&r_0(1-{(1-r_0)\dt_n\over{16}})\tau(\Phi(x))>r\tau(\Phi(x)). \hspace{0.1in}
\eneq

\end{proof}

\begin{lem}\label{L86} 
Let $C$ be a unital \CA\, in ${{{\cal A}_1}}$  and $\Psi: C\to M(B)$ be a unital 
\hm\, such that $\pi\circ \Psi$ is injective.

Let $\{G_n\}$ be an increasing sequence of finite subsets of $\underline{K}(C)$ such that
$\cup_{n=1}^\infty G_n=\underline{K}(C),$ let $\{{\cal G}_n\}$ be an increasing sequence of finite subsets 
of $C$ such that $\cup_{n=1}^\infty{\cal G}_n$ is dense in $C,$  and  a decreasing  sequence of positive numbers
$\{\dt_n\}$ with $\sum_{n=1}^\infty\dt_n<1,$  $\ep>0$ and  ${\cal F}\subset C$ be a finite subset.
We also assume  that $(\dt_n, {\cal G}_n, G_n)$ is a $KL$-triple and $1>r>0.$
Let ${\cal H}_n$ be an increasing sequence of finite subsets 
of $C_+^{\bf 1}\setminus \{0\}$ such that $\cup_{n=1}^\infty{\cal H}_n$ is dense in $C_+^{\bf 1}.$

Let $\ep>0$ and ${\cal F}\subset C$ be a finite subset.
Let  $\Phi: C\to M(B)$ be a unital injective homomorphism
 with $\Phi_{*0}=\Psi_{*0}$  
for which
there exists an approximate identity $\{e_n\}$ of $B$ consisting of projections 
 (with $e_0=0$ and $e_n-e_{n-1}\not=0$)
and a sequence of ${\cal G}_n$-$\dt_n$-multiplicative \morp s $\phi_n: C\to (e_n-e_{n-1})B(e_n-e_{n-1})$
such that
\beq
&& \label{equ:Mar920232AM} [\phi_n]|_{G_n^g\cap {\rm ker}\rho_{C,f}}=0,\,\,
[\phi_n]|_{ G_n\cap K_1(C)}=0,\\
&&{[\phi_n]}|_{G_n\cap K_i(C, \Z/k\Z)}=0\tforal k\ge 2\,\,\, (i=0,1),\\
&&{\tau(\phi_n(x))\over{\tau(e_n-e_{n-1})}}>r\tau(\Phi(x))\tforal x\in {\cal H}_n\tand \tau\in T(B),
\eneq
$n=1,2,...,$    and 
\beq
\Phi(c)-\sum_{n=1}^\infty \phi_n(c)\in B\tforal c\in C\tand\\\label{86-T-01}
\|\Phi(a)-\sum_{n=1}^\infty \phi_n(a)\|<\ep\tforal a\in {\cal F}.
\eneq
Then there exist {{a unitary $V\in M_2(M(B)),$}}
an integer $N_1,$ and a ${\cal G}_1$-$\dt_1$-multiplicative \morp\,
$L: C\to  e_{N_1}'Be_{N_1}'$ such that $[L]|_{G_1}$ is well defined, 
{{$V^*(1_{M(B)}-e_{N_1})V\in 1_{M(B)}+B,$
$e_{N_1}'=1_{M(B)} -V^*(1_{M(B)} -e_{N_1})V\in B,$}} 
\beq
&&\Psi(c)-\sum_{n=N_1+1}^\infty  V^*\phi_n(c)V\in B\tforal c\in C,\tand\\\label{86-T-02}
&&\|\Psi(c)-(\sum_{n=N_1+1}^\infty V^*\phi_n(c)V+L(c))\|<\ep\tforal c\in {\cal F}.
\eneq

If, in addition, $\tau\circ \Psi(c)=\tau\circ \Phi(c)$ for all $c\in C,$ then we may further assume 
that, 
\beq\label{86t-1}
\tau(L(x))
>r\tau(\Phi(x))\tand\\\label{86t-2}
|\tau(L(x))-\tau(\sum_{j=1}^{N_1}\phi_j(x))|<\ep\hspace{0.2in}
\eneq
for all $ x\in {\cal H}_1\tand\tau\in T(B)$, and 
{{if}} 
{{a finite subset of projections $p_1,p_2,...,p_s, q_1,q_2,...,q_s\in M_l(C)$ (for some integer $l\ge 1$)
with the property that $\tau(p_i)=\tau(q_i)$ for all $\tau\in T(C),$
$i=1,2,...,s,$  is given,}}
we may also require that 
\beq\label{86t-3}
\rho_B([L(p_i)])(\tau)=\sum_{j=1}^{N_1}\rho_B([\phi_j(p_i)])(\tau)\tand
\rho_B([L(q_i)])(\tau)=\rho_B([L(p_i)](\tau))
\eneq
for all $\tau\in T(B),\,\,1\le i\le s.$

We may further assume that, for each $n,$ the map
$c\mapsto (1-e_{N_1+n})\Psi(c) (1-e_{N_1+n})$ is ${\cal G}_{n+1}$-$\dt_{n+1}$-multiplicative, and 
\beq
\lim_{N\to\infty} \|\Psi(c)-(\sum_{n=N}^\infty V^*\phi_n(c)V+e_N\Psi(c)e_N)\|=0\tforal c\in C.
\eneq
\end{lem}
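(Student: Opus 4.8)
\textbf{Proof plan for Lemma \ref{L86}.}
The plan is to combine the given ``quasidiagonal-type'' decomposition of $\Phi$ (the tail sum $\sum_n \phi_n$) with the uniqueness theorem of Section 4 (Theorem \ref{TH2}), applied not to $\Phi$ and $\Psi$ directly but to two suitable truncations. First I would observe that $\pi_B\circ\Phi$ and $\pi_B\circ\Psi$ are both injective essential extensions of $C$ with the same $KK$-class: indeed $\Phi_{*0}=\Psi_{*0}$ by hypothesis, and since $C\in\mathcal A_1$ the relevant $KK$-group is determined by $K_0$ together with the tracial data, while the vanishing conditions \eqref{equ:Mar920232AM} on $[\phi_n]$ on torsion, on $K_1$, and on the $\mathbb Z/k\mathbb Z$-coefficient groups, plus $\Phi(c)-\sum_n\phi_n(c)\in B$, force $KK(\pi_B\circ\Phi)=KK(\pi_B\circ\Psi)$. (This is where the hypothesis that $(\delta_n,\mathcal G_n,G_n)$ is a $KL$-triple and that $\cup G_n=\underline K(C)$ is used: the sum $\sum_n[\phi_n]$ makes sense on each $G_n$ and is eventually $0$ on the appropriate summands, so the ``perturbation'' $\pi_B\circ\Phi$ versus $\pi_B\circ\Psi$ has trivial obstruction.) By Theorem \ref{TH2} there is a unitary $W\in M_2(M(B))$ (one passes to $M_2$ exactly as in the proof of Corollary \ref{C82+}, to accommodate non-unital Busby invariants and the fact that $\Phi(1_C)=\Psi(1_C)=1_{M(B)}$) with $\mathrm{Ad}\,\pi(W)\circ\pi\circ\Phi=\pi\circ\Psi$, so that $\Psi(c)-W^*\Phi(c)W\in M_2(B)$ for all $c$.

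Next I would run the truncation argument of Corollary \ref{C82+} essentially verbatim: set $p_n':=\mathrm{diag}(e_n,e_n)$ and $p_n:=W^*p_n'W$, an approximate identity of $M_2(B)$; using compactness of $\{$elements of $\mathcal F\}$ and \eqref{86-T-01}, choose $N_1$ large so that $(1_2-p_{N_1})$ approximately commutes with $\Psi(c)$ on $\mathcal F$ and so that $(1_2-p_{N_1})\Psi(c)(1_2-p_{N_1})\approx W^*(\sum_{n>N_1}\phi_n(c))W$ modulo $M_2(B)$; since $B$ has continuous scale we may also demand $\sup_\tau\tau(1_2-p_{N_1})$ small. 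The element $(1_2-p_{N_1})\Psi(1_C)(1_2-p_{N_1})$ is then a projection in $1_{M(B)}+B$, close to the projection $W^*(1_{M(B)}-e_{N_1})W$; conjugating by a small unitary $U\in M_2(1_{M(B)}+B)$ that identifies these two projections and absorbing $U$ into $W$, one obtains $V:=WU$ with $V^*(1_{M(B)}-e_{N_1})V\in 1_{M(B)}+B$. Setting $e_{N_1}':=1_{M(B)}-V^*(1_{M(B)}-e_{N_1})V\in B$, the corner $c\mapsto e_{N_1}'\Psi(c)e_{N_1}'$ is $\mathcal G_1$-$\delta_1$-small multiplicative after a further small perturbation (by the choice of $N_1$ and the standard argument that a compression that approximately commutes is approximately multiplicative), giving the map $L:C\to e_{N_1}'Be_{N_1}'$ with $[L]|_{G_1}$ well defined, and \eqref{86-T-01}--\eqref{86-T-02} follow by construction. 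The final ``We may further assume'' clause is immediate from the same estimates: for each $n$ enlarge $N_1$ (or reindex) so the compression by $1-e_{N_1+n}$ is $\mathcal G_{n+1}$-$\delta_{n+1}$-multiplicative, and the norm convergence $\|\Psi(c)-(\sum_{n\ge N}V^*\phi_n(c)V+e_N\Psi(c)e_N)\|\to 0$ holds because $\{e_N\}$ is an approximate identity and $\Psi(c)-V^*\Phi(c)V\in B$.

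For the ``if, in addition'' part assume $\tau\circ\Psi=\tau\circ\Phi$ on $C$. Then on top of the $KK$-equality we have agreement of traces, so the unitary $W$ from Theorem \ref{TH2} can be taken so that the truncation also matches traces; concretely, $\tau(e_{N_1}'\Psi(x)e_{N_1}')$ and $\sum_{j\le N_1}\tau(\phi_j(x))$ differ by $\tau\big((1_2-p_{N_1})\Psi(x)(1_2-p_{N_1})\big)-\tau\big(W^*(\sum_{n>N_1}\phi_n(x))W\big)$, which is small and nonnegative by the choice of $N_1$; this gives \eqref{86t-1}--\eqref{86t-2} after one more compactness-of-$\mathcal H_1$ bookkeeping, using the lower trace bound $\tau(\phi_n(x))/\tau(e_n-e_{n-1})>r\tau(\Phi(x))$ on $\mathcal H_1$ together with $r_0>r$ as in Corollary \ref{C82+}. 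Finally, for the given finite set of projections $p_i,q_i$ with $\tau(p_i)=\tau(q_i)$, one adjusts $L$ within its (approximate) unitary equivalence class so that $\rho_B([L(p_i)])=\sum_{j\le N_1}\rho_B([\phi_j(p_i)])$ and $\rho_B([L(q_i)])=\rho_B([L(p_i)])$ on $T(B)$; this is possible because $B$ has real rank zero and tracial rank zero, so projections in corners of $M(B)$ realizing prescribed trace values exist and the relevant $K_0$ classes in $M(B)$ are detected by traces (Theorem \ref{125-2331}, Corollary \ref{c123}), and because $[\Psi(p_i)]=[\Psi(q_i)]$ in $K_0(M(B))=\Aff(QT(B))$ already. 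The main obstacle is the first step: producing $W$ from Theorem \ref{TH2} while \emph{simultaneously} controlling traces and the $KL$-data of the tail, i.e., verifying carefully that $KK(\pi_B\circ\Phi)=KK(\pi_B\circ\Psi)$ from the hypotheses \eqref{equ:Mar920232AM} and $\Phi_{*0}=\Psi_{*0}$ (using $C\in\mathcal A_1$, the UCT-type description of $KK(C,C(B))$ available here, and the finite-subset exhaustion $\cup G_n=\underline K(C)$) — everything after that is the now-routine truncation-and-small-unitary-correction machinery already deployed in Corollary \ref{C82+}.
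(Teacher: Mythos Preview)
Your overall architecture is right and matches the paper: deduce $KK(\pi\circ\Psi)=KK(\pi\circ L_M)$, apply Theorem~\ref{TH2} to get $W\in M_2(M(B))$, then run the truncation-and-small-unitary argument of Corollary~\ref{C82+} to produce $V$, $N_1$, and $L(c)=e_{N_1}'\Psi(c)e_{N_1}'$. However, two of your justifications are off.

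First, the reason $KK(\pi\circ\Phi)=KK(\pi\circ\Psi)$ has nothing to do with $C\in\mathcal A_1$ or with the vanishing conditions \eqref{equ:Mar920232AM} on the $[\phi_n]$. Those conditions play no role in this step. The paper's argument is that $K_0(M(B))=\Aff(T(B))$ is torsion-free and $K_1(M(B))=0$, so $KK(C,M(B))=\mathrm{Hom}(K_0(C),\Aff(T(B)))$; hence $\Phi_{*0}=\Psi_{*0}$ alone gives $[\Phi]=[\Psi]$ in $KK(C,M(B))$, and therefore $[\pi\circ\Phi]=[\pi\circ\Psi]$. Since $\Phi-L_M\in B$ gives $\pi\circ\Phi=\pi\circ L_M$, you get $[\pi\circ\Psi]=[\pi\circ L_M]$. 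The class $\mathcal A_1$ concerns embeddings into AF-algebras and carries no $KK$-computational content; invoking it here is a red herring.

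Second, your plan for \eqref{86t-3} --- ``adjust $L$ within its (approximate) unitary equivalence class'' --- cannot work: $L$ is the fixed compression $e_{N_1}'\Psi(\cdot)e_{N_1}'$, and conjugating by a unitary does not change $\rho_B\circ[L]$. The paper does not adjust $L$; the identities are automatic. One first arranges that the matrix entries of the $p_i,q_i$ lie in the controlling finite set and that $\epsilon<1/16(l+1)^2$. Then from $\Phi_{*0}=\Psi_{*0}$ and the norm estimate one gets $[\Phi(p_i)]=[\Psi(p_i)]=[L'(p_i)]$ in $K_0(M(B))$ where $L'=\sum_{n>N_1}V^*\phi_n(\cdot)V+L$. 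Since $K_0(M(B))=\Aff(T(B))$ and $[V^*\phi_n(\cdot)V]=[\phi_n]$, subtracting the common tail gives $\rho_B([L(p_i)])=\sum_{j\le N_1}\rho_B([\phi_j(p_i)])$. Finally $[p_i]-[q_i]\in G_n\cap\ker\rho_{C,f}$, and \emph{this} is where the vanishing hypothesis \eqref{equ:Mar920232AM} is actually used: it forces $\rho_B([\phi_j(p_i)])=\rho_B([\phi_j(q_i)])$ for each $j$, hence $\rho_B([L(p_i)])=\rho_B([L(q_i)])$.
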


\begin{proof}
{{We will use the fact that ${\rm ker}\rho_{C,f}={\rm ker} \rho_C$ (see \ref{Rrho-1}).}}
Let ${\cal F}\subset  C$ be a finite subset. 
%
Put ${\cal F}'={\cal F}\cup {\cal G}_1\cup {\cal H}_1.$ 
If $p_1, p_2,...,p_s, q_1,q_2,...,q_s\in  M_l(C)$ are given projections,
we may assume that $[p_{{k}}],\, {{[q_{{k}}]}} \in G_1,$ $i=1,2,..., s.$ 
Write $p_k=(a_{p_k, i,j})_{l\times l}$  {{and $q_k=(a_{q_k, i,j})_{l\times l}$}} for some $a_{p_k,i,j}, {{a_{q_k, i,j}}}\in C$
($1\le k\le s$), 
{{respectively.}}
Let ${\cal F}_p\subset C$ be a finite subset 
such that  $a_{p_k,i,j}, a_{q_k, i,j}\in {\cal F}_p.$ In what follows we may assume 
that ${\cal F}'\supset {\cal F}_p.$

   We assume also that $1_C\in {\cal F}.$
     Let $0<\ep<{1\over{16(l+1)^2}}.$ 
   Choose $1>r_0>r$ such that $r_0(1-{1-r_0\over{4}})>r.$
   Let $\Gamma:=\Psi_{*0}: K_0(C)\to \Aff(T(B)).$ 
   
   Put
\beq\label{86-sigma}
\sigma:=\inf\{\tau(\Psi(c)):c\in  {\cal H}_1\andeqn \tau\in T(B)\}>0.
\eneq

   Let $\ep_0:=\min\{\ep\sigma/2, (1-r)\sigma/2\}.$

Let $\Phi$  with $\Phi_{*0}=\Gamma$ and $\{\phi_n\}$ be given by Lemma \ref{L84}
(with ${\cal F}'$ in place of ${\cal F},$ $r_0$ in place of $r,$ and $\ep_0/4$ in place of $\ep$).

We will repeat the argument used in the proof of Lemma \ref{C82+}.
Define $L_M: C\to M(B)$ by $L_M(c)=\sum_{n=1}^\infty\phi_n(c)$ for $c\in C.$
Then
\beq\label{86-Phi}
\Phi(c)-L_M(c)\in B\rforal c\in C\andeqn
\|\Phi(x)-L_M(x)\|<\ep_0/4\rforal x\in {{{\cal F}'}}.
\eneq

Note $\pi\circ L_M: C\to M(B)/B$ is a unital essential extension of $C.$ 
Denote by $\pi: M(B)\to M(B)/B$ as well as $M_2(M(B))\to M_2(M(B)/B)$ the quotient maps.
{{Recall that $K_0(M(B))=\Aff(T(B))$ (which is torsion free) and $K_1(M(B))=0.$
It follows that \\$KK(C, M(B))={\rm Hom}(K_0(C), \Aff(T(B)).$ In particular, 
$[\Psi]=[\Phi]$ in $KK(C, M(B)).$}} 
It follows from  {{\eqref{86-Phi}}} that $[\pi\circ \Psi]=[\pi\circ L_M].$ 
Therefore, by {{Theorem \ref{TH2},}}  there exists a unitary $W\in M_2(M(B))$ such that
\beq
{\rm Ad}\, \pi(W)\circ \pi\circ L_M=\pi\circ \Psi.
\eneq
It follows, for any $c\in C,$ 
 \beq
\Psi(c)-W^*(\sum_{n=1}^\infty \phi_n(c))W\in M_2(B).
\eneq  
Let $p_n'=\diag(e_n, e_n)\in M_2(B),$ {{$n\in \N.$}} Then $\{p_n:=W^*p_n'W\}$ forms an approximate 
identity for $M_2(B).$  

To simplify notation, we henceforth sometimes write $1_2 = 1_{M_2(M(B))}$ and
$1 = 1_{M(B)}$.

Choose $0<\dt=\min\{\ep_0/4, \dt_1/4\}.$
Since ${\cal F}$ is finite, there is $N_1\ge 1$ such that
\beq\label{83-a-1}
&&\hspace{-0.6in}\|(1_2-p_{N_1})\Psi(x) 
-W^* (\sum_{N_1+1}^\infty \phi_n(x))W\|
<\dt/{{2^9}} \andeqn\\
&&\hspace{-0.6in}\|({{1_2}}-p_{N_1})\Psi(x)(1_2-p_{N_1})
-W^*(\sum_{N_1+1}^\infty \phi_n(x))W\|<\dt/{{2^9}}\rforal x\in {\cal F'} \cup 
{\cal F'}^*, \andeqn\\
\label{86-a-1++}
&&\hspace{-0.6in}(1_2-p_{N_1})\Psi(c)(1_2-p_{N_1})-W^*(\sum_{N_1+1}^\infty \phi_n(c))W\in M_2(B)\rforal  c\in C.
\eneq
Recall that $B$ has continuous scale, so, by choosing sufficiently large $N_1,$
we may also assume 
that
\beq\label{85-trace-1}
\sup\{\tau(1_2-p_{N_1}):\tau\in T(B)\}<\ep_0/4\rforal \tau\in T(B).
\eneq
Note {{also that, for all $k,m\in \N,$}} $p_k$ commutes with $W^*(\sum_{n=m}^\infty \psi_n(c))W$ for all $c\in C.$	
Hence, by \eqref{83-a-1},
\beq\nonumber
\hspace{-0.1in}({{1_2}}-p_{N_1})1_{M(B)}&=&(1_2-p_{N_1})W^*L_M(1_{M(B)})W+(1_2-p_{N_1})(1_{M(B)}-W^*L_M(1_{M(B)})W)\\\nonumber
&\approx_{\dt/{{2^9}}}&({{1_2}}-p_{N_1})W^*L_M(1_{M(B)})W=
W^*L_M(1_{M(B)})W(1_2-p_{N_1})\\\nonumber
&\approx_{\dt/2^9}&(1_{M(B)}-W^*L_M(1_{M(B)})W)(1_2-p_{N_1})+W^*L_M(1_{M(B)})W(1_2-p_{N_1})\\
&=&1_{M(B)}(1_2-p_{N_1}).
\eneq
{{Note that we identify $1_{M(A)}$ with the identity of $\wtd B.$ It follows that 
$1_{M(A)}(1_2-p_{N_1})1_{M(A)}\in \wtd B.$}} 
Thus there is a projection $d\in 1+B$ such that 
\beq
\|d-(1_2-p_{N_1})\Psi(1_C)(1_2-p_{N_1})\|<\dt/{{2^7}}.
\eneq
It follows that  {{(see also \eqref{83-a-1})}}
\beq
&&\|d\Psi(x)-\Psi(x)d\|<\dt/2^7+\dt/2^9\rforal x\in {{{\cal F}'}}\andeqn\\
&&\|d-W^*(\sum_{N_1+1}^\infty \phi_n(1_C))W\|<{{\dt/2^7+\dt/2^9.}}
\eneq
Note both $d$ and $W^*(\sum_{N_1+1}^\infty \phi_n(1_C))W{{=W^*(1-e_{N_1})W}}$ are projections 
in $\td B+M_2(B).$
There is a unitary $U\in M_2(\td B)$ such that
\beq\label{86-d-1}
\|U-1_{M_2(\td B)}\|<\dt/{{2^5}}\andeqn
U^*W^*(\sum_{N_1+1}^\infty \phi_n(1_C))WU=d.
\eneq
It follows that
\beq
d\Psi(c)d-U^*W^*(\sum_{N_1+1}^\infty\phi_n(c))WU\in B\rforal c\in C\andeqn\\
\|d{{\Psi}}(x)d-U^*W^*(\sum_{N_1+1}^\infty\phi_n(x))WU\|<\dt/2\rforal x\in {\cal F'}.
\eneq
Put $e_{N_1}':=1-d\in B.$ Define $L: C\to e_{N_1}'Be_{N_1}'$ by 
$L(c)=(1-d)\Psi(c)(1-d)$ for all   $c \in C$. 
Then $L$ is ${\cal F'}$-$\dt$-multiplicative.
One then checks that, for all $x\in {\cal F'},$
\beq
\|\Psi(x)-(U^*W^*(\sum_{n=N_1+1}^\infty\phi_n(x))WU+L(x))\|<\ep_0.
\eneq
Let $e_n':= U^*W^*e_{n}WU$ for all $n>1,$ 
and  define $\psi_n:C\to (e_n'-e_{n-1}')B(e_n'-e_{n-1}')$ by  
$\psi_n(c)=U^*W(\phi_{n}(c))WU$ for all $c\in C$   and $n>1.$
Then $\{e_n'\}$ is an approximate identity for $B$ and 
\beq\label{86-nn519}
&&\|\Psi(x)-(\sum_{n=N_1+1}^\infty\psi_n(x)+L(x))\|<\ep\rforal x\in {\cal F'}\andeqn\\
&&\Psi(c)-(\sum_{n=N_1+1}^\infty\psi_n(c)+L(c))\in B \rforal
c \in C.
\eneq
{{Choose $V=WU,$ then, by \eqref{86-d-1}, $d=V^*(1-e_{N_1})V$ which implies that 
$V^*(1-e_{N_1})V\in 1+B.$}} Hence  the first part of the lemma follows. 
We also have, by  the second part of \eqref{86-d-1}, \eqref{85-trace-1}, \eqref{86-sigma},
\beq
 \tau(L(x))&=&\tau((1-d)\Psi(x)(1-d))\\
&=&\tau(\Psi(x))-\tau(d\Psi(x)d)\ge \tau(\Psi(x))-\tau(d)=\tau(\Psi(x))-\tau(1-e_{N_1})\\
&\ge& \tau(\Psi(x))-\tau(1_2-p_{N_1})>\tau(\Psi(x))-\ep_0/4\\
&\ge& (1-{1-r\over{4}})\tau(\Psi(x))>r\tau(\Psi(x))\rforal x\in {\cal H}_1\andeqn \tau\in T(B).
\eneq

Suppose now that $\tau\circ \Phi=\tau\circ \Psi.$ Then estimates above {{imply}} that 
\beq
{{\tau(L(x))
>r\tau(\Phi(x))}}\rforal x\in {\cal H}_1\andeqn \tau\in T(B).
\eneq
Moreover,  by \eqref{86-Phi} and \eqref{85-trace-1}, for all $x\in {\cal H}_1,$ 
\beq\nonumber 
&&\hspace{-0.5in}|\tau(L(x))-\tau(\sum_{j=1}^{N_1}\phi_j(x))|\le |\tau(L(x))-\tau(\Psi(x))|+|\tau(\Phi(x))-\sum_{j=1}^{N_1}\tau(\phi_(x))|\\
\nonumber
&&\hspace{0.2in}\le \tau(1_2-p_{N_1})+|\tau(\Phi(x))-\tau(L_M(x))|+|\tau(L_M(x))-\sum_{j=1}^{N_1}\tau(\phi_(x))|\\\nonumber
&&\hspace{0.2in}<\ep_0/4+\ep_0/4+\ep_0/4<\ep\rforal \tau\in T(B).
\eneq

Recall $[p_k], [q_k] \in G_1$, $\tau(p_k) = \tau(q_k)$ 
for all $\tau \in T(C)$,  ${{a_{p_k,i,j}, a_{q_k, i,j}\in {\cal F}',}}$
for $k = 1, ..., s$, $1 \leq i,j \leq l$, 
and 
$\ep<1/16(l+1)^2.$ Then,  since $\tau\circ \Phi=\tau\circ \Psi$ for all $\tau\in T(B),$
{{and $\Phi_{*0}=\Psi_{*0},$}} 
{{by  \eqref{86-nn519},}}
\beq
[\Phi(p_i)]=[\Psi(p_i)]=[L'(p_i)]\andeqn [\Phi(q_i)]=[\Psi(q_i)]=[L'(q_i)]\,\,\,i=1,2,...,s,
\eneq
where $L'(x)=\sum_{i=N_1+1}^\infty \psi_n(x)+L(x)$  for all $x\in C.$
Then (recall that $B$ has continuous scale)
\beq
&&\rho_B({{[L(p_i)]}})(\tau)=\rho_B([\sum_{j=1}^{N_1}\phi_j(p_i)])(\tau)\andeqn\\
&&\rho_B({{[L(q_i)]}})(\tau)=\rho_B([\sum_{j=1}^{N_1}\phi_j(q_i)])(\tau) \rforal \tau
\in T(B),\,\, i=1,2,...,s.
\eneq
Also, for all $1 \leq i \leq s$,  
since $[p_i] - [q_i] \in G_n \cap {{\rm ker}} \rho_{C,f}$ for all $n$, by (\ref{equ:Mar920232AM}), we must
have that  
$$\rho_B([\sum_{j=1}^{N_1}\phi_j(p_i)])(\tau)= \rho_B([\sum_{j=1}^{N_1}\phi_j(q_i)])(\tau)
\makebox{  for all   }  \tau \in T(B).$$
Hence, 
$$\rho_B({{[L(p_i)]}})(\tau) = \rho_B({{[L(q_i)]}})(\tau) \makebox{  for all  }
\tau \in T(B).$$

{{Hence}} the  {{second}} last part of the lemma also follows {{(so \eqref{86t-1}, \eqref{86t-2} and \eqref{86t-3} hold).}}

For the last part of the lemma, we note that $\lim_{n\to\infty}\|(1-e_{N_1+n})\Phi(c)-\sum_{k=N_1+n}^\infty \phi_n(c)\|=0.$
So the last part of the lemma follows from passing to a subsequence of $\{e_n\}.$

\end{proof}

\section{{A Voiculescu--Weyl--von Neumann theorem}}


\begin{prop}\label{Ptrace}
Let $C$ be a unital separable  amenable \CA\, with $T(C)\not=\emptyset.$ 
For any $\ep>0$ and finite subset ${\cal F}\subset C,$ there exist 
$\dt>0$ and a finite subset ${\cal G}\subset C$ satisfying the following:
For any unital \CA\, $A$ with $T(A)\not=\emptyset,$ and any unital ${\cal G}$-$\dt$-multiplicative \morp\, 
$L: C\to A,$ there exist a continuous affine map $L_\sharp: T(A)\to T(C)$ 
such that
\beq
|\tau(L(c))-L_{\sharp}(\tau)(c)|<\ep\tforal c\in {\cal F}\tand \tau\in T(A).
\eneq

\end{prop}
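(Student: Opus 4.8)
The plan is to argue by contradiction using a standard compactness/ultrafilter argument, since the statement is a "local" stability statement about traces being approximately preserved by almost-multiplicative maps. First I would observe that it suffices to prove the following: if the conclusion fails for some $\ep_0 > 0$ and finite subset $\F_0 \subset C$, then we obtain a sequence of unital C*-algebras $A_n$ with $T(A_n) \neq \emptyset$, a sequence of finite subsets $\G_n \nearrow$ with $\bigcup \G_n$ dense in $C$, a sequence $\dt_n \searrow 0$, unital $\G_n$-$\dt_n$-multiplicative c.p.c.\ maps $L_n \colon C \to A_n$, and tracial states $\tau_n \in T(A_n)$ such that for every continuous affine map $\gamma \colon T(A_n) \to T(C)$ there exists $c \in \F_0$ with $|\tau_n(L_n(c)) - \gamma(\tau_n)(c)| \geq \ep_0$. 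In particular, taking $\gamma$ arbitrary, this says $\tau_n \circ L_n$ is "far" (on $\F_0$) from the restriction to $T(A_n)$ of any affine continuous $T(A_n)\to T(C)$; but it will be cleaner to note that the functional $c \mapsto \tau_n(L_n(c))$ on $C$ is what we must control directly.

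**Key steps.** The second step is to consider the functionals $f_n \colon C \to \C$ defined by $f_n(c) := \tau_n(L_n(c))$. Each $f_n$ is a state on $C$ (it is positive and unital since $L_n$ is unital and positive, being c.p.c., and $\tau_n$ is a state). Since $C$ is separable, the state space $S(C)$ is weak* metrizable and compact, so after passing to a subsequence we may assume $f_n \to f$ weak* for some state $f \in S(C)$. The third step is the crucial one: using that $L_n$ is $\G_n$-$\dt_n$-multiplicative with $\dt_n \to 0$ and $\bigcup \G_n$ dense in $C$, I would show that $f$ is a trace, i.e.\ $f(ab) = f(ba)$ for all $a, b \in C$. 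Indeed, for fixed $a, b \in C$ and $\eta > 0$, choose $a', b' \in \bigcup \G_n$ within $\eta$ of $a, b$; then for $n$ large, $\|L_n(a'b') - L_n(a')L_n(b')\| < \dt_n$ and similarly for $b'a'$, so $|f_n(a'b') - f_n(b'a')| = |\tau_n(L_n(a'b')) - \tau_n(L_n(b'a'))| \leq |\tau_n(L_n(a')L_n(b')) - \tau_n(L_n(b')L_n(a'))| + 2\dt_n = 2\dt_n$, using that $\tau_n$ is a trace on $A_n$. Letting $n \to \infty$ gives $f(a'b') = f(b'a')$, and letting $\eta \to 0$ gives $f(ab) = f(ba)$. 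Hence $f \in T(C)$. The fourth step: now define $\gamma_n \colon T(A_n) \to T(C)$ to be the constant map with value $f$ (this is trivially continuous and affine, and $f \in T(C)$). Then $\gamma_n(\tau_n) = f$, and since $f_n \to f$ weak*, for $n$ large we have $|f_n(c) - f(c)| < \ep_0$ for all $c \in \F_0$ (finite set!), i.e.\ $|\tau_n(L_n(c)) - \gamma_n(\tau_n)(c)| < \ep_0$ for all $c \in \F_0$, contradicting the choice of $\tau_n$. This contradiction completes the proof.

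**Main obstacle.** The only genuine subtlety is making the quantifiers in the contradiction hypothesis precise: the negation of the statement gives, for each candidate pair $(\dt, \G)$, a counterexample, and one must organize these into a single sequence $(A_n, L_n, \tau_n)$ with $\dt_n \to 0$ and $\G_n$ exhausting a dense subset of $C$ — this is routine diagonalization since $C$ is separable. A second minor point is that amenability (nuclearity) of $C$ is not actually needed for this particular statement — positivity and unitality of c.p.c.\ maps suffice for the trace-limit argument — but it does no harm to assume it, and it is consistent with how the map $L$ arises elsewhere in the paper. I expect no real difficulty beyond bookkeeping; the weak* compactness of $S(C)$ and the density of $\bigcup \G_n$ do all the work.
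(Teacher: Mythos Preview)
Your argument has a genuine gap in the very first step, the ``routine'' quantifier manipulation. The negation of the proposition says: for every pair $(\delta,\mathcal G)$ there exist $A$ and $L$ such that \emph{for every} continuous affine $\gamma\colon T(A)\to T(C)$ \emph{there exists} $\tau\in T(A)$ (depending on $\gamma$) and $c\in\mathcal F_0$ with $|\tau(L(c))-\gamma(\tau)(c)|\geq\ep_0$. You instead claim the existence of a single $\tau_n\in T(A_n)$ that defeats \emph{all} continuous affine $\gamma$ simultaneously; this is a stronger statement that does not follow from the negation. Indeed, testing your purported $\tau_n$ against constant maps $\gamma\equiv t$ shows your hypothesis is equivalent to ``$\tau_n\circ L_n$ is $\ep_0$-far from $T(C)$ on $\mathcal F_0$'', and this is exactly the pointwise statement whose negation the paper cites as Lemma~9.4 of \cite{Linclr1}. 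So what your compactness argument actually establishes (once the quantifiers are set up honestly) is only the pointwise conclusion: for each $\tau\in T(A)$ there is some trace $\gamma'(\tau)\in T(C)$ with $|\tau(L(c))-\gamma'(\tau)(c)|<\ep/2$ on $\mathcal F$. This is a correct and useful lemma, but it is not the proposition, because the assignment $\tau\mapsto\gamma'(\tau)$ need not be continuous or affine.

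The missing ingredient is precisely how to glue these pointwise choices into a single continuous affine map $T(A)\to T(C)$. The paper does this by invoking a finite-simplex approximation of $T(A)$ (Lemma~8.10 of \cite{GLIII}): one finds finitely many extreme traces $\tau_1,\dots,\tau_m\in\partial_e T(A)$ and a continuous affine retraction $\lambda\colon T(A)\to\operatorname{conv}\{\tau_1,\dots,\tau_m\}$ that moves points little on $L(\mathcal F)$; then one defines $L_\sharp$ by prescribing its values on the vertices $\tau_i$ to be the pointwise choices $\gamma'(\tau_i)$ and extending affinely, composed with $\lambda$. Your constant-map trick cannot replace this step, since a single trace in $T(C)$ cannot approximate $\tau\circ L$ uniformly over all $\tau\in T(A)$ when $T(A)$ is large.
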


\begin{proof}
Fix $\ep>0$ and a finite subset ${\cal F}\subset C.$
By Lemma 9.4 of  {{\cite{Linclr1},}}  there exist a $\dt>0$ and a finite subset ${\cal G} \subset C$
such that for any unital \CA\, $A$, 
for any ${\cal G}$-$\dt$-multiplicative \morp\,
$L: C\to A$, 
for each $\tau\in T(A),$ there is a tracial state $\gamma'(\tau)\in T(C)$ 
such that
\beq\label{Ptrace-e-1}
|\tau \circ L(c)-\gamma'(\tau)(c)|<\ep/2\rforal c\in {\cal F}.
\eneq 
Fix any such $A$ and $L.$
By Lemma 8.10 of {{\cite{GLIII},}} 
there exist a finite subset ${\cal T}\subset \partial_e(T(A))$ 
and a continuous affine map
$\lambda: T(A)\to \triangle,$ where $\triangle$ is 
the convex hull of ${\cal T},$ 
such that
\beq\label{Ptrace-e-2}
|\lambda(\tau)(a)-\tau(a)|<\ep/2\rforal a\in L({\cal F})\makebox{ and  } \tau \in T(A).
\eneq
Let  ${\cal T}=\{\tau_1, \tau_2,...,\tau_m\}.$ 
Define a continuous affine map $\gamma_1: \triangle\to T(C)$
by $\gamma_1(\tau_i)=\gamma'(\tau_i)$ (for a choice of $\gamma'(\tau_i)$), $i=1,2,...,m.$
Define $L_\sharp: T(A)\to T(C)$ by 
$L_\sharp=\gamma_1\circ \lambda.$ Then $L_\sharp$ is a continuous affine map.
Moreover, for each $\tau\in T(A),$ let $\lambda(\tau)=\sum_{i=1}^m \af_i(\tau) \tau_i$,
where $\af_i(\tau) \in [0,1]$ for all $i$, and $\sum_{i=1}^m \af_i(\tau) = 1$.  
Then  by \eqref{Ptrace-e-1} and \eqref{Ptrace-e-2},
for all $f \in {\cal F}$ and $\tau \in T(A)$, 
\beq
L_\sharp(\tau)(f)&=&\gamma_1(\sum_{i=1}^m \af_i(\tau) \tau_i)(f)=\sum_{i=1}^m\af_i(\tau)\gamma'(\tau_i)(f)\\
&\approx_{\ep/2}&\sum_{i=1}^m\af_i(\tau)\tau_i(L(f))=\lambda(\tau)(L(f))\approx_{\ep/2} \tau(L(f)).
\eneq
%
\end{proof}

\begin{df}\label{Ad89}
Let $C$ be a unital C*-algebra, $\delta > 0$, ${\cal G} \subset C$ be a
finite subset, and ${\cal P} \subset \underline{K}(C)$ be a finite subset. 
Let ${\cal H}\subset C_{s.a.}^{\bf 1}\setminus \{0\}$
be a finite subset  and $1>\eta>0.$
The triple $({\cal G}, \dt, {\cal P})$ is called a \emph{$KL$-triple  
associated with 
${\cal H}$ and $\eta$}  if it is a $KL$-triple and  for any unital separable \CA\, $D$ with $T(D)\not=\emptyset$ 
and any unital ${\cal G}$-$\dt$-multiplicative \morp\, 
$L: C\to D,$ there exists a continuous affine map $L_\sharp: T(D)\to T(C)$
such that
\beq
|\tau\circ L(c)-L_\sharp(\tau)(c)|<\eta\rforal c\in {\cal H}\andeqn \tau\in T(D).
\eneq
\end{df}

Note that, by Proposition  \ref{Ptrace},
 for any unital separable amenable \CA\, with $T(C)\not=\emptyset$ and 
any finite subset ${\cal H}\subset C_{s.a.}^{\bf 1}\setminus \{0\}$, 
such a triple $({\cal G}, \dt, {\cal P})$ always exists. 

Recall that if $H$ is a group and $E \subseteq H$, we let 
$E^g$
denote the subgroup of $H$ generated by $E$.  We also sometimes use the notation $G^E$ instead of $E^g$ as in
Definition \ref{DefA1} (3) (b).
Also, recall that for a C*-algebra $C$,
$K_0(C)^\ro_+ := \{ x \in K_0(C) : \rho_C(x) > 0 \} \cup \{ 0 \}$, and
$C_+^{q, {\bf 1}}$ {{is the image of $C_+^{\bf 1}$ in $\Aff(T(C)).$
Finally, recall that for any $h \in C_{s.a.}$, $\hat{h}$ is the induced element  of 
$\Aff(T(C))$, i.e., $\hat{h}(\tau) := \tau(h)$ for all $\tau \in T(C)$.}} 

{{Next we introduce a class of \CA s ${\cal A}_0.$ 
It will be shown in section 11 that many \CA s  (such as ${\cal Z}$-stable simple \CA s satisfying 
the UCT)
that we are interested 
are in ${\cal A}_0.$    Some formulations  (such as  axiom (3)(c) (Existence 2))  are a bit technical.
However,  they would make  some proofs  in this section easier as we will use these properties. }}

\begin{df}\label{DefA1}
Denote by ${\cal A}_0$ the class of unital separable amenable \CA s $C$ {{in ${\cal A}_1$}} which  satisfy the following conditions:

(1) (Inductive limit structure) There is an increasing  sequence of unital separable amenable {{\SCA\,s}}
$C_n \subseteq C$ with $1_{C_n}=1_C$, which satisfy the UCT such that 
$K_0(C_n)={{G_{0,n}}}
\oplus {\rm ker}\rho_{C_n}$
is finitely generated, {{where $G_{0,n}$ is a finitely generated free group, 
$G_{0,n}\cap {\rm ker}\rho_{C_n}=\{0\}$}} 
and $K_0(C_n)_+^\rho \cap G_{0,n}$ {{(see \ref{Drho} and the paragraphs before this definition)}}
is finitely generated as
a {{part of a}} positive cone, 
and $K_1(C_n)$ is also finitely generated, and 
$C=\overline{\cup_{n=1}^\infty C_n}.$  Denote by $\iota_n: C_n\to C_{n+1}$ and $\iota_{n,\infty}: C_n\to C$ the (injective) connecting maps and the induced embedding respectively 
(so that $C=\lim_{n\to\infty}(C_n, \iota_n)$).


(2) (Uniqueness)  Let $\Delta: C_+^{q, {\bf 1}}\setminus \{0\}\to (0,1)$ be a non-decreasing map.  For any $\ep>0$ and 
any finite subset ${\cal F}\subset C,$ there are a finite subset ${\cal G}\subset C,$ $\dt>0,$ a finite 
subset ${\cal P}\subset \underline{K}(C),$ a finite subset ${\cal H}_q\subset C_+^{\bf 1}\setminus \{0\},$ 
$\eta>0,$ 
and a finite subset ${\cal H}\subset C_{s.a.}$ such that for any unital
separable simple  {{\CA\,}}
A with tracial rank zero, for any 
two  ${\cal G}$-$\dt$-multiplicative \morp s 
 $L_1, L_2: C\to A$ which satisfy 
\beq
&&[L_1]|_{\cal P}=[L_2]|_{\cal P},\\
&&|\tau(L_1(x))-\tau(L_2(x))|<\eta\rforal x\in {\cal H}\andeqn\\
&&\tau(L_1(h))\ge \Delta(\hat h)\hspace{0.4in}\rforal h\in {\cal H}_q,
\makebox{  } \tau \in T(A),
\eneq
there is a unitary $u\in A$ such that
\beq
\|u^*L_1(a)u-L_2(a)\|<\ep\tforal a\in {\cal F}.
\eneq
Moreover, for each $n$,
 if ${\cal F}\subset \iota_{n, \infty}(C_n),$ then we may assume 
that ${\cal G}\subset \iota_{n, \infty}(C_n),$ ${\cal P}\subset [\iota_{n, \infty}](\underline{K}(C_n)),$
${\cal H}_q\subset \iota_{n, \infty}({C_n}_+^{\bf 1}\setminus \{0\})$ 
and ${\cal H}\subset \iota_{n, \infty}({C_n}_{s.a.}).$

(3) (Finite generation and existence)
For each $n$, 
there is a finite subset ${\cal Q}_n\subset \underline{K}(C_n)$ 
such that for any 
{{\CA\,}} $A$,  any  $\kappa\in {\rm Hom}_{\Lambda}(\underline{K}(C_n), \underline{K}(A)),$
$\kappa|_{{\cal Q}_n}$ determines $\kappa$ {{(this always holds as $K_i(C_n)$ 
is finitely generated---see  Cor. 2.11 \cite{DL}).}}
There is a finite subset $K'_{n,p}\subset K_0(C_n)_+\setminus \{0\}$ such that
{{$K_0(C_n)$ is generated by $K'_{n,p},$ 
and, for any $x\in  K_0(C_n)_+,$ there  exist  
$r_1, r_2, ..,r_{m(x)}\in  \N,$ 
$s_1,s_2,...,s_{n(x)}\in \Z$ and $x_1, x_2,..., x_{m(x)},
 y_1,y_2,...,y_{n(x)}
\in K_{n,p}'$ 
such  that 
\beq
x=\sum_{i=1}^{m(x)}r_i x_i+\sum_{j=1}^{n(x)} s_j  y_j
\eneq
where 
$\xi=\sum_{j=1}^{n(x)} s_j  y_j\in {\rm ker}\rho_{C_n},$ }}
%
%
{{and  $[1_{C_n}]\in K_{n,p}'.$}} We may assume that $K'_{n,p} 
\subseteq {\cal Q}_n$ and  ${{[\iota_n]}}(K'_{n,p} )\subseteq
K'_{n+1,p}$ for all $n$.   
{{(Warning: the statement above does not mean that 
$K_0(C_n)_+$ is finitely generated as positive cone.)}}

For each $n$, 
let ${\cal P}_n=[\iota_{n, \infty}]({\cal Q}_n),$  $K_{n,p} = [\iota_{n, \infty}](K'_{n,p}),$ 
and $G_n$ be the subgroup generated by ${\cal P}_n\cap K_0(C).$ Note that $G_n$ is generated by
$K_{n,p}$; and  $G_n = \iota_{n, \infty}(K_0(C_n))$.    We have that 
 $[1_C]\in K_{n,p}$   
 and ${\cal P}_n\subset {\cal P}_{n+1}$ for all $n$,
and
$\cup_{n=1}^\infty{\cal P}_n=\underline{K}(C).$ 
Note that by definition, $K_{n,p} \subseteq K_{n+1,p}$ for all $n$. 
We may assume that $l(n)\in \N$ such that there is a finite subset $\mathtt{P}_n\subset M_{l(n)}(C)$ 
such that $K_{n,p}=\{[p]: p\in \mathtt{P}_n\}.$
{{It should be noted this first part of (3) always holds.}}


 (a)  We have that $K_0(C)=\cup_{n=1}^\infty G_n$,  and for all $n$, 
we assume that
 $G_n=\Z^{r(n)}\oplus G_{o,n}$ where $G_{o,n}=G_n\cap {\rm ker}(\rho_{C}),$   and 
 {{$\Z^{r(n)}\cap {\rm ker} \rho_C=\{0\}.$}}

\textbf{Note:} Recall that by Remark \ref{Rrho-1},      
${\rm ker}\rho_C = {\rm ker} \rho_{C,f}$.  Hence, in the descriptions 
of $G_n$ and $G_{o,n}$ in the previous paragraph, every occurrence of 
${\rm ker}\rho_C$ can be replaced with ${\rm ker}\rho_{C,f}$.

(b) (Existence 1: K theory)
Fix $n \geq 1$.
Let ${\cal G} \subset \iota_{n,\infty}(C_n)$ be a finite subset
and $\delta > 0$ such that  
$({\cal G}, \delta, {\cal P}_n)$ is a KL triple.  
Let $A$ be a unital separable simple 
{{\CA\,}} with tracial rank zero, and
let $\kappa_n\in KL_{loc} (G^{{\cal P}_n\cup K_{n,p}}, \underline{K}(A))$ 
with 
$\kappa_n([1_C])=[1_A]$,  and $\kappa_n(g)\in K_0(A)_+\setminus \{0\}$ for all $g\in K_{n,p},$
and $\kappa_n(G_{o,n})\subset {\rm ker}\rho_A.$ 
Then there exists a unital ${\cal G}$-$\dt$-multiplicative \morp\,
$L : C \rightarrow A$ such that 
\beq
&&\hspace{-0.4in}
[L]|_{{\cal P}_n} ={\kappa_n}|_{{\cal P}_n}.
\eneq

(Note that since $K_{n,p}  \subseteq {\cal P}_n$, $G^{{\cal P}_n \cup K_{n,p}}
= G^{{\cal P}_n}$.) 

To simplify notation,
we will fix an increasing sequence $\{ \G_n \}$ of finite subsets of
$C$ and a strictly decreasing sequence $\{ \delta_n \}$ in $(0,1)$ 
such that $\G_n \subset \iota_{n, \infty}(C_n)$ for all $n$,
$\bigcup_{n=1}^{\infty} \G_n$ is dense in $C$, and
$\sum_{n=1}^{\infty} \delta_n < 1$.  Note that for each $n$,
{{we assume that 
there exists a unital ${\cal G}_n$-$\dt_n$-multiplicative \morp\,
$L_n : C \rightarrow A$ such that 
$[L_n]|_{{\cal P}_n} ={\kappa_n}|_{{\cal P}_n}.$}}

We further impose the requirement that $(\G_n, {{2}}\delta_n, \Pp_n)$
is a KL triple, for all $n$.

(c) (Existence 2: K theory and traces) Fix $n \geq 1$.  Let ${\cal H}\subset \iota_{n, \infty}({C_n}_{s.a.}^{\bf 1}\setminus \{0\})$ be a finite subset  
and $\eta>0.$  Suppose that that $({\cal G}_n, 2\dt_n, {\cal P}_n)$ is a $KL$-triple 
{{(see \ref{DefKL-triple} and \ref{Ad89})}} associated with ${\cal H}$ and $\eta/2.$ 

 For  any finite set ${\cal G}_0\supset {\cal G}_n \cup {\cal H},$
 and $0<\dt_0<\dt_n,$
for any unital separable simple
\CA\, $A$ with tracial rank zero,  
if $\phi_1: C\to A$ is a unital ${\cal G}_0$-$\dt_0$-multiplicative \morp\,
such that $[\phi_1]$ induces an element in $KL_{loc}(G^{{\cal P}_n\cup K_{n,p}}, \underline{K}(A))$,  
and  
there exists $\sigma_0 > 0$
such that for all $x\in K_{n,p},$ 
\beq\label{608d-0}
\rho_A([\phi_1](x))(\tau)>\sigma_0\rforal \tau\in T(A),\,\,
\eneq
and $[\phi_1](G^{{\cal P}_n\cup K_{n,p}}\cap {\rm ker}\rho_{C,f})\subset {\rm ker}\rho_A,$ 
%
and 
if $\kappa_0\in KL_{loc}(G^{{\cal P}_n\cup K_{n,p}}, \underline{K}(A))$ such that 
$\kappa_0(K_{n,p})\subset K_0(A)_+\setminus \{0\},$  ${\kappa_0}(G^{{\cal P}_n\cup K_{n,p}}\cap {\rm ker}\rho_{C,f})
\subset {\rm ker}\rho_A$, 
\beq\label{608d}
\|\rho_A(\kappa_0([1_C]))\|<\af_0\min\{1/2,\sigma_0\} 
\eneq
{{for some $\af_0\in (0, 1/2l(n))$}} 
and $p_0\in A$  is a projection such 
that $[p_0]=[\phi_1(1_C)]-\kappa_0([1_C]),$ 
then  there exists a unital ${\cal G}_n$-$\dt_n$-multiplicative \morp\, 
$\psi: C\to p_0 A p_0$ such that
\beq
&&[\psi]|_{{\cal P}_n}=([\phi_1]-\kappa_0)|_{{\cal P}_n}\andeqn\\
&&|\tau(\psi(h))-\tau(\phi_1(h))|<2\af_0+\eta\rforal h\in {\cal H}\andeqn \tau\in T(A).
\eneq
\end{df}

\begin{rem}\label{R810}
{{Note that if  
\eqref{608d}  holds, then, for all $x\in K_{n,p},$
\beq\label{608d-1}
\|\rho_A(\kappa_0(x))\|\le l(n) \|\rho_A(\kappa_0([1_C])\|<\sigma_0/2,
\eneq
and  thus if  \eqref{608d-0} also holds, then 
\beq 
[\phi_1](x)-\kappa_0(x)>0\rforal x\in K_{n,p}
\eneq}}

{{To see \eqref{608d-1} holds, let $x\in K_{n,p},$ 
We assume that $x\not=l(n)[1_C].$ }}
{{Thus, 
by the choice of $l(n),$ $x$ is represented by a projection in $M_{l(n)}(C). $ 
Hence $l(n)[1_C]-x\ge 0.$
By the assumption  of $K_{n,p}',$ 
there are $x_1, x_2,...,x_l, y_1, y_2,...,y_m\in K_{n,p}$ and 
integers $r_1, r_2,...,r_l\in \N,  s_1, s_2,...,s_m\in \Z$ such that  
\beq
l(n)[1_C]-x=\sum_{i=1}^l r_i x_i+\sum_{j=1}^m s_j y_j\andeqn
\zeta := \sum_{j=1}^m s_j y_j\in {\rm ker}\rho_C.
\eneq  
  Since $\rho_A(\kappa_0(\zeta))=0,$ it follows that 
  \beq
  \rho_A\circ \kappa_0((l(n)[1_C]-x)=\sum_{i=1}^l r_i \rho_A\circ  \kappa_0(x_i)>0.
  \eneq}}  
Hence 
$0<\rho_A(\kappa_0(x))<l(n)\rho_A(\kappa_0([1_C])).$ Thus \eqref{608d-1} holds.



\end{rem}

{{
\begin{prop}\label{10-3directsum}
Finite direct sums of \CA s in ${\cal A}_0$ are in ${\cal A}_0.$
\end{prop}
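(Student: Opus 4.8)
The plan is to reduce at once to two summands: writing $C=D\oplus E$ with $D,E\in\mathcal A_0$, the general finite case follows by induction, and all the defining properties of $\mathcal A_0$ are ``coordinatewise'' in spirit — the recurring mechanism being that a unital map (or embedding) out of $C$ into a unital $C^\ast$-algebra splits, up to a small perturbation and a conjugation, as a pair of unital maps into two orthogonal corners. Amenability, separability and unitality of $C$ are clear. For membership in $\mathcal A_1$: suppose $\mathfrak C$ is a unital simple infinite-dimensional AF-algebra with $\ker\rho_{\mathfrak C}=\{0\}$, $\alpha\in\Hom(K_0(C),K_0(\mathfrak C))$ is order preserving with $\alpha([1_C])\le[1_{\mathfrak C}]$, and $\Gamma$ is a compatible strictly positive map. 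By Proposition \ref{PA1A2} and Remark \ref{rem:CGSTW}, $D$ and $E$ have faithful traces, so $\widehat{1_D},\widehat{1_E}\neq 0$ and strict positivity of $\Gamma$ forces $\alpha([1_D]),\alpha([1_E])\neq0$. As $\mathfrak C$ is simple, infinite-dimensional and AF it has no nonzero minimal projections, so one can pick orthogonal projections $e_D,e_E\in\mathfrak C$ with $[e_D]=\alpha([1_D])$, $[e_E]=\alpha([1_E])$, $e_D+e_E\le1_{\mathfrak C}$; then $e_D\mathfrak Ce_D$ and $e_E\mathfrak Ce_E$ are again unital simple infinite-dimensional AF-algebras with trivial $\ker\rho$. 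Using the canonical rescaled affine identifications between $\Aff$ of $\mathfrak C$ and of its corners, and the standard description of the trace space of a direct sum, one restricts $\alpha$ and $\Gamma$ to the two coordinates and applies the $\mathcal A_1$-property of $D$ (into $e_D\mathfrak Ce_D$) and of $E$ (into $e_E\mathfrak Ce_E$); the resulting two embeddings add to an embedding $h:C\to\mathfrak C$ with $h_{\ast0}=\alpha|_{K_0(C)}$ and $\tau(h(a))=\Gamma(\widehat a)(\tau)$ for all $a\in C_{s.a.}$ and $\tau\in T_f(C)$.

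For condition (1) take $C_n:=D_n\oplus E_n$ and $\iota_n:=\iota_n^D\oplus\iota_n^E$. Then $K_i(C_n)=K_i(D_n)\oplus K_i(E_n)$, $\ker\rho_{C_n}=\ker\rho_{D_n}\oplus\ker\rho_{E_n}$, and $K_0(C_n)_+=K_0(D_n)_+\oplus K_0(E_n)_+$ (a projection in $M_r(D_n\oplus E_n)$ is a pair of projections); hence the free summand of $K_0(C_n)$ is $G_{0,n}^D\oplus G_{0,n}^E$, which is free, meets $\ker\rho_{C_n}$ trivially, and whose intersection with $K_0(C_n)_+^\rho$ is finitely generated as a cone since each summand is, and $C=\overline{\bigcup_n C_n}$. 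For the finite-generation data of (3), let $\mathcal Q_n$ be the union of the images of $\mathcal Q_n^D$ and $\mathcal Q_n^E$ under the summand inclusions $\underline K(D),\underline K(E)\hookrightarrow\underline K(D)\oplus\underline K(E)=\underline K(C)$, let $K'_{n,p}$ consist of the images of the corresponding sets for $D$ and $E$ together with $[1_{C_n}]$, and put $l(n):=\max\{l^D(n),l^E(n)\}$; the required decomposition of each positive element of $K_0(C_n)$, including the $\ker\rho$-correction, follows immediately from those for $D_n$ and $E_n$, and $\mathcal P_n,K_{n,p},G_n$ then split accordingly.

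For conditions (2), (3)(b) and (3)(c): a unital $\mathcal G$-$\delta$-multiplicative map $L:C\to A$ into a unital simple $A$ of tracial rank zero sends $1_D,1_E$ to approximately orthogonal projections with $K_0$-classes $[L]([1_D]),[L]([1_E])$; since $A$ has real rank zero and cancellation and $[1_D],[1_E]$ lie in the relevant $K$-theoretic test sets, one perturbs and conjugates $L$ so that $L(1_D)=p_1$, $L(1_E)=p_2$ with $p_1+p_2=1_A$, reducing to a pair of unital maps $D\to p_1Ap_1$ and $E\to p_2Ap_2$ into corners that are again unital simple of tracial rank zero. One then invokes (2), resp.\ (3)(b), (3)(c), for $D$ and $E$ separately and reassembles (a unitary $u_1+u_2$; a map $\psi_D\oplus\psi_E$; etc.), using the affine identification $T(A)\cong T(p_iAp_i)$ and the splitting of $\underline K$. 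For (2), the data $(\mathcal G,\delta,\mathcal P,\mathcal H_q,\eta,\mathcal H)$ for $C$ is obtained by taking unions of the coordinatewise data (with $\delta,\eta$ the minima); the comparison function for $D$ is $\Delta_D(\widehat h):=\Delta(\widehat{h\oplus0})$, which is well defined, non-decreasing and $(0,1)$-valued because $\widehat h\mapsto\widehat{h\oplus0}$ is monotone and injective on $D_+^{q,{\bf1}}$; and — this is the step requiring care — one adjoins $1_D$ and $1_E$ to $\mathcal H_q$, which forces $\tau(p_1),\tau(p_2)$ to be bounded below on $T(A)$ and thereby prevents the renormalization $T(A)\to T(p_iAp_i)$ from inflating the trace distances between the restricted maps. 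Conditions (3)(b) (pure $K$-theory existence) and the ``moreover'' clauses localizing the data to the stage-$n$ subalgebra $C_n=D_n\oplus E_n$ are automatic, since every choice above is made coordinatewise.

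The main obstacle is the trace estimate in (3)(c). After splitting $\phi_1$ and setting $\kappa_0^D:=\kappa_0|_{\underline K(D)}$, $\kappa_0^E:=\kappa_0|_{\underline K(E)}$, one uses strict comparison in $A$ to choose orthogonal projections $p_0^D\le\phi_1(1_D)$, $p_0^E\le\phi_1(1_E)$ with $[p_0^D]=[\phi_1(1_D)]-\kappa_0^D([1_D])$ (positive by Remark \ref{R810}), $[p_0^E]=[\phi_1(1_E)]-\kappa_0^E([1_E])$ and $p_0\sim p_0^D+p_0^E$; one must then produce $\psi_D\oplus\psi_E$ with the \emph{uniform} bound $|\tau(\psi(h))-\tau(\phi_1(h))|<2\alpha_0+\eta$, whereas adding the uniform bounds $2\alpha_0^D+\eta_D$ and $2\alpha_0^E+\eta_E$ returned by (3)(c) for $D$ and $E$ would lose a factor of two. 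The resolution is to keep the estimate pointwise in $\tau$: the compressions furnished by (3)(c) differ from $\phi_1$ in trace by at most, essentially, $\rho_A(\kappa_0^D([1_D]))(\tau)+\eta_D$ on the $D$-part and $\rho_A(\kappa_0^E([1_E]))(\tau)+\eta_E$ on the $E$-part, and the exact pointwise identity $\rho_A(\kappa_0^D([1_D]))(\tau)+\rho_A(\kappa_0^E([1_E]))(\tau)=\rho_A(\kappa_0([1_C]))(\tau)<\alpha_0\min\{1/2,\sigma_0\}$ then yields a total error $<\alpha_0\min\{1/2,\sigma_0\}+(\eta_D+\eta_E)<2\alpha_0+\eta$ once $\eta_D+\eta_E\le\eta$. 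Assembling $\psi:=\psi_D\oplus\psi_E:C\to(p_0^D+p_0^E)A(p_0^D+p_0^E)$ and conjugating $p_0^D+p_0^E$ onto $p_0$ completes the verification of (3)(c), and hence of $C\in\mathcal A_0$.
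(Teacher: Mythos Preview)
Your argument is largely in the right spirit, and in places more careful than the paper's (you address $\mathcal A_1$-membership explicitly, which the paper omits). But your handling of the trace estimate in condition (3)(c) contains a real error.

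You claim that ``adding the uniform bounds $2\alpha_0^D+\eta_D$ and $2\alpha_0^E+\eta_E$ returned by (3)(c) for $D$ and $E$ would lose a factor of two.'' This is a misconception: the bounds are not added, they are \emph{averaged}. For $\tau\in T(A)$ and $h=h_D\oplus h_E\in\mathcal H$, write $q_D=\phi_1(1_D)$, $q_E=\phi_1(1_E)$, $\beta_D=\tau(q_D)$, $\beta_E=\tau(q_E)$, so that $\beta_D+\beta_E=\tau(1_A)=1$. Since $\psi_D(h_D)$ and $\phi_1(h_D)$ both live in the corner $A_D:=q_DAq_D$, one has
\[
|\tau(\psi_D(h_D))-\tau(\phi_1(h_D))|=\beta_D\,|\tau_D(\psi_D(h_D))-\tau_D(\phi_1(h_D))|,\qquad \tau_D:=\tau|_{A_D}/\beta_D\in T(A_D).
\]
Applying (3)(c) for $D$ into $A_D$ with the \emph{same} $\alpha_0$ and $\eta$ (this is exactly what the paper does) gives the corner bound $|\tau_D(\psi_D(h_D))-\tau_D(\phi_1(h_D))|<2\alpha_0+\eta$, and hence
\[
|\tau(\psi(h))-\tau(\phi_1(h))|\le\beta_D(2\alpha_0+\eta)+\beta_E(2\alpha_0+\eta)=2\alpha_0+\eta.
\]
This convex-combination trick is the paper's argument, and there is no factor of two to lose.

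Your proposed workaround is also invalid on its own terms: you assert that the maps furnished by (3)(c) satisfy a $\tau$-dependent bound ``essentially $\rho_A(\kappa_0^D([1_D]))(\tau)+\eta_D$,'' but condition (3)(c) as stated produces only the \emph{uniform} constant $2\alpha_0+\eta$, with $\alpha_0$ a fixed number satisfying $\|\rho_A(\kappa_0([1]))\|<\alpha_0\min\{1/2,\sigma_0\}$. There is no mechanism to extract from (3)(c) a pointwise estimate depending on $\rho_A(\kappa_0([1]))(\tau)$; the ``exact pointwise identity'' you invoke is correct but cannot be fed back through (3)(c). So your resolution addresses a non-problem with a bound that is not available.
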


\begin{proof}
Let  $D_1, D_2,\cdots, D_m\in {\cal A}_0.$
Put $C=\bigoplus_{j=1}^m D_j.$ 
Since each $D_j$ satisfies (1), $C$ also satisfies (1).
Hence we may write $C=\overline{\cup_{n=1}^\infty C_n}$ as in (1) of \ref{DefA1}.
Moreover, each $C_n$ may be obtained as $C_n=\bigoplus_{j=1}^m C_{j,n},$
where $C_{j,n}$ are given as \SCA s of $D_j$  by the assumption that each 
$D_j$ is in ${\cal A}_0.$ 
It is also clear that $C$ satisfies (2). 

For condition (3), we first note that ${\cal Q}_n$  and ${\cal P}_n$ can be easily obtained 
from each $D_j.$   Moreover, we may assume that $[1_{D_j}]\in K_{n, p}'$
with $[1_{C}]=[1_{D_1}]+[1_{D_2}]+\cdots [1_{D_m}].$ 
We  also note that $C$ 
satisfies  condition (a). 
We may assume that 
${\cal P}_n=\cup_{j=1}^m {\cal P}_{j,n},$
where ${\cal P}_{j,n}=\iota_{n, \infty}({\cal Q}_{j,n})$ and ${\cal Q}_n$ is a finite subset of $\underline{K}(C_{j,n}),$ $1\le j\le m.$

Now let $A$ be a unital separable simple \CA\, of tracial rank zero and let 
$\kappa_n\in \in KL_{loc} (G^{{\cal P}_n\cup K_{n,p}}, \underline{K}(A))$ 
with 
$\kappa_n([1_C])=[1_A]$,  and $\kappa_n(g)\in K_0(A)_+\setminus \{0\}$ for all $g\in K_{n,p},$
and $\kappa_n(G_{o,n})\subset {\rm ker}\rho_A.$ 

Choose mutually orthogonal projections $e_1, e_2,...,e_m$  in $A$ such that
$[e_i]=\kappa_n([1_{D_{i,n}}]),$ $i=1,2,...,m.$  This is possible since $A$ has tracial rank zero., 
Note that $\kappa_n([1_C])=[1_A].$ Hence we may assume that 
$\sum_{i=1}^m e_i=1_A.$   Since each $D_i$ is in ${\cal A}_0,$ we conclude that $C$ also satisfies 
condition (b) of (3) in \ref{DefA1}. 

To see (c) also holds, let  ${\cal H}\subset \iota_{n, \infty}({C_n}_{s.a.}^{\bf 1}\setminus \{0\})$ be a finite subset  
and $\eta>0.$  Suppose that that $({\cal G}_n, 2\dt_n, {\cal P}_n)$ is a $KL$-triple associated with ${\cal H}$ and $\eta/2.$   We may write 
${\cal G}_n= \cup_{j=1}^m\iota_{n,\infty}(G_{j,n})$
and ${\cal H}=\cup_{i=1}^m {\cal H}_i,$ where  $G_{j,n}\subset  \iota_{n, \infty}(C_{j,n})$
is a finite subset and where 
${\cal H}_i\subset \iota_{n, \infty}({C_{j,n}}_{s.a.}^{\bf 1}\setminus \{0\}),$ $i=1,2,...,m.$ 

Let ${\cal G}_0\subset C$ be a finite subset which contains ${\cal G}_n\cap H.$ 
We may write ${\cal G}_0=\cup_{i=1}^m {\cal G}_{0,i},$ where 
${\cal G}_{0,i}\subset D_i$ be a finite subset which contains ${\cal G}_{0,i}\cap H_i,$
$i=1,2,...,m.$  Let $\sigma_{0,j}$ and $m_{0,j}$ be given by
(c) of (3) in \ref{DefA1} as each $D_j$ is in ${\cal A}_0.$
Choose $\sigma_0=\min\{\sigma_{0,j}: 1\le j\le m\}.$ 
Let $0<\dt_0<\dt_n$ 
and  $A$ be any 
unital separable simple
 with tracial rank zero. 
Let  $\phi_1: C\to A$ is a unital ${\cal G}_0$-$\dt_0$-multiplicative \morp\,
such that $[\phi_1]$ induces an element in $KL_{loc}(G^{{\cal P}_n\cup K_{n,p}}, \underline{K}(A))$,  
for all $x\in K_{n,p},$ 
\beq\label{n608d-0}
\rho_A([\phi_1](x))(\tau)>\sigma_0\rforal \tau\in T(A),\,\,
\eneq
and $[\phi_1](G^{{\cal P}_n\cup K_{n,p}}\cap {\rm ker}\rho_{C,f})\subset {\rm ker}\rho_A,$ 
and 
if $\kappa_0\in KL_{loc}(G^{{\cal P}_n\cup K_{n,p}}, \underline{K}(A))$ such that 
$\kappa_0(K_{n,p})\subset K_0(A)_+\setminus \{0\},$  ${\kappa_0}(G^{{\cal P}_n\cup K_{n,p}}\cap {\rm ker}\rho_{C,f})
\subset {\rm ker}\rho_A$, 
\beq\label{n608d}
\|\rho_A(\kappa_0([1_C]))\|<\af_0\min\{1/2,\sigma_0\} 
\eneq
{{for some $\af_0\in (0, 1/2l(n))$}} 
and $p_0\in A$  is a projection such 
that $[p_0]=[\phi_1(1_C)]-\kappa_0([1_C]).$ 

We may assume that $\phi_1(1_{D_1}), \phi_1(1_{D_2}),...,\phi_1(1_{D_m})$ 
are mutually orthogonal projections.  Put $q_j=\phi_1(1_{D_j}),$ $1\le j\le m.$ 
Denote by $A_j=q_jAq_j,$ $1\le j\le m.$ 
Consider $[q_j]-\kappa_0([1_{D_j}]).$
Note 
\beq
\rho_A([q_j])-\kappa_0([1_{D_j}])\ge \sigma_0-\af_0\min\{1/2, \sigma_0\}>0.
\eneq
One then chooses projections $p_j\in A_j$ 
such that $[p_i]=[\phi_1(1_{D_j})]-\kappa_0([1_{D_j}]),$ $1\le j\le m.$  
Since each $D_j$ is in ${\cal A}_0,$ there exists a unital ${\cal G}_{j,n}$-$\dt_n$-multiplicative \morp\, 
$\psi_j: D_j\to A_j$ such that
\beq\label{105-a}
&&[\psi_j]|_{{\cal P}_{j,n}}=([\phi_1]-\kappa_0)|_{{\cal P}_{j,n}}\andeqn\\\label{105-b}
&&|\tau(\psi_j(h))-\tau(\phi_1(h))|<2\af_0+\eta\rforal h\in {\cal H}_j\andeqn \tau\in T(A_j).
\eneq
Define $\psi: C\to A$ by 
$$
\psi(c_1\oplus c_2\oplus \cdots \oplus c_m)=\oplus_{i=1}^m \psi_i(c_i)
$$
for $c_j\in D_j,$ $1\le j\le m.$ 
Then, by  \eqref{105-a},  
$$
[\psi]|_{{\cal P}_n}=([\phi_1]-\kappa_0)|_{{\cal P}_{n}}
$$
Let  $\tau\in T(A)$ and $\bt_j=\|\tau|_{A_j}\|,$ $1\le j\le m.$
Let $h=\oplus_{j=1}^m h_j\in {\cal H},$ where $h_j\in {\cal H}_j,$
$1\le j\le m.$ Then, by \eqref{105-b},
\beq
|\tau(\psi(h))-\tau(\phi_1(h))|&\le& \sum_{j=1}^m|\tau(\psi_j(h_j))-\tau(\phi_1(h_j))|\\
&\le & \sum_{j=1}^m \bt_j(2\af_0+\eta)=2\af_0+\eta.
\eneq
This completes the proof.

\end{proof}
}}

\begin{prop}\label{Plowerbd}
Let $C$ be a 
\CA, $\Delta: C_+^{q, {\bf 1}}\to (0,1)$ be a nondecreasing map,
and ${\cal Q}\subset K_0(C)_+\setminus \{0\}$ be a finite subset.
%
%
Then there exist a finite subset 
${\bar  H}_q\subset C_+^{\bf 1}\setminus \{0\}$, a  $\dt>0,$  and a finite subset ${\cal G}\subset C$ satisfying the
following: 
For any C*-algebra $A$ with $T(A) \not= \emptyset$,
if $\phi: C\to A$ is a 
${\cal G}$-$\dt$-multiplicative \morp\, 
such that
\beq
\tau(\phi(h))\ge \Delta(\hat{h})\tforal h\in {\bar H}_q\tand \tau\in T(A),
\eneq
then 
\beq
\inf\{\rho_A(\phi_{*0}(x))(\tau) : \tau\in T(A)\}\ge \min\{\Delta(\hat{h}): h\in {\bar H}_q\}>0.
\eneq
for all $x\in {\cal Q}.$
\end{prop}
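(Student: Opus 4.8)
\textbf{Proof plan for Proposition \ref{Plowerbd}.}

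The plan is to reduce the statement about arbitrary positive $K_0$-elements to a statement about a fixed finite generating set of projections, and then to control $\rho_A\circ\phi_{*0}$ on those projections by evaluating $\phi$ on suitable positive contractions associated to the projections. First I would observe that each $x\in{\cal Q}$ can be represented by a projection $p_x\in M_{R}(C)$ for some common integer $R$ (enlarging $R$ so that it works for all finitely many $x\in{\cal Q}$), and I write $p_x=(a_{x,i,j})_{R\times R}$ with all matrix entries lying in some finite subset ${\cal F}_0\subset C$. The idea is that if $\phi$ is sufficiently multiplicative on a finite set ${\cal G}$ containing ${\cal F}_0$ (and the relevant products), then $\phi^{(R)}:=\phi\otimes\mathrm{id}_{M_R}$ applied to $p_x$ is within a small $\delta'$ of a genuine projection $q_x\in M_R(A)$, so that $[\phi]_{*0}(x)=[q_x]$ is well-defined and $\tau(\phi^{(R)}(p_x))$ is within $R^2\delta$ of $\tau(q_x)=\rho_A(\phi_{*0}(x))(\tau)$ for every $\tau\in T(A)$. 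This is where the choice of $\delta$ and ${\cal G}$ enters: I would pick $\delta$ and ${\cal G}$ by the standard perturbation lemma for almost-multiplicative maps near projections, making sure $\phi_{*0}$ is defined on the subgroup generated by ${\cal Q}$ and that the trace estimate holds with error, say, less than $\tfrac12\min\{\Delta(\hat h)\}$ for whatever ${\bar H}_q$ we end up choosing (so one has to choose ${\bar H}_q$ first, or iterate the choices — I would fix ${\bar H}_q$ before fixing $\delta$).

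Next I would construct ${\bar H}_q$. For each $x\in{\cal Q}$, with representing projection $p_x\in M_R(C)$, consider the positive contraction $h_x$ in $C$ obtained by compressing $p_x$ suitably — more precisely, since $p_x\in M_R(C)$, one has $\tau(p_x)=(\tau\otimes\mathrm{Tr})(p_x)$ and I want a single element of $C_+^{\bf 1}$ whose trace is comparable to $\tau(p_x)$. One clean way: the diagonal entries $a_{x,i,i}$ are positive contractions in $C$ summing (in trace, under $\tau\otimes\mathrm{Tr}$) to $\tau(p_x)$, so at least one diagonal entry $h_x:=a_{x,i_0,i_0}$ has $\tau(h_x)\ge \tau(p_x)/R$; but this $i_0$ can depend on $\tau$, which is a nuisance. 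A cleaner route is to take $h_x$ to be a positive contraction in $C$ with $h_x\lesssim p_x$ (in $M_R(C)\otimes{\cal K}$) and $d_\tau(h_x)$ bounded below; however we only have traces, not Cuntz comparison in $C$ in general. So I would instead just set ${\bar H}_q:=\{a_{x,i,i}: x\in{\cal Q},\,1\le i\le R\}\cup\{\text{corners }e_{x,i}\}$ where, using that $p_x$ is a projection, $\sum_i a_{x,i,i}$ has $\tau\otimes\mathrm{Tr}$-value equal to $\tau(p_x)$; then the hypothesis $\tau(\phi(h))\ge\Delta(\hat h)$ for all $h\in{\bar H}_q$ forces $\sum_i\tau(\phi(a_{x,i,i}))\ge R\cdot\min_h\Delta(\hat h)$ is not quite what we want — we want a lower bound on $\tau(\phi^{(R)}(p_x))=\sum_{i}\tau(\phi(a_{x,i,i}))$ (the off-diagonal entries do not contribute to the trace of $\phi^{(R)}(p_x)$ up to the multiplicativity error, since $\tau\otimes\mathrm{Tr}$ of an off-diagonal matrix element is controlled by $\|\phi(xy)-\phi(x)\phi(y)\|$ type errors). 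Actually $\tau(\phi^{(R)}(p_x))=\sum_{i=1}^R\tau(\phi(a_{x,i,i}))\ge \sum_{i=1}^R\Delta(\widehat{a_{x,i,i}})\ge \min\{\Delta(\hat h):h\in{\bar H}_q\}$, directly, once each $a_{x,i,i}\in{\bar H}_q$. That is the key elementary point: the trace of $\phi$ applied to a matrix of almost-multiplicative type equals the sum of traces of the diagonal images, and each diagonal image trace is bounded below by $\Delta$ of its hat.

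Putting the pieces together: given ${\cal Q}$, choose $R$ and the representing projections $p_x$, let ${\bar H}_q$ be the set of diagonal entries $\{a_{x,i,i}\}$, choose ${\cal G}\supset\{a_{x,i,j}\}\cup\{a_{x,i,j}a_{x,j,k}\}$ and $\delta>0$ small enough (depending on $R$ and on $\min_{h\in{\bar H}_q}\Delta(\hat h)$) that $\phi^{(R)}(p_x)$ is within $\delta'$ of a projection $q_x$ with $[q_x]=\phi_{*0}(x)$ and $|\tau(\phi^{(R)}(p_x))-\tau(q_x)|<\tfrac12\min_{h}\Delta(\hat h)$ for all $\tau\in T(A)$. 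Then for $x\in{\cal Q}$ and $\tau\in T(A)$,
\[
\rho_A(\phi_{*0}(x))(\tau)=\tau(q_x)\ge \tau(\phi^{(R)}(p_x))-\tfrac12\min_h\Delta(\hat h)=\sum_{i=1}^R\tau(\phi(a_{x,i,i}))-\tfrac12\min_h\Delta(\hat h)\ge \tfrac12\min_h\Delta(\hat h)>0,
\]
which gives the stated conclusion (after replacing $\min\{\Delta(\hat h):h\in{\bar H}_q\}$ by this constant, or by re-running the argument with a sharper $\delta$ so the error is absorbed — one can simply quote the bound $\ge\min\{\Delta(\hat h):h\in{\bar H}_q\}$ up to the harmless factor, or enlarge ${\bar H}_q$ and shrink $\delta$ to make the error arbitrarily small relative to $\min_h\Delta(\hat h)$, yielding exactly the inequality as written). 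The main obstacle, and the only place requiring care, is the $\tau$-uniformity of the perturbation estimate $|\tau(\phi^{(R)}(p_x))-\tau(q_x)|<\varepsilon$ for all $\tau$ simultaneously — but this is automatic because $\|\phi^{(R)}(p_x)-q_x\|<\delta'$ is a norm estimate independent of $\tau$, and every trace is norm-decreasing, so the uniformity is free. A secondary point to get right is ensuring $\phi_{*0}$ is genuinely defined on the relevant subgroup (so that $[q_x]$ does not depend on the choice of nearby projection $q_x$); this is handled by insisting $({\cal G},\delta,{\cal P})$ is a KL-triple for a finite ${\cal P}\supset\{x:x\in{\cal Q}\}\subset K_0(C)\subset\underline{K}(C)$, exactly as in Definition \ref{DefKL-triple}.
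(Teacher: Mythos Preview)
Your proposal is correct and follows essentially the same approach as the paper: represent each $x\in{\cal Q}$ by a projection $p_x=(a_{x,i,j})\in M_R(C)$, take ${\bar H}_q$ to be the set of diagonal entries $a_{x,i,i}$, and use $\tau(\phi^{(R)}(p_x))=\sum_i\tau(\phi(a_{x,i,i}))\ge\min_{h\in{\bar H}_q}\Delta(\hat h)$. The paper presents only this key observation as a sketch, while you correctly supply the additional bookkeeping (choice of $\delta$, ${\cal G}$ via a KL-triple, and the uniform norm-perturbation estimate to pass from $\tau(\phi^{(R)}(p_x))$ to $\rho_A(\phi_{*0}(x))(\tau)$) that the paper leaves implicit.
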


\begin{proof}
This is known and has been used many times. 
For the convenience of the reader, let us briefly sketch the proof.   
Suppose that  $p_1, p_2,...,p_n\in M_R(C)$  are projections, for some 
$R\in \N$, such that ${\cal Q}=\{[p_1], [p_2],...,[p_n]\}.$   
For each $1 \leq k \leq n$, 
we may write $p_k=(a_{i,j}^{(k)})_{1\le i, j\le R},$ where 
$a_{i,j}^{(k)}\in C$ for all $i,j$.   Note that $a_{i,i}^{(k)}\ge 0$ for all
$1 \leq i \leq R$.  
Assume that $a_{i,i}^{(k)}\in {\bar H}_q$ for all $1 \leq i \leq R$. 
{{Then, for each $1 \leq k \leq n$ and}}  for all $\tau \in T(A)$,  
\beq
\tau(\phi(p_k))=\sum_{i=1}^R \tau(\phi(a_{i,i}^{(k)}))\ge \inf\{\Delta(\hat{h}): h\in {\bar H}_q\}>0.
\eneq
The proposition follows from the above observation. 
\end{proof}

\begin{lem}\label{Lpartuniq}
Let $C$ be a {{\CA\,}}  in the class ${\cal A}_0$,
 and let {{$C_k$, ${\cal P}_k,$ $K_{k,p}$  and $G_{o,k}$, for $k \geq 1$,}}
satisfy the conditions of Definition 
\ref{DefA1}.

Fix $n \geq 1$ and   a nondecreasing map $\Delta: C_+^{q,{\bf 1}}\to (0,1).$ 
Let $\eta > 0$, $1>\dt>0$ and ${\cal G}\subset \iota_{n, \infty}(C_n)$ 
be a finite subset such that {{$({\cal G}, \dt, {\cal P}_n)$}}
is a KL-triple.
Then 
there exist $0<\dt_1<\dt,$  a finite subset ${\cal G}_1\subset C$ (which contains  {{${\cal G}$}})
and a finite subset {{${\cal H}_q\subset{ C}_+^{\bf 1}\setminus \{0\}$}} 
satisfying the following  conditions:

For every unital separable simple {{\CA\,}} $A$ with tracial rank zero,  if $\phi: C\to A$ is a unital ${\cal G}_1$-$\dt_1$-multiplicative \morp\,
such that  $[\phi](G_{o,n})\subset {\rm ker}\rho_A$ and 
\beq
\tau(\phi(h))\ge \Delta(\hat{h})\tforal h\in {\cal H}_q
\eneq
and $\kappa_0\in KL_{loc}(G^{{\cal P}_n\cup K_{n,p}}, \underline{K}(A))$ 
such that $\kappa_0(K_{n,p})\subset K_0(A)_+\setminus \{0\}$ 
and 
$\kappa_0(G_{o,n})\subset {\rm ker}\rho_A,$
then there are non-zero mutually orthogonal projections $e_0, e_1\in A$ and  unital
${\cal G}$-$\dt$-multiplicative \morp s $\psi_0: C\to e_0Ae_0$ and $\psi_1:C\to e_1Ae_1,$
and a unitary $u\in A$
such that
\beq
&&e_0+e_1=1_A, \,\,\, \tau(e_0)<\sigma\tforal \tau\in T(A),\\
&&{[\psi_0]}|_{{\cal P}_n\cap {\rm ker}\rho_{C,f}}={\kappa_0}|_{{\cal P}_n\cap {\rm ker}\rho_{C,f}},\,\,\,\\
&&{[}\psi_0{]}|_{{\cal P}_n\cap K_1(C)}={\kappa_0}|_{{\cal P}_n\cap K_1(C)},\\
&&{[\psi_0]}|_{{\cal P}_n\cap K_i(C, \Z/k\Z)}={\kappa_0}|_{{\cal P}_n\cap K_i(C, \Z/k\Z)},\,\,\, i = 0,1 \makebox{  and  }k=2,3,...,\\
&&{{[\psi_0]|_{{\cal P}_n}+[\psi_1]|_{{\cal P}_n}=[\phi]|_{{\cal P}_n}\tand}}\\
&&\|u^*\phi(c)u-(\psi_0(c)+\psi_1(c))\|<\eta\rforal c\in \iota_{n, \infty}({\cal G}).
\eneq

\end{lem}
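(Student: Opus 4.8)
\emph{Plan.} The lemma is a ``decomposition $+$ partial uniqueness'' statement, and I would derive it from the defining axioms of the class $\mathcal A_0$ in Definition \ref{DefA1}: Existence~1 (item (3)(b)) to manufacture the small ``$\kappa_0$-part'' $\psi_0$, Existence~2 (item (3)(c)) to manufacture the complementary part $\psi_1$ with traces close to those of $\phi$, and Uniqueness (item (2)) to absorb $\psi_0\oplus\psi_1$ into $\phi$ by a unitary. Proposition \ref{Plowerbd} is used to convert the lower bound $\tau(\phi(h))\ge\Delta(\hat h)$ on $\mathcal H_q$ into a uniform lower bound $\rho_A(\phi_{*0}(x))>\sigma_0$ on the classes $x\in K_{n,p}$, and I would use freely that a unital simple $A$ of tracial rank zero has strict comparison, cancellation and weakly unperforated $K_0$, and that a nonzero corner $eAe$ of it is again such an algebra with $\underline K(eAe)\cong\underline K(A)$ as ordered groups (preserving positive cones and sending $[e]$ to $[e]$).

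\emph{Choice of the data.} Apply Uniqueness (2) to $\Delta$, $\eta$ and $\mathcal G$ to get $(\mathcal G^{(u)},\delta^{(u)},\mathcal P^{(u)},\mathcal H_q^{(u)},\eta^{(u)},\mathcal H^{(u)})$ with $\mathcal P^{(u)}\subset[\iota_{n,\infty}](\underline K(C_n))$, and enlarge $\mathcal P^{(u)}$ so that it contains $\mathcal P_n$. Fix $\alpha_0\in(0,1/2l(n))$, choose $\mathcal H\supseteq\mathcal H^{(u)}$ finite in $\iota_{n,\infty}((C_n)_{s.a.}^{\mathbf 1})$ and $\eta_1>0$ with $2\alpha_0+\eta_1<\eta^{(u)}$, arranged (via Proposition \ref{Ptrace}) so that $(\mathcal G_n,2\delta_n,\mathcal P_n)$ is a $KL$-triple associated with $\mathcal H$ and $\eta_1/2$. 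Apply Proposition \ref{Plowerbd} to $\Delta$ and to the finite set $K_{n,p}\subset K_0(C)_+\setminus\{0\}$ to obtain a finite set $\bar H_q$, auxiliary $(\mathcal G',\delta')$, and $\sigma_0:=\min\{\Delta(\hat h):h\in\bar H_q\}>0$. Put $\mathcal H_q:=\bar H_q\cup\mathcal H_q^{(u)}$, require (as in Definition \ref{DefA1}(3)(c) and Remark \ref{R810}) the smallness $\|\rho_A(\kappa_0([1_C]))\|<\alpha_0\min\{1/2,\sigma_0\}$ on $\kappa_0$ together with a bound $\sigma$ for $\tau(e_0)$ (this is the role of the parameter $\sigma$ in the statement), and let $\mathcal G_1\supseteq\mathcal G\cup\mathcal G^{(u)}\cup\mathcal G_n\cup\mathcal G'\cup\mathcal H$ and $\delta_1<\min\{\delta,\delta^{(u)},\delta_n,\delta',\eta_1\}$ be chosen so that $\mathcal G_1$-$\delta_1$-multiplicativity forces the induced map on the finitely generated group $\underline K(C_n)$ to be a genuine $\Lambda$-homomorphism.

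\emph{Construction.} Given $\phi,\kappa_0$, Proposition \ref{Plowerbd} gives $\rho_A(\phi_{*0}(x))(\tau)>\sigma_0$ for $x\in K_{n,p}$; with Remark \ref{R810} and the smallness of $\kappa_0([1_C])$ this yields $\|\rho_A(\kappa_0(x))\|<\sigma_0/2$ and $\phi_{*0}(x)-\kappa_0(x)\in K_0(A)_+\setminus\{0\}$ for all $x\in K_{n,p}$. As $[1_C]\in K_{n,p}$ we have $0<\rho_A(\kappa_0([1_C]))<1$, so by strict comparison and cancellation there is a projection $e_1<1_A$ with $[e_1]=[1_A]-\kappa_0([1_C])=[\phi(1_C)]-\kappa_0([1_C])$; set $e_0:=1_A-e_1$, so $e_0,e_1$ are orthogonal, $e_0+e_1=1_A$, $[e_0]=\kappa_0([1_C])$ and $\tau(e_0)<\sigma$. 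Apply Existence~2 (Definition \ref{DefA1}(3)(c)) with $\phi_1=\phi$, the given $\kappa_0$, the above $\alpha_0$ and $p_0:=e_1$: this produces a unital, sufficiently multiplicative $\psi_1:C\to e_1Ae_1$ with $[\psi_1]|_{\mathcal P_n}=([\phi]-\kappa_0)|_{\mathcal P_n}$ and $|\tau(\psi_1(h))-\tau(\phi(h))|<2\alpha_0+\eta_1$ for $h\in\mathcal H$, $\tau\in T(A)$. Transport $\kappa_0$ through $\underline K(e_0Ae_0)\cong\underline K(A)$ to $\tilde\kappa_0$; then $\tilde\kappa_0([1_C])=[1_{e_0Ae_0}]$, $\tilde\kappa_0(K_{n,p})\subset K_0(e_0Ae_0)_+\setminus\{0\}$ and $\tilde\kappa_0(G_{o,n})\subset\ker\rho_{e_0Ae_0}$, so Existence~1 (Definition \ref{DefA1}(3)(b)) applied to $e_0Ae_0$ yields a unital $\mathcal G$-$\delta$-multiplicative $\psi_0:C\to e_0Ae_0$ with $[\psi_0]|_{\mathcal P_n}=\kappa_0|_{\mathcal P_n}$ (equality of classes in $\underline K(A)$).

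\emph{Conclusion and the hard point.} Put $\psi:=\psi_0\oplus\psi_1:C\to A$ (i.e. $\psi(c)=\psi_0(c)+\psi_1(c)$); it is unital and sufficiently multiplicative. On $\mathcal P_n$ one has $[\psi]=\kappa_0+([\phi]-\kappa_0)=[\phi]$, and since $\mathcal Q_n$ determines $\Lambda$-homomorphisms on $\underline K(C_n)$ this propagates to $\mathcal P^{(u)}$; moreover $|\tau(\psi(x))-\tau(\phi(x))|\le\tau(e_0)+|\tau(\psi_1(x))-\tau(\phi(x))|<\sigma+2\alpha_0+\eta_1<\eta^{(u)}$ for $x\in\mathcal H^{(u)}$, and $\tau(\psi(h))\ge\tau(\psi_1(h))\ge\tau(\phi(h))-2\alpha_0-\eta_1$ keeps a suitably shrunk lower bound on $\mathcal H_q^{(u)}$. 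Uniqueness (2) then gives a unitary $u\in A$ with $\|u^*\phi(c)u-(\psi_0(c)+\psi_1(c))\|<\eta$ for $c\in\mathcal G$, and combining this with $[\psi_0]|_{\mathcal P_n}=\kappa_0|_{\mathcal P_n}$ (restricted to the $\ker\rho_{C,f}$, $K_1$ and $K_i(\cdot,\mathbb{Z}/k\mathbb{Z})$ summands), with $[\psi_0]|_{\mathcal P_n}+[\psi_1]|_{\mathcal P_n}=[\phi]|_{\mathcal P_n}$, and with $e_0+e_1=1_A$, $\tau(e_0)<\sigma$, gives exactly the conclusion. The main obstacle is precisely the bookkeeping of the previous two paragraphs: arranging $(\mathcal G_1,\delta_1,\mathcal H_q)$ so that $\mathcal P^{(u)}$ lands inside $[\iota_{n,\infty}](\underline K(C_n))$ (where $[\phi]$ and $[\psi]$ literally agree), so that the accumulated trace error $\sigma+2\alpha_0+\eta_1$ and the smallness $\|\rho_A(\kappa_0([1_C]))\|<\alpha_0\min\{1/2,\sigma_0\}$ fit inside the margins imposed by Existence~2, so that the transported $K$-theoretic data is actually realizable inside the corners $e_0Ae_0$ and $e_1Ae_1$, and so that the multiplicativity moduli of $\psi_0,\psi_1$ match what Uniqueness~(2) requires.
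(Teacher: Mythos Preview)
Your approach has a genuine gap: you impose a smallness condition $\|\rho_A(\kappa_0([1_C]))\|<\alpha_0\min\{1/2,\sigma_0\}$ on $\kappa_0$ that is \emph{not} among the hypotheses of the lemma. The statement allows an arbitrary $\kappa_0\in KL_{loc}(G^{{\cal P}_n\cup K_{n,p}},\underline K(A))$ subject only to $\kappa_0(K_{n,p})\subset K_0(A)_+\setminus\{0\}$ and $\kappa_0(G_{o,n})\subset\ker\rho_A$; in particular $\kappa_0([1_C])$ may be as large as $[1_A]$ itself. Your construction takes $[e_0]=\kappa_0([1_C])$, so you cannot achieve $\tau(e_0)<\sigma$ for a prescribed $\sigma$. (In the application in Theorem~\ref{T8unitaryequiv}, the $\kappa_n$ fed into this lemma is built with large multiples $(T_n!-1)\kappa_n'(f)$ and is certainly not small.)

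The point you are missing is that the conclusion only asks $[\psi_0]$ to agree with $\kappa_0$ on ${\cal P}_n\cap\ker\rho_{C,f}$, on ${\cal P}_n\cap K_1(C)$, and on ${\cal P}_n\cap K_i(C,\Z/k\Z)$ --- \emph{not} on the free summand $\Z^{r(n)}$ of $G_n$. The paper exploits this freedom: using that $A$ has tracial rank zero (so $\rho_A(K_0(A))$ is dense in $\Aff(T(A))$), it perturbs $\kappa_0$ on the standard generators $e_j$ of $\Z^{r(n)}$ by multiples of $K_0(A)$-classes to obtain a new $\kappa_1\in KL_{loc}(G^{{\cal P}_n\cup K_{n,p}},\underline K(A))$ with $\kappa_1=\kappa_0$ on $G_{o,n}$, on $K_1$, and on all $K_i(\,\cdot\,,\Z/k\Z)$, but with $\rho_A(\kappa_1([1_C]))$ as small as desired and $\kappa_1(K_{n,p})\subset K_0(A)_+\setminus\{0\}$. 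The integer $K$ is chosen divisible by enough factorials so that the Bockstein compatibilities are preserved and $\kappa_1$ is a genuine element of $KL_{loc}$. It is this $\kappa_1$ (not $\kappa_0$) that is fed into Existence~1 to produce $\psi_0$ with $[\psi_0]|_{{\cal P}_n}=\kappa_1|_{{\cal P}_n}$ and $\tau(e_0)<\sigma$, and then $\alpha=[\phi]-[\psi_0]$ is fed into Existence~2 to produce $\psi_1$; the rest of your argument (comparing traces and invoking Uniqueness~(2)) then goes through as you describe.
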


\begin{proof}

{{Recall that}} $C=\overline{\cup_{k=1}^\infty C_k}$   
Fix $\dt>0$ and a finite subset ${\cal G}\subset \iota_{n,\infty}(C_n)$
and
a finite subset ${\cal P}_n\subset [\iota_{n, \infty}](\underline{K}(C_n))$
and finite subset $K_{n,p}\subset K_0(C)_+\setminus \{0\}$
as in the statement of the lemma.  
{{Recall that  we}}  assume that  {{$[1_C]\in K_{n,p}=[\iota_{n, \infty}](K_{n,p}')$
(which also satisfies the condition described in Definition 
\ref{DefA1}),
}} ${\cal G}=\iota_{n, \infty}({\cal G}'),$ 
where ${\cal G}'\subset C_n$ is a finite subset,
and ${\cal H}_q\subset \iota_{n, \infty}({\cal H}_q'),$ where ${\cal H}_q'\subset {C_n}_+^{\bf 1}\setminus \{0\}.$
We may further assume that 
${\cal G}'\subset {C_n}_{s.a.}^{\bf 1}.$

Let $\eta, \sigma>0$ be given.
Let $\Delta_1:=(3/4)\Delta.$
Let ${\bar H}_q\subset C_+\setminus \{0\}$ be a finite subset, $\delta'_1 > 0$
and ${\cal G}'_1 \subset C$ be a finite subset  
given by Proposition \ref{Plowerbd}, for $K_{n,p}$ in place of ${\cal Q}$. 

Set  ${\bar \sigma}=\min\{\Delta(\hat{h}): h\in {\bar H}_q\}>0.$ 

Let ${\cal G}_1'$ (in place of ${\cal G}$), {{$\dt_1''$}} (in place of $\dt$), ${\cal P}\subset [\iota_{n, \infty}](\underline{K}(C_n)),$ 
${\cal H}_q\subset \iota_{n, \infty}({C_n}_+^{\bf 1}\setminus \{0\}),$  ${\cal H}\subset \iota_{n, \infty}({C_n}_{s.a.})$
and $\eta_0>0$ (in place of $\eta$)  be as in the condition (2) of \ref{DefA1} associated with
$\min\{\dt/2, \eta/2\}$ (in place of $\ep$), ${\cal G}$ 
(in place of ${\cal F}$)  and $\Delta_1$ (in place of $\Delta$). 
We may choose $\eta_0<\eta/4.$ 
Put ${\cal H}_1={\cal H}_q\cup {\cal H}.$  By choosing smaller  {{$\dt_1''$}} and large ${\cal G}_1,$ we may further assume 
that $({\cal G}_1, {{\dt_1',}} {\cal P}_n)$ is a $KL$-triple associated with ${\cal G}$ and $\eta/4.$
{{We may also assume that 
${\cal G}_1$ is larger than ${\cal G}\cup {\cal G}_1'.$ Let $\dt_1=\min\{\dt, \dt_1', \dt_1''\}.$}}

We may also assume that ${\bar H}_q\subset {\cal H}_q,$ 
%
%
Let 
\beq
\delta_0:=(1/16)\inf\{\Delta(\hat{h}): h\in {\cal H}_q\}>0.
\eneq

Let $p_1, p_2,..,p_l\in M_{{l(n)}}(\iota_{n, \infty}(C_n))$ be projections 
{{(with ${{l(n)}}\ge 1$) as in Definition \ref{DefA1} (3)}} 
such that $K_{n,p}=\{[p_1], [p_2],...,[p_l]\}.$ 
Let us assume that 
$[p_1] = [1_C]$. (Recall that in Definition
\ref{DefA1}, in condition (1), it is  required that $1_{C_n} = 1_C$,  and
in condition (3), it is required that $[1_C] \in K_{n,p}$.) 

Suppose that $A$, $\phi$ and $\kappa_0$ satisfy the assumptions  of the lemma 
for $\dt,$ ${\cal G},$ ${\cal H}_q$ (and the already
fixed $G_{o,n},$ $K_{n,p},$  ${\cal P}_n,$ from Definition \ref{DefA1}).  

Set 
\beq
&&\sigma_0:=\inf\{\rho_A([\phi(p_i)])(\tau): \tau\in T(A),\,\, 1\le i\le l\}
>0 \makebox{ and  }\\
&&\sigma_{00}:=\inf\{\rho_A(\kappa_0([p_i]))(\tau): \tau\in T(A),\,\, 1\le i\le l\}>0.  
\eneq
Note that by Proposition \ref{Plowerbd}, $\sigma_0\ge {\bar \sigma}$.

Set $\sigma_1:={\min\{1/2, \sigma_0, \sigma_{00}, 
\eta_0, \eta/4\}\cdot \dt_0\cdot {\bar \sigma}\cdot \sigma\over{2l{{l(n)}}(M+1)}}.$
From Definition \ref{DefA1},
 $G_n = K_0(C)\cap G^{{\cal P}_n\cup K_{n,p}}=\Z^{r(n)}\oplus G_{o,n},$
and $G_{o.n}\subset {\rm ker}\rho_{C,f}$ {{and $\Z^{r(n)}\cap {\rm ker}\rho_{C,f}=\{0\}.$}}   Let $e_i=(\overbrace{0,...,0}
^{i-1}, 1,0,...,0)\in  \Z^{r(n)},$ $i=1,2,...,r(n),$ be the standard generators
of $\mathbb{Z}^{r(n)}$.
Let $\Pi_{g,n}: G_n\to \Z^{r(n)}$ be the projection map. Set
 $x_j=\Pi_{g,n}([p_j]),$
$j=1,2,...,l.$
  We may write  $x_j=\sum_{i=1}^{r(n)} m_{i,j}e_i$ for some 
  integers $m_{i,j}\in \Z,$ $i=1,2, ..., r(n)$, $j=1,2,...,l.$ 
  Set $M=r(n) \max\{|m_{i,j}|: 1\le i\le r(n), 1 \le j \leq l \}.$ 
  Let $K' \geq 1$ be an integer such that $K'x=0$ for all $x\in {\rm Tor}(K_i(C))\cap G^{{\cal P}_n\cup K_{n,p}}$
  ($i=0,1$).
  Suppose that $K_i(C, \Z/k\Z)\cap G^{{\cal P}_n\cup K_{n,p}}=\{0\}$ ($i=0,1$)
  for all $k\ge K''.$   Choose an integer $K'''\ge l+1$ such that 
  \beq\label{88-1C-e-1}
M  \max\{\|\rho_A([\phi](e_j))\|, \,\|\rho_A(\kappa_0(e_j))\|: 1\le j\le r(n)\}
  /K'''<{\sigma_1^2\over{32(l+1)}}.
  \eneq
  Let $K=K'''(K'K'')!.$

Define $\sigma_2:={\sigma_1^2\over{16(K+1)(M+1)}}.$

Since $\rho_A(K_0(A))$ is dense in $\Aff(T(A)),$  
there are $f_i\in \rho_A(K_0(A))$
such that
\beq
\|f_i-{\rho_A(\kappa_0(e_i))\over{K+1}}\| & < &{\sigma_2\over{4(K+1)^2}}\,\,\,
\makebox{ for all } i=1,2,...,r(n). 
\eneq
It follows that
\beq
\|(K+1)f_i-\rho_A(\kappa_0(e_i))\|<{\sigma_2\over{4(K+1)}},\,\,\, i=1,2,...,r(n).
\eneq
Choose $f_{0,j}\in K_0(A)_+$ such that $\rho_A(f_{0,j})=f_j,$ $j=1,2,...,r(n).$
Let $a_j:=\kappa_0(e_j)-Kf_{0,j}\in K_0(A),$ $j=1,2,...,r(n).$
Then, for $j = 1, 2, ..., r(n)$, 
\beq
\|\rho_A(a_j)-{\rho_A(\kappa_0(e_j))\over{K+1}}\| &\le& \|\rho_A(a_j)-f_j\|
+\|f_j-{\rho_A(\kappa_0(e_j))\over{K+1}}\|\\
&{{<}}&\|\rho_A(\kappa_0(e_j))-(K+1)f_j\|
+{\sigma_1\over{4(K+1)^2}}\\\label{88-a-e-2}
&<&{\sigma_2\over{4(K+1)}}+{\sigma_2\over{4(K+1)^2}}<{\sigma_2\over{(K+1)}}.
\eneq

Define  a \hm\, $\kappa_1: G^{{\cal P}_n\cup K_{n,p}}\to 
\underline{K}(A)$ by taking  
\beq\label{87-k-1}
\kappa_1(e_j)=a_j,\,\,\,j=1,2,...,r(n),\,\,\,
{\kappa_1}|_{G_{o,n}}=\kappa_0|_{G_{o,n}},\\
{\kappa_1}|_{K_1(C)\cap G^{{\cal P}_n\cup K_{n,p}}}={\kappa_0}|_{K_1(C)\cap G^{{\cal P}_n\cup K_{n,p}}}\andeqn\\
{\kappa_1}|_{K_i(C, \Z/k\Z)\cap G^{{\cal P}_n\cup K_{n,p}}}={\kappa_0}|_{K_i(C, \Z/k\Z)\cap G^{{\cal P}_n\cup K_{n,p}}}, 
\eneq
\noindent for $i =0,1$ and $k \geq 2$.   {{In particular, $\kappa_1(G_{o,n})\subset {\rm ker}\rho_A.$}}

Note that, ${\bar a_j}=\overline{\kappa_0(e_j)}$ in $K_0(A)/kK_0(A)$ for all $1<k\le K'',$  $j=1,2,...,r(n).$
Moreover, ${\kappa_1}|_{{\rm Tor}(K_i(C)) \cap G^{{\cal P}_n \cup K_{n,p}}}={\kappa_0}|_{{\rm Tor}(K_i(C)) \cap G^{{\cal P}_n \cup K_{n,p}}}$, for $i = 0,1$.
One then checks that $\kappa_1\in KL_{loc}(G^{{\cal P}_n\cup K_{n,p}}, \underline{K}(A))$ (satisfying the ``local Bockstein operations"). 

We estimate that, for $j=1, 2, ..., l$,  
\beq
&&\hspace{-1in}\rho_A(\kappa_1([p_j]))(\tau)=\rho_A(\kappa_1(x_j))(\tau)=
\sum_{i=1}^{r(n)}m_{i,j}\rho_A(\kappa_1(e_i))(\tau)
\\
&&=\sum_{i=1}^{r(n)}m_{i,j} \rho_A(a_i)(\tau)>\sum_{i=1}^{r(n)}m_{i,j}({\rho_A(\kappa_0(e_i))(\tau)\over{K+1}}-{\sigma_2\over{K+1}})\\
&&={\rho_A(\kappa_0([p_j]))(\tau)-M\sigma_2\over{K+1}}>0\hspace{0.3in}\rforal \tau\in T(A).
\eneq
By \eqref{88-1C-e-1}, 
we have that for all $i = 1, 2, ..., l$,  
\beq
\rho_A(\kappa_1([p_i]))(\tau) 
& = & \sum_{j=1}^{r(n)} m_{j,i} \rho_A(\kappa_1(e_j))(\tau) \\
&\le&  M\max\{|\rho_A(\kappa_1(e_j))(\tau)|: 1\le j\le r(n)\}\\
&=&M\max\{|\rho_A(a_j)(\tau)|: 1\le j\le r(n)\}\\
&<&{M\max\{|\rho_A(\kappa_0(e_j))(\tau)| : 1 \leq j \leq r(n) \}\over{K+1}}+{\sigma_2\over{(K+1)}}\\\label{Lpartuniq-21}
&\le& {\sigma_1^2\over{32(l+1)}}+{\sigma_2\over{(K+1)}}<{\sigma_1^2\over{16(l+1)}} \,\rforal 
\tau\in T(A).
\eneq

 Choose a 
non-zero projection $e_0 \in A$  such that $[e_0]=\kappa_1([1_C])$.
Then $\tau(e_0)<\sigma_1^2/2$   
for all $\tau\in T(A).$
Since $C$ is in the class ${\cal A}_0,$ by (b) of part  (3) of Definition 
\ref{DefA1}, there is a unital {{${\cal G}$-$\dt$}}-multiplicative \morp\  
$\psi_0: C\to e_0Ae_0$ 
such that
\beq
[\psi_0]|_{{\cal P}_n}={\kappa_1}|_{{\cal P}_n}.
\eneq


Now consider $\af:=[\phi]-[\psi_0].$ 
{{Since $[\psi](G_{o,n})\subset {\rm ker}\rho_A,$ we have $\af(G_{o,n})\subset {\rm ker}\rho_A.$}}
By  {{the the choice of $\sigma_1,$}}  \eqref{Lpartuniq-21} and 
by applying (c) of part (3) of Definition \ref{DefA1}, we obtain a  unital  ${\cal G}$-$\dt$-multiplicative \morp\,
$\psi_1: C\to (1-e_{0})A(1-e_0)$ such that
\beq
&&[\psi_1]|_{{\cal P}_n}=\af|_{{\cal P}_n}\andeqn\\
&&|\tau(\psi_1(x))-\tau(\phi(x))|<{\ppl{3}}\sigma_1+\eta/2\rforal x\in {\cal H}\andeqn \rforal \tau\in T(A).
\eneq
Define $e_1=1_A -e_0.$
We note that 
\beq
&&e_0+e_1=1_A\andeqn \tau(e_0)<\sigma \rforal \tau\in T(A),\\
&&[\psi_0]|_{{\cal P}_n}+[\psi_1]|_{{\cal P}_n}=[\phi]|_{{\cal P}_n}\andeqn\\
&&|\tau(\psi_0(x)+\psi_1(x))-\tau(\phi(x))|<3\sigma_1+\sigma_1/2+\eta/2<\eta\rforal x\in {\cal H}_1.
\eneq
From the above, we also estimate that
\beq
\tau((\psi_0+\psi_1)(h))\ge \Delta_1(\hat{h})\rforal h\in {\cal H}_q\andeqn \tau\in T(A).
\eneq
We then apply (2) of Definition \ref{DefA1} to obtain the required unitary, 
which works for the {{finite subset $\G.$}}

\end{proof}

Note that, in the next statement, the {{\CA\,}} $C$ has at least one faithful tracial state.

\begin{thm}\label{T8unitaryequiv}
Let $C$ be a unital separable \CA\, in the class ${\cal A}_0$, and let $B$ be {{a non-unital separable simple \CA\, 
with tracial rank zero  and continuous scale.}}
Let  ${{\Psi, \Phi}}: C\to M(B)$ be  two unital \hm s such that $\pi\circ \Psi$ and $\pi\circ \Phi$ are   both 
injective.

Suppose that $\tau\circ \Psi=\tau\circ \Phi$ for all $\tau\in T(B).$ 
Then there exists a sequence of unitaries $\{ u_n \}$ in $M(B)$ such that
\beq
&&\lim_{n\to\infty}\|u_n^*\Psi(c)u_n-\Phi(c)\|=0\rforal c\in C,\andeqn\\
&&u_n^*\Psi(c)u_n-\Phi(c)\in B\rforal n \geq 1 \makebox{ and } c \in C.
\eneq
The converse also holds.
\end{thm}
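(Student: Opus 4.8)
The converse is immediate. If unitaries $u_n\in M(B)$ satisfy (i) and (ii), then for every $\tau\in T(B)=T(M(B))$ (recall that $M(B)/B$ is purely infinite simple by Theorem \ref{Tcontscal}, so $T(M(B))=T(B)$ by Remark \ref{RQTMB}) the trace $\tau$ is a norm‑continuous functional on $M(B)$, and $u_n^*\Psi(c)u_n-\Phi(c)\in B$ with $\|u_n^*\Psi(c)u_n-\Phi(c)\|\to 0$ gives $\tau(\Psi(c))=\tau(u_n^*\Psi(c)u_n)=\tau(\Phi(c))+\tau\bigl(u_n^*\Psi(c)u_n-\Phi(c)\bigr)\to\tau(\Phi(c))$; hence $\tau\circ\Psi=\tau\circ\Phi$. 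For the forward direction I would first observe that the trace hypothesis already forces agreement of $KK$-classes. Since $B$ has tracial rank zero it is nuclear, so every $2$-quasitrace on $M(B)$ is a trace, and by Proposition \ref{prop:K0M(B)} the map $\rho_{M(B)}$ identifies $K_0(M(B))$ with $\Aff(QT(B))=\Aff(T(B))$; in particular $\rho_{M(B)}$ is injective and $\rho_{M(B)}(\Psi_{*0}(x))(\tau)=\tau(\Psi(x))$ on classes represented by projections over $C$. As $C$ is unital, $K_0(C)$ is generated by such classes, so $\tau\circ\Psi=\tau\circ\Phi$ for all $\tau$ yields $\Psi_{*0}=\Phi_{*0}$. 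By Theorem \ref{thm:K1M(B)}, $K_1(M(B))=0$, and $\Aff(T(B))$ is a torsion-free divisible group, so $\underline K(M(B))$ is concentrated in $K_0$; since $C\in\mathcal A_0$ satisfies the UCT, $KK(C,M(B))=\Hom(K_0(C),\Aff(T(B)))$ and therefore $[\Psi]=[\Phi]$ in $KK(C,M(B))$, whence $[\pi\circ\Psi]=[\pi\circ\Phi]$ in $KK(C,M(B)/B)$. (Thus $\pi\circ\Psi\sim_{wu}\pi\circ\Phi$ by Theorem \ref{thm:ClassifyUnitalExt}, but a unitary in $M(B)/B$ is not enough: we need a liftable one that also witnesses the norm approximation, so a hands-on construction is required.)

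The proof of (i)--(ii) is by the usual approximate intertwining, so it suffices to prove: \emph{for every finite subset $\mathcal F\subset C$ and every $\ep>0$ there is a unitary $u\in M(B)$ with $u^*\Psi(c)u-\Phi(c)\in B$ for all $c\in C$ and $\|u^*\Psi(c)u-\Phi(c)\|<\ep$ for all $c\in\mathcal F$.} Granting this, choose an increasing sequence $\mathcal F_n$ of finite subsets of $C$ with dense union, apply the statement to $(\mathcal F_n,1/n)$ to obtain $u_n$; then (ii) holds for every $n$ by construction, and for fixed $c\in C$ and $c'\in\mathcal F_N$ with $\|c-c'\|$ small one gets $\|u_n^*\Psi(c)u_n-\Phi(c)\|\le 2\|c-c'\|+1/n$ for $n\ge N$, giving (i). Note that $u^*\Psi(c)u-\Phi(c)\in B$ for all $c$ is equivalent to ${\rm Ad}\,\pi(u)\circ(\pi\circ\Psi)=\pi\circ\Phi$, so the ``$\in B$'' requirement is exactly a mod-$B$ unitary-equivalence condition. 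To set up the construction for fixed $(\mathcal F,\ep)$: since $\pi\circ\Psi$ is injective and $\tau$ is faithful on $M(B)$ (because $B$ is the unique nontrivial closed ideal of the prime algebra $M(B)$, and $M(B)/B$ carries no trace), the affine continuous map $\gamma\colon T(B)\to T(C)$, $\gamma(\tau)=\tau\circ\Psi=\tau\circ\Phi$, takes values in $T_f(C)$; fix a non-decreasing $\Delta\colon C_+^{q, {\bf 1}}\setminus\{0\}\to(0,1)$ lying below $h\mapsto\inf_{\tau\in T(B)}\gamma(\tau)(h)$ (positive since $T(B)$ is compact, as $B$ has continuous scale).

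The core step is then to compare $\Psi$ and $\Phi$ with a common diagonal model. Since $\Psi_{*0}$ is strictly positive, $\Psi_{*0}([1_C])=1_{T(B)}$, and $\gamma$ is compatible with $\Psi_{*0}$, Lemma \ref{L84} produces a model unital embedding $\Phi_0\colon C\to M(B)$ with $\Phi_{0,*0}=\Psi_{*0}=\Phi_{*0}$, $\tau\circ\Phi_0=\tau\circ\Psi=\tau\circ\Phi$, and a diagonal decomposition $\Phi_0\sim\sum_n\phi_n$ by block maps $\phi_n$ supported on mutually orthogonal corners of $B$. Applying Lemma \ref{L86} with this model separately to $\Psi$ and to $\Phi$ gives, modulo $B$ on all of $C$ and up to $\ep$ on $\mathcal F$,
\[
\Psi(\cdot)\ \sim\ \sum\nolimits_{n} V^*\phi_n(\cdot)V+L(\cdot),\qquad
\Phi(\cdot)\ \sim\ \sum\nolimits_{m} W^*\phi_m(\cdot)W+L'(\cdot),
\]
where $V,W\in M_2(M(B))$ are chosen so that the relevant gap projections lie in $1_{M(B)}+B$ (which is what lets the final conjugating unitary be taken in $M(B)$ rather than $M_2(M(B))$), and the finite-corner maps $L,L'$ are controlled in $K$-theory and, by the hypothesis $\tau\circ\Psi=\tau\circ\Phi$ together with \eqref{86t-1}--\eqref{86t-3}, in traces by the matching initial segments of the $\phi_n$. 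One now runs an approximate intertwining between the two block sequences: at each stage Lemma \ref{Lpartuniq} (applicable since $C\in\mathcal A_0$, $B$ has tracial rank zero, and the lower trace bound $\Delta$ is available and preserved by the $r$-estimates of Corollary \ref{C82+}) conjugates a block of one decomposition onto a sub-block of the other and absorbs the finite-corner corrections $L,L'$ into fresh batches of blocks of the opposite side; the discrepancies are pushed onto ever-smaller tail corners. The resulting strict limit of unitaries is a unitary $u\in M(B)$ for which $\pi(u)$ implements $\pi\circ\Psi\sim\pi\circ\Phi$ exactly (the tail discrepancies vanish in $M(B)/B$), so $u^*\Psi(c)u-\Phi(c)\in B$ for all $c$, and $\|u^*\Psi(c)u-\Phi(c)\|<\ep$ on $\mathcal F$.

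The principal obstacle is precisely this intertwining: one must organize the block-by-block matching so that (a) both finite-corner error terms $L,L'$ are completely absorbed into the respective tails, (b) the conjugating unitary genuinely lands in $M(B)$ (not merely $M_2(M(B))$), and (c) the accumulated difference lies in $B$ for \emph{all} $c$, while simultaneously keeping the trace lower bounds (controlled by $\Delta$ and the constants $\sigma_0$, $r$ of Lemma \ref{Lpartuniq} and Corollary \ref{C82+}) and the $KL$-triple multiplicativity estimates under control at every stage. Everything else --- the $KK$-identification, the reduction to finite subsets, and the converse --- is routine.
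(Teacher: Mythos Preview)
Your proposal follows the same architecture as the paper's proof --- build a diagonal model via Lemma~\ref{L84}, pull $\Psi$ onto that model via Lemma~\ref{L86}, and then match blocks using Lemma~\ref{Lpartuniq} with the lower trace bound $\Delta$ --- and you correctly flag the assembly of the final unitary in $M(B)$ (rather than $M_2(M(B))$) as the crux. The converse and the $KK$-identification are handled as in the paper.

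There is one simplification you miss that the paper uses to avoid the two-sided intertwining you describe. Rather than introducing a third map $\Phi_0$ and comparing both $\Psi$ and $\Phi$ to it, the paper observes that the desired relation is transitive, so it suffices to prove the statement for \emph{one fixed} $\Phi$ and arbitrary $\Psi$; it then \emph{takes $\Phi$ itself to be the model} produced by Lemma~\ref{L84}. Thus $\Phi$ is exactly diagonal from the start, and Lemma~\ref{L86} is applied only once (to $\Psi$). Your plan to apply Lemma~\ref{L86} separately to $\Psi$ and to $\Phi$ and then ``intertwine the two block sequences'' would force you to juggle two conjugating isometries $V,W$, two cutoff indices, and two finite-corner corrections $L,L'$ simultaneously; this is not wrong in principle (transitivity recovers it), but it is not what the paper does and would be considerably harder to execute directly.

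Relatedly, the actual assembly in the paper is not a symmetric shuttle intertwining. With $\Phi=\sum_n\phi_n$ exactly, the paper works block-by-block on the common family $\{\phi_{N_1+n}\}$: for each $n$ it uses Lemma~\ref{Lpartuniq} to split $\phi_{N_1+n}$ into $\psi_{0,n}\oplus\psi_{1,n}$ (carrying the $K$-theoretic defect of $\Psi_n$), then defines $\lambda_n=[\Psi_n]-[\Phi_n]+[\psi_{0,n}]$ and realizes it by $\phi_{0,n}$ via part~(3)(b) of Definition~\ref{DefA1}, and finally uses part~(3)(c) to produce the complementary $\phi_{1,n}$ with the right traces. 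The unitary is then a product $U=U_1U_2U_3$ of three strictly convergent block-diagonal unitaries (one from the $u_{1,n}$, one from the $u_n$ matching $\bar\psi_n$ to $\bar\phi_n$, one from the $v_{1,n}$), rather than a single strict limit coming from an intertwining ladder. Your outline is compatible with this once you adopt the ``$\Phi=$ model'' reduction, but as written it under-specifies how $L$ and the $K$-theoretic discrepancies are absorbed; the paper's mechanism is the $\lambda_n$/$\kappa_n$ bookkeeping together with parts~(3)(b)--(c) of Definition~\ref{DefA1}, not Lemma~\ref{Lpartuniq} alone.
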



\begin {proof}
Fix $\ep>0$ and a finite subset ${\cal F}\subset C.$
For each 
$h\in C_+^{\bf 1}\setminus \{0\}$ define 
\beq\label{813-n1}
\Delta_0(\hat{h}):= \inf \{\tau(\Psi(h)): \tau\in T(B)\}. 
\eneq
{{Recall that, since $B$ has continuous scale, $T(B)$ is compact.}}
Since $\Psi(h)\not=0$  {{and}} $B$ is simple,
$\Delta_0(\hat{h})>0.$ It follows that $\Delta_0: (C_+)^{q, {\bf 1}}\setminus \{0\}\to (0,1]$ is an order preserving 
map. 

Let ${\cal F}_1:={\cal F}$ and let ${\cal F}_1\subset {\cal F}_2\subset \cdots {\cal F}_n\subset \cdots C$ be an 
increasing sequence of finite subsets of $C^{\bf 1}$ which is dense in the unit ball of $C.$
Put $\Delta_1:=(1/4)\Delta_0.$
{{Recall that  we may assume that}} $C=\overline{\cup_{n=1}^\infty C_n}$ as in Definition \ref{DefA1} for ${\cal A}_0.$  
Let $2\dt_n>0$ (in place of $\dt$),  $\eta_n'>0$ (in place of $\eta$), ${\cal G}_n\subset C,$   ${\cal P}_n\subset \underline{K}(C),$  ${\cal H}_{q,n}\subset C_+^{\bf 1}\setminus \{0\},$ 
and a finite subset ${\cal H}_n\subset C_{s.a.}$   be finite subsets given by  part  (2)  of Definition \ref{DefA1}  associated 
with $\ep/2^{n+5}$ (in place of $\ep$), ${\cal F}_{n}$ (in place of ${\cal F}$) and $\Delta_1$ (in place of $\Delta$).
We may also assume that $\dt_n<\ep/2^{n+5},$ $\sum_{n=1}^\infty \dt_n<1,$  ${\cal F}_n\subset {\cal G}_n\subset {\cal G}_{n+1}$
and ${\cal H}_{q,n}\subset {\cal H}_n$ for all 
 $n.$  
 Moreover, following Defintion \ref{DefA1},  we may assume that $({\cal G}_n, 2\dt_n, {\cal P}_n)$ is a KL-triple and  
 $\dt_n<\dt_{n+1}$ for all $n$.
 Continuing to follow Definition \ref{DefA1}, we have the following: Let $G_n$ be the subgroup generated by 
${\cal P}_n\cap K_0(C)$ and there is a finite subset $K_{n,p}\subset K_0(C)_+\setminus \{0\}$ which also generates 
$G_{n}.$ Thus we may also assume that $K_{n,p}\subset {\cal P}_n.$
{{Recall that we also assume that $K_{n,p}=[\iota_{n, \infty}](K_{n,p}'),$ where 
$K_{n,p}'\subset K_0(C_n)_+\setminus \{0\}$ is a finite subset such that $K_{n,p}'$ generates 
$K_0(C_n)$ and satisfying other conditions in as described in Definition \ref{DefA1} including (1) and (3). }}

Again continuing to follow Definition \ref{DefA1}, we may write $G_n=F_{0,n}\oplus G_{o,n},$ where $F_{0,n}$ is a finitely generated free group
and $G_{o,n}\subset {\rm ker}\rho_{C, f}$ {{and $F_{0,n}\cap {\rm ker}\rho_{C,f}=\{0\},$ and $K_{n,p}$ 
contains a generating set of $F_{0,n}\cap K_0(C)_+.$}}
We may further assume that 
$K_{n,p}=\{[p_{n,1}], [p_{n,2}],..., [p_{n,s(n)}]\}$  {{with $[1_C]=[p_{n,1}]$}}
and $G_{o,n}$ is generated by $\{[q_{+,n,1}]-[q_{-,n,1}], [q_{+, n,2}]-[q_{-,n,2}],..., [q_{+, n,m(n)}]-[q_{-,n,m(n)}]\},$
and $p_{n,i}, q_{+, n,j}, q_{-n,j}\in M_{l(n)}(C)$ are projections. 
We also assume that $[q_{+,n,j}], [q_{-,n,j}]\in K_{n,p}.$
Put $K_{n,p,p}=\{p_{n,j}: 1\le j\le s(n)\}.$

Put
 \beq\label{87-sigma}
 \sigma_n':=\inf\{\Delta_1(\hat{h}): h\in {\cal H}_{q,n}\}/2^{n+2}{{l(n)}} \andeqn \sigma_n:=\min\{\sigma_n'/8, \eta_n'/8, \dt_n/8\},
 \eneq
 {\ppl{$n\in \N.$}}
  \Wlog, we may further assume that ${\bar H}_{q,n}\subset {\cal H}_{q,n},$ 
  where ${\bar H}_{q,n}\subset A_+^{\bf 1}\setminus \{0\}$ corresponds $K_{n,p}$ (in place of 
  ${\cal Q}$ in Proposition \ref{Plowerbd}{{)}}. Moreover, we may also assume that 
  each $\dt_n$ is smaller than half of $\dt$ and ${\cal G}_n$ is larger then 
  ${\cal G}$ in Proposition \ref{Plowerbd} associated with $K_{n,p}$ (in place of ${\cal Q}$).
%
 %
 
 Choose a finite subset ${\cal G}_{n,T}\subset \iota_{n, \infty}(C_n)$  larger than ${\cal G}_ n$ and 
 $0<\dt_{n, T}<\sigma_n$  such that $({\cal G}_{n,T}, 2\dt_{n,T}, {\cal P}_n)$ is a KL-triple 
 associated with ${\cal H}_n$ and $\sigma_n$. 
 
 For each $n,$  we will apply Lemma \ref{Lpartuniq}. Let $\tilde \dt_n>0,$ ${\tilde{\cal G}_n}\subset C$ and 
 ${\tilde {\cal H}}_{q,n}\subset C_+^{\bf 1}\setminus \{0\}$ be finite subsets given by \ref{Lpartuniq} associated 
 with $\dt_{n, T}$ (in place of $\dt$), $\sigma_n^2$ (in place of $\sigma$),  $\sigma_n$ (in place 
 of $\eta$), ${\cal P}_n,$ $K_{n,p}$ and $G_{o,n},$ $n=1,2,....$ 
We assume  ${\cal G}_{n,T}\subset {\tilde{\cal G}_n}$ and ${\cal H}_{q, n}\subset 
{\tilde {\cal H}}_{q,n}.$ 
 
 Choose    ${\td {\cal H}}_n={\tilde {\cal H}}_{q,n}\cup {\cal H}_n$ and 
 \beq
 \eta_n:=\min\{{\tilde \dt_n}/4, \sigma_n'/4, \eta_n'/4\},
 \eneq 
 $n=2,3,....$

To prove Theorem \ref{T8unitaryequiv}, 
it suffices to prove that the theorem holds for one fixed $\Phi$ 
and arbitrary $\Psi$ satisfying the hypotheses of Theorem \ref{T8unitaryequiv}.  
Therefore,
by applying Lemma \ref{L84}, we may assume that 
there are
an approximate identity consisting of projections $\{e_n\}$ (with $e_0=0$ and $e_n-e_{n-1}\not=0$) and 
${\td{\cal G}}_n$-$\eta_n$-multiplicative \morp s 
$\phi_n: C\to (e_n-e_{n-1})B(e_n-e_{n-1})$ such that $[\phi_n]|_{G_n}$ is well defined, 
\beq\label{89-k-1}
&&[\phi_n]|_{G_n\cap {\rm ker}\rho_{C,f}}=0,\,\,
[\phi_n]|_{ G_n\cap K_1(C)}=0,\\
&&{[\phi_n]}|_{G_n\cap K_i(\Z/k\Z)}=0\tforal k\ge 2\,\,\, (i=0,1),
\eneq
$n=1,2,...,$
and
\beq\label{84-B-1}
&&\Phi(c)-\sum_{n=1}^\infty \phi_n(c)\in B\tforal c\in C,\\
&&\|\Phi(c)-\sum_{n=1}^\infty \phi_n(c)\|<\eta_1\tforal c\in {\cal G}_1\cup {\td{\cal H}}_1,\tand\\\label{TT-t-01}
&& {\tau(\phi_n(x))\over{\tau(e_n-e_{n-1})}}\ge (3/4)\tau(\Phi(x))\tforal x\in {\td{\cal H}}_{q,n},\\\label{813-n2}
&&\tau(\Phi(c))=\gamma(\tau)(c)\tforal c\in C\tand \tau\in T(B).
\eneq

By applying Lemma \ref{L86}, we obtain {{a unitary  $V\in M_2(M(B)),$}} 
an integer $N_1,$ and a ${\td{\cal G}_1}$-$\eta_1$-multiplicative \morp\,
$L: C\to e_{N_1'}Be_{N_1'}$ such that $[L]|_{\td G_1}$ is well defined, {{$V^*(1_{M(B)}-e_{N_1})V\in 1_{M(B)}+B,$
$e_{N_1}'=1_{M(B)} -V^*(1_{M(B)} -e_{N_1})V\in B,$}}

\beq\label{TT-T-001}
&&\Psi(c)-\sum_{n=N_1+1}^\infty V^*\phi_n(c)V\in B\tforal c\in C,\tand\\\label{86-T-02}
&&\|\Psi(c)-(\sum_{n=N_1+1}^\infty V^*\phi_n(c)V+L(c))\|<\eta_1\tforal c\in {\cal G}_1\cup {\td{\cal H}}_1.
\eneq
Moreover, 
\beq\label{TT-T-1}
\hspace{-0.2in} |\tau(L(x))-\tau(\sum_{i=1}^{N_1}\phi_i(x))|<  \eta_1,\,\,(x\in \td {\cal H}_1),\,\,
{\tau(L(h))\over{\tau(e_{N_1})}}>(3/4)\tau(\Phi(h)),\,\,(h\in {\wtd{{\cal H}}}_{q,1}),\\\label{TT-T-2}
\eneq
and, for all $\tau\in T(B),$ and 
\beq\label{TT-e-1}
&&\rho_B([L(p])(\tau)=\sum_{i=1}^{N_1}\rho_B([\phi_i(p)])(\tau)\rforal p\in K_{1,p,p}
\tand\\\label{TT-e-2}
&&\rho_B([L(q_{+,1,j})])(\tau)=\rho_B([L(q_{-, 1, j})])(\tau)
\eneq
for all {{$\tau\in T(B),\,\,1\le j\le m(1).$}} 
{{Put $e_{N_1+n}'=V^*e_{N_1+n}V,$ $n=1, 2,....$}} 
{{Since $V^*(1_{M(B)}-e_{N_1})V\in 1_{M(B)}+B,$ $e_{N_1+n}'\in B,$ $n=1,2,....$
($e_{N_1}'$ has been defined.)
Note that 
\beq\label{813-c-n1}
V^*\sum_{i=N_1+1}^\infty (e_n-e_{n-1})V\perp e_{N_1}'\andeqn \tau(e_{N_1}')=\tau(e_{N_1})\rforal \tau\in T(B).
\eneq}}

Define  $\Psi_n: C\to e_{N_1+n}'Be_{N_1+n}'$ by $\Psi_n(c)=e_{N_1+n}'\Psi(c)e_{N_1+n}'$
for all $c\in C.$ 
We also assume that $\Psi_n$ are ${\td{\cal G}_{n+1}}$-$\eta_{n+1}$-multiplicative. 
By choosing large $N_1,$  and passing to a subsequence, we may assume 
that 
\beq
\tau(\Psi_n(p_{n+1,j}))>0\rforal \tau\in T(B),\,\,\, {{j=1,2,...,s(n+1).  }}  
\eneq
In other words, we may assume that $[\Psi_n](K_{n+1,p})\subset K_0(B)_+\setminus \{0\}.$ 

Define  $\Phi_n: C\to e_{N_1+n}Be_{N_1+n}$ by $\Phi_n(c)=e_{N_1+n}\Phi(c)e_{N_1+n}$
for all $c\in C.$ 
We also assume that $\Phi_n$ are ${\td{\cal G}_{n+1}}$-$\eta_{n+1}$-multiplicative
and $[\Phi_n](K_{n+1,p})\subset K_0(B)_+\setminus \{0\}.$ 

Since 
\beq
&&\lim_{n\to\infty}\|\Psi(c)-(\sum_{k=N_1+n+1}^\infty V^*\phi_k(c)V+\Psi_n(c))\|=0,\\
&&\lim_{n\to\infty}\|\Phi(c)-(\sum_{k=N_1+n+1}^\infty\phi_k(c)+\Phi_n(c))\|=0\rforal c\in C,
\eneq
and since close projections are equivalent, 
we may assume 
that,  for all $n\in\N,$
\beq
&&[\Psi(p)]=[ (\sum_{k=N_1+n}^\infty\phi_k(p))+\Psi_n(p)]\andeqn\\
&&{[}\Phi(p){]}=[(\sum_{k=N_1+n}^\infty\phi_k(p))+\Phi_n(p)]
\eneq
{for $p\in \{p_{n+1,j}: 1\le j\le s(n+1)\}\cup \{q_{\pm,n+1,i}: 1\le i\le m(n+1)\}.$}
Also, since $\tau\circ \Phi=\tau\circ \Psi$ for all $\tau\in T(B),$  from the above,  we have 
\beq\label{TT-e-10}
&&\tau(\Phi_n(p_{n+1,j}))=\tau(\Psi_n(p_{{n+1},j})),\\
&&\tau(\Psi_n(q_{+,n+1,i}-q_{-,n+1,i}))=\tau(\Phi_n(q_{+,n+1,i}-q_{-,n+1,i}))=0
\eneq 
for all  $\tau\in T(B),\,\,\, j=1,2,...,s(n+1), \makebox{ } i=1, ..., m(n+1), \,\,n=1,2,....$ 
It follows that 
\beq\label{TT-e-kerrho}
[\Phi_n]({G_{o,n+1}}),\,\,[\Psi_n]({G_{o,n+1}})\subset {\rm ker}\rho_B.
\eneq
We also have (with $\Phi_0=\phi_{N_1}$),  for $n=1,2,...,$ 
\beq\label{87-k-10}
&&[\Phi_{n}]|_{{\cal P}_{n}}=([\phi_{N_1+n}]+[\Phi_{n-1}])|_{{\cal P}_{n}},\\\label{87-K-11}
&&{[}\Psi_{n}{]}|_{{\cal P}_{n}}=([\phi_{N_1+n}]+[\Psi_{n-1}])|_{{\cal P}_{n}}\andeqn\\\label{87-K-12}
&&{[}\Psi_1{]}|_{{\cal P}_1}=([\phi_{N_1+1}]+[L])|_{{\cal P}_1}.
\eneq


Recall that we assume that $K_{n,p}\subset {\cal P}_n.$  
Since $\Psi_1$ is ${\td{\cal G}_2}$-$ \eta_2$-multiplicative, $[\Psi_1]|_{{\cal P}_2}$
gives an element $\kappa_1'\in KL_{loc}(G^{{\cal P}_2}, \underline{K}(A))$  and $\kappa_1'({{K}}_{2,p})\subset K_0(B)_+\setminus \{0\}.$
By \eqref{TT-e-kerrho},
$\kappa_1'(G_{o,2})
\subset {\rm ker}\rho_B.$  
 
Write $[p_{2,j}]=f_{2,j}\oplus x_{2,j}\in F_{0,2}\oplus G_{o,2},$ 
{{$j=1,2,...,s(2).$}} 
Let $T_1\in \N$ such that $T_1x=0$ for all 
$x\in {\rm Tor}(G^{{\cal P}_2}).$  We may choose $T_1>2.$
 Define $\kappa_1\in KL_{loc}(G^{{\cal P}_2}, \underline{K}(B))$
as follows: 

On $F_{0,2}\oplus G_{o,2},$ define $\kappa_1(f\oplus z)=(T_1!-1)\kappa_1'(f)-\kappa_1'(z)$
for all $f\in  F_{0,2}$ and $z\in G_{0,2};$
define $\kappa_1(x)=-\kappa_1'(x)$ for all $x\in K_1(C)\cap G^{{\cal P}_2},$
and for all $x\in G^{{\cal P}_2}\cap K_i(C, \Z/m\Z)$ ($i=0,1$),  $m=2,3,...,.$ 
Note, for $[p_{2,j}]\in K_{2,p}$ {{and $1\le j\le r(2),$}}
\beq
\kappa_1([p_{2,j}])=(T_1!-1)\kappa_1'(f_{2,j})-\kappa_1'(x_{2,j})
=(T_1!-1)\kappa_1'([p_{2,j}])-T_1! \kappa_1'(x_{2,j}).
\eneq
Note that
$\kappa_1'(x_{2,j})
\in {\rm ker}\rho_B.$ 
Since $B$ is a simple \CA\, with tracial rank zero, 
$\kappa_1([p_{2,j}])\in K_0(B)_+\setminus \{0\}.$ 

Put $B_0:=e_{N_1}Be_{N_1},$ $B_n:=(e_{N_1+n}-e_{N_1-(n-1)})B(e_{N_1+n}-e_{N_1-(n-1)}),$
$n=1,2,....$

{{Note that, by \eqref{TT-t-01}
and by  \eqref{813-n1},
$\tau(\phi_{N_1 +1}(h)) \geq \Delta_1(\hat{h})$ for all 
$h \in \tilde{{\cal H}}_{q,N_1 +1}$ and $\tau \in T(B_1)$.}}
By applying  Lemma \ref{Lpartuniq} to the map 
$\phi_{N_1+1}$ (in place of $\phi$) and $\kappa_1,$   we obtain 
mutually {{non-zero}} orthogonal projections $e_{0,1}, e_{1,1}\in (e_{N_1+1}-e_{N_1})B(e_{N_1+1}-e_{N_1})$ and unital
${\cal G}_{2,T}$-$\dt_{2, T}$-multiplicative \morp s $\psi_{0,1}: C\to e_{0,1}Be_{0,1}$ and $\psi_{1,1}:C\to e_{1,1}Be_{1,1},$
and a unitary $u_{1,1}\in B_1$
such that
\beq
&&\hspace{-0.4in}e_{0,1}+e_{1,1}=e_{N_1+1}-e_{N_1}, \,\,\, \tau(e_{0,1})<\sigma_2^2\tforal \tau\in T(B_1),\\
&&\hspace{-0.4in}{[\psi_{0,1}]}|_{{\cal P}_2\cap {\rm ker}\rho_{C,f}}={\kappa_1}|_{{\cal P}_2\cap {\rm ker}\rho_{C,f}},\,\,\,\\
&&\hspace{-0.4in}{[}\psi_{0,1}{]}|_{{\cal P}_2\cap K_1(C)}={\kappa_1}|_{{\cal P}_2\cap K_1(C)},\\
&&\hspace{-0.4in}{[\psi_{0,1}]}|_{{\cal P}_2\cap K_i(C, \Z/k\Z)}={\kappa_1}|_{{\cal P}_2\cap K_i(C, \Z/k\Z)},\,\,\, k=2,3,...,\\
&&\hspace{-0.4in}\|u_{1,1}^*\phi_{N_1+1}(c)u_{1,1}-(\psi_{0,1}(c)+\psi_{1,1}(c))\|<\sigma_2 \rforal c\in {\cal G}_2\andeqn \tau\in T(B_1).
\eneq
Note 
$\psi_{0,1}(1_C)=e_{0,1}$ and $\psi_{1,1}(1_C)=e_{1,1}.$
{{Put $e_{0,1}'=V^*e_{0,1}V.$}}
Note that  $K_0(C, \Z/k\Z)\cap G^{{\cal P}_2}=\{0\}$ for any $k>T_1.$ 
Therefore, for $x\in (K_0(C)/kK_0(C))\cap G^{{\cal P}_2}$ ($k\le T_1$)
\beq
\kappa_1'(x)+[\psi_{0,1}](x)=(T_1!-1)\kappa_1(x)+\kappa_1(x)=-\kappa_1(x)+\kappa_1(x)=0.
\eneq
Also, if $x\in {\rm Tor}(K_0(A))\cap G^{{\cal P}_2},$ 
\beq
\kappa_1'(x)+[\psi_{0,1}](x)=(T_1!-1)\kappa_1(x)+\kappa_1(x)=0.
\eneq
One then computes that
\beq
&&[\psi_{0,1}+L]|_{K_i(C, \Z/k\Z)\cap {\cal P}_1}=0,\,\,i=0,1,\\
&&{[}\psi_{0,1}+L{]}|_{{\rm ker}\rho_{C,f}\cap G^{{\cal P}_1}}=0,\andeqn\\
&&{[}\psi_{0,1}+L{]}|_{K_1(C)\cap {\cal P}_1}=0.
\eneq

Define $\lambda_1\in KL_{loc}(G^{{\cal P}_2}, \underline{K}(B))$  by 
${\lambda_1}|_{{\cal P}_2}=([\Psi_1]-[\Phi_1]+[\psi_{0,1}])|_{{\cal P}_2}.$
Note that, by \eqref{TT-e-kerrho},
\beq
\lambda_1(x)=([\Psi_1]-[\Phi_1]+[\psi_{0,1}])(x)=\kappa_1'(x)-[\Phi_1](x)+[\psi_{0,1}](x)=-[\Phi_1](x)\in {\rm ker}\rho_B
\eneq
for $x\in G^{{\cal P}_2}\cap {\rm ker}\rho_{C,f},$ 
and, by \eqref{89-k-1} and \eqref{87-k-10},  if $x\in G^{{\cal P}_1}\cap {\rm ker}\rho_{C,f},$
\beq
\lambda_1(x)=
-[\Phi_1](x)=0.
\eneq
Moreover,  by \eqref{87-k-10} and \eqref{87-K-12},
\beq\label{87-k-13}
&&\hspace{-0.7in}\lambda_1(x)=([\Psi_1]-[\Phi_1]+[\psi_{0,1}])(x)\\\label{87-k-14}
&&\hspace{-0.3in}=
([\phi_{N_1+1}]+[L]-[\phi_{N_1+1}]-[\sum_{i=1}^{N_1}\phi_i]+[\psi_{0,1}])(x)
=([L]-[\sum_{i=1}^{N_1}\phi_i]+[\psi_{0,1}])(x)
\eneq
for all $x\in G^{{\cal P}_1}.$
Furthermore, by \eqref{TT-e-10}, for  $j=1,2,...,s(2),$ 
\beq
&&\hspace{-0.7in}\rho_B(\lambda_1([p_{2,j}]))=\rho_B([\Psi_1]([p_{2,j}])
+[\psi_{0,1}]([p_{2,j}])-[\Phi_1]([p_{2,j}]))
=\rho_B([\psi_{0,1}]([p_{2,j}]))>0.
\eneq
In other words, $\lambda_1([p_{2,j}]) \in K_0(B)_+\setminus \{0\},$ $j=1,2,...,s(2).$
Note that {{$\rho_B(\lambda_1([1_C]))=\rho_B([e_{0,1}])$ (see \eqref{813-c-n1}).
Choose $e_{0,1}''\le  e_{N_1+1}-e_{N_1}$ such that 
$[e_{0,1}'']=\lambda_1([1_C]).$}}
By  (b) of  part (3) of  \ref{Lpartuniq}, there is a unital ${\cal G}_{2,T}$-$\dt_{2,T}$-multiplicative \morp\,
$\phi_{0,1}: C\to e_{0,1}''Be_{0,1}''$ such that 
\beq
[\phi_{0,1}]|_{{\cal P}_2}={\lambda_1}|_{{\cal P}_2}.
\eneq
Note that $(e_{N_1+1}-e_{N_1})-e_{0,1}=e_{1,1}.$
By \eqref{TT-T-1},
we have
\beq
 {\tau(\phi_{N_1}(x))\over{\tau(e_{N_1+1}-e_{N_1})}}\ge (3/4)\tau(\Phi(x))\ge \Delta_1(\widehat{x}) \tforal x\in {\td{\cal H}}_{q,N_1}.
\eneq

Recall that $\|\rho_{B_1}(\lambda_1([1_C]))\|=\sup\{\tau(e_{0,1}): \tau\in T(B_1)\}<\sigma_2^2$ and 
{{$\sigma_n'<1/2l(n).$  Set $e_{1,1}''=e_{N_1+1}-e_{N_1}-e_{0,1}''.$}}
Applying part (d) of (3) of  \ref{DefA1} (also recall  the choice of ${\bar H}_{q,N_1}$ and Proposition \ref{Plowerbd})
to the map 
$\phi_{N_1+1}$ (in place of $\phi$) and $\lambda_1,$
we obtain  a unital ${\cal G}_{2,T}$-$\dt_{2,T}$-multiplicative \morp\, $\phi_{1,1}: C\to  {{e_{1,1}''}}B{{e_{1,1}''}}$
such that
\beq
&&[\phi_{1,1}]|_{{\cal P}_2}=([\phi_{N_1+1}]-\lambda_1)|_{{\cal P}_2},\andeqn\\
&&|\tau(\phi_{1,1}(h))-\tau(\phi_{N_1+1}(h))|<3\sigma_2\rforal h\in {\cal H}_2\andeqn \tau\in T(B_1).
\eneq

Put $B_1':=(e_{0,1}+e_{N_1})B(e_{0,1}+e_{N_1})$ {{and $B_1'':=(e_{0,1}'+e_{N_1}')B(e_{0,1}'+e'_{N_1}).$}}
Consider maps ${\bar \psi}_1:=L\oplus  {\rm Ad}\, V\circ \psi_{0,1}: C\to {{B_1''}}$ and ${\bar \phi}_1:=(\sum_{i=1}^{N_1}\phi_i)\oplus \phi_{0,1}:
C\to B_1'.$   
It follows from \eqref{87-k-14} that
\beq
[{\bar \psi}_1]|_{{\cal P}_1}=[{\bar \phi_1}]|_{{\cal P}_1}.
\eneq
{{In particular, $[{\bar \psi}_1(1_C)]=[{\bar \phi_1}(1_C)].$ It follows that there is a unitary $w_1\in {\tilde B}$ 
such  that $w_1^*{\bar \psi}(1_C)w_1={\bar \phi_1}(1_C).$ In other words, $w_1^*(e_{N_1}'+e_{0,1}')w_1=e_{N_1}+e_{0,1}.$}} 
Note also, by \eqref{TT-T-1}, 
\beq
\tau\circ {\bar \psi_1}(h)\ge \tau\circ L(h)\ge (3/4) \Delta_0(\hat{h})-2\sigma_2\ge (1/2)\Delta_0(\hat{h})\rforal h\in {\cal H}_{q,1}.
\eneq
Moreover,   by \eqref{TT-T-1},  for all $c\in {\cal H}_1,$ 
\beq
|\tau\circ {\bar \psi}_1(c)-\tau\circ {\bar \phi}_1(c)|<2\sigma_1\le \eta_1'\rforal \tau\in T(B_1').
\eneq
By (2) of \ref{DefA1}, there is a unitary $u_1'\in U(B_1'')$ such that
\beq
(u_1')^*w_1^*{\bar \psi_1}(c)w_1u_1'\approx_{\ep/2^{1+5}} {\bar \phi}_1(c)\rforal c\in {\cal F}_1.
\eneq
{{Put $u_1=w_1u_1'.$}}

\vspace*{2ex}
{{To make the notation consistent:  
From now until the end of the proof, we replace ``k" with ``n" (except in mod k K groups
$K_i(C, Z/kZ)$).
We replace ``m" with ``M", and ``n" with ``N".}}\\

For each $n,$ since $[\Psi_n]$ is $\td {\cal G}_{n+1}$-$\eta_{n+1}$-multiplicative, $[\Psi_n]|_{{\cal P}_{n+1}}$
gives an element $\kappa_n'\in KL_{loc}(G^{{\cal P}_{n+1}}, \underline{K}(B))$  
and $\kappa_n'(K_{n+1,p})\subset K_0(B)_+\setminus \{0\}.$ 
By 
\eqref{TT-e-kerrho}, 
$\kappa_n'(G_{o,n+1})
\subset {\rm ker}\rho_B.$

Write $[p_{n+1,j}]=f_{n+1,j}\oplus x_{n+1,j}\in F_{0,n+1}\oplus G_{o,n+1},$ 
$j=1,2,...,s(n+1).$
Let $T_n\in \N$ such that $T_nx=0$ for all 
$x\in {\rm Tor}(G^{{\cal P}_{n+1}}).$  We may choose $T_n>2.$
 Define $\kappa_n\in KL_{loc}(G^{{\cal P}_{n+1}}, \underline{K}(B))$
as follows:

On $F_{0,n+1}\oplus G_{o,n+1},$ define $\kappa_n(f\oplus z)=(T_n!-1)\kappa_n'(f)-\kappa_n'(z)$
for all $f\in  F_{0,n}$ and $z\in G_{0,n};$
define $\kappa_n(x)=-\kappa_n'(x)$ for all $x\in K_1(C)\cap G^{{\cal P}_{n+1}},$
and for all $x\in G^{{\cal P}_{n+1}}\cap K_i(C, \Z/k\Z)$ ($i=0,1$),  $k=2,3,...,.$ 
Note, for $[p_{n+1,j}]\in K_{n+1,p}$ and $1\le j\le s(n+1),$ 

\beq
\kappa_n([p_{n+1,j}])&=&(T_{n}!-1)\kappa_n'(f_{n+1,j})-\kappa_n'(x_{n+1,j})\\
&=&(T_n!-1)\kappa_1'([p_{n+1,j}])-T_n! \kappa_1'(x_{n+1,j}).
\eneq
Note that
$\kappa_n'(x_{n+1,j})
\in {\rm ker}\rho_B.$ 
Since $B$ is a  simple \CA\, with tracial rank zero, 
$\kappa_n([p_{n+1,j}])\in K_0(B)_+\setminus \{0\},$   $1\le j\le s(n+1).$

{{Note that, by \eqref{TT-t-01}
and by  \eqref{813-n1},
%
$\tau(\phi_{N_1 +n}(h)) \geq \Delta_1(\hat{h})$
for all $h \in \tilde{\cal H}_{q,N_1 +n}$ and $\tau \in T(B_n)$.}} 
By applying  Lemma \ref{Lpartuniq} to the map 
$\phi_{N_1+n}$ (in place of $\phi$) and $\kappa_n,$   we obtain 
mutually orthogonal projections $e_{0,n}, e_{1,n}\in (e_{N_1+n}-e_{N_1+n-1})B(e_{N_1+n}-e_{N_1+n-1})$ and unital
${\cal G}_{n+1}$-$\dt_{n+1}$-multiplicative \morp s $\psi_{0,n}: C\to e_{0,n}Be_{0,n}$ and $\psi_{1,n}:C\to e_{1,n}Be_{1,n},$
and a unitary $u_{1,n}\in B_n$
such that
\beq\label{TT-t-01+}
&&e_{0,n}+e_{1,n}=e_{N_1+n}-e_{N_1+n-1}, \,\,\, \tau(e_{0,n})<\sigma_{n+1}^2\tforal \tau\in T(B_n),\\
&&{[\psi_{0,n}]}|_{{\cal P}_{n+1}\cap {\rm ker}\rho_{C,f}}={\kappa_n}|_{{\cal P}_{n+1}\cap {\rm ker}\rho_{C,f}},\,\,\,\\
&&{[}\psi_{0,n}{]}|_{{\cal P}_{n+1}\cap K_1(C)}={\kappa_n}|_{{\cal P}_{n+1}\cap K_1(C)},\\
&&{[\psi_{0,n}]}|_{{\cal P}_{n+1}\cap K_i(C, \Z/k\Z)}={\kappa_n}|_{{\cal P}_{n+1}\cap K_i(C, \Z/k\Z)},\,\,\, k=2,3,...,\\\label{TT-t-9n}
&&{{[\psi_{0,n}]|_{{\cal P}_n}+[\psi_{1,n}]|_{{\cal P}_n}=[\phi_{N_1+n}]|_{{\cal P}_n}\andeqn}}\\\label{TT-t9n++}
&&\|u_{1,n}^*\phi_{N_1+n}(c)u_{1,n}-(\psi_{0,n}(c)+\psi_{1,n}(c))\|<\sigma_{n+1}\rforal c\in {\cal G}_{n+1}.
\eneq
Note 
$\psi_{0,n}(1_C)=e_{0,n}$ and $\psi_{1,n}(1_C)=e_{1,n}.$
Note that  $K_0(C, \Z/k\Z)\cap G^{{\cal P}_{n+1}}=\{0\}$ for any $k>T_n.$ 
Therefore, for $x\in (K_0(C)/kK_0(C))\cap G^{{\cal P}_{n+1}}$ ($k\le T_n$)
\beq
\kappa_n'(x)+[\psi_{0,n}](x)=(T_n!-1)\kappa_n(x)+\kappa_n(x)=-\kappa_n(x)+\kappa_n(x)=0.
\eneq
Also, if $x\in {\rm Tor}(K_0(A))\cap G^{{\cal P}_{n+1}},$ 
\beq
\kappa_n'(x)+[\psi_{0,n}](x)=(T_n!-1)\kappa_n(x)+\kappa_n(x)=0.
\eneq
One then computes that, for $n\ge 2,$  if $x\in G^{{\cal P}_n}\cap {\rm ker}\rho_{C,f},$ 
$x\in K_1(C)\cap {\cal P}_n,$ and $x\in  K_i(C, \Z/k\Z)\cap {\cal P}_n,$
{{by \eqref{TT-t9n++},}}
\eqref{87-k-10} and \eqref{87-K-11},
\beq
([\psi_{0,n}+ \psi_{1,n-1}])(x)=[\psi_{0,n}](x)+([{{\phi}}_{N_1+n-1}]-[\psi_{0,n-1}])(x)\\
=-[\Psi_n](x)+[{{\phi}}_{N_1+n-1}](x) +  [\Psi_{n-1}](x)=0. 
\eneq

Define {{(for $n=1,2,...$)}} $\lambda_n\in KL_{loc}(G^{{\cal P}_{n+1}}, \underline{K}(B))$  by 
${\lambda_n}|_{{\cal P}_{n+1}}=([\Psi_n]-[\Phi_n]+[\psi_{0,n}])|_{{\cal P}_{n+1}}.$
Note that, by \eqref{TT-e-kerrho},
{{\beq
\hspace{-0.3in}\lambda_n(x)=([\Psi_n]-[\Phi_n]+[\psi_{0,n}])(x)=\kappa_n'(x)-[\Phi_n](x)+[\psi_{0,n}](x)=-[\Phi_n](x)\in {\rm ker}\rho_B
\eneq}}
for $x\in G^{{\cal P}_{n+1}}\cap {\rm ker}\rho_{C,f},$  and,  by \eqref{89-k-1} and \eqref{87-k-10},  if $x\in G^{{\cal P}_n}\cap {\rm ker}\rho_{C,f},$
\beq
\lambda_n(x)=
-[\Phi_{n-1}](x)=0,
\eneq
 By \eqref{TT-e-10}, for  $j=1,2,...,s(n+1),$ 
\beq
\rho_B(\lambda_n([p_{{n+1},j}]))&=&\rho_B([\Psi_n]([p_{n+1,j}])+[\psi_{0,n}]([p_{n+1,j}])-[\Phi_n]([p_{n+1,j}]))\\
&=&\rho_B([\psi_{0,n}]([p_{n+1,j}]))>0.
\eneq
In other words, $\lambda_n([p_{n+1,j}]) \in K_0(B)_+\setminus \{0\},$ $j=1,2,...,s(n+1).$
Note that, for $n\ge 1,$ 
\beq
\lambda_n([1_C]))=[e_{N_1+n}']-[e_{N_1+n}]+[e_{0,n}])=
[e_{0,n}].
\eneq
By  (b) of  part (3) of   \ref{DefA1},
there is a unital ${\cal G}_{n+1, T}$-$\dt_{n+1, T}$-multiplicative \morp\,
$\phi_{0,n}: C\to e_{0,n}Be_{0,n}$ such that 
\beq
[\phi_{0,n}]|_{{\cal P}_{n+1}}={\lambda_n}|_{{\cal P}_{n+1}}.
\eneq
Note that $(e_{N_1+n}-e_{N_1+n-1})-e_{0,n}=e_{1,n}.$
By \eqref{TT-T-1},
we have
\beq
 {\tau(\phi_{N_1+n}(x))\over{\tau(e_{N_1+n}-e_{N_1+n-1})}}\ge (3/4)\tau(\Phi(x))\ge \Delta_1(\widehat{x}) \tforal x\in {\td{\cal H}}_{q,N_1+n}.
\eneq

Recall that $\|\rho_{B_n}(\lambda_n([1_C]))\|=\sup\{\tau(e_{0,n}): \tau\in T(B_n)\}<\sigma_{n+1}^2.$
Applying part (d) of (3) of  \ref{DefA1} (also recall  the choice of ${\bar H}_{q,N_1+n}$ and Proposition \ref{Plowerbd})
to the map 
$\phi_{N_1+n}$ (in place of $\phi$) and $\lambda_n,$
we obtain  a unital ${\cal G}_{n+1}$-$\dt_{n+1}$-multiplicative \morp\, $\phi_{1,n}: C\to  e_{1,n}Be_{1,n}$
such that
\beq
&&[\phi_{1,n}]|_{{\cal P}_{n+1}}=([\phi_{N_1+n}]-\lambda_n)|_{{\cal P}_{n+1}},\andeqn\\\label{TT-t-10}
&&|\tau(\phi_{1,n}(h))-\tau(\phi_{N_1+n}(h))|<3\sigma_{n+1}\rforal h\in {\cal H}_{n+1}\andeqn 
\tau\in  T(B_n).
\eneq
Note that
\beq
&&[\phi_{N_1+n}]|_{{\cal P}_{n+1}}=([\phi_{1,n}]+[\phi_{0,n}])|_{{\cal P}_{n+1}},\\
&&|\tau(\phi_{N_1+n}(c))-\tau(\phi_{1,n}(c)+\phi_{0,n}(c))|<4\sigma_{n+1}\le \eta_{n+1}'\tforal c\in {\cal H}_{n+1}
\eneq
for all  $\tau\in T(B_n).$  Combining with \eqref{TT-t-01},  applying part (2) of \ref{DefA1}, we obtain a unitary
$v_{1,n}\in B_n$
such that
\beq\label{TT-e-18}
v_{1,n}^*\phi_{N_1+n}(c)v_{1,n}\approx_{\ep/2^{n+5}} \phi_{1,n}(c)+\phi_{0,n}(c)\rforal c\in {\cal F}_{n+1}.
\eneq

Put $B_n':=(e_{0,n}+e_{1,n-1})B(e_{0,n}+e_{1,n-1}),$ $n=2,3,...$
Consider maps ${\bar \psi}_n:=\psi_{0,n}\oplus \psi_{1, n-1}: C\to B_n'$ and ${\bar \phi}_n:=\phi_{0,n}\oplus \phi_{1,n-1}:
C\to B_n'.$     For $n\ge 2,$  {{by \eqref{TT-t9n++},}}
\eqref{87-k-10} and \eqref{87-K-11},
\beq
\hspace{-0.6in}{[{\bar \phi}_n]}|_{{\cal P}_n}&=&(([\phi_{N_1+n-1}]-[\phi_{0,n-1}])+[\phi_{0,n}])|_{{\cal P}_n}\\
&=&(({[\phi_{N_1+n-1}]}-[\phi_{0,n-1}])+([\Psi_n]-[\Phi_n]+[\psi_{0,n}]))|_{{\cal P}_n}\\\nonumber
\hspace{-0.2in}&=&(([\phi_{N_1+n-1}]-([\Psi_{n-1}]-[\Phi_{n-1}]+[\psi_{0,n-1}])+([\Psi_n]-[\Phi_n]+[\psi_{0,n}])))|_{{\cal P}_n}\\\nonumber
&=&(([\psi_{0,n}]+[\phi_{N_1+n-1}]-[\psi_{0,n-1}]))+(([\Psi_n]-[\Psi_{n-1}])-([\Phi_n]-[\Phi_{n-1}])))|_{{\cal P}_n}\\
&=&(([\psi_{0,n}]+[\psi_{1,n-1}])+([\phi_{N_1+n}]-[\phi_{N_1+n}]))|_{{\cal P}_n}\\
&=&(([\psi_{0,n}]+[\psi_{1, n-1}])|_{{\cal P}_n}=[{\bar \psi}_n]|_{{\cal P}_n}.
\eneq
Note also, by \eqref{TT-t-10},
\beq
\tau\circ {\bar \psi_n}(h) &\ge& \tau\circ \psi_{1, n-1}(h)\ge \tau\circ \phi_{N_1+n}(h)-3\sigma_{n+1}\\
&\ge& 
(3/4) \Delta_0(\hat{h})-3\sigma_{n+1}\ge (1/2)\Delta_0(\hat{h})\rforal h\in {\cal H}_{q,n}.
\eneq
Recall that ${\cal H}_n\subset {\cal G}_n.$
By \eqref{TT-t-9n}, \eqref{TT-t-01},  for all $c\in {\cal H}_n$   and, for all $\tau\in T(B_n'),$
\beq
\tau({\bar \psi}_n(c))\approx_{\dt_{n+1}+\sigma_{n+1}}\tau(\phi_{N_1+n}(c))\approx_{\sigma_{n+1}}
\tau(\phi_{1,n}(c))\approx_{\sigma_{n+1}}
\tau({\bar \phi}_{n}(c)).
\eneq
It follows from part of (2) of \ref{DefA1} that there exists a unitary 
$u_n\in U(B_n')$   such that  ($n=2,3,...$)
\beq\label{TT-t-22}
u_n^*{\bar \psi}_n(c)u_n\approx_{\ep/2^{n+5}} {\bar \phi}_n(c)\rforal c\in {\cal F}_{n}.
\eneq
Define ${\bar e}_1:=e_{0,1}+e_{N_1},$ 
${\bar e}_{n+1}:=(e_{N_1+n}-e_{0,n})+e_{0, n+1},$ $n=1, 2,....$ 
Define 
\beq
&&\hspace{-0.8in}U_1:=\sum_{n=1}^\infty V^*u_{1,n}V+e_{N_1}',\,\,U_2:=V^*\sum_{n=2}^\infty u_n+u_1{\bar e}_1
\andeqn U_3:=\sum_{n=1}^\infty v_{1,n}^*+{\bar e}_1.
\eneq
Then $U_1,\, U_2,\, U_3\in U(M(B)).$  Put $U:=U_1U_2U_3$ {{and
${\tilde \Psi}(c)=\sum_{n=1}^\infty V^*\phi_{N_1+n}(c)V+L(c)$ for all $c\in C.$}}
By  \eqref{TT-t-9n}, \eqref{TT-t-22}  and  \eqref{TT-e-18},  for all $c\in {\cal F}={\cal F}_1,$ 
\beq
\hspace{-0.2in}U^*{{{\tilde \Psi}(c)}}U=U_3^*U_2^*U_1^*{\tilde \Psi}(c)U_1U_2U_3=U_3^*U_2^*(\sum_{n=1}^\infty V^*u_{1,n}\phi_{N_1+n}(c)u_{1,n}V+L(c))U_2U_3\\
\approx_{\ep/2^5} U_3^*U_2^*(\sum_{n=1}^\infty {{V^*(\psi_{0,n}(c)+\psi_{1,n}(c))V}}+L(c))U_2U_3\\
=U_3^*U_2^*(\sum_{n=1}^\infty V^*(\psi_{0,n+1}(c)+\psi_{1,n}(c))V+(V^*\psi_{0,1}(c)V+L(c)))U_2U_3\\
=U_3^*(\sum_{n=1}^\infty u_{n+1}^*(\psi_{0,n+1}(c)+\psi_{1,n}(c))u_{n+1}+u_{1}^*(V^*\psi_{0,1}(c)V+L(c))u_{1})U_3\\
\approx_{\ep/2^5}
U_3^*(\sum_{n=1}^\infty (\phi_{0,n+1}(c)+\phi_{1,n}(c)) +\sum_{i=1}^{N_1}\phi_i(c)+\phi_{0,1}(c))U_3\\
=U_3^*(\sum_{n=1}^\infty (\phi_{1,n}(c)+\phi_{0,n}(c)) +\sum_{i=1}^{N_1}\phi_i(c))U_3\\
=\sum_{n=1}^\infty v_{1,n}(\phi_{1,n}(c)+\phi_{0,n}(c))v_{1,n}^*+\sum_{i=1}^{N_1}\phi_i(c)\\
\approx_{\ep/2^5}\sum_{n=1}^\infty \phi_{N_1+n}(c)+\sum_{i=1}^{N_1}\phi_i(c)\approx_{\eta_1} \Phi(c).
\eneq 
In other words, 
\beq\label{TT-e-19}
\|U^*\Psi(c)U-\Phi(c)\|<\ep\rforal c\in {\cal F}.
\eneq
Similarly, for each $M>1$ and any $c\in {\cal F}_M,$ if $N\ge M,$ 
\beq
{{U^*(\sum_{n=N}^\infty V^*\phi_{N_1+n}(c)V)U=U_3^*U_2^*(\sum_{n=N}^\infty (V^*u_{1,n}^*\phi_{N_1+n}(c)u_{1,n}V))U_2U_3}}\\
\approx_{\ep/2^{N+4}} U_3^*U_2^*(\sum_{n=N}^\infty V^*(\psi_{0,n}+\psi_{1,n})V)U_2U_3\\
=U_3^*U_2^*V^*(\sum_{n=N}^\infty (\psi_{0,n+1}(c)+\psi_{1,n}(c))+(\psi_{0,N}(c)))VU_2U_3\\
=U_3^*(\sum_{n=N}^\infty u_{n+1}^*(\psi_{0,n+1}(c)+\psi_{1,n}(c))u_{n+1}+u_{N}^*\psi_{0,N}(c)u_{N})U_3\\
=U_3^*(\sum_{n=N}^\infty (\phi_{0,n+1}(c)+\phi_{1,n}(c)) +u_{N}^*\psi_{0,N}(c)u_{N})U_3\\
=\sum_{n=N+1}^\infty v_{1,n}(\phi_{1,n}(c)+\phi_{0,n}(c))v_{1,n}^*+U_3^*\phi_{1, N}(c)+u_{N}^*\psi_{0,N}(c)u_{N})U_3\\
\approx_{\ep/2^{N+4}}\sum_{n=N+1}^\infty \phi_{N_1+n}(c)+U_3^*(\phi_{1,N}(c)+u_{N}^*\psi_{0,N}(c)u_{N})U_3.
\eneq
Put $\Lambda_N(c)=\sum_{n=N+1}^\infty \phi_{N_1+n}(c)+U_3^*(\phi_{1,N}(c)+u_{N}^*\psi_{0,N}(c)u_{N})U_3.$
{{Then}} the {{inequalities}} above may also be written as 
\beq\label{TT-e-20}
\|U^*(\sum_{n=N}^\infty  V^*\phi_{N_1+n}(c)V)U-\Lambda_N(c)\|<\ep/2^{N+3}\rforal c\in {\cal F}_M.
\eneq
For each $c\in {\cal F}_M,$   by  \eqref{TT-T-001} and  \eqref{84-B-1},
\beq
&&\hspace{-0.6in}c_{0,N}:=U^*\Psi(c)U-U^*(\sum_{n=N}^\infty V^*\phi_{N_1+n}(c)V)U=U^*(\Psi(c)-\sum_{n=N}^\infty V^*\phi_{N_1+n}(c)V)U\in B\\
&&\hspace{-0.4in}\andeqn
c_{1,N}:=\Lambda_N(c)-\Phi(c)\in B.
\eneq
In particular,
\beq
c_{0,N}+c_{1,N}\in B.
\eneq
We have, for $c\in {\cal F}_M,$
\beq
(U^*\Psi(c)U-\Phi(c))=(c_{0,N}+c_{1,N})+(U^*(\sum_{n=N}^\infty V^*\phi_{N_1+n}(c)V)U-\Lambda_N(c)).
\eneq
It follows from \eqref{TT-e-20}
that
\beq
\lim_{N\to\infty}\|(U^*\Psi(c)U-\Phi(c))-(c_{0,N}+c_{1,N})\|=0.
\eneq
Hence $U^*\Psi(c)U-\Phi(c)\in B$ for all $c\in {\cal F}_M.$ 
Since $\cup_{M=1}^\infty {\cal F}_M$ is dense in the unit ball of $C,$ we conclude that
\beq
U^*\Psi(c)U-\Phi(c)\in B\rforal c\in C.
\eneq
Combining with \eqref{TT-e-19}, the lemma follows.

\end{proof}

\section{Class ${\cal A}_0$}

\label{section:ClassA0}

Let us first recall some terminology.   

{{Recall that for a unital C*-algebra $A$,  and for every
unitary $u \in {{U_0(A)}}$, the \emph{exponential length of $u$} is 
$$cel(u) := \inf \left\{ \sum_{l=1}^m \| h_l \| : u = \prod_{l=1}^m exp(i h_l),
\makebox{ } h_l \in A_{SA} \makebox{ for all } 1 \leq l \leq m,
\makebox{ and } m \geq 1 \right\}.$$
Define the \emph{exponential length of $A$} to be   
$$cel(A) := \sup \{ cel(u) : u \in {{U_0(A)}} \}.$$
For each $n \geq 1$, 
let $exp(iA_{SA})^n$ denote all products of $n$ exponentials of the form
$\prod_{l=1}^n exp(i h_l)$, where $h_l \in A_{SA}$ for all $1 \leq l \leq n$.
Recall that for a unitary $u \in {{U_0(A)}}$, the \emph{exponential rank of $u$} 
is 
\[
cer(u) =
\begin{cases}
 n & \makebox{ if } u \in exp(i A_{SA})^n \setminus 
\overline{exp(i A_{SA})^{n-1}}\\
n + \epsilon & \makebox{ if  } 
u \in \overline{exp(i A_{SA})^n} \setminus exp(i A_{SA})^n.\\
\end{cases}
\]
Finally, recall that the \emph{exponential rank of $A$} is
$$cer(A) := \sup \{ cer(u) : u \in U(A)_0 \}.$$}}\\
(See \cite{PhillipsRingrose}.)

\begin{df}
Let $A$ be a unital C*-algebra.
\begin{enumerate}
\item For any map $T : \mathbb{N}^2 \rightarrow \mathbb{N}$, $A$ is said to have
\emph{$K_0$-divisible rank $T$} if for every $n, k \in \mathbb{N}$, for every
$x \in K_0(A)$, if $-n [1_A] \leq kx \leq n [1_A]$, then
$-T(n, k ) [1_A] \leq x \leq T(n,k) [1_A]$.
\item For any map $E : \mathbb{R}_+ \times \mathbb{N} \rightarrow 
\mathbb{R}_+$, $A$ is said to have
\emph{exponential length divisible rank $E$} if for every
$m, k \geq 1$ and $u \in U(M_m(A))_0$, if $cel(u^k) \leq L$ then 
that $cel(u) \leq E(L, k)$.     
\item Let $C$ be a C*-algebra and $T_1 = N \times K : C_+ \setminus \{ 0 \}
\rightarrow \mathbb{N} \times (\mathbb{R}_+ \setminus \{ 0 \})$ be a map.
Let $\mathcal{H} \subset C_+ \setminus \{ 0 \}$ be a finite subset. 
Suppose that $\theta : C \rightarrow A$ is a contractive completely positive 
linear map.   Then $\theta$ is said to be 
\emph{$T_1$-$\mathcal{H}$-full} if for all $a \in \mathcal{H}$,
there are $x_{a,j} \in A$ with $\| x_{a,j} \| \leq K(a)$, $j = 1, 2, ..., N(a)$,
such that 
$$\sum_{j=1}^{N(a)} x_{a,j}^* \theta(a) x_{a,j} = 1_A.$$

   The map $\theta$ is said to be \emph{$T_1$-full}, if in the above, $\mathcal{H}$ can be replaced with $A_+ \setminus \{ 0 \},$ {\red{i.e.,}}
 for all $a \in A_+ \setminus
\{ 0 \}$, there exist $x_{a,j} \in A$ with $\| x_{a,j} \| \leq K(a)$, 
$j=1, 2, ..., N(a)$, such that  
$$\sum_{j=1}^{N(a)} x_{a,j}^* \theta(a) x_{a,j} = 1_A.$$  
{{(See Definition 5.5 of \cite{eglnp}.)}} 


\end{enumerate}
\end{df}

The following theorem is an immediate consequence of Lemma 4.15 of \cite{Lincbms} (see also Theorem 7.1 of \cite{Linhomtrk1}): 

\begin{thm}\cite[Theorem 7.1]{Linhomtrk1} \label{LH71}
Let $C$ be a unital separable amenable \CA\, satisfying the UCT.  Let $b\ge 1,$ $T: \N^2\to \N, 
 \makebox{ }{\bf L}: U(M_\infty(C))\to \R_+,$ $E: \R_+\times \N\to \R_+$
 and 
$T_1=N\times K: C_+\setminus \{0\}\to \N\times (\R_+\setminus\{0\})$ be four maps. For any $\ep>0$ and any 
finite subset ${\cal F} \subset C,$ there exist a $\dt>0,$ a finite subset ${\cal G}\subset C,$ a finite subset 
${\cal H}\subset C_+\setminus \{0\},$   a finite subset
${\cal P}\subset \underline{K}(C),$ a finite subset ${\cal U}\subset U(M_\infty(C)),$ and integers $l>0$ and $k>0$ satisfying 
the following:
For any unital \CA\, $A$ with stable rank one, $K_0$-divisible rank $T,$ exponential length divisible rank $E$ and ${\rm cer}(M_m(A))\le b$ for all $m$, if $\phi, \psi: C\to A$ are two ${\cal G}$-$\dt$-multiplicative contractive completely positive linear maps, 
with $p_0:=\phi(1_C)$ and $q_0:=\psi(1_C)$ being projections, such that   
\beq
[\phi]|_{\cal P}=[\psi]|_{\cal P}\andeqn {\rm cel}(\la\phi(u)\ra^*\la \psi (u)\ra)\le {\bf L}(u)
\eneq
for all $u\in {\cal U}$, 
then for any unital ${\cal G}$-$\dt$-multiplicative \morp\, 
$\theta: C\to M_l(A)$ which is $T_1$-${\cal H}$-full, there exists a unitary 
$u\in M_{1+kl}(A)$ such that 
\beq
u^*\diag(\phi(c), \overbrace{\theta(c), \theta(c),...,\theta(c)}^k)u\approx_{\ep} \diag(\psi(c), \overbrace{\theta(c), \theta(c),...,\theta(c)}^k)
\eneq
for all $c\in {\cal F}$.  Also, $u^*pu=q,$
where 
$p=\diag(p_0, \theta(1_C), ...,\theta(1_C))$ and $q=\diag(q_0, \theta(1_C),...,\theta(1_C)).$
\end{thm}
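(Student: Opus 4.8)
The plan is to deduce Theorem~\ref{LH71} directly from the abstract stable uniqueness statement \cite[Lemma 4.15]{Lincbms} (whose specialization to more regular coefficient algebras is \cite[Theorem 7.1]{Linhomtrk1}), so the argument is essentially a verification that our hypotheses are the ones that lemma requires rather than a fresh construction. First I would recall the exact shape of \cite[Lemma 4.15]{Lincbms}: for a unital separable amenable $C$ satisfying the UCT and for prescribed control data $b\ge 1$, $T\colon\N^2\to\N$, ${\bf L}\colon U(M_\infty(C))\to\R_+$, $E\colon\R_+\times\N\to\R_+$ and $T_1=N\times K\colon C_+\setminus\{0\}\to\N\times(\R_+\setminus\{0\})$, and for every $\ep>0$ and finite ${\cal F}\subset C$, it produces $\dt>0$, finite subsets ${\cal G}\subset C$, ${\cal H}\subset C_+\setminus\{0\}$, ${\cal P}\subset\underline{K}(C)$, ${\cal U}\subset U(M_\infty(C))$ and integers $l,k>0$ such that the required conjugation exists for every unital $A$ with stable rank one, $K_0$-divisible rank $T$, exponential length divisible rank $E$ and ${\rm cer}(M_m(A))\le b$ for all $m$, provided $\phi,\psi$ are ${\cal G}$-$\dt$-multiplicative with $\phi(1_C),\psi(1_C)$ projections, $[\phi]|_{\cal P}=[\psi]|_{\cal P}$, ${\rm cel}(\la\phi(u)\ra^*\la\psi(u)\ra)\le{\bf L}(u)$ for $u\in{\cal U}$, and $\theta\colon C\to M_l(A)$ is a unital ${\cal G}$-$\dt$-multiplicative $T_1$-${\cal H}$-full map.

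Second, I would check the translation point by point. (i) The regularity assumptions on $A$ in our statement---stable rank one, $K_0$-divisible rank $T$, exponential length divisible rank $E$, and uniformly bounded exponential rank $b$---are exactly the ingredients used in \cite{Lincbms} to run the Basic Homotopy Lemma and the exponential-rank comparison steps and to bound the image of $K_0$ by multiples of $[1_A]$, so nothing analytic needs to be redone. (ii) Our fullness hypothesis, that $\theta$ is $T_1$-${\cal H}$-full for the \emph{output} finite set ${\cal H}$, is precisely (not stronger than) what the absorption step in \cite{Lincbms} uses to supply the elements $x_{a,j}$ with $\|x_{a,j}\|\le K(a)$; if \cite{Lincbms} states fullness for a single element rather than a finite set, the standard reduction $h\mapsto\bigoplus_{a\in{\cal H}}a$ converts it, at the cost of adjusting $N$ and $K$ by a function of $|{\cal H}|$. (iii) The assertion $u^*pu=q$, with $p=\diag(p_0,\theta(1_C),\dots,\theta(1_C))$ and $q=\diag(q_0,\theta(1_C),\dots,\theta(1_C))$, follows because $[1_C]\in{\cal P}$ forces $[p_0]=[q_0]$ in $K_0(A)$; after slightly enlarging ${\cal P}$ and ${\cal U}$ if necessary, the unitary produced by \cite[Lemma 4.15]{Lincbms} may be taken in the corner cut by $p$ and made to carry $p$ to $q$ (equivalently, one first conjugates $p_0$ to $q_0$ by a unitary of controlled exponential length, absorbed into ${\bf L}$).

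Third, I would assemble: feeding $\ep,{\cal F},b,T,{\bf L},E,T_1$ into \cite[Lemma 4.15]{Lincbms} yields $\dt,{\cal G},{\cal H},{\cal P},{\cal U},l,k$, and then for any $A$, $\phi$, $\psi$, $\theta$ as in the statement the conclusion of \cite[Lemma 4.15]{Lincbms} gives exactly the displayed approximate intertwining on ${\cal F}$ together with $u^*pu=q$, with $u\in M_{1+kl}(A)$. I expect the main (and essentially only) obstacle to be bookkeeping of the quantitative invariants: one must confirm that ``$K_0$-divisible rank $T$'' and ``exponential length divisible rank $E$'' as defined in this paper coincide verbatim with the regularity conditions appearing in \cite{Lincbms}, and that the shape of $T_1$-fullness matches; the substantive work---the Basic Homotopy Lemma and the stable uniqueness machinery---is already contained in \cite{Lincbms}.
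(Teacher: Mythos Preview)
Your proposal treats the theorem as a direct citation, but this misses the actual content the paper is proving. The source results \cite[Theorem 7.1]{Linhomtrk1} and \cite[Lemma 4.15]{Lincbms} are stated for \emph{unital} $\phi,\psi$ (i.e., $\phi(1_C)=\psi(1_C)=1_A$); the paper explicitly says that under that hypothesis the present statement is identical to the cited one. What is new here is allowing $p_0=\phi(1_C)$ and $q_0=\psi(1_C)$ to be arbitrary projections, together with the exact conclusion $u^*pu=q$. Your recall of \cite[Lemma 4.15]{Lincbms} already inserts the phrase ``with $\phi(1_C),\psi(1_C)$ projections'' as if it were part of the cited hypothesis---it is not---so the verification you propose does not go through.

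The paper handles this by a unitization trick that you do not perform: pass to $C'=\C\oplus C$ and $A'=M_2(A)$, extend $\phi,\psi,\theta$ to unital maps $\phi_0,\psi_0,\theta_0$, adjust $T_1$ and ${\bf L}$ to maps $T_2,{\bf L}_1$ on $C'$, and apply the cited (unital) theorem with tolerance $\ep/4$. This yields a unitary $W$ in $M_{kl+1}(A')$ giving approximate intertwining; in particular $\|W^*pW-q\|<\ep/2$, so a small perturbation $W_1$ gives $W_1^*W^*pWW_1=q$ exactly, and then stable rank one of $A$ is used to pull the partial isometry $pWW_1q$ back to a genuine unitary in $M_{kl+1}(A)$. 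Your item (iii) gestures at $u^*pu=q$ but relies on undeveloped assertions (``may be taken in the corner cut by $p$''; ``absorbed into ${\bf L}$'') rather than the concrete perturbation-plus-stable-rank-one step. The unitization and this final perturbation are precisely where the work lies.
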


\begin{proof}
We may assume that $0<\epsilon < \frac{1}{10}$, and that all
the elements of ${\cal F}$ have norm less than or equal to one.

This is just a {{minor}} variation on Theorem 7.1 of \cite{Linhomtrk1}. 
If we assume that $\phi(1_C)=\psi(1_C)=1_A,$ then the current statement is the same as that of Theorem 7.1 of \cite{Linhomtrk1}.
One way to prove the case where $\phi$ or $\psi$ is non-unital  is by reducing
to the unital case via unitizing both $\phi$ and $\psi$. 
Let  $C':=\C\oplus C$ and $A':=M_2(A)$. Let $T_1=N\times K: C_+\setminus \{0\}\to  \N \times (\R_+ \setminus \{ 0 \})$
be as given. Define $T_2: N_1\times K_1: C'_+\setminus \{0\}\to \N \times 
(\R_+ \setminus \{ 0 \})$
as 
follows: $N_1((\lambda, c))=2+2N(c)$ and $K_1((\lambda, c))=2/\lambda + 
2/\sqrt{\lambda} +2K(c)$ if $\lambda\in \R_+\setminus\{0\}$ 
and $c\in C_+\setminus \{0\}$;  $N_1((0, c))=2N(c)$ and
 $K_1((0,c))=2K(c)$ if $c\in C_+\setminus \{0\}$;  
$N_1((\lambda, 0))=2$ and $K_1((\lambda, 0))=2/\lambda + 2/\sqrt{\lambda}$ if
$\lambda\in \R_+\setminus \{0\}.$ 
Define ${\bf L}_1: U(M_\infty(C'))\to \R_+$ by 
${\bf L}_1((\lambda, u))={\bf L}(u) + 2 \pi$ for $\lambda\in \T$ and $u\in U(M_\infty(C)).$

Set ${\cal F}_1=\{(\lambda, c): \lambda= 0, 1 \makebox{ and } c\in {\cal F}\}.$
Applying Theorem 7.1 of \cite{Linhomtrk1} to $C'$ with ${\ppl{\ep/4}}$ and ${\cal F}_1$ (in place of ${\cal F}$),
we obtain a $\dt_1 > 0$, finite subsets ${\cal G}_1\subset C',$ ${\cal H}_1\subset C'_+\setminus \{0\}$, 
${\cal P}_1\subset \underline{K}(C')$ and ${\cal U}_1\subset U(M_\infty(C')),$  and integers $l, k \geq 1.$
We may assume that ${\cal G}_1=\{(\lambda, g): \lambda\in \C, g\in {\cal G}\},$ where ${\cal G}\subset C$ is a finite subset,
${\cal H}_1=\{(\af, 0), \makebox{ }(0, h): \af >0, h \in {\cal H}\},$ where ${\cal H}\subset C_+\setminus \{0\}$ is 
a finite subset, ${\cal P}_1=\{[(1,0)]\}\cup {\cal P},$ where ${\cal P}\subset \underline{K}(C)$ is a finite subset,
and ${\cal U}_1=\{\diag(1, u): u\in {\cal U}\},$ where ${\cal U}\subset U(M_\infty(C))$ is a finite subset. 
We may assume that $[1_A] \in \Pp$.
Note that here, $\G_1$ and $\Hh_1$ are no longer finite sets, but this will not
be a problem for us.

Now let $\phi,  \psi : C \rightarrow A$ and $\theta: C\to M_{l}(A)$ be
given as in the statement of the theorem, with $\delta = \delta_1$. 
 Define $\phi_0, \psi_0,: C'\to A' = M_2(A)$ by 
$\phi_0(\lambda\oplus c)=\diag(\lambda (1_A - p_0)\oplus \phi(c), \lambda 1_A)$ 
and 
$\psi_0(\lambda\oplus c)=\diag(\lambda(1_A - q_0)\oplus \psi(c), \lambda 1_A)$
for all $\lambda \oplus c \in C' = \C \oplus C$. 
 Define 
$\theta_0:  C'\to  M_l(A') = M_{2l}(A)$ by $\theta_0(\lambda\oplus c)=
\diag(\lambda 1_{M_l(A)}, \theta(c))$ for all $\lambda \oplus c \in 
C'$.   
Then $\phi_0$ and $\psi_0$ are ${\cal G}_1$-$\dt_1$-multiplicative and 
\beq
[\phi_0]|_{{\cal P}_1}=[\psi_0]|_{{\cal P}_1}\andeqn {\rm cel}(\la \phi_0(v)^*\ra \la \psi_0(v)\ra)\le {\bf L}_1(v)
\eneq
for all $v\in {\cal U}_1.$ Moreover $\theta_0$ is ${\cal G}_1$-$\dt_1$-multiplicative and 
$T_2$-${\cal H}_1$-full.
By Theorem 7.1 of \cite{Linhomtrk1},  we obtain a unitary $W\in M_{kl+1}(A')$ such that
\beq
W^*\diag(\phi_0(c), \overbrace{\theta_0(c),..., \theta_0(c)}^k)W\approx_{\ep/4} \diag(\psi_0(c), \overbrace{\theta_0(c),..., \theta_0(c)}^k)
\eneq
for all $c\in {\cal F}_1.$
In particular, 
$\|W^*pW-q\|<\ep/2$, where $p=\diag(p_0, \theta(1_C), ...,\theta(1_C))$ 
and $q=\diag(q_0, \theta(1_C), ...,\theta(1_C))$. 
{{It follows that
there is a unitary $W_1\in  M_{kl+1}(A')$ with $\| W_1 - 1 \| <\ep/2$}} 
such that $W_1^*W^*pWW_1=q$.
Let $V:=WW_1 \in M_{kl+1}(A')$ and set  $v=pVq \in M_{kl +1}(A)$. 
Then 
\beq
v^*\diag(\phi(c), \overbrace{\theta(c), ..., \theta(c)}^k)v
\approx_{\ep} \diag(\psi(c), \overbrace{\theta(c), ..., \theta(c)}^k)
\eneq
for all $c\in {\cal F}.$ Since $A$ has stable rank one, we find a unitary $u\in M_{kl+1}(A)$ such 
that $pu=v.$ This $u$ is the required unitary, and 
the theorem then follows. 
\end{proof}

For a unital \CA\,  $A$ (not necessarily simple), we say that $A$
has \emph{
strong 
strict comparison} if for all $a, b \in {{(A \otimes {\cal K})_+}}$,
if $d_{\tau}(a) < d_{\tau}(b)$ or $d_{\tau}(b) = \infty$ for all $\tau 
\in T(A)$, then $a {{\lesssim}} b$, i.e., there exists a sequence $\{ x_n \}$
in $A \otimes {\cal K}$ for which $x_n b x_n^* \rightarrow a$.

The next lemma is also known.

\begin{lem}\label{LDelta=full}
Let $C$ be a separable \CA\, and let $\Delta: C^{q, {\bf 1}}_+\setminus \{0\}\to (0,1)$ 
be an order preserving map.   Then there is  a map $T_{\Delta}: C_+\setminus \{0\}\to \N\times (\R_+ \setminus \{ 0 \})$ 
satisfying the following:

For any finite subset ${\cal H}\subset C_+\setminus \{0\},$ 
there exist $\dt>0$ and a  finite subset ${\cal G} \subset C$ (both depending on $C, \Delta$ and ${\cal H}$ only)
such that the following holds:
If $A$ is a 
unital \CA\,  with strong strict comparison and
$L: C\to A$ is a  ${\cal G}$-$\dt$-multiplicative contractive completely
positive linear map   
such that
\beq
\tau(L(c))\ge \Delta(\hat{c})\rforal \tau\in  T(A)\andeqn c\in {\cal H}_{\Delta},
\eneq
where ${\cal H}_{\Delta}=\{ f_{\|a\|\over{2}}(a), f_{\|a\|\over{4}}(a): a\in {\cal H}\},$ 
then $L$ is $T_{\Delta}$-${\cal H}$-full.

\end{lem}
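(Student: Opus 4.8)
The plan is to reduce the statement to a standard fullness-via-comparison argument. Fix the order preserving map $\Delta$ on $C_+^{q,\mathbf 1}\setminus\{0\}$. For $a\in C_+\setminus\{0\}$ with $\|a\|=1$ (the general case is obtained by rescaling), set $c_a:=f_{1/2}(a)$ and observe that $\widehat{c_a}(\tau)=\tau(f_{1/2}(a))$ is bounded below by some positive constant $s_a$ depending only on $a$ and $\Delta$ once we know $\tau(L(f_{1/4}(a)))\ge \Delta(\widehat{f_{1/4}(a)})$ for a sufficiently multiplicative $L$ — this is exactly why the hypothesis is phrased in terms of $\mathcal H_\Delta$. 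First I would define $T_\Delta=N_\Delta\times K_\Delta\colon C_+\setminus\{0\}\to \mathbb N\times(\mathbb R_+\setminus\{0\})$ purely in terms of $\Delta$ and the norms: roughly, $N_\Delta(a)$ is the least integer exceeding $1/\Delta(\widehat{f_{\|a\|/2}(a)})$ (times a fixed constant coming from how many Cuntz-subequivalence summands are needed), and $K_\Delta(a)$ is a corresponding norm bound, e.g.\ $K_\Delta(a):=2/\sqrt{\Delta(\widehat{f_{\|a\|/2}(a)})}\cdot\|a\|^{-1/2}$ or something of that shape; the precise constant is a routine bookkeeping matter.

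The key steps, in order, would be: (1) Given a finite subset $\mathcal H\subset C_+\setminus\{0\}$, choose $\delta>0$ and a finite $\mathcal G\subset C$ large enough that any $\mathcal G$-$\delta$-multiplicative contractive completely positive $L\colon C\to A$ satisfies $\|L(f_{\|a\|/2}(a))-f_{\|a\|/2}(L(a))\|$ small and $\|L(f_{\|a\|/4}(a))-f_{\|a\|/4}(L(a))\|$ small for all $a\in\mathcal H$, and also that $L(a)$ is close to a genuine positive element; this uses only functional calculus continuity and is standard. (2) From the hypothesis $\tau(L(c))\ge\Delta(\hat c)$ for $c\in\mathcal H_\Delta$, deduce $d_\tau(L(a))\ge \tau(f_{\|a\|/2}(L(a)))\ge \tau(L(f_{\|a\|/4}(a)))-\text{(small)}\ge \Delta(\widehat{f_{\|a\|/4}(a)})-\text{(small)}>0$ for all $\tau\in T(A)$ and all $a\in\mathcal H$, with a uniform positive lower bound $\sigma_a$ depending only on $a$ and $\Delta$. (3) Apply strong strict comparison in $A$: since $d_\tau(1_A)=1<\infty$ and, after taking a large enough multiple $M_a:=N_\Delta(a)$ of $L(a)$ (realized as $\mathrm{diag}(L(a),\dots,L(a))$ inside $M_{M_a}(A)$), one has $M_a\,d_\tau(L(a))\ge M_a\sigma_a>1=d_\tau(1_A)$ for all $\tau$, we get $1_A\lesssim \mathrm{diag}(L(a),\dots,L(a))$ in $A\otimes\mathcal K$. (4) Unwind the Cuntz subequivalence $1_A\lesssim \bigoplus^{M_a}L(a)$ into the required fullness witnesses: there exist $x_{a,j}\in A$ with $\sum_{j=1}^{N_\Delta(a)}x_{a,j}^*L(a)x_{a,j}=1_A$, and a norm estimate on the $x_{a,j}$ coming from the lower bound $\sigma_a$ gives $\|x_{a,j}\|\le K_\Delta(a)$; this is the routine passage from subequivalence-by-a-single-element to an explicit finite sum, using that $1_A$ is a projection so the approximate subequivalence can be turned into an exact equation by a small perturbation and functional calculus.

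The main obstacle I expect is step (3)–(4): ensuring that the norm bound $K_\Delta(a)$ genuinely depends only on $C$, $\Delta$ and $a$ (not on $A$ or $L$). This requires extracting, from strong strict comparison, not merely the existence of a subequivalence but one with controlled norm of the implementing elements; the standard device is to first pass to $f_\epsilon(L(a))$ for a suitable $\epsilon$ comparable to $\sigma_a$, note that $f_\epsilon(L(a))$ is Cuntz-below $L(a)$ via an element of norm $\le \epsilon^{-1/2}$, and then use that the subequivalence $1_A\lesssim\bigoplus^{M_a}f_\epsilon(L(a))$ between a projection and a positive element of ``gap'' $\epsilon$ can be implemented with norm bounded in terms of $\epsilon^{-1}$ alone (a Rørdam-type estimate). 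A secondary, purely cosmetic obstacle is matching the precise form of $\mathcal H_\Delta=\{f_{\|a\|/2}(a),f_{\|a\|/4}(a):a\in\mathcal H\}$ demanded in the statement with the functional-calculus cutoffs used in the argument; choosing the cutoff parameters to be exactly $\|a\|/2$ and $\|a\|/4$ throughout takes care of this. I would also remark that the hypothesis ``strong strict comparison'' (allowing $d_\tau(b)=\infty$) is used precisely so that no compactness or finiteness assumption on $T(A)$ is needed, and that $T(A)$ being a compact set is implicit once $A$ is unital.
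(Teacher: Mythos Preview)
Your plan is correct and hits all the essential beats of the paper's proof: define $N_\Delta(a)$ in terms of $1/\Delta(\widehat{f_{\|a\|/2}(a)})$, use the tracial lower bound on $\mathcal H_\Delta$ together with strong strict comparison to get $1_A\lesssim \bigoplus^{N}(\text{cut-off of }a)$, exploit that $1_A$ is a projection to upgrade to an exact partial isometry $x$ with $\|x\|=1$, and then use the gap between $f_{\|a\|/2}$ and $f_{\|a\|/4}$ plus a factorization $f_{\|a\|/4}=g\cdot a\cdot g$ to pass back to $L(a)$ with controlled norm on the implementing elements.

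The only substantive difference from the paper is a technical choice in where the functional calculus lives. You propose to work with $f_\sigma(L(a))$ inside $A$ (so the factorization and the gap trick happen in $A$), after first checking $L(f_\sigma(a))\approx f_\sigma(L(a))$. The paper instead replaces $L|_{C^*(a)}$ by a genuine $*$-homomorphism $\phi_a\colon C_0(\mathrm{sp}(a))\to A$ that is close to $L$ on the finitely many relevant elements $\{a,\,b(a),\,f_{\|a\|/2}(a),\,f_{\|a\|/4}(a)\}$, where $b(a)\in C^*(a)_+$ satisfies $b(a)a=f_{\|a\|/4}(a)$. It then runs the whole comparison argument with $\phi_a$ (so all factorizations are exact), obtaining $\sum_i x_i^*\phi_a(b(a))^{1/2}\phi_a(a)\phi_a(b(a))^{1/2}x_i=1_A$ with $\|x_i\|\le 1$, and only at the very end perturbs $\phi_a(a)$ back to $L(a)$ and fixes the resulting $1/4$-error with an invertible correction. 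This makes the norm bookkeeping completely transparent: $K_\Delta(a)$ depends only on $\|b(a)\|$ (hence only on $\|a\|$), with no $\Delta$-dependence in $K_\Delta$ at all --- your proposed $K_\Delta(a)\sim \Delta^{-1/2}$ is harmless but unnecessary. Your route works equally well; the paper's just avoids the approximate-functional-calculus estimate $f_\sigma(L(a))\approx L(f_\sigma(a))$ in exchange for one clean perturbation at the end.
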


\begin{proof}
For every $a\in C_+\setminus \{0\}$, 
there exists  $b(a)\in C^*(a)_+\setminus \{0\}$ such 
that $b(a)a=ab(a)=f_{\|a\|\over{4}}(a).$ 
Define 
\beq
T_{\Delta}(a)=([2/\Delta(\widehat{f_{\|a\|\over{2}}(a)})]+1, 4(\|a\|+1)\cdot (\|b(a)\| + \| b(a) \|^{1/2} +1))
\eneq
for all $a\in C_+\setminus \{0\}$, where for every real number $r$, $[r]$ is
the least integer greater than or equal to $r$. 
Set $N(a)=[2/\Delta(\widehat{f_{\|a\|\over{2}}(a)})]+1$ and $R(a)= (\|a\|+1)\cdot (\| b(a) \| + \|b(a)^{1/2}\|+1)$, for all $a \in C_+ \setminus \{ 0 \}$.  

Fix a finite subset ${\cal H}\subset C_+\setminus \{0\}.$ Set ${\cal H}_{\Delta}=\{ f_{\|a\|\over{2}}(a), f_{\|a\|\over{4}}(a): a\in {\cal H}\}.$
Let 
\beq
\eta:=\min\{\min\{1/4N(a)R(a)^2: a\in \Hh \},  
(1/16)\Delta(\hat{c}): c\in {\cal H}_{\Delta}\}.
\eneq
{{Choose}} $\dt>0$ and finite subset ${\cal G}\subset C$ such that for
any C*-algebra $D$ and for any ${\cal G}$-$\dt$-multiplicative 
contractive completely positive linear
 map $\Phi: C\to D$,  there are \hm s 
$\phi_a': C_0({\rm sp}(a))\to D$  (for all $a\in {\cal H}$)  such that
\beq
\|\Phi(c)-\phi_a'(c)\|<\eta\rforal c\in  
\Hh_a := \{a, b(a),  f_{\sigma}(a): \sigma=\|a\|/2, \|a\|/4\}.
\eneq

Now suppose that   $L: C\to A$ is a ${\cal G}$-$\dt$-multiplicative contractive
completely positive linear map
such that 
$\tau(L(c))\ge \Delta(\hat{c})$ for all $\tau\in T(A)$ and $c\in {\cal H}_{\Delta}.$
Then,  by the choice of $\dt,$ there are \hm s $\phi_a: C_0({\rm sp}(a))\to A$
(for all $a \in \Hh$) 
such that
\beq\label{D=f-Lphia}
\|L(c)-\phi_a(c)\|<\eta\rforal c\in \Hh_a.
\eneq
It follows that for all $a\in {\cal H}$ and all $\tau \in T(A)$,   
\beq
\tau(\phi_a(c))\ge  {15\Delta(\hat{c})\over{16}}\rforal c\in \Hh_a. 
\eneq

Then, for all $a\in {\cal H}$ and all $\tau \in T(A)$,   
\beq
N(a)d_\tau(\phi_a(f_{\|a\|\over{2}}(a)))>1.  
\eneq

Fix an arbitrary $a \in \Hh$.

Hence, since $A$ has strong strict comparison, 
there exists a sequence $\{x_n\}\in M_{N(a)}(A)$ 
such that $x_n^*x_n\to \diag(1_A, 0,...,0)$ and for all $n$,
$x_n x_n^*\in  
{\rm Her}(\phi_a(f_{\|a\|\over{2}}(a))\otimes 1_{N(a)}).$ 
{{Since  $\diag(1_A, 0,...,0)$ is a projection,}} it follows that 
 there is an $x\in M_{N(a)}(A)$ such that
$x^*x=\diag(1_A,0,...,0)$ and $xx^*\in {\rm Her}(\phi_a(f_{\|a\|\over{2}}(a))\otimes 1_{N(a)}).$
Note that $\|x\|=1$ and $xx^*(\phi_a(f_{\|a\|\over{4}}(a))\otimes 1_{N(a)})=xx^*=(\phi_a(f_{\|a\|\over{4}}(a))\otimes 1_{N(a)})xx^*.$
By replacing $x$ with $x\diag(1_A,0,...,0),$ 
we may write 
\beq
x=\sum_{i=1}^{N(a)} x_i\otimes e_{i,1},
\eneq
where $x_i\in A,$ $i=1,2,...,N(a)$ and $\{e_{i,j}\}$ is a system of matrix units for $M_{N(a)}.$
Note that $\|x_i\|\le 1,$ $i=1,2,...,N(a).$
Thus, we have that $\sum_{i=1}^{N(a)}x_i^*\phi_a(f_{\|a\|\over{4}}(a))x_i=1_A.$ 
It follows that
\beq
\sum_{i=1}^{N(a)} x_i^*\phi_a(b(a))^{1/2}\phi_a(a)\phi_a(b(a))^{1/2}x_i=1_A.
\eneq
By \eqref{D=f-Lphia} and the choice of $\eta,$ 
\beq
\|\sum_{i=1}^{N(a)} x_i^*\phi_a(b(a))^{1/2}L(a)\phi_a(b(a))^{1/2}x_i-1_A\|
<1/4.   
\eneq
There is an $e(a)\in A_+$ with $\|e(a)\|<\sqrt{4/3}$ such that
\beq
\sum_{i=1}^{N(a)} e(a)x_i^*\phi_a(b(a))^{1/2}L(a)\phi_a(b(a))^{1/2}x_ie(a)
=1_A.  
\eneq
Note that $\|\phi_a(b(a))^{1/2}x_ie(a)\|\le \sqrt{4/3}R(a).$
Since $a \in \Hh$ was arbitrary,
it follows that $L$ is $T_{\Delta}$-${\cal H}$-full. 
\end{proof}

{{Let $k\in \N.$ Denote by ${\cal S}_k$ the family of all separable \CA s whose irreducible representations 
have rank no more than $k.$}}

\begin{lem}\label{LA1-Sr}
Every unital separable \CA\, in the class $S_r$ (for some $r\in\N$) satisfies  
condition (2) of Definition \ref{DefA1}.

\end{lem}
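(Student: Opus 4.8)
\textbf{Proof proposal for Lemma \ref{LA1-Sr}.}

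The plan is to reduce condition (2) of Definition \ref{DefA1} — a uniqueness (Weyl--von Neumann type) statement for almost multiplicative maps into a unital separable simple \CA\, $A$ with tracial rank zero — to the basic stable uniqueness theorem already available, namely Theorem \ref{LH71}, by exploiting that $C \in S_r$ has finite-dimensional irreducible representations of bounded rank. The first thing I would do is invoke the structure of the class $S_r$: a unital separable \CA\, with irreducible representations of rank at most $r$ is subhomogeneous, hence it satisfies the UCT, has finite decomposition rank (in fact is a quotient of a ``dimension-drop''-like algebra), $K_0$-divisible rank bounded by a universal $T$ depending only on $r$, exponential length divisible rank bounded by a universal $E$, and uniformly bounded exponential rank $cer(M_m(C)) \le b$ for all $m$ (subhomogeneous algebras have $cer \le 1 + \epsilon$ or at worst a small universal constant; see \cite{PhillipsRingrose}). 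So the hypotheses on $A$ needed to apply Theorem \ref{LH71} are automatic: since $A$ has tracial rank zero it has stable rank one, real rank zero, strict comparison, and $cer(M_m(A)) \le 1 + \epsilon$; and the $K_0$-divisibility and exponential-length-divisibility conditions pass to $A$ because $A$ has tracial rank zero (these are standard: $\rho_A(K_0(A))$ is dense and $K_0(A)$ is weakly unperforated, and $cel$ estimates in $TR = 0$ algebras are classical).

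Next I would set up the data. Given $\epsilon > 0$ and finite ${\cal F} \subset C$, fix an order-preserving map $\Delta$ as in condition (2). Using Lemma \ref{LDelta=full} with the map $\Delta$, I get a map $T_\Delta : C_+ \setminus \{0\} \to \mathbb{N} \times (\mathbb{R}_+ \setminus \{0\})$ so that any sufficiently multiplicative map $L_1 : C \to A$ with $\tau(L_1(h)) \ge \Delta(\hat h)$ on an enlarged finite set ${\cal H}_q$ is $T_\Delta$-${\cal H}$-full for a chosen finite ${\cal H}$; this is exactly the ``fullness'' hypothesis needed in Theorem \ref{LH71}. Then I feed $(C, \epsilon/2, {\cal F})$ together with the universal constants $b, T, E$ and the full-map datum $T_\Delta$ into Theorem \ref{LH71} to extract $\delta_0 > 0$, finite subsets ${\cal G}_0 \subset C$, ${\cal H}_0 \subset C_+ \setminus \{0\}$, ${\cal P}_0 \subset \underline{K}(C)$, ${\cal U}_0 \subset U(M_\infty(C))$, and integers $l, k$. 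The point of $S_r$ entering a second time: because $C$ is subhomogeneous with bounded representation rank, one can construct a single ``standard'' finite-dimensional model map $\theta : C \to M_l(A)$ (built from point evaluations at a finite set of irreducible representations, amplified appropriately) which is $T_\Delta$-${\cal H}_0$-full and whose KK-class and exponential-length data are controlled; the boundedness of ranks is what makes $l$ uniform. The hypotheses $[L_1]|_{\cal P} = [L_2]|_{\cal P}$, the trace estimate, and the lower bound $\tau(L_1(h)) \ge \Delta(\hat h)$ then yield, via Theorem \ref{LH71} applied to $L_1 \oplus \theta^{\oplus k}$ and $L_2 \oplus \theta^{\oplus k}$, a unitary matching them up to $\epsilon/2$. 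To absorb the $k$ copies of $\theta$ back, I would use that $A$ has tracial rank zero and real rank zero so that $\theta$ is approximately dominated (in the sense of Cuntz) by $L_2$ up to a small trace defect controlled by $\Delta$ — this is the standard ``Murray--von Neumann absorption'' step, using strict comparison in $A$ — allowing the extra summands to be conjugated inside $L_2(1_C)$; this gives the desired $\|u^* L_1(a) u - L_2(a)\| < \epsilon$ on ${\cal F}$.

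The one genuinely delicate point I would flag as the main obstacle is the final ``moreover'' clause of condition (2): that when ${\cal F} \subset \iota_{n,\infty}(C_n)$, one can take all the auxiliary data ${\cal G}, {\cal P}, {\cal H}_q, {\cal H}$ inside the image of $C_n$. This requires that the whole construction — the full model map $\theta$, the KK-data, the fullness witnesses — be arranged to factor through $C_n$. Since $C_n$ is itself a unital separable amenable subalgebra with $1_{C_n} = 1_C$ and (being subhomogeneous if $C$ is, though this is not literally assumed for $C_n$) one should be able to run Theorem \ref{LH71} and Lemma \ref{LDelta=full} with $C_n$ in place of $C$ and then push forward along $\iota_{n,\infty}$; the compatibility of the constants is then a matter of choosing the finite sets for $C_n$ first and only enlarging within $\iota_{n,\infty}(C_n)$. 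The rest is bookkeeping: matching up the finite subsets of $\underline{K}$ under $[\iota_{n,\infty}]$ and ensuring the trace estimates survive restriction. I expect no essential difficulty beyond organizing these choices, since all the analytic content is already packaged in Theorem \ref{LH71} and Lemma \ref{LDelta=full}.
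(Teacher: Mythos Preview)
Your reduction to Theorem \ref{LH71} with fullness via Lemma \ref{LDelta=full} matches the paper, as does your identification of the relevant properties of $A$ (tracial rank zero gives stable rank one, ${\rm cer}\le 2$, weak unperforation). But the absorption step is a real gap. Theorem \ref{LH71} delivers a unitary in $M_{1+kl}(A)$ intertwining $L_1 \oplus \theta^{\oplus k}$ and $L_2 \oplus \theta^{\oplus k}$; since $\theta$ must be $T_\Delta$-full and the integers $k,l$ are dictated by the theorem before $\theta$ is chosen, the summand $\theta^{\oplus k}$ is not tracially small relative to $L_i(1_C)=1_A$, and there is no mechanism for ``conjugating it inside $L_2(1_C)$'' as you suggest. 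The unitary from the theorem mixes blocks, so one cannot cancel identical direct summands in this approximate setting.

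The paper avoids an external $\theta$ altogether: using tracial rank zero of $A$, one first compresses $L_1, L_2$ into a finite-dimensional subalgebra $F \subset A$ up to small error, then applies Lemma~4.3.4 of \cite{Lincbms}---this is the actual place where $C \in S_r$ enters---to decompose each $\Psi \circ L_i$ \emph{inside} $F$ as $\psi_i \oplus \psi^{\oplus 2k(l+1)}$, with $\psi$ a common unital homomorphism into a corner $e_1 F e_1$ and $\psi_i$ landing in a small corner $e_0 F e_0$. The mutually equivalent projections $e_1,\ldots,e_{2k(l+1)}$ together with the small pieces sum to $1_A$, so Theorem \ref{LH71} applied to the small parts with $\theta=\psi$ produces a unitary already in $U(A)$; no post-hoc absorption is needed. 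Note also that the parameters $b,T,E$ in Theorem \ref{LH71} are hypotheses on the codomain $A$, not on $C$; the list of regularity properties you attribute to $S_r$-algebras plays no role here.
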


\begin{proof}
Fix an arbitrary $C\in S_r$. Let $\ep>0,$ ${\cal F}\subset C$ be a finite subset and $\Delta:  C_+\setminus \{0\}\to (0,1)$ be an order preserving map.    Set
 $\Delta_1=\Delta/7.$
Let $T_{\Delta_1}$ be given by Lemma \ref{LDelta=full}.

 To apply Theorem \ref{LH71}, 
let $b=2,$  {{$T(n,j)=[n/j]+1$ for all $(n,j) \in \N^2$,}} ${\bf L}(u)=2\pi+1$ for all $u\in U(M_\infty(C)),$  and 
{{$E(s,j)=2\pi+1$ for all $(s,j) \in \R \times \N$.}}  
(Recall that for every 
real number $t$, $[t]$ is the least integer greater than or equal to $t$.) 

Let $\dt_1>0$ (in place of $\dt$), and ${\cal G}_1\subset C$ (in place of ${\cal G}$), ${\cal H}_1\subset C_+\setminus \{0\}$ 
(in place of ${\cal H}$), 
${\cal P}_1\subset \underline{K}(C)$  (in place of ${\cal P}$)  and ${\cal U}\subset U(M_\infty(C))$ be finite subsets,
and $l$ and $k$ be integers given by Theorem \ref{LH71}  associated with ${\cal F},$ $\ep/4,$ and $b,$ $T,$ 
$T_{\Delta_1}$ (in place of $T_1$), ${\bf L}$ and 
$E$ as above. 
We may assume that $m \geq 1$ is an integer for which
 $u\in U(M_m(C))$ for all $u\in {\cal U}$. 
We may also assume that $[u]\in {\cal P}_1$ for all $u\in {\cal U}.$

Let $\dt_2>0$ (in place of $\dt$)  and finite subset ${\cal G}_2\subset C$ (in place of ${\cal G}$) be as in Lemma \ref{LDelta=full} for $C$, ${\cal H}_1$ and $\Delta_1.$ 

Let $\ep_0:=\min\{1/2, \ep/16, \dt_1/2, \dt_2/2\}$ and ${\cal F}_0:={\cal F}\cup {\cal G}_1\cup {\cal G}_2\cup {\cal H}_1\cup 
({\cal H}_1)_{\Delta}$,
where $(\Hh_1)_{\Delta} = \{ f_{\| a \|\over{2}}(a), f_{\| a \|\over{4}}(a) 
: a \in \Hh_1 \}.$  (See the statement of Lemma \ref{LDelta=full} for
the meaning of $\Hh'_{\Delta}$ for a general set $\Hh'$.) 

We will also apply Lemma 4.3.4 of \cite{Lincbms}. 
Let $0<\ep_1<1/4k(l+1).$ 
Let $\dt_3>0$ (in place  of $\dt$), $\sigma>0$, ${\cal G}_3\subset C$ (in place of ${\cal G}$) and ${\cal H}_2\subset C_+^{\bf 1}\setminus \{0\}$ be required by Lemma 4.3.4 of \cite{Lincbms} for $C$ (in place of $A$), 
$\ep_0$ (in place of $\ep$), ${\cal F}_0$ (in place of ${\cal F}$), 
$\ep_1$ (in place of $\ep_0$), ${\cal F}_0$ (in place of ${\cal G}_0$), 
$({\cal H}_1)_{\Delta}$ (in place of ${\cal H}_1$), as well as $\Delta_1.$ 

Let $\dt_4:=\min\{\ep_0, \dt_3\},$ ${\cal G}:={\cal G}_3\cup {\cal F}_3\cup {\cal H}_3,$ 
${\cal H}_q:=({\cal H}_2\cup {\cal H}_1)_{\Delta},$  and ${\cal H}:={\cal H}_q\cup {\cal H}_2\cup {\cal H}_1.$ 
Define $\eta:=\sigma/4k(l+1).$  Set $\dt=\dt_4/4.$ 

Now suppose that $A$ is a unital separable simple C*-algebra
with tracial rank zero, and
$L_1, L_2: C\to A$ are ${\cal G}$-$\dt$-multiplicative 
contractive completely positive {{linear maps such that }}  
\beq\label{LA1-Sr-1}
&&[L_1]|_{\cal P}=[L_2]|_{\cal P},\\\label{LA1-Sr-2}
&&\tau\circ L_1(c), \tau\circ L_2(c)\ge \Delta(\hat{c})\rforal c\in {\cal H}_q\andeqn\\\label{LA1-Sr-3}
&&|\tau\circ L_1(c)-\tau\circ L_2(c)|<\eta\rforal c\in {\cal H} 
{{\makebox{  and  } \tau \in T(A).}}  
\eneq
We may assume that $L_1(1_C)$ and $L_2(1_C)$ are projections.

Applying \cite{CP},   we obtain  that $x_j^{(i)}(c)\in A$ ($1\le j\le D(c)$ and $i=1,2$) such that, for $i=1,2,$
\beq
\sum_{j=1}^{D(c)}x_j^{(i)^*}(c)x_j^{(i)}(c)=\Delta(\hat{c})\cdot 1_A\andeqn
\sum_{j=1}^{D(c)}x_j^{(i)}(c)x_j^{(i)*}(c)\le L_i(c)\rforal c\in {\cal H}_q;
\eneq
and  $y_j(c)\in A$ ($1\le j\le \rho(c)$) such that
\beq
\sum_{j=1}^{\rho(c)}y_j^*(c)y_j(c)=L_1(c)\andeqn {{\|\sum_{j=1}^{\rho(c)}y_j(c)y_j^*(c)-L_2(c)\|<\eta}}  
\eneq
for all $c\in {\cal H}.$ 
Put 
\beq\nonumber
&&\hspace{-0.2in}M_1:=\max\{\|x_j^{(i)}(c)\|: 1\le j\le D(c), i=1,2,\, c\in {\cal H}_q\}+{{\max\{\|y_j(c)\|: 1\le j\le \rho(c), c\in  {\cal H}\}}}\\\nonumber
&&\andeqn
N_1:=\max\{D(c): c\in {\cal H}_q\}+\max\{\rho(c): c\in {\cal H}\}.
\eneq

Recall that $A$ has tracial rank {{zero}}. Therefore $A$ has stable rank one, real rank zero and 
has strict comparison (Theorem 3.4 and 3.6 of \cite{LinTAF}).  In particular, ${\rm cer}(A)\le 2,$ ${\rm cel}(A)\le 2\pi+1,$
$K_0(A)$ is weakly unperforated 
{{(see Theorem 5 of \cite{LinCERRR0} and Theorem 3.6 of \cite{LinTAF}).}}  It follows that $K_0(A)$ has  {{divisible rank $T$}} and 
$A$ has exponential divisible rank $E$ defined above and ${\rm cer}(M_m(A))\le 2.$ 

Put $\eta_1:=\min\{\eta/2, \min\{\Delta_1(c)/4k(l+1): c\in {\cal H}_q\}\}$ and $\eta_2:=\eta_1/2(M_1+1)N_1.$

By 
{{\eqref{LA1-Sr-1},}} there are $\dt_A>0$ and a finite subset ${\cal G}_A\subset A$
such that, if 
$e\in A$ is a non-zero projection  such that $\|ea-ae\|<\dt_A$
{{for all  $a \in {\G}_A$,}}  
then $L_i': C\to A$  defined by $L_i'(c)=eL_i(c)e$ is ${\cal G}$-$\dt$-multiplicative and 
\beq
[L_1']|_{\cal P}=[L_2']|_{\cal P}.
\eneq
Choose 
\beq
&&\hspace{-0.4in}\dt_A':=\min\{\dt_A/2, \dt,\eta_2\}]\andeqn\\
&&\hspace{-0.4in}{\cal G}_A':={\cal G}_A\cup \{x_j^{(i)}(c): 1\le j\le D(c), i=1,2,\, c\in {\cal H}_q \}\cup\{y_j(c): 1\le j\le \rho(c),\, c\in {\cal H}\}.
\eneq

For any finite subset ${{{\cal G}_A''}}\subset A$ with ${{{\cal G}_A''\supset {\cal G}_A'}},$ 
since $A$ has tracial rank zero,  there are non-zero projection $e\in A$ and 
a finite dimensional \CA\, $F\subset A$ with $1_F=(1-e)$ such that
\beq\label{LA1-Sr-5}
&&\|eg-ge\|<\dt_A'\rforal g\in {{{\cal G}_A''}},\\
&&\tau(e)<\eta_2/4k(l+1)\rforal \tau\in T(A)
\eneq
and there is a unital {{contractive completely positive linear}}
 map $\Psi: A\to F$ 
such that 
\beq\label{LA1-Sr-7}
\|\Psi(a)-(1-e)a(1-e)\|<\dt_A' \rforal a\in {{{\cal G}_A''.}}
\eneq
By choosing even  larger ${{{\cal G}_A'',}}$ we may 
assume that
 $L_i'$ ( as defined above) is ${\cal G}$-$\dt$-multiplicative.
 We also have  
\beq\label{LA1-Sr-10}
&&[L_1']|_{\cal P}=[L_2']|_{\cal P},\\\label{LA1-Sr-11}
&&t(\Psi\circ L_i(c))\ge \Delta(c)/2\rforal c\in {\cal H}_q,\,\,i=1,2, \andeqn\\\label{LA1-Sr-12}
&&|t(\Psi\circ L_1(c))-t(\Psi\circ L_2(c))|<\sigma\rforal c\in {\cal H}\,\,
\makebox{ and for all } t\in T(F).
\eneq
By Lemma 4.3.4 of \cite{Lincbms} (applying to each simple summand of $F$), 
there are mutually orthogonal non-zero projections $e_0, e_1,e_2,...,e_{2k(l+1)}\in F$ such 
that $e_1,e_2,...,e_{2k(l+1)}$ are mutually equivalent, $e_0\lesssim e_1,$ $t(e_0)<\ep_1$ for all 
$t\in T(F)$ and $\sum_{i=0}^{2k(l+1)}e_i=1_F,$ and there are unital  ${\cal F}_0$-$\ep_0$-multiplicative c.p.c. maps
$\psi_1, \psi_2: C\to e_0Fe_0$ and a unital \hm\, $\psi: C\to e_1Fe_1$ and a unitary $v\in F$ such that, for all $c\in {\cal F}_0,$
\beq\label{LA1-Sr-13}
&&\|\Psi\circ L_1(c)-\diag(\psi_1(c),\overbrace{\psi(c),\psi(c),...,\psi(c)}^{2k(l+1)})\|<\ep_0\\\label{LA1-Sr-14}
&&\|v^*\Psi\circ L_2(c)v-\diag(\psi_2(c),\overbrace{\psi(c),\psi(c),...,\psi(c)}^{2k(l+1)})\|<\ep_0\andeqn\\\label{LA1-Sr-15}
&& t(\psi(c))\ge \Delta_1(c)/(2k(l+1))\rforal c\in {\cal H}_q.
\eneq
Put $\Psi_i:=L_i'\oplus \psi_i: C\to eAe\oplus e_0Fe_0\subset (e+e_0)A(e+e_0).$
Note that $e+e_0\lesssim e_1.$ 
We also have 
that
\beq
[\Psi_1]|_{\cal P}=[\Psi_2]|_{\cal P}.
\eneq
By \eqref{LA1-Sr-3}, {{for all $\tau \in T(e_1Ae_1),$}} 
\beq
\tau(\psi(c))\ge \Delta_1(c)\rforal c\in {\cal H}_q.
\eneq
It follows from the choice of ${\cal H}_q$ that $\psi$ is $T$-${\cal H}_1$-full.
Since $[u]\in {\cal P}$ for all $u\in {\cal U}$ and $e_1Ae_1$ has real rank zero, 
\beq
{\rm cel}({{\la \Psi_1(u)\ra^*\la \Psi_2(u)\ra}})\le 2\pi+1={\bf L}(u)\rforal u\in {\cal U}.
\eneq
By Theorem \ref{LH71}, there is a unitary $w\in U(A)$ such that, for all $c\in {\cal F},$ 
\beq\label{LA1-Sr-16}
w^*(\Psi_2(c)\oplus \diag(\overbrace{\psi(c),..., \psi(c)}^{2k(l+1)}))w \approx_{\ep/4}
\Psi_1(c)\oplus \diag(\overbrace{\psi(c),..., \psi(c)}^{2k(l+1)}).
\eneq
Let $v_1=e\oplus v.$ 
Put $W:=v_1w$ and $\Phi_i=L_i'\oplus \Psi\circ L_i,$ $i=1,2.$   Then, by \eqref{LA1-Sr-14}, \eqref{LA1-Sr-16}
and \eqref{LA1-Sr-13}
\beq
W^*\Phi_2(c)W\approx_{\ep_0+\ep/4+\ep_0} \Phi_1(c)\rforal c\in {\cal F}.
\eneq
Combining this with \eqref{LA1-Sr-5} and \eqref{LA1-Sr-7}, we finally obtain that
\beq
W^*L_2(c)W\approx_{\ep} L_1(c)\rforal c\in {\cal F}.
\eneq
\end{proof}

\begin{cor}\label{LA1-IndSr}
Let $A$ be a unital separable \CA\, which is an inductive limit of \CA s in the
class $S_r.$
Then $A$ satisfies condition (2) of Definition \ref{DefA1}.
\end{cor}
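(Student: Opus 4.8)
The plan is to deduce the statement from Lemma \ref{LA1-Sr} by a routine reduction to a single term of the inductive system. Write $A = \overline{\bigcup_{n=1}^{\infty} A_n}$ with each $A_n$ in $S_r$. A quotient of an $S_r$-algebra is again in $S_r$ (an irreducible representation of a quotient pulls back to an irreducible representation of the algebra, which has rank at most $r$), and so is the (minimal) unitization of an $S_r$-algebra; hence we may replace each $A_n$ by the closure of its image in $A$ together with $1_A$, and thereby assume that the $A_n$ are C*-subalgebras of $A$ with $A_n \subseteq A_{n+1}$ and $1_{A_n} = 1_A$ for all $n$. Because $1_{A_n} = 1_A$, restriction gives a continuous affine map $T(A) \to T(A_n)$, so for $c \in (A_n)_+$ the value of $\widehat{c}$ in $\mathrm{Aff}(T(A_n))$ determines (and is determined by) the value of $\widehat{c}$ in $\mathrm{Aff}(T(A))$; consequently, given a nondecreasing $\Delta : A_+^{q,{\bf 1}}\setminus\{0\} \to (0,1)$, the formula $\Delta_n(\widehat{c}^{A_n}) := \Delta(\widehat{c}^{A})$ (for $c \in (A_n)_+^{\bf 1}\setminus\{0\}$) is a well-defined nondecreasing map $(A_n)_+^{q,{\bf 1}}\setminus\{0\} \to (0,1)$.

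Now fix $\Delta$, $\epsilon > 0$ and a finite subset $\mathcal{F} \subset A$. Choose $n$ and a finite subset $\mathcal{F}' \subset A_n$ so that each element of $\mathcal{F}$ lies within $\epsilon/4$ of an element of $\mathcal{F}'$; if $\mathcal{F}$ already lies in $\iota_{m,\infty}(A_m)$ for some $m$, take $n = m$ and $\mathcal{F}' = \mathcal{F}$, which will dispose of the ``moreover'' clause of condition (2). Apply Lemma \ref{LA1-Sr} to $A_n \in S_r$ with the data $\Delta_n$, $\epsilon/2$ and $\mathcal{F}'$ to obtain $\delta_n > 0$, finite sets $\mathcal{G}_n \subset A_n$, $\mathcal{P}_n \subset \underline{K}(A_n)$, $\mathcal{H}_{q,n} \subset (A_n)_+^{\bf 1}\setminus\{0\}$, $\mathcal{H}_n \subset (A_n)_{s.a.}$ and $\eta_n > 0$. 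For condition (2) of Definition \ref{DefA1} applied to $A$ I would then output $\mathcal{G} := \mathcal{G}_n$, $\delta := \delta_n$, $\mathcal{P} := [\iota_{n,\infty}](\mathcal{P}_n)$, $\mathcal{H}_q := \mathcal{H}_{q,n}$, $\eta := \eta_n$, $\mathcal{H} := \mathcal{H}_n$, all viewed inside $A$, resp. $\underline{K}(A)$, via the inclusion $A_n \hookrightarrow A$.

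Given a unital separable simple C*-algebra $D$ of tracial rank zero and $\mathcal{G}$-$\delta$-multiplicative maps $L_1, L_2 : A \to D$ satisfying the three hypotheses of condition (2) relative to these data, I would observe that the restrictions $L_i|_{A_n} : A_n \to D$ are $\mathcal{G}_n$-$\delta_n$-multiplicative, that $[L_1|_{A_n}]|_{\mathcal{P}_n} = [L_2|_{A_n}]|_{\mathcal{P}_n}$ (using $[L_i|_{A_n}] = [L_i]\circ[\iota_{n,\infty}]$ and $[\iota_{n,\infty}](\mathcal{P}_n) = \mathcal{P}$), that $|\tau(L_1|_{A_n}(h)) - \tau(L_2|_{A_n}(h))| < \eta_n$ on $\mathcal{H}_n$, and that $\tau(L_1|_{A_n}(h)) \ge \Delta(\widehat{h}) = \Delta_n(\widehat{h})$ on $\mathcal{H}_{q,n}$, for all $\tau \in T(D)$. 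Lemma \ref{LA1-Sr} then yields a unitary $u \in D$ with $\|u^* L_1(a) u - L_2(a)\| < \epsilon/2$ for all $a \in \mathcal{F}'$, and since $L_1$ and $L_2$ are contractive, the triangle inequality combined with the $\epsilon/4$-approximation of $\mathcal{F}$ by $\mathcal{F}'$ gives $\|u^* L_1(a) u - L_2(a)\| < \epsilon$ for all $a \in \mathcal{F}$. The ``moreover'' clause holds because of the special choice $n = m$, $\mathcal{F}' = \mathcal{F}$ made above, since then $\mathcal{G}$, $\mathcal{P}$, $\mathcal{H}_q$, $\mathcal{H}$ already lie in $\iota_{m,\infty}(A_m)$, resp. $[\iota_{m,\infty}](\underline{K}(A_m))$.

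I do not anticipate a genuine difficulty here; this is a standard ``pass to the inductive limit'' argument. The only points requiring care are: the reduction allowing one to assume the $A_n$ are unital C*-subalgebras of $A$ with $1_{A_n} = 1_A$ (via stability of $S_r$ under quotients and unitizations); the well-definedness and monotonicity of the induced map $\Delta_n$, which rest on the fact that $1_{A_n} = 1_A$ makes restriction $T(A) \to T(A_n)$ trace-to-trace; and the routine verification that the $\underline{K}$-theory and tracial hypotheses descend under restriction, together with the bookkeeping for the ``moreover'' clause.
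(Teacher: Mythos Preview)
The paper states this corollary without proof, and your reduction to Lemma~\ref{LA1-Sr} by restricting to a single building block $A_n$ is exactly the intended argument. One small caveat worth recording: your formula $\Delta_n(\widehat{c}^{\,A_n}) := \Delta(\widehat{c}^{\,A})$ only defines $\Delta_n$ on those classes with $\widehat{c}^{\,A}\neq 0$, and since the restriction $T(A)\to T(A_n)$ need not be surjective there can in principle be $c\in (A_n)_+^{\bf 1}$ with $\widehat{c}^{\,A_n}\neq 0$ but $\widehat{c}^{\,A}=0$; this causes no trouble whenever $T_f(A)\neq\emptyset$ (as in every application of the corollary in the paper), since a faithful trace forces $\widehat{c}^{\,A}\neq 0$ for all nonzero $c$.
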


\begin{thm}
Every {{unital}} AH-algebra $C$ with at least one faithful tracial state is in
the class ${\cal A}_0.$ 
\label{thm:AHInA0}
\end{thm}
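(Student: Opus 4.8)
The goal is to verify that a unital AH-algebra $C$ with a faithful tracial state satisfies all the defining conditions of the class $\mathcal A_0$ in Definition \ref{DefA1}. By Proposition \ref{10-3directsum} finite direct sums preserve $\mathcal A_0$, so after decomposing $C$ as an inductive limit $C = \overline{\bigcup_n C_n}$ with each $C_n$ a finite direct sum of algebras of the form $P_j M_{r_j}(C(X_j)) P_j$, and after a standard Elliott-type reindexing, we may reduce to understanding each building block $C_n$. The first thing I would check is membership in $\mathcal A_1$: this follows from Theorem \ref{TembeddingAH}(1), since $C$ is a unital AH-algebra with $T_f(C) \neq \emptyset$. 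Next, condition (1) (inductive limit structure) is essentially the AH-decomposition itself, with the $K$-theoretic finiteness built in; one uses that for $P M_r(C(X)) P$ with $X$ a finite CW complex, $K_0$ is finitely generated, the positive cone modulo $\ker\rho$ is finitely generated (this uses $\dim X < \infty$ and that $\rho$ has finitely generated image — here is where $X$ being a finite complex matters), and $K_1$ is finitely generated. Passing to a subsequence with strictly positive connecting maps on $K_0$ is routine.

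\textbf{The main body.} Condition (2) (Uniqueness) follows from Corollary \ref{LA1-IndSr}: since AH-algebras are inductive limits of algebras $P M_r(C(X)) P$, and such an algebra has all its irreducible representations of rank at most $r$ (it lies in $S_r$), every AH-algebra is an inductive limit of algebras in the classes $S_r$, and Corollary \ref{LA1-IndSr} gives exactly the uniqueness statement (2), including the refinement about $\mathcal F \subset \iota_{n,\infty}(C_n)$ forcing $\mathcal G, \mathcal P, \mathcal H_q, \mathcal H$ to sit inside the image of $C_n$ — this is automatic from the proof of Lemma \ref{LA1-Sr} applied levelwise. Condition (3), the finite-generation-and-existence package, breaks into several pieces. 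The purely $K$-theoretic bookkeeping at the start of (3) (the sets $\mathcal Q_n$, $K'_{n,p}$, the decomposition $G_n = \mathbb Z^{r(n)}\oplus G_{o,n}$) holds for any algebra with finitely generated $K$-theory, as noted in the excerpt; here one also invokes Remark \ref{Rrho-1} so that $\ker\rho_C = \ker\rho_{C,f}$. The substantive parts are (b) (Existence 1) and (c) (Existence 2), which assert that for $A$ unital separable simple with tracial rank zero, prescribed local $KL$-data (and, for (c), trace data) can be realized by almost-multiplicative maps. For these I would appeal to Theorem \ref{LH71} (the uniqueness-with-full-map statement from \cite{Linhomtrk1}) together with the existence theory for maps from AH-algebras into simple $TR = 0$ algebras: tracial rank zero algebras have stable rank one, real rank zero, strict comparison, $\mathrm{cer} \leq 2$, $\mathrm{cel} \leq 2\pi + 1$, weakly unperforated $K_0$; these give the divisibility ranks $T$ and $E$ needed to run Theorem \ref{LH71}, and Lemma \ref{LDelta=full} converts lower trace bounds into the fullness hypothesis $T_\Delta$-$\mathcal H$-full. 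Existence 1 is the classical fact that into a simple unital $TR=0$ algebra one can construct, from any admissible $KL$-element with positive $K_0$-image, an almost multiplicative map (building it as a direct sum of point-evaluation-type pieces weighted by projections realizing the prescribed traces, which exist by real rank zero and weak unperforation). Existence 2 is the relative version: subtracting the small $KL$-element $\kappa_0$ and correcting the trace by at most $2\alpha_0 + \eta$, using Remark \ref{R810} to ensure $[\phi_1](x) - \kappa_0(x) > 0$ on $K_{n,p}$ so that a projection $p_0$ with $[p_0] = [\phi_1(1_C)] - \kappa_0([1_C])$ exists and the corrected map lands in $p_0 A p_0$.

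\textbf{Where the difficulty lies.} The genuine obstacle is condition (3)(c), Existence 2, in its precise quantitative form — in particular producing a map $\psi : C \to p_0 A p_0$ with $[\psi]|_{\mathcal P_n} = ([\phi_1] - \kappa_0)|_{\mathcal P_n}$ whose traces satisfy $|\tau(\psi(h)) - \tau(\phi_1(h))| < 2\alpha_0 + \eta$ for $h \in \mathcal H$. The strategy is: since $\|\rho_A(\kappa_0([1_C]))\| < \alpha_0$ is tiny, the "defect" to be subtracted is a small-trace piece; one realizes $\kappa_0$ by a map into a small corner $q A q$ with $\tau(q)$ small, takes $p_0 = \phi_1(1_C) - q$ after conjugating so that $q \leq \phi_1(1_C)$, and defines $\psi = (1-q)\phi_1(\cdot)(1-q)$ composed with a $K$-theory correction on the $q$-corner; the trace estimate then comes from $\tau(q) < \alpha_0$ plus the $\eta$ from $({\mathcal G}_n, 2\delta_n, {\mathcal P}_n)$ being a $KL$-triple associated with $\mathcal H$ and $\eta/2$ (Definition \ref{Ad89}, via Proposition \ref{Ptrace}). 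Keeping all the index sets and multiplicativity constants coherent through this argument, and verifying that the AH building blocks $P M_r(C(X)) P$ genuinely support the existence results with the stated uniformity, is the technical heart; for the AH case specifically one can lean on the well-developed classification machinery (e.g. \cite{LinAH} and \cite{Linhomtrk1}) so that the only new content is checking that the axioms of Definition \ref{DefA1} are satisfied with the indicated constants, rather than reproving the classification.
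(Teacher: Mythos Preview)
Your treatment of membership in $\mathcal A_1$ (Theorem~\ref{TembeddingAH}), condition~(1), and condition~(2) via Corollary~\ref{LA1-IndSr} matches the paper. The handling of the finite-generation bookkeeping in (3) and (3)(a) is also fine. The gaps are in (3)(b) and, more seriously, (3)(c).

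For (3)(b), a ``direct sum of point-evaluation-type pieces weighted by projections'' produces maps that vanish on $K_1(C_n)$ and on every $K_i(C_n,\Z/k\Z)$, so it cannot realize a general $\kappa_n$; and Theorem~\ref{LH71} is a uniqueness theorem, not an existence theorem. The paper instead pulls $\kappa_n$ back along $[\iota_{n,\infty}]$ to $\kappa_n'\in\Hom_\Lambda(\underline K(C_n),\underline K(A))$, checks that $\kappa_n'(K_0(C_n)_+\setminus\{0\})\subset K_0(A)_+\setminus\{0\}$, and invokes Theorem~4.7 of \cite{LinKT} to obtain an actual unital homomorphism $\psi_n:C_n\to A$ with $[\psi_n]=\kappa_n'$; precomposing with an approximate left inverse $C\to C_n$ gives $L_n$.

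For (3)(c), your compression-and-correction strategy does not yield the trace estimate. Even if you choose $q$ with $\tau(q)<\af_0$ and $q$ approximately commuting with $\phi_1(\G_0)$ (using $TR=0$), the compression $(1-q)\phi_1(\cdot)(1-q)$ has $KL$-class $[\phi_1]-[q\phi_1 q]$, not $[\phi_1]-\kappa_0$; the discrepancy $[q\phi_1 q]-\kappa_0$ need not be positive on $K_{n,p}$, so there is no small-corner ``correction'' available. And the $KL$-triple condition of Definition~\ref{Ad89} says only that each almost-multiplicative map has traces within $\eta/2$ of \emph{some} affine map $T(A)\to T(C)$; it gives no relation between the traces of two different maps with the same local $KL$-data, so the bound $|\tau(\psi(h))-\tau(\phi_1(h))|<2\af_0+\eta$ does not follow. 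The paper avoids this entirely: it reduces to $C=C(X)$ with $X$ a connected finite CW complex, sets $\chi_1=[\phi_1]-\kappa_0$, constructs (via a perturbation toward a fixed faithful trace and an approximation of $T(A_1)$ by a simplex with finitely many extreme points, Lemma~8.10 of \cite{GLIII}) an affine map $\gamma_2:T(p_0Ap_0)\to T_f(C)$ that is close to the one induced by $\phi_1$ and is compatible with $\chi_1$, and then applies Theorem~4.5 of \cite{Linann}. That result supplies a genuine homomorphism $\Phi:C\to p_0Ap_0$ with $[\Phi]=\chi_1$ \emph{and} with prescribed trace map $\gamma_2$; the trace estimate is then a direct comparison of $\gamma_2$ with $\tau\circ\phi_1$, tracking the $2\af_0\sigma_0$ errors coming from $T(A)$ versus $T(p_0Ap_0)$. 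Without an existence theorem that simultaneously prescribes $KL$ and traces, your argument cannot close.
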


\begin{proof}
 By Theorem \ref{TembeddingAH}, $A\in {\cal A}_1$.
We can realize $C$ as $C=\lim_{n\to\infty}(C_n, \phi_n),$
where for each $n \geq 1$,
 $C_n=P_nM_{j(n)}(C(X_n))P_n,$  $j(n)\ge 1$ is an integer,
$X_n$ is a finite CW complex,  $P_n\in M_{j(n)}(C(X_n))$ is a projection, and 
$\phi_n : C_n \rightarrow C_{n+1}$ is a unital injective \hm\, (Theorem 2.1 of \cite{EGL-inj}). 

Then 
$K_0(C_n)=\Z^{R(n)}\oplus {\rm ker}\rho_{C_n},$  where $R(n) \geq 0$
is an integer and ${\rm ker}\rho_{C_n}$ is a finitely generated 
abelian group, for all $n$. 
Moreover, for all $n$,
\beq
{{\{(z, 0): z\in \Z^{R(n)}_+\}\subset}} K_0(C_n)_+\subset \{(z, x): z\in \Z^{R(n)}_+\setminus \{0\}, x\in {\rm ker}\rho_{C_n}\}\cup \{(0,0)\}
\eneq
{{(as $K_0(C_n)$ may have perforation).}}
{{Let $z_1, z_2,...,z_{R(n)}$ form a generator set for $\Z^{R(n)}_+.$ 
Suppose that $\zeta_1, \zeta_2, ...,\zeta_{m(n)}\in {\rm ker}\ro_{C_n}$ form a generator set of ${\rm ker}\ro_{C_n}.$
There are $z_1', z_2',...,z_{m(n)}'\in \Z^{R(n)}_+\setminus \{0\}$ such that $z_i'+\zeta_i\in K_0(C_n)_+.$
Put $K_{n,p}'=\{(z_i, 0):1\le i\le R(n)\}\cup \{z_j'+\zeta_j: 1\le j\le m(n)\}\subset K_0(C_n)_+.$ 
Then $K_{n,p}'$ generates $K_0(C_n)$ and, for any $x\in K_0(C_n)_+,$ 
there are $x_1, x_2,...,x_{m(x)}, y_1, y_2, ....,y_{J(x)}\in K_{n,p}',$ $r_1,r_2,...,r_{m(x)}\in \N,$
$s_1, s_2,....,s_{J(x)}\in \Z$ such that 
\beq
x=\sum_{i=1}^{m(x)} r_i x_i+\sum_{j=1}^{J(x)} s_j y_j\andeqn \sum_{j=1}^{J(x)} s_j y_j\in {\rm ker}\ro_{C_n}.
\eneq}}

Note that this, in particular, implies that $C$ satisfies condition (1) in 
Definition \ref{DefA1}.
By Lemma \ref{LA1-IndSr}, $C$ also satisfies condition (2) of Definition
\ref{DefA1}.

For each $n \geq 1$, we may also write $C_n=\bigoplus_{i=1}^{R(n)} C_{n,i},$
where for each $1 \leq i \leq R(n)$, 
$C_{n,i}=P_{n,i}M_{j(n,i)}(C(X_{n,i}))P_{n,i},$
$j(n,i) \geq 1$ is an integer,  $X_{n,i}$ is a connected finite  complex
and $P_{n,i} \in M_{j(n,i)}(C(X_{n,i}))$ is a projection. 
Note that
 ${\phi_{n, \infty}}_{*0}({\rm ker}\rho_{C_n})\subset {\rm ker}\rho_{C,f}.$ 
Moreover, since ${\phi_{n, \infty}}_{*0}$ is order preserving, 
we  may write $G_n:={\phi_{n, \infty}}_{*0}(K_0(C_n))=\Z^{r(n)}\oplus G_{o,n},$ where 
$G_{o,n}=G_n\cap {\rm ker}\rho_{C, f}$, for all $n$.
  Let $K_{n,p}$ be the image of $K_{n,p}'$ in 
$K_0(C)$, for all $n$. Then $G_n$ is generated by $K_{n,p}$, for all $n$.

Let $n \geq 1$ be arbitrary. 
Let $A$ be a unital separable simple {{\CA\,}} with tracial rank zero.
Consider $\kappa_n\in KL_{loc} (G^{{\cal P}_n\cup K_{n,p}}, \underline{K}(A))$  satisfying the assumptions 
in (b) of part (3) of Definition \ref{DefA1}. 
Set $\kappa_n':=\kappa_n \circ [\phi_{n, \infty}]\in {\rm Hom}_{\Lambda}(\underline{K}(C_n), \underline{K}(A)).$
Since $\kappa'_n ({{\rm ker}} \rho_{C_n}) \subset 
\kappa_n(G_{o, n})\subset {\rm ker}\rho_A$ and $\kappa_n'(K'_{n,p})
= \kappa_n(K_{n,p}) \subset K_0(A)_+\setminus \{0\},$ and since 
$A$ is simple with tracial rank zero,
we conclude $\kappa_n'(K_0(C_n)_+\setminus \{0\})\subset K_0(A)_+\setminus \{0\}.$
Recall that $C_n:=\bigoplus_{i=1}C_{n,j}.$ 
Applying  Theorem 4.7 of \cite{LinKT}, we obtain a unital \hm\, $\psi_n: C_n\to A$ such 
that $[\psi_n]=\kappa_n'.$ 
Since $C_n$ is amenable, for any $\dt>0$   and any finite subset ${\cal  G}\subset C_n,$ there exists 
a unital $\G$-$\delta$-multiplicative \morp\, $\Psi: C\to C_n$ (recall that $\phi_{n, \infty}$ is injective) such that
\beq
\|\Psi(\phi_{n, \infty}(c))-c\|<\dt \rforal c\in {\cal G}. 
\eneq
Set $L_n:=\psi_n\circ \Psi_n: C\to A.$ 
Since $n$ is arbitrary, we have shown that 
$C$ satisfies the conditions (a) and (b) of part (3) of Definition \ref{DefA1}.

To prove part  {{(c) of (3)}} of Definition \ref{DefA1}, we note that it suffices to prove the case where $C$ has the form of the one of the building blocks in
the C*-inductive limit decomposition of $C$. 
By considering each summand, we may assume that $C$ has the form $PM_r(C(X))P,$
where $X$ is a connected finite  complex. We may further reduce this case to the case 
that $C=C(X).$ 
Since we now assume that $K_i(C)$, for $i = 0,1$, is finitely generated,
there is an integer $M\ge 1$ such that $Mx=0$  for any 
$x\in {\rm Tor}(K_i(C))$, 
$i=0,1.$ Set $K=M!.$

For convenience,
let us keep the notation described in parts (1), (2) and (3) (a) and (b)
of Definition \ref{DefA1}
and take $C = C_n$ for all $n$.  
So $C = C_n = C(X)$ for all $n$. 
Moreover, following the notation of Definition \ref{DefA1},
 we may assume that $K_{n,p}=\{[p]: p\in \mathtt{P}_n\}$ and $\mathtt{P}_n$ is a finite 
subset of projections in $M_{l(n)}(C(X))$ for some integer $l(n)\in 
\mathbb{N}.$

Fix $n \geq 1$.  Let $\Hh \subset \iota_{n, \infty}({C_n}_{s.a.}^{\bf 1} \setminus \{ 0 \})$ 
be a finite subset
and $\eta>0.$   Assume that $({\cal G}_n, \dt_n, {\cal P}_n)$ is a KL triple associated with ${\cal H}$ and $\eta/2.$ 
Note that by Proposition \ref{Ptrace}, for sufficiently large $n$, 
such a KL triple will indeed be
associated with $\Hh$ and $\eta/2$. 

Fix  any ${\cal G}_0\supset {\cal G}_n\cup {\cal H}$ and $0<\dt_0<\dt_n.$
Let $A$ be a unital separable simple  {{\CA\,}}
with tracial rank zero.
Let  $\phi_1: C\to A$ be a unital ${\cal G}_0$-$\dt_0$-multiplicative \morp\,
such that $[\phi_1]$ induces an element in $KL_{loc}(G^{{\cal P}_n\cup K_{n,p}}, \underline{K}(A)),$ 
$[\phi_1(x)]\in K_0(A)_+\setminus \{0\}$ for all $x\in K_{n,p},$ 
and $[\phi_1](G^{{\cal P}_n\cup K_{n,p}}\cap {\rm ker}\rho_{C,f})\subset {\rm ker}\rho_A$. 
Let  $\kappa_0\in KL_{loc}(G^{{\cal P}_n\cup K_{n,p}}, \underline{K}(A))$  be such that 
$\kappa_0(K_{n,p})\subset K_0(A)_+\setminus \{0\},$ ${\kappa_0}(G^{{\cal P}_n\cup K_{n,p}}\cap {\rm ker}\rho_{C,f})
\subset {\rm ker}\rho_A,$ and 
\beq
\|\rho_A(\kappa_0([1_C]))\|<\af_0\min\{\sigma_0, 1/2\},
\eneq
where $\af_0\in (0,1/2l(n))$ and   
 \beq\label{93-n-10}
 \min\{\rho_A([\phi_1](y))(\tau) : y\in K_{n,p}, 
\tau \in T(A) \}> \sigma_0 > 0.
 \eneq
Let  $\chi_1\in KL_{loc}(G^{{\cal P}_n\cup K_{n,p}}, \underline{K}(A))$  be defined by
$\chi_1(x)=[\phi_1](x)-\kappa_0(x)$ for all $x\in {\cal P}_n \cup K_{n,p}.$ 
Recall that since we now assume that $C=C(X)$ for some connected finite 
 complex
$X$,
$K_0(C)=\Z\oplus {\rm ker}\rho_C$ {{and $(1, 0)\in K_{n,p}.$}}
Since $[\phi_1](x)-\kappa_0(x)\in {\rm ker}\rho_A$
for all $x\in {\rm ker}\rho_{f,C}$ and $A$ is simple with tracial
rank zero, and  by \eqref{93-n-10}, 
one computes that
that $\chi_1(K_0(C)_+\setminus \{0\})\subset K_0(A)_+\setminus \{0\}.$ 

Since $({\cal G}_n, \dt_n, {\cal P}_n)$ is a KL-triple associated with ${\cal H}$
  and $\eta/2$, 
there exists a continuous affine map
$\gamma_1': T(A)\to T(C)$
such that
\beq
|\tau\circ \phi_1(c)-\gamma_1'(\tau)(c)|<\eta/2\rforal c\in {\cal H}\andeqn \tau\in T(A).
\eneq
 Let $t_f\in T_f(C).$ 
 Define $\gamma_1'': T(A)\to T_f(C)$ by $\gamma_1''(\tau)=t_f$ for all
$\tau \in T(A)$. 
 Define $\gamma_1:=(1-\eta/128)\gamma_1'+(\eta/128)\gamma_1''.$
 Then $\gamma_1: T(A)\to T_f(C)$ is a continuous affine map and 
 \beq\label{95-T-1}
 |\tau\circ \phi_1(c)-\gamma_1(\tau)(c)|<\eta/2+\eta/64\rforal c\in {\cal H}\andeqn \tau\in T(A).
 \eneq

Let $\gamma_1^{\sharp}: {{\Aff^b}}(T_f(C))\to \Aff(T(A))$ be the continuous
linear map 
induced by $\gamma_1.$ 
Let  ${\cal H}_{A,q}\subset A_{s.a}$ be a finite subset such that
$\{\hat{b}: b\in {\cal H}_{A,q}\}=\gamma_1^{\sharp}(\widehat{{\cal H}}),$ 
where $\widehat{{\cal H}}=\{\hat{h}|_{T_f(C)}: h\in {\cal H}\}.$

Let $e_0\in A$ be a projection  such that $[e_0]=\kappa_0([1_C])$ and let 
$A_1:=(1_A-e_0)A(1_A-e_0).$   \Wlog, we may identify $A$ with $pM_2(A_1)p$
for some projection $p \in M_2(A_1)$ 
with $[p]=[1_A]=[e_0]+[1_{A_1}]=[e_0]+[1_A-e_0].$
We may further assume that $p=1_{A_1}\oplus e_0',$ where $e_0'\in e_{2,2}M_2(A_1)e_{2,2}$
is a projection such that $[e_0']=[e_0].$  
Consider $H_1:=\{1_{A_1},1_{A_1} \phi_1(c) 1_{A_1}, 1_{A_1}x1_{A_1}, e_{1,2}e_0'xe_0'e_{2,1}: c \in \Hh, \makebox{ } x\in {\cal H}_{A,q}\}\subset A_1.$

By  {{Lemma 8.10 of \cite{GLIII},}}
there exist a finite subset $T\subset \partial_e(T(A_1))$ and a continuous affine
map $\gamma_T:  T(A_1)\to {\rm conv}(T)$ such that
\beq\label{95-T-20}
|\gamma_T(\tau)(x)-\tau(x)|<(\eta/16)\|x\|\rforal x\in  \widehat{H_1} \makebox{ and } \tau \in T(A_1).
\eneq
Write $T=\{s_1, s_2,..,s_m\}.$ 
Define $t_i(b)= \frac{s_i(b)}{s_i(p)}$ for all $b \in A$ ($=pM_2(A_1)p$), 
for $1 \leq i \leq m$. 
Then $t_i\in T(A),$ for $1\le i\le m.$  Set $\Delta_T={\rm conv}(\{t_1,t_2,...,t_m\}) \subset T(A)$.   
Define $\gamma_{T,p}: {\rm conv}(T) \to \Delta_T$  by $\gamma_{T,p}(s_i)=t_i$, 
for $1\le i\le m$.  

Note that for all $t \in T(A)$, if $s = \frac{t}{t(1_{A_1})} \in T(A_1)$
(or, equivalently, if $t = \frac{s}{s(1_A)}$), then for all $a \in A_+$,
$$t(a) \leq s(a) = \frac{t(a)}{1 - t(e_0)} < t(a)(1 + 2 t(e_0)) < t(a)(1 
+ 2 \alpha_0 \sigma_0).$$
Hence, $$\| s - t \| < 2 \alpha_0 \sigma_0$$
where we are viewing $s$ and $t$ as positive linear functionals on $A$
{{(and taking the norm as elements in $A^*$).}}

By a similar argument, if $s \in T(A_1)$ and $\gamma_T(s) = \sum_{j=1}^m 
\lambda_j s_j$ (where $\lambda_j \geq 0$ for all $j$ and $\sum_{j=1}^m 
\lambda_j  = 1$), then 
for all $a \in A_+$,
$$\sum_{j=1}^m \lambda_j t_j(a) \leq \sum_{j=1}^m \lambda_j s_j(a) 
< \sum_{j=1}^m \lambda_j t_j(a)(1 + 2 t_j(e_0)) < \sum_{j=1}^m \lambda_j t_j(a)
(1 + 2 \alpha_0 \sigma_0).$$ 
Hence,
$$\| \gamma_T(s) - \gamma_{T,p} \circ \gamma_T(s) \| < 2 \alpha_0 \sigma_0$$
where again, we are viewing all maps as positive linear functionals on $A$
{{(and taking the norm as elements in $A^*$).}}

Define $\gamma_2: T(A_1)\to T_f(C)$
by $\gamma_2=\gamma_1\circ j\circ \gamma_{T,p}\circ \gamma_T,$ where 
$j: \Delta_T\to T(A)$ is the embedding.

Note that $\chi_1([1_C])=[\phi_1(1_C)]-\kappa_0([1_C])=[1_{A_1}].$ 
Then since $K_0(C) = \Z \oplus ker \rho_C$,
 $\chi_1$ and $\gamma_2$ are compatible. Recall that
$\chi_1(K_0(C)_+\setminus \{0\})\subset K_0(A)_+\setminus \{0\}.$ 
Thus, by Theorem 4.5 of \cite{Linann} (see (e.4.67) in the last paragraph of the proof of Theorem 
4.5 of \cite{Linann}), there is a unital \hm\, $\Phi: C\to A_1$ such that
\beq
[\Phi]=\chi_1\andeqn s(\Phi(c))=\gamma_2(s)(c)\rforal c\in C_{s.a.}\andeqn s\in T(A_1).
\eneq
If $t\in T(A),$ then $s=\frac{t}{t(1_{A_1})}\in T(A_1).$ With this convention, 
we estimate, by \eqref{95-T-1}, by \eqref{95-T-20}, and by the above
norm estimate for $\gamma_T(s)$ and $\gamma_{T,p} \circ \gamma_T(s)$, that
for all $x\in \Hh,$ 
\beq
t(\Phi(x))=t(1_{A_1})(s(\Phi(x)))=t(1_{A_1})(\gamma_2(s)(x))\\
=t(1_{A_1})(\gamma_1\circ j\circ \gamma_{T,p}\circ \gamma_T)(s)(x)\\
=t(1_{A_1})\gamma_1((\gamma_{T,p}\circ \gamma_T)(s))(x)
\approx_{33\eta/64}t(1_{A_1}) (\gamma_{T,p}\circ \gamma_T)(s)(\phi_1(x))\\
\approx_{2 \alpha_0 \sigma_0} t(1_{A_1})\gamma_T(s)(\phi_1(x))\approx_{\eta/16}t(1_{A_1})s(\phi_1(x))\\
= t(\phi_1(x)).
\eneq
This implies that (c) of part (3) of Definition
\ref{DefA1} holds for this case, and the general case follows.
\end{proof}
\begin{df}\label{DC0An}
Let $E$ and $F$ be finite dimensional \CA s, $\phi_0, \phi_1: F \to E$
be unital \hm s.  Define
$$
C:=C(E, F, \phi_0, \phi_1)=\{ (f,a)\in C([0,1], E) \oplus F: f(0)=\phi_0(a),\, \, f(1)=\phi_1(a)\}.
$$
$C$ is sometimes called one-dimensional non-commutative CW complex. 
As in Proposition 3.5 of \cite{GLNI}, ${\rm ker}\rho_C=\{0\}.$
Denote by  ${\cal C}_0$ the class of those one-dimensional non-commutative CW complexes $C$ such that $K_1(C)=\{0\}.$

Denote by $X'=S^1\vee S^1\vee \cdots S^1\vee T_{2, k_1}\vee T_{2, k_i}\vee T_{3, m_1}\vee\cdots T_{3, m_j}$ exactly as in 13.27 of \cite{GLNI}.  Denote by $x^1$ the base point which is the common 
point of all spaces $S^1,$ $T_{2, k}$ and $T_{3, m}$ {{in $X'.$}}
 Set $X=[0,1]\vee X'.$ We write $X=[x^0, x^1]\vee X',$ where $x^0=0,\,\, x^1=1$ (in $[0,1]$). 

{{Write $F=F^{(1)}\oplus F',$ where $F_1=M_{r_1}$ for some $r_1\ge 1.$}}

Now we consider $PM_r(C(X))P,$ where $r\ge 1$ is an integer and $P\in M_r(C(X))$ is a 
projection with the {{rank $r_1,$}} the 
same rank as $F^{(1)}.$ 

We define 
\beq\nonumber
A=\{ (f,g,a)\in C([0,1], E)\oplus PM_r(C(X))P \oplus F: \\\label{defB}
 f(0)=\phi_0(a),\,\, f(1)=\phi_1(a),\,\, g(x^0)=\phi_X(a)\},
\eneq
where  $\phi_X: F\to PM_r(X)P|_{x^0}$ is a \hm\, which factors through $F^{(1)}.$ 
(This is the same as $A_n$ in  Definition 13.28 of \cite{GLNI}). 
Let $J=\{(f, g,a)\in A: f=0, a=0\}$ and $I=\{(f, g,a)\in A: g=0,a=0\}.$
Put $A_C=A/J$ and $A_X=A/I.$
Note that
\beq
&&A_C=\{(f,a)\in C([0,1], E)\oplus F: f(0)=\phi_0(a),\,\, f(1)=\phi_1(a)\}\andeqn\\
&&A_X\cong \{ (g, a) \in  PM_r(C(X))P\oplus F :  g(x^0) = \phi_X(a) \}.
\eneq
We require that $A_C\in {\cal C}_0,$ i.e., $K_1(A_C) = 0$.

{{Since $\phi_X$ factors through $F^{(1)}=M_{r_1},$ 
the map ${\phi_X}|_{F^{(1)}} M_{r_1}\to PM_r(C(X))P|_{x^0}\cong M_{r_1}$ is invertible.
Denote by $\psi$ the inverse.
Let
$$
A_X'=\{(f,a)\in PM_r(C(X))P\oplus M_{r_1}: f(x^0)=\phi(a){\red{\}.}}
$$
Define $\Phi: PM_r(C(X))P\to A_X'$ by
$\Phi(f)=(f,\psi(f(x^0)))$ for $f\in PM_r(C(X))P.$ 
Then  the map $\Phi$ gives the isomorphism 
\beq\label{21712-1}
A_X'\cong PM_r(C(X))P.
\eneq
Therefore we have
\beq\label{21712-2}
A_X=PM_r(C(X))P\oplus F'.
\eneq
We will use this simple fact.}}

Denote by ${\cal B}$ the class of \CA s with the form in \eqref{defB} above. 
Note that $K_0(A)_+$ is finitely generated as positive cone.   

It should be noted that
\beq\label{DC0An-01}
K_0(A)=K_0(A_C)\oplus K_0(C_0(X\setminus x^0))\andeqn
K_1(A)=K_1(C_0(X\setminus x^0)),
\eneq
with 
\beq
K_0(A)_+\subset \{(m, z): m\in K_0(A_C)_+\setminus \{0\}, z\in K_0(C_0(X\setminus x^0))\}\cup\{(0,0)\}.
\eneq
Note that $K_0(A_C)_+$ is finitely generated and $K_0(A)$ is also finitely generated.
Thus there is a finite subset $K_p\in K_0(A)_+\setminus \{0\}$ such 
that, for any $x\in K_0(A)_+,$ there are $x_1, x_2,...,x_l, y_1,y_2,...,y_m\in K_p,$
$r_1, r_2,...,r_l\in \N$ and $s_1, s_2,...,s_m\in \Z$ such that
\beq
x=\sum_{i=1}^l r_i x_i+\sum_{j=1}^m s_j y_j
\eneq
and $\xi:=\sum_{j=1}^m s_j y_j\in {\rm ker}\rho_A.$
By \eqref{DC0An-01}, for any \CA\, $B,$ 
one may write 
\beq\label{AnKLdec}
KL(A,B)=KL(A_C, B)\oplus KL(C_0(X\setminus x^0), B).
\eneq

\end{df}

\begin{prop}\label{P2304}
Let $A$ be as in \eqref{defB} and $B$ be a unital separable simple \CA\, 
with tracial rank zero and $p\in B$ be a non-zero projection. Suppose that $\af\in 
KL(A, B)^{++}$ such that $\af([1_A])\le [p].$ 
Then there exists  a sequence of \cpc s
$L_n: A\to pBp$ such that
\beq
\lim_{n\to\infty} \|L_n(ab)-L_n(a)L_n(b)\|=0\tforal a, b\in A\andeqn [\{L_n\}]=\af.
\eneq

\end{prop}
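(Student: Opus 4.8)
\textbf{Proof plan for Proposition \ref{P2304}.}

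The plan is to reduce the problem to the two quotient building blocks $A_C$ and $PM_r(C(X))P$, for which existence results are already available (or easily assembled), and then patch the resulting almost-multiplicative maps together using the pullback structure of $A$. First I would use the decomposition \eqref{AnKLdec}, $KL(A,B)=KL(A_C,B)\oplus KL(C_0(X\setminus x^0),B)$, to split the given $\alpha$ accordingly. Since $A_C\in\mathcal C_0$ is a one-dimensional noncommutative CW complex with $K_1(A_C)=0$, $\alpha$ restricts to an element $\alpha_C\in KL(A_C,B)^{++}$; by the UCT and the standard existence theorem for maps from such building blocks into unital simple C*-algebras of tracial rank zero (Theorem 4.7 of \cite{LinKT}, applied after a finite-dimensional perturbation, exactly as in the proof of Theorem \ref{thm:AHInA0}), together with the fact that $B$ has real rank zero and stable rank one so that one can realize prescribed $K_0$-data by projections, there is a $\ast$-homomorphism (or a sequence of asymptotically multiplicative maps) $\phi_C:A_C\to eBe$ inducing $\alpha_C$, where $e$ is a projection with $[e]=\alpha([1_{A_C}])\le[p]$. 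Similarly, using \eqref{21712-1}--\eqref{21712-2} which identifies $A_X$ with $PM_r(C(X))P\oplus F'$, and applying Theorem 4.7 of \cite{LinKT} (for finite CW complexes) to the corner $PM_r(C(X))P$, one obtains the corresponding maps realizing the $X$-part of $\alpha$.

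The second step is to assemble these into almost-multiplicative maps $A\to pBp$. Here I would work at the level of approximate maps: for each finite subset $\mathcal F\subset A$ and each $\varepsilon>0$, choose $n$ large and use the almost-multiplicative maps on $A_C$ and on $PM_r(C(X))P$ obtained above, together with the fact that $A$ sits inside $C([0,1],E)\oplus PM_r(C(X))P\oplus F$ via the boundary conditions $f(0)=\phi_0(a)$, $f(1)=\phi_1(a)$, $g(x^0)=\phi_X(a)$. The point is that the restriction maps $A\to A_C$ and $A\to A_X$ are surjective $\ast$-homomorphisms, and $A$ is a pullback $A_C\times_{F^{(1)}\oplus\cdots}A_X$ along finite-dimensional algebras; so a pair of (almost) compatible maps out of $A_C$ and $A_X$ assembles to an (almost) map out of $A$. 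Concretely one composes the chosen maps with the quotient maps $A\to A_C$ and $A\to A_X$ and glues using a continuous path of unitaries in $B$ to reconcile the two images of the common finite-dimensional part, which exists because $B$ has stable rank one (so close projections, here the two images of $1_{F^{(1)}}$, are unitarily equivalent) and one controls the $KK$-class so that the glued map has the right invariant. Since $A$ is amenable, $A$ is semiprojective relative to such pullbacks in a weak quantitative sense (or one simply works directly with the finite subsets), so the almost-multiplicativity can be made to go to $0$ along a sequence.

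The third step is bookkeeping on the $KL$-class: one must check that the glued sequence $\{L_n\}$ induces exactly $\alpha$, not merely something agreeing with $\alpha$ on a finite part of $\underline K(A)$. This is where I would invoke that $K_i(A)$ is finitely generated (true for $A\in\mathcal B$, by \eqref{DC0An-01} since $K_0(A_C)_+$ and $K_0(C_0(X\setminus x^0))$ are finitely generated and $X$ is a finite complex), so that $KL(A,B)=\mathrm{Hom}_\Lambda(\underline K(A),\underline K(B))$ is determined by its values on a fixed finite subset $\mathcal Q\subset\underline K(A)$, and a map inducing the correct values on $\mathcal Q$ induces $\alpha$. Finally, one records that $\alpha\in KL(A,B)^{++}$ and $\alpha([1_A])\le[p]$ guarantee the target projections can be chosen inside $pBp$: the $K_0$-positivity sends $K_p$ into $K_0(B)_+\setminus\{0\}$, and $B$ being simple of tracial rank zero (hence with strict comparison and real rank zero) lets us realize all the needed subprojections.

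The main obstacle I expect is the gluing step: one must produce the unitary in $B$ (or a path of unitaries) reconciling the two finite-dimensional boundary data of the $A_C$-part and the $A_X$-part \emph{while preserving the prescribed $KK$-class}, and show the resulting discrepancy in multiplicativity can be pushed to $0$. This is essentially a relative version of the existence theorem for pullbacks over finite-dimensional C*-algebras; the technical content is that $B$ has stable rank one and $K_1$-injectivity (which holds for tracial rank zero algebras), so unitaries implementing the required identifications of projections exist and can be chosen with trivial $K_1$-class, and then one transports one of the two maps by this unitary before summing. All the ingredients — realizing $K_0$-data by projections, $KL$-existence from \cite{LinKT} for the building blocks, finite generation of $K_*(A)$ — are already available, so the proof should be a careful but routine assembly.
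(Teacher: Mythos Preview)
Your decomposition $\alpha=\alpha_c\oplus\alpha_x$ via \eqref{AnKLdec} is the right starting point, but the ``gluing'' step is where the plan goes off track. The pullback $A\cong A_C\times_F A_X$ has its universal property in the wrong direction for what you need: a pair of maps $\psi_C:A_C\to B$, $\psi_X:A_X\to B$ agreeing on $F$ does \emph{not} assemble to a map $A\to B$ (think of $C[0,2]\cong C[0,1]\times_{\mathbb C}C[0,1]$: two maps out of $C[0,1]$ agreeing at a point do not determine a map out of $C[0,2]$). Composing with the quotient maps gives you two separate maps $\psi_C\circ\pi_C,\ \psi_X\circ\pi_X:A\to B$, but there is no natural way to combine them into a single map realizing $\alpha$ unless they land in orthogonal corners --- and then the $KL$-class is the \emph{sum}, not a fiber-product, so you would be double-counting the $F$-part. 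The unitary reconciliation you propose does not resolve this structural mismatch.

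The paper sidesteps the gluing problem entirely. It first builds an honest $\ast$-homomorphism $h_x:A\to qBq$ factoring through $\pi_X$ (using Theorem~2.3 of \cite{Li} on $A_X\cong PM_r(C(X))P\oplus F'$) whose $KL$-class is $[\psi_{F,0}\circ\pi]\oplus\alpha_x$: it carries all of $\alpha_x$ plus a small borrowed strictly positive piece on $K_0(A_C)$, and lands in a corner $qBq$ with $q\le p$ and $\tau(q)$ small. Then $\beta:=\alpha-[h_x]$ lies purely in $KL(A_C,B)^{++}$ (its $C_0(X\setminus x^0)$-component vanishes by construction), and one realizes $\beta$ by asymptotically multiplicative maps $\Phi_n:A_C\to(p-q)B(p-q)$ via Theorem~18.7 of \cite{GLNI} (not \cite{LinKT}, which handles only homogeneous algebras). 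The final maps are simply $L_n:=\Phi_n\circ\pi_C+h_x$, landing in the orthogonal corners $(p-q)B(p-q)$ and $qBq$; no pullback gluing is needed, and $[\{L_n\}]=\beta+[h_x]=\alpha$ is immediate.
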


\begin{proof}
Write 
\beq
A=\{(f,g,a)\in C([0,1], E)\oplus PM_r(C(X))P \oplus F: \\
 f(0)=\phi_0(a),\,\, f(1)=\phi_1(a),\,\, g(x^0)=\phi_X(a)\},
\eneq
where  $\phi_X: F\to PM_r(X)P|_{x^0}$ is the \hm\, that factors through $F^{(1)}.$

We may write (see  \eqref{AnKLdec})
\beq\label{P23-04-1}
\af=\af_c\oplus \af_x,
\eneq where 
$\af_c\in KL(A_C, B)^{++}$ and $\af_x\in KL(C_0(X\setminus x^0), B).$
Since $\af\in KL(A, B)^{++},$
$\rho_B\circ (\af_x(z))=0$ for all $z\in {\rm ker}\rho_A.$
Let $K_p\in K_0(A)_+\setminus \{0\}$ be a finite subset 
such that, for any $x\in K_0(A)_+,$ there  are $x_1, x_2,...,x_l, y_1,y_2,...,y_m\in K_{p}$,
$r_1, r_2,...,r_l\in \N$ and $s_1, s_2,...,s_m\in \Z$
such that
\beq
x=\sum_{i=1}^l r_i x_i+\sum_{j=1}^m s_j y_j,
\eneq
where $\xi:=\sum_{j=1}^m s_j y_j\in {\rm ker}\rho_A.$
Since $\af\in KL(A, B)^{++},$ there is $\sigma>0$ such that
\beq
\rho_B(\af(x))(\tau)>\sigma\rforal x\in K_p \andeqn \tau\in T(B).
\eneq

We may assume that there are projections $e_1, e_2,...,e_m\in M_{l(n)}(A)$
such that $K_p=\{[e_1], [e_2],...,[e_m]\}.$ 

Choose a non-zero projection $q\le p$ such that 
\beq
\sup\{\tau(p)-\tau(q):\tau\in T(B)\}>(1-1/4l(n))\sigma
\eneq
Since $B$ has tracial rank zero, there is an embedding 
$\psi_{F,0}: F\to qBq.$  Let $\pi: A_X\to F$ be the quotient map. 
Define $\af_X'\in KL(A_X, B)$ by 
\beq\label{P2304-2}
\af_X'=[\psi_{F,0} \circ \pi]\oplus \af_x.
\eneq
(Note that for all $g \in PM_r(C(X))P$, $g = (g - g(x^0)) + g(x^0)$ and recall that if $(g, a) \in A_X$, then
$g(x^0) = \phi_X(a)$ where $\phi_X$ factors through $F^{(1)}$.)
Since $[\psi_{F,0}]$  is strictly positive, we conclude that 
$\af_X'\in KL(A_X, B)^{++}.$
 
 {{By Theorem 2.3
of \cite{Li},  there exists a \hm\, $h_x'': C(X)\oplus F'\to E(B\otimes {\cal K})E,$
 where $E\in B\otimes {\cal K}$ is a projection such 
 that $[E]=\af_x'([1_C(X)]\oplus [1_{F'}])$ {{and}}  $[h_x'']=\af_X'.$ 
 Then $h_x''$ induces a \hm\, $h_x''': PM_r(C(X))P\oplus F'\to B\otimes {\cal K}$ such that
 $$
 [h_x'''(P)\oplus 1_{F'}]=[\af_X'([1_{A_X})]\le [q].
 $$ }}
Since $B$ has tracial rank zero, there is a unitary $u\in \widetilde{B\otimes {\cal K}}$ 
such that
$$
u^*h_x'''(1_{A_X})u\le q.
$$
Put $h_x'={\rm Ad} u\circ h_x''': A_X\to qAq.$ Then
$[h_x']=\af_X'$ (as an element in $KL(A_X, B)$).
Let $\pi_X: A\to A/I=A_X$  be the quotient and $h_x= h_x' \circ \pi_X: A\to qBq.$ 
Put 
\beq	
\bt:=\af-[h_x].
\eneq
For any $\xi\in K_p$ and any $\tau \in T(B)$, 
we have that
\beq
\rho_B(\af(\xi)-[h_x](\xi))(\tau)>\sigma -l(n)[h_x](1_A)(\tau)>(3/4)\sigma>0
\eneq
It follows that $\bt\in KL(A, B)^{++}.$
Note that, in view of \eqref{AnKLdec}, by \eqref{P23-04-1} and \eqref{P2304-2},
\beq
\bt=\af-[h_x]\in KL(A_C, B)^{++}.
\eneq
Note that, since $B$ has tracial rank zero, $B\in {\cal B}_1$ (as defined in Definition 9.1 of \cite{GLNI}). 
By 18.7 of \cite{GLNI},  there is a sequence of \cpc\, maps $\Phi_n: A_C\to (p-q)B(p-q)$
such that 
\beq
\lim_{n\to\infty}\|\Phi_n(ab)-\Phi_n(a)\Phi_n(b)\|=0\rforal a,b\in A_C\andeqn [\{\Phi_n\}]=\bt
\eneq
(where we view $\bt$ as an element in $KL(A_C, B)$).
Denote by $\pi_C: A\to A/J=A_C$ the quotient map.

Define $L_n=\pi_C\circ L_n\oplus h_x: A\to pBp,$ $n\in \N.$
Then 
\beq
\lim_{n\to\infty}\|L_n(ab)-L_n(a)L_n(b)\|=0\andeqn
[\{L_n\}]=\bt+[h_x]=\af
\eneq
\end{proof}

\begin{thm}\label{Ain A1}
Let $C$ be a unital separable simple amenable ${\cal Z}$-stable \CA\, 
satisfying the UCT.
Then $C$ is in the class ${\cal A}_0.$ 
\end{thm}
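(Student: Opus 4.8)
The goal is to verify that a unital separable simple amenable $\mathcal{Z}$-stable C*-algebra $C$ satisfying the UCT belongs to $\mathcal{A}_0$, i.e., that $C$ satisfies (1), (2) and (3) of Definition \ref{DefA1} (and lies in $\mathcal{A}_1$, which is already provided by Theorem \ref{TembeddingAH}(2) / Remark \ref{rem:CGSTW}). The plan is to follow closely the pattern of the proof of Theorem \ref{thm:AHInA0} for AH-algebras, replacing the concrete AH building blocks with the building blocks $A_n \in \mathcal{B}$ from Definition \ref{DC0An} and the classification machinery for AH-algebras with Lemma \ref{Hom1} (the $\mathcal{Z}$-stable classification), Proposition \ref{P2304}, and Proposition \ref{P230604}/\ref{Ltr0KK}-type decompositions.

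First I would establish the inductive limit structure required by (1). Using the classification theory for unital separable simple amenable $\mathcal{Z}$-stable C*-algebras satisfying the UCT (the Elliott-invariant-based results quoted in and around Lemma \ref{Hom1}, together with the known range-of-invariant results), I would write $C \cong \lim_{n\to\infty}(C_n, \iota_n)$ where each $C_n$ is a finite direct sum of algebras from the class $\mathcal{B}$ of Definition \ref{DC0An}, with $1_{C_n} = 1_C$, with $\iota_n$ unital and injective, with $(\iota_n)_{*0}$ strictly positive and $(\iota_n)_{*i}$ injective, and with $K_0(C_n) = G_{0,n}\oplus \ker\rho_{C_n}$ finitely generated as stated ($G_{0,n}$ free, $K_0(C_n)_+^\rho \cap G_{0,n}$ finitely generated as a positive cone, $K_1(C_n)$ finitely generated). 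This is exactly the kind of statement packaged for AH-algebras in Theorem \ref{thm:AHInA0}; the $\mathcal{Z}$-stable analogue should be obtainable by the same inductive-limit bookkeeping applied to the one-dimensional NCCW / $\mathcal{B}$ building blocks, using that such $C$ has generalized tracial rank at most one and is classifiable. Then $G_n := \iota_{n,\infty}(K_0(C_n))$ decomposes as $\mathbb{Z}^{r(n)}\oplus G_{o,n}$ with $G_{o,n} = G_n\cap\ker\rho_{C,f}$, and the finite sets $\mathcal{Q}_n$, $K'_{n,p}$, $K_{n,p}$, $\mathcal{P}_n$, $G_{n,o}$ are defined verbatim as in Definition \ref{DefA1}(3), which handles the first (automatic) part of (3) and condition (a).

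Next, condition (2): this I would deduce from Corollary \ref{LA1-IndSr}, since the building blocks in $\mathcal{B}$ (one-dimensional NCCW complexes, or corners over $C(X)$ tensored with matrices) lie in the class $S_r$ for suitable $r$ — or more precisely, $C$ is an inductive limit of algebras which are each such building blocks, hence in inductive limits of $S_r$, so Corollary \ref{LA1-IndSr} applies directly. For the existence statements (3)(b) and (3)(c): for (3)(b), given a simple unital $A$ with tracial rank zero and $\kappa_n \in KL_{loc}(G^{\mathcal{P}_n\cup K_{n,p}},\underline{K}(A))$ with $\kappa_n([1_C]) = [1_A]$, $\kappa_n(K_{n,p})\subseteq K_0(A)_+\setminus\{0\}$, $\kappa_n(G_{o,n})\subseteq\ker\rho_A$, I pull back via $[\iota_{n,\infty}]$ to get $\kappa_n' \in \mathrm{Hom}_\Lambda(\underline{K}(C_n),\underline{K}(A))$ taking $K_0(C_n)_+\setminus\{0\}$ into $K_0(A)_+\setminus\{0\}$ (using simplicity and tracial rank zero of $A$), and then apply Proposition \ref{P2304} to each summand in $\mathcal{B}$ to produce an asymptotic sequence of almost-multiplicative maps realizing $\kappa_n'$, composing with an almost-multiplicative approximate left-inverse $C\to C_n$ of $\iota_{n,\infty}$. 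For (3)(c), I would mimic the argument at the end of the proof of Theorem \ref{thm:AHInA0}: reduce to the case $C = C_n$ a single building block in $\mathcal{B}$, use Proposition \ref{Ptrace}/\ref{Ad89} to get a trace-compatible affine map $\gamma_1: T(A)\to T(C)$, perturb it to land in $T_f(C)$, split off the projection $e_0$ with $[e_0] = \kappa_0([1_C])$, build a compatible $\chi_1 = [\phi_1] - \kappa_0$ on $K_0$, and invoke the existence theorem for maps from $\mathcal{B}$-algebras to simple tracial-rank-zero algebras with prescribed $KL$-class and approximate trace behaviour (Theorem 4.5 of \cite{Linann} together with Proposition \ref{P2304}, exactly as in the AH case) to get the map $\psi: C\to p_0Ap_0$ with $[\psi]|_{\mathcal{P}_n} = ([\phi_1]-\kappa_0)|_{\mathcal{P}_n}$ and the required trace estimate $|\tau(\psi(h))-\tau(\phi_1(h))| < 2\alpha_0 + \eta$.

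The main obstacle I anticipate is step (1): producing a C*-inductive limit decomposition of a general classifiable $\mathcal{Z}$-stable $C$ through building blocks in $\mathcal{B}$ with all the finite-generation and strict-positivity properties of Definition \ref{DefA1}(1), and simultaneously arranging the compatibility needed for (3)(c) (namely that each $C_n$ is itself in $\mathcal{A}_1$ and that the existence theorem applies uniformly). For AH-algebras this is handled by Theorem 2.1 of \cite{EGL-inj} plus elementary $K$-theory bookkeeping; for the $\mathcal{Z}$-stable case one must instead invoke the full strength of the classification program — the range of the Elliott invariant realized by inductive limits of $\mathcal{B}$-algebras together with the isomorphism theorem quoted via Lemma \ref{Hom1} and Corollary 4.13 of \cite{EGLN} — to rewrite $C$ as such a limit, which is the technical heart of the matter. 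Once that decomposition is in hand, conditions (2), (3)(a), (3)(b), (3)(c) follow by arguments that are essentially identical to (but notationally heavier than) the corresponding parts of the proof of Theorem \ref{thm:AHInA0}, using Proposition \ref{P2304} in place of the AH-specific existence results.
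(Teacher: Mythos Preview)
Your outline for parts (1), (2), (3)(a) and (3)(b) matches the paper's proof: Theorem 13.50 of \cite{GLNI} provides the inductive limit through building blocks in $\mathcal{B}$, Corollary \ref{LA1-IndSr} handles (2), and Proposition \ref{P2304} handles (3)(b). The gap is in your treatment of (3)(c).

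Your plan there is to reduce to a building block $C_n \in \mathcal{B}$ and invoke ``Theorem 4.5 of \cite{Linann} together with Proposition \ref{P2304}, exactly as in the AH case.'' But Theorem 4.5 of \cite{Linann} is specific to commutative $C(X)$ and does not apply to the noncommutative one-dimensional NCCW pieces in $\mathcal{B}$, while Proposition \ref{P2304} realizes only a prescribed $KL$-class with no trace control --- which is precisely what (3)(c) demands. More substantively, the AH reduction works because for connected $X$ one has $K_0(C(X)) = \mathbb{Z}\oplus\ker\rho_C$, so compatibility of $\chi_1 = [\phi_1]-\kappa_0$ with the approximate trace map $\gamma_1$ reduces to the single equation $\chi_1([1_C])=[1_{A_1}]$. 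For $C_n\in\mathcal{B}$ the free part $K_0(A_C)$ of $K_0(C_n)$ is genuinely larger, and since $\phi_1$ is only approximately multiplicative there is no reason for $\rho_A\circ[\phi_1]$ to be exactly induced by any affine map $T(A)\to T(C_n)$; the compatibility needed to feed into an existence-with-traces theorem need not hold.

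The paper sidesteps this with a different construction. It builds auxiliary simple $\mathcal{Z}$-stable algebras $D$ and $D_n$ with $K_0(D_n)=F_n$ (finitely generated pieces of $\rho_C(K_0(C))$) and homomorphisms $H_{C,D}: C\to D$, $H_{D_n,C}: D_n\to C$ via Theorem 5.2 of \cite{GLNhom2}, yielding asymptotically multiplicative maps $\Psi_n: C\to C$ with $[\Psi_n]|_{G_o'}=0$ for any prescribed finitely generated $G_o'\subset\ker\rho_C$. It then uses the tracial-rank-zero structure of $A$ (Lemma 6.10 of \cite{LinTAF}) to split off a large diagonal piece $\Psi_a := L_{A,1}\circ\phi_1\circ\Psi_{n+k}$, which inherits its trace behaviour directly from $\phi_1$ and satisfies $[\Psi_a\circ\iota_{n,\infty}]|_{\ker\rho_{C_n}}=0$. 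The remaining small $KL$-correction $\beta = [\phi_1\circ\iota_{n,\infty}]-[\Psi_a\circ\iota_{n,\infty}]-\kappa_0\circ[\iota_{n,\infty}]$ lands in $KL(C_n,A)^{++}$ and is realized by Proposition \ref{P2304} alone (no trace control is needed there, since this piece is small). The required $\psi$ is then the sum $\Psi_{0,j}+\Psi_a$ composed with an approximate left inverse $C\to C_n$.
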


\begin{proof} {{Firstly, by Theorem \ref{TembeddingAH}, $C$ is in
the class ${\cal A}_1$.   
We also}} note that by Theorem 13.50  of \cite{GLNI},
$C$ satisfies
part (1) of Definition \ref{DefA1}.
Next, {{ by Corollary 4.12 and Theorem 4.11 of \cite{EGLN} as well as Theorem 13.50 of \cite{GLNI}),
$C=\lim_{n\to \infty}(C_n, \iota_n),$  where each $C_n\in {\cal S}_{K(n)}$ (for some $K(n)\in \N$), and 
$\iota_n$ is injective.
Then,}} by Corollary
 \ref{LA1-IndSr},  
 $C$ satisfies part (2) of Definition \ref{DefA1}. 
It remains to show that $C$ satisfies part (3) of Definition \ref{DefA1}.

In fact, by Theorem 13.50 and 13.28 of \cite{GLNI}, we may assume that
each $C_n\in {\cal B}$ has the form in \eqref{defB}.

For each $n$, 
since $K_i(C_n)$ is finitely generated for $i =0,1$, one can choose a finite subset  ${\cal Q}_n$ of $\underline{K}(C_n)$ 
which satisfies the requirement for  ${\cal Q}_n$   in the first part 
(3) of Definition \ref{DefA1}  {{(before (a)).}}
For each $n$,
we thus get $\Pp_n$ as in Definition \ref{DefA1}. 
As in \ref{DC0An}, there exists a finite subset $K_{n,p}'\subset K_0(C_n)_+\setminus \{0\}$
such that, for any $x\in K_0(C_n)_+,$ there are $x_1, x_2,...,x_{l_n}, y_1,y_2,...,y_{m_n} \in K'_{n,p},$
$r_1, r_2,...,r_{l_n}\in \N$ and $s_1, s_2,...,s_{m_n}\in \Z$ such that
\beq
x=\sum_{i=1}^{l_n}r_i x_i+\sum_{j=1}^{m_n} s_j y_j
\eneq
and $\xi:=\sum_{j=1}^{m_n} s_j y_j\in {\rm ker}\rho_{C_n}.$
We may assume that $[1_{C_n}]\in K_{n,p}'.$ 

Then  condition  (a)  of part (3)  of Definition \ref{DefA1}  follows.
{{Let ${\mathtt{P}}_n\subset M_{l(n)}(C_n)$ (for some $l(n)\in \N$) be a finite subset of projections 
such that $K_{n,p}'=\{[p]:p\in {\mathtt{P}}_n\}.$}}
Then (b) of part (3) of Definition \ref{DefA1} follows from Proposition \ref{P2304}.
It remains to show that $C$ also satisfies (c) of part (3) of Definition \ref{DefA1}.

Define $F=\rho_C(K_0(C))$ with positive cone $F_+$ given by the strict order in $\Aff(T(C)).$
Let $D$ be a unital separable simple ${\cal Z}$-stable \CA\, which is an inductive limit
of sub-homogeneous \CA s such that  
\beq
{\rm Ell}(D)=(F, F_+, [1_C], K_1(C), T(C), \rho_C)
\eneq
(given by Theorem 13.50  of \cite{GLNI}).
We may write $F=\cup_{n=1}^{\infty} F_n,$
where each $F_n$ is a finitely generated abelian subgroup of $F\subset \Aff(T(C))$ with 
the strict order inherited from $\Aff(T(C))_{++}$ and $[1_C]\subset F_n$ and 
$F_n\subset F_{n+1},$ $n\in \N.$ 
It follows from Theorem 13.50 of \cite{GLNI} again that there exists a sequence 
of unital simple ${\cal Z}$-stable \CA s $D_n$ which are inductive limits of sub-homogeneous \CA s
such that
\beq
{\rm Ell}(D_n)=(F_n, (F_n)_+, [1_C], K_1(C), T(C), \rho_C).
\eneq 
As in Theorem 5.4 of \cite{GLNhom2} (see Theorem 5.2 of \cite{GLNhom2}), we obtain embeddings $j_n: D_n\to D_{n+1}$ such that
\beq
D=\lim_{n\to\infty} (D_n, j_n),
\eneq
where $(j_n)_{*0}$ gives an embedding from $F_n$ to $F_{n+1}$ 
and $(j_n)_{*1}={\rm id}_{K_1(C)}$ and $j_n$ induces the identity 
map on $T(D_n)=T(C).$
It follows from Theorem 5.2 of \cite{GLNhom2} that there is a unital \hm\,
$H_{C,D}: C\to D$ such that
${H_{C,D}}_{*0}=\rho_C: K_0(C)\to K_0(D),$
${H_{C,D}}_{*1}={\rm id}_{K_1(C)}$ and $H_{C,D}$ induces the identity map
on $T(C)=T(D).$ 

For each $n,$ there exists $s_n: K_0(D_n)\to K_0(C)$ 
such that $\rho_C\circ s_n=\id_{K_0(D_n)}.$ 
Thus, by Theorem 5.2 of \cite{GLNhom2} again, there exists a unital \hm\, $H_{D_n,C}:
D_n\to C$ such that ${H_{D_n,C}}_{*0}=s_n,$
${H_{D_n, C}}_{*1}=\id_{K_1(C)}$ and $H_{D_n, C}$ induces the identify map on $T(D_n) = T(C)$. 
There is also a unital \cpc s $\Psi_n': D\to D_n$ such that
\beq
\lim_{n\to\infty} \|d-j_{n, \infty}\circ \Psi_n'(d)\|=0\rforal d\in D.
\eneq
Put $\Psi_n: C\to C$ defined by
$\Psi_n(c)=H_{D_n, C}\circ \Psi_n'\circ  H_{C, D}(c)$ for all $c \in C$.
Then $\Psi_n$ is a \cpc. 
\beq
\lim_{n\to\infty}\|\Psi_n(ab)-\Psi_n(a)\Psi_n(b)\|=0\rforal a, b\in C.
\eneq
We note that, for any finitely generated subgroup $G_{o}'\subset {\rm ker}\rho_C,$ 
\beq
[\Psi_n]|_{G_o'}=0
\eneq
for all sufficiently large $n\in \N.$

We may assume that $\rho_C\circ {\iota_{n, \infty}}_{*0}(K_0(C_n))\subset F_n,$ $n\in \N.$ 

To prove condition (c) of part (3) of Definition \ref{DefA1}, we note that it suffices to prove the case 
$C_n$ satisfies the condition (c) of part (3) of Definition \ref{DefA1} for each $n.$ 
%

Again, we will also {{use}} the notation of (a) and (b) in part (3) of \ref{DefA1}, as well as the notation from the present argument so far.
%

Let us first fix an arbitrary $n \geq 1$.
Let ${\cal H}\subset \iota_{n, \infty}({C_n}_{s.a.}^{\bf 1}\setminus \{0\})$
be a finite set 
and $\eta>0.$   Assume that $({\cal G}_n, 2\dt_n, {\cal P}_n)$ is a $KL$-triple associated with ${\cal H}$ and $\eta/2.$ 
Let ${\cal G}_0\supset {\cal G}_n\cup {\cal H}$ be a finite 
set and $0<\dt_0<\dt_n.$  Let $A$ be an arbitrary unital separable simple
C*-algebra with tracial rank zero. 

Let $\phi_1: C\to A$  be  a unital ${\cal G}_0$-$\dt_0$-multiplicative \morp\,
such that $[\phi_1]$ induces an element in $KL_{loc}(G^{{\cal P}_n\cup K_{n,p}}, \underline{K}(A))$ 
and there is a $\sigma_0 > 0$ where for all 
$x\in K_{n,p}$ and for all $\tau \in T(A)$,  
\beq
\rho_A([\phi_1(x)])(\tau)>\sigma_0, \makebox{ and }     
\eneq
$[\phi_1](G^{{\cal P}_n\cup K_{n,p}}\cap {\rm ker}\rho_{C,f})\subset {\rm ker}\rho_A$. 
Suppose that  $\kappa_0\in KL_{loc}(G^{{\cal P}_n\cup K_{n,p}}, \underline{K}(A))$ such that 
$\kappa_0(K_{n,p})\subset K_0(A)_+\setminus \{0\},$ ${\kappa_0}(G^{{\cal P}_n\cup K_{n,p}}\cap {\rm ker}\rho_{C,f})
\subset {\rm ker}\rho_A,$
{{and, for all $x \in K_{n,p}$, 
\beq
\|\rho_A(\kappa_0(x))\|<\af_0\min\{1/2, \sigma_0\}
\eneq
for some $\af_0\in (0, 1/2l(n)),$}}
and $p_0\in A$  is a projection such that $[p_0]=[\phi_1(1_C)]-\kappa_0([1_C]).$ 
%
%

Choose integers $m,K\in \N$ such that 
$2\af_0<m/K\le 33\af_0/16.$   Set  
\beq
\eta_0=\min\{(m/K-2\af_0), \af_0, \eta, \sigma_0/2\}>0.
\eneq

Recall that $A$ has tracial rank zero.
Hence, by Lemma 6.10 of \cite{LinTAF}, 
for any $\ep'>0$ and any finite subset ${\cal G}'\subset A,$ 
there are
projections $e_1, e_2\in A$ 
such that $(1-e_1)A(1-e_1)\cong M_K(e_2 A e_2),$ a finite dimensional \SCA\, $F_a$  of $e_2 A e_2$
with $1_{F_a}=e_2$ and 
a unital ${\cal G}'$-$\ep'$-multiplicative \morp\, $L: A \to F_a$
such that

(1) $\tau(e_1)<\eta_0/64\rforal \tau\in T(A),\\\label{95-T-02}$

(2) $\|[e_i,\, x]\|<\ep'$ for all $x\in {\cal G}'$,  $i=1, 2$, and 

(3) $\|x-(e_1xe_1\oplus \diag(\overbrace{L(x),L(x),...,L(x)}^K))\|<\ep'\rforal x\in {\cal G}'.$

Set $E=\sum_{i=m+1}^K e_{i,i},$ where $\{e_{i,j}\}_{K\times K}$ is a system of matrix units for $M_K.$ 
Let
\beq
L_{A,0}(x)&=&e_1xe_1\oplus \diag(\overbrace{L(x),L(x),..., L(x)}^m,0,...,0)\andeqn\\
L_{A,1}(x)&=&0\oplus \diag(0,...,0, \overbrace{L(x),L(x),...,L(x)}^{K-m})\rforal x\in A.
\eneq
Define $\Psi_a: C\to EM_K(F_a)E$ by
$\Psi_a := L_{A,1}\circ \phi_1\circ \Psi_{n+k}$ for some $k\ge 1.$
With sufficiently large $k,$
we may assume that 
{{$\Psi_a$ is at least  $\G_0$-$\delta_0$-multiplicative. 
Note that  $[\Psi_a\circ \iota_{n, \infty}]|_{{\rm ker}\rho_{C_n}}=0.$ 
Let $\bt=[\phi_1\circ \iota_{n, \infty}]-[\Psi_a\circ \iota_{n, \infty}]-\kappa_0\circ [\iota_{n, \infty}] \in KL(C_n, A)$.}}  
We compute that, for sufficiently large $k,$
for all $\tau \in T(A)$ and $x' \in K'_{n,p}$, if $x = [\iota_{n, \infty}](x') \in K_{n,p}$, then 
\beq
\rho_C(\bt(x'))(\tau)&>&\tau(\phi_1(x)) - (1 - \frac{m}{K}) \tau(\phi_1(x))
- \tau(\kappa_0(x))\\
& = & \frac{m}{K} \tau(\phi_1(x)) - \tau(\kappa_0(x)) > 0.  
\eneq
Moreover  $\bt|_{{\rm ker}\rho_{C_n}}=0.$ 
Let $A'=(e_1\oplus (\sum_{i=1}^m e_{i,i}))A(e_1\oplus (\sum_{i=1}^me_{i,i})).$
Note that $\rho_A(\bt([1_C]))(t)<\rho_A([1_{A'}])(t)$ for all $t\in T(A).$ 

%

Choose a projection $e_0\in A'$ such that $\bt([1_C])=[e_0].$   
Then we may view $\bt\in KL_e(C_n, e_0A e_0)^{++}.$ 
By Proposition \ref{P2304},
there is a  sequence of \cpc s
$\Psi_{0,j}: C_n\to e_0 A e_0$
such that
{{\beq
\lim_{j\to\infty}\|\Psi_{0,j}(ab)-\Psi_{0,j}(a)\Psi_{0,j}(b)\|=0\rforal a, b\in C_n\andeqn
 [\{\Psi_{0,j}\}]=\bt.
 \eneq}} 

Now define $\Phi_j: C_n\to A$ 
by $\Phi_j(c)=\Psi_{0,j}(c)+\Psi_a(c)$ for $c\in C_n.$ 
Recall that we assume that ${\cal G}_n\subset \iota_{n, \infty}(C_n).$ 
Since $\iota_{n, \infty}$ is injective and $C_n$ is amenable,
there is, for each $n\in \N,$ a 
sequence of \cpc s $\Lambda_{n,k}: C\to C_n$
such that
\beq
\lim_{k\to\infty} \|\Lambda_{n,k}(c)-c\|=0\rforal c\in \iota_{n, \infty}(C_n).
\eneq
We verify that  (for all large $j$)
\beq
[\Phi_j\circ \Lambda_{n,k}]=
([\phi_1]-[\Psi_a]-\kappa_0)+[\Psi_a]=[\phi_1]-\kappa_0.
\eneq
Recall that $\tau(e_1)<\eta_0/16$ for all $\tau\in T(A )$ and $m/K<33\af_0/16.$
We compute that
\beq
|\tau(\Phi_j(x))-\tau(\phi_1(x))|<2\af_0+\eta \rforal  x\in {\cal H}_1\andeqn \tau\in T(A ).
\eneq
Since $C = C_n$ and since $n \geq 1$ {{is}} arbitrary,
this implies that condition (c) of part (3) of Definition \ref{DefA1} holds. 
\end{proof}

\begin{cor}\label{Cor9}
Suppose that {\emph{either}} $C$ is a unital separable simple amenable ${\cal Z}$-stable \CA\, in the UCT class
\emph{or} $C$ is a unital AH-algebra with a faithful tracial state, and  {{suppose}} 
$B$ is a 
non-unital  {{separable}} simple \CA\, with tracial rank zero 
and continuous scale. 
Let  $\Phi, \Psi: C\to M(B)$ be  two unital \hm s such that $\pi\circ \Phi$ and $\pi\circ \Psi$ are  both 
injective.

Suppose that $\tau\circ \Phi=\tau\circ \Psi$ for all $\tau\in T(B).$ 
Then there exists a sequence $\{ u_n \}$ of unitaries in
$M(B)$ such that
\beq
&&\lim_{n\to\infty}\|u_n^*\Phi(c)u_n-\Psi(c)\|=0\rforal c\in C\andeqn\\
&&u_n^*\Phi(c)u_n-\Psi(c)\in B\rforal n\in \N \makebox{ and } c\in C.
\eneq
The converse also holds.
\end{cor}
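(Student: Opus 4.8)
The statement to be proved is Corollary \ref{Cor9}, which asserts a Voiculescu--Weyl--von Neumann type result for unital injective (after composing with $\pi$) homomorphisms $\Phi,\Psi:C\to M(B)$, where $C$ is either a unital separable simple amenable $\mathcal Z$-stable $C^\ast$-algebra satisfying the UCT, or a unital AH-algebra with a faithful tracial state, and $B$ is a non-unital separable simple $C^\ast$-algebra with tracial rank zero and continuous scale. The whole point is that this is a \emph{corollary} of Theorem \ref{T8unitaryequiv}, whose hypothesis is that $C$ belongs to the class $\mathcal A_0$ introduced in Definition \ref{DefA1}. So the plan has two ingredients: first, verify that the two families of $C^\ast$-algebras named in the statement lie in $\mathcal A_0$; second, invoke Theorem \ref{T8unitaryequiv} directly, and note that the converse direction (the ``only if'') is the easy one.

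For the first ingredient, I would simply cite the two theorems proved in Section \ref{section:ClassA0}: Theorem \ref{Ain A1} states that every unital separable simple amenable $\mathcal Z$-stable $C^\ast$-algebra satisfying the UCT is in $\mathcal A_0$, and Theorem \ref{thm:AHInA0} states that every unital AH-algebra with at least one faithful tracial state is in $\mathcal A_0$. One small point to address: in the AH-case one must know $T_f(C)\neq\emptyset$ is genuinely used — it is, since membership in $\mathcal A_0$ requires membership in $\mathcal A_1$, and $\mathcal A_1$-algebras have faithful traces via the strictly positive linear maps in Definition \ref{df:ClassCalA}; and the hypothesis of Corollary \ref{Cor9} that $\pi\circ\Phi$ is injective forces $T(B)$, restricted along $\tau\circ\Phi$, to see a faithful trace on $C$, which is consistent. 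So nothing extra is needed beyond quoting these two theorems.

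For the second ingredient, once $C\in\mathcal A_0$ is established, the ``if'' direction (existence of the sequence $\{u_n\}$ given $\tau\circ\Phi=\tau\circ\Psi$ for all $\tau\in T(B)$) is precisely the conclusion of Theorem \ref{T8unitaryequiv}, applied verbatim with $(\Psi,\Phi)$ in the roles of $(\Psi,\Phi)$ there; note $T(B)=T(M(B))$ since $M(B)/B$ is purely infinite simple (as $B$ has continuous scale, by Theorem \ref{Tcontscal}), so the trace condition is unambiguous. For the ``only if'' (converse) direction: if $u_n^\ast\Phi(c)u_n-\Psi(c)\in B$ for all $n$ and $\|u_n^\ast\Phi(c)u_n-\Psi(c)\|\to 0$, then for any $\tau\in T(B)$, extended canonically to a trace on $M(B)$ (using that $B$ is $\sigma$-unital and the extension procedure recalled in Definition \ref{Dtrace}), we have $\tau$ vanishing on $B$ — wait, that is false; rather $\tau|_B$ is the given trace, and $\tau\circ\pi_B$ is not defined. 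The correct argument: $\tau$ on $M(B)$ is a state with $\tau(u_n^\ast\Phi(c)u_n)=\tau(\Phi(c))$ by the trace property, and $\tau(u_n^\ast\Phi(c)u_n)\to\tau(\Psi(c))$ since the difference tends to $0$ in norm and $\tau$ is norm-continuous; hence $\tau(\Phi(c))=\tau(\Psi(c))$ for all $c\in C$ and all $\tau\in T(B)$. This is routine and needs no new input.

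\textbf{Expected main obstacle.} There is essentially no obstacle in proving Corollary \ref{Cor9} itself — it is a formal consequence of results already established in the excerpt. The real content (and the real difficulty) lives entirely upstream, in Theorem \ref{T8unitaryequiv} and in the verification that $\mathcal Z$-stable UCT algebras and AH-algebras belong to $\mathcal A_0$ (Theorems \ref{Ain A1} and \ref{thm:AHInA0}), which in turn rest on the uniqueness and existence axioms of Definition \ref{DefA1}, on the classification machinery of \cite{GLNI,GLNII,GLNhom2}, and on the absorption/KK results of Section 4 and the $K$-theory computations of Section 5. So the ``proof'' of the corollary is just: (i) $C\in\mathcal A_0$ by Theorem \ref{Ain A1} or Theorem \ref{thm:AHInA0}; (ii) apply Theorem \ref{T8unitaryequiv} for the forward direction; (iii) the trace-continuity argument above for the converse. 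The only thing to be mildly careful about is bookkeeping with the identification $T(B)=T(M(B))$ and the canonical extension of traces from $B$ to $M(B)$, both of which are already set up in Sections 2 and 3.
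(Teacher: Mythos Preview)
Your proposal is correct and matches the paper's approach exactly: the paper gives no separate proof of Corollary \ref{Cor9}, since it is an immediate consequence of Theorem \ref{T8unitaryequiv} once one knows $C\in\mathcal A_0$, which is precisely the content of Theorem \ref{thm:AHInA0} (AH case) and Theorem \ref{Ain A1} ($\mathcal Z$-stable case). Your handling of the converse via the trace property and norm continuity is the standard one-line argument the paper implicitly has in mind when it says ``The converse also holds'' in Theorem \ref{T8unitaryequiv}.
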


\end{document}